\documentclass[a4paper]{amsbook}
\usepackage{amssymb, enumitem}
\usepackage[chatter]{rotating}
\usepackage{epsfig}
\usepackage{fancyvrb}

\usepackage{nomencl}
\usepackage{multicol}
\renewcommand{\nompreamble}{\begin{multicols}{2}}
\renewcommand{\nompostamble}{\end{multicols}}
\makenomenclature

\PassOptionsToPackage{hyphens}{url}
\usepackage[hidelinks]{hyperref}

\usepackage{bbm}

\usepackage{makecell}
\setcellgapes{3pt}

\usepackage{multirow}
\usepackage{array} 
\newcolumntype{A}{ >{} r <{} @{} >{{}} l <{} } 

\newtheorem{lemma}{Lemma}[chapter]
\newtheorem*{mth}{Main Theorem}
\newtheorem*{theo}{Classification Theorem of the Symmetric Algebras}
\newtheorem{teo}{Theorem}
\newtheorem{mcor}{Corollary}

\newtheorem{theorem}[lemma]{Theorem}
\newtheorem{remark}[lemma]{Remark}
\newtheorem{cor}[lemma]{Corollary}
\newtheorem{proposition}[lemma]{Proposition}

\newtheorem{hyp}[lemma]{Hypothesis}

\numberwithin{table}{chapter}
\numberwithin{equation}{chapter}

\newcommand{\N}{{\mathbb N}}

\newcommand{\ba}{{\bf a}}
\newcommand{\bs}{{\bf s}}
\newcommand{\bv}{{\bf v}}

\DeclareMathOperator{\ch}{char}

\newcommand{\Vo}{{\bf V}}
\newcommand{\ad}{{\rm ad}}
\DeclareMathOperator{\supp}{supp}

\newcommand{\F}{{\mathbb F}}
\newcommand{\Z}{{\mathbb Z}}

\newcommand{\lm}{\lambda}

\newcommand{\lmu}{\lambda_1}

\newcommand{\lmf}{\lambda_1^f}

\newcommand{\lmd}{\lambda_2}

\newcommand{\lmdf}{\lambda_2^f}

\newcommand{\al}{\alpha}
\newcommand{\bt}{\beta}
\newcommand{\gm}{\gamma}
\newcommand{\dl}{\delta}
\newcommand{\M}{\mathcal M}

\newcommand{\Mab}{\mathcal M(\al,\bt)}

\newcommand{\bu}{{\bf u}}

\newcommand{\ti}{\bar \imath}
\newcommand{\tj}{\bar \jmath}
\newcommand{\tr}{\bar r}
\newcommand{\tT}{\bar t}
\newcommand{\tz}{\bar 0}
\newcommand{\tu}{\bar 1}
\newcommand{\td}{\bar 2}
\newcommand{\tih}{\bar h}
\newcommand{\tik}{\bar k}

\newcommand{\ttr}{\tilde r}
\newcommand{\ttT}{\tilde t}
\newcommand{\ttu}{\tilde 1}
\newcommand{\ttd}{\tilde 2}
\newcommand{\ttz}{\tilde 0}
\newcommand{\ttx}{\tilde x}
\newcommand{\tta}{\tilde a}
\newcommand{\ttb}{\tilde b}

\newcommand{\V}{\mathcal{V}}
\newcommand{\W}{\mathcal{W}}

\newcommand{\cupdot}{\mathbin{\mathaccent\cdot\cup}}

\newcommand{\A}{\mathrm{A}}
\newcommand{\C}{\mathrm{C}}
\newcommand{\B}{\mathrm{B}}
\newcommand{\Y}{\mathrm{Y}}
\newcommand{\J}{\mathrm{J}}
\newcommand{\IY}{\mathrm{IY}}
\newcommand{\QQ}{\mathrm{Q}}
\newcommand{\cH}{\mathcal{H}}
\newcommand{\hatH}{\hat{\cH}}

\newcommand\la{\langle}
\newcommand\ra{\rangle}
\newcommand\lla{\langle\!\langle}
\newcommand\rra{\rangle\!\rangle}

\newcommand{\1}{\mathbbm{1}}

\DeclareMathOperator{\adim}{Adim}
\DeclareMathOperator{\Ann}{Ann}
\DeclareMathOperator{\Miy}{Miy}
\DeclareMathOperator{\Aut}{Aut}

\renewcommand{\phi}{\varphi}
\renewcommand{\epsilon}{\varepsilon}

\newcommand{\ie}{i.e.\ }

\SetEnumitemKey{enum_note}{label={\upshape \arabic*.}, ref=\arabic*} 
\SetEnumitemKey{enum_arabic}{label={\upshape (\arabic*)}}
\SetEnumitemKey{enum_alph}{label={\upshape (\alph*)}}
\SetEnumitemKey{enum_roman}{label={\upshape (\roman*)}}

\setlist[enumerate, 1]{enum_alph}
\setlist[enumerate, 2]{enum_roman}

\newsavebox{\savepar}
\usepackage{xcolor}

\title[$2$-generated Axial Algebras]{{\sc The Classification of the $2$-generated Primitive  Axial Algebras of Monster Type}}
\author{Clara Franchi}
\address{Dipartimento di Matematica e Fisica,
Universit\`a Cattolica del Sacro Cuore,
Via della Garzetta 48
I-25133 Brescia, Italy}
\email{clara.franchi@unicatt.it}
\author{Mario Mainardis}
\address{Dipartimento di Scienze Matematiche, Informatiche e Fisiche, 
Universit\`a degli Studi di Udine, via delle Scienze 206,
I-33100 Udine, Italy}
\email{mario.mainardis@uniud.it}
\author{Justin M\textsuperscript{c}Inroy}
\address{The Alan Turing Institute, British Library, 96 Euston Road, London, NW1 2DB, UK, and School of Mathematics, University of Bristol, Fry Building, Woodland Road, Bristol, BS8 1UG, UK}
\email{justin.mcinroy@bristol.ac.uk}
\author{Michael Turner}
\address{Independent Researcher, Chichester, UK}
\curraddr{}
\email{}
\date{\today}

\author{}
\address{2020 Subject Classification: 17A60, 17A99, 17C27, 20F29}

\author{}
\address{Abstract. {\rm Axial algebras of Monster type $(\al,\bt)$ arise naturally as a generalisation of the weight $2$ components of $OZ$-type vertex operator algebras by axiomatising some of their relevant properties. 
Indeed, the motivating examples of axial algebras of Monster type are the Conway-Norton-Griess algebra, which is the weight $2$ component of the Moonshine VOA and whose automorphims group is the Monster simple group, and its Norton-Sakuma subalgebras.  This class also includes Matsuo algebras, related to $3$-transposition groups, and Jordan algebras. Moreover, axial behaviour has also been detected elsewhere, in Chayet-Garibaldi algebras which are related to  algebraic groups and whose fusion law is remarkably close to the Monster type fusion law, and in Hsiang algebras, arising from PDEs.
A central issue has been the classification of the $2$-generated primitive axial algebras of Monster type.  This problem was originally attacked by Rehren around 2015. A huge milestone was accomplished by Yabe in 2023 leading, with additional cases completed by Franchi, Mainardis, and M\textsuperscript{c}Inroy, in 2022 and 2024, to the classification in the symmetric case.
In this paper we deal with the non-symmetric case and complete the classification, also giving a full description of all these algebras.}}

\usepackage{float}
\makeindex
\begin{document}

\maketitle
\tableofcontents



\chapter*{Introduction}

A long standing issue in finite simple groups is to find a unified theory whereby the sporadic groups could be treated within the same setup as other (possibly all) classes of finite simple groups, hopefully unveiling new connections between different classes of finite simple groups. 

This was one of the key motivations for Jon Hall, Felix Rehren, and Sergey Shpectorov to introduce in~\cite{HRS} the class of primitive axial algebras of Monster type $(\al, \bt)$ (in short $\M(\al,\bt)$-axial algebras, see definition in Section \ref{basicdef}).  The motivating example is the Conway-Norton-Griess algebra, which is an $\M(\tfrac{1}{4},\tfrac{1}{32})$-axial algebra.
Their work generalised Alexander A. Ivanov's theory of Majorana algebras (see~\cite{IvBook}), which most impressed John H. Conway (see~\cite{IvInd}).  Majorana theory has produced the explicit constructions of several important subalgebras of the Conway-Norton-Griess algebra, in particular those associated to subgroups of the Monster group generated by Fischer involutions, (see~\cite{IPSS, Iv11b, A67, IS12, L32, Dec, Alonso, FIM1, FIM2, A12, Why, BI, Gev}), bringing new insight into this algebra and into the Monster group.  

The class of $\M(\al,\bt)$-axial algebras is a rich class which realises several finite simple groups as their automorphism groups.
In particular, Jordan algebras, related to the classical groups and $G_2$; Matsuo algebras, related to the $3$-transposition groups; and the Conway-Norton-Griess algebra~\cite{Griess}, the originating example, and its subalgebras, related to the Monster sporadic simple group and the sporadic groups in the Happy Family. An axial behaviour has also been detected by Tkachev in Hsiang algebras, related to PDEs~\cite{Tka}, and in Chayet-Garibaldi algebras (see~\cite{CG, Des, ShGo}), arising from algebraic groups, whose fusion law is remarkably close to the Monster type fusion law (see~\cite[\S 6.6]{ShGo}).

Similarly to other classes of algebras, Lie algebras being a notable example, a key step towards the understanding and the possible applications of the theory of axial algebras of Monster type is the classification of the $2$-generated objects. 

This story for axial algebras can be traced back to an important result by Simon Norton,
who classified in~\cite{Nor} the $2$-generated subalgebras of the Conway-Norton-Griess algebra (which is an axial algebra of Monster type $(\frac{1}{4}, \frac{1}{32})$  \cite{IvBook}). 
The Conway-Norton-Griess algebra can be viewed as the weight $2$ component $V^\sharp_{(2)}$ of the Moonshine Module $V^\sharp$, which is a Vertex Operator Algebra (VOA) that arises as a  representation of the Virasoro algebra (Virasoro VOAs). The class of VOAs was  axiomatized by Richard Borcherds
in~\cite{Bor} in order to give his celebrated  solution of the McKay-Thompson Moonshine conjecture. Using the fact that Virasoro VOAs are $\Z_2$-graded, in 1996 Masahiko Miyamoto introduced Ising vectors for an $OZ$-type VOA $V$ and proved that every Ising vector $u$ of $V$ is associated with an involutory automorphism $\tau_u$ of $V$, now called a {\it Miyamoto automorphism} \cite{Miya}. In the case of the Moonshine module (which is a $OZ$-type VOA), the Ising vectors are precisely the Conway axes and the Miyamoto automorphisms are the Conway involutions of the Monster (\ie those whose centralisers are isomorphic to the double cover of the Baby Monster). The properties of Ising vectors and  Miyamoto automorphisms turned out to be central in developing the theory of axial algebras.  Indeed, based on Miyamoto's work, one of his students, Shinya Sakuma, classified the isomorphism types of the $OZ$-type VOA generated by two Ising vectors \cite{Sakuma}, essentially obtaining the same classification as Norton's.  For this reason these algebras are now called {\it Norton-Sakuma algebras}\index{Norton-Sakuma algebras!in characteristic zero}. They  fall into nine isomorphism classes which are labelled  accordingly to ATLAS notation for the conjugacy class in the Monster of the product of the Miyamoto involutions associated to the generating Ising vectors. We display them here as the nodes of McKay's extended $E_8$ graph:

\begin{picture}(80,60)(-60,-38)
\put(-5,5){$1\A$}
\put(0,0){\circle{4}}
\put(2,0){\line(1,0){26}} 
\put(25,5){$2\A$}
\put(30,0){\circle{4}}
\put(32,0){\line(1,0){26}} 
\put(55,5){$3\A$}
\put(60,0){\circle{4}}
\put(62,0){\line(1,0){26}} 
\put(85,5){$4\A$}
\put(90,0){\circle{4}}
\put(92,0){\line(1,0){26}} 
\put(115,5){$5\A$}
\put(120,0){\circle{4}}
\put(122,0){\line(1,0){26}}
\put(145,5){$6\A$}
\put(150,0){\circle{4}}
\put(152,0){\line(1,0){26}}
\put(175,5){$4\B$}
\put(180,0){\circle{4}}
\put(182,0){\line(1,0){26}}
\put(205,5){$2\B$}
\put(210,0){\circle{4}}
\put(150,-2){\line(0,-1){26}}
\put(150,-30){\circle{4}}
\put(155,-32){$3\C$}
\end{picture}

\noindent For a detailed description of these algebras see~\cite[Table~3]{IPSS}.

A careful analysis of  Sakuma's proof, led Ivanov to axiomatize the class of Majorana algebras, which are particular $\M(\tfrac{1}{4}, \tfrac{1}{32})$-axial algebras over the real numbers \cite{IvBook}.  Within this surprisingly simple axiomatics, Ivanov reproved the Norton-Sakuma classification \cite{IPSS}. Ivanov's result, later extended in~\cite{FMS3} to every $\mathcal{M}(\tfrac{1}{4}, \tfrac{1}{32})$-axial algebra over every field of characteristic $0$, was the starting point for the theory of Majorana algebras and Majorana representations.

Evidently, the analogue of the Norton-Sakuma classification for all $2$-generated $\mathcal{M}(\al, \bt)$-axial algebras, obtained in this paper, is also a necessary step to extend the machinery of Majorana Theory to the wider class of 
$\mathcal{M}(\al, \bt)$-axial algebras and their automorphism groups. 


 The investigation  of the  
  $2$-generated $\M(\al,\bt)$-axial algebras over a field $\F$, of characteristic other than $2$,  for generic distinct  parameters $\al$ and $\bt$ in $\F\setminus\{0,1\}$, was initiated by Rehren in~\cite{FelixPaper}.  (Note that, by an argument of Mathias Stout~\cite{Stout}, see also~\cite[Lemma 2.1]{HW}, in characteristic $2$ all axial algebras of Monster type are associative, whence the axial condition brings no relevant information.) In~\cite{FelixPaper}, Rehren showed implicitly that we can split into three disjoint cases that behave somewhat differently ({\it Rehren's Tricotomy}\index{Rehren's Tricothmy}), namely:
\begin{enumerate}[enum_arabic]
    \item $\al=2\bt$,
    \item $\al=4\bt$,
    \item all other cases (the {\it regular}\index{Rehren's regular case} case),
\end{enumerate}  
and showed that, in the regular case, these algebras have dimension at most $8$.

A major breakthrough was due to Takahiro Yabe, who in~\cite{Yabe} gave an almost complete classification of the $2$-generated $\Mab$-axial algebras $(V,\{a_0, a_1\})$, under the assumption that there exists an involutory automorphism (called a {\it flip}) of the algebra $V$ that swaps the two generating axes $a_0$ and $a_1$. In this case the $\Mab$-axial algebra $(V, \{a_0, a_1\})$ is called  {\it symmetric}. The remaining subcases of the symmetric case left open by Yabe were eventually completed by Franchi and Mainardis \cite{FM}, and Franchi, Mainardis and M\textsuperscript{c}Inroy \cite{HWQ}.

\begin{theo}[\cite{Yabe, FM, HWQ}]\label{symmetric}
Let $\F$ be a field of characteristic other than $2$,
  $\al, \bt\in \F\setminus \{0,1\}$ with $\al \neq \bt$,   and $(V, \{a_0, a_1\})$ be a \textup{(}primitive\textup{)} symmetric $2$-generated $\Mab$-axial algebra. Then $(V, \{a_0, a_1\})$ is isomorphic to a quotient of one of the following:
  \begin{enumerate}
      \item an axial algebra of Jordan type $\al$, or $\bt$; 
      \item an axial algebra in one of the following families: \label{symmetric_2}
      \begin{enumerate}
          \item $3\A (\al,\bt )$, $4\A(\tfrac{1}{4},\bt)$, $4\B(\al, \tfrac{\al^2}{2} )$, $4\J(2\bt ,\bt)$, $4\Y(\tfrac{1}{2},\bt)$, $4\Y(\al, \tfrac{1-\al^2}{2} )$, \\
          $5\A(\al, \tfrac{5\al-1}{8})$, $6\A(\al,-\tfrac{\al^2}{4(2\al-1)} )$, $6\J(2\bt,\bt)$, and $6\Y(\tfrac{1}{2},2)$;
     \item $\IY_3(\al, \tfrac{1}{2} ; \mu)$ and $\IY_5(\al, \tfrac{1}{2} )$;
      \end{enumerate}
      \item the Highwater algebra $\mathcal H$, or its characteristic $5$ cover $\hatH$. 
      \end{enumerate} 
\end{theo}

In this paper, we complete the classification of the $2$-generated $\mathcal{M}(\al,\bt)$-axial algebras by considering the non-symmetric algebras and prove the following result which gives a complete answer to~\cite[\S 10, Question~(c)]{TA} and \cite[Problem~5.15]{survey}.

\begin{mth}
Let $\F$ be a field of characteristic other than $2$, $\al, \bt\in \F\setminus \{0,1\}$ with $\al \neq \bt$, and $(V, \{a_0, a_1\})$ be a \textup{(}primitive\textup{)} $2$-generated $\Mab$-axial algebra over $\F$. Then $(V, \{a_0, a_1\})$ is either symmetric or isomorphic to one of the following:
\begin{enumerate}
    \item $4\QQ(2\bt,\bt)$;
    \item $4\QQ(-1,-\tfrac{1}{2})^\times$;
    \item $3\C^\prime(\al,1-\al)$, for $\al \neq \tfrac{1}{2}$;
    \item $3\QQ^\prime(\tfrac{1}{3},\tfrac{2}{3})$;
    \item $4\B(-1,\tfrac{1}{2}; \nu)^\times$. 
\end{enumerate}
\end{mth}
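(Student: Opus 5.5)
We reduce the non-symmetric case to a finite computation in a universal object, in the spirit of Rehren and Yabe. Let $\tau_0,\tau_1$ be the Miyamoto involutions of $a_0,a_1$ and $\rho=\tau_0\tau_1$. Consider the $\rho$-orbits of axes $a_{2i}=a_0^{\rho^i}$ and $a_{2i+1}=a_1^{\rho^i}$. The flip swapping $a_0$ and $a_1$ exists precisely when the two orbits are related by an automorphism. So we study separately the two \emph{even} subalgebras $\lla a_0,a_2\rra$ and $\lla a_1,a_{-1}\rra$, each of which is symmetric, since $\tau_1$ swaps $a_0,a_2$ and $\tau_0$ swaps $a_1,a_{-1}$. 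By the Classification Theorem of the Symmetric Algebras, each even subalgebra lies in the explicit list there.

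First, we write down the standard spanning set. Put $s_{0,1}=a_0a_1-\bt(a_0+a_1)$ and decompose $a_1$ with respect to $a_0$. Using primitivity, we define the projection functions $\lmu=\lm_{a_0}(a_1)$ and $\lmd=\lm_{a_1}(a_0)$, together with $\lm_{a_0}(a_2)$ and $\lm_{a_1}(a_{-1})$. Applying the fusion law to products of eigenvectors, as in Rehren's argument, we show that $V$ is spanned by $a_{-2},\dots,a_3$ (or a similarly short window) together with $s_{0,1}$ and $s_{0,2},s_{1,-1}$. We then derive polynomial relations among $\al,\bt$ and the four $\lm$-values. These come from two sources: the identities $(a_0\cdot u)\cdot v$ forced by the fusion law on the $0$-, $\al$- and $\bt$-eigenvectors, and the compatibility of the two even subalgebras, which share the element $s$-products. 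This yields a finite system whose generic solutions satisfy $\lmu=\lmd$ and $\lm_{a_0}(a_2)=\lm_{a_1}(a_{-1})$. In that generic situation we aim to show that the symmetric presentation forces the flip to exist, using the universal-cover description from Yabe's approach.

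Next, for the non-generic solutions, which is where the non-symmetric algebras live, we split according to Rehren's trichotomy. In the case $\al=2\bt$ we expect to obtain $4\QQ(2\bt,\bt)$. In the case $\al=4\bt$ the relevant specialisation to $\al=-1,\bt=-\frac12$ should give the candidates $4\QQ(-1,-\frac12)^\times$ and $4\B(-1,\frac12;\nu)^\times$. In the regular case we expect the special relation $\al+\bt=1$, giving $3\C'(\al,1-\al)$, together with the sporadic value $\al=\frac13$ (perhaps only in small characteristic), giving $3\QQ'(\frac13,\frac23)$. In each subcase we compute the multiplication table on the spanning set and identify the algebra. We then check, by constructing it, that each listed algebra is indeed non-symmetric and satisfies the fusion law.

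The main obstacle is the non-generic branch of the polynomial system. There the two even subalgebras may be of different types from the symmetric list, for instance one of Jordan type and the other $3\A$ or $4\B$. Ruling out all mixed pairs except the listed ones needs case-by-case resultant/Gröbner computations over $\Z[\al,\bt,\frac12]$, with care about small characteristics where extra components appear. I would attack this by first fixing the isomorphism type of $\lla a_0,a_2\rra$ from the symmetric classification. That determines $\lm_{a_0}(a_2)$ and the dimension, after which the remaining unknowns are few and the system becomes tractable.
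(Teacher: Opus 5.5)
\textbf{Gap 1: equal projections do not force a flip.} Equal projections $\lm_{a_0}(a_1)=\lm_{a_1}(a_0)$ and $\lm_{a_0}(a_2)=\lm_{a_1}(a_{-1})$, even with a finite spanning set, only make $\ker(\varphi|_{\tilde\F})$ invariant under the semilinear flip of the universal algebra. So they only show that $\V$ is a \emph{quotient} of a symmetric algebra. This is Proposition~\ref{reg}, and even that needs $(\al,\bt)\neq(2,\tfrac12)$ or extra data. The ideal you factor out need not be flip-invariant, so no flip need survive. The algebra $4\B(-1,\tfrac12;\nu)^\times$ with $\nu\neq\tfrac12$ is exactly such a case. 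It is $4\B(-1,\tfrac12)$ modulo the line $\F(\nu(a_0+a_2)+(1-\nu)(a_1+a_{-1})+c)$, which the flip does not preserve. Yet it has $\lm_{a_0}(a_1)=\lm_{a_1}(a_0)=\tfrac14$ and $\lm_{a_0}(a_2)=\lm_{a_1}(a_{-1})=-\tfrac12$. Closing this branch requires knowing all ideals of all symmetric algebras and of $\mathcal H$, $\hatH$, which is what Theorem~\ref{teoq} imports; your plan has no substitute. Your placement of the exceptions in Rehren's trichotomy is also wrong. $4\B(-1,\tfrac12;\nu)^\times$ has $\al=-2\bt$, so it lies in the regular case. $4\QQ(-1,-\tfrac12)^\times$ and $3\C^\prime(\tfrac23,\tfrac13)$ occur for $\al=2\bt$.

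\textbf{Gap 2: the finite window and the form of the relations.} The short spanning set exists only when $\al\notin\{2\bt,4\bt\}$; $\al=2\bt$ needs separate treatment, and for $\al=4\bt$ no bound of this kind holds in general. At $(\al,\bt)=(2,\tfrac12)$ with all four projections equal to $1$, the algebra may be the infinite-dimensional Highwater algebra $\mathcal H$ or its characteristic-$5$ cover $\hatH$. No resultant or Gr\"obner computation on a finite window can settle this; the paper needs the induction over all $s_{\bar r,j}$ in Theorem~\ref{HWthm}. Even in the regular case, the universal relations are not polynomial equations in the $\lambda$'s alone. They are vector identities such as $A(a_{-2}-a_2)=\tfrac{2}{\bt(\al-\bt)}B(a_{-1}-a_1)$ (Lemma~\ref{s1s2}). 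When the coefficients are nonzero these give subspace identities like $V_o^\ast=V_e^{\ast\ast}$. The argument then has to do three things your plan omits:
\begin{enumerate}
\item compare $V_e^\ast$, $V_e^{\ast\ast}$, $V_o^\ast$, $V_o^{\ast\ast}$ across all symmetric types;
\item split by the axet, regular $X(n+n)$ versus skew $X^\prime(3)$, where one of $\V_e,\V_o$ is $1\A$ and $3\C'$, $3\QQ'$ arise;
\item treat $V=V_e$ separately, via the idempotents and automorphisms of $V_e$ (Theorem~\ref{teonsa}).
\end{enumerate}
Your closing idea of first fixing the type of $\lla a_0,a_2\rra$ is the right organizing principle, but these reductions are what make it go through.
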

The above algebras, including the symmetric ones, will be defined and discussed in Chapter~\ref{known}. For the symmetric ones, we use the notation in \cite{survey}.  Note also that we use a prime in our notation of $3\QQ^\prime(\tfrac{1}{3},\tfrac{2}{3})$ and $3\C^\prime(\al,1-\al)$ since these algebras have a skew axet. Elsewhere these are called $\QQ_2(\tfrac{1}{3},\tfrac{2}{3})$ and $3\C(\al,1-\al)$, respectively.  All the different notation in the literature can be found in the tables in Section~\ref{thetables}.

\medskip

For more information and motivation on axial algebras and Majorana algebras, we refer to the survey papers~\cite{survey, IvInd, surveyI, Novosibirsk}. In particular, \cite{survey} is a good place to find out who first constructed the algebras mentioned in this paper.

We now state some consequences of the Main Theorem.

\begin{mcor}\label{mcor:FF}
All \textup{(}primitive\textup{)} $2$-generated axial algebras of Monster type admit a Frobenius form. 
\end{mcor}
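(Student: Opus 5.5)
By the Main Theorem and the Classification Theorem of the Symmetric Algebras, every primitive $2$-generated $\Mab$-axial algebra $(V,\{a_0,a_1\})$ is either a quotient of one of the symmetric algebras listed there (axial algebras of Jordan type, the families in~\ref{symmetric_2}, $\mathcal H$ and $\hatH$), or isomorphic to one of the five non-symmetric algebras of the Main Theorem. The plan is to exhibit a Frobenius form, i.e. a bilinear form $(\cdot,\cdot)$ with $(xy,z)=(x,yz)$, on each algebra in this finite list, and then pass to quotients.

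First I reduce quotients to covers. Let $W$ carry a Frobenius form and let $I$ be an ideal. The induced form on $W/I$ is well defined only if $I\subseteq W^\perp$, which need not hold. I therefore use the standard fact that every ideal of a primitive axial algebra with a Frobenius form satisfying $(a,a)\neq0$ on axes is contained in the radical $R(W)$. Indeed, a proper ideal contains no axis, and by the eigenspace decomposition with respect to each generating axis it lies in the sum of the non-$1$ eigenspaces of every axis. Those eigenspaces are orthogonal to the axes, so the ideal lies in the radical, and $R(W)=W^\perp$ in this situation. Hence the form descends to $W/I$. If some algebra in the list has degenerate behaviour on axes, i.e. $(a,a)=0$, I treat it by hand. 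Alternatively I use the fact that the zero form is trivially Frobenius. The real content of the corollary is presumably a form normalised by $(a,a)=1$, so I aim for that.

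Next I handle the covers family by family. For algebras of Jordan type, and for the families $3\A$, $4\A$, $4\B$, $4\J$, $4\Y$, $5\A$, $6\A$, $6\J$, $6\Y$, $\IY_3$, $\IY_5$, $\mathcal H$ and $\hatH$, Frobenius forms are already known from the literature cited in~\cite{survey}. I would quote these and check them against the explicit multiplication tables recorded in Chapter~\ref{known}. The remaining cases are the non-symmetric algebras $4\QQ(2\bt,\bt)$, $4\QQ(-1,-\tfrac12)^\times$, $3\C'(\al,1-\al)$, $3\QQ'(\tfrac13,\tfrac23)$ and $4\B(-1,\tfrac12;\nu)^\times$. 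For each, I use the basis given in Chapter~\ref{known} and the fact that a Frobenius form on an axial algebra generated by axes is determined by its values on pairs of axes, namely $(a,a)=1$ and $(a,b)=\lambda_a(b)$, the projection coefficient of $b$ onto $a$. I then verify associativity $(xy,z)=(x,yz)$ on basis triples.

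The main obstacle is the non-symmetric algebras. Here the generators $a_0$ and $a_1$ are not conjugate, so well-definedness needs symmetry of the form, which means checking $\lambda_{a_0}(a_1)=\lambda_{a_1}(a_0)$ and the analogous identities for all axes in the (skew) axet. I expect this to be the delicate step. I would first compute the projection coefficients $\lambda$ directly from the eigenvector decompositions. Then I would reduce the associativity check to generators using the known lemma that it suffices to verify the identity for $x$ an axis, with $y$ and $z$ running over a spanning set.
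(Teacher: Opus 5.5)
Your overall route matches the paper, whose proof is just the Main Theorem plus inspection of the tables in Section~\ref{thetables}: reduce to a finite list, check each algebra, and push forms down to quotients.

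Your quotient step is sound, and it needs no hypothesis on $(a,a)$. An ideal $I$ missing $a_0$ and $a_1$ decomposes along the eigenspaces of each axis, so it is $\Miy(\V)$-invariant. Hence it misses every axis $a\in X$ and lies in $\bigoplus_{\lambda\neq 1}V^a_\lambda$, which is orthogonal to $a$. Invariance, together with the fact that $X$ generates $W$, gives $I\subseteq W^\perp$, so the form descends and stays non-zero.

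The genuine gap is in the non-symmetric case, exactly at the step you call delicate: it fails. Putting $(a,a)=1$ and $(a,b)=\lambda_a(b)$ forces $\lambda_{a_0}(a_1)=\lambda_{a_1}(a_0)$, and this is false in four of the five algebras:
\begin{itemize}
\item in $4\QQ(2\bt,\bt)$ and $4\QQ(-1,-\tfrac12)^\times$, $\lambda_{a_0}(a_1)=\bt$ but $\lambda_{a_1}(a_0)=\tfrac{\bt}{2}$;
\item in $3\C^\prime(\eta,1-\eta)$ the two values are $\tfrac{1+\eta}{2}$ and $\tfrac{2-\eta}{2}$, equal only for the excluded $\eta=\tfrac12$;
\item in $3\QQ^\prime(\tfrac13,\tfrac23)$ they are $\tfrac23$ and $\tfrac{5}{12}$.
\end{itemize}
So the normalised form you take to be the real content of the corollary does not exist in general. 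The paper says right after the statement that axes may necessarily have different lengths, even $0$. Your fallback is not available either, since a Frobenius form is by definition non-zero.

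The missing idea is the correct relation. For any axis $a$ and any $x$, orthogonality of the eigenspaces of $\ad_a$ gives $(a,x)=\lambda_a(x)(a,a)$. Since Miyamoto involutions are isometries, $(a,a)$ is constant on each $\Miy(\V)$-orbit, but it may differ between the two orbits. Symmetry then only requires $\lambda_{a_0}(a_1)(a_0,a_0)=\lambda_{a_1}(a_0)(a_1,a_1)$. This forces:
\begin{itemize}
\item $(a_1,a_1)=2(a_0,a_0)$ in $4\QQ(2\bt,\bt)$;
\item the ratio $(a_0,a_0):(a_1,a_1)=(2-\eta):(1+\eta)$ in $3\C^\prime(\eta,1-\eta)$;
\item an isotropic axis in $3\C^\prime(2,-1)$, in $3\C^\prime(-1,2)$, and in $3\QQ^\prime(\tfrac13,\tfrac23)$ in characteristic $5$.
\end{itemize}
These are exactly the lengths recorded in Tables~\ref{tableQ2}, \ref{tableQ2bis} and \ref{table3Cskew}. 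With them you then verify invariance on basis triples, as the paper does. Of the five non-symmetric algebras, only $4\B(-1,\tfrac12;\nu)^\times$, where the two projections agree, admits the normalisation you proposed.
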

This supports the conjecture that every axial algebra of Monster type admits a Frobenius form (see~\cite[Conjecture 6.1]{survey}). This has been proven for every axial algebra of Jordan type in~\cite[Theorem~4.1]{HSS}, where in addition, it is shown that the Frobenius form may be chosen in such a way that all axes have length $1$. The same holds for symmetric $2$-generated axial algebras of Monster type. On the contrary, in the non-symmetric algebras axes may necessarily have different lengths, even $0$ in a couple of cases (see Table~\ref{table3Cskew}). 


The algebras in the Main Theorem can fail to be symmetric in three different ways.  Firstly, the axet may be skew (the two orbits of axes have different sizes): both $3\C'(\al,1-\al)$ and $3\QQ^\prime(\tfrac{1}{3},\tfrac{2}{3})$ have skew axet $X'(3)$.  The remaining algebras have a regular axet (either one orbit of axes, or two orbits of the same size).  Secondly, the projections of one generating axis onto the other can be different (and so the axes have different lengths with respect to the Frobenius form).  This occurs in $4\QQ(2\bt, \bt)$ and $4\QQ(-1, -\frac{1}{2})^\times$.  Finally, even if the axes have the same projection onto each other (and so length), we may fail to have an algebra automorphism, as in $4\B(-1, \tfrac{1}{2}; \nu)^\times$.


Looking at the axets, we have the following:

\begin{mcor}\label{mcor:bt<>1/2}
If $\bt \neq \frac{1}{2}$, then the axet \textup{(}set of axes\textup{)} of a \textup{(}primitive\textup{)} $2$-generated axial algebra of Monster type has size $1$, $2$, $3$, $4$, $5$, or $6$.
\end{mcor}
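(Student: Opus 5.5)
Corollary~\ref{mcor:bt<>1/2} follows from the Main Theorem together with the Classification Theorem of the Symmetric Algebras by inspection of the axets of every algebra appearing there. Since the axet of a quotient is the image of the axet of the cover, its size can only decrease. So it is enough to compute, for each algebra in the two lists, the size of the axet generated by $a_0,a_1$ under the Miyamoto involutions, and to check that this size lies in $\{1,\dots,6\}$ whenever $\bt\neq\tfrac12$.

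\textbf{Non-symmetric algebras.} Using the descriptions in Chapter~\ref{known}:
\begin{itemize}
\item $4\QQ(2\bt,\bt)$, $4\QQ(-1,-\tfrac12)^\times$ and $4\B(-1,\tfrac12;\nu)^\times$ have regular axets of size $4$;
\item $3\C'(\al,1-\al)$ and $3\QQ'(\tfrac13,\tfrac23)$ have the skew axet $X'(3)$, of size $3$.
\end{itemize}
The case $4\B(-1,\tfrac12;\nu)^\times$ has $\bt=\tfrac12$ and so is excluded anyway. For $3\C'(\al,1-\al)$ we have $\bt=1-\al\neq\tfrac12$ because $\al\neq\tfrac12$; in any case its axet has size $3$.

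\textbf{Symmetric algebras.} The families in part~\ref{symmetric_2}(a) have axets of size equal to the leading numeral in their names, so at most $6$: $3\A$ gives $3$; $4\A$, $4\B$, $4\J$ and the $4\Y$'s give $4$; $5\A$ gives $5$; $6\A$, $6\J$ and $6\Y$ give $6$.

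The families $\IY_3(\al,\tfrac12;\mu)$ and $\IY_5(\al,\tfrac12)$ have $\bt=\tfrac12$, as do the Highwater algebra $\cH$ and its cover $\hatH$, which have $(\al,\bt)=(2,\tfrac12)$ in characteristic~$5$ and infinite axet. All of these are excluded by the hypothesis.

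For the axial algebras of Jordan type $\al$ or $\bt$ (case~(a) of the Classification Theorem), I would invoke the known classification of $2$-generated algebras of Jordan type $\eta$. These are $1\A$, $2\B$, $3\C(\eta)$ and their quotients, with axet sizes $1$, $2$ and $3$, except that for $\eta=\tfrac12$ one gets the Spin factor algebras, whose axets can be arbitrary (even infinite). Viewed as Monster type algebras, these have Jordan parameter $\eta\in\{\al,\bt\}$.

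\textbf{Main obstacle.} The delicate step is the Jordan type $\al=\tfrac12$ with $\bt\neq\tfrac12$, where Spin factors could a priori produce large axets. I would resolve it by noting that a primitive $2$-generated Monster type algebra which is of Jordan type $\al$ has trivial $\bt$-eigenspace. In that case the Miyamoto involutions coming from the $\Z_2$-grading are trivial unless $\al$ itself is the odd part. That forces either the $\bt=\tfrac12$ situation or trivial Miyamoto involutions, so the axet is just $\{a_0,a_1\}$ of size at most $2$.

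I would need to check carefully how the paper defines the axet, as the closure under Miyamoto involutions for the Monster type grading. With that checked, all remaining cases have axet size in $\{1,2,3,4,5,6\}$, proving the corollary.
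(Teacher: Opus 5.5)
Your proposal is correct and follows the paper's own route: the paper proves this corollary from the Main Theorem and the Classification Theorem of the Symmetric Algebras by direct inspection of the axets recorded in the tables of Section~\ref{thetables}. Your treatment of the Jordan type $\al$ case is also right, and the paper already records it: for $\eta=\al$ all $\bt$-eigenspaces vanish, so $\Miy(\V)$ is trivial and the axet is $X(2)$ or smaller. The qualifier ``unless $\al$ itself is the odd part'' should be dropped, though, because for the law $\M(\al,\bt)$ the odd part is always $V_\bt$, so no case distinction is needed.
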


Thus, for $\bt\neq \tfrac{1}{2}$, we get an analogous result to the $6$-transposition property found in~\cite{Sakuma}.

\begin{mcor}\label{cor6trans}
Let $\bt\neq \tfrac{1}{2}$ and $\V$ be a \textup{(}primitive\textup{)} $\Mab$-axial algebra. Then $\Miy(\V)$ is a $6$-transposition group. 
\end{mcor}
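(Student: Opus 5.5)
The plan is to reduce the statement to the two-generated case via Corollary~\ref{mcor:bt<>1/2}, and then to read off orders of products of Miyamoto involutions from the axet sizes. Recall that $\Miy(\V)$ is generated by the Miyamoto involutions $\tau_a$, $a$ an axis of $\V$. Here $\tau_a$ acts as $-1$ on the $\bt$-eigenspace of $\ad_a$ and as $1$ on the $0$-, $1$- and $\al$-eigenspaces; the Monster type fusion law is $\Z_2$-graded, so $\tau_a$ is an automorphism. The Miyamoto involutions of the generating axes form a normal set of generators, because $\tau_a^g=\tau_{a^g}$ for every automorphism $g$ and $a^g$ is again an axis. Hence the $6$-transposition property amounts to a single claim: for any two axes $a,b$ of $\V$, the order of $\tau_a\tau_b$ in $\Miy(\V)$ is at most $6$. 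We may assume $\tau_a,\tau_b\neq 1$, since otherwise the claim is trivial.

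Fix axes $a,b$ and let $W$ be the subalgebra generated by $a$ and $b$. Then $(W,\{a,b\})$ is a primitive $2$-generated $\Mab$-axial algebra. Let $D=\langle\tau_a|_W,\tau_b|_W\rangle$, and let $X$ be the closure of $\{a,b\}$ under $D$, i.e.\ the axet of $W$. By Corollary~\ref{mcor:bt<>1/2}, since $\bt\neq\tfrac12$, we have $|X|\le 6$. The group $D$ is dihedral and acts on $X$. Its element $\rho=\tau_a\tau_b$ moves $a$ along the chain $a,\ a^{\rho},\ a^{\rho^2},\dots$, which lies in $X$, and similarly for $b$. The key point is that $\rho$ has order at most $6$ in its action on $X$: the orbits of $\langle\rho\rangle$ on $X$ have sizes at most $6$ with total at most $6$, so the order divides the lcm of orbit sizes, which must be checked to be at most $6$. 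I would argue this directly from the structure of the axets: for a regular axet of size $n$ the order is $n$ or $n/2$, and for the skew axet $X'(3)$ it is $3$. Alternatively one can inspect the explicit list in the Main Theorem together with Theorem~\ref{symmetric}. Since $W$ is generated by $X$, and elements of $D$ are automorphisms fixing $X$ pointwise exactly when they act trivially on $W$, the restriction $\rho|_W$ has order at most $6$.

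The main obstacle is passing from $W$ to $\V$: the order of $\tau_a\tau_b$ on all of $\V$ need not equal its order on $W$. I would handle this using the standard identity $\tau_{a^g}=g^{-1}\tau_a g$ for $g\in\Aut(\V)$. If $\rho^k$ fixes $a$ and $b$, then $\rho^k$ commutes with both $\tau_a$ and $\tau_b$, so it is central in $\langle\tau_a,\tau_b\rangle$. In the regular case with $|X|=n$, I would instead use the relation $\tau_{a^{\rho^{j}}}=\rho^{-j}\tau_a\rho^{j}$ together with the fact that $X$ is closed to show that $\tau_a\tau_b$ itself has order $n$ (or $n/2$). Concretely, $\tau_a$ and $\tau_b$ then each have $\langle\rho\rangle$-orbit of involutions of length dividing $|X|$, and the standard dihedral argument shows $\rho^{m}=1$ in $\Miy(\V)$ once $\tau_{a^{\rho^m}}=\tau_a$ and $\tau_{b^{\rho^m}}=\tau_b$ with the right parity. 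This gives order at most $6$. The same argument applies to the skew axet. Applying it to every pair of axes, and then to all conjugates, yields that $\Miy(\V)$ is a $6$-transposition group.
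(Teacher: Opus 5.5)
Your reduction is the paper's own argument. The paper takes the Miyamoto involutions of the axes as the normal generating set, applies Corollary~\ref{mcor:bt<>1/2} to the subalgebra generated by two axes, and says nothing more. You are right that this leaves something implicit: why a bound on the action of $\rho=\tau_a\tau_b$ on $X$ (equivalently on $W$) bounds the order of $\rho$ in $\Miy(\V)$. Three minor points:
\begin{itemize}
\item Take $\mathcal T=\{\tau_a : a\in\mathcal A^{\Miy(\V)}\}$; the involutions of the generating axes alone are not closed under conjugation.
\item On the skew axet $X'(3)$, $\rho$ acts with order $2$, not $3$.
\item The bound on $X$ needs no case check, since every permutation of at most six points has order at most $6$.
\end{itemize}

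Your treatment of the lift from $W$ to $\V$ is misstated, and as written it does not reach the bound $6$. Since $\tau_a$ inverts $\rho$, one has $\rho^{-j}\tau_a\rho^{j}=\rho^{-2j}\tau_a$. So $\tau_{a^{\rho^j}}=\tau_a$ is equivalent to $\rho^{2j}=1$, not to $\rho^j=1$. Compare $4\B$ and $6\A$: $\rho$ acts on their axets with order $2$ and $3$, while the corresponding products in the Monster have order $4$ and $6$.

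This weaker conclusion is enough in two cases:
\begin{itemize}
\item For a regular axet $X(n)$ with $n$ even, $\rho^{n/2}$ fixes $a$ and $b$, hence $\rho^n=1$.
\item For $X'(3)$, one generator is fixed by the other's involution, so $\tau_a$ and $\tau_b$ commute.
\end{itemize}
For $n$ odd, however, $\rho$ acts on $X$ with order $n$, and centrality of $\rho^n$ only gives $\rho^{2n}=1$. That allows order up to $10$ for two axes generating $5\A(\al,\tfrac{5\al-1}{8})$.

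The missing input, which ``with the right parity'' only gestures at, is that for $n$ odd both generators lie in a single orbit. With $a=a_0$ and $b=a_1$ one has $a_0=a_n=a_1^{\rho^{(n-1)/2}}$. Hence $\tau_0=\rho^{-(n-1)/2}\tau_1\rho^{(n-1)/2}=\rho^{1-n}\tau_1$, so $\rho=\tau_0\tau_1=\rho^{1-n}$ and $\rho^n=1$.

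With this added, the order of $\tau_a\tau_b$ in $\Miy(\V)$ divides $|X|\le 6$ in the regular case and is at most $2$ in the skew case. Your argument then becomes a complete proof that also fills in the step the paper's one-line proof skips.
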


Note that if $\al\neq \tfrac{1}{2}\neq \bt$ and the algebra is finite dimensional, then by \cite[Corollary 3.4]{Auto}, its Miyamoto group is finite.

By~\cite[Theorem 1.5]{FMS3}, every (primitive) $2$-generated $\M(\tfrac{1}{4},\tfrac{1}{32})$-axial algebra in characteristic zero is isomorphic to a Norton-Sakuma algebra. We extend this definition to any characteristic  by saying that the {\it Norton-Sakuma algebras}\index{Norton-Sakuma algebras!in any  characteristic} are the algebras  
\[1\A,\: 3\C(\tfrac{1}{4}), \: 3\A(\tfrac{1}{4}, \tfrac{1}{32}), \: 4\A(\tfrac{1}{4}, \tfrac{1}{32}), \: 5\A(\tfrac{1}{4}, \tfrac{1}{32}), \: 6\A(\tfrac{1}{4}, \tfrac{1}{32}), \: 2\B, \: 4\B(\tfrac{1}{4}, \tfrac{1}{32}), \: \mbox{ and } \: 3\C(\tfrac{1}{32}).\footnote{Note that the Norton-Sakuma algebra $2\A$ is the algebra $3\C(\tfrac{1}{4})$ over the reals.}
\]
 By the Main Theorem, we can determine the (primitive) $2$-generated $\M(\tfrac{1}{4},\tfrac{1}{32})$-algebras in positive characteristics (see Lemma~\ref{JM}). In particular we get 
\begin{mcor}\label{mcor:NS char}
Suppose $\ch(\F)\notin \{2,3,5,7,11,23,31\}$. Every \textup{(}primitive\textup{)} $2$-generated $\M(\tfrac{1}{4},\tfrac{1}{32})$-axial algebra over $\F$ is isomorphic to one of the Norton-Sakuma algebras.
\end{mcor}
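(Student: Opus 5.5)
Set $(\al,\bt)=(\tfrac14,\tfrac1{32})$ and intersect the list in the Main Theorem, together with the symmetric list of the Classification Theorem of the Symmetric Algebras, with this parameter pair. The plan is to check, family by family, for which characteristics the given parameter constraint can be met by $(\tfrac14,\tfrac1{32})$. Each constraint is a rational identity, so it holds only in the characteristics dividing a certain integer. Excluding those primes leaves exactly the generic algebras, which are the Norton-Sakuma ones. This is presumably the content of Lemma~\ref{JM}, and the corollary follows once the relevant primes are shown to lie in $\{2,3,5,7,11,23,31\}$.

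First, the parameters must be legitimate: we need $\al,\bt\notin\{0,1\}$ and $\al\ne\bt$. Now $\tfrac14=1$ forces $3=0$, $\tfrac1{32}=1$ forces $31=0$, and $\tfrac14=\tfrac1{32}$ forces $7=0$. So for $\ch\F\notin\{2,3,7,31\}$ the Main Theorem applies.

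Next, go through each family and record the prime at which its constraint holds.
\begin{enumerate}
\item Jordan type algebras, of type $\al$ or $\bt$, are quotients of $1\A$, $2\B$, $3\C(\tfrac14)$ and $3\C(\tfrac1{32})$, unless the type is $\tfrac12$. The value $\tfrac14$ equals $\tfrac12$ only when $2=0$, and $\tfrac1{32}=\tfrac12$ only when $15=0$, i.e.\ characteristic $3$ or $5$. Away from these, the Jordan type $2$-generated algebras are exactly $1\A$, $2\B$, $3\C(\eta)$; here one must use that their quotients are again of this form, or are $1\A$.
\item $3\A(\al,\bt)$ is a Norton-Sakuma algebra.
\item $4\A(\tfrac14,\bt)$ with $\bt=\tfrac1{32}$ is Norton-Sakuma.
\item $4\B(\al,\tfrac{\al^2}{2})$ requires $\tfrac1{32}=\tfrac1{32}$, so it is $4\B(\tfrac14,\tfrac1{32})$, which is Norton-Sakuma.
\item $4\J$, $6\J$ and $4\QQ(2\bt,\bt)$ need $\al=2\bt$, i.e.\ $\tfrac14=\tfrac1{16}$, so $3=0$.
\item $4\Y(\tfrac12,\bt)$ needs $2=0$ or $\tfrac14=\tfrac12$, which is impossible.
\item $4\Y(\al,\tfrac{1-\al^2}{2})$ needs $\tfrac1{32}=\tfrac{15}{32}$, so $7=0$.
\item $5\A(\al,\tfrac{5\al-1}{8})$ gives $\tfrac1{32}$, which is correct, so it is Norton-Sakuma.
\item $6\A$: $-\tfrac{\al^2}{4(2\al-1)}=-\tfrac{1/16}{-2}=\tfrac1{32}$, which is correct, so it is Norton-Sakuma.
\item $6\Y(\tfrac12,2)$, $\IY_3$, $\IY_5$, $4\B(-1,\tfrac12;\nu)^\times$ and $4\QQ(-1,-\tfrac12)^\times$ need $\bt=\tfrac12$, or $\al\in\{\tfrac12,-1\}$ with the corresponding $\bt$; this forces $15=0$ or $5=0$.
\item $3\C'(\al,1-\al)$ needs $\tfrac1{32}=\tfrac34$, so $23=0$.
\item $3\QQ'(\tfrac13,\tfrac23)$ needs $\tfrac14=\tfrac13$, which is impossible.
\item The Highwater algebra and its cover have $\al=2$, $\bt=\tfrac12$; so $\tfrac14=2$ gives $7=0$, and $\tfrac1{32}=\tfrac12$ again gives $15=0$.
\end{enumerate}
The prime $11$ does not arise from any of the identities above. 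It must come from degeneracies in which the Norton-Sakuma algebras themselves acquire non-trivial proper quotients or covers, for instance a vanishing structure constant or a degenerate Frobenius form in $5\A$, $6\A$ or $4\B$.

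The main obstacle is the quotient clause of the Classification Theorem of the Symmetric Algebras. For each Norton-Sakuma family, and for $3\A$, one must determine the characteristics in which the algebra has a non-trivial proper ideal not containing the axes; this is the radical of the Frobenius form. I would compute the Gram determinant of the Frobenius form on each family at $(\tfrac14,\tfrac1{32})$ and factor it. The claim to verify is that its prime divisors lie in $\{2,3,5,7,11,23,31\}$; $11$ presumably appears this way. With those primes excluded, every algebra in the list is simple modulo its axes, and the corollary follows.
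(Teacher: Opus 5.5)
Your plan is sound, and its skeleton is the paper's. Corollary~\ref{mcor:NS char} is deduced from Lemma~\ref{JM}, whose proof runs through the families of the Main Theorem and of the symmetric classification and records in which characteristics $(\tfrac14,\tfrac1{32})$ can satisfy each family's parameter constraint, much as your items do.

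The difference lies in the quotient clause. The paper computes nothing there. It uses the complete list of proper quotients recorded in Chapter~\ref{known}: $3\C(-1)^\times$, $\J(0)^\times$, $3\A(\al,\tfrac{1-3\al^2}{3\al-1})^\times$, $4\A(\tfrac14,\tfrac12)^\times$, $4\B(-1,\tfrac12)^\times$, the quotients of $6\A$, and the characteristic-$5$ quotient of $5\A$. It then checks their parameter constraints directly.

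You instead compute Gram determinants at $(\tfrac14,\tfrac1{32})$. An ideal avoiding $a_0,a_1$ decomposes into $\ad_{a_0}$- and $\ad_{a_1}$-eigenspaces, so it is $\Miy(\V)$-invariant and avoids all axes. It therefore lies in the radical of the Frobenius form, and a non-degenerate form rules out proper quotients. This route is more self-contained, and the claim you leave open is true: for $3\A$, $4\A$, $4\B$, $5\A$, $6\A$ and $3\C(\tfrac14)$ the determinants involve only the primes $2,3,5,7$. What you lose is the exact list of exceptional algebras that Lemma~\ref{JM} provides.

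One correction. The prime $11$ does come from one of your identities, namely $\tfrac{1}{32}=-1$. The parenthetical claim in your Jordan-type item is false, because $3\C(-1)$ has the proper quotient $3\C(-1)^\times$.
\begin{itemize}
\item In characteristic $11$ we have $3\C(\tfrac1{32})=3\C(-1)$.
\item Its Gram determinant $(1-\tfrac{\eta}{2})^2(1+\eta)$ equals $3^5\cdot 7^2\cdot 11/2^{17}$ at $\eta=\tfrac1{32}$.
\item $3\C(-1)^\times$ is exactly the non-Norton-Sakuma algebra in the characteristic-$11$ case of Lemma~\ref{JM}.
\end{itemize}
So $5\A$, $6\A$ and $4\B$ play no role in producing $11$.

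The corollary survives, since $11$ is excluded and your determinant check covers $3\C(\tfrac1{32})$. Your Jordan-type item should instead read: the proper quotients of $3\C(\eta)$ are $1\A$ (for $\eta=2$) and $3\C(-1)^\times$ (for $\eta=-1$), and neither value is attained by $\tfrac14$ or $\tfrac1{32}$ outside characteristics $3,5,7,11$.
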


This proves \cite[Conjecture 5.2]{survey} and explicitly gives $\ch(\F)>31$. Interestingly, since $\M(\tfrac{1}{4},\tfrac{1}{32})$ does not exist when $\ch(\F)\in \{2,3,7,31\}$, the bound $31$, is down to the fusion law being properly defined. Therefore any extra conditions on the algebra will not reduce the bound on the characteristic. 


\newpage
\section*{Overview}
In Chapter~\ref{ch1}, we give the basic definitions and properties of $2$-generated axial algebras, in particular that of the universal algebra $\mathfrak {V}$, and prove several general lemmas. 

In Chapter~\ref{known}, we describe all the $2$-generated $\Mab$-axial algebras and the properties of these algebras which are relevant for this paper, including the symmetric and non-symmetric algebras.

The proof of the Main Theorem is given in Chapters~\ref{H}-\ref{proof} and follows the scheme given in Table~\ref{MCS}, which we shall now describe.
\begin{table}[H]

\begin{picture}(256,230)(30,150)

\put (131, 351) {\fbox{\tiny $\V$}}
\put (131, 348) {\vector ( -2,-1) {21}}
\put (143, 348) {\vector ( 2,-1) {21}}
\put (80,330) {\fbox{\tiny$\al=2\bt$}}
\put(164, 330){\fbox{\tiny $\al\neq 2\bt$}}
\put (164, 326) {\vector ( -2,-1) {21}}
\put (193, 326) {\vector ( 2,-1) {21}}
\put(100 , 308){\fbox {\tiny not $\mathcal H$-type}}

\put(214, 308){\fbox { {\tiny$\mathcal H$-type} }}
\put (100, 304) {\vector ( -2,-1) {20}}
\put(9, 282){\fbox { \parbox{0.85in}{\tiny $\V$ is a quotient of a symmetric algebra}}}
\put(163, 283){\fbox {\tiny  \parbox{0.95in}{$\V$ is not a quotient of a symmetric algebra}}}
\put (143, 304) {\vector ( 2,-1) {20}}
\put (163, 275) {\vector ( -2,-1) {20}}
\put ( 255, 257) { \fbox {\tiny  $\V$ has skew axet}}
\put (238, 275) {\vector ( 2,-1) {20}}
\put (73, 257) { \fbox {\tiny $\V$ has regular axet}}
\put (76, 252) {\vector ( -2,-1) {20}}
\put(4,234){\fbox {\tiny  $V\in\{V_e, V_o\}$}}
\put (143, 252) {\vector (2,-1) {20}}
\put(163,234){\fbox {\tiny $V\not \in\{V_e, V_o\}$}}
\put (163, 229) {\vector ( -2,-1) {20}}
\put (111, 211) {\fbox{\tiny $|X|\geq 8$}}
\put (215, 229) {\vector ( 2,-1) {20}}
\put (235, 211) {\fbox{\tiny $|X|< 8$}}
\put (111, 206) {\vector ( -2,-1) {20}}
\put (143, 206) {\vector ( 2,-1) {20}}
\put (163,188) {\fbox{\tiny $\adim(\V_e), \adim(\V_o)< 4$}}
\put (38,188) {\fbox{\tiny $\adim(\V_e)\geq  4$}}

\end{picture}

\caption{The main case subdivision.}\label{MCS}
\end{table} 

Let $\F$ be a field of characteristic other than $2$, $\al$ and $\bt$ be distinct elements of $\F \setminus \{0,1\}$, and  $\V:=(V, \{a_0, a_1\})$ be a primitive $2$-generated $\mathcal{M}(\al,\bt)$-axial algebra over $\F$. 

For $k\in \{0,1\}$,  let $\tau_k$ be the Miyamoto involution associated to $a_k$ and let $\Miy(\V)$ be the Miyamoto group $\langle \tau_0, \tau_1\rangle$ of $V$ associated to the set $\{a_0, a_1\}$. Set $$\rho:=\tau_0\tau_1$$ and, for $i\in \Z$, let
\[
a_{2i}:=a_0^{\rho^i} \quad \mbox{and}\quad a_{2i+1}:=a_1^{\rho^i}.
\]
Since $\rho$ is an automorphism of $\V$, for every $j\in \Z$,  $a_j$ is an axis.
Moreover, as  $\tau_1$ (respectively $\tau_0$) swaps $a_0$ and $a_2$ (respectively $a_{-1}$ and $a_1$), the subalgebras 
 \[
     \V_e:=(V_e, \{a_0, a_2\})\quad  \mbox{and}\quad \V_o:=(V_o, \{a_{-1}, a_1\})
 \]
 generated by $a_0$ and $a_2$, and, respectively, by $a_{-1}$ and $a_1$, are $2$-generated primitive symmetric axial algebras of Monster type $(\al, \bt)$. We call $\V_e$ and $\V_o$ the \index{subalgebra!even}{\em even}~\index{even subalgebra} and the \index{subalgebra!odd}{\em odd}\index{odd subalgebra} {\em subalgebra} of $\V$, respectively. 
 
 For each end node of the tree given in Table~\ref{MCS}, let $\V$ be an algebra satisfying the conditions stated in that node. The Classification Theorem of the Symmetric Algebras gives us a list of possibilities for the pair $(\V_e, \V_o)$, which  is further restricted by a result of M\textsuperscript{c}Inroy and Shpectorov (see Theorem~\ref{axetthm} and Corollary~\ref{axetcor}). The proof of the Main Theorem is then accomplished by a detailed analysis of each possible configuration of $(\V_e, \V_o)$. Key results for this analysis are certain relations  between $\V_e$ and $\V_o$ which are obtained from the construction of the universal algebra (see Sections~\ref{universal}, \ref{relVeVo}, and \ref{evaluation}).

According to Table~\ref{MCS}, the first case subdivision is whether $\al=2\bt$ or $\al\neq 2\bt$. The former case  has been accomplished in~\cite{FMS2} and gives the following:

\begin{teo} \cite[Theorem 1.1]{FMS2}
\label{al=2bt}
Assume $\al=2\bt$. Then $\V$ is either symmetric or isomorphic to one of the following algebras:
\begin{enumerate}
    \item $4\QQ(2\bt, \bt)$;
     \item $4\QQ(-1, -\tfrac{1}{2})^\times$;
    \item $3\C^\prime(\tfrac{2}{3},\tfrac{1}{3})$.
\end{enumerate}
\end{teo}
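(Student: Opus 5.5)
This theorem is quoted from \cite[Theorem 1.1]{FMS2}. Within the present paper it needs no new argument, and I would simply invoke that reference. If I had to reconstruct the proof, I would follow the scheme of Table~\ref{MCS} specialised to $\al=2\bt$. The reason this case splits off is Rehren's trichotomy: when $\al=2\bt$, the linear systems coming from the universal algebra $\mathfrak V$ degenerate. This is why one must work with the Jordan-type-like families $4\J(2\bt,\bt)$, $6\J(2\bt,\bt)$ and the quotients $4\QQ$, rather than with the general bound $\dim V\le 8$.

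\textbf{Step 1: restrict the pair $(\V_e,\V_o)$.} The subalgebras $\V_e$ and $\V_o$ are symmetric $2$-generated $\M(2\bt,\bt)$-axial algebras. The Classification Theorem of the Symmetric Algebras, specialised to $\al=2\bt$, therefore limits each of them to a small list:
\begin{itemize}
\item quotients of $4\J(2\bt,\bt)$ or $6\J(2\bt,\bt)$;
\item Jordan type algebras;
\item finitely many special-parameter algebras, such as $3\A(2\bt,\bt)$ at special $\bt$, or $4\B$ with $\bt=\frac{1}{2}$, $\al=-1$, where $2\bt=\al$ forces $\al^2/2=\bt$;
\item the Highwater quotients, which only arise for $\al=2$, $\bt=1$ type parameters and are excluded here.
\end{itemize}

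\textbf{Step 2: restrict the axet.} I would apply Theorem~\ref{axetthm} and Corollary~\ref{axetcor} to cut the list further. The axet is determined by the orders of $\rho$ on $a_0$ and $a_1$, and these must be compatible with the axets of $\V_e$ and $\V_o$. This splits the analysis into two cases:
\begin{itemize}
\item regular axets, where $|a_0^{\Miy}|=|a_1^{\Miy}|$;
\item skew axets, where the orbits have sizes $1$ and $2$, giving $X'(3)$.
\end{itemize}

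\textbf{Step 3: expand products in $\mathfrak V$.} In each configuration I would write $V$ as spanned by the $a_i$ and the elements $s_{r,j}$ of $\mathfrak V$. I would then expand $a_0a_1$ into eigenvectors of $a_0$, and of $a_1$, and use the fusion law to compare the two expansions. The relations between $\V_e$ and $\V_o$ should force one of two outcomes:
\begin{itemize}
\item the structure constants are symmetric under $0\leftrightarrow 1$, and then $\V$ is symmetric by the universal property;
\item $\V$ collapses to one of the three listed exceptions.
\end{itemize}
In the second outcome, the lengths $\lambda_1(a_0,a_1)\ne\lambda_1(a_1,a_0)$ give $4\QQ(2\bt,\bt)$ and $4\QQ(-1,-\frac12)^\times$, while the skew axet gives $3\C'(\frac23,\frac13)$.

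\textbf{Main obstacle.} The hardest step is the regular-axet case in which $\V_e$ and $\V_o$ are both (quotients of) $4\J$- or $6\J$-type with a free parameter. There, symmetry does not follow automatically and the parameters $\lambda_1,\lambda_1^f$ must be pinned down. I would first try to compute the eigenvalue conditions from $a_0(a_1a_2)$ and $a_1(a_0a_{-1})$ via the evaluation map. From these I would derive polynomial equations in $\lambda_1,\lambda_1^f,\bt$, and then resolve them by resultants, case by case on the characteristic.
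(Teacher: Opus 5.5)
Your plan matches the paper: the statement is quoted from \cite[Theorem 1.1]{FMS2} with no argument of the paper's own, so invoking that reference is all that is required. The reconstruction sketch is unnecessary, and if you keep it, correct two slips: $4\B(\al,\tfrac{\al^2}{2})$ with $\al=2\bt$ forces $\al=\al^2$, i.e.\ $\al\in\{0,1\}$, so no $4\B$ algebra occurs (and $(\al,\bt)=(-1,\tfrac12)$ does not satisfy $\al=2\bt$); likewise the Highwater algebra $\mathcal H$ has type $(2,\tfrac12)$, i.e.\ $\al=4\bt$, not $(2,1)$, and $\bt=1$ is excluded anyway.
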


So we may now assume that $\al \neq 2\bt$.  As $\V$ is primitive, for every $j\in \Z$, there is a linear function $\lambda_{a_j}\colon V\to \F$  such that every $x\in V$ can be written in a unique way as 
$$
x=\lambda_{a_j}(x)a_j+x_{0,j}+x_{\al,j}+x_{\bt,j},
$$
where $x_{0,j}$, $x_{\al, j}$, $x_{\bt,j}$ are $0$-, $\al$-, and $\bt$-eigenvectors for $\ad_{a_j}$, respectively. For $i\in \Z_{\geq 1}$, set 
\begin{equation*}\label{deflmlmf1}
    \lm_i:=\lambda_{a_0}(a_i) \quad  \mbox{and}\quad  \lm_i^f:=\lambda_{a_1}(a_{1-i}).
\end{equation*}
We say that $\V$ is of \emph{$\mathcal H$-type}\index{H-type@$\mathcal H$-type} if 
$$(\al, \bt)=(2,\tfrac{1}{2})\quad  \mbox{and}\quad \{\lmu, \lmf, \lmd, \lmdf\}=\{1\}
.$$ 
In Chapter~\ref{H}, extending the methods used in~\cite{HW, Yabe}, we prove 

\begin{teo}\label{HWthm}
Assume  $\V$ is of $\mathcal H$-type. Then, either $\V$ is isomorphic to a quotient of $\mathcal H$, or $\F$ has characteristic $5$ and $V$ is isomorphic to a quotient of $\hatH$.    
\end{teo}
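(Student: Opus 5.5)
Under the $\mathcal H$-type hypothesis, $(\al,\bt)=(2,\tfrac12)$ and $\lmu=\lmf=\lmd=\lmdf=1$. The plan is to show that the structure constants of $\V$ coincide with those of the Highwater algebra, or of its characteristic $5$ cover. We work in the universal $2$-generated $\M(2,\tfrac12)$-axial algebra $\mathfrak V$ of Section~\ref{universal}. Since $\lmu=\lmf$ and $\lmd=\lmdf$, the parameters determining the products $a_0a_1$, $a_0a_2$, $a_1a_{-1}$ are symmetric. We first show that the projections $\lambda_{a_0}(a_i)$ and $\lambda_{a_1}(a_{1-i})$ agree for all $i$. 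Then the map $a_i\mapsto a_{1-i}$ extends to an automorphism of the universal object specialised at these parameters, so $\V$ is a quotient of a symmetric algebra, and the Classification Theorem of the Symmetric Algebras applies. Among the algebras listed there, only $\mathcal H$ and $\hatH$ (in characteristic $5$) have all these $\lambda$'s equal to $1$ with $(\al,\bt)=(2,\tfrac12)$. For the Jordan type and finite-dimensional families, the values of $\lmu,\lmd$ are explicit and can be checked directly.

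The concrete steps, following~\cite{HW,Yabe}, are as follows. First, we introduce the elements $\sigma_{i}:=a_0a_i - \bt(a_0+a_i)$ (suitably normalised). Using the fusion law at $a_0$, with $\al=2$, $\bt=\tfrac12$, we decompose $a_i$ into eigenvectors and obtain the recursive expressions for $a_0\sigma_i$ and $\sigma_i\sigma_j$ in terms of the $a_k$ and $\sigma_k$. We do the same relative to $a_1$. Second, we use the resolution of the products $a_0(a_1 a_{-1})$ and related products, via the relations between $\V_e$ and $\V_o$ from Section~\ref{relVeVo}, to derive recurrences for $\lm_i$ and $\lm_i^f$. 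With initial values $\lmu=\lmd=\lmf=\lmdf=1$, induction on $i$ should force $\lm_i=\lm_i^f=1$ for all $i$. It should also force the $\mathcal H$-relations: $\sigma_i$ depends only on $i$ modulo the flip, and $a_i-a_j$ lies in the expected ideal-like span.

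Third, having matched all multiplication rules, we exhibit a surjection from $\mathcal H$ (or $\hatH$) onto $V$ sending $a_i\mapsto a_i$ and $\sigma_i\mapsto\sigma_i$.

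The main obstacle will be the induction in the second step. The recurrences obtained from the fusion law have coefficients that may vanish in small characteristic. In characteristic $5$ one of the constraints linking $\sigma$'s degenerates, which is exactly why $\hatH$ appears. I would try first to derive the identity expressing $\sigma_{i+1}$ in terms of lower $\sigma$'s, as in~\cite{HW}, tracking the scalar denominators. Where a denominator equals $5$, I would analyse separately and show that one obtains at most the extra freedom of the cover $\hatH$.
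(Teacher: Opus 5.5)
\paragraph{A genuine gap.} The overall shape of your outline is right: eigenvector decompositions at $a_0$ and $a_1$, induction on the level, and a final surjection from $\mathcal H$ or $\hatH$. What is missing is the step that drives the induction. For $h+k=i+1$, the fusion-law computations you describe give only $\lambda_{a_0}(s_{\tih,i+1})=\tfrac12(\lm_{i+1}-1)$, together with formulas for $a_0s_{\pm\tih,i+1}$ and $s_{\tz,h}s_{\tz,k}$ in which $\lm_{i+1}$ is still a free parameter (Steps 2--4 in the proof of Proposition~\ref{exclaim}). Nothing there forces $\lm_{i+1}=1$.

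The source you name for the recurrences, the relations of Section~\ref{relVeVo}, is vacuous here. Since $\al=4\bt$ we have $P=0$, and at $\lmu=\lmf=\lmd=\lmdf=1$ one gets $Q=Q^f=C=C^f=R=R^f=0$. So \eqref{equa1}, \eqref{equa2}, \eqref{eqs2-4bt} and \eqref{eqs3-4bt} all read $0=0$, and \eqref{equa3} yields only $s_{\tz,2}=s_{\tu,2}$. This collapse is exactly why $\mathcal H$ is infinite-dimensional. As written, ``induction on $i$ should force $\lm_i=\lm_i^f=1$'' has no mechanism behind it.

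\paragraph{What the paper uses instead.} The paper needs two ideas.

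\emph{Even levels.} Use that $\V_e$ and $\V_o$ are symmetric with $\lambda_{a_0}(a_2)=\lmd=1$ and $\lambda_{a_1}(a_{-1})=\lmdf=1$. By Lemma~\ref{lambdasut} (the symmetric classification applied to these subalgebras, not to $\V$), they are quotients of $\mathcal H$ or $\hatH$. This gives $\lm_{2k}=\lm_{2k}^f=1$ together with all even-level relations and products (Lemma~\ref{evenlevel}).

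\emph{Odd levels.} For an odd level $i+1$, apply $\rho^{(i+2)/2}$, which sends $a_0\mapsto a_{i+2}$ and $s_{\tr,i+1}\mapsto s_{\tr+\tu,i+1}$, to the formula for $s_{\tz,h}s_{\tz,k}$. Compare the result with the formula for $s_{\tu,h}s_{\tu,k}$ obtained at $a_1$. The $s$-terms cancel, leaving $(1-\lm_{i+1})a_{i+2}=(1-\lm_{i+1}^f)a_1$. Since $a_1$ and $a_{i+2}$ are idempotents, this forces $\lm_{i+1}=\lm_{i+1}^f=1$.

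Both arguments use, at all lower levels, the relations among the $s_{\tr,j}$ and the products $a_ls_{\tr,j}$. So $\lm_{i+1}$ cannot be handled by a separate recurrence: the induction must carry the whole package of Proposition~\ref{exclaim}, plus Lemma~\ref{prodss3} in characteristic $5$.

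\paragraph{Your first-paragraph shortcut.} The claim that $\lm_i=\lm_i^f$ for all $i$ makes $\V$ a quotient of a symmetric algebra would itself need proof in this infinite-dimensional setting. The paper invokes \cite[Corollary~5.10]{FMS3} only when $V$ has an explicit finite spanning set (Proposition~\ref{reg}). In any case, the shortcut does not remove the need for the induction above.
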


The quotients of $\mathcal H$ and $\hatH$ have been determined in~\cite{HWQ} and are all symmetric.
The ideals and the quotients of the twelve families of primitive $2$-generated symmetric axial algebras of Monster type $(\al, \bt)$ as in case \ref{symmetric_2} of the classification theorem of the symmetric algebras above have been determined in~\cite{MM}. By those results and Theorem~\ref{HWthm} and we get:

\begin{teo}\label{teoq}
If $\V$ is isomorphic to a quotient of a symmetric algebra, then $\V$ is either symmetric or isomorphic to $4\B(-1,\tfrac{1}{2}; \nu)^\times$.
\end{teo}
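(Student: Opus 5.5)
We argue by going through the list of symmetric algebras in the Classification Theorem of the Symmetric Algebras and determining, for each of them, which of their quotients (possibly with a different choice of generating pair) can carry a non-symmetric $2$-generated structure. So suppose $\V=(V,\{a_0,a_1\})$ is isomorphic to a quotient $W/I$ of a symmetric algebra $(W,\{b_0,b_1\})$, where $b_0,b_1$ map to $a_0,a_1$. By the Classification Theorem, $W$ is one of three kinds: an algebra of Jordan type $\al$ or $\bt$; one of the twelve families in case~\ref{symmetric_2}; or $\mathcal H$, or $\hatH$ in characteristic $5$. In the $\mathcal H$/$\hatH$ case (which by Theorem~\ref{HWthm} covers every algebra of $\mathcal H$-type), the quotients were determined in~\cite{HWQ} and all of them are symmetric. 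For the Jordan type case we use that every quotient of a $2$-generated algebra of Jordan type is again of Jordan type. Since $2$-generated Jordan type algebras are known to be $1\A$, $2\B$, $3\C(\eta)$, or quotients of the Matsuo-type/spin examples, and all of these admit the flip, their quotients are symmetric.

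The main step is the case where $W$ lies in one of the twelve families of case~\ref{symmetric_2}. Here we invoke~\cite{MM}, which lists all ideals $I$ of these algebras together with the parameters (and characteristics) for which each ideal exists. For each such ideal we must check whether the flip $f$ of $W$, which swaps $b_0$ and $b_1$, preserves $I$. If $I^f=I$, then $f$ induces an automorphism of $W/I$ swapping $a_0$ and $a_1$, so $\V$ is symmetric. We also need that the images of $b_0,b_1$ remain primitive axes of the right type, which again is part of~\cite{MM}. Thus the whole question reduces to identifying the ideals that are \emph{not} $f$-invariant.

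We expect the non-invariant ideals to appear only in degenerate parameter situations. For example, when the radical decomposes as a sum of two minimal ideals that $f$ interchanges, one can quotient by just one of them. Scanning the lists, we anticipate exactly one such occurrence: in $4\B(\al,\tfrac{\al^2}{2})$ at $(\al,\bt)=(-1,\tfrac12)$, where a one-parameter family of ideals $I_\nu$ with $I_\nu^f\neq I_\nu$ yields $4\B(-1,\tfrac12;\nu)^\times$.

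The hardest part is proving that this quotient really is not symmetric, and, in the other families, that no non-invariant ideal survives. The second point is a careful but routine reading of~\cite{MM}. For the first, it is not enough to note that $f$ fails to preserve $I_\nu$: some \emph{other} automorphism swapping $a_0$ and $a_1$ might exist. We plan to rule this out by comparing invariants attached to $a_0$ and $a_1$, namely the values $\lm_i$ and $\lm_i^f$ and the structure of the even and odd subalgebras $\V_e$ and $\V_o$. Since a swapping automorphism would interchange these data, any asymmetry between them excludes it. If all these invariants coincide, we instead compute directly the automorphisms of the small quotient algebra and check that none swaps the two generating axes.
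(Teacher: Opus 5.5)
Your proposal is correct and follows essentially the same route as the paper, which simply combines the classification of the symmetric algebras with \cite{HWQ}, where all quotients of $\mathcal H$ and $\hatH$ are shown to be symmetric, and with \cite{MM}, where the ideals and quotients of the twelve families are determined and the only non-symmetric quotients turn out to be the $4\B(-1,\tfrac{1}{2};\nu)^\times$. The step you call the hardest, proving that $4\B(-1,\tfrac{1}{2};\nu)^\times$ is genuinely non-symmetric, is not needed: the statement is a disjunction, and in any case that fact is already recorded in \cite{MM}.
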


 Let 
\begin{equation}\label{defX}
    X:= \{a_i : i\in \Z\}=a_0^{\Miy(\V)}\cup a_1^{\Miy(\V)}.
\end{equation}
The action of $\Miy(\V)$ on the set $X$ determines an axet $(\Miy(\V), X, \tau)$ (see~\cite{axet}) which we call {\em the axet of $\V$}\index{axet of $\V$}. 
 We say that $\V$ has \index{axet!regular}{\em regular axet}\index{regular axet}
 if the orbits of the two generating axes $a_0$ and $a_1$ under the Miyamoto group $\Miy(\V)$ have the same length, otherwise we say that $\V$ has \index{axet!skew}{\em skew axet}\index{skew axet}. The case when $\V$ has skew axet has been classified in~\cite{Turner1, Turner2} and gives the following:

\begin{teo} \cite[Theorem 1.3]{Turner2}
\label{skew}
Assume $\V$ has skew axet. Then $\V$ is isomorphic to one of the following algebras:
\begin{enumerate}
    \item  $3\C^\prime(\al,1-\al)$, for $\al \neq \tfrac{1}{2}$;
    \item  $3\QQ^\prime(\tfrac{1}{3},\tfrac{2}{3})$.
\end{enumerate}
\end{teo}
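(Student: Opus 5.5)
The statement is quoted from \cite[Theorem 1.3]{Turner2}. Within the present paper it is used as an input, so a proof here means reconstructing the argument of \cite{Turner1, Turner2} in the notation set up above. The plan is to exploit the asymmetry of the orbit sizes to pin down the axet, and then use the universal algebra $\mathfrak V$ to force the structure constants.

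First I would classify the possible skew axets. By the theory of axets \cite{axet}, the axet $X$ is a quotient of the infinite dihedral axet on $\{a_i : i\in\Z\}$, with $\rho=\tau_0\tau_1$ acting by $a_i\mapsto a_{i+2}$. Skewness means $|a_0^{\Miy(\V)}|\neq |a_1^{\Miy(\V)}|$. This happens exactly when $X$ has odd size $n$ and the identifications $a_i=a_{i+n}$ merge the two parity classes in an asymmetric way. The axet $X'(n)$ then has orbits of sizes $n$ and $(n\pm1)/2$, or a similar pattern. In such an axet some axis $a_j$ coincides with an axis of the other parity. So $\tau_{a_j}$ lies in both orbits of Miyamoto involutions, which forces relations such as $\tau_1 = \tau_0^{\rho^k}$ for some $k$.

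Next I would feed this into the symmetric classification. Both $\V_e$ and $\V_o$ are symmetric, so each is (a quotient of) an algebra in the Classification Theorem of the Symmetric Algebras. The skew identification means the even and odd subalgebras share axes. Combined with Theorem~\ref{axetthm} and Corollary~\ref{axetcor} of M\textsuperscript{c}Inroy and Shpectorov, this bounds the axet size: $\V_e$ and $\V_o$ must have axets of sizes compatible with a single odd-size skew axet. Running through the list, I expect only $|X|=3$, that is $X'(3)$, to survive. Larger skew axets would require one of $\V_e,\V_o$ to have an axet size (for instance from $4\B$, $5\A$, $6\A$, or $\mathcal H$) that is incompatible with the other under the shared-axis relation.

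With $X=X'(3)=\{a_0,a_1,a_2\}$ and $a_1$ fixed by $\rho$ in the appropriate sense, the algebra is spanned by the three axes and a small number of products. I would write down the general multiplication using the universal algebra relations of Sections~\ref{universal}, \ref{relVeVo}, and \ref{evaluation}, with unknowns $\lmu,\lmf,\lmd,\lmdf$ and $\al,\bt$. The fusion law then gives polynomial equations. Solving them should yield the family $\bt=1-\al$, giving $3\C'(\al,1-\al)$ (the case $\al=\tfrac12$ is excluded, since there it is Jordan and collapses), together with an isolated solution $(\al,\bt)=(\tfrac13,\tfrac23)$ that has an extra dimension, giving $3\QQ'(\tfrac13,\tfrac23)$. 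The case $\al=2\bt$ is already handled by Theorem~\ref{al=2bt}, consistent with $3\C'(\tfrac23,\tfrac13)$.

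The main obstacle is the first reduction: proving that no skew axet of size greater than $3$ can occur. This is hard because large or infinite skew axets cannot be excluded by dimension alone. I would first try to show that a shared axis gives $\tau_0=\tau_1^{g}$, so that $\V_e$ and $\V_o$ are isomorphic via conjugation. Then $\lmu=\lmf$-type equalities hold and $\V$ is nearly symmetric. Comparing the resulting Gram-type data in the universal algebra should then force the axet to collapse.
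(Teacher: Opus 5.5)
The paper does not prove this statement; it imports it from \cite{Turner1, Turner2}. Your reconstruction, however, has a genuine gap: it starts from a false picture of skew axets. By Theorem~\ref{axetthm} and the construction just before it, the skew $2$-generated axets are exactly $X'(3k)$, $k\geq 1$. One takes the $4k$-gon and identifies opposite \emph{odd} vertices, so the two orbits have sizes $2k$ and $k$ (ratio exactly $2$, cf.\ Corollary~\ref{axetcor}), and $|X|=3k$ need not be odd. An odd axet $X(n)$ has a single orbit and is regular.

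In $\V$, label so that $a_1$ lies in the small orbit. Then the odd axes satisfy $a_1=a_{1+2k}$, while the even axes have period $4k$. So $\rho^k$ fixes $a_1$ but sends $a_0$ to $a_{2k}\neq a_0$. The orbits $a_0^{\Miy(\V)}$ and $a_1^{\Miy(\V)}$ are disjoint: if an even axis equalled an odd one, the two orbits would coincide and the axet would be regular. Consequently:
\begin{itemize}
\item there is no shared axis;
\item there is no relation $\tau_1=\tau_0^{\rho^k}$ (in fact $\tau_0^{\rho^k}=\tau_{a_{2k}}=\tau_0$);
\item $\V_e$ and $\V_o$ cannot be isomorphic, since they have axets $X(2k)$ and $X(k)$.
\end{itemize}
Your fallback of proving $\lmu=\lmf$-type identities and showing $\V$ is ``nearly symmetric'' also points the wrong way. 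Skewness is an obstruction to symmetry. Already for $X'(3)$, the relation $a_1=a_{-1}$ forces $\tau_0=1$ on $V$ (as in the proof of Lemma~\ref{lhyp1}), so $a_0$ is a $\mathcal J(\al)$-axis while $a_1$ is not. Correspondingly, the target algebras have $\lmu\neq\lmf$: the footnote in Section~\ref{evaluation} gives $\lmu=1-\tfrac{\al}{2}\neq\tfrac12+\tfrac{\al}{2}=\lmf$ for the skew algebra on the support of $3\C(\al)$.

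The substantive step, excluding $k\geq 2$, is left with no mechanism, and the one you sketch relies on the nonexistent shared axis. Axet-size bookkeeping cannot do it. Infinite $X$ is excluded for free by Corollary~\ref{axetcor}, but the pairing ``$\V_e$ with axet $X(2k)$, $\V_o$ with axet $X(k)$'' is compatible with the symmetric classification for small $k$ (e.g.\ $X(4)$ with $X(2)$, or $X(6)$ with $X(3)$). Moreover, for $\bt=\tfrac12$ the algebras $\J(\delta)$, $\IY_3$, $\IY_5$ and quotients of $\hatH$ have axets whose size depends on $\ch(\F)$ (e.g.\ $I\mathcal H_3$ has axet $X(2p)$ in characteristic $p$). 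So arbitrarily large finite $k$ must be handled a priori.

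What is needed are genuine algebraic relations exploiting $a_1=a_{1+2k}$ and $a_0\neq a_{2k}$. For instance, when $k$ is odd the subalgebra $\lla a_0,a_k\rra$ has axet $\{a_0,a_{2k},a_k\}\cong X'(3)$, so the base case constrains the general one. Finally, even the $X'(3)$ case is only asserted. You still have to solve the fusion-law constraints with $\tau_0=1$ and $a_1=a_{-1}$ to obtain $\bt=1-\al$ or $(\al,\bt)=(\tfrac13,\tfrac23)$, and then identify the algebras. That computation is the actual content of the cited classification.
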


Note that, in~\cite{Turner2}, the algebra $3\QQ^\prime(\tfrac{1}{3},\tfrac{2}{3})$ is denoted by $\QQ_2(\tfrac{1}{3},\tfrac{2}{3})$ and is defined only if $\ch(
\F)\neq 5$, while the algebra $\QQ_2(\tfrac{1}{3})^\times \oplus \langle \mathbbm 1 \rangle $ is constructed when $\ch(\F)=5$. Since the two algebras have the same multiplication table, we use the symbol $3\QQ^\prime(\tfrac{1}{3},\tfrac{2}{3})$ for both of them (see Section~\ref{sec:ns}).

In Chapter \ref{chaplast}, we consider the case where $V=V_e$ (respectively $V=V_o$). Then, the symmetric subalgebra  $\V_e$ contains also $a_1$ (respectively $a_0$), which is still an axis of Monster type $(\al,\bt)$. In these cases, we prove the following:

\begin{teo} 
\label{teonsa}
Suppose that $\al\neq 2\bt$, $V\in \{V_e, V_o\}$, and $\V$ has a regular axet. Then $\V$ is symmetric. 
\end{teo}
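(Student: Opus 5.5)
By symmetry of the roles of $a_0$ and $a_1$ (replace $\rho$ by $\rho^{-1}$ and relabel), I will assume $V=V_e$. Then $V$ is generated by $a_0$ and $a_2$, so $(V_e,\{a_0,a_2\})$ is a symmetric $2$-generated $\Mab$-axial algebra. By the Classification Theorem of the Symmetric Algebras, $V_e$ is a quotient of one of the listed algebras. Since $\al\neq 2\bt$, the families $4\J(2\bt,\bt)$ and $6\J(2\bt,\bt)$ do not occur. The Jordan type $\al$ or $\bt$ case and the Highwater case are handled by Theorem~\ref{HWthm} and by the explicit description of the quotients in \cite{MM, HWQ}. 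The plan is to go through this finite list of possibilities for $V_e$. In each case I will locate all the additional idempotents $a_1\in V_e$ that are primitive axes of Monster type $(\al,\bt)$, and then show that the pair $(a_0,a_1)$ admits a flip.

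\textbf{Using the axet.} Since the axet is regular, $|a_0^{\Miy(\V)}|=|a_1^{\Miy(\V)}|$. Moreover, $X=\{a_i\}$ lies in $V_e$ and is stable under $\tau_0$ and $\tau_1$, so $\tau_1$ restricts to an automorphism of $V_e$ interchanging $a_0$ and $a_2$. Theorem~\ref{axetthm} and Corollary~\ref{axetcor} then bound the possible axets. In particular, the even axes $a_{2i}$ already form the axet of $V_e$, and the odd axes form a set of the same size inside $V_e$. This restricts the possibilities to cases where the automorphism group of $V_e$ has room for a second orbit of axes. I will take Chapter~\ref{known} as giving, for each algebra in the list, its full automorphism group and its idempotents (or at least enough of them). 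It then suffices to check which idempotents $e$ have Monster type fusion law with eigenvalues $\{1,0,\al,\bt\}$, satisfy $a_0a_1$-relations compatible with the universal algebra relations of Sections~\ref{universal}, \ref{relVeVo} and \ref{evaluation}, and have $\lm_1$, $\lmf$ forced to be equal.

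\textbf{Deriving symmetry.} Once $a_1$ is pinned down, I will show $\lm_1=\lmf$ and $\lm_2=\lmdf$. The aim is to use these equalities to show that the evaluation maps from the universal algebra give isomorphic presentations for $(a_0,a_1)$ and $(a_1,a_0)$. An alternative is to produce directly an element of $\Aut(V_e)$ swapping $a_0$ and $a_1$. A natural candidate is a suitable product of $\tau_0$, or of the Miyamoto involution of some $a_{2i+1}$, with the flip of $V_e$. In the small cases ($|X|\le 6$) this is an explicit computation. In the infinite-axet cases ($\bt=\tfrac12$, $\IY_3$, $\IY_5$, $\hatH$), the extra axes are governed by the affine structure of $X$, and I would argue via $\rho$ acting as a translation.

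\textbf{Main obstacle.} I expect the hardest part to be the determination of all axes of Monster type inside each $V_e$, especially for the parametric families $\IY_3(\al,\tfrac12;\mu)$, $4\B(\al,\tfrac{\al^2}{2})$ and $4\Y$, and for quotients in special characteristics. In these cases extra idempotents can exist, such as the $4\B(-1,\tfrac12;\nu)^\times$ phenomenon, which is exactly a non-symmetric example. I would first try to exclude these cases using the eigenvalue constraint together with the requirement that $\tau_1$ normalise the axet of $V_e$. This should force $a_1$ to be the image of $a_0$ under an automorphism that commutes appropriately with the flip, and the flip is then what yields symmetry.
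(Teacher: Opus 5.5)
Your top-level plan is the same as the paper's: reduce to $V=V_e$, run through the classification of $\V_e$, and decide where $a_1$ can lie. However, the two steps that are supposed to produce symmetry do not work as stated.

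\textbf{The proposed flips.} When $V=V_e$, an automorphism of $V_e$ is determined by its values on $a_0,a_2$, and $\tau_1$ already swaps them. So the flip of $\V_e$ \emph{is} $\tau_1$. Hence every product of $\tau_0$, odd Miyamoto involutions and the flip of $\V_e$ lies in $\Miy(\V)$, which preserves the orbit $a_1^{\Miy(\V)}$. Such an element can interchange $a_0$ and $a_1$ only if $a_1\in a_0^{\Miy(\V)}$, which is the trivial case. In the substantive case the swap must come from outside $\Miy(\V)$. For example, take $\V_e\cong\IY_3(\al,\tfrac12;\mu)$, with $a_0=x_a(e)$ and $a_1=x_a(u)$ for an arbitrary $u\in E$ with $b(u,u)=1$. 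Here the paper uses the external fact \cite{split} that $\Aut(V)$ induces the whole of $\mathrm{O}(E,b)$ on $E$, so the reflection $t_{e-u}$ lifts to the required flip. When $\mu=1$ it writes down an explicit involution instead.

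\textbf{The route via projections.} This is incomplete. The equalities $\lmu=\lmf$ and $\lmd=\lmdf$ give, through Proposition~\ref{reg}, only that $\V$ is a \emph{quotient} of a symmetric algebra. For $(\al,\bt)=(2,\tfrac12)$ even this requires also checking $\lm_3=\lm_3^f$ and a spanning condition. Quotients of symmetric algebras need not be symmetric, so you still need Theorem~\ref{teoq} and an argument excluding $4\B(-1,\tfrac12;\nu)^\times$. This is exactly how the paper treats $\IY_5(\al,\tfrac12)$. There every idempotent has the form $\zeta a_0+(1-\zeta)a_2+u$ with $u\in V^\perp$, so all projections equal $1$. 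The algebra $4\B(-1,\tfrac12;\nu)^\times$ is then excluded by dimension.

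\textbf{The missing case analysis.} The case work itself is absent, and some of the tools you name are misapplied.
\begin{itemize}
\item Theorem~\ref{HWthm} needs $\lmu=\lmf=\lmd=\lmdf=1$ and says nothing about the Jordan type case. The paper settles that case with the explicit idempotent lists of $3\C(\bt)$, $3\C(-1)^\times$ and $\J(\delta)$.
\item For $\V_e$ a quotient of $\hatH$, you first need Lemma~\ref{supernew} to get $\lmu=\lmf=1$ when $\adim(\V_e)\ge 4$, and then an analysis of $\V_o$ to get $\lmdf=1$. When $\adim(\V_e)\le 3$, the paper uses the list of Lemma~\ref{subH} together with a direct computation ruling out $I\mathcal H_3$.
\item For the finite-axet algebras the paper does not classify idempotents or automorphism groups; Chapter~\ref{known} does not provide them. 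It shows no non-symmetric $a_1$ exists, as follows. Write $a_1$ in the $\tau_1$-fixed part of $V_e$. Apply an even Miyamoto involution to get a second odd axis with permuted coefficients, e.g.\ $a_5=a_1^{\tau_3}$ or $a_7=a_1^{\tau_4}$. Use that $a_0-a_2$ is a $\bt$-eigenvector for both. In the $4$-type and $6\A$ cases this forces $\bt=\tfrac12$ and then contradicts the structure constants; for $6\Y$ it gives $a_7=a_1$ directly. Separate arguments handle $3\A$, via the $\mathcal J(\al)$-subalgebra $\lla a_0,a_3\rra$, and $5\A$, via the identity and primitivity.
\end{itemize}

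Two smaller points. Your worry about $4\B(-1,\tfrac12;\nu)^\times$ is misplaced here: in that algebra $V_e$ has dimension at most $3$ and does not contain $a_1$, so it never arises when $V=V_e$. Also, ``$\rho$ acting as a translation'' is not an argument for the infinite-axet cases.
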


Now assume that $\V$ satisfies none of the hypotheses of Theorems~\ref{al=2bt},~\ref{HWthm}, \ref{teoq}, \ref{skew}, and \ref{teonsa}.  Set
\[
n:=|a_0^{\Miy(\V)}|, \mbox{ with } n\in \N\cup \{\infty\}.
\]
Since we are assuming that $V\not \in \{V_e, V_o\}$ and $\V$ has regular axet, it follows that
\[
a_0^{\Miy(\V)}\neq a_1^{\Miy(\V)}\mbox{ and } |a_0^{\Miy(\V)}|=|a_1^{\Miy(\V)}|=n,
\]
whence
\[
|X|=|a_0^{\Miy(\V)} \cupdot a_1^{\Miy(\V)}|=
|a_0^{\Miy(\V)}| + |a_1^{\Miy(\V)}| \in \{2n, \infty\}.
\]
In this case we say that $\V$ has axet $X(n+n)$. If $n=1$, an easy argument proves that $\V$ is symmetric (see Lemma~\ref{lhyp1}). 
In Chapter~\ref{chap:regular}, we deal with the cases when $n\geq 2$. If $n\in \{2,3\}$, then we prove the following:

\begin{teo}
\label{teo2}
Assume  $\al\neq 2\bt$ and that $\V$ has a regular axet with $n=2$. Then $\V$ satisfies the hypothesis of either Theorem~\ref{teoq} or Theorem~\ref{teonsa}.
\end{teo}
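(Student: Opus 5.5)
Assume $n=2$, the axet is regular and $V\notin\{V_e,V_o\}$; the goal is to show that $\V$ is a quotient of a symmetric algebra. Then $a_0^{\Miy(\V)}=\{a_0,a_2\}$ with $a_4=a_0$, and $a_1^{\Miy(\V)}=\{a_{-1},a_1\}$ with $a_3=a_{-1}$. Hence $\rho^2$ fixes every axis of $X$, and $X=\{a_{-1},a_0,a_1,a_2\}$ has four elements. The even subalgebra $\V_e$ is symmetric and $2$-generated by $a_0,a_2$, and its own Miyamoto group has axis orbit $\{a_0,a_2\}$ (in $\V_e$, $\tau_{a_0}$ fixes $a_2$, since $a_2^{\tau_0}=a_{-2}=a_2$). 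So the Classification Theorem of the Symmetric Algebras forces $\V_e$ to be an algebra whose axet has size $2$. These are quotients of $2\B$, $3\C(\eta)$ (Jordan type $\al$ or $\bt$ with $a_0=a_2$ excluded), or the algebras with two axes such as $4\B$-type folds. By the regular axet assumption, $a_0\neq a_2$. The same holds for $\V_o$. So the first step is to list the possible pairs $(\V_e,\V_o)$. I will use Theorem~\ref{axetthm} and Corollary~\ref{axetcor} to cut this down. Essentially, $\V_e$ is either $2\B$ (so $a_0a_2=0$) or a Jordan-type algebra $3\C(\bt)$ or $3\C(\al)$ with $a_0\ne a_2$. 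The latter has $3$ axes, which contradicts $n=2$ unless the third axis coincides (impossible). So I expect $\V_e\cong 2\B\cong\V_o$, that is, $a_0a_2=0=a_{-1}a_1$.

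Next I will use the universal-algebra relations between $\V_e$ and $\V_o$ (Sections~\ref{universal}, \ref{relVeVo}, \ref{evaluation}). These express $a_{-1}a_1$ and the products $a_ia_{i+2}$ in terms of $a_0a_1$, the parameters $\lmu,\lmf,\lmd,\lmdf$, and $\al,\bt$. Setting $a_0a_2=0$ forces $\lmd=0$, and similarly $\lmdf=0$. The fusion-law decomposition of $a_1$ and $a_2$ with respect to $a_0$ then gives polynomial identities in $\lmu,\lmf$. Solving them should leave two outcomes. In the first, $V$ is spanned by $a_{-1},a_0,a_1,a_2$ and $a_0a_1$, possibly together with a few products. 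The multiplication table is then determined and coincides with a quotient of $4\B(\al,\tfrac{\al^2}{2})$ or of a symmetric $4$-axis algebra such as $4\A(\tfrac14,\bt)$ or $4\J$. This is the situation of Theorem~\ref{teoq}. In the second, the relations collapse and $a_1\in V_e$ or $a_0\in V_o$, which is the situation of Theorem~\ref{teonsa}.

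The main obstacle will be the computation: solving the resulting system in $\lmu,\lmf$ (using $\al\neq 2\bt$ and $\ch\F\neq 2$) and identifying the algebra in every branch. In particular, it is delicate to show that when $\lmu\neq\lmf$ no new non-symmetric algebra appears. My first attempt there will be to compute $\lambda_{a_0}$ of both sides of the relation $a_{-1}a_1=0$ expanded via the universal formulas. I expect this to give a linear relation forcing $\lmu=\lmf$ or a degenerate case where $a_1\in V_e$.
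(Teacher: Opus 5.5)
There is a genuine gap at the first step, where you determine $\V_e$ and $\V_o$. You note that $a_2^{\tau_0}=a_{-2}=a_2$, but you do not draw the consequence. Since $\tau_0$ and $\tau_2$ fix both generators of $V_e$, they act as the identity on $V_e$. Hence $V_e$ contains no $\bt$-eigenvectors of $\ad_{a_0}$ or $\ad_{a_2}$, and $\V_e$ (likewise $\V_o$) is an algebra of Jordan type $\al$; this is Lemma~\ref{jordan}.

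This excludes $3\C(\bt)$ but not $3\C(\al)$. Your reason for discarding $3\C(\al)$, that it ``has $3$ axes'', is wrong. With Jordan eigenvalue $\al$ its Miyamoto involutions are trivial and its axet is $X(2)$; the idempotent $a^\ast$ is not in the orbit of the generators (see the Notes to Table~\ref{table2}). The correct list is therefore $\V_e,\V_o\in\{2\B,3\C(\al),3\C(-1)^\times\}$, together with $\J(\delta)$ and $\J(0)^\times$ when $\al=\tfrac12$.

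Your own expected outcome shows the inconsistency. $4\B(\al,\tfrac{\al^2}{2})$ has axet $X(4)$, so $n=2$, yet its even and odd subalgebras are $3\C(\al)$, not $2\B$. Similarly, $4\Y(\tfrac12,\bt)$ has subalgebras $\J(4\bt(2\bt-1))$, and $4\B(-1,\tfrac12;\nu)^\times$ has $3\C(-1)$ or $3\C(-1)^\times$. So you cannot assume $a_0a_2=0=a_{-1}a_1$.

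This matters because the $2\B$ case you do treat is the trivial one. There $\lmd=\lmdf=0$. Also $s_{\tz,1}=s_{\tz,1}^{\tau_1}=a_2a_1-\bt(a_1+a_2)$, and $a_2a_1\in V^{a_0}_{\{0,\al,\bt\}}$, so $\lmu=\lmf=\bt$ follows at once and the symmetric-quotient criterion applies; this is Corollary~\ref{cor2B} in the paper.

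The substance of the theorem is excluding the cases $\V_e,\V_o\in\{3\C(\al),3\C(-1)^\times\}$, plus the half case $\al=\tfrac12$. In these cases $\lmd=\lmdf=\tfrac{\al}{2}$ and no product vanishes, so your plan of projecting the relation $a_{-1}a_1=0$ has nothing to work with. The paper argues by contradiction there:
\begin{enumerate}
\item It substitutes $a_{-2}=a_2$ and $a_3=a_{-1}$ into the universal relations. This gives $R=R^f=2P$, and $B=B^f=0$ (or $C=C^f=0$ if $\al=4\bt$).
\item It shows $\lmu\neq\lmf$, since otherwise $\lmd=\lmdf$ and Proposition~\ref{reg} applies. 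When $(\al,\bt)=(2,\tfrac12)$ this needs an extra spanning argument.
\item From $B+B^f=0$ it deduces $\al=4\bt$ or $\bt=\tfrac{\al^2}{2(\al-1)}$.
\item It kills the surviving cases by taking projections and $a_0$-, $a_1$-multiples of \eqref{equa3}, followed by resultant computations.
\end{enumerate}
None of this machinery appears in your proposal, so as written it does not prove the theorem.
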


\begin{teo}
\label{teo3}
Assume  $\al\neq 2\bt$ and that $\V$ has a regular axet with $n=3$. Then $\V$ satisfies the hypothesis of either Theorem~\ref{teoq} or Theorem~\ref{teonsa}.
\end{teo}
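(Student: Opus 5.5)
We assume $\al\neq 2\bt$ and that $\V$ has regular axet $X(3+3)$, so $a_0^{\Miy(\V)}=\{a_0,a_2,a_4\}$ and $a_1^{\Miy(\V)}=\{a_1,a_3,a_5\}$ are distinct orbits of size $3$. Hence $a_6=a_0$ and $a_7=a_1$, and $\rho=\tau_0\tau_1$ acts with order $3$ on $X$. The plan is to show that either $V\in\{V_e,V_o\}$ (the hypothesis of Theorem~\ref{teonsa}) or $\V$ is a quotient of a symmetric algebra (the hypothesis of Theorem~\ref{teoq}).

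\textbf{Step 1: identify $\V_e$ and $\V_o$.} Both are symmetric $2$-generated $\Mab$-axial algebras whose axet has exactly $3$ axes: $\{a_0,a_2,a_4\}$ and $\{a_{-1},a_1,a_3\}$ respectively. By the Classification Theorem of the Symmetric Algebras, together with the known axets of the listed families and their quotients from~\cite{MM} and~\cite{HWQ}, each of $\V_e$ and $\V_o$ is one of the following:
\begin{itemize}
\item an algebra of Jordan type $\al$ or $\bt$ with three axes (of type $3\C$);
\item a quotient of $3\A(\al,\bt)$;
\item a quotient of $\IY_3(\al,\tfrac12;\mu)$;
\item a $3$-axis quotient of the Highwater algebra, which is excluded unless $\V$ is of $\mathcal H$-type, and that case is already covered by Theorems~\ref{HWthm} and~\ref{teoq}.
\end{itemize}
Theorem~\ref{axetthm} and Corollary~\ref{axetcor} should further restrict which combinations of $(\V_e,\V_o)$ can occur.

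\textbf{Step 2: exploit the symmetry between $\V_e$ and $\V_o$.} Because the orbit sizes coincide, the relations coming from the universal algebra $\mathfrak V$ (Sections~\ref{universal}, \ref{relVeVo}, \ref{evaluation}) tie the parameters $\lmu,\lmf,\lmd,\lmdf$ to the structure constants of $\V_e$ and $\V_o$. Here $\lmd$ is the projection of $a_2$ onto $a_0$ inside $V_e$, and $\lmdf$ is its odd counterpart. The periodicity $a_6=a_0$ gives extra relations: I would evaluate the identities expressing $a_{i+6}-a_i$ (equivalently, the recursive formulas for the $a_i$ in $\mathfrak V$) at zero. This should yield polynomial equations in $\lmu,\lmf,\lmd,\lmdf,\al,\bt$. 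Solving them case by case, I expect to obtain one of two conclusions:
\begin{enumerate}
\item $\lmu=\lmf$ and $\lmd=\lmdf$, with the evaluations of $\mathfrak V$ agreeing with those of a symmetric algebra. Then $\V$ is a quotient of the corresponding symmetric algebra (for instance $6\A$, $6\J$, or $6\Y$ type), giving the hypothesis of Theorem~\ref{teoq}.
\item The even and odd subalgebras coincide, for example $a_1\in V_e$. Then $V=V_e$ or $V=V_o$, giving the hypothesis of Theorem~\ref{teonsa}.
\end{enumerate}

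\textbf{Main obstacle.} The difficult step is the elimination in Step 2 for the degenerate configurations: when $\V_e$ or $\V_o$ is a proper quotient (for example a $3\C$-type quotient in which $a_0+a_2+a_4$ is annihilated or is an identity), or when $\bt=\tfrac12$ so that $\IY_3$ occurs. In these cases the universal relations may be insufficient. I would first use the Frobenius-form-free identities: compute $a_1(a_0+a_2+a_4)$ in two ways, using the $\tau_1$-invariance of $a_0+a_2+a_4$ and its known product behaviour in $V_e$, to force $a_1$ into $V_e$ or to force $\lmu=\lmf$.

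The argument for $n=2$ (Theorem~\ref{teo2}) should follow the same pattern and be easier. There $\V_e$ and $\V_o$ are $2$-axis symmetric algebras, namely $2\B$-type or quotients of $4$-axis families, and the relation $a_4=a_0$ gives the needed equations.
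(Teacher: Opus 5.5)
\textbf{There is a genuine gap.} The proof has to happen in your Step~2, and it is not there. ``Evaluate the identities expressing $a_{i+6}-a_i$ at zero \dots\ I expect to obtain'' does not say which identities you mean, or how they would force either $\lmu=\lmf$ or $a_1\in V_e$.

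The missing idea is to compare the subspaces $V_e^\ast$ and $V_o^\ast$ spanned by the $\Miy(\V)$-images of axis differences. With $a_4=a_{-2}$ and $a_{-3}=a_3$, the universal identities \eqref{equa1} and \eqref{equa2} become
$(P-R)(a_2-a_0)=Q(a_3-a_{-1})$ and $(P-R^f)(a_{-1}-a_1)=Q^f(a_2-a_{-2})$.
\begin{itemize}
\item When a coefficient is non-zero, these give no polynomial equation in the $\lambda$'s at all. They give instead a linear identification of an even axis difference with an odd one.
\item When all coefficients vanish, Corollary~\ref{reduction} and Proposition~\ref{reg} let you assume $\al\neq 4\bt$ and $\lmu\neq\lmf$. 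Here $R=R^f$ forces $\lmd=\lmdf$, so otherwise $\V$ is already a quotient of a symmetric algebra. Lemmas~\ref{s1s2}, \ref{lemma:A+Af} and~\ref{Ve*Vo*} then supply the identification.
\end{itemize}
Since $V_e^{\ast\ast}=V_e^\ast$ and $V_o^{\ast\ast}=V_o^\ast$ for axet $X(3)$, this yields $V_e^\ast=V_o^\ast$.

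This equality is the actual source of your second conclusion: if $V_e^\ast=V_e$, then $V_e\subseteq V_o$ and so $V=V_o$. Otherwise $V_e^\ast<V_e$ and $V_o^\ast<V_o$, and Lemmas~\ref{sub3C} and~\ref{sub3A} leave only three pairs: $3\C(-1)^\times$ in characteristic $3$, $3\C(2)$, and $3\A(\al,\bt)$ with $(3\al^2+3\al\bt-9\al-2\bt+4)(3\al+\bt-2)=0$. In these cases, comparing $\tau_0$-eigenspaces inside $V_e^\ast=V_o^\ast$ forces $a_1-a_{-1}=x(a_2-a_{-2})$. Explicit computation then pins down $x$, and Corollary~\ref{a1xa2} (or Lemma~\ref{smult}) yields $\lmu=\lmf$. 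Proposition~\ref{reg} then gives your first conclusion. None of this mechanism is in your plan.

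\textbf{Your fallback does not work.} Since $a_1\notin V_e$, the multiplication of $V_e$ says nothing about $a_1(a_0+a_2+a_4)$. Expanding it gives $2s_{\tz,1}+s_{\tu,3}+\bt(3a_1+a_0+a_2+a_4)$. This only introduces the new unknown $s_{\tu,3}$, and because every term is already $\tau_1$-invariant, the invariance yields no constraint.

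\textbf{Step~1 is also inaccurate.}
\begin{itemize}
\item The three-axis members of the $\IY_3$ family are $3\A(\al,\tfrac12)$ and its quotient, since $\IY_3(\al,\tfrac12;-\tfrac12)\cong 3\A(\al,\tfrac12)$. The three-axis quotients of $\hatH$ are quotients of $\hatH_3\cong 3\A(2,\tfrac12)$. So the list is just $3\C(\bt)$, $3\C(-1)^\times$, $3\A(\al,\bt)$ and $3\A(\al,\tfrac{1-3\al^2}{3\al-1})^\times$.
\item Your claim that a Highwater quotient forces $\mathcal H$-type is false. $\V_e$ being a quotient of $\hatH$ only yields $\lmd=1$ and says nothing about $\lmu$, $\lmf$ or $\lmdf$.
\item Theorem~\ref{axetthm} imposes nothing beyond both subalgebras having axet $X(3)$.
\end{itemize}
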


Finally, suppose that $n\geq 4$. The dimension of the linear span of the set $X$ is called the {\em axial dimension}\index{axial dimension} of $\V$ and will be denoted by $\adim(\V)$\nomenclature{$\adim(\V)$}{\pageref{teo3}}. 
Chapter~\ref{ch:infinite} deals with the case when $\V_e$ and $\V_o$ are isomorphic to a quotient of either an algebra of Jordan type $\bt=\tfrac{1}{2}$ or of $\IY_3(\al, \tfrac{1}{2}; \mu)$.  We show that in these cases, $\V$ satisfies the hypothesis of either Theorem~\ref{teoq}, or Theorem~\ref{teonsa}. Using this result, in Section~\ref{chap:larger}, we  prove 

\begin{teo}
\label{teolarger}
Assume  $\al\neq 2\bt$ and that $\V$ has regular axet with $n\geq 4$. Then $\V$ satisfies the hypothesis of either Theorem~\ref{teoq} or Theorem~\ref{teonsa}.
\end{teo}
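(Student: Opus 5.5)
Assume $\V$ has regular axet with $n\geq 4$, $\al\neq 2\bt$, $V\notin\{V_e,V_o\}$, and suppose for a contradiction that $\V$ is neither of $\mathcal H$-type nor a quotient of a symmetric algebra. The plan is to reduce to the configurations of Chapter~\ref{ch:infinite} or to a short finite list, using the structure of the pair $(\V_e,\V_o)$. Since $\tau_1$ swaps $a_0,a_2$ and $\tau_0$ swaps $a_{-1},a_1$, both $\V_e$ and $\V_o$ are symmetric $2$-generated $\Mab$-axial algebras. Hence the Classification Theorem of the Symmetric Algebras applies to each, and they are quotients of Jordan type algebras, of one of the twelve families, or of $\mathcal H,\hatH$. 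Each of $\V_e,\V_o$ has axet of size $n$ or $n/2$, the latter when $n$ is even. By Theorem~\ref{axetthm} and Corollary~\ref{axetcor}, the pair of axets of $(\V_e,\V_o)$ is compatible with the axet $X(n+n)$ of $\V$. This forces $\V_e$ and $\V_o$ to have the same axet size, $n$ for $n$ odd and $n/2$ for $n$ even. Running through the list then gives finitely many candidate pairs, together with the infinite case $n=\infty$.

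Following Table~\ref{MCS}, I would split on $|X|$. If $|X|\geq 8$ (including $|X|=\infty$), then $\V_e$ has at least $4$ axes. Among the symmetric algebras, the only ones with arbitrarily large or infinite axets are the Jordan type $\bt=\tfrac12$ algebras, the $\IY_3(\al,\tfrac12;\mu)$ family and $\mathcal H,\hatH$. The finite ones, such as $5\A$, $6\A$, $6\J$, $6\Y$ and $\IY_5$, would be handled by the finite analysis below. The sub-branch $\adim(\V_e)\geq 4$, or the infinite axet, should then fall exactly into the hypotheses of Chapter~\ref{ch:infinite}, which shows $\V$ satisfies Theorem~\ref{teoq} or~\ref{teonsa}. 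Excluding $\mathcal H$ and $\hatH$ themselves needs Theorem~\ref{HWthm}: I would show that $\V_e\cong$ a quotient of $\mathcal H$ forces $(\al,\bt)=(2,\tfrac12)$ and $\lm_1=\lm_1^f=\lm_2=\lm_2^f=1$, using the known values of $\lambda_{a_0}(a_2)$ in $\mathcal H$ together with the relations between $\lm_2$ and $\lm_1,\lm_1^f$ from Section~\ref{relVeVo}. In the remaining sub-branch $\adim(\V_e),\adim(\V_o)<4$, the even and odd subalgebras are small quotients with many identified axes. The relations obtained from the universal algebra in Sections~\ref{relVeVo} and~\ref{evaluation} should then force linear dependencies among the $a_i$ that pull $a_1$ into $V_e$, giving $V=V_e$, or collapse the orbits.

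For $|X|<8$ with $n\geq4$, we have $n=4$ and $|X|=8$ or fewer. Then $\V_e,\V_o$ have $2$ axes each, so they are among $2\B$, $3\C(\al)$-type Jordan algebras, or quotients with $a_0$ and $a_2$ generating a $2$-dimensional or $3$-dimensional algebra. Here I would write down the general element using $\lm_1,\lm_1^f,\lm_2,\lm_2^f$ and the known multiplication of the universal algebra $\mathfrak V$. I would impose the eigenvector relations from $\V_e$ and $\V_o$ simultaneously and solve the resulting polynomial system in $\al,\bt,\lm_1,\lm_1^f$.

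I expect the main obstacle to be this evaluation step, in both finite branches. We must show that every solution either makes $\V$ a quotient of some symmetric algebra, typically with $\lm_1=\lm_1^f$ and an explicit flip, or lands in $\{V_e,V_o\}$. I would first derive $\lm_1=\lm_1^f$ from the symmetry of the even/odd relations. Then I would exhibit the flip on the universal algebra and check that it preserves the defining ideal. Any residual exceptional parameters, for example small characteristics, would be treated separately by direct computation.
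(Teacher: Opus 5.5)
Your case split rests on a miscount of the axets. Here $n=|a_0^{\Miy(\V)}|$. The $\rho$ of $\V_e$ is $\rho^2$, so the axes of $\V_e$ are exactly $\{a_{2i}: i\in\Z\}=a_0^{\Miy(\V)}$. Hence $\V_e$ and $\V_o$ both have axet $X(n)$ (Remark~\ref{rem:reg}), never one of size $n/2$, and $|X|=2n\geq 8$.

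It follows that your branch ``$|X|<8$ with $n\geq 4$'' is empty. Two axes in each of $\V_e,\V_o$ is the $X(2+2)$ case of Theorem~\ref{teo2}, not this theorem. So the algebras $4\A$, $4\B$, $4\Y$, $5\A$, $6\A$, $6\Y$, $\IY_5$ and their quotients, which you postpone to that branch, are never treated.

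Your routing into Chapter~\ref{ch:infinite} is also backwards. Theorem~\ref{case4} concerns $\{\V_e,\V_o\}\subseteq\mathcal L$, and every member of $\mathcal L$ has $\adim\leq 3$, even when its axet is infinite. The invariant that matters is the axial dimension, not the size of the axet. The branch $\adim(\V_e),\adim(\V_o)\leq 3$, which you leave to relations that ``should'' pull $a_1$ into $V_e$, is exactly where Theorem~\ref{case4} is needed. It also needs a separate computation for $I\mathcal H_3\notin\mathcal L$ (Lemma~\ref{dim33}).

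What is missing is the mechanism that exploits $\adim\geq 4$. Since $a_2\neq a_{-2}$ and $a_3\neq a_{-1}$, Lemma~\ref{supernew} gives:
\begin{itemize}
\item $\adim(\V_e)\geq 4$ forces $A^f=B^f=0$, or $Q=C=0$ when $\al=4\bt$;
\item $\adim(\V_o)\geq 4$ forces $A=B=0$, or $Q^f=C^f=0$.
\end{itemize}
Lemma~\ref{AQ}, resting on Lemma~\ref{lemma:A+Af} and Corollary~\ref{reduction}, says these coefficients cannot all vanish. So the two axial dimensions cannot both be $\geq 4$.

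In the mixed case $\adim(\V_e)\leq 3<\adim(\V_o)$, the surviving nonzero coefficient gives $V_o^{\ast\ast}=V_e^\ast$ by Lemma~\ref{Ve*Vo*}. Moreover $V_e^\ast<V_e$, since otherwise $V=V_o$. Lemma~\ref{axialdim} then gives $\dim(V_o^{\ast\ast})=\dim(V_e^\ast)\leq 3$. The same lemma, together with $\bt=\tfrac12$ coming from $\V_e$, forces $\V_o$ to be a quotient of $\hatH$. Only at that point do $\lmd=\lmdf=1$, $\al=4\bt=2$ and $Q^f=C^f=0$ yield $\lmu=\lmf=1$, so that Theorem~\ref{HWthm} applies.

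Your plan of solving a polynomial system in $\al,\bt,\lm_1,\lm_1^f$ and then exhibiting a flip gives no concrete substitute for this. Your claim that $\V_e$ being a quotient of $\mathcal H$ by itself forces $\lmu=\lmf=\lmd=\lmdf=1$ is also unjustified. From $\V_e$ you get at most $\lmu=\lmf=\lmd=1$ (via $Q=C=0$, when $\adim(\V_e)\geq 4$), while $\lmdf$ is determined by $\V_o$.
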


In Chapter~\ref{proof}, we prove the Main Theorem and  its corollaries.

\chapter{General setup}\label{ch1}

\section{Basic definitions and preliminary results}\label{basicdef}
In this section and in the remainder of this paper, we assume that $\F$\nomenclature{$\F$}{\pageref{basicdef}} is a field of characteristic other than $2$ and $R_\F$\nomenclature{$R_\F$}{\pageref{basicdef}} is a commutative associative ring containing $\F$ (in this paper we shall almost always take $\F=R_\F$, except when defining the universal algebra in Section~\ref{universal}). Let $\al$ and $\bt$ be two distinct  elements in $\F\setminus \{0,1\}$, and  $V$ a commutative $R_\F$-algebra. 

A {\em fusion law}\index{fusion law} is a pair $\mathcal F:=(F, \star)$ where $F$ is a subset of $\F$ and $\star$ is a map
$$\star\colon F \times F \rightarrow 2^F$$
where, as usual, $2^F$ denotes the power set of $F$. For the purpose of this paper we focus on the fusion laws $\mathcal {J}(\eta)$\nomenclature{$\mathcal {J}(\eta)$}{\pageref{Jeta}}\label{Jeta} (for $\eta\in \F\setminus\{0,1\}$) and $\Mab$\nomenclature{$\Mab$}{\pageref{Mab}}\label{Mab}  described in Table~\ref{fusion}, which are called \index{fusion law!Jordan type}{\em Jordan type}\index{Jordan type fusion law} and \index{fusion law!Monster type}{\em Monster type}\index{Monster type fusion law} fusion  law, respectively. 
{
\renewcommand{\arraystretch}{1.5}
\begin{table}[H]
\[ 
\begin{array}{ccc}
\begin{array}{|c||c|c|c|c|}
\hline
\star & 1 & 0 & \eta \\
\hline
\hline
1 & 1 &\emptyset & \eta \\
\hline
0 & \emptyset& 0 & \eta \\
\hline
\eta & \eta & \eta & 1,0 \\
\hline
\end{array}
&\hspace{1cm} &
\begin{array}{|c||c|c|c|c|}
\hline
\star & 1 & 0 & \al & \bt\\
\hline
\hline
1 & 1 & \emptyset& \al & \bt\\
\hline
0 & \emptyset& 0 & \al & \bt\\
\hline
\al & \al & \al & 1,0 & \bt\\
\hline
\bt & \bt & \bt & \bt & 1,0,\al\\
\hline
\end{array}\\
\mathcal J(\eta) & & \Mab
\end{array}
\]
\caption{Fusion laws $\mathcal J(\eta)$ and $\Mab$} \label{fusion}
\end{table}
Given an element $a$ of $V$, denote by $\ad_a$ the {\em adjoint map}\index{adjoint map} associated with $a$, that is 
\[
\begin{aligned}
\ad_a\colon V& \longrightarrow V\\
v&\longmapsto av.
\end{aligned}
\]
For $\lambda\in F$, $S\subseteq F$,  and $a\in V$, let
\[
V_\lambda^a:=\{v\in V : av=\lambda v\}\quad \mbox{and}\quad V_S^a:=\bigoplus_{\lm \in S}V_\lm^a.
\]\nomenclature{$V_\lambda^a$}{\pageref{auto}}\label{auto} \nomenclature{$V_S^a$}{\pageref{auto2}}\label{auto2}
An {\em $\mathcal{F}$-axis}\index{F-axis@$\mathcal F$-axis} of $V$ is an element $a\in V$ such that 
\begin{enumerate}
    \item[A0)] $a$ is idempotent,
    \item[A1)] $V=\bigoplus_{\lambda \in F}V_\lambda^a$,
    \item[A2)] $V_\lambda^a V_\mu^a \subseteq V_{\lambda\star \mu}^a$, for every $\lambda, \mu \in F$. 
\end{enumerate}
Further, $a$ is {\em primitive}\index{primitive axis} if, 
\begin{enumerate}
\item[A3)]   $V_1^a=R_\F a$.
\end{enumerate}
If $a$ is a primitive axis of $V$, then, by~\cite[Lemma~3.4]{FMS3}, there is a linear function $\lambda_{a} \colon V\to R_\F$\nomenclature{$\lambda_{a}$}{\pageref{la}}\label{la} such that every $x\in V$ can be written in a unique way as 
\begin{equation}\label{xlambda}
 x=\lambda_{a}(x)a+\sum_{\mu\in F\setminus \{1\}}x_\mu,   
\end{equation}
where $x_\mu\in V_\mu^a$ for every $\mu \in F\setminus \{1\}$. The scalar 
$\lambda_a(x)$ is called the {\em projection}\index{projection}
 of $x$ onto $a$.
 
Note that, for every axis $a$ of Monster type $(\al, \bt)$ in $V$, the fusion law $\Mab$ induces a $\Z_2$-grading on $V$, by setting 
\[
V_+^a:=V_1^a+V_0^a+V_{\alpha}^a \quad \mbox{ and } \quad  V_-^a=V_{ \beta}^a.
\]\nomenclature{$V_+^a$}{\pageref{V+}}\label{V+}\nomenclature{$V_-^a$}{\pageref{V-}}\label{V-}
It follows that the linear map 
\[
\tau_a\colon V\to V
\]\nomenclature{$\tau_a$}{\pageref{tau}}\label{tau} that fixes the elements of $V_+^a $ and negates the elements of $V_-^a$ is an involutory automorphism of $V$ called {\em Miyamoto involution}\index{Miyamoto involution} associated to $a$ (see~\cite[Proposition~3.4]{HRS})\footnote{In this paper, Miyamoto involutions and Miyamoto groups (see below) are defined only for axes of Monster type, since  this suffices for our purposes. For the general definition see~\cite[Section~2.3]{survey}}.

If the $R_\F$-algebra $V$ is generated (as an algebra) by a set $\mathcal A$ of primitive $\mathcal F$-axes, then  we say that the pair $\V:=(V, \mathcal A)$  is an \index{axial algebra!F-axial algebra@$\mathcal{F}$-axial algebra}{\em $\mathcal F$-axial algebra}\index{F-axial algebra@$\mathcal F$-axial algebra}  over $R_\F$. We call the $R_\F$-algebra $V$ the {\em support}\index{support of $\V$} of $\V$ and will be also denoted by $\supp (\V)$\nomenclature{$\supp (\V)$}{\pageref{supp}}\label{supp}. If $\mathcal F=\mathcal J(\eta)$ (respectively $\mathcal F=\Mab$), then we say that $\V$ is of {\em Jordan type}\index{axial algebra!of Jordan type} $\eta$ (respectively {\em Monster type $(\al, \bt)$})\index{axial algebra!of Monster type}. We remark that, as a difference to the definition of $\mathcal F$-algebra given in this paper, in the existing literature the condition of primitivity is not always required.

A {\it Frobenius form}\index{Frobenius form} on an $\mathcal F$-axial algebra $\V$ over $R_\F$ is a non-zero bilinear form $\kappa \colon V\times V\to R_\F$ which is {\it invariant}\index{invariant form} with respect to the algebra product, that is, for every $u,v,w\in V$, 
\[
\kappa(uv,w)=\kappa(u,vw).
\]
The invariance of the Frobenius form $\kappa$ implies that for every axis $a$ of $V$, the eigenspaces of $\ad_a$ are mutually orthogonal. In particular it follows that the Miyamoto involution $\tau_a$ is an isometry.
Moreover, since $V$ is commutative and generated by idempotents, invariance also  implies  that $\kappa$ is symmetric.

Let $\V:=(V,\mathcal{A})$ be an $\mathcal{M}(\al,\bt)$-axial algebra over $R_\F$. The {\it Miyamoto group}\index{Miyamoto group} of $\V$ is the group generated by the Miyamoto involutions associated to the axes in $\mathcal{A}$ and will be denoted by $\Miy(\V)$\nomenclature{$\Miy(\V)$}{\pageref{Miya}}\label{Miya}. Note that, if $\eta\in \{\al, \bt\}$, then, by Table~\ref{fusion}, every axial algebra $\V$ of Jordan type $\eta$ is also an $\mathcal{M}(\al,\bt)$-axial algebra. Precisely, if $\eta=\al$, then $V_\bt^a=\{0\}$ for every $a\in \mathcal{A}$ and $\Miy(\V)$ is trivial. If $\eta=\bt$, then $V_\al^a=\{0\}$ for every $a\in \mathcal A$ and $\Miy(\V)$ is trivial if and only if also $V_\bt^a=\{0\}$ for every $a\in \mathcal{A}$, whence, by~\cite[Lemma~3.3]{FMS3}, $V$ is associative.

Given two $\mathcal F$-axial algebras $\V:=(V,\mathcal{A})$ and $\mathcal W:=(W,\mathcal{B})$ over $R_\F$, we say that an $R_\F$-algebra homomorphism
$$\phi\colon V \to W$$
is an 
{\em axial homomorphism}\index{axial homomorphism}
if 
\[
\mathcal{A}^\phi \subseteq \mathcal{B}. 
\]
If $\phi$ is also bijective, we say that  $\V$ and $\mathcal{W}$ are {\em isomorphic}\index{isomorphic algebras}. Note that 
the $\Mab$-axial algebras $3\C(\al)$ and $3\C^\prime(\al,1-\al)$ have isomorphic supports but are not isomorphic as axial algebras (see Note \ref{table3Cskew axes} to Table~\ref{table3Cskew}).

\medskip

For the remainder of this section we assume that $\V:=(V,\mathcal A)$ is an $\mathcal{M}(\al,\bt)$-axial algebra over $R_\F$ such that $\mathcal A$ contains two elements (not necessarily distinct), which we shall denote by $a_0$ and  $a_1$. In this case we say that $\V$ is \index{axial algebra!$2$-generated}{\it $2$-generated}\index{$2$-generated axial algebra}. For $k\in \{0,1\}$,  denote by $\tau_k$ be the Miyamoto involution associated to $a_k$. Set $\rho:=\tau_0\tau_1$\nomenclature{$\rho$}{\pageref{rho}}\label{rho}, and, for $i\in \Z$, set
\begin{equation}\label{aroi}
    a_{2i}:=a_0^{\rho^i} \quad \mbox{  and }\quad  a_{2i+1}:=a_1^{\rho^i}.
\end{equation}
Note that, since $\rho$ is an automorphism of $V$, for every $j\in \Z$,  $a_j$ is an axis. Denote by $\tau_j:=\tau_{a_j}$ \nomenclature{$\tau_j$}{\pageref{tauj}}\label{tauj} the corresponding Miyamoto involution. 

\begin{lemma}\cite[Lemmas~2.1 and~2.2]{FMS2}\label{invariant}
For every $ n\in \Z_+ $ and $r,t\in \Z$, 
\begin{enumerate}
    \item  $
a_ra_{r+n}-\bt(a_r+a_{r+n})$ is invariant under $\langle \tau_r, \tau_{r+n}\rangle$;\label{invariant_1}
\item  if $r\equiv_n t$, then 
$
a_ra_{r+n}-\bt(a_r+a_{r+n})=a_ta_{t+n}-\bt(a_t+a_{t+n}).
$  
\end{enumerate}
\end{lemma}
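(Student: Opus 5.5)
Part (a) should follow from how each Miyamoto involution acts on the generating pair. Set $s_{r,n}:=a_ra_{r+n}-\bt(a_r+a_{r+n})$. Split $a_{r+n}$ with respect to $a_r$ as
\[
a_{r+n}=\lambda_{a_r}(a_{r+n})a_r+u_0+u_\al+u_\bt,
\]
with $u_\mu\in V_\mu^{a_r}$. Then
\[
a_ra_{r+n}=\lambda_{a_r}(a_{r+n})a_r+\al u_\al+\bt u_\bt .
\]
Since $\tau_r$ negates only $u_\bt$, we have $a_{r+n}^{\tau_r}=a_{r+n}-2u_\bt$. This gives
\[
s_{r,n}=\lambda_{a_r}(a_{r+n})a_r+\al u_\al+\bt u_\bt-\bt a_r-\bt a_{r+n},
\]
and here the $\bt u_\bt$ term cancels against the $u_\bt$-part of $\bt a_{r+n}$. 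So $s_{r,n}$ lies in $V_+^{a_r}=V_1^{a_r}+V_0^{a_r}+V_\al^{a_r}$ and is fixed by $\tau_r$. The same argument with the roles of $a_r$ and $a_{r+n}$ exchanged shows $s_{r,n}$ is fixed by $\tau_{r+n}$. That proves (a).

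For (b), I first need to know how the involutions move the indices. Since $\rho=\tau_0\tau_1$, one checks from the definition \eqref{aroi} that
\[
a_i^{\tau_0}=a_{-i},\qquad a_i^{\tau_1}=a_{2-i}.
\]
These are the usual dihedral relations. More generally, conjugating by powers of $\rho$ gives $\tau_j=\rho^{-k}\tau_{j'}\rho^k$ with the appropriate shifts, so that $a_i^{\tau_j}=a_{2j-i}$ for every $j\in\Z$. To get this, I use that $\tau_{a^g}=g^{-1}\tau_ag$ for any automorphism $g$, and I treat even and odd $j$ separately.

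Now apply $\tau_{r+n}$ to $s_{r,n}$. By (a) the element is unchanged. On the other hand, $\tau_{r+n}$ sends $a_r$ to $a_{r+2n}$ and fixes $a_{r+n}$. Hence
\[
s_{r,n}=a_{r+2n}a_{r+n}-\bt(a_{r+2n}+a_{r+n})=s_{r+n,n}.
\]
Iterating this forward, and running the same argument backwards with $\tau_r$ (which sends $a_{r+n}$ to $a_{r-n}$, giving $s_{r,n}=s_{r-n,n}$), we get $s_{r,n}=s_{t,n}$ whenever $r\equiv t \pmod n$.

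The only delicate step is the index formula $a_i^{\tau_j}=a_{2j-i}$. It requires care with the parity cases in the definition of $a_i$ via $\rho$, since that definition treats even and odd indices differently.
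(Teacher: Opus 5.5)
Your proof is correct and follows the standard argument behind this result; the paper gives no proof of its own and only cites the source. As there, the $\beta$-components cancel in the eigenspace decomposition, which puts $a_ra_{r+n}-\beta(a_r+a_{r+n})$ in $V_+$ for both axes, and the dihedral rule $a_i^{\tau_j}=a_{2j-i}$ then carries the pair $(a_r,a_{r+n})$ to $(a_{r+2n},a_{r+n})$, giving the shift by $n$. The index rule you flag does follow as you indicate, from $\tau_{a^g}=g^{-1}\tau_a g$ together with $\tau_0\rho\tau_0=\tau_1\rho\tau_1=\rho^{-1}$, $a_1^{\tau_0}=a_{-1}$ and $a_0^{\tau_1}=a_2$.
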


Let $n$ be a non negative integer, denote by $\bar r$ the congruence class $r+n\Z$ and define 
\begin{equation}\label{defs}
s_{\ti,n}:=a_ia_{i+n}-\beta (a_i+a_{i+n}). 
\end{equation}\nomenclature{$s_{\ti,n}$}{\pageref{sin}}\label{sin}
For $\bt=\tfrac{1}{2}$, the following formula, which is an immediate consequence of Equation~\eqref{defs}, will be used without reference:
\[
 \mbox{when }\quad \bt=\tfrac{1}{2}, \quad s_{\ti,n}=-\tfrac{1}{2}(a_i-a_{i+n})^2. 
\]
Let $i\in \Z_{\geq 1}$. Since $a_j$ is a primitive axis, for every $j\in \Z$, by Equation~\eqref{xlambda}, we can define  
\begin{equation}\label{deflmlmf}
    \lm_i:=\lambda_{a_0}(a_i) \quad \mbox{and} \quad \lm_i^f:=\lambda_{a_1}(a_{1-i}).
\end{equation}\nomenclature{$\lm_i$}{\pageref{nomlm}}\label{nomlm}\nomenclature{$\lm_i^f$}{\pageref{nomlm}}
Let 
\nomenclature{$u_i$}{\pageref{nomlm}}\nomenclature{$v_i$}{\pageref{nomlm}}\nomenclature{$w_i$}{\pageref{nomlm}}
\nomenclature{$\tilde u_i$}{\pageref{nomlm}}\nomenclature{$\tilde v_i$}{\pageref{nomlm}}\nomenclature{$\tilde w_i$}{\pageref{nomlm}}
\begin{equation}
\begin{aligned}\label{defui}
u_i &:= \tfrac{1}{\alpha} ((\lambda_i  - \beta - \alpha \lambda_i ) a_0 + \tfrac{1}{2}(\alpha - \beta) (a_i + a_{-i})-s_{\tz,i} ); \\
v_i &:= \tfrac{1}{\alpha} ((\bt-\lambda_i )a_0+\tfrac{\bt}{2}(a_i+a_{-i})+s_{\tz,i});\\
w_i &:= \tfrac{1}{2}(a_i-a_{-i});\\
\tilde u_i &:= \tfrac{1}{\alpha} ((\lambda_i^f  - \beta - \alpha \lambda_i^f ) a_1 + \tfrac{1}{2}(\alpha - \beta) (a_{1-i} + a_{1+i})-s_{\tz,i} );\\
\tilde v_i &:= \tfrac{1}{\alpha} ((\bt-\lambda_i^f )a_1+\tfrac{\bt}{2}(a_{1-i}+a_{1+i})+s_{\tz,i});\\
\tilde w_i &:= \tfrac{1}{2}(a_{1-i}-a_{1+i}).
\end{aligned}
\end{equation}

\begin{lemma}\cite[Lemma~6.4]{FMS3}\label{ui}
Let $i\in \Z_{\geq 1}$, then, for $\ad_{a_0}$ \textup{(}respectively $\ad_{a_1}$\textup{)},
\begin{enumerate}
    \item  $u_i$ (respectively $\tilde u_i$) is a $0$-eigenvector, 
    \item  $v_i$ (respectively $\tilde v_i$) is an $\al$-eigenvector, 
    \item $w_i$ (respectively $\tilde w_i$) is a $\bt$-eigenvector.
\end{enumerate} 
Moreover,  
$ 
a_i=\lambda_i  a_0+u_i+v_i +w_i \quad \mbox{ and }\quad a_{1-i}=\lambda_i^f  a_1+{\tilde u}_i+{\tilde v}_i +{\tilde w}_i.$
 \end{lemma}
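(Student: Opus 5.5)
We verify the statement for $a_0$ and $u_i,v_i,w_i$ directly. The statement for $a_1$ and $\tilde u_i,\tilde v_i,\tilde w_i$ then follows by the same argument with the roles of $a_0$ and $a_1$ exchanged: we relabel $a_j\mapsto a_{1-j}$, which preserves the structure $\rho\mapsto\rho^{-1}$, and we note that $s_{\tz,i}$ and $s_{\bar 1,i}$ are related by Lemma~\ref{invariant}.

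\emph{The decomposition identity.} First we check that $\lambda_i a_0+u_i+v_i+w_i=a_i$. Adding the definitions, the $s_{\tz,i}$ terms cancel. The coefficient of $a_0$ is $\lambda_i+\tfrac{1}{\al}(\lambda_i-\bt-\al\lambda_i+\bt-\lambda_i)=0$. The coefficient of $a_i+a_{-i}$ is $\tfrac{1}{2\al}(\al-\bt+\bt)=\tfrac12$. Together with $w_i$, this leaves exactly $a_i$.

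\emph{The $\bt$-eigenvector.} The involution $\tau_0$ swaps $a_i$ and $a_{-i}$: indeed $a_{-i}=a_i^{\tau_0}$, since $\tau_0\rho\tau_0=\rho^{-1}$ and $\tau_0$ fixes $a_0$ (for odd $i$ one uses $a_1^{\tau_0}=a_{-1}$ and conjugates by powers of $\rho$). Hence $w_i$ is negated by $\tau_0$, so $w_i\in V^{a_0}_\bt$. Likewise $a_i+a_{-i}$ and $s_{\tz,i}$ are $\tau_0$-invariant; for $s_{\tz,i}$ this is Lemma~\ref{invariant}\ref{invariant_1} with $r=0$. Therefore $u_i,v_i\in V_+^{a_0}=\F a_0\oplus V_0^{a_0}\oplus V_\al^{a_0}$.

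\emph{The $0$- and $\al$-components.} Write $a_i=\lambda_i a_0+x_0+x_\al+x_\bt$ with respect to $a_0$. Then $x_\bt=w_i$ and $\tfrac12(a_i+a_{-i})=\lambda_i a_0+x_0+x_\al$. Compute $a_0a_i=\lambda_i a_0+\al x_\al+\bt x_\bt$, and use $s_{\tz,i}=a_0a_i-\bt(a_0+a_i)$ together with $x_\bt=a_i-\lambda_i a_0-x_0-x_\al$. This gives $s_{\tz,i}$ as a linear combination of $a_0$, $x_\al$, and $a_i-x_\bt=\tfrac12(a_i+a_{-i})$, modulo $x_0$:
\[
s_{\tz,i}=(\lambda_i-\bt)a_0+(\al-\bt)x_\al-\bt x_0+\bt\lambda_i a_0 - \bt\lambda_i a_0,
\]
after simplification. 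Solving the two linear relations, one from $\tfrac12(a_i+a_{-i})$ and one from $s_{\tz,i}$, for $x_0$ and $x_\al$ recovers exactly $u_i=x_0$ and $v_i=x_\al$. This requires $\al\neq0$, which holds.

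The only mildly delicate point is the bookkeeping in this last solve, together with the justification that $\tau_0$ maps $a_i$ to $a_{-i}$ for all $i$, including both parities. The rest is routine linear algebra.
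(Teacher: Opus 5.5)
Your route is the standard one: decompose $a_i$ with respect to $a_0$, use $a_{-i}=a_i^{\tau_0}$ to isolate the $\bt$-part, then use $a_0a_i$ and the definition of $s_{\tz,i}$ to solve for $x_0,x_\al$. Your decomposition check, the parity argument for $a_i^{\tau_0}=a_{-i}$, and $w_i=x_\bt$ are all correct.

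The problem is the one computation that carries the content. From $a_0a_i=\lambda_ia_0+\al x_\al+\bt x_\bt$ and $a_i=\lambda_ia_0+x_0+x_\al+x_\bt$ one gets
\[
s_{\tz,i}=(\lambda_i-\bt-\bt\lambda_i)a_0+(\al-\bt)x_\al-\bt x_0 .
\]
Its $a_0$-coefficient is $\lambda_{a_0}(s_{\tz,i})=\lambda_i-\bt-\bt\lambda_i$, as Lemma~\ref{s}\ref{s_2} requires. Your display collapses to $(\lambda_i-\bt)a_0+(\al-\bt)x_\al-\bt x_0$, because the two $\bt\lambda_i a_0$ terms cancel, so it loses $-\bt\lambda_i a_0$. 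Solving with it gives $x_\al=v_i-\tfrac{\bt\lambda_i}{\al}a_0$, not $v_i$. So ``recovers exactly'' is asserted rather than shown, and it is false for the formula you wrote.

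With the correct coefficient, substituting $x_0=\tfrac12(a_i+a_{-i})-\lambda_ia_0-x_\al$ gives $s_{\tz,i}=(\lambda_i-\bt)a_0+\al x_\al-\tfrac{\bt}{2}(a_i+a_{-i})$. Hence $x_\al=v_i$, and then $x_0=\tfrac12(a_i+a_{-i})-\lambda_ia_0-v_i=u_i$. A more direct route: multiply $\tfrac12(a_i+a_{-i})=\lambda_ia_0+x_0+x_\al$ by $a_0$ and use $a_0a_{\pm i}=s_{\tz,i}+\bt(a_0+a_{\pm i})$; the minus-sign case follows from the $\tau_0$-invariance of $s_{\tz,i}$. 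This gives $\al x_\al=s_{\tz,i}+(\bt-\lambda_i)a_0+\tfrac{\bt}{2}(a_i+a_{-i})$ at once. You flagged this solve as the delicate point yourself, so it needs to be written out.

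For the second decomposition, your appeal to Lemma~\ref{invariant} to ``relate'' $s_{\tz,i}$ and $s_{\tu,i}$ does not work. Part (b) only identifies $s_{\bar r,i}$ and $s_{\bar t,i}$ when $r\equiv_i t$, and $0\not\equiv_i 1$ for $i\geq 2$. Rerunning your argument with $a_1$ and $\tau_1$ in place of $a_0$ and $\tau_0$ ($\tau_1$ swaps $a_{1-i}$ and $a_{1+i}$) produces $a_1a_{1-i}-\bt(a_1+a_{1-i})=s_{\overline{1-i},i}=s_{\tu,i}$. So what you actually prove is the statement with $s_{\tu,i}$ in $\tilde u_i$ and $\tilde v_i$. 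That is the intended definition: it is the one used in Lemma~\ref{newformulas}\ref{newformulas vi}--\ref{newformulas viii}. Read literally with $s_{\tz,i}$, $\tilde u_i$ and $\tilde v_i$ are eigenvectors only when $s_{\tz,i}=s_{\tu,i}$. Also, since $\V$ need not have a flip, ``relabel $a_j\mapsto a_{1-j}$'' must mean rerunning the argument, not applying a map of $V$.
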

 \begin{lemma}\cite[Lemma 6]{FM}\label{s}
 For every $n\in \Z_+$, $i,r\in \Z_{\geq 1}$, and $j\in \{0,1\}$, the following hold:
\begin{enumerate}
\item  for every $x\in V$, $\lambda_{a_j}(x)=\lambda_{a_j}(a_jx)$; \label{s_1}
\item  $\lambda_{a_0}(s_{\tz,n})=\lm_n-\bt-\bt\lm_n$ and $\lambda_{a_1}(s_{\tu,n})=\lm_n^f-\bt-\bt\lm_n^f$; \label{s_2}
\item  $\lambda_{a_0}(a_{-i})=\lambda_i$ and $\lambda_{a_1}(a_{i+1})=\lambda_i^f$; \label{s_3}
\item  $\lambda_{a_0}(s_{\tr,n+1})=\lambda_{a_0}(s_{-\tr,n+1})$. \label{s_4}
\end{enumerate}
\end{lemma}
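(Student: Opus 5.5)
Each of the four claims follows from primitivity, the fusion law, and the decomposition of Lemma~\ref{ui}, together with the symmetries supplied by the Miyamoto involutions. I treat them in order, since (d) uses (a)--(c).

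For (a), write $x=\lambda_{a_j}(x)a_j+x_0+x_\al+x_\bt$ with respect to $\ad_{a_j}$, as in \eqref{xlambda}. Then
\[
a_jx=\lambda_{a_j}(x)a_j+\al x_\al+\bt x_\bt,
\]
and this is again a decomposition into $\ad_{a_j}$-eigenvectors. Its component in $V_1^{a_j}=R_\F a_j$ is $\lambda_{a_j}(x)a_j$. By uniqueness of the decomposition, $\lambda_{a_j}(a_jx)=\lambda_{a_j}(x)$.

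For (c), I use that $\tau_0$ is an automorphism fixing $a_0$. Hence $\tau_0$ preserves each eigenspace of $\ad_{a_0}$, and so $\lambda_{a_0}(x^{\tau_0})=\lambda_{a_0}(x)$ for every $x\in V$. It remains to check that $a_i^{\tau_0}=a_{-i}$.
\begin{itemize}
\item Since $\tau_0$ and $\tau_1$ are involutions, $\rho^{\tau_0}=\tau_0\tau_0\tau_1\tau_0=\tau_1\tau_0=\rho^{-1}$.
\item Also $a_1^{\tau_0}=a_1^{\tau_0\tau_1}=a_1^{\rho}=a_{-1}$, using $a_1^{\tau_1}=a_1$ and \eqref{aroi} with $a_{-1}=a_1^{\rho^{-1}}$.
\end{itemize}
Conjugating the definitions in \eqref{aroi} by $\tau_0$ then gives $a_k^{\tau_0}=a_{-k}$ for all $k$. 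The statement for $a_1$ is the same argument with $\tau_1$, which gives $a_{1-i}^{\tau_1}=a_{1+i}$. (Alternatively, (c) can be read off from Lemma~\ref{ui}, where $a_{-i}$ has the same $1$-component as $a_i$ because $w_i$ changes sign.)

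For (b), I expand
\[
\lambda_{a_0}(s_{\tz,n})=\lambda_{a_0}(a_0a_n)-\bt\lambda_{a_0}(a_0)-\bt\lambda_{a_0}(a_n).
\]
By (a), $\lambda_{a_0}(a_0a_n)=\lambda_{a_0}(a_n)=\lm_n$, and $\lambda_{a_0}(a_0)=1$. This gives $\lm_n-\bt-\bt\lm_n$. Similarly, $s_{\tu,n}=a_1a_{1+n}-\bt(a_1+a_{1+n})$. Part (c) gives $\lambda_{a_1}(a_{1+n})=\lm_n^f$, and (a) handles the product, giving $\lm_n^f-\bt-\bt\lm_n^f$.

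For (d), set $m=n+1$. Lemma~\ref{invariant}(b) makes $s_{\tr,m}$ well defined on residue classes. Applying $\tau_0$ and using $a_k^{\tau_0}=a_{-k}$ gives
\[
s_{\tr,m}^{\tau_0}=a_{-r}a_{-r-m}-\bt(a_{-r}+a_{-r-m}).
\]
This is $s$ for the pair $(a_{-r-m},a_{-r})$, namely $s_{\overline{-r-m},m}$ by commutativity. Since $-r-m\equiv -r \pmod m$, it equals $s_{-\tr,m}$. Combining this with the $\tau_0$-invariance of $\lambda_{a_0}$ yields (d).

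The only delicate point is the index bookkeeping in (c) and (d): the identity $a_k^{\tau_0}=a_{-k}$ and the congruence $-r-m\equiv -r$ modulo $m$. The rest is direct.
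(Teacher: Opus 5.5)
Your proof is correct and is the natural argument (the paper gives no proof of its own here and simply cites \cite[Lemma 6]{FM}): uniqueness of the $\ad_{a_j}$-decomposition gives (a), (a) gives (b), and the $\tau_0$-invariance of $\lambda_{a_0}$ together with $a_k^{\tau_0}=a_{-k}$ gives (c) and (d).

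One slip needs repairing. As written, the chain $a_1^{\tau_0}=a_1^{\tau_0\tau_1}=a_1^{\rho}=a_{-1}$ is wrong in the paper's right-action convention, because $a_1^{\tau_0\tau_1}=(a_1^{\tau_0})^{\tau_1}$ and $a_1^{\rho}=a_3$ by \eqref{aroi}. The correct chain is $a_1^{\tau_0}=(a_1^{\tau_1})^{\tau_0}=a_1^{\tau_1\tau_0}=a_1^{\rho^{-1}}=a_{-1}$, and with that fix the rest of your argument goes through unchanged.
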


\begin{lemma}\label{quotient}
 Let $I$ be an ideal of $V$ not containing $a_i$ for $i\in \{0,1\}$. Then $(V/I, \{a_0+I, a_1+I\})$ is an $\mathcal{M}(\al,\bt)$-axial algebra such that  
$$
\lm_{a_{0+I}}(a_1+I)=\lmu \mbox{ and } \lm_{a_{1+I}}(a_0+I)=\lmf
.$$
Moreover, the canonical projection $\pi:V\to V/I$ induces a homomorphism of $\V$ onto $(V/I, \{a_0+I, a_1+I\})$.
\end{lemma}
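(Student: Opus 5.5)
Let $\pi\colon V\to \bar V:=V/I$ be the canonical projection, and write $\bar x:=x+I$. The plan is to check the axiom list for $\bar a_0,\bar a_1$ directly, then compare projections.

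\emph{Step 1: the images are axes.} Since $\pi$ is a surjective algebra homomorphism, $\bar V$ is generated by $\bar a_0,\bar a_1$, and each $\bar a_j$ is idempotent (A0). For A1, apply $\pi$ to $V=\bigoplus_\lambda V^{a_j}_\lambda$. This gives $\bar V=\sum_\lambda \pi(V^{a_j}_\lambda)$, and $\pi(V^{a_j}_\lambda)\subseteq \bar V^{\bar a_j}_\lambda$. Eigenspaces for distinct eigenvalues of $\ad_{\bar a_j}$ are independent, because the eigenvalues $1,0,\al,\bt$ are distinct elements of $\F$. Hence $\bar V=\bigoplus_\lambda\bar V^{\bar a_j}_\lambda$ and $\bar V^{\bar a_j}_\lambda=\pi(V^{a_j}_\lambda)$. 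Then A2 follows from $\pi(V_\lambda)\pi(V_\mu)=\pi(V_\lambda V_\mu)\subseteq\pi(V_{\lambda\star\mu})$.

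\emph{Step 2: primitivity, the main point.} We have $\bar V^{\bar a_j}_1=\pi(V_1^{a_j})=\pi(R_\F a_j)=R_\F\bar a_j$. It remains to ensure $\bar a_j\neq 0$, so that $\bar a_j$ is a genuine nonzero idempotent and $\pi$ restricted to $R_\F a_j$ behaves as required. This is exactly where the hypothesis $a_j\notin I$ is used. With $R_\F=\F$, a field, $R_\F a_j\cap I$ is either $0$ or $\F a_j$, and the latter is excluded. Hence $\F\bar a_j$ is one-dimensional and A3 holds. Thus $(\bar V,\{\bar a_0,\bar a_1\})$ is an $\Mab$-axial algebra.

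\emph{Step 3: projections.} Decompose $a_1=\lmu a_0+u_1+v_1+w_1$ as in Lemma~\ref{ui}. Applying $\pi$ gives $\bar a_1=\lmu\bar a_0+\bar u_1+\bar v_1+\bar w_1$, where $\bar u_1,\bar v_1,\bar w_1$ lie in the $0$-, $\al$- and $\bt$-eigenspaces of $\ad_{\bar a_0}$. By uniqueness of the decomposition~\eqref{xlambda} in $\bar V$, which holds since $\bar a_0\ne0$ and the sum is direct, we get $\lambda_{\bar a_0}(\bar a_1)=\lmu$. Symmetrically, $\lambda_{\bar a_1}(\bar a_0)=\lmf$ by applying $\pi$ to $a_0=\lmf a_1+\tilde u_1+\tilde v_1+\tilde w_1$.

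Finally, $\pi$ is an algebra homomorphism mapping $\{a_0,a_1\}$ onto $\{\bar a_0,\bar a_1\}$, so it is an axial homomorphism of $\V$ onto the quotient. The only delicate step is the nonvanishing and one-dimensionality of $\pi(R_\F a_j)$ in Step 2. This is why the hypothesis on $I$ is needed, and why we work over the field $\F$.
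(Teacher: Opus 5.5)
Your proof is correct: it is exactly the routine verification the paper leaves as ``immediate'' (pushing the eigenspace decomposition and fusion law through $\pi$, then reading off the projections from the image of the decomposition of $a_1$, respectively $a_0$). Restricting to $R_\F=\F$ is the right call, since over a general ring the condition $a_j\notin I$ alone would not give $R_\F a_j\cap I=0$, and that is what the well-definedness of the $R_\F$-valued projection $\lambda_{\bar a_j}$ actually requires.
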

\begin{proof}
    The proof is immediate.
\end{proof}

We denote the algebra $(V/I, \{a_0+I, a_1+I\})$ of Lemma~\ref{quotient} by $\V/I$ \nomenclature{$\V/I$}{\pageref{V/I}}\label{V/I} and call it a {\em  quotient of $\V$}\index{quotient of $\V$}.

\begin{lemma}\label{action}
Let $h,m\in \Z$, with $m$ positive. With the above notation, the following assertions hold:
\begin{enumerate}
\item  $\rho^m$ maps $s_{\tih , 2m-1}$ to $s_{\tih+\tu, 2m-1}$;  \label{action_1}
\item  $\Miy(\V)$ acts transitively on each one of the following sets 
\[
\{s_{2\tr,2m} \mid \tr\in \Z_{2m}\}, \quad
\{s_{2\tr+\tu,2m} \mid \tr\in \Z_{2m}\},\quad
\{s_{\tr,2m-1} \mid \tr\in \Z_{2m-1}\}.
\]
\end{enumerate}
  \end{lemma}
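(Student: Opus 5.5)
The plan is to reduce both statements to the action of $\rho$ on the axes $a_i$, and then use Lemma~\ref{invariant} to see that $s_{\ti,n}$ depends only on the class $\ti$ modulo $n$.

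\emph{Step 1: the action of $\rho$ on the axes.} By the definition~\eqref{aroi}, for every $i\in\Z$ we have
\[
a_{2i}^{\rho}=a_0^{\rho^{i+1}}=a_{2i+2}\quad\mbox{and}\quad a_{2i+1}^{\rho}=a_1^{\rho^{i+1}}=a_{2i+3}.
\]
Hence $a_j^{\rho}=a_{j+2}$ for every $j\in\Z$, and so $a_j^{\rho^m}=a_{j+2m}$. Since $\rho\in\Miy(\V)$ is an algebra automorphism, it maps products to products. Therefore, for every $h\in\Z$ and every positive $n$,
\[
\bigl(a_ha_{h+n}-\bt(a_h+a_{h+n})\bigr)^{\rho^m}=a_{h+2m}a_{h+2m+n}-\bt(a_{h+2m}+a_{h+2m+n}).
\]

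\emph{Step 2: well-definedness and part (a).} The expression $s_{\tih,n}$ is only meaningful as a function of the class $\tih=h+n\Z$. That it is independent of the representative is exactly Lemma~\ref{invariant}(b). So Step~1 gives $s_{\tih,n}^{\rho^m}=s_{\overline{h+2m},n}$. Now take $n=2m-1$. Then $2m\equiv 1 \pmod{2m-1}$, so $\overline{h+2m}=\tih+\tu$, which proves (a).

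\emph{Step 3: part (b).} For the set $\{s_{\tr,2m-1}\}$, iterate (a): the powers $\rho^{mk}$ for $k=0,\dots,2m-2$ send $s_{\tz,2m-1}$ to $s_{\bar k,2m-1}$. This runs through every class in $\Z_{2m-1}$, so the action is transitive.

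For $n=2m$, Step~1 with exponent $1$ gives $s_{\tr,2m}^{\rho}=s_{\tr+\td,2m}$. Since $2m$ is even, the parity of a class modulo $2m$ is well defined. So the powers $\rho^k$, for $k=0,\dots,m-1$, move $s_{\tz,2m}$ through all the even classes $s_{2\tr,2m}$, and move $s_{\tu,2m}$ through all the odd classes $s_{2\tr+\tu,2m}$. This gives transitivity on each of the first two sets. Each of these sets is $\rho$-stable, so the action of $\langle\rho\rangle\le\Miy(\V)$, and hence of $\Miy(\V)$, restricts to them.

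The only point that needs care is the well-definedness in Step~2. I would not re-derive it, since it is precisely Lemma~\ref{invariant}(b). Everything else is index bookkeeping once $a_j^{\rho}=a_{j+2}$ is recorded.
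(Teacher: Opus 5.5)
Your proposal is correct and follows the paper's own one-line proof: $a_j^{\rho}=a_{j+2}$, so $\rho$ sends $s_{\bar h,n}$ to $s_{\bar h+\bar 2,n}$ (well defined by Lemma~\ref{invariant}), and the rest is index bookkeeping modulo $2m-1$ and $2m$. One small slip: $\rho$-stability of the three sets does not by itself make them $\Miy(\V)$-stable. This is easily fixed, since $\tau_0$ fixes $a_0$, sends $a_1$ to $a_1^{\tau_1\tau_0}=a_{-1}$ and conjugates $\rho$ to $\rho^{-1}$, so $a_j\mapsto a_{-j}$ and $s_{\bar h,n}\mapsto s_{-\bar h,n}$, which preserves the parity of classes modulo $2m$.
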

\begin{proof}
 The result is immediate since, for every $j\in \Z$, $\rho$ maps $a_j$ to $a_{j+2}$ and so it maps $s_{\tj, n}$ to $s_{\tj+\bar 2, n}$.   
\end{proof}

Let $\sigma$ be an involutory automorphism of $R_\F$ that fixes pointwise the elements of $\F$.
A {\em $\sigma$-flip}\index{$\sigma$-flip} of $\V$ is a map
$
f:V\to V
$
 satisfying the following conditions
\begin{enumerate}
    \item[F1)] $f$ is a $\sigma$-semi automorphism of the $R_\F$-module $V$, that is, for all $v_1, v_2\in V$, $r_1, r_2\in R_\F$, $(r_1v_1+r_2v_2)^f=r_1^\sigma v_1^f+r_2^\sigma v_2^f$,
    \item[F2)] $f$ preserves the algebra multiplication of $V$,
    \item[F3)] $f$ swaps $a_0$ and $a_1$.
\end{enumerate}
 Clearly, $f$ has order two.


\begin{lemma}\label{translation}
Let $f$ be a $\sigma$-flip of $\V$. For every $i\in \Z$, 
$$
a_i^f=a_{1-i} \quad \mbox{ and }\quad a_i^{\tau_0 f}=a_{i+1}.
$$
\end{lemma}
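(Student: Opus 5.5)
The plan is to first show that $f$ conjugates the two Miyamoto involutions into each other, namely $f^{-1}\tau_0 f=\tau_1$ and $f^{-1}\tau_1 f=\tau_0$. Since $f$ is a $\sigma$-semi-automorphism that preserves multiplication and maps $a_0$ to $a_1$, it carries the $\mu$-eigenspace of $\ad_{a_0}$ onto the $\mu^\sigma$-eigenspace of $\ad_{a_1}$. Because $\mu\in\{1,0,\al,\bt\}\subseteq\F$ is fixed by $\sigma$, this is the $\mu$-eigenspace of $\ad_{a_1}$. Hence $f$ maps $V_+^{a_0}$ onto $V_+^{a_1}$ and $V_-^{a_0}$ onto $V_-^{a_1}$. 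Comparing the two maps on each graded piece gives $f\tau_1=\tau_0 f$ (maps acting on the right), that is, $\tau_0^f=\tau_1$. Since $f$ has order two, we also get $\tau_1^f=\tau_0$, and therefore $\rho^f=(\tau_0\tau_1)^f=\tau_1\tau_0=\rho^{-1}$.

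For the first identity, I apply $f$ to the defining formulas \eqref{aroi}. This gives
\[
a_{2i}^f=(a_0^{\rho^i})^f=(a_0^f)^{f^{-1}\rho^i f}=a_1^{\rho^{-i}}=a_{1-2i},
\]
and likewise $a_{2i+1}^f=a_0^{\rho^{-i}}=a_{-2i}=a_{1-(2i+1)}$. Here I use the convention that exponents compose left to right, so that $x^{gh}=(x^g)^h$. So $a_i^f=a_{1-i}$ for all $i\in\Z$.

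For the second identity it suffices to show $a_i^{\tau_0}=a_{-i}$, since then $a_i^{\tau_0 f}=a_{-i}^f=a_{1+i}$. We have $a_0^{\tau_0}=a_0$ and $\tau_0\rho\tau_0=\tau_1\tau_0=\rho^{-1}$, so $a_{2i}^{\tau_0}=a_0^{\rho^i\tau_0}=a_0^{\tau_0\rho^{-i}}=a_{-2i}$. Similarly $a_{2i+1}^{\tau_0}=a_1^{\tau_0\rho^{-i}}$. Because $a_1^{\tau_0}=a_1^{\tau_0\tau_1}=a_1^{\rho}=a_3^{\ldots}$ requires care, I compute it instead as $a_1^{\tau_0}=a_1^{\tau_1\tau_0}=a_1^{\rho^{-1}}=a_{-1}$, using that $\tau_1$ fixes $a_1$. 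Then $a_{2i+1}^{\tau_0}=a_{-1}^{\rho^{-i}}=a_{-2i-1}$, as required.

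The only delicate point is keeping the composition conventions consistent, with maps written as right exponents, and checking that $\sigma$ fixes the eigenvalues; everything else is routine bookkeeping.
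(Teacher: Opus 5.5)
Your proof is correct and follows the paper's argument. Since $\sigma$ fixes the eigenvalues, $f$ carries $V_\mu^{a_0}$ onto $V_\mu^{a_1}$, so conjugation by $f$ swaps $\tau_0$ and $\tau_1$ and inverts $\rho$, which yields $a_i^f=a_{1-i}$; your computation $a_{2i}^f=a_1^{\rho^{-i}}=a_{1-2i}$ is right, whereas the paper's display ends with the typo $a_{i-1}$. Your derivation of $a_i^{\tau_0}=a_{-i}$ correctly supplies the step the paper calls immediate, but delete the stray aside asserting $a_1^{\tau_0}=a_1^{\tau_0\tau_1}$: that equality is false in general, since the right-hand side is $a_3$.
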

\begin{proof}
Let $\mu\in \{1,0,\al, \bt\}$. Then $\mu\in \F$, whence $\mu^\sigma=\mu$. By F1) and F3), 
\[
(V_\mu^{a_0})^f=V_\mu^{a_1}\quad \mbox{ and } (V_\mu^{a_1})^f=V_\mu^{a_0}.
\]      
By the definition of Miyamoto involution, it follows that conjugation by $f$ swaps $\tau_0$ and $\tau_1$, and so it inverts $\rho$. Assume first that $i$ is even, say $i=2j$ with $j \in \Z$, then 
\[
(a_{i})^f=((a_0)^{\rho^j})^f=(a_0)^{\rho^j f}=(a_{0})^{f\rho^{-j}}=((a_{0})^f)^{\rho^{-j}}=a_1^{\rho^{-j}}=a_{i-1}.
\]
A similar computation gives the first assertion if $i$ is odd. 
This second one follows immediately.
\end{proof}

Note that, if $\sigma$ is the identity map on $R_\F$, then $f$ is an $R_\F$-algebra automorphism of $V$. In this case we say that $f$ is a {\em flip}\index{flip}. We call $\V$ \index{axial algebra!symmetric $2$-generated}\emph{symmetric}\index{symmetric $2$-generated axial algebra} if it has a flip. 

For a symmetric algebra $\V$ let
\begin{equation}
  V^\ast:=\lla \,(a_0-a_1)^\tau  :  \tau\in \langle f, \Miy(\mathcal V)\rangle \,\rra  
\end{equation}\nomenclature{$V^{\ast}$}{\pageref{Vast}}\label{Vast}
and 
\begin{equation}
  V^{\ast \ast}:=\lla\, (a_0-a_2)^\tau  :  \tau\in \langle f, \Miy(\mathcal V)\rangle\, \rra .  \end{equation}\nomenclature{$V^{\ast\ast}$}{\pageref{Vastast}}\label{Vastast}
Note that $V^{\ast \ast} \subseteq V^\ast$, since $a_0-a_2=(a_0-a_1)+(a_1-a_2)=(a_0-a_1)-(a_0-a_1)^{\tau_1}$.

Let $V_e$\nomenclature{$V_e$}{\pageref{Ve}}\label{Ve} be the subalgebra of $V$ generated by $a_0$ and $a_2$ and let $V_o$\nomenclature{$V_o$}{\pageref{Vo}}\label{Vo} be the subalgebra of $V$ generated by $a_{-1}$ and $a_1$. Set
\[
\V_e:=(V_e, \{a_0, a_2\})\quad \mbox{and} \quad \V_o:=(V_o, \{a_{-1}, a_1\}).
\]

\begin{lemma}\label{lem:VeVo}
$\V_e$ and $\V_o$ are symmetric $\Mab$-axial algebras. Moreover
$$
V_e^\ast = \lla  (a_0-a_2)^\tau : \tau\in  {\Miy(\V)} \rra,\quad V_e^{\ast\ast} = \lla  (a_{-2}-a_2)^\tau :  \tau\in  \Miy(\V) \rra
$$
and 
$$ V_o^{\ast\ast}=\lla  (a_{-1}-a_{1})^\tau :  \tau\in  {\Miy(\V)}\rra, \quad V_o^{\ast\ast}=\lla  (a_3-a_{-1})^\tau :  \tau\in  \Miy(\V)\rra.$$ 
    \end{lemma}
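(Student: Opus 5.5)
The plan is to treat $\V_e$ first; the argument for $\V_o$ is the same after shifting all indices by one. Both generators of $V_e$ are axes of $V$. Since $V_e$ is a subalgebra containing $a_0$, the eigenspaces of $\ad_{a_0}$ restricted to $V_e$ are $V_e\cap V_\mu^{a_0}$. The key step is to show that $V_e$ decomposes as the direct sum of these intersections. To do this I will use that the projections onto the eigenspaces of $\ad_{a_0}$ are polynomials in $\ad_{a_0}$: since $\ad_{a_0}$ is semisimple with eigenvalues in $\{1,0,\al,\bt\}$, each projection is a Lagrange interpolation polynomial evaluated at $\ad_{a_0}$. These polynomials preserve the subalgebra $V_e$. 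Hence $V_e=\bigoplus_\mu (V_e\cap V^{a_0}_\mu)$, and the fusion law is inherited from $V$. Primitivity also follows, because
\[
V_e\cap V_1^{a_0}\subseteq \F a_0\subseteq V_e .
\]
The same argument applies to $a_2$, so $\V_e$ is a primitive $\Mab$-axial algebra generated by $a_0$ and $a_2$.

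For symmetry, I will take the restriction $\tau_1|_{V_e}$. The involution $\tau_1$ maps $a_0$ to $a_0^{\tau_1}$. Since $a_2=a_0^{\rho}=a_0^{\tau_0\tau_1}=a_0^{\tau_1}$, this means $\tau_1$ swaps $a_0$ and $a_2$. Therefore $\tau_1$ maps $V_e$ onto itself, and its restriction is an algebra automorphism of $V_e$ of order at most two that swaps the generators, which is a flip. Similarly $\tau_0$ swaps $a_1$ and $a_{-1}$, because $a_{-1}=a_1^{\rho^{-1}}=a_1^{\tau_1\tau_0}=a_1^{\tau_0}$. So $\tau_0|_{V_o}$ is a flip of $\V_o$.

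Next I need the descriptions of $V_e^\ast$ and $V_e^{\ast\ast}$. In $\V_e$ the flip is $f=\tau_1|_{V_e}$, and the Miyamoto group of $\V_e$ is generated by $\tau_{a_0}|_{V_e}$ and $\tau_{a_2}|_{V_e}$. The Miyamoto involution of $a_0$ in $\V_e$ is the restriction of $\tau_0$, because the eigenspaces of $\ad_{a_0}$ on $V_e$ are intersections of those on $V$. Likewise $\tau_{a_2}=\tau_1\tau_0\tau_1$ restricts to the Miyamoto involution of $a_2$. Therefore
\[
\langle f,\Miy(\V_e)\rangle=\langle \tau_0,\tau_1\rangle|_{V_e}=\Miy(\V)|_{V_e},
\]
and the first equality follows directly from the definition of $V_e^\ast$. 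For $V_e^{\ast\ast}$, the definition uses the "$a_0-a_2$" element of $\V_e$, with its indexing $a'_j$ in which $a'_0=a_0$ and $a'_1=a_2$. Under this indexing $a'_{-1}=a_{-2}$ and $a'_1=a_2$, and $a_0'-a_2'$ lies in the orbit of $a'_{-1}-a'_1=a_{-2}-a_2$ under $\rho'^{\pm1}$. So the orbit under $\Miy(\V)$ gives the stated span. The odd case is analogous, with $a_{-1},a_1$ in place of $a_0,a_2$ and the shifted element $a_3-a_{-1}$. The first displayed odd equality, which presumably should read $V_o^{\ast}$, is spanned by the orbit of $a_{-1}-a_1$.

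The main obstacle is only bookkeeping of indices, namely matching the relabelled sequence of axes in $\V_e$ and $\V_o$ with the original $a_j$. Once that is set up, everything reduces to checking that $\tau_0,\tau_1$ restrict to the relevant Miyamoto involutions and flips.
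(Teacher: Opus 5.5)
Your proposal is correct and follows the paper's route. The flips are $\tau_1|_{V_e}$ and $\tau_0|_{V_o}$, and $\langle \tau_1,\Miy(\V_e)\rangle=\Miy(\V)=\langle\tau_0,\Miy(\V_o)\rangle$ gives all four descriptions. Two of your points go beyond the paper's text:
\begin{itemize}
\item Your Lagrange-projection argument justifies something the paper takes for granted, namely that $a_0$ and $a_2$ remain primitive axes of the subalgebra $V_e$.
\item Your assignment of which involution swaps which pair ($\tau_1$ swaps $a_0,a_2$; $\tau_0$ swaps $a_{\pm1}$) is the correct one. The paper's proof interchanges $\tau_0$ and $\tau_1$ at that point.
\end{itemize}

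One slip to fix: $a'_0-a'_2=a_0-a_4$ is not carried to $a_{-2}-a_2$ by a power of $\rho'=\tau_0\tau_2=\rho^2$, which only shifts primed indices by $2$. Use $\rho^{-1}$ instead, or $\tau_1$, which sends it to $-(a_{-2}-a_2)$. Since both lie in $\Miy(\V)$, the conclusion is unaffected.
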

    \begin{proof}
 By definition $\V_e$ and $\V_o$ are generated by $\Mab$-axes, hence they are $\Mab$-axial algebras.  Moreover, by the definition of $a_2$ and $a_{-1}$, $\tau_0$ swaps $a_0$ and $a_2$ and $\tau_1$ swaps $a_1$ and $a_{-1}$. Thus $\V_e$ and $\V_o$ are symmetric. 
 Moreover,
$$
\langle \tau_0, \Miy(\V_o)\rangle= \Miy(\V)=\langle \tau_1, \Miy(\V_e)\rangle,  
$$
whence, by the definition of $V_e^\ast$ and $V_o^\ast$
the result follows.
    \end{proof}

Recall the definition of axial dimension $\adim(\V)$ of $\V$, given in the Overview. 

\begin{lemma}\cite[Lemma~2.1]{Yabe}\label{lemma:adim}
Assume $\V$ is symmetric. Then, for every $i\in \Z$, $(a_i, \ldots , a_{i+\adim(\V)-1})$  is a basis of the linear span of $A^{\Miy(\V)}$.    
\end{lemma}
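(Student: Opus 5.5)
The plan is to show two things: the span $U:=\langle a_j : j\in\Z\rangle$ of the axes equals the span of any $\adim(\V)$ consecutive axes $a_i,\dots,a_{i+\adim(\V)-1}$, and these consecutive axes are linearly independent. Note that $A^{\Miy(\V)}=\{a_j : j\in \Z\}$ by Equation~\eqref{aroi}. The key tool is the flip $f$ together with $\tau_0$: by Lemma~\ref{translation}, the map $t:=\tau_0 f$ is a ($\F$-linear, since $f$ is a genuine flip) bijection with $a_j^t=a_{j+1}$ for every $j$. Hence $t$ acts on $U$ as a shift, and $U$ is a $t$-invariant space spanned by the $t$-orbit of $a_0$ (or of $a_i$ for any $i$).

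\emph{Step 1.} Let $d:=\adim(\V)$, and assume first that $d$ is finite; the infinite case will be handled separately. Let $m$ be the least integer such that $a_i,\dots,a_{i+m}$ are linearly dependent. Since $a_j=a_i^{t^{j-i}}$, this $m$ does not depend on $i$, because $t$ is an invertible linear map carrying $(a_i,\dots,a_{i+m})$ to $(a_{i+1},\dots,a_{i+m+1})$. Minimality gives a relation in which $a_{i+m}$ has nonzero coefficient, so
\[ a_{i+m}\in\langle a_i,\dots,a_{i+m-1}\rangle. \]
Applying powers of $t$ shows that $W_i:=\langle a_i,\dots,a_{i+m-1}\rangle$ is $t$-invariant, that is, $a_{i+m}\in W_i$ implies $W_{i+1}\subseteq W_i$. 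Since $a_i,\dots,a_{i+m-1}$ are independent, the two spaces have equal dimension, so $W_{i+1}=W_i$.

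\emph{Step 2.} By Step 1, all $W_i$ coincide, and their union is $U$. Hence $U=W_i$ has basis $(a_i,\dots,a_{i+m-1})$, and therefore $m=\dim U=d$.

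\emph{Step 3.} If $d=\infty$, then no such $m$ exists. Indeed, if some finite $m$ existed, Step 2 would force $U$ to be finite dimensional. So every finite string of consecutive axes is independent, and the statement is read with the infinite sequence $(a_i,a_{i+1},\dots)$. For this we must also argue spanning in the negative direction. This is done by using $t^{-1}$, or the flip $f$, which reverses indices: $a_j^f=a_{1-j}$.

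The main subtlety is the $t$-invariance argument in Step 1. One must check that $t$ is genuinely linear (it is, because $\sigma$ is the identity for a flip) and that the minimal dependence length is uniform in $i$. Beyond that, the infinite-dimensional case is only a matter of interpreting the statement.
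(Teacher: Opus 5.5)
The paper gives no proof of this lemma; it only cites Yabe. Your Steps~1--2 are correct and settle the case $\adim(\V)<\infty$ completely. The shift $t=\tau_0 f$ is an automorphism with $a_j^t=a_{j+1}$. Hence the minimal length $m$ of a dependent consecutive string does not depend on $i$. Then $a_{i+m}\in W_i$ together with a dimension count gives $W_{i+1}=W_i$ for all $i$, in both directions. So $U=W_i$ and $m=\adim(\V)$.

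The problem is Step~3. You correctly show that, when $\adim(\V)=\infty$, every finite string of consecutive axes is linearly independent. Since any finite set of indices lies inside a consecutive block, this makes the whole family $(a_j)_{j\in\Z}$ linearly independent. In particular, $a_{i-1}\notin\langle a_j : j\geq i\rangle$ for every $i$. So the one-sided sequence $(a_i,a_{i+1},\dots)$ is never a basis of $U$ in this case, and your proposed ``spanning in the negative direction'' via $t^{-1}$ or $f$ cannot work. Indeed, $t^{-1}$ maps $\langle a_j : j\geq i\rangle$ onto $\langle a_j : j\geq i-1\rangle$, and $f$ maps it onto $\langle a_j : j\leq 1-i\rangle$; neither image is contained in the original space.

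The correct reading of the statement when $\adim(\V)=\infty$ is that $(a_j)_{j\in\Z}$ is a basis of $U$. Spanning holds by definition, and independence is exactly what your Step~3 proves once you drop the one-sided interpretation. This is also all the paper needs from the lemma. For example, in Lemma~\ref{supernew} it uses the linear independence of $a_{-2},a_0,a_2,a_4$ whenever $\adim(\V_e)\geq 4$, which includes the infinite case.
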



A \index{axet!Z2-axet@$\mathbb{Z}_2$-axet}{\em $\Z_2$-axet}%
\index{Z2-axet@$\mathbb{Z}_2$-axet} (see~\cite{axet}) is a triple $(G,Y,\tau)$ where $G$ is a group, $Y$ is a $G$-set, and 
\[
\tau \colon Y \to G
\]
is a map (written $\tau_y = \tau(y)$), such that, for all $y\in Y$ and $g \in G$,
\begin{enumerate}[enum_note]
\item $\tau_y \in G_y$,
\item $\tau_y^2 = 1$, 
\item $\tau_{yg} = {\tau_y}^g$,
\end{enumerate}
and we set $\Miy Y := \langle \tau_y : y\in Y\rangle \leq G$.  The axet $(G,Y,\tau)$ is \index{axet!$2$-generated}{\em $2$-generated}\index{$2$-generated axet} if there exist $y_0, y_1\in Y$ such that 
\[
Y=y_0^G\cup y_1^G.
\]
In particular, if $X$ is defined as in Equation~\eqref{defX} on page~\pageref{defX} and $\tau\colon X \to \Miy(\V)$ is the map that associates to each $a \in X$ the Miyamoto involution $\tau_a$, then the triple $(\Miy(\V), X, \tau )$ is a $2$-generated $\Z_2$-axet, which we call \index{axet!of $\V$}{\em the axet of $\V$}\index{axet of $\V$}.  

The  $2$-generated $\Z_2$-axets have been classified in~\cite{axet}. Let $n\in \N\cup \{\infty \}$. For $n\in \N$,  let $\Pi_n:=\{P_0, \ldots , P_{n-1}\}$ be the vertex set of a regular $n$-gon and, for every $i\in \{0, \dots n-1\}$, let $\tau_{P_i}$ be the reflection in $P_i$. For $n=\infty$  let   $\Pi_n=\Z$,  let $P_i:=i$, and, for every $i\in \Z$, let $\tau_{P_i}$ be the map that sends, for every $j\in \Z$, $i+j$ to $i-j$. Let $M_{n}:=\langle \tau_{P_i} : P_i \in\Pi_n \rangle$ and, as above, let
\[
\begin{aligned}
    \tau \colon \Pi_n &\longrightarrow M_{n}\\
    P_i & \longmapsto \tau_{P_i}.
  \end{aligned}
\]
Then  $(M_{n}, \Pi_n, \tau )$ is a $2$-generated $\Z_2$-axet denoted by $X(n)$\nomenclature{$X(n)$}{\pageref{X(n)}}\label{X(n)}. Note that, if $n$ is odd, $M_n$ is the dihedral group of order $2n$ and $\Pi_n=P_0^{M_n}=P_1^{M_n} $; if $n$ is even or $n=\infty$, $M_n$ is a subgroup of order $2$ in the dihedral group of order $2n$, $\Pi_n=P_0^{M_n}\cupdot P_1^{M_n} $, and $|P_0^{M_n}|=|P_1^{M_n}|$. 
Now let $k\in \N$ and $n=4k$. Let 
\[
P_i^\prime:=
\begin{cases}
    P_i & \mbox{ if } i\equiv 0 \bmod 2\\
    \{ P_i, P_{i+2k} \} & \mbox{ if } i\equiv 1 \bmod 2
\end{cases}
\]
and $\Pi^\prime:=\{P_0^\prime, \ldots , P_{n-1}^\prime\}$. 
Since $\tau_{P_i}=\tau_{P_{i+2k}}$, for every $i\in \{0,\ldots , n-1\}$, and $\tau_{P_i}$ permutes the pairs of opposite vertices in $\Pi_n$,  the map $\tau$ induces in a natural way a map 
\[
\begin{aligned}
\tau^\prime\colon \Pi^\prime & \longrightarrow M_{n} \\
 P_i^\prime &\longmapsto \tau_{P_i^\prime}
\end{aligned}
\]
and $(M_{n}, \Pi_n^\prime, \tau^\prime )$ is a $2$-generated $\Z_2$-axet denoted by $X^\prime(3k)$\nomenclature{$X^\prime(n)$}{\pageref{X'}}\label{X'}. In this case, $$\Pi_n^\prime={P_0^\prime}^{M_n}\cupdot {P_1^\prime}^{M_n}\quad  \mbox{ and }\quad  |{P_0^\prime}^{M_n}|=2|{P_1^\prime}^{M_n}|.$$

\begin{theorem}\cite[Theorem 1.1]{axet}\label{axetthm}
Let $(G, Y, \tau )$ be a $2$-generated $\Z_2$-axet with $|Y|=n$, where $n\in \N\cup \{\infty\}$. Then $(G, Y, \tau )$ is isomorphic to one of the following: 
\begin{enumerate}
  \item   $X(n)$, 
\item  $X^\prime(n)$, where $n = 3k$ and $k \in \N$.
\end{enumerate}
\end{theorem}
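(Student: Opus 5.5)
Theorem~\ref{axetthm} is the classification of $2$-generated $\Z_2$-axets from~\cite{axet}, and I would prove it directly from the three axioms. Fix generators $y_0,y_1$ with $Y=y_0^G\cup y_1^G$ and set $t_0:=\tau_{y_0}$, $t_1:=\tau_{y_1}$. The first step is to replace $G$ by the subgroup $D:=\langle t_0,t_1\rangle$. By axiom 3, $\tau_{y_i^g}=t_i^g$, so $\Miy Y=\langle t_0^G,t_1^G\rangle$, and only the action of $\Miy Y$ on $Y$ matters for the isomorphism type.

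Second, I would show that $Y=y_0^D\cup y_1^D$. The plan is to prove that $y_0^D\cup y_1^D$ is closed under every $\tau_y$ with $y$ in it. This uses $\tau_{y_i^d}=t_i^d\in D$ for $d\in D$. Consequently the union is stable under the group generated by all these $\tau_y$, and that group contains $\Miy Y$. The awkward point is that $G$ itself may act on $Y$ beyond $\Miy Y$. I would either note that in the definition of isomorphism of axets only the $\tau$-structure is relevant, or pass to $G$ modulo the kernel and use $Y=y_0^G\cup y_1^G$ directly. I expect this reduction, making the statement ``isomorphic'' precise when $G$ is larger than $\Miy Y$, to be the main technical obstacle. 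I would handle it by working with the natural axet $(\Miy Y, Y,\tau)$ and checking that the orbits of $\Miy Y$ and $G$ coincide under the 2-generation hypothesis.

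Third, $D$ is dihedral, generated by $t_0,t_1$, with $\rho=t_0t_1$ of order $m\in\N\cup\{\infty\}$. Define $y_{2i}:=y_0^{\rho^i}$ and $y_{2i+1}:=y_1^{\rho^i}$, mirroring Equation~\eqref{aroi}. Since $t_0$ fixes $y_0$ and $t_1$ fixes $y_1$ (axiom 1), we have $y_0^D=\{y_{2i}\}$ and $y_1^D=\{y_{2i+1}\}$. Moreover $\tau_{y_j}$ acts as the reflection $y_k\mapsto y_{2j-k}$. Hence $Y$ is a quotient of the infinite axet $X(\infty)$ on $\Z$ by an equivalence relation compatible with all these reflections. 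The stabiliser of $y_0$ in $\langle\rho\rangle$ is $\langle\rho^{p}\rangle$ and that of $y_1$ is $\langle\rho^{q}\rangle$. The reflection relations force $p$ and $q$ to divide each other up to a factor $2$.

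Finally I would split into cases. If $p=q$, the identifications are $j\sim j+2p$ for all $j$, giving $X(n)$. If $p$ is finite and $q=\infty$ (or vice versa), one checks the relations are inconsistent.

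If $q=p/2$ (or $p=q/2$), the identification applies to one parity class only. The reflections force $y_{2i+1}=y_{2i+1+p}$ while the even points stay distinct modulo $2p$. A further check shows that $p$ must be even and $\tau$ must be well defined on the merged points, since merged points need equal involutions. This gives the axet $X'(3k)$ with $n=4k$ and $|Y|=2k+k=3k$.

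Ruling out other ratios uses that $\tau$ is a function. If $y_j=y_{j'}$, then the reflections about $j$ and $j'$ must coincide as elements of $D$, which forces $j\equiv j'$ modulo the appropriate period. Comparing the counts $|Y|=n$ identifies the listed cases.
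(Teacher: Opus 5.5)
\paragraph{Step 2 is the genuine gap.} Your closure argument only shows that $Y_D:=y_0^D\cup y_1^D$ is stable under $\tau_z$ for $z\in Y_D$, and all of these lie in $D$. The claim that the group they generate contains $\Miy Y$ already presupposes $Y_D=Y$, so the argument is circular.

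It also cannot be rescued by showing that the $G$-orbits and the $\Miy Y$-orbits coincide. With 2-generation read literally as $Y=y_0^G\cup y_1^G$ (the definition recorded in this paper), the statement is false. Take $G=S_4$ acting by conjugation on its six transpositions, with $\tau_y=y$. All three axioms hold, $Y$ is a single $G$-orbit, and $G=\Miy Y$ acts faithfully. Yet $\Miy Y$ has order $24$ and is transitive on $Y$. By contrast, $X(6)$ and $X'(6)$ have Miyamoto groups of order $6$ and $8$, each with two orbits; equivalently, the closure of any two transpositions has at most $3$ elements. (Even more simply, take $\tau\equiv 1$ on a regular $C_3$-set.)

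What makes the theorem true is the closure notion of 2-generation: $Y$ is the smallest subset containing $y_0,y_1$ that is stable under $\tau_y$ for each of its elements $y$, i.e.\ $Y=y_0^D\cup y_1^D$. This is exactly what is available in the paper's application. There $G=\Miy(\V)=\langle\tau_0,\tau_1\rangle$, and $G$ acts faithfully on $X$ because $X$ generates $V$. Under this hypothesis your Step 2 is immediate.

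You still need to pass to the image of $D$ in $\mathrm{Sym}(Y)$, as you suggest. For example, $D_8$ acting on the vertices of a square through $D_8/Z(D_8)$ is a legitimate axet in which $\tau_{y_0}\neq\tau_{y_2}$, whereas $\tau_{P_0}=\tau_{P_2}$ in $X(4)$.

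\paragraph{The case analysis also has a hole.} First make the divisibility precise. If $y_0=y_{2p}$, axiom 3 gives $t_0=t_0^{\rho^p}=\rho^{-2p}t_0$, so $\rho^{2p}=1$. With $m:=|\rho|$ this yields $p,q\in\{m,m/2\}$.

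In the case $p=q$, however, you only allow the identifications $j\sim j+2p$, which produce $X(2p)$. You never treat $y_0^D\cap y_1^D\neq\emptyset$. Suppose $y_{2i}=y_{2j+1}$. Translating by $\rho^{-i}$ gives $y_0=y_r$ with $r$ odd. Axiom 3 then gives $t_0=t_1^{\rho^{(r-1)/2}}=t_1\rho^{r-1}$, hence $\rho^r=1$, and then $y_k=y_{k+r}$ for all $k$. So the two orbits coincide and you get $X(d)$ with $d$ odd. These are exactly the odd members of the family $X(n)$, which your enumeration never produces.

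The tool you mention at the end, that $y_j=y_{j'}$ forces $\tau_{y_j}=\tau_{y_{j'}}$ in $G$, is the right one, but you must apply it across the two parity classes.

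With these two repairs, the rest of your outline is the standard argument: the reflection formula $y_k\mapsto y_{2j-k}$, the impossibility of $p<\infty=q$, and the case $\{p,q\}=\{m,m/2\}$ with $m=2k$ giving $X'(3k)$.
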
 
By Theorem~\ref{axetthm} and the above discussion, we get the  the following.
\begin{cor}\label{axetcor}
 Let $\V$, $a_0$, $a_1$, and $X$ be as above and set
 \[
 n_i:=|a_i^{\Miy(\V)}|
 \]
 for $i\in \{0,1\}$. Then 
 \begin{enumerate}
 \item  either $|X|=\infty$ and $n_0=n_1$; or
 \item  $|X|\in \N$ and $n_0n_1^{-1}\in \{\tfrac{1}{2}, 1,2\}$. \label{axetcor_2}   
 \end{enumerate}
 \end{cor}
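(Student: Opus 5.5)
The plan is to apply Theorem~\ref{axetthm} to the axet $(\Miy(\V), X, \tau)$ of $\V$, which by the discussion preceding the corollary is a $2$-generated $\Z_2$-axet generated by $y_0=a_0$ and $y_1=a_1$. We have $X=a_0^{\Miy(\V)}\cup a_1^{\Miy(\V)}$. Throughout, the two orbits are either equal or disjoint, since they are orbits of the same group.

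First suppose $|X|=\infty$. By Theorem~\ref{axetthm}, $X$ is isomorphic to $X(\infty)$, because the skew axets $X^\prime(3k)$ are finite. In $X(\infty)$ the point set is $\Z$ and $M_\infty$ is generated by the reflections $j\mapsto 2i-j$. The orbits are the even integers and the odd integers (the orbit is all of $\Z$ only if $P_0,P_1$ lie in the same orbit, which cannot happen in $X(\infty)$), and both are infinite. Hence $n_0=n_1=\infty$.

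One point needs care: the isomorphism of axets must carry the generating pair $\{a_0,a_1\}$ to a generating pair of the model. I would argue via orbit sizes, which are invariant under isomorphism. In $X(\infty)$ every orbit is infinite, and since $X$ is the union of at most two orbits, each of $a_0^{\Miy(\V)}$ and $a_1^{\Miy(\V)}$ must be infinite.

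Next suppose $|X|=n\in\N$. There are two cases, according to which model in Theorem~\ref{axetthm} $X$ is isomorphic to.
\begin{enumerate}
\item If $X\cong X(n)$ with $n$ odd, there is a single orbit, so $n_0=n_1=n$.
\item If $X\cong X(n)$ with $n$ even, there are two orbits of equal size $n/2$, and every orbit has this size. Since $a_0$ and $a_1$ each lie in one of these orbits, $n_0=n_1$.
\item If $X\cong X^\prime(3k)$, there are exactly two orbits, of sizes $2k$ and $k$. Since $X=a_0^{\Miy(\V)}\cup a_1^{\Miy(\V)}$, the generators cannot lie in the same orbit, so $\{n_0,n_1\}=\{k,2k\}$. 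Hence $n_0n_1^{-1}\in\{\tfrac12,2\}$.
\end{enumerate}
In every case $n_0n_1^{-1}\in\{\tfrac12,1,2\}$, which is part~\ref{axetcor_2}.

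The only real obstacle is the orbit structure of the models. Specifically, I need that $X(n)$ has either one orbit (for $n$ odd) or two orbits of equal size (for $n$ even or infinite), and that $X^\prime(3k)$ has exactly two orbits, of sizes $2k$ and $k$. These facts are stated in the paragraph before Theorem~\ref{axetthm}, so they can be quoted directly. Everything else is a formal transfer of orbit sizes through the axet isomorphism.
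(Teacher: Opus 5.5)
Your proposal is correct and follows the paper's own argument: the paper deduces the corollary in one line from Theorem~\ref{axetthm} together with the orbit structure of $X(n)$ and $X^\prime(3k)$ stated just before it, and you carry out exactly that deduction case by case. Your observation that you only need to transport orbit sizes through the axet isomorphism, rather than the particular generating pair, is the right way to make that one-line deduction rigorous.
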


 \begin{lemma}\label{lhyp1}
   If $a_0=a_2$ or $a_1=a_{-1}$, then either $\V$ is an algebra of Jordan type (and in particular it is symmetric) or $\V$ has a skew axet (and Theorem~\ref{skew} holds).  
 \end{lemma}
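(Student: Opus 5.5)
The plan is to translate each hypothesis into a statement about Miyamoto orbits, and then split according to whether one or both orbits are trivial. By symmetry of the roles of $a_0$ and $a_1$, it suffices to treat $a_0=a_2$; the case $a_1=a_{-1}$ is identical after exchanging indices.

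\emph{First step: the orbit of $a_0$ is trivial.} Since $a_2=a_0^{\rho}=a_0^{\tau_0\tau_1}=a_0^{\tau_1}$, the hypothesis $a_0=a_2$ says that $\tau_1$ fixes $a_0$. Also $\tau_0$ fixes $a_0$, because $a_0\in V_1^{a_0}\subseteq V_+^{a_0}$. Hence $n_0=|a_0^{\Miy(\V)}|=1$.

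We then split according to $n_1=|a_1^{\Miy(\V)}|$.

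\emph{Case $n_1\neq 1$.} Then $n_0\neq n_1$, so by definition $\V$ has skew axet and Theorem~\ref{skew} applies. Corollary~\ref{axetcor} further forces $|X|$ finite with $n_1=2$, giving axet $X'(3)$, though this is not needed.

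\emph{Case $n_1=1$.} Then $a_1$ is fixed by $\tau_0$, so also $a_{-1}=a_1^{\tau_0}=a_1$, and $a_0$ is fixed by $\tau_1$. We show that both $\bt$-eigenspaces vanish.
\begin{itemize}
\item Since $\tau_1$ fixes $a_0$, we have $a_0\in V_+^{a_1}=V_1^{a_1}+V_0^{a_1}+V_\al^{a_1}$. Trivially $a_1\in V_+^{a_1}$ as well.
\item By the fusion law $\Mab$ (Table~\ref{fusion}), products of elements of $\{1,0,\al\}$-eigenspaces lie again in $\{1,0,\al\}$-eigenspaces. Hence $V_+^{a_1}$ is a subalgebra.
\item It follows that $V_+^{a_1}$ contains the subalgebra generated by $a_0$ and $a_1$, which is all of $V$. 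Therefore $V_\bt^{a_1}=0$.
\item Symmetrically, since $\tau_0$ fixes $a_1$, we get $V_\bt^{a_0}=0$. Equivalently, $w_1=\tfrac12(a_1-a_{-1})=0$ in Lemma~\ref{ui}.
\end{itemize}
With no $\bt$-parts, the fusion law restricted to $\{1,0,\al\}$ is exactly $\mathcal J(\al)$. So $a_0$ and $a_1$ are primitive axes of Jordan type $\al$, and $\V$ is a $2$-generated primitive axial algebra of Jordan type $\al$.

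\emph{Symmetry in the Jordan case.} It remains to see that such an algebra is symmetric. For this I would invoke the known classification of $2$-generated primitive axial algebras of Jordan type (Hall--Rehren--Shpectorov). Each algebra in that list, namely $1\A$, $2\B$, $3\C(\al)$, the quotients of $3\C(-1)$, and for $\al=\tfrac12$ the spin-factor type algebras, admits an automorphism swapping the two generating axes. I expect this citation to be the only non-formal step. Everything else reduces to the observation that $V_+^{a}$ is a subalgebra containing both generators.
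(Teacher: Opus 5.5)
Your proposal is correct and takes essentially the same route as the paper: you show $n_0=1$, get a skew axet when $n_1\neq 1$, and when $n_1=1$ use that $\tau_0,\tau_1$ act trivially on $V$, so the $\bt$-eigenspaces vanish and the Hall--Rehren--Shpectorov classification gives symmetry (your $V_+^{a_1}$-subalgebra argument is the statement that the fixed points of $\tau_1$ form a subalgebra containing both generators). The only difference is cosmetic: the paper invokes Corollary~\ref{axetcor} to get $n_1\in\{1,2\}$, whereas you use the definition of a skew axet directly, which is enough.
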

 \begin{proof}
   Suppose $a_0=a_2$. Then, by Corollary~\ref{axetcor}~\ref{axetcor_2} (and using the same notation), $n_0=1$ and $n_1\in \{1,2\}$. If $n_1=2$, then $\V$ has a skew axet and Theorem~\ref{skew} holds. If $n_1=1$, then $a_1=a_{-1}$, and so $\tau_0$ and $\tau_1$ are the identity on $V$. This implies that  the $\bt$-eigenspaces of $\ad_{a_0}$ and $\ad_{a_1}$ are trivial. Hence $\V$ is a $2$-generated  algebra of Jordan type and, by~\cite{HRS1}, it  is symmetric.  
 \end{proof}


\section{The universal $\Mab$-axial algebra $\mathfrak {V}$}\label{universal}

As in the previous one, in this section $\F$ is a field of characteristic other than $2$. 
By~\cite[Theorem~5.8 and Corollary~5.10]{FMS3} there exist a ring $\tilde \F$\nomenclature{$\tilde \F$}{\pageref{universal}} containing $\F$ as a subring and  a $2$-generated $\Mab$-axial algebra (the {\it universal $\Mab$-axial algebra})\index{universal $\Mab$-axial algebra} $\mathfrak{V}:=(\Vo,\{\ba_0, \ba_1\})$\nomenclature{$\mathfrak{V}$}{\pageref{universal}} over the ring $\tilde \F$, such that, for every $2$-generated $\Mab$-axial algebra $\V:=(V, \{a_0, a_1\})$ over the field $\F$, there is a  unique surjective evaluation map (depending on $\V$)  
\begin{equation}\label{uni}
    \varphi \colon \Vo\cupdot\tilde \F\to V\cupdot\F
\end{equation}
with the following properties 
\begin{enumerate}
\item[U1)] $\varphi|_{\tilde{\F}}$ is a ring homomorphism such that  $\tilde{\F}^\varphi=\F$ and $\varphi|_{\F}=\mathrm{id}_{\F}$,
\item[U2)] for every $i, j\in \{0,1\}$,
$(\lambda_{\ba_i}(\ba_j))^{\varphi}=\lambda_{a_i}(a_j)$,
\item[U3)] $\varphi|_{\Vo}$ is an $\F$-algebra epimorphism when $\Vo$ is given the natural structure of $\F$-algebra by restricting the ring of scalars, 
\item[U4)]  $\ba_0^{\varphi}=a_0$ and $\ba_1^{\varphi}=a_1$,
\item[U5)] for every $\mu\in \tilde{\F}$ and $\bv\in \Vo$, $(\mu\bv)^\varphi=\mu^{\varphi}\bv^{\varphi}$.
\end{enumerate}

By~\cite[Corollary 5.9]{FMS3}, the algebra $(\Vo,\{\ba_0, \ba_1\})$ has a $\sigma$-flip $\bf f$, where $\sigma\colon \tilde{\F}\to \tilde{\F}$ is such that, for every $i\in \Z$,
\begin{equation}\label{lambdai}
  (\lambda_{\ba_0}(\ba_i))^\sigma =\lambda_{\ba_1}(\ba_{1-i}).  
\end{equation}

As above, it is convenient to denote the map $\sigma$ also by the symbol $\bf f$. In this case we understand that the domain of $\bf f$ is $\Vo\cupdot \tilde{\F}$ and $\bf f$ acts as a flip on $\Vo$ whilst it acts as $\sigma$ on $\tilde{\F}$.
Thus, in the case of the universal algebra $\Vo$, $\lm_i^f$ defined in Equation~\eqref{deflmlmf} is exactly the image of $\lm_i$ under $\bf f$. 

 We recall from~\cite{FMS3} some formulas which hold in the algebra $\bf V$ and are needed for this paper. We use bold letters for the elements of $\bf V$.

 For $i\in \N$, set 
\begin{equation}\label{gammai}
    \gamma_i:=(1- \alpha)\lambda_i +\beta( \alpha- \beta- 1). 
\end{equation}
Further let  \nomenclature{$H$}{\pageref{primo}}\nomenclature{$I$}{\pageref{primo}}\nomenclature{$J$}{\pageref{primo}}\nomenclature{$K$}{\pageref{primo}}
\nomenclature{$L$}{\pageref{primo}}\nomenclature{$P$}{\pageref{primo}}\nomenclature{$Q$}{\pageref{primo}}\nomenclature{$R$}{\pageref{primo}}
\nomenclature{$S$}{\pageref{primo}}\nomenclature{$T$}{\pageref{primo}}\nomenclature{$U$}{\pageref{primo}}
\begin{align*}
H&:=\beta^2( \alpha- \beta)\\
I&:=-2   \alpha   \beta   \lmu+2\beta(1- \alpha) \lmf+\tfrac{\beta}{2}(4   \alpha^2  -2   \alpha   \beta- \alpha   +4   \beta^2-2   \beta)\\
J&:= \tfrac{1}{(\alpha-\beta)}\left ((6   \alpha^2-8   \alpha   \beta-2   \alpha+4   \beta)   \lmu^2+(2   \alpha^2-2   \alpha)   \lmu   \lmf \right .\\
&\phantom{{}={}\tfrac{1}{(\alpha-\beta)}(} \quad +2(-2   \alpha^2-2   \alpha   \beta+ \alpha)(\alpha-\beta) \lmu-4\beta(   \alpha -1)(\alpha-\beta)   \lmf  \\
&\phantom{{}={}\tfrac{1}{(\alpha-\beta)}(} \quad \left. \vphantom{\tfrac{1}{(\alpha-\beta)}} - \alpha   \beta(\alpha-\beta)   \lmd+ (4   \alpha^2   \beta-2   \alpha   \beta+2   \beta^3)(\alpha-\beta)\right )\\
K&:= \tfrac{2}{\bt}I \\
L&:= 2\beta ( \alpha -  \beta)\\
P&:=\beta(\alpha-\beta)^2(\alpha-4\beta)\\
Q&:= 4\alpha\beta(\alpha-\beta)\lmu+2(-\alpha^3+5\alpha^2\beta+\alpha^2-4\alpha\beta^2-5\alpha\beta+4\beta^2)\lmf \\
&\phantom{{}={}} \quad + \beta(-10\alpha^2\beta-\alpha^2+14\alpha\beta^2+7\alpha\beta-4\beta^3-6\beta^2) \\
R &:= 2\left ( 2(-3\alpha^2+4 \alpha \beta+\alpha-2 \beta) \lmu^2+2\alpha(1-\alpha) \lmu \lmf \right .\\
&\phantom{{}={}2(} \quad \left . +2(\alpha^3+4 \alpha^2 \beta-6 \alpha \beta^2-3 \alpha \beta+4 \beta^2)\lmu+2\alpha\beta(\alpha -1) \lmf \right .\\
&\phantom{{}={}2(} \quad \left . \vphantom{2(-3\alpha^2+4 \alpha \beta+\alpha-2 \beta) \lmu^2+2\alpha(1-\alpha) \lmu \lmf} +\alpha\beta(\alpha- \beta)\lmd+\beta(-\alpha^3 -8 \alpha^2 \beta+13 \alpha \beta^2+4 \alpha \beta-4 \beta^3-4 \beta^2)\right )\\
S &:= 4\left (2   \alpha(\alpha-  \beta)   \lmu+\alpha( \alpha- 1)  \lmf+(-6   \alpha^2   \beta+10   \alpha   \beta^2+ \alpha   \beta-4   \beta^3)\right )\\
T &:= -2\alpha\beta(\alpha -    \beta)\\
U &:= 2\beta( \alpha - \beta)(\alpha-2\beta).
\end{align*}
These polynomials and the ones defined in the next section are available in~\cite{codenightmare}. 
The following result recalls some structure constants of the algebra $\Vo$ which will be needed later.

 \begin{lemma}\label{primo}\textup{\cite[Lemma 3.1]{FelixPaper}\cite[Lemmas~6.3, 6.8]{FMS3}}
Let $i\in \N$, the following equalities hold in the algebra $\Vo$:
 \begin{enumerate}[itemsep=8pt, topsep=8pt]
 \item \label{primo_1} $\ba_0\bs_{\tz,i}=( \alpha- \beta) \bs_{\tz,i}+\gamma_i \ba_0+\tfrac{\bt}{2}( \alpha - \beta)(\ba_i+\ba_{-i})$;
 \item \label{primo_2} $\ba_1s_{\tz,1}=( \alpha- \beta) \bs_{\tz,1}+\gamma_1^{\bf f} \ba_1+\tfrac{\bt}{2}(\al-\bt)(\ba_0+\ba_{2}))$;
 \item \label{primo_3} $\ba_{-1}\bs_{\tz,1}=
    ( \alpha- \beta) \bs_{\tz,1}+\gamma_1^{\bf f} \ba_{-1}+\tfrac{\bt}{2}( \alpha - \beta)(\ba_0+\ba_{-2})$;
 \item \label{primo_4} if $i>1$, then $\ba_1s_{\tu,i}=( \alpha- \beta) \bs_{\tu,i}+\gamma_i^{\bf f} \ba_1+\tfrac{\bt}{2}( \alpha - \beta)(\ba_{-i+1}+\ba_{i+1})$;
 \item \label{primo_5} $( \alpha-2   \beta)\ba_0\bs_{\tu,2}=H(\ba_{-2}+\ba_2)+I(\ba_1+\ba_{-1})+J\ba_0+K\bs_{\tz,1}+L\bs_{\tz,2}$;
\item \label{primo_6}
$4(\alpha-2\beta)\bs_{\tz,1} \cdot \bs_{\tz,1}=P(\ba_{-2}+\ba_2)+Q(\ba_{-1}+\ba_1)+R\ba_0+S\bs_{\tz,1}+T\bs_{\tz,2}+U\bs_{\tu,2}$.
\end{enumerate}
\end{lemma}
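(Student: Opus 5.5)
For \ref{primo_1} the plan is a direct eigenvector computation in $\Vo$ using Lemma~\ref{ui}. Write $\ba_i=\lambda_i\ba_0+\bu_i+\bv_i+\mathbf w_i$ with $\bu_i,\bv_i,\mathbf w_i$ as in Equation~\eqref{defui}. Then
\[
\ba_0\ba_i=\lambda_i\ba_0+\al\bv_i+\bt\mathbf w_i \quad\mbox{and}\quad \ba_0(\ba_0\ba_i)=\lambda_i\ba_0+\al^2\bv_i+\bt^2\mathbf w_i .
\]
Since $\bs_{\tz,i}=\ba_0\ba_i-\bt(\ba_0+\ba_i)$, we get
\[
\ba_0\bs_{\tz,i}=\ba_0(\ba_0\ba_i)-\bt\ba_0-\bt\ba_0\ba_i .
\]
Substitute the explicit expressions of $\bv_i$ and $\mathbf w_i$ from~\eqref{defui} in terms of $\ba_0$, $\ba_{\pm i}$ and $\bs_{\tz,i}$, and use $\ba_i+\ba_{-i}$ symmetrically, which is legitimate by Lemma~\ref{s}\ref{s_3} and the $\tau_0$-invariance of $\bs_{\tz,i}$ from Lemma~\ref{invariant}. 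Collecting coefficients should give exactly $(\al-\bt)\bs_{\tz,i}+\gamma_i\ba_0+\tfrac{\bt}{2}(\al-\bt)(\ba_i+\ba_{-i})$.

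Parts \ref{primo_2}--\ref{primo_4} should follow from \ref{primo_1} by symmetry. Apply the $\sigma$-flip $\bf f$, which by Lemma~\ref{translation} sends $\ba_j\mapsto\ba_{1-j}$ and $\lambda_i\mapsto\lambda_i^f$, hence $\gamma_i\mapsto\gamma_i^{\bf f}$. Then $\bs_{\tz,i}^{\bf f}=\ba_1\ba_{1-i}-\bt(\ba_1+\ba_{1-i})=\bs_{\overline{1-i},i}$, and this equals $\bs_{\tu,i}$ by Lemma~\ref{invariant}(b), because $1-i\equiv 1 \pmod i$.
\begin{itemize}
\item For $i=1$ this is $\bs_{\tz,1}$ again, and $\ba_{\pm1}\mapsto \ba_0,\ba_2$, giving \ref{primo_2}.
\item For $i>1$ it gives \ref{primo_4}.
\item Applying $\tau_0$ to \ref{primo_2} gives \ref{primo_3}: $\tau_0$ fixes $\ba_0$ and $\bs_{\tz,1}$ (Lemma~\ref{invariant}\ref{invariant_1}) and maps $\ba_1\mapsto\ba_{-1}$, $\ba_2\mapsto\ba_{-2}$.
\end{itemize}

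Parts \ref{primo_5} and \ref{primo_6} are the genuine content, and this is where I expect the main obstacle: they are not consequences of the eigenvector decomposition alone but of the fusion law. I would impose the constraints coming from products of the eigenvectors $\bu_1,\bv_1,\mathbf w_1$ of $\ad_{\ba_0}$:
\begin{itemize}
\item $\bu_1\bv_1\in V_\al^{\ba_0}$;
\item $\bu_1\bu_1\in V_0^{\ba_0}$;
\item $\bv_1\bv_1\in V_{\{1,0\}}^{\ba_0}$;
\item $\mathbf w_1\mathbf w_1\in V_{\{1,0,\al\}}^{\ba_0}$.
\end{itemize}
Expanding these products brings in $\bs_{\tz,1}\cdot\bs_{\tz,1}$, $\ba_0\bs_{\tz,1}$ (known from \ref{primo_1}), $\ba_{\pm1}\bs_{\tz,1}$ (known from \ref{primo_2}, \ref{primo_3}), and $\ba_1\ba_{-1}=\bs_{\tu,2}+\bt(\ba_1+\ba_{-1})$, which is where $\bs_{\tu,2}$ enters.

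For \ref{primo_5}, the cleanest route is the condition $(\ad_{\ba_0}-\al)\ad_{\ba_0}(\ad_{\ba_0}-1)(\mathbf w_1^2)=0$, with $\mathbf w_1^2=\tfrac14(\ba_1+\ba_{-1}-2\ba_1\ba_{-1})$. Combined with $\bu_1\bv_1\in V_\al^{\ba_0}$ and the projections supplied by Lemma~\ref{s}, this gives two linear relations between the unknowns $\ba_0\bs_{\tu,2}$ and $\ba_0(\bs_{\tz,1}\cdot\bs_{\tz,1})$. Eliminating should leave $(\al-2\bt)\ba_0\bs_{\tu,2}$ alone, with everything else expressed in $\ba_{\pm2},\ba_{\pm1},\ba_0,\bs_{\tz,1},\bs_{\tz,2}$; this is why the factor $\al-2\bt$ appears. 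Substituting back into one of the fusion identities, for instance $\bu_1\bu_1\in V_0^{\ba_0}$ rewritten via \ref{primo_1}--\ref{primo_3}, then isolates $4(\al-2\bt)\bs_{\tz,1}\cdot\bs_{\tz,1}$, giving \ref{primo_6}.

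The bookkeeping of the coefficients $H,\dots,U$ is heavy and I would verify it by computer, as in~\cite{codenightmare}.
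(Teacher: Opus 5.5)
Your parts \ref{primo_1}--\ref{primo_4} are fine. The $\bt$-component of $\ba_i$ is $\tfrac12(\ba_i-\ba_{-i})$ by definition of $\tau_0$, the $\al$-component is read off from $\ba_0\ba_i$, and applying $\bf f$ and $\tau_0$ gives the rest. The gap is in \ref{primo_5}.

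The condition you call the cleanest route, $(\ad_{\ba_0}-\al)\ad_{\ba_0}(\ad_{\ba_0}-1)(\mathbf w_1^2)=0$, is vacuous. Since $\mathbf w_1^{\tau_0}=-\mathbf w_1$ and $\tau_0$ is an automorphism of $\Vo$, the vector $\mathbf w_1^2=\tfrac14(1-2\bt)(\ba_1+\ba_{-1})-\tfrac12\bs_{\tu,2}$ is $\tau_0$-fixed, so it lies in $V^{\ba_0}_{\{1,0,\al\}}$ automatically. Written out, the condition only says $\ad_{\ba_0}^3\bs_{\tu,2}=(1+\al)\ad_{\ba_0}^2\bs_{\tu,2}-\al\ad_{\ba_0}\bs_{\tu,2}$, which every $\tau_0$-fixed vector satisfies. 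It also does not contain $\bs_{\tz,1}\cdot\bs_{\tz,1}$ at all, so it cannot be one of your ``two linear relations''.

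Nor do $\bu_1\bu_1\in V^{\ba_0}_0$ and $\bu_1\bv_1\in V^{\ba_0}_\al$ suffice on their own. They are two relations among the three unknowns $\bs_{\tz,1}\cdot\bs_{\tz,1}$, $\ba_0(\bs_{\tz,1}\cdot\bs_{\tz,1})$ and $\ba_0\bs_{\tu,2}$. They express the first two in terms of the third and leave $\ba_0\bs_{\tu,2}$ undetermined. Also, Lemma~\ref{s} does not supply $\lm_{\ba_0}(\bs_{\tu,2})$, which you will need.

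The missing ingredient is $\bv_1\bv_1\in V^{\ba_0}_{\{1,0\}}$. Use it together with the observation that $\bu_1+\bv_1=x-\lmu\ba_0$, where $x:=\tfrac12(\ba_1+\ba_{-1})$, contains no $\bs_{\tz,1}$. Hence $\bu_1\bu_1+\bu_1\bv_1=\bu_1x$ and $\bu_1\bv_1+\bv_1\bv_1=\bv_1x-\al\lmu\bv_1$ are explicit by \ref{primo_1}--\ref{primo_3} and $x\cdot x=(\tfrac12+\bt)x+\tfrac12\bs_{\tu,2}$, with no $\bs_{\tz,1}\cdot\bs_{\tz,1}$. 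The fusion law gives $\ba_0(\bu_1x)=\al\,\bu_1\bv_1$ and $\ba_0(\bv_1x-\al\lmu\bv_1)=\al\,\bu_1\bv_1+\lm_{\ba_0}(\bv_1\bv_1)\ba_0$. Subtracting,
\[
\ba_0\bigl((\bv_1-\bu_1)x\bigr)=\al^2\lmu\,\bv_1+\lm_{\ba_0}(\bv_1\bv_1)\,\ba_0 .
\]
Since $\al(\bv_1-\bu_1)=(2\bt-2\lmu+\al\lmu)\ba_0+(2\bt-\al)x+2\bs_{\tz,1}$, the only unknown product on the left is $\tfrac{2\bt-\al}{2\al}\ba_0\bs_{\tu,2}$. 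This is the source of the factor $\al-2\bt$; the terms $\ba_{\pm2}$ and $\bs_{\tz,2}$ enter through $\ba_0(x\bs_{\tz,1})$. The scalar $\lm_{\ba_0}(\bv_1\bv_1)=\lm_{\ba_0}(\bv_1x)$ requires $\lm_{\ba_0}(\bs_{\tu,2})$. That value comes from $\lm_{\ba_0}(\bu_1x)=0$, as in Corollary~\ref{cor:lm2f}, which uses only \ref{primo_1}--\ref{primo_4}.

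For \ref{primo_6}, once \ref{primo_5} is known, $\bu_1\bv_1=\tfrac1\al\ba_0(\bu_1x)$ and hence $\bu_1\bu_1=\bu_1x-\bu_1\bv_1$ are explicit. Comparing with the direct expansion of $\al^2\bu_1\bu_1=\bigl((\lmu-\bt-\al\lmu)\ba_0+(\al-\bt)x-\bs_{\tz,1}\bigr)^2$, in which $\bs_{\tz,1}\cdot\bs_{\tz,1}$ appears with coefficient $1$, isolates it. It is this identity that does the job, not the eigenvalue condition $\ba_0(\bu_1\bu_1)=0$, which brings in the new unknown $\ba_0(\bs_{\tz,1}\cdot\bs_{\tz,1})$.
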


\begin{cor}\label{cor:lm2f}~
\begin{enumerate}
\item   $\lm_{\ba_0}(\bs_{\tu,2})=\tfrac{2(\al-1)}{\al-\bt}\lmu(\lmu-\lmf)+(1-2\bt)\lmu+\bt\lmd-\bt$ \label{cor:lm2f_1}
\item   $\lm_{\ba_1}(\bs_{\tz,2})=\frac{2(\al-1)}{\al-\bt}\lmf(\lmf-\lmu)+(1-2\bt)\lmf+\bt\lmdf-\bt$. \label{cor:lm2f_2}
\end{enumerate}
\end{cor}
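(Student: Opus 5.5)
The plan is to compute $\lambda_{\ba_0}(\bs_{\tu,2})$ by applying $\lambda_{\ba_0}$ to identity \ref{primo_5} of Lemma~\ref{primo}, and then obtain part \ref{cor:lm2f_2} from part \ref{cor:lm2f_1} by applying the flip $\bf f$.

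For \ref{cor:lm2f_1}, Lemma~\ref{s}\ref{s_1} gives $\lambda_{\ba_0}(\ba_0\bs_{\tu,2})=\lambda_{\ba_0}(\bs_{\tu,2})$. The left-hand side of Lemma~\ref{primo}\ref{primo_5} therefore yields $(\al-2\bt)\lambda_{\ba_0}(\bs_{\tu,2})$. On the right-hand side I will use the following values:
\begin{itemize}
\item $\lambda_{\ba_0}(\ba_{\pm 2})=\lmd$ and $\lambda_{\ba_0}(\ba_{\pm1})=\lmu$, by Lemma~\ref{s}\ref{s_3};
\item $\lambda_{\ba_0}(\ba_0)=1$;
\item $\lambda_{\ba_0}(\bs_{\tz,i})=\lm_i-\bt-\bt\lm_i$ for $i=1,2$, by Lemma~\ref{s}\ref{s_2}.
\end{itemize}
This gives the linear equation
\[
(\al-2\bt)\lambda_{\ba_0}(\bs_{\tu,2})=2H\lmd+2I\lmu+J+K(\lmu-\bt-\bt\lmu)+L(\lmd-\bt-\bt\lmd).
\]

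The remaining work is to substitute the explicit $H,I,J,K,L$ and expand. Since $\al\neq 2\bt$ we may divide by $\al-2\bt$, and the claim is that the quotient simplifies to the stated expression. As a consistency check, the $\lmd$ coefficient is
\[
2H+L(1-\bt)-\tfrac{\al\bt(\al-\bt)}{\al-\bt}\cdot\tfrac{1}{1}=2\bt^2(\al-\bt)+2\bt(\al-\bt)(1-\bt)-\al\bt=\bt(\al-2\bt),
\]
which gives $\bt\lmd$ after division, as expected. The $\lmu^2$ and $\lmu\lmf$ terms come only from $J$, and they must factor as $\frac{2(\al-1)(\al-2\bt)}{\al-\bt}\lmu(\lmu-\lmf)$. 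Indeed $6\al^2-8\al\bt-2\al+4\bt=2(\al-1)(3\al-2\bt)-\ldots$, which has to be checked carefully. The linear $\lmu,\lmf$ terms and the constant are collected similarly.

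The main obstacle is this polynomial bookkeeping. It is routine but error-prone, and I would verify it with the code in~\cite{codenightmare}. There is also a subtlety: the statement is claimed as an identity in $\tilde\F$ without assuming $\al\neq2\bt$. If division is illegitimate there, I would instead argue that the identity holds after multiplying by $\al-2\bt$. Alternatively, I would derive it directly: decompose $\ba_2$ and $\ba_{-1}$ with respect to $\ba_1$ via Lemma~\ref{ui}, and compute $\lambda_{\ba_0}$ of $\ba_{-1}\ba_1$ using the fusion law, which avoids the division.

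For \ref{cor:lm2f_2}, apply the $\sigma$-flip $\bf f$ to \ref{cor:lm2f_1}. By Lemma~\ref{translation}, $\bf f$ sends $\ba_i\mapsto\ba_{1-i}$, so it maps $\bs_{\tu,2}=\ba_1\ba_3-\bt(\ba_1+\ba_3)$ to $\ba_0\ba_{-2}-\bt(\ba_0+\ba_{-2})=\bs_{\tz,2}$. It also exchanges $\lambda_{\ba_0}$ and $\lambda_{\ba_1}$ (twisted by $\sigma$), and it sends $\lm_i\mapsto\lm_i^f$ and $\lm_i^f\mapsto\lm_i$ by~\eqref{lambdai}. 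Since $\al,\bt\in\F$ are fixed by $\sigma$, the formula in \ref{cor:lm2f_2} follows immediately.
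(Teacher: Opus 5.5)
Your route, applying $\lambda_{\ba_0}$ to Lemma~\ref{primo}\ref{primo_5} and using Lemma~\ref{s}\ref{s_1}, differs from the paper's. It is correct when $\al\neq 2\bt$: the full expansion does give $(\al-2\bt)\lambda_{\ba_0}(\bs_{\tu,2})=(\al-2\bt)\bigl(\tfrac{2(\al-1)}{\al-\bt}\lmu(\lmu-\lmf)+(1-2\bt)\lmu+\bt\lmd-\bt\bigr)$. However, your claim that the $\lmu^2$ and $\lmu\lmf$ terms come only from $J$ is wrong. The terms $2I\lmu$ and $K\lambda_{\ba_0}(\bs_{\tz,1})=K(\lmu-\bt-\bt\lmu)$ contribute as well, because $I$ and $K$ are linear in $\lmu,\lmf$. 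The $J$-part alone does not factor as you want; only the total does. For the $\lmu^2$ coefficient you get $\tfrac{6\al^2-8\al\bt-2\al+4\bt}{\al-\bt}-4\al\bt-4\al(1-\bt)=\tfrac{2(\al-1)(\al-2\bt)}{\al-\bt}$. For the $\lmu\lmf$ coefficient you get $\tfrac{2\al(\al-1)}{\al-\bt}+4\bt(1-\al)+4(1-\al)(1-\bt)=-\tfrac{2(\al-1)(\al-2\bt)}{\al-\bt}$.

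The genuine gap is the case $\al=2\bt$, which the statement does not exclude. There Lemma~\ref{primo}\ref{primo_5} reads $0=\cdots$ and says nothing about $\ba_0\bs_{\tu,2}$, so your argument gives no information. Proving only the identity multiplied by $\al-2\bt$ is strictly weaker than the claim.

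Your fallback is too vague, and its obvious version hits the same problem. Writing $\ba_{\pm1}=\lmu\ba_0+\bu_1+\bv_1\pm{\bf w}_1$, you need $\lambda_{\ba_0}(\bv_1\bv_1)$, hence $\bs_{\tz,1}\bs_{\tz,1}$. That is Lemma~\ref{primo}\ref{primo_6}, with the same factor $\al-2\bt$.

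The missing idea, which is the paper's proof, is to take the combination $\bu_1\bu_1+\bu_1\bv_1$, in which the $\bs_{\tz,1}\bs_{\tz,1}$ terms cancel.
\begin{itemize}
\item Since $\bu_1+\bv_1=-\lmu\ba_0+\tfrac12(\ba_1+\ba_{-1})$ and $\ba_0\bu_1=0$, this element equals $\tfrac12\bu_1(\ba_1+\ba_{-1})$.
\item By the fusion law it lies in the sum of the $0$- and $\al$-eigenspaces of $\ad_{\ba_0}$, so its projection on $\ba_0$ is zero.
\item Expanding it needs only $\ba_0\ba_{\pm1}$, $\ba_1\ba_{-1}=\bs_{\tu,2}+\bt(\ba_1+\ba_{-1})$ and $\ba_{\pm1}\bs_{\tz,1}$, the last from parts~\ref{primo_2} and~\ref{primo_3} of Lemma~\ref{primo}.
\item The vector $\bs_{\tu,2}$ appears with the invertible coefficient $\tfrac{\al-\bt}{2\al}$, so solving gives the formula for all $\al\neq\bt$.
\end{itemize}
In this paper the corollary is only applied with $\al\neq2\bt$, so your version would cover those uses, but it does not prove the statement as written. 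Your deduction of part~\ref{cor:lm2f_2} via ${\bf f}$, including $\bs_{\tu,2}^{\bf f}=\bs_{\tz,2}$, is fine.
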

\begin{proof}
Let $\bu_1$ and $\bv_1$ be as in Equation~\eqref{defui} on page~\pageref{defui}.  By the fusion law, $\bu_1\bu_1+\bu_1\bv_1$ is a $0$-eigenvector for $\ad_{\ba_0}$ and so 
\begin{equation}\label{uu=0}
\lm_{\ba_0}(\bu_1\bu_1+\bu_1\bv_1)=0.
\end{equation}
 Using Lemma~\ref{primo} parts \ref{primo_1}-\ref{primo_4},  we can express $\bu_1\bu_1+\bu_1\bv_1$ as a linear combination of $ \ba_{-2}$, $\ba _{-1}$, $\ba_0$, $ \ba_{1}$, $\ba _{2}$, $\bs_{\tz,1}$, $\bs_{\tz,2}$, and $\bs_{\tu,2}$. Since, by Equation~\eqref{deflmlmf} on page~\pageref{deflmlmf} and Lemma~\ref{s}, the values of $\lm_{a_0}$ on $ \ba_{-2}$, $\ba _{-1}$, $\ba_0$, $ \ba_{1}$, $\ba _{2}$, $\bs_{\tz,1}$, and $\bs_{\tz,2}$ are known, from Equation~\eqref{uu=0} we deduce the expression for $\lm_{\ba_0}(\bs_{\tu,2})$, as in \ref{cor:lm2f_1}. By applying the flip $\bf f$, we get \ref{cor:lm2f_2}.
\end{proof}

\begin{cor}\label{a1xa2} 
With the notation of Lemma~$\ref{primo}$,
\begin{enumerate}
\item  if $\ba_1-\ba_{-1}=x(\ba_2-\ba_{-2})$ for some $x\in \tilde{\F}\setminus \{0\}$, then: 
\[
\left ((1-\al)(\lmf-\lmu)+\tfrac{\bt}{2}(\al-\bt)(\tfrac{1}{x}-x)\right )(\ba_1-\ba_{-1})-\tfrac{\bt}{2}(\al-\bt)x(\ba_3-\ba_{-3})=0,
\]\label{a1xa2_1}
 \item  if $ \ba_0-\ba_{2}=x(\ba_{-1}-\ba_{3}) $ for  some $ x\in \tilde {\F}\setminus \{0\}$, then 
\[
\left ((1-\al)(\lmu-\lmf)+\tfrac{\bt}{2}(\al-\bt)(\tfrac{1}{x}-x)\right )(\ba_0-\ba_{2})-\tfrac{\bt}{2}(\al-\bt)x(\ba_{-2}-\ba_{4})=0.
\]\label{a1xa2_2}
\end{enumerate}
\end{cor}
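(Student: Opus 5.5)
The idea is to multiply the hypothesised linear relation by a suitable axis and compare the two sides using the structure constants of Lemma~\ref{primo}. For part~\ref{a1xa2_1}, assume $\ba_1-\ba_{-1}=x(\ba_2-\ba_{-2})$ and multiply both sides by $\ba_0$.

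\textbf{Left-hand side.} By Lemma~\ref{ui}, $\bw_1=\tfrac{1}{2}(\ba_1-\ba_{-1})$ is a $\bt$-eigenvector of $\ad_{\ba_0}$. Hence $\ba_0(\ba_1-\ba_{-1})=\bt(\ba_1-\ba_{-1})$.

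\textbf{Right-hand side.} Similarly, $\bw_2=\tfrac12(\ba_2-\ba_{-2})$ is a $\bt$-eigenvector, so $\ba_0 x(\ba_2-\ba_{-2})=\bt x(\ba_2-\ba_{-2})=\bt(\ba_1-\ba_{-1})$. Applied to $\ba_0$, the relation therefore gives no information.

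\textbf{Multiplying by $\ba_1$ instead.} So we multiply the relation by $\ba_1$. We have $\ba_1\ba_1=\ba_1$. By Lemma~\ref{primo}\ref{primo_2} and~\ref{primo_3} together with the definition of $\bs_{\tz,1}$, the product $\ba_1\ba_{-1}=\bs_{\tu,2}+\bt(\ba_1+\ba_{-1})$ is not directly reducible. It is cleaner to use the decomposition relative to $\ba_1$: by Lemma~\ref{ui} applied to $\ad_{\ba_1}$, the vector $\tilde\bw_i=\tfrac12(\ba_{1-i}-\ba_{1+i})$ is a $\bt$-eigenvector. Accordingly, the plan is to rewrite the relation in terms of vectors adapted to $\ba_1$. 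Note that $\ba_2-\ba_{-2}=(\ba_2-\ba_0)+(\ba_0-\ba_{-2})$, where $\ba_0-\ba_2=2\tilde\bw_1$ is a $\bt$-eigenvector for $\ad_{\ba_1}$.

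\textbf{Recommended route.} We compute $\ba_1\cdot(\ba_1-\ba_{-1})$ and $\ba_1\cdot(\ba_2-\ba_{-2})$ explicitly.
\begin{itemize}
\item For $\ba_1\ba_{-1}$, use $\bs_{\tz,2}^{\tau_0}$. By Lemma~\ref{invariant} the element $\bs_{\tu,2}$ is well defined, and by the flip it corresponds to $\bs_{\tz,2}$.
\item For $\ba_1\ba_2$ and $\ba_1\ba_{-2}$, write them as $\bs_{\tu,1}+\bt(\ba_1+\ba_2)$ and $\bs_{\tu,3}+\bt(\ba_1+\ba_{-2})$ respectively. Lemma~\ref{invariant} gives $\bs_{\tu,1}=\bs_{\tz,1}$.
\end{itemize}
Since $\bs$-terms of different lengths would appear, this route leads to a subtraction that may not close up.

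\textbf{Simpler route via $\bs_{\tz,1}$.} A more efficient approach is to multiply the relation by $\bs_{\tz,1}$. This element is $\tau_0$- and $\tau_1$-invariant, hence invariant under $\rho$, and it commutes with the reflections. By Lemma~\ref{primo}\ref{primo_1} and its $\rho$-translates (obtained via Lemma~\ref{action}, since $\bs_{\tz,1}$ is $\rho$-invariant), we have
\[
\ba_j\bs_{\tz,1}=(\al-\bt)\bs_{\tz,1}+\gamma\,\ba_j+\tfrac{\bt}{2}(\al-\bt)(\ba_{j-1}+\ba_{j+1}),
\]
where $\gamma=\gamma_1$ for $j$ even and $\gamma=\gamma_1^{\bf f}$ for $j$ odd, as in parts~\ref{primo_2} and~\ref{primo_3}. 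Subtracting the expressions for $j$ and $-j$ cancels the $\bs_{\tz,1}$ terms and gives:
\begin{itemize}
\item $(\ba_1-\ba_{-1})\bs_{\tz,1}=\gamma_1^{\bf f}(\ba_1-\ba_{-1})+\tfrac{\bt}{2}(\al-\bt)(\ba_2-\ba_{-2})$;
\item $(\ba_2-\ba_{-2})\bs_{\tz,1}=\gamma_1(\ba_2-\ba_{-2})+\tfrac{\bt}{2}(\al-\bt)(\ba_1-\ba_{-1}+\ba_3-\ba_{-3})$.
\end{itemize}

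\textbf{Conclusion of part~\ref{a1xa2_1}.} Multiply the hypothesis by $\bs_{\tz,1}$ and substitute $\ba_2-\ba_{-2}=\tfrac1x(\ba_1-\ba_{-1})$. Using $\gamma_1^{\bf f}-\gamma_1=(1-\al)(\lmf-\lmu)$ from Equation~\eqref{gammai}, and collecting terms, we obtain
\[
\left((1-\al)(\lmf-\lmu)+\tfrac{\bt}{2}(\al-\bt)\bigl(\tfrac1x-x\bigr)\right)(\ba_1-\ba_{-1})-\tfrac{\bt}{2}(\al-\bt)x(\ba_3-\ba_{-3})=0,
\]
which is the claimed identity.

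\textbf{Part~\ref{a1xa2_2}.} This follows symmetrically, multiplying by the $\rho$-translate $\bs_{\tz,1}$ and using the analogous formulas, or by applying $\tau_0\bf f$ (Lemma~\ref{translation}), which shifts indices by one and swaps $\lmu$ and $\lmf$.

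**Expected obstacle.** The only real care needed is justifying the translated product formulas, that is, the $\rho$-invariance of $\bs_{\tz,1}$ and the correct assignment of $\gamma_1$ versus $\gamma_1^{\bf f}$ according to the parity of $j$.
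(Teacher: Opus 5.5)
Your final route is correct and is essentially the paper's argument. You multiply the hypothesis by $\bs_{\tz,1}$, compute $(\ba_1-\ba_{-1})\bs_{\tz,1}$ and $(\ba_2-\ba_{-2})\bs_{\tz,1}$ from Lemma~\ref{primo}\ref{primo_1}--\ref{primo_3} and their Miyamoto images, and use $\gamma_1^{\bf f}-\gamma_1=(1-\al)(\lmf-\lmu)$; part~\ref{a1xa2_2} then follows by transporting part~\ref{a1xa2_1} with the flip, and your $\tau_0{\bf f}$ variant works equally well. The exploratory attempts with $\ba_0$ and $\ba_1$ are dead ends and can be deleted.
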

\begin{proof}
By parts~\ref{primo_2} and~\ref{primo_3} of Lemma~\ref{primo} we have
\[
 (\ba_1-\ba_{-1})s_{\tz,1}=\gamma_1^{\bf f}(\ba_1-\ba_{-1})+\tfrac{\bt}{2}(\al-\bt)(\ba_2-\ba_{-2})   
\]
and similarly, by applying $\tau_1$ and $\tau_1\tau_0$ to part~\ref{primo_1} of Lemma~\ref{primo}, we get
\[
     x(\ba_2-\ba_{-2})s_{\tz,1}=\gamma_1 x(\ba_2-\ba_{-2})
  +\tfrac{1}{2}\bt(\al-\bt)x(\ba_1+\ba_3-\ba_{-1}-\ba_{-3})
\]
Since $\ba_1-\ba_{-1}=x(\ba_2-\ba_{-2})$, taking the difference of these two equations we get 
\begin{align*}
  0 &=  (\ba_1-\ba_{-1})\bs_{\tz,1}-x(\ba_2-\ba_{-2})\bs_{\tz,1}\\
  &= (\gamma_1^{\bf f}-\gamma_1)(\ba_1-\ba_{-1})+\tfrac{\bt}{2}(\al-\bt)\left ((\tfrac{1}{x}-x)(\ba_1-\ba_{-1})-x(\ba_3-\ba_{-3})\right )\\
  &= (1-\al)(\lmf-\lmu)(\ba_1-\ba_{-1})+\tfrac{\bt}{2}(\al-\bt)\left ((\tfrac{1}{x}-x)(\ba_1-\ba_{-1})-x(\ba_3-\ba_{-3})\right )
\end{align*}
 proving \ref{a1xa2_1}. Part \ref{a1xa2_2} is obtained by applying $\bf f$ to part \ref{a1xa2_1}. 
\end{proof}


\section{Some relations between $\mathfrak{V}_e$ and $\mathfrak{V}_o$}\label{relVeVo}

We can now prove some of the key relations between $\mathfrak{V}_e$ and $\mathfrak{V}_o$ \nomenclature{$\mathfrak{V}_e$, $\mathfrak{V}_o$}{\pageref{relVeVo}} mentioned in the Overview.
\begin{lemma}
In the algebra  $\Vo$ the following equalities hold:
\begin{align}
    P(\ba_{-2}-\ba_4)+(P-R)(\ba_2-\ba_0) &= Q (\ba_3-\ba_{-1}),\label{equa1} \\
    P(\ba_{3}-\ba_{-3})+(P-R^{\bf f})(\ba_{-1}-\ba_1) &= Q^{\bf f}(\ba_{-2}-\ba_2),\label{equa2}  
\end{align}
\begin{equation}
\begin{aligned}\label{equa3}
0 &= (S-S^{\bf f})\bs_{\tz,1}+(T-U)(\bs_{\tz,2}-\bs_{\tu,2})+P(\ba_{-2}-\ba_3)\\
&\phantom{{}={}} \quad +(Q-P)\ba_{-1}+(R-Q^{\bf f})\ba_0+(Q-R^{\bf f})\ba_1+(P-Q^{\bf f})\ba_2.
\end{aligned}    
\end{equation}

\end{lemma}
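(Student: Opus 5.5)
The plan is to obtain all three identities from part~\ref{primo_6} of Lemma~\ref{primo}, using the $\sigma$-flip $\bf f$ of $\Vo$ and the Miyamoto involutions. The idea is that the left-hand side $4(\al-2\bt)\bs_{\tz,1}\cdot\bs_{\tz,1}$ is fixed by $\bf f$, whereas the right-hand side is not manifestly fixed. Comparing the two expressions gives~\eqref{equa3}. Then~\eqref{equa1} and~\eqref{equa2} follow by applying $\tau_1$ and $\bf f$.

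\textbf{Step 1: proof of~\eqref{equa3}.} Apply $\bf f$ to part~\ref{primo_6} of Lemma~\ref{primo}. By Lemma~\ref{translation}, $\bf f$ maps $\ba_i$ to $\ba_{1-i}$. Hence
\[
\bs_{\tz,1}^{\bf f}=\ba_1\ba_0-\bt(\ba_1+\ba_0)=\bs_{\tz,1},\qquad
\bs_{\tz,2}^{\bf f}=\ba_1\ba_{-1}-\bt(\ba_1+\ba_{-1})=\bs_{\tu,2},
\]
and similarly $\bs_{\tu,2}^{\bf f}=\bs_{\tz,2}$. The coefficients behave as follows:
\begin{itemize}
\item $P$, $T$, $U$ and $4(\al-2\bt)$ lie in $\F$, so they are fixed by $\sigma$;
\item $Q$, $R$, $S$ depend on $\lmu,\lmf,\lmd$, so they go to $Q^{\bf f}$, $R^{\bf f}$, $S^{\bf f}$.
\end{itemize}
The two left-hand sides are equal, so subtracting the flipped identity from the original gives zero. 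Collecting coefficients, $P$ multiplies $\ba_{-2}+\ba_2-\ba_3-\ba_{-1}$. The $Q$-terms give $Q(\ba_{-1}+\ba_1)-Q^{\bf f}(\ba_2+\ba_0)$, and the $R$-terms give $R\ba_0-R^{\bf f}\ba_1$. The remaining terms are $(S-S^{\bf f})\bs_{\tz,1}$ and $(T-U)(\bs_{\tz,2}-\bs_{\tu,2})$. Regrouping by axes reproduces~\eqref{equa3} exactly.

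\textbf{Step 2: proof of~\eqref{equa1}.} Apply $\tau_1$ to~\eqref{equa3}. By the definition of the $\ba_i$, $\tau_1$ maps $\ba_i$ to $\ba_{2-i}$. We need the three $\bs$-terms to be $\tau_1$-invariant:
\begin{itemize}
\item $\bs_{\tz,1}$ is invariant by Lemma~\ref{invariant}~\ref{invariant_1};
\item $\bs_{\tu,2}=\ba_1\ba_3-\bt(\ba_1+\ba_3)$ is invariant by the same lemma, applied with $r=1$;
\item $\bs_{\tz,2}$ is symmetric in $\ba_0,\ba_2$, which $\tau_1$ swaps, so it is invariant as well.
\end{itemize}
Since the coefficients are scalars, $\tau_1$ fixes them. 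Subtracting the $\tau_1$-image of~\eqref{equa3} from~\eqref{equa3} therefore kills every $\bs$-term and leaves a linear relation among axes:
\[
P(\ba_{-2}-\ba_4)+Q(\ba_{-1}-\ba_3)+(R-P)(\ba_0-\ba_2)=0 .
\]
The contributions of $Q^{\bf f}$ and $R^{\bf f}$ cancel in this subtraction. Rearranging gives~\eqref{equa1}.

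\textbf{Step 3: proof of~\eqref{equa2}.} Apply $\bf f$ to~\eqref{equa1}. Using $\ba_i\mapsto\ba_{1-i}$, together with $P^{\bf f}=P$ and ${\bf f}^2=1$ on $\tilde\F$, we get
\[
P(\ba_3-\ba_{-3})+(P-R^{\bf f})(\ba_{-1}-\ba_1)=Q^{\bf f}(\ba_{-2}-\ba_2),
\]
which is~\eqref{equa2}.

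No serious obstacle is expected. The only care needed is bookkeeping: checking the action of $\bf f$ and $\tau_1$ on each $\bs_{\ti,n}$, and that $\sigma$ fixes the $\F$-valued polynomials $P$, $T$, $U$. The coefficient matching in Step~1 is the most error-prone part, and the explicit formulas in~\cite{codenightmare} can be used to double-check it.
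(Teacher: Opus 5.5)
Your proof is correct and follows essentially the paper's route: \eqref{equa3} is the difference between Lemma~\ref{primo}\ref{primo_6} and its image under $\bf f$, \eqref{equa1} comes from a $\tau_1$-difference, and \eqref{equa2} is the $\bf f$-image of \eqref{equa1}. The only change is that you take the $\tau_1$-difference of \eqref{equa3} rather than of Lemma~\ref{primo}\ref{primo_6} itself. This is the same computation, because the flipped identity is entirely $\tau_1$-invariant: it involves only $\ba_{-1}+\ba_3$, $\ba_0+\ba_2$, $\ba_1$ and the $\tau_1$-invariant $\bs$-terms, so it contributes nothing to the difference.
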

\begin{proof}
By applying $\bf f$ to parts~\ref{primo_5} and~\ref{primo_6}
of Lemma~\ref{primo} we get, respectively, 
\begin{equation} \label{4.7.1f}
( \alpha-2   \beta)\ba_1\bs_{\tz,2}=H(\ba_{3}+\ba_{-1})+I^{\bf f}(\ba_0+\ba_2)+J^{\bf f}\ba_1+K^{\bf f}\bs_{\tz,1}+L\bs_{\tu,2},
\end{equation}
and 
\begin{equation}\label{D}
4(\alpha-2\beta)\bs_{\tz,1} \cdot \bs_{\tz,1}=P(\ba_{-1}+\ba_3)+Q^{\bf f}(\ba_{0}+\ba_2)+R^{\bf f}\ba_1+S^{\bf f}\bs_{\tz,1}+
T\bs_{\tu,2}+U\bs_{\tz,2}.
\end{equation}
Equation~\eqref{equa1} now follows by taking the difference between Lemma~\ref{primo}\ref{primo_6} and its image under $\tau_1$. Equation~\eqref{equa2} is obtained by applying $\bf f$ to the previous one.  Equation~\eqref{equa3} follows by taking the difference between Lemma~\ref{primo}\ref{primo_6} and Equation~\eqref{D}. 
  \end{proof}

Set \nomenclature{$A$}{\pageref{A}}\nomenclature{$B$}{\pageref{B}}
\begin{equation}
\begin{aligned}
A &:= 2\Big ((\al-4\bt)(\al^2-4\al\bt+\al+2\bt)\lmu  \\
 &\phantom{{}:={}2(}\  +(3\al^3-12\al^2\bt-\al^2+16\al\bt^2+6\al\bt-8\bt^2)\lmf \\
 &\phantom{{}:={}2(}\ -(3\al^4-14\al^3\bt+18\al^2\bt^2+2\al^2\bt-4\al\bt^3-12\al\bt^2+8\bt^3)\Big)(\lmu-\lmf)  \\
 &\phantom{{}:={}} \ -\al\bt(\al-\bt)(\al-4\bt)(\lmd-\lmdf) 
 \end{aligned}     
 \label{A} 
\end{equation}
%
and
\begin{equation}
\begin{aligned}
B &:= \Big(2(\al^2-2\al\bt-\al+4\bt)(-3\al^2+4\al\bt+\al-2\bt)\lmu^2  \\
&\phantom{{}:={}\Big(} \quad -2\al(\al-1)(\al^2-2\al\bt-\al+4\bt)\lmu\lmf \\
&\phantom{{}:={}\Big(} \quad +(6\al^5-24\al^4\bt-6\al^4+24\al^3\bt^2+36\al^3\bt+8\al^2\bt^3-60\al^2\bt^2 \\
&\phantom{{}:={}\Big(+(} \quad -4\al^2\bt+16\al\bt^3+20\al\bt^2-16\bt^3)\lmu \\
&\phantom{{}:={}\Big(} \quad  +2\bt(3\al^4-16\al^3\bt+24\al^2\bt^2-\al^3+8\al^2\bt-24\al\bt^2-2\al\bt+8\bt^2)\lmf\\
&\phantom{{}:={}\Big(} \quad +\al\bt(\al-\bt)(\al^2-2\al\bt-\al+4\bt)\lmd \\
&\phantom{{}:={}\Big(} \quad  -\bt(6\al^5-28\al^4\bt+32\al^3\bt^2+8\al^2\bt^3-2\al^4+17\al^3\bt-42\al^2\bt^2 \\
&\phantom{{}:={}\Big(-\bt(} \quad  -5\al^2\bt+22\al\bt^2-8\bt^3)\Big)(\lmu-\lmf) \\
&\phantom{{}:={}} \quad +\bt^2(\al-\bt)(\al-4\bt)(2\al^2-2\al\bt-\al+2\bt)(\lmdf-\lmd). \label{B} 
\end{aligned}      
\end{equation}
%

\begin{lemma}\label{s1s2}
Let  $\al\not \in \{2\bt, 4\bt\}$. With the above notation, in the algebra $\Vo$ the following relations hold:
 \begin{equation}\label{s1s2eq}
A(\ba_{-2}-\ba_2)=\tfrac{2}{\bt(\al-\bt)}B(\ba_{-1}-\ba_1)
 \end{equation}
 and 
  \begin{equation}\label{s1s2eq2}
 A^{\bf f}(\ba_{3}-\ba_{-1})=\tfrac{2}{\bt(\al-\bt)}B^{\bf f}(\ba_{2}-\ba_0).
 \end{equation}
\end{lemma}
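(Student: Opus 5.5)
We derive \eqref{s1s2eq} by eliminating the products $\bs_{\tz,1}\bs_{\tz,1}$ and $\ba_0\bs_{\tu,2}$ among the relations already available, reducing everything to the spans $\la \ba_{-2}-\ba_2\ra$ and $\la \ba_{-1}-\ba_1\ra$. Then \eqref{s1s2eq2} follows by applying $\bf f$. By Lemma~\ref{translation}, $\bf f$ maps $\ba_i\mapsto \ba_{1-i}$, so $\ba_{-2}-\ba_2\mapsto \ba_3-\ba_{-1}$ and $\ba_{-1}-\ba_1\mapsto \ba_2-\ba_0$. Since $\al,\bt\in\F$ are fixed by $\bf f$, this is exactly the second relation.

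For \eqref{s1s2eq}, the starting point is Equation~\eqref{equa3}. It contains the antisymmetric term $(T-U)(\bs_{\tz,2}-\bs_{\tu,2})$ and the term $(S-S^{\bf f})\bs_{\tz,1}$, which are to be killed. Because $\bs_{\tz,1}$ and $\bs_{\tz,2}$ are $\tau_0$-invariant by Lemma~\ref{invariant}, I would multiply \eqref{equa3} by $\ba_0$ and take its image under $\tau_0$. This image sends $\ba_j\mapsto \ba_{-j}$, fixes $\bs_{\tz,1},\bs_{\tz,2}$, and sends $\bs_{\tu,2}$ to $\bs_{-\tu,2}=\bs_{\tu,2}$ (indices mod $2$). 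Subtracting \eqref{equa3} from its $\tau_0$-image cancels all $\bs$-terms and leaves a linear relation among $\ba_{\pm1},\ba_{\pm2},\ba_{\pm3}$ in antisymmetric combinations, namely
\[
P(\ba_{-2}-\ba_2-\ba_3+\ba_{-3})+(Q-P-Q+R^{\bf f})(\ba_{-1}-\ba_1)+(P-Q^{\bf f})(\ba_2-\ba_{-2})=0 .
\]
This is equivalent to a combination of \eqref{equa2} and therefore gives nothing new. So the genuine new information must come from multiplying \eqref{equa3} by $\ba_0$.

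Using Lemma~\ref{primo}\ref{primo_1}, \eqref{primo_5}, the $\tau_1$-image of Lemma~\ref{primo}\ref{primo_1} for $\ba_0\ba_{\pm j}$ (which are expressed through $\bs_{\tz,j}$), and $\ba_0\ba_3$ via $\bs_{\tz,3}$, I would expand $\ba_0\cdot(\text{RHS of \eqref{equa3}})$. Subtracting the $\tau_0$-image then kills all $\tau_0$-invariant terms ($\ba_0$, $\bs_{\tz,k}$, $\ba_j+\ba_{-j}$). What survives is a combination of $\ba_{-1}-\ba_1$, $\ba_{-2}-\ba_2$ and $\ba_{-3}-\ba_3$.

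Next I eliminate $\ba_{-3}-\ba_3$ using \eqref{equa2}, where the coefficient $P=\bt(\al-\bt)^2(\al-4\bt)$ is nonzero because $\al\ne4\bt$. Clearing denominators (the factor $\al-2\bt$ from Lemma~\ref{primo}\ref{primo_5}, which is invertible since $\al\neq 2\bt$) yields a relation $A'(\ba_{-2}-\ba_2)=B'(\ba_{-1}-\ba_1)$. Finally, I would check by symbolic computation (as in \cite{codenightmare}) that $A'$ and $B'$ are, up to the common nonzero factor, $A$ and $\tfrac{2}{\bt(\al-\bt)}B$.

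The main obstacle is the bookkeeping of this last step: getting the correct product $\ba_0\bs_{\tu,2}$, substituting $\lm_{\ba_0}$-values via Corollary~\ref{cor:lm2f}, and simplifying to the stated polynomials. I would do this by computer algebra rather than by hand. The hypotheses $\al\notin\{2\bt,4\bt\}$ enter exactly as the invertibility of $\al-2\bt$ and $P$.
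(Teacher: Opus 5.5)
Your key step produces no new information. Write $x$ for the right-hand side of \eqref{equa3}. Since $\ba_0$ is fixed by the automorphism $\tau_0$, you have $\ba_0x-(\ba_0x)^{\tau_0}=\ba_0\,(x-x^{\tau_0})$. You computed $x-x^{\tau_0}$ yourself: up to sign it is
\[
y:=P(\ba_{3}-\ba_{-3})+(P-R^{\bf f})(\ba_{-1}-\ba_1)-Q^{\bf f}(\ba_{-2}-\ba_{2}),
\]
which is exactly the combination in \eqref{equa2}. Every $\ba_{-j}-\ba_j$ is a $\bt$-eigenvector of $\ad_{\ba_0}$ (Lemma~\ref{ui}), so $\ba_0y=\bt y$. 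Your ``new'' relation is therefore just $\bt$ times \eqref{equa2}.

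You can also see this directly in your expansion. The terms $\bs_{\tz,1}$, $\bs_{\tz,2}$, $\bs_{\tz,3}$, $\ba_0$, $\ba_j+\ba_{-j}$ and $\ba_0\bs_{\tu,2}$ all cancel; the right-hand side of Lemma~\ref{primo}\ref{primo_5} is manifestly $\tau_0$-symmetric. The only survivors are the terms $\bt\ba_j$ coming from $\ba_0\ba_j=\bs_{\tz,|j|}+\bt(\ba_0+\ba_j)$, and they reassemble into $\bt y$. Eliminating $\ba_{-3}-\ba_3$ with \eqref{equa2} then leaves $0=0$. So there are no $A',B'$ to compare with $A$ and $\tfrac{2}{\bt(\al-\bt)}B$, and your final symbolic check would fail. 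Likewise, multiplying \eqref{equa3} by $\ba_1$ and antisymmetrizing under $\tau_1$ only returns $\bt$ times \eqref{equa1}.

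What is missing is a genuinely new relation in $\Vo$; the formulas of Lemma~\ref{primo} and \eqref{equa1}--\eqref{equa3} do not supply one under these manipulations. The paper gets it from the full multiplication of $\Vo$. For $\al\notin\{2\bt,4\bt\}$, $\Vo$ is spanned by $\mathfrak B=\{\ba_{-2},\ba_{-1},\ba_0,\ba_1,\ba_2,\bs_{\tz,1},\bs_{\tz,2},\bs_{\tu,2}\}$ \cite[Proposition~6.10]{FMS3}, and the structure constants on $\mathfrak B$ are computed by computer \cite{codenightmare}. The product $\bs_{\tz,1}\bs_{\tu,2}$ is $\tau_0$-invariant, since both factors are, but its computed expression in $\mathfrak B$ is not manifestly so. Subtracting its $\tau_0$-image gives a vanishing combination of $\ba_2-\ba_{-2}$ and $\ba_{-1}-\ba_1$ with coefficients $\tfrac{A}{2(\al-4\bt)^2}$ and $\tfrac{B}{\bt(\al-\bt)(\al-4\bt)^2}$ (up to sign), which is the stated relation.

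So the hypothesis $\al\notin\{2\bt,4\bt\}$ enters through the spanning result and these structure constants, not merely through the invertibility of $\al-2\bt$ and $P$. Your deduction of \eqref{s1s2eq2} from \eqref{s1s2eq} by applying ${\bf f}$ is correct and is what the paper does.
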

\begin{proof}
Let $\al\not \in \{2\bt, 4\bt\}$. By~\cite[Proposition 6.10]{FMS3}, $\Vo$ is linearly spanned by the set 
$$\mathfrak B:=\{\ba_{-2}, \ba_{-1}, \ba_0, \ba_1, \ba_2, \bs_{\tz,1}, \bs_{\tz,2},  \bs_{\tu,2}\}.$$  
By~\cite[Remark 6.11]{FMS3}, the structure constants of $\Vo$ relative to the set $\mathfrak B$ have been computed using {\sc Singular} in~\cite{codenightmare}. In particular we get 
$$
\bs_{0,1}\bs_{1,2}-(\bs_{0,1}\bs_{1,2})^{\tau_0}=-\tfrac{A}{2(\al-4\bt)^2}(\ba_{2}-\ba_{-2})+\tfrac{B}{\bt(\al-\bt)(\al-4\bt)^2}(\ba_{-1}-\ba_1).
$$
Equation~\eqref{s1s2eq} now follows, since both $\bs_{0,1}$ and $\bs_{0,2}$ are $\tau_0$-invariant.  Equation~\eqref{s1s2eq2} follows by applying $\bf f$ to Equation~\eqref{s1s2eq}.   
\end{proof}

Let 
\begin{align*}
    C&:= 2(4\bt-1)\lmu\lmf+(16\bt-1)\lmu^2-2\bt(4\bt-1)\lmf\\ 
    &\phantom{{}={}} \qquad -4\bt(13\bt-1)\lmu  -3\bt^2\lmd+3\bt^2(12\bt-1). 
\end{align*} 

\begin{lemma}\label{l7.3}\cite[Lemma~7.3]{FMS3}
Let $\al=4\bt$, then in the algebra $\Vo$ the following relations hold:
\begin{equation}
\label{eqs2-4bt}
\tfrac{1}{2\bt}Q^{\bf f}(\ba_{-2} -\ba_2)=4C^{\bf f}(\ba_{-1}-\ba_{1})
\end{equation}
and 
\begin{equation}
\label{eqs3-4bt}
\tfrac{1}{2\bt}Q(\ba_{3} -\ba_{-1})=4C(\ba_2-\ba_{0}).
\end{equation}
\end{lemma}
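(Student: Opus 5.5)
We will prove the two relations in the same way as Lemma~\ref{s1s2}, by specialising the general identities \eqref{equa1}--\eqref{equa3} and Lemma~\ref{primo} to $\al=4\bt$. The key observation is that $P=\bt(\al-\bt)^2(\al-4\bt)$ vanishes when $\al=4\bt$. Hence \eqref{equa1} collapses to
\[
-R(\ba_2-\ba_0)=Q(\ba_3-\ba_{-1}),
\]
and \eqref{equa2} collapses to $-R^{\bf f}(\ba_{-1}-\ba_1)=Q^{\bf f}(\ba_{-2}-\ba_2)$.

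These are already relations of the required shape, with the coefficient $Q$ on one side. It therefore remains to check the coefficient identity
\[
-R=8\bt\, C \qquad \text{when } \al=4\bt,
\]
after which \eqref{eqs3-4bt} follows by multiplying through by $\tfrac{1}{2\bt}$. Applying the $\sigma$-flip $\bf f$, which conjugates coefficients, then gives \eqref{eqs2-4bt} with $C^{\bf f}$.

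\textbf{Order of steps.} First substitute $\al=4\bt$ in the defining polynomial of $P$ and note that $P=0$. Next take the difference of Lemma~\ref{primo}\ref{primo_6} and its $\tau_1$-image, exactly as in the proof of \eqref{equa1}; this step needs no hypothesis on $\al$, since Lemma~\ref{primo}\ref{primo_6} holds in $\Vo$ for all parameters. Then compute $R|_{\al=4\bt}$ explicitly.

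Expanding $R$ coefficient by coefficient:
\begin{itemize}
\item the $\lmu^2$ coefficient is $4(-48\bt^2+16\bt^2+4\bt-2\bt)=4\bt(2-32\bt)=-8\bt(16\bt-1)$;
\item the $\lmu\lmf$ coefficient is $4\cdot 4\bt(1-4\bt)=-16\bt(4\bt-1)=-8\bt\cdot 2(4\bt-1)$.
\end{itemize}
Both match $-8\bt$ times the corresponding coefficients of $C$, which is encouraging. The remaining coefficients ($\lmu$, $\lmf$, $\lmd$ and the constant term) are checked in the same way.

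\textbf{Main obstacle and fallback.} The only real obstacle is this routine polynomial verification, possibly up to a sign or a normalisation. If the match fails, for instance because an overall factor $(\al-4\bt)$ appears, the fallback is to redo the argument of Lemma~\ref{s1s2} with the $\al=4\bt$ structure constants from \cite{codenightmare}, using $\bs_{\tz,1}\bs_{\tu,2}-(\bs_{\tz,1}\bs_{\tu,2})^{\tau_0}$. Alternatively, one can cite \cite[Lemma~7.3]{FMS3} directly.
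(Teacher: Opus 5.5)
Your proposal is correct, and the coefficient check you left as routine does go through. At $\al=4\bt$ the remaining coefficients of $R$ are $32\bt^2(13\bt-1)$ for $\lmu$, $16\bt^2(4\bt-1)$ for $\lmf$, $24\bt^3$ for $\lmd$, and $-24\bt^3(12\bt-1)$ for the constant term. Each is exactly $-8\bt$ times the corresponding coefficient of $C$, and neither polynomial has any other monomials. So $R=-8\bt C$ holds identically, and no fallback is needed. Applying ${\bf f}$, which fixes $\bt\in\F$, gives $R^{\bf f}=-8\bt C^{\bf f}$. With $P=0$, \eqref{equa1} and \eqref{equa2} then become \eqref{eqs3-4bt} and \eqref{eqs2-4bt} after dividing by $2\bt$.

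The paper gives no argument of its own here; it only quotes \cite[Lemma~7.3]{FMS3}. Your route is internal to the paper. It uses only Lemma~\ref{primo}\ref{primo_6}, the $\tau_1$-action, and the resulting relations \eqref{equa1}--\eqref{equa2}. These hold without restriction on $\al$, and the paper itself already uses them at $\al=4\bt$ in Corollary~\ref{reduction}. Your derivation also shows that Lemma~\ref{l7.3} is just the specialisation $P=0$ of \eqref{equa1}--\eqref{equa2}, not a separate computation like the {\sc Singular}-based Lemma~\ref{s1s2}. In particular, for $\al=4\bt$ the condition $R=0$ used in Corollary~\ref{reduction} is equivalent to the condition $C=0$ used in Lemma~\ref{supernew}.
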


\section{Evaluation in $\V$}\label{evaluation}

Let $\V:=(V,\{a_0,a_1\})$ be a $2$-generated $\Mab$-axial algebra over $\F$.  Since, by Equation~\eqref{uni} on page~\pageref{uni}, $V$ is a homomorphic image of $\Vo$ and $\F$ is a homomorphic image of $\tilde{\F}$, Equations~\eqref{gammai}-\eqref{eqs3-4bt} still hold in the algebra $V$, once $\ba_i$ is replaced by $a_i$, $\bs_{\bar{r},j}$ by $s_{\bar{r},j}$, and every coefficient in $\tilde{\F}$ by its image under $\varphi$. 

It is important to note that the map
\[
\begin{aligned}
\F & \longrightarrow \F\\
 Z^\varphi &\longmapsto (Z^{\bf f})^\varphi
\end{aligned}
\]
where $Z \in \tilde{\F}$, is not necessarily an automorphism of $\F$.\footnote{For example, take the algebra $A = 3\C(\al)$.  If $\al \neq -1$, then $A$ has an an identity $\1$.  Set $X = \{ a_0, \1-a_1, \1-a_2\}$.  One can check that $\lla a_0, \1-a_1 \rra = A$ and so $A$ is a $2$-generated (non-symmetric) axial algebra of Monster type $\mathcal{M}(\al, 1-\al)$ (see Table~\ref{table3Cskew}).  In this algebra, $\lmu = \lm_{a_0}(\1-a_1) = 1-\frac{\al}{2}$ and $\lmf = \lm_{\1-a_1}(a_0) = \frac{1}{2} + \frac{\al}{2}$, which are not equal as $\al \neq \frac{1}{2}$.}

In order to simplify notation, when it is clear from the context, we shall omit the superscript $\varphi$ and use the same symbol $Z$ to denote both an element  $Z$ in $\tilde{ \F}$ and its image $Z^\varphi$ in $\F$. However, to avoid confusion,  given\nomenclature{$I^f$}{\pageref{Zf}}
\[
Z\in \{\gamma_i,H,I,J,K,L,P,Q,R,S,T,U, A,B, C\},\nomenclature{$J^f$}{\pageref{Zf}} 
\]
 we shall denote $(Z^{\bf f})^\varphi$ by $Z^f$
\nomenclature{$Z^f$}{\pageref{Zf}}\label{Zf} instead of $Z^{\bf f}$. \nomenclature{$B^f$}{\pageref{Zf}}  
We again stress that although ${\bf f}$ is a semi-automorphism of $\tilde{\F}$, the map $f$ is not an automorphism of $\F$.\nomenclature{$Q^f$}{\pageref{Zf}}
  Note also that, by Equation~\eqref{lambdai} on page~\pageref{lambdai}, this notation for $\lmf$ and $\lmdf$ is consistent with the definition given in Equation~~\eqref{deflmlmf} on page~\pageref{deflmlmf}.
\nomenclature{$R^f$}{\pageref{Zf}}

If the coefficients $Q$, $Q^f$, $A$, and $A^f$,\nomenclature{$A^f$}{\pageref{Zf}}
\nomenclature{$S^f$}{\pageref{Zf}} defined on page~\pageref{primo} and page~\pageref{A}, are non zero, then Equations~\eqref{equa1}, \eqref{equa2}, \eqref{s1s2eq}, \eqref{s1s2eq2}, \eqref{eqs2-4bt}, or \eqref{eqs3-4bt} generally give relations between the even and the odd subalgebras $\V_e$ and $\V_o$ as mentioned in the Overview. On the other hand, if the coefficients of those equations are zero, one obtains algebraic relations on $\al$, $\bt$, $\lmu$, $\lmf$, $\lmd$, and $\lmdf$. Both situations occur in the following lemmas. 

\begin{lemma}\label{Ve*Vo*}
Assume $a_0\neq a_2$ and $a_{-1}\neq a_1$. Then
\begin{enumerate}
    \item  if $\al\not \in \{2\bt, 4\bt\}$ and $B\neq 0$, or if $\al=4\bt$ and $C^f\neq 0$, then  $V_o^\ast=V_e^{\ast\ast}$;
    \item  if $\al\not \in \{2\bt, 4\bt\}$ and $B^f\neq 0$, or if $\al=4\bt$ and $C\neq 0$, then $V_e^\ast=V_o^{\ast\ast}$.\label{Ve*Vo*_2}
\end{enumerate}
\end{lemma}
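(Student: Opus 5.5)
The plan is to read the claimed equalities directly off the linear relations of Lemma~\ref{s1s2} (for $\al\notin\{2\bt,4\bt\}$) and Lemma~\ref{l7.3} (for $\al=4\bt$). These relations continue to hold in $V$ after evaluation, as explained at the start of Section~\ref{evaluation}. By Lemma~\ref{lem:VeVo}, the four objects involved are generated by the $\Miy(\V)$-orbits of single vectors:
\begin{itemize}
\item $V_o^\ast$ by the orbit of $a_{-1}-a_1$;
\item $V_e^{\ast\ast}$ by the orbit of $a_{-2}-a_2$;
\item $V_e^\ast$ by the orbit of $a_0-a_2$;
\item $V_o^{\ast\ast}$ by the orbit of $a_3-a_{-1}$.
\end{itemize}
So it suffices to show that, in each part, the two relevant generating vectors are nonzero scalar multiples of each other. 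Every element of $\Miy(\V)$ is a linear automorphism, so proportional generators have proportional orbits, and the objects they generate coincide.

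For part~(a) with $\al\notin\{2\bt,4\bt\}$, I would use Equation~\eqref{s1s2eq} evaluated in $V$:
\[
A(a_{-2}-a_2)=\tfrac{2}{\bt(\al-\bt)}B(a_{-1}-a_1).
\]
Since $B\neq 0$ and $a_{-1}\neq a_1$, the right-hand side is nonzero. Hence $A\neq 0$, and consequently $a_{-2}\neq a_2$. Dividing by $A$ gives $a_{-1}-a_1=c(a_{-2}-a_2)$ with $c=\tfrac{\bt(\al-\bt)A}{2B}\in\F\setminus\{0\}$. For $\al=4\bt$ the same argument uses Equation~\eqref{eqs2-4bt}, namely $\tfrac{1}{2\bt}Q^f(a_{-2}-a_2)=4C^f(a_{-1}-a_1)$. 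Here $C^f\neq 0$ together with $a_{-1}\neq a_1$ forces $Q^f\neq 0$, and again the two vectors are proportional with a nonzero factor. Applying an arbitrary $\tau\in\Miy(\V)$ gives $(a_{-1}-a_1)^\tau=c\,(a_{-2}-a_2)^\tau$. This yields the inclusion $V_o^\ast\subseteq V_e^{\ast\ast}$, and dividing by $c$ gives the reverse inclusion.

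Part~\ref{Ve*Vo*_2} is obtained in exactly the same way. For $\al\notin\{2\bt,4\bt\}$ I would use Equation~\eqref{s1s2eq2}, $A^f(a_3-a_{-1})=\tfrac{2}{\bt(\al-\bt)}B^f(a_2-a_0)$. For $\al=4\bt$ I would use Equation~\eqref{eqs3-4bt}, $\tfrac{1}{2\bt}Q(a_3-a_{-1})=4C(a_2-a_0)$. In either case, $a_0\neq a_2$ and the nonvanishing of $B^f$ (respectively $C$) force the coefficient on the left to be nonzero. Hence $a_0-a_2$ and $a_3-a_{-1}$ are nonzero multiples of one another, and $V_e^\ast=V_o^{\ast\ast}$ follows.

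The only point requiring care is not the computation, which is entirely contained in the cited relations. It is making sure that the nonvanishing of the left-hand coefficient ($A$, $Q^f$, $A^f$ or $Q$) is deduced rather than assumed. This is precisely where the hypotheses $a_0\neq a_2$ and $a_{-1}\neq a_1$ enter: they make the right-hand sides nonzero vectors.
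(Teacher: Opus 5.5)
Your proof is correct and is the paper's own argument. The paper cites exactly Lemmas~\ref{s1s2}, \ref{l7.3} and~\ref{lem:VeVo}; you additionally spell out why the nonvanishing of $B$, $B^f$, $C^f$ or $C$, together with $a_0\neq a_2$ and $a_{-1}\neq a_1$, forces the left-hand coefficient to be nonzero. You also read the first orbit description in Lemma~\ref{lem:VeVo} correctly as one for $V_o^\ast$; the printed $V_o^{\ast\ast}$ there is a typo.
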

\begin{proof}
 This follows immediately from Lemmas~\ref{s1s2},~\ref{l7.3}, and~\ref{lem:VeVo}.   
\end{proof}

\begin{lemma}\label{newQ-Q^f}
With the above notation, the following hold in the field $\F$.
\begin{enumerate}
    \item \label{newQ-Q^f_1} If $Q-Q^f=0$, then either $\lm_1=\lmf$ or $\al\neq 2$ and $\bt=\frac{\al(\al-1)}{2(\al-2)}$.
    \item \label{eqQ=0} If $Q=0$, then 
\[
    \lm_1=\frac{2(\al-1)(\al-4\bt)}{4\al\bt}\lmf+\frac{\bt(10\al\bt-4\bt^2+\al-6\bt)}{4\al\bt}.
\]
\item \label{newQ-Q^f_3}
$\begin{aligned}[t]
R -R^f &= 4(-3\al^2+4\al\bt+\al-2\bt)(\lmu+\lmf)(\lmu-\lmf) \\
 &\phantom{{}={}} \quad +4(\al^3+3\al^2\bt-6\al\bt^2-2\al\bt+4\bt^2)(\lm_1-\lmf) \\
&\phantom{{}={}} \quad +2\al\bt(\al-\bt)(\lmd-\lmdf).
\end{aligned}$
\end{enumerate}
\end{lemma}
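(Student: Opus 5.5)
This is a direct computation from the explicit polynomials $Q$ and $R$ on page~\pageref{primo}. We use the fact that, for $Z\in\{Q,R\}$, $Z^f$ is obtained from $Z$ by the substitution $\lmu\leftrightarrow\lmf$, $\lmd\leftrightarrow\lmdf$. This holds because, by Equation~\eqref{lambdai}, $\bf f$ acts on $\tilde\F$ by swapping $\lambda_i$ and $\lambda_i^{\bf f}$ while fixing $\F$ (so it fixes $\al,\bt$); applying $\varphi$ then gives the stated substitution in $\F$.

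For part~\ref{newQ-Q^f_1}, write $Q=c_1\lmu+c_2\lmf+c_0$, where $c_1=4\al\bt(\al-\bt)$, $c_2=2(-\al^3+5\al^2\bt+\al^2-4\al\bt^2-5\al\bt+4\bt^2)$, and $c_0$ is the constant term. Then
\[
Q-Q^f=(c_1-c_2)(\lmu-\lmf).
\]
So if $Q-Q^f=0$, either $\lmu=\lmf$ or $c_1-c_2=0$. Expanding gives
\[
c_1-c_2=2\bigl(\al^3-3\al^2\bt-\al^2+2\al\bt^2+5\al\bt-4\bt^2\bigr).
\]
We then factor this polynomial as a quadratic in $\bt$. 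The expected factorization is $(\al-2\bt)\bigl(\al^2-\al-2\bt(\al-2)\bigr)$, which should be checked by expanding. Since we are in the regime $\al\neq2\bt$ (or we simply keep the factor $\al-2\bt$ if needed), vanishing forces $2\bt(\al-2)=\al(\al-1)$. If $\al=2$, this reads $0=2$, which is impossible. Hence $\al\neq 2$ and $\bt=\frac{\al(\al-1)}{2(\al-2)}$. The hardest step is this factorization. If the factor $\al-2\bt$ does not appear cleanly, I would recheck the expansion of $c_2$, since the statement as written presupposes $\al\neq2\bt$.

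For part~\ref{eqQ=0}, solve the linear equation $Q=0$ for $\lmu$, dividing by $4\al\bt(\al-\bt)$, which is nonzero because $\al,\bt\neq0$ and $\al\neq\bt$. This requires showing that $-c_2/(4\al\bt)$ equals $\frac{2(\al-1)(\al-4\bt)}{4\al\bt}$ times $(\al-\bt)$. In other words, we must verify
\[
\al^3-5\al^2\bt-\al^2+4\al\bt^2+5\al\bt-4\bt^2=(\al-1)(\al-4\bt)(\al-\bt),
\]
which follows by expanding the right-hand side. Similarly, the constant term $-c_0$ must factor as $\bt(\al-\bt)(10\al\bt-4\bt^2+\al-6\bt)$, again a direct expansion. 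Cancelling the common factor $\al-\bt$ then yields the displayed formula.

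For part~\ref{newQ-Q^f_3}, expand $R$ and subtract its image under the swap. The $\lmu\lmf$ term is symmetric and cancels. The quadratic terms give
\[
4(-3\al^2+4\al\bt+\al-2\bt)(\lmu^2-\lmf^2),
\]
which factors as $(\lmu+\lmf)(\lmu-\lmf)$. The linear terms give the coefficient of $\lmu$ minus the coefficient of $\lmf$, times $(\lmu-\lmf)$:
\[
4(\al^3+4\al^2\bt-6\al\bt^2-3\al\bt+4\bt^2)-4\al\bt(\al-1)=4(\al^3+3\al^2\bt-6\al\bt^2-2\al\bt+4\bt^2).
\]
The $\lmd$ term gives $2\al\bt(\al-\bt)(\lmd-\lmdf)$, and the constants cancel. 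This is exactly the claimed formula.
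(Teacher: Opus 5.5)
Your route is the same as the paper's: the paper just records $Q-Q^f=2(\al-\bt)(\al^2-2\al\bt-\al+4\bt)(\lmu-\lmf)$ and calls (b) and (c) immediate from the definitions. Your verifications for parts (b) and (c) are correct as written. The one real error is the factorization in part (a). Your expansion
\[
c_1-c_2=2\bigl(\al^3-3\al^2\bt-\al^2+2\al\bt^2+5\al\bt-4\bt^2\bigr)
\]
is right. However, $(\al-2\bt)(\al^2-2\al\bt-\al+4\bt)=\al^3-4\al^2\bt-\al^2+4\al\bt^2+6\al\bt-8\bt^2$, which is not this cubic: at $\al=2\bt$ the cubic equals $2\bt^2\neq 0$. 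The correct factorization is
\[
\al^3-3\al^2\bt-\al^2+2\al\bt^2+5\al\bt-4\bt^2=(\al-\bt)(\al^2-2\al\bt-\al+4\bt).
\]
So your quadratic factor $\al^2-\al-2\bt(\al-2)$ is right, but the linear factor is $\al-\bt$, not $\al-2\bt$.

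This is not cosmetic, because it changes what the argument needs. The lemma does not assume $\al\neq 2\bt$. When $\al=2\bt$, the second alternative would force $\al(\al-2)=\al(\al-1)$, which is impossible, so in that case the lemma asserts $\lmu=\lmf$. Had $\al-2\bt$ really been a factor, $Q-Q^f$ would vanish identically at $\al=2\bt$, and that case could not be proved. Your fallback of re-expanding $c_2$ would find nothing, since $c_2$ is correct. The fix is to use the correct factor $\al-\bt$, which is nonzero by the standing hypothesis $\al\neq\bt$. With that, your treatment of $\al^2-2\al\bt-\al+4\bt=0$ goes through unchanged: at $\al=2$ it reads $2=0$, and otherwise it gives $\bt=\frac{\al(\al-1)}{2(\al-2)}$.
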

\begin{proof}
Parts \ref{eqQ=0} and \ref{newQ-Q^f_3} are immediate by the definitions of $Q$ and $R$. Part~\ref{newQ-Q^f_1} follows since  we have 
\[
Q-Q^f=2(\al-\bt)(\al^2-2\al\bt-\al+4\bt)(\lm_1-\lmf).\qedhere
\]
\end{proof}

\begin{lemma}\label{S}
With the above notation, assume $\bt=\tfrac{1}{2}$. Then, either $\lmu=\lmf$ or $s_{\tz,1}\in \langle V_e, V_o\rangle$.   
\end{lemma}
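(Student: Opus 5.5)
The natural tool is Equation~\eqref{equa3}, which expresses a combination of $s_{\tz,1}$, $s_{\tz,2}-s_{\tu,2}$ and axes as zero. Setting $\bt=\tfrac12$ in the definitions of $S$, $T$ and $U$ gives
\[
T-U=-2\al\bt(\al-\bt)-2\bt(\al-\bt)(\al-2\bt)=-2\bt(\al-\bt)(2\al-2\bt)=-(\al-\tfrac12)(2\al-1),
\]
so with $\bt=\tfrac12$ we get $T-U=-\tfrac12(2\al-1)^2$. Also
\[
S-S^f=4\bigl(2\al(\al-\bt)-\al(\al-1)\bigr)(\lmu-\lmf)=4\al(\al+1-2\bt)(\lmu-\lmf),
\]
which for $\bt=\tfrac12$ becomes $4\al^2(\lmu-\lmf)$.

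The key observation is that $s_{\tz,2}=a_0a_2-\tfrac12(a_0+a_2)\in V_e$ and $s_{\tu,2}=a_1a_3-\tfrac12(a_1+a_3)\in V_o$. Indeed $a_3=a_{-1}^{\rho^2}$, and this lies in $V_o$ because $V_o$ is $\Miy(\V)$-invariant: it is generated by $a_{-1},a_1$, and $\tau_1$ preserves it, while $\tau_0$ swaps $a_{-1}$ and $a_1$. Every $a_j$ also lies in $V_e\cup V_o$.

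Suppose $\lmu\neq\lmf$. Then the coefficient $4\al^2(\lmu-\lmf)$ of $s_{\tz,1}$ in Equation~\eqref{equa3} is nonzero, since $\al\neq0$ and $\ch\F\neq2$. Every remaining term of Equation~\eqref{equa3} — the multiples of $s_{\tz,2}-s_{\tu,2}$ and of $a_{-2},a_3,a_{-1},a_0,a_1,a_2$ — lies in the subspace (indeed the subalgebra) $\langle V_e,V_o\rangle$. Dividing by $4\al^2(\lmu-\lmf)$ then gives $s_{\tz,1}\in\langle V_e,V_o\rangle$.

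The only real work is checking the coefficient of $s_{\tz,1}$, that is, the specialisation of $S-S^f$. The $\lmf$ and constant terms of $S$ cancel against $S^f$ once one notes that $S^f$ is $S$ with $\lmu$ and $\lmf$ swapped. This coefficient is nonzero precisely when $\lmu\neq\lmf$, which gives the dichotomy in the statement.
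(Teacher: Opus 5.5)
Your proposal is correct and follows the paper's proof: you specialise $S-S^f$ to $4\al^2(\lmu-\lmf)$ at $\bt=\tfrac12$ and solve Equation~\eqref{equa3} for $s_{\tz,1}$. You also justify that $a_3$ and $s_{\tu,2}$ lie in $V_o$ (via the $\Miy(\V)$-invariance of $V_o$), which the paper takes for granted.
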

\begin{proof}
Assume $\bt=\tfrac{1}{2}$ and $\lmu\neq \lmf$. Then, by the  definition of $S$, $$S-S^f=4\al^2(\lm-\lmf)\neq 0.$$ Equation~\eqref{equa3} then  implies 
\[
s_{\tz, 1}\in \langle a_{-2}, a_{-1}, a_0, a_1, a_2, a_3, s_{\tz,2}, s_{\tu,2}\rangle \leq \langle V_e, V_o\rangle. \qedhere
\]
\end{proof}

\begin{lemma}\label{supernew}
With the above notation, in the algebra $\V$ the followings hold.
  \begin{enumerate}
    \item  Assume $\adim(V_e)\geq 4$. \label{supernew_1}
    \begin{enumerate}
        \item  If $\al\not \in \{2\bt, 4\bt\}$, then either $a_3=a_{-1}$ or $A^f=B^f=0$.
        \item  If $\al=4\bt$, then either $a_3=a_{-1}$ or $Q=C=0$, whence 
        $$\lmu=\tfrac{18\bt-1}{8} \quad \mbox{ and } \quad \lmd=\frac{40\bt^2-14\bt+1}{12\bt^2}\lmf-\frac{(10\bt-1)^2}{192\bt^2}.$$
    \end{enumerate}
    \item  Assume $\adim(V_o)\geq 4$. \label{supernew_2}
    \begin{enumerate}
        \item  If $\al\not \in \{2\bt, 4\bt\}$, then either $a_2=a_{-2}$ or $A=B=0$.
        \item  If $\al=4\bt$, then either $a_2=a_{-2}$ or $Q^f=C^f=0$, whence
        $$\lmf=\tfrac{18\bt-1}{8}\quad \mbox{ and }\quad \lmdf=\frac{40\bt^2-14\bt+1}{12\bt^2}\lmu-\frac{(10\bt-1)^2}{192\bt^2}.$$
    \end{enumerate}
  \end{enumerate}
\end{lemma}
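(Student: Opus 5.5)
We argue part~\ref{supernew_1}; part~\ref{supernew_2} then follows by the symmetric argument, swapping the roles of $\V_e$ and $\V_o$. Formally, one applies the semi-automorphism ${\bf f}$ in $\Vo$ to the relations used, which exchanges $A,B,C,Q$ with $A^f,B^f,C^f,Q^f$ and swaps even and odd indices.

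The key observation is that, since $\V_e$ is a symmetric $2$-generated $\Mab$-axial algebra with $\adim(\V_e)\geq 4$, Lemma~\ref{lemma:adim} applied to $\V_e$ (with generating axes $a_0,a_2$, so that its $i$-th axis is $a_{2i}$) says that $a_{-2},a_0,a_2,a_4$ are linearly independent. In particular $a_2-a_0\neq 0$, and it is not a scalar multiple of any vector lying in the span of $a_{-2},a_4$ and other even axes in a way that would collapse the basis.

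\textbf{Case $\al\notin\{2\bt,4\bt\}$.} We evaluate Equation~\eqref{s1s2eq2} in $V$:
\[
A^f(a_3-a_{-1})=\tfrac{2}{\bt(\al-\bt)}B^f(a_2-a_0).
\]
If $A^f=0$, then $B^f(a_2-a_0)=0$. Since $a_2\neq a_0$, this forces $B^f=0$, and we are in the second alternative.

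If instead $A^f\neq 0$, then $a_3-a_{-1}=c(a_2-a_0)$ with $c=\tfrac{2B^f}{\bt(\al-\bt)A^f}$. If $c=0$, then $a_3=a_{-1}$ and we are done. Otherwise we apply Corollary~\ref{a1xa2}\ref{a1xa2_2} with $x=1/c$; its hypothesis is $a_0-a_2=x(a_{-1}-a_3)$, which holds in $V$ after evaluation. This yields a linear relation
\[
\bigl((1-\al)(\lmu-\lmf)+\tfrac{\bt}{2}(\al-\bt)(\tfrac1x-x)\bigr)(a_0-a_2)-\tfrac{\bt}{2}(\al-\bt)x(a_{-2}-a_4)=0.
\]
The coefficient $\tfrac{\bt}{2}(\al-\bt)x$ of $a_{-2}-a_4$ is nonzero, so this contradicts the linear independence of $a_{-2},a_0,a_2,a_4$. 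Hence the case $A^f\neq0$, $c\neq 0$ is impossible, and in the surviving case $A^f=0$ we get $B^f=0$ as well.

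\textbf{Case $\al=4\bt$.} Here we use Equation~\eqref{eqs3-4bt} in exactly the same way:
\[
\tfrac{1}{2\bt}Q(a_3-a_{-1})=4C(a_2-a_0).
\]
If $Q\neq0$, then either $a_3=a_{-1}$ or the same contradiction arises via Corollary~\ref{a1xa2}. If $Q=0$, then $C=0$ because $a_2\neq a_0$.

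It remains to solve $Q=C=0$ at $\al=4\bt$. Substituting $\al=4\bt$ into the expression of Lemma~\ref{newQ-Q^f}\ref{eqQ=0}, the coefficient $(\al-4\bt)$ of $\lmf$ vanishes, so
\[
\lmu=\frac{\bt(40\bt^2-4\bt^2+4\bt-6\bt)}{16\bt^2}=\frac{36\bt^2-2\bt}{16\bt}=\tfrac{18\bt-1}{8}.
\]
Substituting this value into $C=0$ gives a linear equation in $\lmd$ and $\lmf$. Its $\lmd$-coefficient is $-3\bt^2\neq0$, so solving for $\lmd$ gives the stated formula. This last step is routine algebra, which I would verify with a computer algebra check.

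The main obstacle is the degenerate branch where $A^f\neq0$ and the proportionality constant $c$ is nonzero. Ruling it out requires the Corollary~\ref{a1xa2} relation to genuinely contradict $\adim(\V_e)\geq4$, so one has to check carefully that Lemma~\ref{lemma:adim} indeed gives independence of $a_{-2},a_0,a_2,a_4$. That is the point where the hypothesis $\adim(\V_e)\geq 4$ is used.
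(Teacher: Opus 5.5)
Your proof is correct and follows the paper's own argument: linear independence of $a_{-2},a_0,a_2,a_4$ from Lemma~\ref{lemma:adim}, then Corollary~\ref{a1xa2}\ref{a1xa2_2} to exclude a non-trivial proportionality between $a_0-a_2$ and $a_{-1}-a_3$, then Lemma~\ref{s1s2} or Lemma~\ref{l7.3}. Your explicit derivation of $\lmu$ and $\lmd$ from $Q=C=0$, which the paper leaves implicit, checks out; dividing by $3\bt^2$ is legitimate because $\al=4\bt\neq\bt$ forces $\ch(\F)\neq 3$.
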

\begin{proof}
 Assume $\adim(V_e)\geq 4$. Since $\V_e$ is symmetric, by Lemma~\ref{lemma:adim}, the vectors $a_{-2}$, $a_0$, $a_2$, and $a_4$ are linearly independent. Thus, as $\bt(\al-\bt)\neq 0$, Corollary~\ref{a1xa2}\ref{a1xa2_2} implies that $a_0-a_2$ is not a non-trivial multiple of $a_{-1}-a_3$. Therefore, if $\al\not \in \{2\bt, 4\bt\}$, then Lemma~\ref{s1s2}  yields that
 \[
 \mbox{either}\quad a_3=a_{-1},\quad \mbox{or} \quad A^f=B^f=0.
 \]
 If $\al=4\bt$, then  Lemma~\ref{l7.3} gives that 
 \[
 \mbox{either}\quad a_3=a_{-1},\quad \mbox{or}\quad Q=C=0.
 \]
This proves \ref{supernew_1}. The proof of \ref{supernew_2} is similar.
\end{proof}


\begin{proposition}\label{reg}
 Assume $\lmu=\lmf$, $\lmd=\lmdf$. Suppose further that either 
 \begin{enumerate}
     \item  $(\al,\bt)\neq (2, \tfrac{1}{2})$ or \label{reg_1}
     \item  $\lm_3=\lm_3^f$ and $V$ is spanned by the set $a_{-2}, a_{-1}, a_0, a_1, a_2, a_3, s_{\tz,1}, s_{\tz,2}$.\label{reg_2}
 \end{enumerate}
 Then $\V$ is isomorphic to a quotient of a symmetric algebra.
\end{proposition}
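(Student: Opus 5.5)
The strategy is to use the universal algebra $\mathfrak V$ and pass to a symmetric quotient of it. Let $\varphi$ be the evaluation map of Equation~\eqref{uni}. Let $\mathfrak I$ be the ideal of $\tilde\F$ generated by the elements $Z-Z^{\bf f}$, where $Z$ runs over the parameters $\lm_i=\lm_{\ba_0}(\ba_i)$ that determine the structure constants. Then $\hat\F:=\tilde\F/\mathfrak I$ is a ring on which $\sigma$ induces the identity. Moreover $\hat{\mathbf V}:=\Vo/\mathfrak I\Vo$ is an $\Mab$-axial algebra over $\hat\F$ in which $\bf f$ becomes a genuine flip, so $\hat{\mathbf V}$ is symmetric. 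If $\varphi$ kills $\mathfrak I$, then $\varphi$ factors through $\hat{\mathbf V}\otimes_{\hat\F}\F$. That algebra is a symmetric $\Mab$-axial algebra over $\F$, and $V$ is a quotient of it by a kernel not containing $a_0,a_1$. Lemma~\ref{quotient} then gives the conclusion. So the proof reduces to showing that $\varphi$ vanishes on $\mathfrak I$, which means checking that every relevant parameter takes the same value as its flip-image in $\F$.

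In case~\ref{reg_1}, with $\al\neq 2\bt$, the first step is to recall from \cite[Proposition~6.10, Remark~6.11]{FMS3}, and from the analogous statement for $\al=4\bt$ in \cite[\S7]{FMS3}, that $\tilde\F$ is generated over $\F$ by $\lmu,\lmf,\lmd,\lmdf$. More precisely, $\Vo$ is spanned by $\mathfrak B$, and all structure constants, together with all $\lm_i$ for $i\geq 3$, are polynomials in these four parameters. The hypotheses $\lmu=\lmf$ and $\lmd=\lmdf$ in $\F$ then say exactly that $\varphi$ kills the generators of $\mathfrak I$. Hence $\varphi$ factors through the symmetric specialization, and the argument of the first paragraph applies.

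The case $\al=2\bt$ needs separate care, since $\mathfrak B$ need not span $\Vo$. Here I would invoke the description of the universal algebra for $\al=2\bt$ used in \cite{FMS2}. There too the coefficient ring is generated by $\lmu,\lmf,\lmd,\lmdf$ when $(\al,\bt)\neq(2,\tfrac12)$, so the same argument goes through. The same holds for the remaining exceptional parameter cases from \cite{FMS3}.

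The main obstacle is case~\ref{reg_2}, $(\al,\bt)=(2,\tfrac12)$. Here the universal coefficient ring is genuinely larger (this is the Highwater phenomenon), and $\lm_3,\lm_3^f$ and further parameters appear. I would not pass to the universal algebra directly. Instead I would use the spanning hypothesis: $V$ is spanned by $a_{-2},\dots,a_3,s_{\tz,1},s_{\tz,2}$. Using Lemma~\ref{primo} and Equations~\eqref{equa1}–\eqref{equa3} evaluated in $V$, I would write down the full multiplication table of $V$ in this spanning set. The coefficients depend only on $\lmu,\lmd,\lm_3$ and their flipped versions, which are equal by hypothesis. I would then define the linear map $f$ that sends $a_i\mapsto a_{1-i}$ and fixes $s_{\tz,1}$ and $s_{\tz,2}$ (with $s_{\tz,2}$ swapped with $s_{\tu,2}$, which is expressed via \eqref{equa3}). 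The equality of the parameters makes the structure constants symmetric, so $f$ preserves multiplication. One must also check that $f$ is well defined, i.e.\ that it preserves the linear relations among the spanning vectors. To do this, I would express $V$ as a quotient of the symmetric algebra with the same multiplication table on the formal span and check that the kernel is $f$-invariant. Since $f$ respects all defining relations obtained from the $\Vo$-identities, the kernel is $f$-invariant. This last verification is the delicate step, and I expect it to require explicit calculation with the formulas in \cite{codenightmare}.
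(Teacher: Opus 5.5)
Your reduction is the paper's. You show that $\varphi(Z)=\varphi(Z^{\bf f})$ on the coefficient ring, i.e.\ that $\ker(\varphi|_{\tilde\F})$ is flip-invariant, and then pass to the symmetric specialization of $\mathfrak V$; this is \cite[Corollary~5.10]{FMS3}. For $\al\notin\{2\bt,4\bt\}$ your argument coincides with the paper's appeal to \cite[Propositions~6.10 and~6.15]{FMS3}.

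\textbf{First gap: case~\ref{reg_1} with $\al=4\bt\neq2$.} You assert that $\tilde\F$ is again generated by $\lmu,\lmf,\lmd,\lmdf$, with every $\lm_i$ polynomial in them. The paper relies on no such statement, and the relations point the other way. Since $P=0$ when $\al=4\bt$, both \eqref{equa1} and \eqref{eqs3-4bt} have only a multiple of $Q$ in front of $\ba_3-\ba_{-1}$, and here $Q=6\bt^3(8\lmu-18\bt+1)$. So these relations pin down $a_3$, and with it $\lm_3$, only when $Q\neq0$. Under your hypotheses $Q=Q^f$, and both vanish at $\lmu=\lmf=\tfrac{18\bt-1}{8}$; at that point nothing you invoke gives $\lm_3=\lm_3^f$. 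This case is not marginal: Corollary~\ref{reduction} applies Proposition~\ref{reg}\ref{reg_1} exactly there. The paper therefore splits into two subcases.
\begin{itemize}
\item For $Q\neq0$, \cite[Theorem~7.12]{FMS3} gives $V=\langle a_{-1},a_0,a_1,a_2,s_{\tz,1},s_{\tz,2}\rangle$.
\item For $Q=0$, the proof of Claim~5 there gives either $V=\langle a_{-1},a_0,a_1,a_2,s_{\tz,1}\rangle$, or $\lm_3=\lm_3^f$ together with the eight-element spanning set.
\end{itemize}
Only from this is flip-invariance of $\ker(\varphi|_{\tilde\F})$ deduced. Your appeal to \cite{FMS2} for $\al=2\bt$ is likewise only an assertion.

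\textbf{Second gap: case~\ref{reg_2}.} Here you leave the framework and try to build a flip on $V$ itself. You claim the multiplication table of $V$ on $a_{-2},\dots,a_3,s_{\tz,1},s_{\tz,2}$ has coefficients depending only on $\lm_1,\lm_2,\lm_3$ and their flips; this is unsupported. Writing products such as $a_{-1}a_2$, $a_{-2}a_2$ or $a_{\pm2}s_{\tz,2}$ in that set requires rewriting $s_{\tr,3}$, $s_{\tr,4}$ and $a_{\pm4}$. In $\mathcal H$ these are linearly independent of your eight vectors. Hence, when $\lm_1=\lm_2=\lm_3=1$, no identity of $\Vo$ with coefficients in these parameters can do the rewriting. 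It comes instead from linear relations peculiar to $V$, and nothing in your argument makes those relations flip-symmetric. The $\lm$-values genuinely do not control them. Every $\IY_3(2,\tfrac12;\mu)$ satisfies the hypotheses of case~\ref{reg_2} with $\lm_i=\lm_i^f=1$ for all $i$ (Lemmas~\ref{subIY3} and~\ref{lambdasut}). Yet these algebras are pairwise non-isomorphic, since $a_2=a_{-1}-(2\mu+1)(a_0-a_1)$ there.

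Two further problems remain in case~\ref{reg_2}.
\begin{itemize}
\item $f$-invariance of the kernel of a formal symmetric cover onto $V$ would make $\V$ itself symmetric. That is more than the statement claims, and it cannot come from the flip-invariant $\Vo$-identities alone. Compare $4\B(-1,\tfrac12;\nu)^\times$ with $\nu\neq\tfrac12$: it has $\lmu=\lmf$ and $\lmd=\lmdf$ but no flip.
\item You never check that the formal cover satisfies the fusion law. Without that it is not an axial algebra of which $\V$ is a quotient.
\end{itemize}

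\textbf{How to repair case~\ref{reg_2}.} The size of $\tilde\F$ is no obstacle to staying within your first paragraph. The relations specific to $V$ lie in the kernel of $\varphi$ on $\Vo$, which need not be flip-invariant. One only needs flip-invariance of $\ker(\varphi|_{\tilde\F})$. The paper derives this from the spanning hypothesis and $\lm_3=\lm_3^f$, as in \cite[Proposition~6.15]{FMS3}, and then applies \cite[Corollary~5.10]{FMS3}.
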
 
\begin{proof}
 If $\al\neq 4\bt$, then the result follows by~\cite[Proposition~6.15(ii)]{FMS3}. Assume $\al=4\bt$ and $\al\neq 2$. If $\lmu=\lmf\neq \tfrac{18\bt-1}{2}$, then by~\cite[Theorem~7.12]{FMS3},  $V=\langle a_{-1}, a_0, a_1, a_2, s_{\tz,1}, s_{\tz,2}\rangle$. If $\lmu=\lmf=\tfrac{18\bt-1}{2}$, then by the proof of Claim 5 in~\cite[Theorem~7.12]{FMS3} we have that either $V=\langle a_{-1}, a_0, a_1, a_2, s_{\tz,1}\rangle$ or $\lm_3=\lm_3^f$ and $V=\langle a_{-2}, a_{-1}, a_0, a_1, a_2, a_3, s_{\tz,1}, s_{\tz,2}\rangle$. With an argument similar to that used in the proof of~\cite[Proposition~6.15]{FMS3}, in all cases the hypotheses imply that $\ker(\varphi|_{\tilde \F})$ is invariant under the flip $f$. 
Hence the result follows from~\cite[Corollary~5.10]{FMS3}.   
\end{proof}

\begin{lemma}
    \label{lemma:A+Af}
Assume $A=A^f=B=B^f=0$, then $\lmu=\lmf$. Moreover, if also $\al\neq 4\bt$, then $\lmd=\lmdf$ and $\V$ is isomorphic to a quotient of a symmetric algebra. 
    \end{lemma}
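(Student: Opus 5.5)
The argument is pure elimination in the four quantities $\lmu,\lmf,\lmd,\lmdf$, with $\al,\bt$ fixed. Write $\delta_1:=\lmu-\lmf$ and $\delta_2:=\lmd-\lmdf$. From Equation~\eqref{A}, $A$ has the form
\[
A=\ell(\lmu,\lmf)\,\delta_1-\al\bt(\al-\bt)(\al-4\bt)\,\delta_2,
\]
where $\ell$ is affine in $\lmu,\lmf$. The polynomial $A^f$ is obtained by swapping $\lmu\leftrightarrow\lmf$ and $\lmd\leftrightarrow\lmdf$. Doing so sends $\delta_1\mapsto-\delta_1$ and $\delta_2\mapsto-\delta_2$, so
\[
A^f=-\ell(\lmf,\lmu)\,\delta_1+\al\bt(\al-\bt)(\al-4\bt)\,\delta_2 .
\]
The same holds for $B$ and $B^f$ from Equation~\eqref{B}: each is $\delta_1$ times a polynomial, quadratic in $\lmu,\lmf$ and linear in $\lmd$ (resp.\ $\lmdf$), plus a constant multiple of $\delta_2$. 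The first step is to form the combinations that kill $\delta_2$ and collect a factor $\delta_1$.

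First take $A+A^f=0$. The $\delta_2$ terms cancel, and we get $(\ell(\lmu,\lmf)-\ell(\lmf,\lmu))\,\delta_1=0$. Since $\ell$ is affine, $\ell(\lmu,\lmf)-\ell(\lmf,\lmu)$ is again a constant multiple of $\delta_1$, so this reads $c_A\,\delta_1^2=0$. The constant $c_A$ is the difference of the $\lmu$- and $\lmf$-coefficients of $\ell$, namely $2\big[(\al-4\bt)(\al^2-4\al\bt+\al+2\bt)-(3\al^3-12\al^2\bt-\al^2+16\al\bt^2+6\al\bt-8\bt^2)\big]$. Doing the same with $B+B^f=0$ gives $\delta_1\cdot q(\lmu,\lmf,\lmd,\lmdf)=0$ for an explicit $q$. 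Suppose, towards a contradiction, that $\delta_1\neq0$. Then $c_A=0$, which is a polynomial condition on $(\al,\bt)$. In that case use the remaining relations: $A=0$ gives $\delta_2$ in terms of $\lmu,\lmf$ whenever $\al\neq4\bt$; $B=0$ and $q=0$ then give further equations. I would eliminate with resultants, in the spirit of the computations in~\cite{codenightmare}, to obtain polynomial conditions on $\al,\bt$ alone. The goal is to show that these conditions force $\al\in\{2\bt,4\bt\}$, or $\al,\bt\in\{0,1\}$, or $\al=\bt$, all of which are excluded. If $\al=4\bt$, then $A$ loses its $\delta_2$ term, and $A=0$, $A^f=0$ with $\delta_1\neq0$ force $\ell(\lmu,\lmf)=\ell(\lmf,\lmu)=0$. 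These linear conditions, together with $B=B^f=0$, should again lead to a contradiction after elimination. This case analysis on $(\al,\bt)$ is the main obstacle. The resulting polynomials are large, and may have sporadic common roots in small characteristics, which would need separate handling, possibly using Lemma~\ref{newQ-Q^f}.

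Once $\lmu=\lmf$, the remaining claims follow quickly. Every $\delta_1$-term of $A$ vanishes, so $A=-\al\bt(\al-\bt)(\al-4\bt)\,\delta_2$. If $\al\neq4\bt$, the coefficient is nonzero: $\al,\bt\neq0$, $\al\neq\bt$, and $\ch\F\neq2$ handles the factor $2$ hidden in the definitions. Hence $A=0$ gives $\lmd=\lmdf$. Now either $(\al,\bt)\neq(2,\tfrac12)$, in which case Proposition~\ref{reg}\ref{reg_1} shows that $\V$ is isomorphic to a quotient of a symmetric algebra. Or $(\al,\bt)=(2,\tfrac12)$, which is in fact $\al=4\bt$ and so excluded by the hypothesis; thus Proposition~\ref{reg}\ref{reg_1} always applies.
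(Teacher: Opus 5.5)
Your reduction to $A+A^f=c_A\,\delta_1^2$ and $B+B^f=\delta_1\,q$ is the right one; it is exactly what the paper does. Your treatment of the second assertion, via $A=-\al\bt(\al-\bt)(\al-4\bt)\delta_2$ and Proposition~\ref{reg}\ref{reg_1}, is also correct. The gap is in the first assertion. After assuming $\delta_1\neq 0$, you stop at ``$c_A=0$ is a polynomial condition on $(\al,\bt)$''. You then defer everything to a resultant elimination that you never carry out, with a case split on $\al=4\bt$ and a worry about sporadic characteristics. As written, $\lmu=\lmf$ is not proved.

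The missing idea is a common factor.

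\begin{itemize}
\item Your constant is $c_A=-4\al(\al^2-2\al\bt-\al+4\bt)$. Since $\al\neq 0$ and $\ch\F\neq 2$, $c_A=0$ means $\al^2-2\al\bt-\al+4\bt=0$. This forces $\al\neq 2$ (at $\al=2$ the factor equals $2$), and then $\bt=\frac{\al(\al-1)}{2(\al-2)}$.
\item In \eqref{B}, the same factor $\al^2-2\al\bt-\al+4\bt$ divides the $\lmu^2$-coefficient and the $\lmd$-coefficient inside the big bracket.
\item The $\lmu\lmf$-term is symmetric, so it cancels in $B+B^f$. The $\delta_2$-terms outside the bracket also cancel in $B+B^f$.
\end{itemize}

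Hence, once $c_A=0$, your $q$ is just a constant times $\delta_1$; neither $\delta_2$ nor $\lmu+\lmf$ survives. One gets
\[
B+B^f=-\tfrac{4\al^4(\al-1)}{(\al-2)^3}(\lmu-\lmf)^2 ,
\]
which forces $\lmu=\lmf$ because $\al\notin\{0,1\}$ and $\ch\F\neq 2$. This argument uses neither $A=0$ on its own nor $\al\neq 4\bt$, and there are no exceptional characteristics. It is the paper's entire proof of the first claim.
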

\begin{proof}
By Equation~\eqref{A} we get 
     $$
0=A+A^f=-4\al(\lmu-\lmf)^2(\al^2-2\al\bt-\al+4\bt),   
    $$
    whence either $\lmu=\lmf$
    or $\al\neq 2$ and $\bt=\frac{\al(\al-1)}{2(\al-2)}$. In the latter case, by Equation~\eqref{B}, we get 
$$
0= B+B^f=-\tfrac{4\al^4(\al-1)}{(\al-2)^3}
(\lmu-\lmf)^2
    $$
    whence, again, $\lmu=\lmf$. 
 Thus, in both cases, $\lmu=\lmf$, whence
 $$
 0=A=-\al\bt(\al-\bt)(\al-4\bt)(\lmd-\lmdf).
 $$
If $\al\neq 4\bt$, then $\lmdf=\lmd$ and the result follows by Proposition~\ref{reg}.
\end{proof}
\bigskip

\chapter{Known algebras}\label{known}

In this chapter we describe the algebras appearing in the Main Theorem. Moreover, we obtain some properties of the symmetric algebras that are needed in the proof of the Main Theorem. More information on these algebras can be found in~\cite{HWQ, HW, HRS1,  MM, split, axet}. As throughout this paper, $\F$ is a field of characteristic other than $2$ and $\al$ and $\bt$ are distinct elements in $\F\setminus\{0,1\}$.

\section{The $2$-generated axial algebras of Jordan type}\label{Jordan}

The $2$-generated algebras of Jordan type are, up to isomorphism, the algebras $1\A$, $2\B$, $3\C(\eta)$, its quotient $3\C(-1)^\times$ for $\eta=-1$, $\J(\delta)$ and its quotient $\J(0)^\times$ for $\delta=0$ (see~\cite{HRS1}).  There are listed in Table~\ref{table2} in Section~\ref{thetables}, where  a basis, the structure constants and the 
relevant values of the Frobenius form are given (recall that all algebras considered in this paper are commutative  and the bilinear form $(\, ,\,)$ is always symmetric).  Note that, except for the cases $1\A$ and $2\B$, each isomorphism class of $2$-generated $\mathcal J(\eta)$-axial algebras splits into two distinct isomorphism classes of $\Mab$-axial algebras, according whether $\eta=\al$ or $\eta=\bt$. All these algebras are symmetric and afford an invariant Frobenius form.

In the next lemmas we describe some features of the   algebras of Table~\ref{table2}, when considered as $\Mab$-axial algebras with $\beta=\eta$.  Note that in this case the $\bt$-eigenspaces of the adjoint maps $\ad_{a_0}$ and $\ad_{a_1}$ are non-trivial, whence the  Miyamoto groups of the algebras are non-trivial.

\begin{lemma}\label{sub2B}
Let $\V=2\B$. The following assertions hold:
\begin{enumerate}
    \item  $\lmu=0$; \label{sub2B_1}
\item  $a_2=a_0$; \label{sub2B_2}
\item  $V=V^{\ast}$; \label{sub2B_3}
\item  $V^{\ast\ast}=\{0\}$.  \label{sub2B_4}
\end{enumerate}
\end{lemma}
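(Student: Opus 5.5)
We will work directly with the explicit description of $2\B$ in Table~\ref{table2}. There $V$ has basis $a_0,a_1$ with $a_0a_1=0$, and it is viewed as an $\Mab$-axial algebra with $\bt=\eta$. The plan is to first determine the eigenspace decompositions of $\ad_{a_0}$ and $\ad_{a_1}$, and then read off the four assertions one at a time.

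\begin{enumerate}
\item Since $a_0a_1=0$, the vector $a_1$ is a $0$-eigenvector for $\ad_{a_0}$. So the decomposition of Equation~\eqref{xlambda} is simply $a_1=0\cdot a_0+a_1$, which gives $\lmu=\lambda_{a_0}(a_1)=0$ and proves \ref{sub2B_1}.
\item We have $V=\la a_0\ra\oplus\la a_1\ra$, where $\la a_0\ra$ is the $1$-eigenspace and $\la a_1\ra$ the $0$-eigenspace of $\ad_{a_0}$. Hence the $\bt$-eigenspace of $\ad_{a_0}$ is trivial and $\tau_0=\mathrm{id}$. By symmetry $\tau_1=\mathrm{id}$, so $\rho=1$ and $a_2=a_0^{\rho}=a_0$, proving \ref{sub2B_2}. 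Note that this conflicts with the remark before the lemma that the $\bt$-eigenspaces are non-trivial. For $2\B$ that remark simply fails, which matches the paper's exception of $1\A$ and $2\B$; in any case the computation here does not depend on it.
\item Since $\Miy(\V)$ is trivial, $\la f,\Miy(\V)\ra=\la f\ra$, where $f$ is the flip swapping $a_0$ and $a_1$. Hence $V^{\ast}$ is the subalgebra generated by $a_0-a_1$ and $a_1-a_0$. Using $a_0a_1=0$, we get $(a_0-a_1)^2=a_0+a_1$. Together with $a_0-a_1$, this spans $V$, because $\ch\F\neq 2$. This proves \ref{sub2B_3}.
\item Since $a_0-a_2=0$, and its images under $\la f,\Miy(\V)\ra$ are $a_0-a_2$ and $a_1-a_{-1}$, which are also $0$, every generator of $V^{\ast\ast}$ vanishes. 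So $V^{\ast\ast}=\{0\}$, proving \ref{sub2B_4}.
\end{enumerate}

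The only step needing care is the second one, where we must confirm from the table's multiplication that no $\bt$-eigenvectors exist. All remaining steps are immediate consequences of it.
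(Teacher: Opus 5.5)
Your proposal is correct and follows the paper's proof step for step. The paper also argues that $a_1$ is a $0$-eigenvector, so $\lambda_1=0$; that $\ad_{a_0}$ and $\ad_{a_1}$ have only the eigenvalues $1$ and $0$, so $\tau_0=\tau_1=1$, which gives $a_2=a_0$ and hence $V^{\ast\ast}=\{0\}$; and that $(a_0-a_1)^2=a_0+a_1$ yields $V=V^\ast$. Your side remark is also accurate: the note before the lemma, which claims non-trivial $\bt$-eigenspaces, does not apply to $2\B$.
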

\begin{proof}
As $a_1$ is a $0$-eigenvector of $\ad_{a_0}$, \ref{sub2B_1} is true. Notice $\ad_{a_0}$ and $\ad_{a_1}$ only have eigenvalues of $1$ and $0$; thus $\tau_0$ and $\tau_1$ are trivial and \ref{sub2B_2} follows. Part \ref{sub2B_4} follows immediately from \ref{sub2B_2}. 

For \ref{sub2B_3}, notice that $(a_0-a_1)^2=a_0+a_1$ and so $a_0,a_1\in V^{\ast}$.
\end{proof}

\begin{lemma}\label{sub3C}
Let $\V\in \{3\C(\bt), 3\C(-1)^\times\}$.  The  following assertions hold:
\begin{enumerate}
\item  $\lmu=\tfrac{\bt}{2}$; \label{sub3C_1}
\item $a_2=a_{-1}$; \label{sub3C_2}
\item  $V^\ast=V^{\ast\ast}$; \label{sub3C_3}
\item  $V=V^\ast$ if $\bt \neq 2$, and $V^\ast=\langle a_0-a_1, a_0-a_2\rangle$ if $\bt=2$. \label{sub3C_4}
\end{enumerate}
In particular, if $\V=3\C(-1)^\times$ and $\ch \F = 3$, then $\bt=-1 = 2$ and $V^\ast= \langle a_0-a_1\rangle$.
\end{lemma}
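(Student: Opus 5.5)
The plan is to work directly with the explicit multiplication table of $3\C(\eta)$ from Table~\ref{table2}, taking $\eta=\bt$. Recall that $3\C(\bt)$ has basis $a_0,a_1,a_{-1}$ (three axes), with
\[
a_ia_j=\tfrac{\bt}{2}(a_i+a_j-a_k)\quad\text{for }\{i,j,k\}=\{0,1,-1\},
\]
and Frobenius form $(a_i,a_i)=1$, $(a_i,a_j)=\tfrac{\bt}{2}$. The quotient $3\C(-1)^\times$ is obtained for $\bt=-1$ by factoring out the radical spanned by $a_0+a_1+a_{-1}$.

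\textbf{Parts~\ref{sub3C_1} and~\ref{sub3C_2}.} For \ref{sub3C_1}, we have $\lmu=\lm_{a_0}(a_1)$. Since the form is Frobenius and $(a_0,a_0)=1$, the projection equals $(a_0,a_1)=\tfrac{\bt}{2}$. One can also check this directly: $a_0$ is the only $1$-eigenvector component, and the eigenvectors of $\ad_{a_0}$ are
\begin{itemize}
\item the $\bt$-eigenvector $a_1-a_{-1}$,
\item the $0$-eigenvector $a_1+a_{-1}-\bt a_0$.
\end{itemize}
Hence $a_1=\tfrac{\bt}{2}a_0+\tfrac12(a_1+a_{-1}-\bt a_0)+\tfrac12(a_1-a_{-1})$, which gives $\lmu=\tfrac{\bt}{2}$. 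In the quotient the same decomposition holds, so $\lmu=\tfrac{\bt}{2}$ by Lemma~\ref{quotient}.

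For \ref{sub3C_2}, the decomposition above shows that $\tau_0$ fixes $a_0$ and swaps $a_1$ and $a_{-1}$. By symmetry, $\tau_1$ fixes $a_1$ and swaps $a_0$ and $a_{-1}$. Then $a_2=a_0^{\rho}=a_0^{\tau_0\tau_1}=a_0^{\tau_1}=a_{-1}$, where $a_{-1}$ here denotes the third axis. This is consistent with $a_{-1}=a_1^{\tau_0}$.

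\textbf{Part~\ref{sub3C_3}.} Here $V^\ast$ is spanned, as a subalgebra, by the images of $a_0-a_1$ under $\langle f,\Miy(\V)\rangle$. This group acts as $S_3$ on the three axes, so the images are all differences $a_i-a_j$. By \ref{sub3C_2}, $a_0-a_2=a_0-a_{-1}$ is one of these differences. Therefore the generating sets of $V^\ast$ and $V^{\ast\ast}$ coincide, which gives \ref{sub3C_3}.

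\textbf{Part~\ref{sub3C_4}.} Let $D=\langle a_0-a_1,\,a_0-a_{-1}\rangle$, the linear span of the differences. Compute
\[
(a_0-a_1)^2=a_0+a_1-2a_0a_1=(1-\bt)(a_0+a_1)+\bt a_{-1}.
\]
This vector has coordinate sum $2-\bt$, while every element of $D$ has coordinate sum $0$.
\begin{itemize}
\item If $\bt\neq 2$, the square lies outside $D$. So the subalgebra generated by $D$ has dimension $3$, that is, $V^\ast=V$.
\item If $\bt=2$, the coordinate sum is $0$, so $(a_0-a_1)^2\in D$. The same holds for products $(a_0-a_1)(a_0-a_{-1})$, by polarization or direct computation. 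Hence $D$ is closed under multiplication and $V^\ast=D$.
\end{itemize}

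\textbf{The quotient case and the final remark.} For $3\C(-1)^\times$ the same arguments run in the $2$-dimensional quotient. The last sentence of the statement concerns $\ch\F=3$, where $-1=2$. In that case $D$ modulo $a_0+a_1+a_{-1}$ is spanned by the images of $a_0-a_1$ and $a_0-a_{-1}$. Their sum is $2a_0-a_1-a_{-1}\equiv 3a_0=0$, so $D$ becomes $1$-dimensional, spanned by $a_0-a_1$.

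The only delicate point is bookkeeping in this quotient case. One must check that $D$ does not contain $a_0+a_1+a_{-1}$ except in characteristic $3$. This holds because that vector has coordinate sum $3$.
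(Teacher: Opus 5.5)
Your proposal is correct and follows essentially the same route as the paper. You use an explicit eigenvector decomposition from the multiplication table of $3\C(\bt)$ for the first two parts, and the coincidence of the generating orbits (equivalently $a_2-a_{-2}=a_{-1}-a_1$) for $V^\ast=V^{\ast\ast}$. For the last part you check whether $(a_0-a_1)^2$ lies in the span of the differences; your coordinate-sum test is a cleaner form of the paper's independence check of $a_0-a_1$, $a_{-1}-a_0$, $(a_0-a_1)^2$, which fails exactly when $\bt=2$. One small wording point: in characteristic $3$ the form on $3\C(-1)=3\C(2)$ has all Gram entries equal to $1$, so its radical is $2$-dimensional, and $3\C(-1)^\times$ should be described as the quotient by the ideal $\F(a_0+a_1+a_{-1})$ rather than by the radical, which is the ideal your argument actually uses.
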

\begin{proof}
Let 
\begin{equation}\label{uv3C}
 u:=\bt a_1-(a_0+a_{-1}) \quad \mbox{ and }\quad  w:=a_0-a_{-1}   
\end{equation}
Using the basis and the multiplication given in Table~\ref{table2}, we see that $u$ (respectively $v$) is a $0$-eigenvector (respectively $\tfrac{1}{2}$-eigenvector) for $\ad_{a_1}$, and 
\[
a_0=\tfrac{\bt}{2}a_1-\tfrac{1}{2}u+\tfrac{1}{2}v.
\]
By the uniqueness of the above decomposition, it follows that $\lmf=\tfrac{\bt}{2}$. Since the algebra is symmetric, $\lmu=\lmf=\tfrac{\bt}{2}$. By the definitions of $a_2$,  $\tau_1$, and Equation~\eqref{uv3C},
$$
a_2=a_0^{\tau_1}=\tfrac{\bt}{2}a_1-\tfrac{1}{2}u-\tfrac{1}{2}v=a_{-1},
$$
proving \ref{sub3C_2}. Hence $
a_{-2}=a_2^{\tau_0}=a_{-1}^{\tau_0}=a_1$ and 
$
a_{2}-a_{-2}=a_{-1}-a_1$.
This implies $V^{\ast\ast}=V^\ast$. 

By Table~\ref{table2}, $(a_0-a_1)^2=(1-\bt)(a_0+a_1)+\bt a_{-1}$. If $\bt\neq 2$, then the three vectors $a_0-a_1$, $a_{-1}-a_0$, and $(a_0-a_1)^2$ are linearly independent. Thus $V^\ast$ has the same dimension as $V$, giving $V=V^\ast$. If $\bt=2$, then by the multiplication table, $\langle a_0-a_1, a_0-a_{-1}\rangle$ is a subalgebra of $V$, and by \ref{sub3C_2} it is invariant under $\tau_0$ and the flip $f$. Thus, by Lemma~\ref{translation}, it is invariant under the Miyamoto group of $V$, whence $V^\ast=\langle a_0-a_1, a_0-a_{-1}\rangle$. 

When $\V=3\C(-1)^\times$, the result follows since $3\C(-1)^\times$ is the quotient of $3\C(-1)$ over the ideal $\F(a_0+a_1+a_2)$.
\end{proof}

\begin{lemma}\label{subJ}
 Let $\V\in \{ \J(\delta), \J(0)^\times\}$. Then the following assertions hold:
 \begin{enumerate}
     \item  $\lmu=2\delta+1$; \label{subJ_1}
     \item for every $i\in \Z$, $a_{i-1}=-a_{i+1}+(2+8\delta) a_i-4s_{0,1}$; \label{subJ_2}
    \item  $V=V^\ast$, if $\delta\neq 0$, and  $V^\ast= \langle  a_0-a_1, s_{\tz,1}\rangle$, if $\delta= 0$; \label{subJ_3}
    \item  $V^\ast=V^{\ast\ast}$, if $\delta\neq -\tfrac{1}{2}$, and  $V^{\ast\ast}=\langle a_2-a_0\rangle=\langle a_0+a_1+2s_{\tz , 1}\rangle$, if $\delta= -\tfrac{1}{2}$.  \label{subJ_4}
 \end{enumerate}
 In particular, if $V=\J(0)^\times$, then the flip acts on $V^{\ast\ast}$ as the multiplication by $-1$.
\end{lemma}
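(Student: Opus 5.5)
The plan is to argue directly from the basis and multiplication table of $\J(\delta)$ in Table~\ref{table2}, with $\bt=\eta=\tfrac12$, in the same way as Lemma~\ref{sub3C}. I take $a_0$, $a_1$, $s:=s_{\tz,1}$ as a basis. By Lemma~\ref{invariant}, $s$ is fixed by $\Miy(\V)$; it is also fixed by the flip $f$, which swaps $a_0$ and $a_1$. Since $\bt=\tfrac12$, we have $s=-\tfrac12(a_0-a_1)^2$. For $\J(0)^\times$ everything is pushed to the quotient by the relevant one-dimensional ideal, exactly as at the end of the proof of Lemma~\ref{sub3C}.

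\emph{Parts \ref{subJ_1} and \ref{subJ_2}.} With respect to $\ad_{a_1}$ I will write down explicit vectors $u$ and $w$ in $\la a_0,a_1,s\ra$. Using the table, I check that $u$ is a $0$-eigenvector and $w$ a $\tfrac12$-eigenvector, and that $a_0=\lambda a_1+u+w$ for a scalar $\lambda$. A natural guess is $w=\tfrac12(a_0-a_{2})$, modelled on $w_i$ in Equation~\eqref{defui}. Uniqueness of the decomposition then gives $\lmf=\lambda$, and a direct computation should give $\lambda=2\delta+1$. Symmetry then gives $\lmu=\lmf=2\delta+1$.

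Next, $a_2=a_0^{\tau_1}=\lambda a_1+u-w$, so $a_2=2(\lambda a_1+u)-a_0$. Rewriting this in the basis should yield $a_2=-a_0+(2+8\delta)a_1-4s$, which is the case $i=1$ of \ref{subJ_2}. Since $s$ is invariant under $\Miy(\V)$ and under $f$, Lemma~\ref{translation} lets me transport this identity by $\rho^k$ and by $\tau_0 f$. The map $\tau_0 f$ shifts all indices by one, so the identity holds for every $i$.

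\emph{Part \ref{subJ_3}.} Clearly $V^\ast\ni a_0-a_1$, and $V^\ast\ni s=-\tfrac12(a_0-a_1)^2$. I then compute $(a_0-a_1)s$ from the table. I expect it to equal a multiple of $\delta$ times $a_0+a_1$, plus terms already in $\la a_0-a_1, s\ra$.
\begin{itemize}
\item If $\delta\neq0$, then $a_0+a_1\in V^\ast$, so $\dim V^\ast=3$ and $V^\ast=V$.
\item If $\delta=0$, the product is zero, so $\la a_0-a_1,s\ra$ is a subalgebra. It is invariant under $f$, which negates $a_0-a_1$ and fixes $s$. By \ref{subJ_2} it is also invariant under $\tau_0$, since $a_0-a_{-1}$ lies in it when $\delta=0$. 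Hence by Lemma~\ref{translation} it is invariant under $\la f,\Miy(\V)\ra$, and so equals $V^\ast$.
\end{itemize}

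\emph{Part \ref{subJ_4}.} By \ref{subJ_2}, $a_2-a_0=-2a_0+(2+8\delta)a_1-4s$.
\begin{itemize}
\item If $\delta=-\tfrac12$, this is $-2(a_0+a_1+2s)$, so $V^{\ast\ast}\supseteq\la a_0+a_1+2s\ra$. For equality I show that this line is closed under multiplication and invariant under $\tau_0$ and $f$. Under $\tau_0$ we have $a_1\mapsto a_{-1}=-2a_0-a_1-4s$, so $a_0+a_1+2s$ is sent to $-(a_0+a_1+2s)$; $f$ fixes it; and its square is a scalar multiple of itself by the table.
\item If $\delta\neq-\tfrac12$, I show that $a_2-a_0$, its image $a_{-1}-a_1$ under $f$ (up to sign), and $(a_2-a_0)^2$ span a space of the same dimension as $V^\ast$. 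This compares with \ref{subJ_3}: when $\delta=0$ the span should be $\la a_0-a_1,s\ra$.
\end{itemize}

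The final sentence about $\J(0)^\times$ I treat in the quotient. There $V^{\ast\ast}$ is the one-dimensional image of $\la a_2-a_0\ra$, and $f$ maps $a_2-a_0$ to $a_{-1}-a_1$. By \ref{subJ_2} with $\delta=0$, working modulo the ideal, this equals $-(a_2-a_0)$.

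\emph{Main obstacle.} The main difficulty is the bookkeeping of the exact coefficients from Table~\ref{table2}. The eigenvector decomposition determines both $\lmu$ and the recurrence, so a single error propagates through everything else. The other delicate point is the degenerate values $\delta=0$ and $\delta=-\tfrac12$, where the rank drops and I must verify closure under the group rather than just count dimensions.
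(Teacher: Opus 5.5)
\textbf{Gap in part (c), case $\delta\neq0$.} The step you propose there fails. In $\J(\delta)$ the element $s=s_{\tz,1}$ acts as $\delta$ times the identity: Table~\ref{table2} gives $u\cdot s_{\tz,1}=\delta u$ for every $u$. Hence $(a_0-a_1)s=\delta(a_0-a_1)$, with no $a_0+a_1$ term. Combined with $(a_0-a_1)^2=-2s$ and $s^2=\delta s$, this shows that $\langle a_0-a_1,s\rangle$ is closed under multiplication for \emph{every} $\delta$. So no product of the vectors you already have can produce a third independent vector. The computation you plan gives nothing new, and the conclusion $a_0+a_1\in V^\ast$ does not follow.

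\textbf{The fix.} The third direction has to come from the group action, which you already use in your $\delta=0$ branch. By (b), $(a_0-a_1)^{\tau_0}=a_0-a_{-1}=-(1+8\delta)a_0+a_1+4s$. Modulo $\langle a_0-a_1,s\rangle$ this is congruent to $-8\delta a_0$. So for $\delta\neq0$ (recall $\ch(\F)\neq2$) you get $a_0\in V^\ast$, hence $V^\ast=V$. This is exactly the paper's argument, via the linear independence of $a_0-a_1$, $a_{-1}-a_0$ and $(a_0-a_1)^2$.

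\textbf{The remaining parts.} Parts (a), (b), the $\delta=-\tfrac12$ case of (d), and the final remark on $\J(0)^\times$ are fine. For (a)--(b) you work with $\ad_{a_1}$ and recover $a_2$, whereas the paper works with $\ad_{a_0}$ and recovers $a_{-1}$; the two are mirror images. Your dimension count in (d) for $\delta\neq-\tfrac12$ goes through once (c) is repaired:
\begin{itemize}
\item $(a_2-a_0)^2=-8(1+2\delta)s$;
\item modulo $s$, the vectors $a_2-a_0$ and $a_{-1}-a_1$ have determinant $-32\delta(1+2\delta)$.
\end{itemize}
The paper's route here is shorter. It observes that $a_2-a_0+a_1-a_{-1}=(4+8\delta)(a_1-a_0)$, which puts $a_0-a_1$ directly into $V^{\ast\ast}$, so no dimension count is needed.
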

\begin{proof}
Let 
\begin{equation}\label{eig}
u:=-\delta a_0+s_{0,1}\quad  \mbox{ and } \quad w:=-(1+4\delta )a_0+a_1+2s_{0,1}.
\end{equation}
Using the basis and the multiplication table given in Table~\ref{table2}, we see that $u$ (respectively $w$) is a $0$-eigenvector (respectively $\tfrac{1}{2}$-eigenvector) for $\ad_{a_0}$, and
\begin{equation*}
a_1=(1+2\delta)a_0-2u+w.
\end{equation*}
By the uniqueness of the above decomposition, it follows that $\lmu=1+2\delta$. By the definitions of $a_{-1}$ and $\tau_0$, and Equation~\eqref{eig},
$$
a_{-1}=a_1^{\tau_0}=(1+2\delta)a_0-2u-w=-a_1+(2+8\delta) a_0-4s_{0,1}.
$$ 
Since $V$ is symmetric, part \ref{subJ_2} follows by Lemma~\ref{translation}.

By the multiplication table, we get $(a_0-a_1)^2=-2s_{0,1}$. Assume first  $\delta\neq 0$,  then the three vectors $a_0-a_1$, $a_{-1}-a_0$, and $(a_0-a_1)^2$ are linearly independent. Thus $V^\ast$ has the same dimension as $V$, giving $V=V^\ast$. Assume now  $\delta=0$ and let $U^\ast:=\langle  a_0-a_1, s_{0,1}\rangle$. Then, by the multiplication table,  $U$ is a subalgebra of $V$ and, since, by \ref{subJ_2}, $s_{0,1}=\tfrac{1}{4}(a_{0}-a_1+a_0-a_{-1})$, it is contained in $V^\ast$. Again by \ref{subJ_2}, 
$$
(a_0-a_1)^{\tau_0}=a_0-a_{-1}=a_1-a_0-4s_{0,1}\in U^\ast,
$$
whence, by Lemma~\ref{invariant}\ref{invariant_1},
\begin{equation}\label{Utauo}
(U^\ast)^{\tau_0}=U^\ast.
\end{equation}
 Furthermore, since the flip $f$ negates $a_0-a_1$ and fixes $s_{0,1}$,  
 \begin{equation}\label{Uflip}
 (U^\ast)^f=U^\ast. 
 \end{equation} 
 By Equations~\eqref{Utauo} and~\eqref{Uflip}, it follows that $(U^\ast)^{\tau_0 f}=U^\ast$. Thus, by Lemma~\ref{translation}, $U^\ast$ contains $a_i-a_{i-1}$ for every $i\in \Z$, whence $V^\ast= U^\ast$, proving \ref{subJ_3}.

 By \ref{subJ_2}, for $i\in\{0,1\}$, we get 
\[
a_2=-a_0+(2+8\delta)a_1-4s_{0,1} \quad \mbox{and} \quad a_{-1}=-a_1+(2+8\delta)a_0-4s_{0,1}.
\]
Substituting $a_2$ and $a_{-1}$ by the above values, we get 
$$
a_2-a_0+a_1-a_{-1}=(4+8\delta)(a_1-a_0)
.$$
Thus, if $\delta\neq -\tfrac{1}{2}$, then $a_0-a_1$ is contained in $V^{\ast\ast}$, whence $V^\ast=V^{\ast\ast}$. Assume $\delta=-\tfrac{1}{2}$ and let $U^{\ast\ast}:=\langle a_2-a_0\rangle$. By \ref{subJ_2}, for every $i\in \Z$, we have $a_{i+1}=-a_{i-1}-2a_i-4s_{0,1}$, whence
\begin{align*}
(a_{i}+a_{i-1}+2s_{0,1})^{\tau_0 f} &= a_{i+1}+ a_i+2s_{0,1}\\
&=(-a_{i-1}-2a_i-4s_{0,1})+a_i+2s_{0,1}\\
&=-(a_{i}+a_{i-1}+2s_{0,1}).   
\end{align*}
By applying ${\tau_0 f}$ twice, we get 
\[
a_{i+1}+ a_i+2s_{0,1}=a_{i-1}+a_{i-2}+2s_{0,1}, \quad \mbox{ for every }  i\in \Z,
\]
whence
\[
  a_{i+1}-a_{i-1}=a_{i-2}-a_i, \quad \mbox{ for every }  i\in \Z.
\]
So, $U^{\ast \ast}$ is invariant under the Miyamoto group of $\V$. By \ref{subJ_2}, 
$$
a_2-a_0=-a_0-2a_1-4s_{0,1}-a_0=-2(a_0+a_1+2s_{0,1}),
$$
whence, by the multiplication table,
$(a_2-a_0)^2=0$. It follows that $U^{\ast\ast}$ is a subalgebra of $V$ invariant under the Miyamoto group and on which the flip acts trivially, whence it coincides with $V^{\ast\ast}$, 
giving \ref{subJ_4}.
Finally, since $\J(0)^\times=\J(0)/\langle s_{0,1}\rangle$, the result for $\J(0)^\times$  follows.  
\end{proof}

\section{Non Jordan type $2$-generated symmetric $\Mab$-axial algebras}
\label{sec:symmetric}

The $2$-generated symmetric algebras of Monster type, which are not of Jordan type, are, up to isomorphism, 
\begin{description}
\item [Table~\ref{table3A}] $3\A (\al,\bt )$ and its quotient $3\A(\al, \tfrac{1-3\al^2}{3\al-1})^\times$ for $\bt=\tfrac{1-3\al^2}{3\al-1}$;
\item [Table~\ref{table4A}] $4\A(\tfrac{1}{4},\bt)$ and its quotient $4\A(\tfrac{1}{4},\tfrac{1}{2})^\times$ for $\bt=\tfrac{1}{2}$; 
\item [Table~\ref{table4B}] $4\B(\al, \tfrac{\al^2}{2} )$ and its quotient $4\B(-1, \tfrac{1}{2} )$ for $\al=-1$; 
\item [Table~\ref{table4J}] $4\J(2\bt ,\bt)$ and its quotient $4\J(-\tfrac{1}{2} ,-\tfrac{1}{4})$ for $\bt=-\tfrac{1}{4}$;
\item [Table~\ref{table4Y}] $4\Y(\tfrac{1}{2},\bt)$; 
\item [Table~\ref{table4Yal}] $4\Y(\al, \tfrac{1-\al^2}{2} )$; 
\item [Table~\ref{table5A}] $5\A(\al, \tfrac{5\al-1}{8})$; 
\item [Table~\ref{table6A}] $6\A\left (\al,-\tfrac{\al^2}{4(2\al-1)} \right )$ and its three quotients $6\A\left (\al,-\tfrac{\al^2}{4(2\al-1)} \right )^\times$ for $\al\in \{\tfrac{2}{3}, \tfrac{1\pm\sqrt{97}}{24}\}$;
\item [Table~\ref{table6J}] $6\J(2\bt,\bt)$ and its quotient $6\J(-\tfrac{2}{7},-\tfrac{1}{7})$ for $\bt=-\tfrac{1}{7}$;
\item [Table~\ref{table6Y}] $6\Y(\tfrac{1}{2},2)$ and its quotient $6\Y(\tfrac{1}{2},2)^\times$;
\item [Table~\ref{tableIY3bis}] $\IY_3(\al, \tfrac{1}{2} ; \mu)$ and its quotients $\IY_3(\al, \tfrac{1}{2} ; 1)^\times$
and  $\IY_3(-1, \tfrac{1}{2} ; \mu)^\times$
;
\item [Table~\ref{tableIY5}] $\IY_5(\al, \tfrac{1}{2} )$ and its quotient $\IY_5(\al, \tfrac{1}{2} )^\times$;
\item [Table~\ref{tablehatH}] the Highwater algebra $\mathcal H$, its cover in characteristic $5$ $\hatH$ and their quotients.
    \end{description}

    In Tables~\ref{table3A}-\ref{tablehatH} in Section~\ref{thetables}, for each algebra, 
 a basis, the structure constants and the 
relevant values of the Frobenius form are given (for the quotients of $\mathcal H$ and $\hatH$ refer to~\cite{HWQ}).
  
A straightforward computation shows that these are indeed symmetric axial algebras of Monster type. Alternatively one can use the Magma package~\cite{MP}.
Since the bases given for the algebras described in this section differ slightly from those used in~\cite{MP} (see also~\cite{MM}),  we give for each algebra the rule for the change of basis. In the tables appearing in this section the indices $i$, used for the elements $a_i$ of the basis, are chosen in $\Z_n$ (if $n\in \N$) or $\Z$ (if $n=\infty$). This will be  specified in the heading of each table. It  turns out that $n$ is such that the corresponding algebra $\V:=(V, \{a_0,a_{1}\})$ has axet isomorphic to $X(n)$ (see Theorem~\ref{axetthm}) and the notation $a_i$  is consistent with Equation~\eqref{aroi}. 

As in the previous section, in the next lemmas we describe some features of the above  algebras.

\begin{lemma}\label{sub3A}
Let $\V\in \{3\A(\al, \bt), 3\A(\al, \tfrac{1-3\al^2}{3\al-1})^\times\}$. Then, the following assertions hold:
\begin{enumerate}
\item  $\lmu=\tfrac{(3\al^2+3\al\bt-\al-2\bt)}{4(2\al-1)}$;\label{sub3A_1}
\item  $s_{\tz,1}=z+\tfrac{\al-\bt}{2}(a_0+a_1+a_2)$;\label{sub3A_2}
\item  $V^{\ast\ast}=V^\ast$;\label{sub3A_3}
\item   $V=V^\ast$, unless $(3\al^2+3\al\bt-9\al-2\bt+4)(3\al+\bt-2)=0$ and $\V=3\A(\al, \bt)$. In this case  $V^\ast=\langle a_0-a_1, a_0-a_2, (2\bt-1)a_1+s_{\tz,1}\rangle$.\label{sub3A_4}
\end{enumerate}
\end{lemma}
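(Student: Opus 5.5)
This is a direct computation in the basis of Table~\ref{table3A}, following the pattern of Lemmas~\ref{sub3C} and~\ref{subJ}. In $3\A(\al,\bt)$ we have the basis $a_0,a_1,a_2,z$ (indices mod $3$), with $a_ia_j$ and $a_iz$ given explicitly. Since $n=3$, the relations $a_{-1}=a_2$ and $a_{-2}=a_1$ hold.

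\textbf{Part (b).} From the product $a_0a_1$ in the table and the definition $s_{\tz,1}=a_0a_1-\bt(a_0+a_1)$, I will read off the expression in terms of $z$ and $a_0+a_1+a_2$. The only check needed is that the table's $a_0a_1$ has the form $\tfrac{\al-\bt}{2}\cdot(\text{sym.})+\ldots$; the change-of-basis rule stated for Table~\ref{table3A} should make this immediate.

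\textbf{Part (a).} As in Lemma~\ref{sub3C}, I will decompose $a_1$ with respect to $\ad_{a_0}$, using the eigenvectors $u_1,v_1,w_1$ of Lemma~\ref{ui}. Concretely:
\begin{itemize}
\item $w_1=\tfrac12(a_1-a_2)$;
\item I will find a $0$-eigenvector of the form $u=x a_0+y(a_1+a_2)+z'z$ and an $\al$-eigenvector $v$ of the same shape by solving the linear conditions $a_0u=0$ and $a_0v=\al v$ with the multiplication table;
\item writing $a_1=\lmu a_0+u+v+w_1$ and using uniqueness then gives $\lmu$.
\end{itemize}
Alternatively, I will compute $\lmu=\lambda_{a_0}(a_1)$ by requiring $\lambda_{a_0}(u_1u_1+u_1v_1)=0$, in the same spirit as Corollary~\ref{cor:lm2f}. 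The direct eigenvector solve is quicker, since the eigenspaces are one-dimensional.

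\textbf{Part (c).} Because $a_{-2}=a_1$ and $a_2=a_{-1}$, we get $a_{-2}-a_2=a_1-a_{-1}$, which is a generator of $V^\ast$. Hence $V^\ast\subseteq V^{\ast\ast}$, and the reverse inclusion was noted earlier.

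\textbf{Part (d).} The span $W:=\langle a_0-a_1,a_0-a_2\rangle$ is already invariant under $\Miy(\V)$ and the flip, since these permute $a_0,a_1,a_2$. So $V^\ast$ is the subalgebra generated by $W$, and it suffices to compute $(a_0-a_1)^2$ and $(a_0-a_1)(a_0-a_2)$ and determine the dimension of the span. I expect:
\begin{itemize}
\item $(a_0-a_1)^2=a_0+a_1-2a_0a_1$, which modulo $W$ is a multiple of $(2\bt-1)a_1+s_{\tz,1}$, or more precisely a combination of $a_0+a_1+a_2$ and $z$;
\item $V^\ast=V$ unless the $3$-dimensional space $W'=W+\langle(a_0-a_1)^2\rangle$ is closed under multiplication, i.e.\ unless the product of $(a_0-a_1)^2$ with $W$ lands back in $W'$.
\end{itemize}
Writing these conditions out gives a polynomial condition. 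I expect it to factor as $(3\al^2+3\al\bt-9\al-2\bt+4)(3\al+\bt-2)=0$: one factor should come from $(a_0-a_1)^2$ degenerating, the other from closure. This factorisation is the main computational obstacle, and I would verify it in Singular/Magma as the paper does elsewhere.

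\textbf{The quotient.} For $3\A(\al,\tfrac{1-3\al^2}{3\al-1})^\times$, the quotient by the one-dimensional radical, I will check that in the quotient the extra element is no longer independent. That gives $V=V^\ast$ there, whence the caveat ``and $\V=3\A(\al,\bt)$''.
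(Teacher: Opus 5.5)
Parts (b) and (c) are fine and match the paper. Your eigenvector solve for (a) also works, though the paper simply reads $\lmu=(a_0,a_1)/(a_0,a_0)$ off the Frobenius form in Table~\ref{table3A}, which is much shorter.

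\textbf{The gap in (d).} ``$V^\ast=V$ unless $W'$ is closed'' is correct. Your reformulation ``i.e.\ unless the product of $(a_0-a_1)^2$ with $W$ lands back in $W'$'' drops the only condition that matters. Put $c:=(a_0-a_1)(a_1-a_2)=(2\bt-1)a_1+s_{\tz,1}$. Then $(a_0-a_1)^2=(1-2\bt)(a_0-a_1)-2c$, and $W'=\langle a_0-a_1,a_0-a_2,c\rangle$ is always $3$-dimensional because $c$ has $z$-coefficient $1$. So nothing ``degenerates''. Let $\omega$ be the linear form with $\omega(a_i)=1$ and $\omega(z)=\theta:=-\tfrac{3\al+\bt-2}{2}$. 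One checks $\omega(a_ia_j)=1$ for all $i,j$ and $\omega(c)=0$. Hence $W'=\ker\omega$, and $\omega(xy)=\omega(x)\omega(y)$ for $x,y\in\langle a_0,a_1,a_2\rangle$. Write $c=c'+z$ with $c'\in\langle a_0,a_1,a_2\rangle$, and let $\kappa$ be the scalar with $a_iz=\kappa a_i$ and $z^2=\kappa z$. For $x\in W$ you get $cx=c'x+\kappa x\in\ker\omega$, and likewise $WW\subseteq W'$. So $cW\subseteq W'$ holds identically, and the conditions you propose to write out produce no polynomial at all. The whole obstruction sits in $c^2=c'^2+2\kappa c'+\kappa z$, for which $\omega(c^2)=\theta(\theta-\kappa)$. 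Here $\theta=0$ iff $3\al+\bt-2=0$, and $\theta=\kappa$ iff $(\al-1)(3\al^2+3\al\bt-9\al-2\bt+4)=0$. So both factors come from the single test $c^2\in W'$; neither comes from $(a_0-a_1)^2$ degenerating. This test is exactly the paper's determinant of $a_0-a_1,a_1-a_2,c,c^2$. You need to add it. In the degenerate case the computation above also shows that $W'$ really is a subalgebra, so $V^\ast=W'$.

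\textbf{The quotient step.} Your quotient paragraph is stated the wrong way round. In the $3$-dimensional algebra $3\A(\al,\tfrac{1-3\al^2}{3\al-1})^\times$, what gives $V=V^\ast$ is that the image of $c$ stays \emph{independent} of the image of $W$. This needs a fresh check, because the $z$-component that guaranteed $c\notin W$ has been killed. The image of $c$ is that of $c'$, whose coefficient sum is $-\theta=\tfrac{3\al+\bt-2}{2}=\tfrac{3(2\al-1)(\al-1)}{2(3\al-1)}$. This is nonzero provided $\ch(\F)\neq 3$. In characteristic $3$, $\omega$ descends to an algebra homomorphism of the quotient, and $V^\ast=\ker\omega$ is $2$-dimensional. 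So the independence of the three vectors that the paper's proof asserts also tacitly needs $\ch(\F)\neq 3$.
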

\begin{proof}
Suppose first $\V= 3\A(\al, \bt)$.
Since $\lmu=\tfrac{(a_0, a_1)}{(a_0,a_0)}$, part \ref{sub3A_1} follows by Table~\ref{table3A}. Part \ref{sub3A_2} follows immediately by Equation~\eqref{defs} on page~\pageref{defs} and Table~\ref{table3A}. Since $a_0=a_3$, part \ref{sub3A_3} follows as in the proof of Lemma~\ref{sub3C}.

Set
\[
c:=(a_0-a_1)(a_1-a_2)=(2\bt-1)a_1+s_{\tz,1}= \tfrac{\al-\bt}{2}(a_0+a_2)+\tfrac{\al+3\bt-2}{2}a_1+z
\]
By Table~\ref{table3A}, 
\begin{align*}
   4(2\al-1) c^2&=(\al-\bt)(3\al^3+9\al^2(\bt-1)-2\al\bt^2-7\al\bt+\bt(\bt-1)+4\al)(a_0+a_2)\\
    &\phantom{{}={}} \quad +(3\al^4-6\al^3\bt-7\al^2\bt^2-14\al\bt^3-3\al^3+16\al^2\bt+28\al\bt^2+7\bt^3\\
    &\phantom{{}={}} \quad -4\al^2-29\al\bt-13\bt^2+10\al+12\bt-4)a_1\\
    &\phantom{{}={}} \quad +(9\al^3+5\al^2\bt-20\al\bt^2-22\al^2+13\al\bt+10\bt^2+9\al-8\bt)z
\end{align*}
A direct check shows that the three vectors $a_0-a_1$, $a_1-a_2$, and  $c$ are linearly independent and 
$$
V^\ast=\langle a_0-a_1, a_1-a_2, c, c^2\rangle.
$$
Now part \ref{sub3A_4} follows, since the determinant of the matrix, whose row entries are the coefficients of $a_0-a_1$, $a_1-a_2$, $c$, $c^2$ with respect to the basis given in Table~\ref{table3A}, is  
$$
(\al-1)(3\al^2+3\al\bt-9\al-2\bt+4)(3\al+\bt-2).
$$
 
  Now let $\V=3\A(\al, \tfrac{1-3\al^2}{3\al-1})^\times$, then 
$$
c=\tfrac{6\al^2-\al-1}{6\al-2}(a_0+a_2)- \tfrac{6\al^2+7\al-5}{6\al-2}a_1
$$ 
and the three vectors  $a_0-a_1$, $a_1-a_2$, and  $c$ are linearly independent. Since they are contained in $V^\ast$ and $V$ has dimension $3$, we get $V=V^\ast$, proving part \ref{sub3A_4}.  Parts \ref{sub3A_1}-\ref{sub3A_3} follow since $3\A(\al, \tfrac{1-3\al^2}{3\al-1})^\times$ is a quotient of $3\A(\al, \tfrac{1-3\al^2}{3\al-1})$.
\end{proof}

\begin{lemma}\label{sub4A}
Let $\ch(\F)\neq 3$ and let $\V\in \{4\A(\tfrac{1}{4}, \bt), 4\A(\tfrac{1}{4}, \tfrac{1}{2})^\times\}$. Then, the following assertions hold:
\begin{enumerate}
    \item $\lmu=\bt$;\label{sub4A_1}
    \item  $s_{\tz,1}=e+\tfrac{1-4\bt}{8}(a_{-1}+a_0+a_1+a_2)$;\label{sub4A_2}
    \item  $V=V^\ast=V^{\ast\ast}$.\label{sub4A_3}
\end{enumerate} 
\end{lemma}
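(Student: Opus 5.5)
I will follow the pattern of the proofs of Lemmas~\ref{sub3C}, \ref{subJ} and~\ref{sub3A}, working directly with the basis and multiplication of Table~\ref{table4A}. For $4\A(\tfrac{1}{4},\bt)$ this basis is $a_{-1},a_0,a_1,a_2,e$, with axet $X(4)$, so $a_{i+4}=a_i$. In particular $a_2=a_{-2}$, and $a_0a_2=0$ since $a_0$ and $a_2$ generate a $2\B$ subalgebra.

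\textbf{Part \ref{sub4A_1}.} I will use $\lmu=\tfrac{(a_0,a_1)}{(a_0,a_0)}$ and read both values of the Frobenius form off Table~\ref{table4A}. This gives $\lmu=\bt$.

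\textbf{Part \ref{sub4A_2}.} I will compute $s_{\tz,1}=a_0a_1-\bt(a_0+a_1)$ by substituting the product $a_0a_1$ from the table, which should give $e+\tfrac{1-4\bt}{8}(a_{-1}+a_0+a_1+a_2)$. As a consistency check, this element should be invariant under $\langle\tau_0,\tau_1\rangle$ by Lemma~\ref{invariant}; this holds because $e$ and $\sum a_i$ are invariant.

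\textbf{Part \ref{sub4A_3}.} First, $V^\ast$ contains $a_0-a_1$, $a_1-a_2$ and $a_2-a_{-1}$, since these lie in the orbit of $a_0-a_1$ under $\langle f,\Miy(\V)\rangle$ (Lemma~\ref{translation}). Hence $V^\ast$ contains the $3$-dimensional space $D$ of combinations $\sum c_ia_i$ with $\sum c_i=0$. Next, $V^{\ast\ast}\supseteq\langle a_0-a_2,\,a_1-a_{-1}\rangle$, and
\[
(a_0-a_2)^2=a_0+a_2.
\]
Therefore $a_0,a_2\in V^{\ast\ast}$, and applying $f$ gives $a_1,a_{-1}\in V^{\ast\ast}$. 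It remains to obtain $e$. For this I will use the product $(a_0-a_1)^2=a_0+a_1-2a_0a_1$ with $a_0a_1$ expressed via part \ref{sub4A_2}: its coefficient of $e$ is $-2\neq0$, so $e\in V^{\ast\ast}$. This shows $V=V^{\ast\ast}$, and then $V^{\ast\ast}\subseteq V^\ast\subseteq V$ gives $V=V^\ast=V^{\ast\ast}$.

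\textbf{The quotient.} For $4\A(\tfrac14,\tfrac12)^\times$ the statement follows by passing to the quotient, since the images of all generators are still generated in the same way.

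\textbf{Main obstacle.} The delicate point is where $\ch(\F)\neq3$ enters. I expect it to be needed because the table's structure constants, or the Frobenius form value $(a_0,a_0)$ used in part \ref{sub4A_1}, involve denominators divisible by $3$. If the coefficient of $e$ in the product above degenerates for some $\bt$, I would fall back on computing $(a_0-a_1)(a_1-a_2)$ instead and compare determinants, exactly as in Lemma~\ref{sub3A}.
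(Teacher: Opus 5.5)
Your proposal is correct and follows the paper's proof: (a) and (b) are read off Table~\ref{table4A} exactly as you do. For (c), the paper uses $\V_e\cong\V_o\cong 2\B$ together with Lemma~\ref{sub2B}\ref{sub2B_3}, which is precisely your computation $(a_0-a_2)^2=a_0+a_2$; this already puts $a_0,a_1\in V^{\ast\ast}$, and they generate $V$, so your separate step for $e$ is unnecessary. The one misstep is your guess about $\ch(\F)\neq 3$: the table has no denominators divisible by $3$, the hypothesis is there only because $\tfrac{1}{4}=1$ in characteristic $3$, and no step of the proof uses it.
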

\begin{proof}
Since (in both cases) $\lmu=\tfrac{(a_0, a_1)}{(a_0, a_0)}$, by Table~\ref{table4A}, we get \ref{sub4A_1}. Part \ref{sub4A_2} follows immediately by Equation~\eqref{defs} on page~\pageref{defs} and Table~\ref{table4A}. 

 To prove \ref{sub4A_3}, notice that $V^{\ast\ast}=\lla V_e^\ast,V_o^\ast\rra$. By Note~\ref{table4A evensub} in Table~\ref{table4A},  $\V_e\cong \V_o\cong 2\B$, whence, by Lemma \ref{sub2B}\ref{sub2B_3}, $V_e^\ast=V_e$ and $V_o^\ast=V_o$. Hence $V^{\ast\ast}=\lla V_e,V_o\rra=V$. As $V^{\ast\ast}\leq V^\ast$, we get \ref{sub4A_3}. 
  \end{proof}


\begin{lemma}\label{sub4B}
Let $\V\in \{ 4\B(\al, \tfrac{\al^2}{2}), 4\B(-1, \tfrac{1}{2})^\times\}$. Then, the following assertions hold:
\begin{enumerate}
    \item  $\lmu=\tfrac{\al^2}{4}$;\label{sub4B_1}
    \item  $s_{\tz,1}=-\tfrac{\al^2}{4}(a_{-1}+a_0+a_1+a_2-c)$;\label{sub4B_2}
    \item  $V=V^\ast=V^{\ast\ast}$.\label{sub4B_3}
\end{enumerate}
\end{lemma}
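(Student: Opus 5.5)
I will follow the template of the proofs of Lemmas~\ref{sub3A} and~\ref{sub4A}, using the basis, multiplication and Frobenius form of Table~\ref{table4B}. I first treat $\V=4\B(\al,\tfrac{\al^2}{2})$, and then obtain the statements for the quotient $4\B(-1,\tfrac{1}{2})^\times$ by passing through the canonical projection (Lemma~\ref{quotient}).

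\emph{Part \ref{sub4B_1}.} The algebra carries a Frobenius form with $(a_0,a_0)\neq 0$. Invariance makes the eigenspaces of $\ad_{a_0}$ mutually orthogonal. Hence, writing $a_1=\lmu a_0+u_1+v_1+w_1$ as in Lemma~\ref{ui}, we get $\lmu=\tfrac{(a_0,a_1)}{(a_0,a_0)}$. Reading the two values off Table~\ref{table4B} should give $\tfrac{\al^2}{4}$. If the form degenerates in the quotient, I would instead take $\lmu$ from the cover via Lemma~\ref{quotient}; this is legitimate because $a_0$ does not lie in the ideal.

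\emph{Part \ref{sub4B_2}.} This is immediate from Equation~\eqref{defs} with $\bt=\tfrac{\al^2}{2}$. I will compute $s_{\tz,1}=a_0a_1-\tfrac{\al^2}{2}(a_0+a_1)$ using the product $a_0a_1$ from the table, which is expressed in terms of $a_{-1},a_0,a_1,a_2$ and the extra basis vector $c$.

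\emph{Part \ref{sub4B_3}.} I will argue as in Lemma~\ref{sub4A}. The key input is the identification of the even and odd subalgebras, presumably recorded in a note to Table~\ref{table4B}. Since the axet is $X(4)$, $a_0$ and $a_2$ generate $\V_e$, and I expect $\V_e\cong\V_o\cong 3\C(\al)$ with $\al$ playing the role of the Jordan parameter. Note, however, that $\V_e$ is of Jordan type $\al$ here, not $\bt$, so Lemma~\ref{sub3C} cannot be quoted verbatim. Instead I will check directly that $(a_0-a_2)^2$, together with $a_0-a_2$ and $a_{-2}-a_0=a_2-a_0$, spans $V_e$. In $3\C(\al)$, $(a_0-a_2)^2=(1-\al)(a_0+a_2)+\al c'$ for the third axis $c'$ of that subalgebra. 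Combined with $V_e^\ast\ni a_0-a_2$ and $c'-a_0$, this spans $V_e$ provided $\al\neq 2$. The analogous argument works for $V_o$.

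Granting this, $V^{\ast\ast}\supseteq\lla V_e^{\ast},V_o^{\ast}\rra=\lla V_e,V_o\rra$. This algebra contains $a_{-1},a_0,a_1,a_2$ and hence $a_0a_1$, so it contains $s_{\tz,1}$ and, by part~\ref{sub4B_2}, also $c$. Thus it is all of $V$, and $V^{\ast\ast}\le V^\ast\le V$ yields equality throughout.

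The main obstacle is the degenerate parameter values. These are $\al=2$ (with $\bt=2$ excluded since $\al\neq\bt$), characteristic $3$ coincidences where the third axis of the subalgebra may collapse, and the quotient case $\al=-1$. For these values I would fall back on a direct rank computation, exactly as in Lemma~\ref{sub3A}. I will form the vectors $a_0-a_1$, $a_1-a_2$, $(a_0-a_1)(a_1-a_2)$, $(a_0-a_2)^2$ and a further product if needed, and compute the determinant of their coordinate matrix with respect to the basis of Table~\ref{table4B}. I expect this determinant to be a nonzero constant times a power of $\al$ (together with a factor such as $\al-1$), so that it never vanishes in the allowed range. Finally, I will check that the same spanning set survives in $4\B(-1,\tfrac{1}{2})^\times$, since the image of a spanning set spans the quotient.
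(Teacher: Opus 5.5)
Parts (a) and (b) are fine and agree with the paper. The gap is in (c), at the step $V_e^\ast=V_e$ (and $V_o^\ast=V_o$).

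\textbf{Why the step fails.} Because the axet is $X(4)$, $a_{-2}=a_2$. So $\tau_0$ fixes $V_e$ pointwise and $\tau_1$ only swaps $a_0$ and $a_2$. Hence the orbit of $a_0-a_2$ is $\{\pm(a_0-a_2)\}$, and $V_e^\ast$ is the subalgebra generated by the single vector $x:=a_0-a_2$.
\begin{itemize}
\item Your ``direct check'' with $x$, $a_{-2}-a_0=-x$ and $x^2$ spans at most a plane.
\item The extra vector $c-a_0$ that you then place in $V_e^\ast$ is not justified. It is not a group image of $x$: $c$ is fixed by $f$, $\tau_0$, $\tau_1$, and it is only the Jordan-type involution of $3\C(\al)$ that moves $a_2$ to $c$. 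Whether $c-a_0$ arises from products depends on $\al$.
\end{itemize}
Computing honestly in $V_e$:
\[
x^2=(1-\al)(a_0+a_2)+\al c,\qquad x\,x^2=(1-\al+\al^2)x,
\]
\[
(x^2)^2=(1-\al)^2(1+\al)(a_0+a_2)+\al(-3\al^2+5\al-1)c.
\]
Hence $V_e^\ast=V_e$ if and only if $(2\al-1)(\al-2)\neq 0$.

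\textbf{A counterexample to your step.} Take $4\B(\tfrac12,\tfrac18)$, e.g.\ in characteristic $0$. Then $V_e^\ast=\langle a_0-a_2,\,a_0+a_2+c\rangle\subsetneq V_e$, and similarly for $V_o^\ast$. So the inclusion $V^{\ast\ast}\supseteq\lla V_e,V_o\rra$ is false there.

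The value $\al=\tfrac12$ is not on your list of degenerate cases. Your fallback would not rescue it either. The vectors $a_0-a_1$, $a_1-a_2$, $(a_0-a_1)(a_1-a_2)$ lie in $V^\ast$ but are not known to lie in $V^{\ast\ast}$. A rank computation with them therefore yields only $V=V^\ast$, not $V=V^{\ast\ast}$.

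\textbf{What is needed.} You need a product that mixes the even and odd parts, which is exactly the paper's route. The following all lie in $V^{\ast\ast}$:
\[
(a_{-1}-a_1)^2=(1-\al)(a_{-1}+a_1)+\al c,\qquad (a_0-a_2)^2=(1-\al)(a_0+a_2)+\al c,
\]
\[
(a_{-1}-a_1)^2(a_0-a_2)^2=\al^2(\al-2)^2c.
\]
Together with $a_{-1}-a_1$ and $a_0-a_2$, these five vectors are linearly independent whenever $(\al-1)(\al-2)\neq 0$. This always holds, because $\al=2$ would force $\bt=\al$. This gives $V=V^{\ast\ast}$ uniformly in $\al$. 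The statement for $4\B(-1,\tfrac12)^\times$ then follows by passing to images, since the ideal is invariant under the flip and the Miyamoto involutions.
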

\begin{proof}
Assume $\V=4\B(\al, \tfrac{\al^2}{2})$. Since $\lmu=\tfrac{(a_0, a_1)}{(a_0, a_0)}$, by Table~\ref{table4B}, we get \ref{sub4B_1}. Part \ref{sub4B_2} follows immediately by Equation~\eqref{defs} on page~\pageref{defs} and Table~\ref{table4B}. 

To prove \ref{sub4B_3}, note that, by Table~\ref{table4B},
\begin{align*}
(a_{-1}-a_1)^2 &= (1-\al)(a_{-1}+a_1)+\al c,\\
(a_{0}-a_2)^2 &= (1-\al)(a_{0}+a_2)+\al c,
\end{align*}
and
\[
(a_{-1}-a_1)^2(a_{0}-a_2)^2 = \al^2(\al-2)^2c.
\]
The five vectors $a_{-1}-a_1$, $a_0-a_2$, $(a_{-1}-a_1)^2$, $(a_0-a_2)^2$, and $(a_{-1}-a_1)^2(a_0-a_2)^2$ are linearly independent if and only if $ 16(\al-1)^2(\al-2)^2\neq 0$. Since $\al=2$ implies $\bt=2$, it follows that $ 16(\al-1)^2(\al-2)^2\neq 0$, whence $V^{\ast\ast}$ has dimension $5$ and so $V=V^{\ast\ast}$. Since $V^\ast\geq V^{\ast\ast}$, we get \ref{sub4B_3}. If $\V=4\B(-1,\tfrac{1}{2})^\times$, then the proof is similar.
%
\end{proof}

\begin{lemma}\label{sub4Y1/2}
Let $\V=4\Y(\tfrac{1}{2}, \bt)$. Then, the following assertions hold:
\begin{enumerate}
    \item  $\lmu=4\bt^2$;\label{sub4Y1/2_1}
    \item  $s_{\tz,1}=4\bt^2 z-\tfrac{\bt}{2}(a_{-1}+a_0+a_1+a_2)$;\label{sub4Y1/2_2}
    \item  $V=V^\ast=V^{\ast\ast}$.\label{sub4Y1/2_3}
\end{enumerate}
\end{lemma}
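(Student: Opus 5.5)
The plan is to follow the pattern of the proofs of Lemmas~\ref{sub4A} and~\ref{sub4B}, using the basis and multiplication of $4\Y(\tfrac{1}{2},\bt)$ in Table~\ref{table4Y}. That basis is $a_{-1},a_0,a_1,a_2,z$, with indices in $\Z_4$.

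For part~\ref{sub4Y1/2_1}, the algebra carries a Frobenius form with $(a_0,a_0)\neq 0$. Since the eigenspaces of $\ad_{a_0}$ are mutually orthogonal, we have $\lmu=\tfrac{(a_0,a_1)}{(a_0,a_0)}$, and we read off $4\bt^2$ from the table. As a consistency check, we can also decompose $a_1$ directly into eigenvectors of $\ad_{a_0}$ using Lemma~\ref{ui}.

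For part~\ref{sub4Y1/2_2}, we substitute the table value of $a_0a_1$ into Equation~\eqref{defs}, $s_{\tz,1}=a_0a_1-\bt(a_0+a_1)$. The table expresses $a_0a_1$ in terms of $z$ and the four axes, so the claimed formula should fall out after collecting coefficients. Since $s_{\tz,1}$ is $\Miy(\V)$-invariant by Lemma~\ref{invariant}, it must be symmetric in the $a_i$, which explains the coefficient pattern $-\tfrac{\bt}{2}(a_{-1}+a_0+a_1+a_2)$.

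For part~\ref{sub4Y1/2_3}, we use $V^{\ast\ast}\le V^\ast\le V$, so it suffices to show $\dim V^{\ast\ast}=5$. The space $V^{\ast\ast}$ contains $a_0-a_2$ and $a_{-1}-a_1$, and it is a subalgebra, so it also contains their squares and the product of the two squares. We will compute
\[
(a_0-a_2)^2,\qquad (a_{-1}-a_1)^2,\qquad (a_0-a_2)^2(a_{-1}-a_1)^2
\]
from the table. Here $a_0a_2$ and $a_{-1}a_1$ are expressed via $z$, or via $s_{\tz,2}$ and $s_{\tu,2}$; note that with $\alpha=\tfrac12$ the structure of the even and odd subalgebras may be of a Jordan type algebra, which we can read from the notes to the table.

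The five vectors $a_0-a_2$, $a_{-1}-a_1$, the two squares and their product will then be written in the basis $a_{-1},a_0,a_1,a_2,z$. We show that the resulting $5\times 5$ determinant is a nonzero multiple of a polynomial in $\bt$, and that this polynomial vanishes only for excluded values such as $\bt\in\{0,1,\tfrac12\}$, using $\al\neq\bt$ with $\al=\tfrac12$.

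The main obstacle is this determinant computation, and in particular checking that no admissible $\bt$ (or characteristic) makes it vanish. If it does vanish at some admissible value, we fall back on the alternative route used in Lemma~\ref{sub4A}. That route identifies $\V_e$ and $\V_o$ from the notes to Table~\ref{table4Y} and invokes the corresponding earlier lemma, for instance Lemma~\ref{subJ} or Lemma~\ref{sub3C}, to get $V_e^\ast$ and $V_o^\ast$. We then combine $V^{\ast\ast}=\lla V_e^\ast,V_o^\ast\rra$ with one further product to reach all of $V$.
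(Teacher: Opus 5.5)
Your proposal is correct and follows the paper's proof. The paper also reads off $\lmu=(a_0,a_1)/(a_0,a_0)=4\beta^2$ from the table, gets $s_{\tz,1}$ by substituting the table value of $a_0a_1$ into the definition, and shows that $a_0-a_2$, $a_{-1}-a_1$, their squares and the product of the squares are linearly independent. That independence holds exactly when $\beta\notin\{0,\tfrac12\}$, which is always the case since $\tfrac12=\al\neq\beta$; in fact the product of the squares is $64\beta^2(2\beta-1)^2z$, so your fallback route is never needed.
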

\begin{proof}
Since $\lmu=\tfrac{(a_0,a_1)}{(a_0,a_0)}$, part \ref{sub4Y1/2_1} follows by Table~\ref{table4Y}. Part \ref{sub4Y1/2_2} follows by Equation~\eqref{defs} on page~\pageref{defs} and Table~\ref{table4Y}.

By Table~\ref{table4Y}, we have
\begin{align*}
(a_{-1}-a_1)^2 &= a_{-1}+a_1-(4\bt-1)(a_0+a_2)-\tfrac{2}{\bt}(4\bt-1)s_{0,1},\\
(a_{0}-a_2)^2 &= a_{0}+a_2-(4\bt-1)(a_{-1}+a_1)-\tfrac{2}{\bt}(4\bt-1)s_{0,1},
\end{align*}
and
\[
(a_{-1}-a_1)^2\cdot (a_{0}-a_2)^2 = 8\bt(2\bt-1)^2(a_{-1}+a_0+a_1+a_2)+16(2\bt-1)^2s_{0,1}.
\]
It is straightforward to see that the five vectors 
\[
a_{-1}-a_1, a_{0}-a_2, (a_{-1}-a_1)^2, (a_{0}-a_2)^2, (a_{-1}-a_1)^2\cdot (a_{0}-a_2)^2
\]
of $V^{\ast\ast}$ are linearly independent if and only if 
\[
1024\bt^2(2\bt-1)^2\neq 0,
\]
which is always the case, since $\bt\not \in \{0,\tfrac{1}{2}\}=\{0,\al\}$.
Thus 
\[
V=V^{\ast\ast}\leq V^\ast\leq V,
\]
whence \ref{sub4Y1/2_3}. 
\end{proof}

\begin{lemma}\label{sub4Y}
Let $\V=4\Y(\al, \tfrac{1-\al^2}{2})$. Then, the following assertions hold:
\begin{enumerate}
    \item  $\lmu=\tfrac{1}{4}(2-\al)(1+\al)$; \label{sub4Y_1}
    \item  $s_{\tz,1}=\tfrac{(\al+1)^2}{4}c+ \tfrac{\al^2-1}{4}(a_{-1}+a_0+a_1+a_2) $; \label{sub4Y_2}
    \item  $V=V^\ast$; \label{sub4Y_3}
    \item  if $\al\neq 2$, then $V=V^{\ast\ast}$; \label{sub4Y_4}
    \item  if $\al= 2$, then $\ch(\F)\neq 3$ and
    \[
    V^{\ast\ast}=\langle a_{-1}-a_1, a_0-a_2, -3(a_{-1}+a_0+a_1+a_2)+4s_{0,1}\rangle= \la a_{-1}-a_1, a_0-a_2, c\ra.
    \]\label{sub4Y_5}
\end{enumerate}
\end{lemma}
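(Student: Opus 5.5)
The plan is to follow exactly the pattern of Lemmas~\ref{sub4B} and~\ref{sub4Y1/2}, using the explicit basis and multiplication table of $4\Y(\al,\tfrac{1-\al^2}{2})$ given in Table~\ref{table4Yal}. That basis is presumably $a_{-1},a_0,a_1,a_2,c$.

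\textbf{Parts \ref{sub4Y_1} and \ref{sub4Y_2}.} Since the algebra is symmetric and carries a Frobenius form with $(a_0,a_0)\neq 0$, we have $\lmu=\tfrac{(a_0,a_1)}{(a_0,a_0)}$. Reading off the form values in Table~\ref{table4Yal} gives \ref{sub4Y_1}. Part \ref{sub4Y_2} is then a direct substitution of the table product $a_0a_1$ into Equation~\eqref{defs}, using $\bt=\tfrac{1-\al^2}{2}$.

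\textbf{Parts \ref{sub4Y_3} and \ref{sub4Y_4}.} These come from a dimension count, as in Lemma~\ref{sub4B}.
\begin{enumerate}
\item Compute from the table the squares $(a_{-1}-a_1)^2$ and $(a_0-a_2)^2$, together with their product. All of these lie in $V^{\ast\ast}$, since $V^{\ast\ast}$ is a subalgebra containing $a_{-1}-a_1$ and $a_0-a_2$.
\item Form the $5\times 5$ determinant of the coordinates of
$$a_{-1}-a_1,\ a_0-a_2,\ (a_{-1}-a_1)^2,\ (a_0-a_2)^2,\ (a_{-1}-a_1)^2(a_0-a_2)^2$$
with respect to the basis.
\item I expect this determinant to factor as a nonzero constant times powers of $\al$, $\al-1$, $\al-\bt$ and $(\al-2)$, possibly with some further factor.
\item The factors $\al$, $\al-1$ and $\al-\bt$ never vanish. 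Hence for $\al\neq 2$ the five vectors are independent, so $V=V^{\ast\ast}\leq V^\ast$, which gives \ref{sub4Y_4}.
\item For \ref{sub4Y_3} with $\al=2$, add $a_0-a_1\in V^\ast$ to a basis of $V^{\ast\ast}$. Check that $a_0-a_1$, $(a_0-a_1)^2$ and the span from part \ref{sub4Y_5} together give a $5$-dimensional space, so $V^\ast=V$.
\end{enumerate}

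\textbf{Part \ref{sub4Y_5}.} Take $\al=2$. First, $\ch(\F)\neq 3$: otherwise $\bt=\tfrac{1-4}{2}=0$, which is excluded. Next, substituting $\al=2$ into \ref{sub4Y_2} gives $s_{0,1}=\tfrac94 c+\tfrac34(a_{-1}+a_0+a_1+a_2)$. Hence
$$4s_{0,1}-3(a_{-1}+a_0+a_1+a_2)=9c,$$
which proves the equality of the two spans.

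Let $U=\la a_{-1}-a_1,\,a_0-a_2,\,c\ra$. I would then check the following.
\begin{enumerate}
\item $U$ is closed under multiplication, using the table at $\al=2$: the squares of the differences should land in $U$, and products with $c$ should too.
\item $U$ is invariant under $\tau_0$, $\tau_1$ and the flip. The axis differences are permuted up to sign, and $c$ is fixed.
\item $U\subseteq V^{\ast\ast}$, because $c$ is a multiple of a combination of $(a_{-1}-a_1)^2$ and $(a_0-a_2)^2$ (or of their product).
\end{enumerate}
Since $V^{\ast\ast}$ is the smallest invariant subalgebra containing the generators, these three facts give $V^{\ast\ast}=U$.

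The main obstacle is the determinant computation and its factorisation. In particular, I need to make sure no extra factor vanishes for some admissible $\al\neq 2$. If one does, I will compensate by including $c$-type products or further products of the generators to recover full rank.
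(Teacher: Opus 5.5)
Your proposal is correct. For (a), (b), (d) and (e) it is the paper's argument. In (e) you make explicit the ``direct check'' the paper omits: $U$ is closed under multiplication, and $c\in V^{\ast\ast}$ because $(a_0-a_2)^2=-3c$ at $\al=2$.

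The genuine difference is in (c). The paper proves $V=V^\ast$ uniformly in $\al$, by showing that $a_{-1}-a_0$, $a_0-a_1$, $a_2-a_1$, $(a_0-a_1)^2$, $(a_0-a_1)(a_1-a_2)$ are independent whenever $\al\notin\{0,\pm1\}$. You instead get (c) from (d) when $\al\neq 2$, and from (e) together with $a_0-a_1$ and $(a_0-a_1)^2$ when $\al=2$. Your route avoids computing $(a_0-a_1)(a_1-a_2)$ and reuses the $V^{\ast\ast}$ work. The cost is a case split and the fact that (c) now depends on (d) and (e), whereas the paper's proof of (c) is self-contained.

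Your $\al=2$ check does go through. With respect to the basis $(a_{-1},a_0,a_1,a_2,c)$, the span $\la a_{-1}-a_1,a_0-a_2,a_0-a_1,c\ra$ is exactly the hyperplane on which the $a_i$-coordinates sum to $0$. On the other hand, $(a_0-a_1)^2=a_0+a_1-2a_0a_1$ has coordinate sum $2$, since each product $a_ia_{i+1}$ has coordinate sum $0$.

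The determinant you flag as the main obstacle is harmless. From the table:
\begin{itemize}
\item $(a_{-1}-a_1)^2=(2-\al)(a_{-1}+a_1)-(\al+1)c$;
\item $(a_0-a_2)^2=(2-\al)(a_0+a_2)-(\al+1)c$;
\item their product is $(\al^2-1)^2c$.
\end{itemize}
So the determinant is $\pm4(\al-2)^2(\al-1)^2(\al+1)^2$. The only extra factor is $\al+1$, which is nonzero because $\bt\neq 0$, so no fallback is needed.
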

\begin{proof}
Since $\lmu=\tfrac{(a_0,a_1)}{(a_0,a_0)}$ and by Equation~\eqref{defs} on page~\pageref{defs}, parts \ref{sub4Y_1} and \ref{sub4Y_2} follows immediately from Table~\ref{table4Yal}.


By Table~\ref{table4Yal} and part \ref{sub4Y_2},  
\[
(a_0-a_1)^2=\al^2(a_{0}+a_1)-2s_{0,1},
\]
and
\[
(a_0-a_1)( a_1-a_2)= \tfrac{1}{2}(\al-1)a_{-1}-\tfrac{1}{2}(\al^2-1)(a_0+a_2)-\tfrac{1}{2}(2\al^2-\al+1)a_1 +\tfrac{2\al}{\al+1} s_{0,1}.
\]
A direct check shows that, since $\al\not \in \{0, \pm 1\}$, the five vectors  $a_{-1}-a_0$, $a_0-a_1$, $a_2-a_1$, $(a_0-a_1)^2$, and $(a_0-a_1)( a_1-a_2)$ of $V^\ast$ are linearly independent. Hence $V=V^\ast$. 

Similarly,
\begin{align*}
(a_{-1}-a_1)^2 &= a_{-1}+a_1+(\al-1)(a_0+a_2)-\tfrac{4}{\al+1}s_{0,1},\\
(a_{0}-a_2)^2 &= a_{0}+a_2+(\al-1)(a_{-1}+a_1)-\tfrac{4}{\al+1}s_{0,1},
\end{align*}
and 
\[
(a_{-1}-a_1)^2\cdot (a_{0}-a_2)^2=\tfrac{3}{16}(a_{-1}+a_0+a_1+a_2)+s_{0,1}.
\]
It is straightforward to see that the five vectors 
\[
a_{-1}-a_1, a_{0}-a_2, (a_{-1}-a_1)^2, (a_{0}-a_2)^2, (a_{-1}-a_1)^2\cdot (a_{0}-a_2)^2
\]
of $V^{\ast\ast}$ are linearly independent if and only if $
16(\al-2)^2(\al-1)^2\neq 0$.
Thus we get \ref{sub4Y_4}. Finally, if $\al=2$, we get 
\[
(a_{-1}-a_1)^2 (a_{0}-a_2)^2=-3(a_{0}-a_2)^2=-3(a_{-1}-a_1)^2=-3(a_{-1}+a_0+a_1+a_2)+4s_{0,1}
\]
and a direct check shows that \ref{sub4Y_5} holds. 
\end{proof}

\begin{lemma}\label{sub5A}
  Let $\V=5\A(\al, \tfrac{5\al-1}{8})$. Then, the following assertions hold:
  \begin{enumerate}
      \item  $\lmu=\tfrac{3(5\al-1)}{32}$; \label{sub5A_1}
      \item  $s_{\tz,1}=\tfrac{5\al-1}{32}w-(a_{-2}+a_{-1}+a_0+a_1+a_2)$; \label{sub5A_2}
      \item  $V^\ast=V^{\ast\ast}$; \label{sub5A_3}
      \item  if $\ch(\F)\neq 5$ and $\al\neq \tfrac{7}{3}$, then $V=V^\ast$; \label{sub5A_4}
      \item  if $\ch(\F)= 5$ or $\al = \tfrac{7}{3}$, then $V^\ast$ is the radical of the Frobenius form and 
      $V^\ast=\langle a_{-2}-a_{-1}, a_{-1}-a_0, a_0-a_1, a_1-a_2, -\tfrac{5}{4}(a_0+a_1)-2s_{0,1}\rangle $. \label{sub5A_5}
       \end{enumerate} 
\end{lemma}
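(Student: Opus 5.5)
The proof will follow the template of the preceding lemmas on the symmetric algebras (Lemmas~\ref{sub3A}, \ref{sub4B}, \ref{sub4Y}). It is a direct computation in the basis of Table~\ref{table5A}: $a_{-2},a_{-1},a_0,a_1,a_2,w$, with indices in $\Z_5$.

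For \ref{sub5A_1}, I will use the Frobenius form. Since the projection $\lambda_{a_0}(a_1)$ equals $\tfrac{(a_0,a_1)}{(a_0,a_0)}$, as the eigenspaces of $\ad_{a_0}$ are mutually orthogonal, I will read off $(a_0,a_1)$ and $(a_0,a_0)$ from Table~\ref{table5A} and get $\lmu=\tfrac{3(5\al-1)}{32}$. Part \ref{sub5A_2} is obtained by substituting the product $a_0a_1$ from Table~\ref{table5A} into Equation~\eqref{defs}, with $\bt=\tfrac{5\al-1}{8}$.

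For \ref{sub5A_3}, the axet is $X(5)$, so $a_5=a_0$. Then $a_{-2}=a_3$, and $\tau_0$ (which maps $a_i\mapsto a_{-i}$) sends $a_1-a_{-1}$ to $a_{-1}-a_1$. The differences $a_{i}-a_{i+2}$ therefore generate, under $\Miy(\V)$ and $f$, the same span as the consecutive differences, because $2$ is invertible modulo $5$. Explicitly, $a_0-a_1=(a_0-a_2)+(a_2-a_4)+(a_4-a_1)$ and $a_4-a_1=a_{-1}-a_1$. Hence $a_0-a_1\in V^{\ast\ast}$, and so $V^\ast=V^{\ast\ast}$, exactly as in Lemma~\ref{sub3C}.

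For \ref{sub5A_4} and \ref{sub5A_5}, $V^\ast$ contains the four-dimensional space $D$ of differences $a_i-a_j$. It is contained in $D+\langle$products$\rangle$, so I will compute $(a_0-a_1)^2=a_0+a_1-2a_0a_1$. By \ref{sub5A_2}, this is $-2s_{\tz,1}+(1-2\bt)(a_0+a_1)$, which involves $w$ with coefficient a multiple of $\tfrac{5\al-1}{16}$. Modulo $D$, all $a_i$ are congruent to $a_0$, so $(a_0-a_1)^2\equiv x\,a_0+y\,w \pmod D$. The vectors $D$ and $(a_0-a_1)^2$ already give dimension $5$, and $V=V^\ast$ holds if and only if I can also reach the sixth direction. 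For that I will compute $(a_0-a_1)(a_0-a_2)$ and a product such as $(a_0-a_1)^2(a_0-a_1)$. Then I will take the determinant of the $6\times 6$ coefficient matrix of these vectors together with $a_{-2}-a_{-1},\dots,a_1-a_2$. I expect this determinant to factor, up to units, as a power of $5$ times $(3\al-7)$ (together with factors like $\al$, $\al-1$, $5\al-1$, which are nonzero because $\bt\notin\{0,1,\al\}$). This gives \ref{sub5A_4}.

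In the degenerate case, the determinant vanishes for every choice of products. I will then show that $U:=\langle a_{-2}-a_{-1},\dots ,a_1-a_2,-\tfrac54(a_0+a_1)-2s_{\tz,1}\rangle$ is closed under multiplication and invariant under $\tau_0$ and $f$, hence equals $V^\ast$. Rather than checking closure under multiplication directly, I will check that $U$ is contained in the radical of the Frobenius form. Gram-matrix entries from Table~\ref{table5A} show that each spanning vector is orthogonal to every $a_i$ and to $w$ exactly when $\ch\F=5$ or $\al=\tfrac73$. The radical is an ideal, and it has codimension $1$, since $a_0$ is not in it. So the radical is an ideal of dimension $5$ containing $a_0-a_1$; it is invariant under automorphisms, and it contains $V^\ast$ while having the same dimension, so it equals $V^\ast$.

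The main obstacle is the determinant/radical computation: choosing products that certify full rank generically, and verifying that the exceptional locus is exactly $\ch\F=5$ or $\al=\tfrac73$.
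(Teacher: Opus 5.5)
Your route matches the paper's. You read off (a) and (b) from Table~\ref{table5A}, and you get (c) by writing $a_0-a_1$ as a sum of differences of axes two steps apart, using $a_5=a_0$. For (d) and (e) you compute the rank of the four consecutive differences together with two products, then identify $V^\ast$ with the radical. Working modulo the span $D$ of the differences reduces (d) to a $2\times 2$ check and confirms your guess. Put $c:=\tfrac{5\al-1}{32}$, so $\bt=4c$ and $(a_i,a_j)=3c$ for $i\neq j$. Then $(a_0-a_1)^2\equiv 2(1-3c)a_0-2cw$ and $(a_0-a_1)(a_0-a_2)\equiv (1-3c)a_0+cw \pmod D$. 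Their determinant is $4c(1-3c)$, and $1-3c=\tfrac{5(7-3\al)}{32}$. So these two products already suffice and the cube is not needed. The paper instead uses $(a_1-a_0)(a_{-2}-a_2)$, which equals $-2cw$.

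One step fails as you wrote it. In the degenerate case you plan to check that every spanning vector of $U$ is orthogonal to all $a_k$ and to $w$, and this includes the printed fifth generator $-\tfrac54(a_0+a_1)-2s_{\tz,1}$. Take $\al=\tfrac73$ with $\ch\F\neq 5$. Then $c=\tfrac13$, $(a_i,a_j)=1$ for all $i,j$, and $s_{\tz,1}=\tfrac13\bigl(w-\sum_i a_i\bigr)$. The pairing of that vector with any $a_k$ is $-\tfrac52+\tfrac{10}{3}=\tfrac56\neq 0$, so it is not in the radical, and (e) is false as printed. This is a misprint that the paper's proof shares. Your own formula gives $(a_0-a_1)^2=(1-2\bt)(a_0+a_1)-2s_{\tz,1}$ with $1-2\bt=\tfrac{5(1-\al)}{4}$, and this equals $-\tfrac54$ only when $\al=2$ or $\ch\F=5$.

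The fix is to take $(a_0-a_1)^2$ itself as the fifth generator. Then the only orthogonality you need is $(a_i-a_j,a_k)=0$, which amounts to $1-3c=0$. Also, $(a_0-a_1)^2$ lies in the radical automatically, because the radical is an ideal. $U$ is $5$-dimensional because $(a_0-a_1)^2\equiv -2cw \pmod D$ when $1-3c=0$.

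For the same reason, substituting into \eqref{defs} gives $s_{\tz,1}=\tfrac{5\al-1}{32}\bigl(w-\sum_i a_i\bigr)$, so (b) needs parentheses.

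Two small logical points remain. First, ``codimension $1$ since $a_0$ is not in it'' only gives codimension at least $1$; equality comes from $\dim U=5$. Second, you do not need the radical to be invariant under automorphisms. The $\langle f,\Miy(\V)\rangle$-orbit of $a_0-a_1$ consists of differences $a_i-a_j$, which already lie in the radical.
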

\begin{proof}
Since $\lmu=\tfrac{(a_0,a_1)}{(a_0,a_0)}$ and by Equation~\eqref{defs} on page~\pageref{defs}, by Table~\ref{table5A} we get \ref{sub5A_1} and \ref{sub5A_2}.  Since $a_0-a_1=a_5-a_1=a_5-a_3+a_3-a_1$, claim \ref{sub5A_3} holds.

By Table~\ref{table5A} and part \ref{sub5A_2}, 
$$
(a_0-a_1)^2=-\tfrac{5}{4}(a_0+a_1)-2s_{0,1}
$$
and 
\[
(a_1-a_0)(a_{-2}-a_2)=-\tfrac{1}{16}(5\al-1)(a_{-2}+a_{-1}+a_0+a_1+a_2)-2s_{0,1}.
\]
It follows that the six vectors 
\[
a_{-2}-a_{-1}, \quad a_{-1}-a_0,\quad  a_0-a_1, \quad a_1-a_2, \quad (a_0-a_1)^2, (a_1-a_0)(a_{-2}-a_2)
\]
are linearly independent unless either $\F$ has characteristic $5$ or $\al=\tfrac{7}{3}$.
In these two cases, the subspace they generate is of dimension $5$, coincides with the radical of the Frobenius form and is therefore equal to $V^\ast$. 
\end{proof}

 \begin{lemma}
     \label{subA6}
 Let $\varepsilon\in\{-1,1\}$ and
 \[
 \V\in \left \{6\A\left (\al, -\tfrac{\al^2}{4(2\al-1)}\right ),  6\A\left (\tfrac{2}{3}, -\tfrac{1}{3}\right )^\times, 6\A\left (\tfrac{1+\varepsilon\sqrt{97}}{24}, \tfrac{53+\varepsilon5\sqrt{97}}{192}\right )^\times \right \}.
 \]
 Then, the following assertions hold:
 \begin{enumerate}
     \item $\lmu=\tfrac{\al^2(2-3\al)}{16(2\al-1)^2}$; \label{sub6A_1}
     \item  $s_{\tz,1}=\tfrac{-\al^2}{8(2\al-1)}((c+z)-(a_{-2}+a_{-1}+a_0+a_1+a_2+a_3))$; \label{sub6A_2}
     \item  $\dim(V^\ast)\geq 6$; \label{sub6A_3}
     \item  $\dim(V^{\ast\ast})\geq 5$. \label{sub6A_4}
 \end{enumerate}
 \end{lemma}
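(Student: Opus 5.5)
I will follow the pattern of Lemmas~\ref{sub3A}--\ref{sub5A}, working with the basis and multiplication of Table~\ref{table6A} for $6\A(\al,-\tfrac{\al^2}{4(2\al-1)})$. The basis is $a_{-2},a_{-1},a_0,a_1,a_2,a_3,c,z$ (eight elements), with $c$ coming from the even and odd $3\A$-type subalgebras and $z$ from the $2\A$-type part. The three quotients $6\A(\cdot)^\times$ are handled afterwards by passing to the quotient.

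For part~\ref{sub6A_1}, the algebra carries a Frobenius form, so $\lmu=\tfrac{(a_0,a_1)}{(a_0,a_0)}$, and I read these two values off Table~\ref{table6A}. For part~\ref{sub6A_2}, I expand $s_{\tz,1}=a_0a_1-\bt(a_0+a_1)$ from Equation~\eqref{defs} using the product $a_0a_1$ in the table, and substitute $\bt=-\tfrac{\al^2}{4(2\al-1)}$. Both steps are direct substitutions. For the quotients, parts~\ref{sub6A_1} and~\ref{sub6A_2} persist by Lemma~\ref{quotient}, since the projections and the products are inherited.

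For parts~\ref{sub6A_3} and~\ref{sub6A_4}, the plan is to exhibit enough explicitly linearly independent vectors.
\begin{itemize}
\item[(c)] $V^\ast$ contains all differences $a_i-a_{i+1}$. These give the five independent vectors $a_{-2}-a_{-1},\dots,a_2-a_3$, since $a_{-2},\dots,a_3$ are independent in the $6$-dimensional axial span. I add one product, say $(a_0-a_1)^2=a_0+a_1-2a_0a_1$, which by part~\ref{sub6A_2} involves $c+z$. Its coefficient sum on the $a_i$ together with the $c+z$ component shows it lies outside the span of the differences, which consists of vectors with coefficient sum $0$ and no $c,z$ component. This gives $\dim V^\ast\geq 6$, provided the coefficient of $c+z$ is nonzero; that coefficient is $\tfrac{\al^2}{4(2\al-1)}\neq0$.
\item[(d)] $V^{\ast\ast}$ contains $a_i-a_{i+2}$, which span a $4$-dimensional space (two independent vectors in each parity class). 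I add $(a_0-a_2)^2$, computed inside the even subalgebra, which is of type $3\A$ and involves $c$. The same coefficient-sum / $c$-component argument gives a fifth independent vector, so $\dim V^{\ast\ast}\geq 5$.
\end{itemize}

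The main obstacle is the quotients. There the kernel, spanned by some combination of $c$, $z$ and $\sum a_i$, could collapse these vectors. For $6\A(\tfrac23,-\tfrac13)^\times$ and the two $\sqrt{97}$ cases, I will check from the explicit generator of the ideal given in Table~\ref{table6A} that the ideal is $1$-dimensional and meets neither span. It suffices to show the ideal is not contained in $\langle a_i-a_j\rangle+\F w$, where $w$ is the added product, and this reduces to a $2\times 2$ determinant in the specific $\al$. If that check fails in some quotient, I will instead use a second product such as $(a_0-a_1)(a_1-a_2)$ to recover the missing dimension.
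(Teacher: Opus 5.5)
Your proposal is correct and follows essentially the paper's route: exhibit the independent differences and add one square. The paper uses the single vector $(a_0-a_2)^2\in V^{\ast\ast}\subseteq V^\ast$ for both (c) and (d), and dismisses the quotients as similar; your check against the one-dimensional ideal does go through there.

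One slip to fix: you have swapped the roles of $c$ and $z$. Here $c$ is the third axis of the $3\C(\al)$-subalgebras $\lla a_i,a_{i+3}\rra$, and $z$ lies in the even and odd $3\A$-subalgebras. Hence $(a_0-a_2)^2=(1-\tfrac{\al}{2})(a_0+a_2)-\tfrac{\al(3\al-1)}{2(2\al-1)}a_4+\tfrac{\al(5\al-2)}{4(2\al-1)}z$ has no $c$-component. In (d) you must therefore use its $z$-coefficient together with its coefficient sum $-\tfrac{(7\al-4)(\al-1)}{2(2\al-1)}$. These never vanish simultaneously: the first vanishes only at $\al=\tfrac{2}{5}$ and the second only at $\al=\tfrac{4}{7}$.
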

 \begin{proof}
  Assume $\V=6\A\left (\al, -\tfrac{\al^2}{4(2\al-1)}\right )$.  Parts \ref{sub6A_1} and \ref{sub6A_2} follow by Table~\ref{table6A}, similarly as in the previous lemmas. 
By Table~\ref{table6A}, 
\[
(a_0-a_2)^2=- \tfrac{2\al(3\al-1)}{4(2\al-1)}a_{-2}-  \tfrac{1}{2}(\al-2)(a_0+a_2)+ \tfrac{2\al(5\al-2)}{8(2\al-1)}z.
\]
Since the six vectors 
\[
a_{-2}-a_{0},\;\; a_{-1}-a_{1},\;\; a_{0}-a_2,\;\; a_1-a_3,\;\; a_0-a_1,\;\; (a_0-a_2)^2
\]
are linearly independent and all but $a_0-a_1$ lie in $V^{\ast\ast}$, we get \ref{sub6A_3} and \ref{sub6A_4}.
If $\V\in\{ 6\A\left (\tfrac{2}{3}, -\tfrac{1}{3}\right )^\times, \;\; 6\A\left (\tfrac{1+\varepsilon\sqrt{97}}{24}, \tfrac{53+\varepsilon 5\sqrt{97}}{192}\right )^\times\}$, the proof is similar. 
 \end{proof}

\begin{lemma}\label{sub6Y}
Let $\V \in \{ 6\Y(\frac{1}{2}, 2), 6\Y(\frac{1}{2}, 2)^\times \}$.  Then, $\V^{\ast\ast} = \la a_0-a_2, a_0-a_{-2} \ra$.
\end{lemma}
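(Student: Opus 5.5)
The plan is to follow the pattern of Lemmas~\ref{sub4B} and~\ref{sub4Y}: produce explicit vectors in the span of $(a_0-a_2)^\tau$, and then show that span is closed under multiplication and invariant under $\langle f,\Miy(\V)\rangle$.

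First, the axet of $6\Y(\tfrac12,2)$ is $X(6)$, so the indices of the $a_i$ are taken in $\Z_6$. The generators of $V^{\ast\ast}$ are the images of $a_0-a_2$ under $\langle f,\Miy(\V)\rangle$. By Lemma~\ref{translation}, $\tau_0 f$ acts as $a_i\mapsto a_{i+1}$, and $\tau_0$ acts as $a_i\mapsto a_{-i}$. Hence these images are, up to sign, the vectors $a_i-a_{i+2}$ for $i\in\Z_6$.

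Next I reduce this set to two vectors. The three even differences satisfy
\[
(a_0-a_2)+(a_2-a_4)+(a_4-a_0)=0,
\]
and the odd differences do likewise. To reach the claimed dimension $2$, I need the multiplication table of Table~\ref{table6Y} to give a relation identifying the odd differences with the even ones. The natural candidate comes from the shape of $6\Y$: I expect $a_{i+3}-a_i$ to be independent of $i$ up to sign, or the relation to appear as an identity such as $a_1-a_3=a_4-a_0$. I will read off the expressions of $a_{\pm1}$, $a_3$ in the given basis and check this directly. Combined with Lemma~\ref{invariant}(b), this gives
\[
\la a_i-a_{i+2} : i\in\Z_6\ra=\la a_0-a_2,\ a_0-a_{-2}\ra=:U.
\]

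Then I check that $U$ is a subalgebra. This amounts to computing $(a_0-a_2)^2$, $(a_0-a_{-2})^2$ and $(a_0-a_2)(a_0-a_{-2})$ from Table~\ref{table6Y}. Since $\bt=\tfrac12$, we have $(a_0-a_2)^2=-2s_{\tz,2}$, so it suffices to show $s_{\tz,2}$ and $s_{\tu,2}$ lie in $U$. I expect them to be multiples of differences of axes, or zero; this is the typical nilpotent behaviour in $6\Y$, analogous to $(a_2-a_0)^2=0$ in $\J(-\tfrac12)$. Once $U$ is a subalgebra, the invariance established above gives $V^{\ast\ast}\subseteq U$. The reverse inclusion is clear, and the independence of $a_0-a_2$ and $a_0-a_{-2}$ follows from the basis.

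Finally, for the quotient $6\Y(\tfrac12,2)^\times$, the same relations descend. It only remains to verify that the two vectors stay independent modulo the ideal, which can be read off from its explicit generator.

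The main obstacle is the table computation showing that the odd differences and the products all land in $U$. If $a_1-a_3\notin U$, the statement would fail, so that relation is the key point to extract from Table~\ref{table6Y}.
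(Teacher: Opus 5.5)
Your proposal is correct and follows the paper's proof. The paper also observes that $a_1=a_{-2}+d$, $a_{-1}=a_2+d$ and $a_3=a_0+d$, so the odd differences $a_1-a_3=a_{-2}-a_0$ and $a_1-a_{-1}=a_{-2}-a_2$ already lie in $U=\la a_0-a_2,a_0-a_{-2}\ra$. The table only gives $a_1$; you get $a_{-1}=a_1^{\tau_0}$ and $a_3=a_{-1}^{\tau_1}$ because $\tau_0$ swaps $a_{\pm 2}$ and fixes $d$, while $\tau_1$ swaps $a_0,a_2$ and fixes $a_{-2},d$. The only difference is that the paper quotes \cite[Table~25]{MM} for $U$ being an ideal, whereas you check closure from Table~\ref{table6Y}; either works.

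One slip to fix: $6\Y(\tfrac12,2)$ has $\bt=2$, not $\tfrac12$ (see the fifth note to Table~\ref{table6Y}). Hence $s_{\tz,2}=-(a_0+a_2+a_{-2})\notin U$, and $(a_0-a_2)^2\neq -2s_{\tz,2}$, so your reduction to showing $s_{\tz,2},s_{\tu,2}\in U$ would fail as written. Drop it and compute the three products directly:
\[
(a_0-a_2)^2=2a_{-2}-a_0-a_2,\quad (a_0-a_{-2})^2=2a_2-a_0-a_{-2},\quad (a_0-a_2)(a_0-a_{-2})=a_2+a_{-2}-2a_0 .
\]
All three lie in $U$.
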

\begin{proof}
First suppose that $\V = 6\Y(\frac{1}{2}, 2)$.  By definition, $\V^{\ast\ast} = \lla a_0-a_2, a_0-a_{-2}, a_1-a_3, a_1 - a_{-1} \rra$, however $a_1-a_3 = (a_{-2}+d) - (a_0+d) = -(a_0-a_{-2})$ and similarly, $a_1-a_{-1} = a_{-2}-a_2 \in \la a_0-a_2, a_0-a_{-2} \ra$.  By \cite[Table 25]{MM}, $\la a_0-a_2, a_0-a_{-2} \ra$ is an ideal of $\V$ and so $\V^{\ast\ast} = \la a_0-a_2, a_0-a_{-2} \ra$, as claimed.  Since $6\Y(\frac{1}{2}, 2)^\times$ is a quotient by $\la z \ra$, the above argument applies here too.

\end{proof}

 \begin{lemma}\label{subIY3}
Let $\V\in \{\IY_3(\al, \tfrac{1}{2}; \mu), \quad  \IY_3(-1, \tfrac{1}{2}; \mu)^\times, \quad \IY_3(\al, \tfrac{1}{2}; 1)^\times\}$. Then, the following assertions hold:
\begin{enumerate}
    \item $\lmu=\tfrac{1}{4}\al(1-\mu)+\tfrac{1}{2}(\mu+1)$; \label{subIY3_1}
    \item  for every $i\in \Z$, $a_{i+2}=a_{i-1}-(2\mu+1)( a_i-a_{i+1})$; \label{subIY3_2}
\item  $ V^{\ast\ast}=V^\ast$ if and only if $\mu\neq  -1$; \label{subIY3_3}
\item  if $\mu=-1$, then $ V^{\ast\ast}=\langle a_{-1}-a_1\rangle $; \label{subIY3_4}
\item   $V=V^\ast$ if and only if $\al\neq 2$ and $\mu\neq 1$; \label{subIY3_5}
\item    if $ \al=2$, then $V^\ast $ has basis $ (a_0-a_{-1}, a_0-a_1, s_{0,1})$; \label{subIY3_6}
\item  if  $\mu=1$, then $V^\ast$ has basis either  $(a_0-a_{-1}, a_1-a_0, s_{\tz,1})$ or $(a_0-a_{-1}, a_1-a_0)$, according whether $\V=\IY_3(\al, \tfrac{1}{2}; 1)$ or  $\V= \IY_3(\al, \tfrac{1}{2}; 1)^\times$. \label{subIY3_7}
   \end{enumerate}
\end{lemma}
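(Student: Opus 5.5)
We follow the pattern of Lemmas~\ref{sub3C}, \ref{subJ} and \ref{sub4Y}, working in the basis and multiplication table of $\IY_3(\al,\tfrac{1}{2};\mu)$ given in Table~\ref{tableIY3bis}. The quotients are handled at the end by reduction modulo the relevant ideal.

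\textbf{Parts \ref{subIY3_1} and \ref{subIY3_2}.} For part \ref{subIY3_1} we compute $\lmu=\frac{(a_0,a_1)}{(a_0,a_0)}$ from the Frobenius form in the table. Alternatively, we write $a_1=\lmu a_0+u+v+w$ with $u,v,w$ as in Lemma~\ref{ui} and read off the coefficient. For part \ref{subIY3_2} we find the $\bt=\tfrac12$-eigenvector $w$ of $\ad_{a_0}$ in the decomposition of $a_1$. Applying $\tau_0$ negates $w$, which expresses $a_{-1}=a_1^{\tau_0}$ in the basis. We then rewrite this as $a_{-1}=a_2+(2\mu+1)(a_0-a_1)$, which is the case $i=0$. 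Since $V$ is symmetric, Lemma~\ref{translation} shows that $\tau_0f$ shifts indices by $1$, and this gives the relation for every $i$.

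\textbf{Parts \ref{subIY3_3} and \ref{subIY3_4}.} Part \ref{subIY3_2} gives $a_{i+2}-a_{i-1}=-(2\mu+1)(a_i-a_{i+1})$. Adding consecutive instances, $(a_{i+2}-a_i)+(a_{i+1}-a_{i-1})$ is a multiple of $a_i-a_{i+1}$, which we expect to be $(1-\mu)$-type. We then compute $a_2-a_0$ and $a_1-a_{-1}$ as combinations of $a_0-a_1$ and $a_1-a_2$. The aim is to show that $a_0-a_1\in V^{\ast\ast}$ exactly when $\mu\neq-1$. When $\mu=-1$ the recurrence becomes $a_{i+2}-a_{i-1}=a_i-a_{i+1}$, and $a_{i+2}-a_i$ is constant up to sign. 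In that case we show that $\langle a_{-1}-a_1\rangle$ is closed under the Miyamoto group and the flip and is a subalgebra. This needs $(a_{-1}-a_1)^2\in\langle a_{-1}-a_1\rangle$, which we check from the table, as in the $\delta=-\tfrac12$ case of Lemma~\ref{subJ}.

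\textbf{Parts \ref{subIY3_5}--\ref{subIY3_7}.} Here $V$ has dimension $4$, with basis $a_{-1},a_0,a_1$ together with one extra element, essentially $s_{\tz,1}$. The subspace $V^\ast$ contains $a_0-a_{-1}$ and $a_0-a_1$. We compute $(a_0-a_1)^2=-2s_{\tz,1}$ (using $\bt=\tfrac12$) and $(a_0-a_1)(a_0-a_{-1})$, and write them in coordinates. A $4\times4$ determinant then decides when these span $V$, and we expect it to factor as a nonzero constant times $(\al-2)(\mu-1)$. In the degenerate cases $\al=2$ or $\mu=1$, we check that the proposed three-dimensional span is a subalgebra invariant under $\tau_0$ and $f$. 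By Lemma~\ref{translation} it is then invariant under the whole Miyamoto group, so it equals $V^\ast$. For $\IY_3(\al,\tfrac12;1)^\times$ the quotient kills the extra element, which leaves the basis $(a_0-a_{-1},a_1-a_0)$. For $\IY_3(-1,\tfrac12;\mu)^\times$ we pass to the quotient similarly, noting that $\al=-1\ne2$.

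\textbf{Main obstacle.} The main difficulty is the determinant and closure computations in the degenerate cases, especially verifying subalgebra closure when $\al=2$. That check needs the full product $s_{\tz,1}s_{\tz,1}$ from the table.
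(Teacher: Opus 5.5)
Your outline follows the paper for (a), (c) and (d), but the computation you describe for (b) yields nothing. The basis in Table~\ref{tableIY3bis} is $(a_{-1},a_0,a_1,s_{\tz,1})$. By Lemma~\ref{ui}, the $\tfrac12$-component of $a_1$ for $\ad_{a_0}$ is just $\tfrac12(a_1-a_{-1})$. So applying $\tau_0$ returns $a_1^{\tau_0}=a_{-1}$, a vector you already have. No information about $a_2$ appears, and you cannot ``rewrite'' this as $a_{-1}=a_2+(2\mu+1)(a_0-a_1)$.

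The axis missing from the basis is $a_2=a_0^{\tau_1}$, so you must decompose $a_0$ with respect to $\ad_{a_1}$ and apply $\tau_1$, as the paper does. Concretely, $w:=a_{-1}-2(\mu+1)a_0+(2\mu+1)a_1$ is a $\tfrac12$-eigenvector of $\ad_{a_1}$, and the $\tfrac12$-component of $a_0$ turns out to be $-\tfrac12w$. Hence $a_2=a_0+w=a_{-1}-(2\mu+1)(a_0-a_1)$, and only after this does Lemma~\ref{translation} propagate the relation to all $i$.

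There is also a slip in (c): the correct identity is $(a_{i+2}-a_i)+(a_{i+1}-a_{i-1})=-(2\mu+2)(a_i-a_{i+1})$. It is the factor $2\mu+2$, not anything of $(1-\mu)$-type, that gives the condition $\mu\neq-1$.

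Second, the $4\times4$ determinant you propose for (e) is identically zero, so it cannot decide anything. By the table, $(a_0-a_1)^2=-2s_{\tz,1}$ and $(a_0-a_1)(a_0-a_{-1})=2\mu s_{\tz,1}$. Both products lie on $\F s_{\tz,1}$, so your four vectors span at most $W:=\langle a_0-a_{-1},a_0-a_1,s_{\tz,1}\rangle$. Moreover $(a_0-a_{\pm1})^2=-2s_{\tz,1}$ and $(a_0-a_{\pm1})s_{\tz,1}\in\F(a_0-a_{\pm1})$. Hence the only product of elements of $W$ that can leave $W$ is $s_{\tz,1}^2$. Its component $\tfrac{(2\al-1)(\al-1)(\al-2)}{16}(\mu-1)(a_{-1}+a_1-2\mu a_0)$ is congruent modulo $W$ to $-\tfrac{(2\al-1)(\al-1)(\al-2)}{8}(\mu-1)^2a_0$, and $a_0\notin W$.

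So the correct test, as in the paper, is the linear independence of $a_0-a_{-1}$, $a_0-a_1$, $s_{\tz,1}$ and $s_{\tz,1}^2$. This holds iff $(\mu-1)^2(\al-1)(2\al-1)(\al-2)\neq0$, and the factor $(\al-1)(2\al-1)$ is automatically nonzero since $\bt=\tfrac12$.

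With these two corrections, your treatment of the degenerate cases is a valid substitute for the paper's inductive argument that $\lla a_0-a_{-1},a_0-a_1\rra$ contains every $a_{i+2}-a_{i+1}$. That is, you show $W$ is closed under products and invariant under $\tau_0$ and $f$, using $a_1-a_2\in W$ from (b).

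One small inaccuracy remains. In $\IY_3(\al,\tfrac12;1)^\times$ the quotient by $\F q$ does not kill $s_{\tz,1}$. It identifies it with $\tfrac{2\al-1}{4}(a_{-1}+a_1-2a_0)\in\langle a_0-a_{-1},a_1-a_0\rangle$, which is why $V^\ast$ drops to dimension $2$.
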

\begin{proof}
Since $\lmu=\tfrac{(a_0, a_1)}{(a_0, a_0)}$, part \ref{subIY3_1} follows by Table~\ref{tableIY3bis}.
Suppose first $\V= \IY_3(\al, \tfrac{1}{2}; \mu)$.  With the notation of Table~\ref{tableIY3bis},  let 
 \begin{align*}
u_1 &:= -2(\al-1)a_{-1}+4(2\al-1)\mu a_0+\left (2\al(\al+1)(1-\mu)-2(2\al-1) \right )a_1+8s_{\tz,1},\\
v_1 &:= 2 a_{-1}-4\mu a_0+2(\al\mu-\al+1) a_1+8s_{\tz,1},\\
w_1 &:= a_{-1}-2(\mu+1)a_0+(2\mu+1)a_1.
\end{align*}
Using Table~\ref{tableIY3bis}, we get that $u_1$, $v_1$ and $w_1$, are eigenvectors for $\ad_{a_1}$ relative to the eigenvalues $0$, $\al$, $\tfrac{1}{2}$, respectively. Moreover, %
\[
a_0=-\tfrac{1}{8\al}u_1+\tfrac{1}{2}w_1+\tfrac{1}{8\al}v_1+\left (\tfrac{1}{4}\al(1-\mu)+\tfrac{1}{2}(\mu+1)\right )a_1.
\]
So applying $\tau_1$ to the above, we get
\[
a_2=a_0^{\tau_1}=-\tfrac{1}{8\al}u_1-\tfrac{1}{2}w_1+\tfrac{1}{\al}v_1+\left (\tfrac{1}{4}\al(1-\mu)+\tfrac{1}{2}(\mu+1)\right )a_1=a_{-1}-(2\mu+1)(a_0-a_1).
\]
Since $\V$ is symmetric, part \ref{subIY3_2} follows by Lemma~\ref{translation}. 

By \ref{subIY3_2}, for every $i\in \Z$, the following equality holds:
\begin{align*}
(2\mu+2)(a_i-a_{i+1}) &= a_i-(a_{i-1}-(2\mu+1)( a_i-a_{i+1}))-(a_{i+1}-a_{i-1})\\
&= a_{i}-a_{i+2}-(a_{i+1}-a_{i-1})\in V^{\ast\ast} ,
\end{align*}
giving \ref{subIY3_3}.
Suppose $\mu=-1$. A straightforward computation gives $(a_1-a_{-1})^2=0$. Since by part \ref{subIY3_2}, $a_{i+2}-a_{i}=a_{i-1}-a_{i+1}$ for every $i\in \Z$, 
$\langle a_{-1}-a_1\rangle$ is a subalgebra of $V$ and it is invariant under the Miyamoto group, whence \ref{subIY3_4} follows.

Let
$
U^\ast:=\lla a_0-a_{-1}, a_0-a_1 \rra .
$
Clearly $U^\ast \subseteq V^\ast$. We claim that, for every $i\in \Z$, 
\begin{equation}\label{indN}
a_{i+2}-a_{i+1}\in U^\ast ,
\end{equation}
whence $U^\ast=V^\ast$.
Since $\langle a_0-a_{-1}, a_0-a_1\rangle$ is invariant under $\tau_0$, we only need to prove Equation~\eqref{indN} for every $i\in \N$. 
 This follows by induction on \ref{subIY3_1}, since, by \ref{subIY3_2}, 
$$
a_{i+2}-a_{i+1}=-2\mu(a_i-a_{i+1})-(a_i-a_{i-1}).
$$
A direct check shows that the three vectors $a_0-a_{-1}$, $a_0-a_1$, and $s_{\tz,1}$ are linearly independent and
\[
V^\ast=\langle a_0-a_{-1}, a_0-a_1, s_{0,1}, (s_{0,1})^2 \rangle .
\]
On the other hand, since the four vectors $a_0-a_{-1}$, $a_0-a_1$, $s_{0,1}$, and $(s_{0,1})^2$, are linearly independent if and only if
\[
(\mu-1)^2(\al-1)(2\al-1)(\al-2)\neq0,  
\]
 we get \ref{subIY3_5}, 
and
 \[
 \mbox{ if } \al=2 \mbox{ or  } \mu=1, \quad  \mbox{ then } \quad V^\ast \mbox{ has basis } (a_0-a_{-1}, a_0-a_1, s_{0,1}).
 \]
 This gives \ref{subIY3_6} and \ref{subIY3_7} in the case $\V=\IY_3(\al, \tfrac{1}{2}; \mu)$. The proofs in the cases $\V= \IY_3(-1, \tfrac{1}{2}; \mu)^\times$ and $\V= \IY_3(\al, \tfrac{1}{2}; 1)^\times $ are similar. 
\end{proof}

 \begin{lemma}
     \label{subIY5}
    Let $\V\in \{\IY_5(\al, \tfrac{1}{2}), \IY_5(\al, \tfrac{1}{2})^\times \}$. Then, the following assertions hold:
    \begin{enumerate}
        \item  $\lmu=1$; \label{subIY5_1}
        \item  for every $i\in \Z$, $a_{i+5}=a_i+5(a_{i+4}-a_{i+1})-10(a_{i+3}-a_{i+2})$; \label{subIY5_2}
        \item $V^{\ast\ast}=V^\ast=\langle a_{-2}-a_{-1}, a_{-1}-a_0, a_0-a_1, a_1-a_2, s_{0,1}\rangle$. \label{subIY5_3}
    \end{enumerate}
 \end{lemma}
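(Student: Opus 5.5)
The plan mirrors the proof of Lemma~\ref{subIY3}, using the basis and multiplication of Table~\ref{tableIY5}.

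\emph{Part~\ref{subIY5_1}.} As in the earlier lemmas, $\lmu=\tfrac{(a_0,a_1)}{(a_0,a_0)}$, so it is read off from the Frobenius form values in Table~\ref{tableIY5}. To double-check, I would also write $a_1=\lambda a_0+u+v+w$ with $u$, $v$, $w$ explicit $0$-, $\al$-, $\tfrac12$-eigenvectors of $\ad_{a_0}$, built as in Equation~\eqref{defui}. Uniqueness of this decomposition then gives $\lambda=1$.

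\emph{Part~\ref{subIY5_2}.} I compute $a_{-1}=a_1^{\tau_0}=\lambda a_0+u+v-w$ in the table's basis, and similarly $a_2=a_0^{\tau_1}$. I then iterate with $\rho$ to express $a_{-2},a_3,\dots$. The relation to verify is that $(a_i)$ satisfies the linear recurrence with characteristic polynomial $(x-1)^5$. In the table the basis is presumably $a_{-2},\dots,a_2$ together with $s_{\tz,1}$ (or similar), and the Miyamoto images are polynomial in $i$ of degree at most $4$. Showing that one fifth difference vanishes, say $a_3-a_{-2}-5(a_2-a_{-1})+10(a_1-a_0)=0$, suffices. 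Indeed, the relation is invariant under $\rho$ (shift by $2$) and under $\tau_0 f$ (shift by $1$, Lemma~\ref{translation}), so it propagates to every $i$.

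\emph{Part~\ref{subIY5_3}.} Set $U:=\langle a_{-2}-a_{-1},a_{-1}-a_0,a_0-a_1,a_1-a_2,s_{0,1}\rangle$. First I show $V^\ast\subseteq U$ by proving that $U$ is a subalgebra invariant under $\tau_0$ and $f$, hence (Lemma~\ref{translation}) under $\langle f,\Miy(\V)\rangle$. Closure of the spanning differences under shifts follows from part~\ref{subIY5_2}: $a_{i+5}-a_{i+4}$ is a combination of earlier consecutive differences. This works because $(x-1)^5=(x-1)\cdot(x-1)^4$, so consecutive differences obey a fourth-order recurrence. Closure under multiplication is a table check on products of the five generators. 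This check is the main computational obstacle; I expect $U$ to be the codimension-one ideal, perhaps the radical-type complement of the identity or of $a_0+\dots$.

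Conversely, $U\subseteq V^{\ast\ast}$. The elements $a_i-a_{i+2}$ lie in $V^{\ast\ast}$, and from the recurrence the consecutive differences $a_i-a_{i+1}$ are combinations of the $a_j-a_{j+2}$. Concretely, the sequence $d_i=a_i-a_{i+1}$ is polynomial in $i$, so $d_i+d_{i+1}$ determines $d_i$ over characteristic $\neq 2$. Then $s_{0,1}=-\tfrac12(a_0-a_1)^2\in V^\ast$, since $\bt=\tfrac12$. To place it in $V^{\ast\ast}$, I express it via $(a_0-a_2)^2$ and $(a_{-1}-a_1)^2$ modulo differences, using the table.

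Combining $U\subseteq V^{\ast\ast}\subseteq V^\ast\subseteq U$ gives equality. The quotient $\IY_5(\al,\tfrac12)^\times$ follows by passing to the quotient, as in the earlier lemmas.
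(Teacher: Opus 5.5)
Your proposal is correct. For parts (a) and (b) it follows the paper. The paper reads $\lmu=1$ off the form. For (b) it computes one genuinely new Miyamoto image, $a_3=a_1^{\tau_2}$, from explicit $0$-, $\al$- and $\tfrac12$-eigenvectors of $\ad_{a_2}$, and then propagates the relation using the flip and Lemma~\ref{translation}, as you propose. Your computations of $a_{-1}=a_1^{\tau_0}$ and $a_2=a_0^{\tau_1}$ are vacuous, since these are already basis vectors. The actual work is one image such as $a_1^{\tau_2}$ or $a_{-1}^{\tau_1}$, and the ``polynomial of degree at most $4$'' remark is what is being proved, not an input.

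For part (c) your route genuinely differs. The paper checks directly that $V^\ast$ is the radical of the form. It then gets $\dim V^{\ast\ast}\geq 5$ by computing $(a_0-a_2)^2=\tfrac12(a_{-2}+a_2)-2(a_{-1}+a_1)+3a_0-8s_{\tz,1}$ and verifying that this is linearly independent from $a_{-2}-a_0$, $a_{-1}-a_1$, $a_0-a_2$, $a_1-a_3$.

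You avoid products altogether. Let $E$ be the shift $a_i\mapsto a_{i+1}$. Part (b) says $(E-1)^4$ kills $d_i=a_i-a_{i+1}$. So $E+1=2+(E-1)$ is invertible on such sequences, with inverse a polynomial in $E$. Hence every $d_i$ is a combination of the elements $a_j-a_{j+2}=d_j+d_{j+1}$ of $V^{\ast\ast}$, which already gives $V^\ast\subseteq V^{\ast\ast}$.

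Two of your steps become unnecessary. Your separate placement of $s_{\tz,1}$ in $V^{\ast\ast}$ is redundant, because $s_{\tz,1}=-\tfrac12(a_0-a_1)^2$ and $V^{\ast\ast}$ is a subalgebra. The ``main computational obstacle'' for $V^\ast\subseteq U$ also disappears. In Table~\ref{tableIY5} one has $(a_i,a_j)=1$ and $(a_i,s_{\tz,1})=(s_{\tz,1},s_{\tz,1})=0$, so $U$ is exactly the radical of the invariant form, hence an ideal.

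Your version buys a structural reason for $V^{\ast\ast}=V^\ast$ that needs only (b) and $\ch(\F)\neq 2$. It is the same mechanism the paper itself uses in Lemma~\ref{subIY3}\ref{subIY3_3}, where the identity $(2\mu+2)(a_i-a_{i+1})\in V^{\ast\ast}$ plays the role of inverting $E+1$. The paper's computation for $\IY_5$ is more ad hoc, but it exhibits the five independent vectors explicitly.
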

 \begin{proof}
 Part \ref{subIY5_1} follows by Table~\ref{tableIY5}, since $\lmu=\tfrac{(a_0,a_1)}{(a_0,a_0)}=1$.
 
Let $\V=\IY_5(\al, \tfrac{1}{2})$ and let 
\begin{align*}
u &:= \tfrac{2\al-1}{4\al}\left( a_{-2} -5a_{-1} + 10 a_0 -9 a_1 +3 a_2 \right) -s_{0,1},\\
v &:= \tfrac{1}{4\al}(a_{-2}-5a_{-1}+10a_0-9a_1+3a_2)+s_{0,1},\\
w &:= -a_{-2}+5(a_{-1}-a_2)-10a_0+11a_1.
\end{align*}
A direct check shows that $u$, $v$, and $w$ are, respectively, $0$-, $\al$, and $\bt$-eigenvectors for $\ad_{a_2}$ and 
$$
a_1=a_2+u+v+\tfrac{1}{2}w.
$$
By the definition of $\tau_2$, we get
$$
a_3=a_1^{\tau_2}=a_2+u+v-\tfrac{1}{2}w=a_{-2}+5(a_2-a_{-1})-10(a_1-a_0).
$$
Since $V$ is symmetric, \ref{subIY5_2} follows by Lemma~\ref{translation}.

By Table~\ref{tableIY5}, a direct check shows that $V^\ast$ is equal to the radical of the form $\langle a_{-2}-a_{-1}, a_{-1}-a_0, a_0-a_1, a_1-a_2, s_{0,1}\rangle$. Moreover, 
\[
(a_0-a_2)^2=\tfrac{1}{2}(a_{-2}+a_2)-2(a_{-1}+a_1)+3a_0-8s_{0,1},
\]
whence the five vectors of $V^{\ast\ast}$, $a_{-2}-a_0$, $a_{-1}-a_1$, $a_0-a_2$, $a_1-a_3$, $(a_0-a_2)^2$ are linearly independent. Hence $\dim(V^{\ast\ast})=\dim(V^\ast)$ and \ref{subIY5_3} follows.

If $\V=\IY_5(\al, \tfrac{1}{2})^\times$, the result follows by the definition of $\IY_5(\al, \tfrac{1}{2})^\times$. 
 \end{proof}


 \begin{lemma}\label{lambdasut}
 Let $\V=(V, \{a_0, a_1\})$ be a symmetric $2$-generated $\mathcal{M}(2,\tfrac{1}{2})$-axial algebra. Then, $\lmu=1$ if and only if either $\V$ is isomorphic to a quotient of $\mathcal H$, or $\F$ has characteristic $5$ and $\V$ is isomorphic to a quotient of $\hatH$. In particular, $\lm_i=1$ for every $i\in \Z$.    
\end{lemma}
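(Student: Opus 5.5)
The statement has two directions and a final clause, and I would handle them separately, relying throughout on the Classification Theorem of the Symmetric Algebras.

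\emph{The ``if'' direction.} Suppose $\V$ is a quotient of $\mathcal H$ or of $\hatH$. The projection $\lambda_{a_0}(a_i)$ can be read off from Table~\ref{tablehatH}; for instance, via the Frobenius form we have $\lm_i=\tfrac{(a_0,a_i)}{(a_0,a_0)}$, and in $\mathcal H$ and $\hatH$ all axes have length $1$ and $(a_0,a_i)=1$. This gives $\lm_i=1$ for every $i$. By Lemma~\ref{quotient}, the values $\lm_i$ are preserved under passing to quotients by ideals not containing the axes. So $\lmu=1$, and the ``in particular'' clause holds for these algebras.

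\emph{The ``only if'' direction.} Assume $(\al,\bt)=(2,\tfrac12)$ and $\lmu=1$. By the Classification Theorem, $\V$ is a quotient of one of the following:
\begin{itemize}
\item an algebra of Jordan type $2$ or $\tfrac12$;
\item a member of the twelve families with parameters specialised at $(2,\tfrac12)$;
\item $\mathcal H$ or $\hatH$.
\end{itemize}
I would go through the list and use the formulas for $\lmu$ from Lemmas~\ref{sub2B}--\ref{subIY5} to exclude every case other than $\mathcal H$ and $\hatH$.
\begin{itemize}
\item Jordan type $\eta=\bt=\tfrac12$: Lemma~\ref{subJ} gives $\lmu=2\delta+1$, so $\delta=0$, and $\J(0)$ and its quotient must be identified with quotients of $\mathcal H$. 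I expect $\J(0)$ to be a known quotient of $\mathcal H$, as in~\cite{HWQ}. Lemma~\ref{sub3C} gives $\lmu=\tfrac14\ne1$ in characteristic not $3$; in characteristic $3$ the case needs a separate check. For $2\B$ we get $\lmu=0$, and $1\A$ is degenerate.
\item Jordan type $\eta=\al=2$: here $\tau_0$ is trivial, and the projections are computed similarly.
\item $3\A(2,\tfrac12)$: $\lmu=\tfrac{12+3-2-1}{12}=1$. This case must therefore be recognised as a quotient of $\mathcal H$, presumably the $3$-dimensional quotient.
\item $4\Y(\al,\tfrac{1-\al^2}{2})$ requires $\bt=-\tfrac32$, which does not match $\tfrac12$ unless the characteristic forces it.
\item $4\Y(\tfrac12,\bt)$ requires $\al=\tfrac12$, which does not match.
\item $4\B$ requires $\bt=\al^2/2=2\ne\tfrac12$, except in special characteristic.
\item $\IY_3(2,\tfrac12;\mu)$: $\lmu=\tfrac12(1-\mu)+\tfrac12(\mu+1)=1$. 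So $\IY_3(2,\tfrac12;\mu)$ and its quotients must be shown to be quotients of $\mathcal H$ (or $\hatH$).
\item $\IY_5(2,\tfrac12)$: $\lmu=1$, so it too must be identified as a quotient of $\mathcal H$.
\end{itemize}
For these identifications I would cite~\cite{HWQ}, where the quotients of $\mathcal H$ and $\hatH$ are determined, and match multiplication tables using Lemma~\ref{subIY3}\ref{subIY3_2} and Lemma~\ref{subIY5}\ref{subIY5_2}.

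\emph{The final clause.} Once $\V$ is known to be a quotient of $\mathcal H$ or $\hatH$, the claim $\lm_i=1$ for all $i$ follows from the ``if'' computation above.

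\emph{Main obstacle.} The hardest part is bookkeeping in small characteristics, where parameter coincidences such as $\tfrac{\al^2}{2}=\tfrac12$ or $\tfrac{1-\al^2}{2}=\tfrac12$ can occur. Each family must be checked for whether its parameter constraints are compatible with $(2,\tfrac12)$ in some characteristic. I would start with characteristics $3$ and $5$, where the most coincidences arise, and in each surviving case either compute $\lmu\neq1$ from the tables or exhibit the algebra as a quotient of $\mathcal H$ or $\hatH$.
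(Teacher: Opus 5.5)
Your approach is the paper's: specialise the Classification Theorem at $(\al,\bt)=(2,\tfrac12)$, discard every family with $\lmu\neq1$ using the $\lmu$-formulas of Chapter~\ref{known}, and recognise the survivors as quotients of $\hatH$. The paper does the recognition in one stroke by citing \cite[Theorem~11.2]{HW}. Your ``if'' direction, via $\lm_i=(a_0,a_i)/(a_0,a_0)=1$ and Lemma~\ref{quotient}, is fine.

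The gap is in the enumeration, which is where the whole content of this lemma lies; your list is incomplete. You never consider $4\A(\tfrac14,\bt)$. It specialises to $(2,\tfrac12)$ exactly when $\ch(\F)=7$ (there $\tfrac14=2$), and it must be excluded by Lemma~\ref{sub4A}, which gives $\lmu=\bt=\tfrac12\neq1$. Apart from $\J(\delta)$ with $\delta\neq0$, this is the only exclusion the paper actually has to make. You also omit $5\A(\al,\tfrac{5\al-1}{8})$ and $6\A(\al,-\tfrac{\al^2}{4(2\al-1)})$. Both give $\bt=\tfrac12$ at $\al=2$ exactly when $\ch(\F)=5$, and there $\lmu=\tfrac{27}{32}=1$ and $\lmu=-\tfrac19=1$ respectively. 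So these are not excluded: they must be recognised as quotients of $\hatH$. For $5\A$ this holds because $5\A(2,\tfrac12)\cong\IY_5(2,\tfrac12)$ in characteristic $5$; for $6\A(2,\tfrac12)$ it is \cite[Theorem~11.2]{HW}.

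Two further cases are left unresolved.
\begin{itemize}
\item For Jordan type $\al=2$ you stop at ``computed similarly''. But $3\C(2)$ has $\lmu=\tfrac{\al}{2}=1$ by Lemma~\ref{sub3C}, so it survives and must be shown to be a quotient of $\hatH$; it is $\hatH/(a_0-a_2)$.
\item $1\A$ is not ``degenerate'' here: $a_1=a_0$ gives $\lmu=1$, so it too must be exhibited as a quotient, for example of $\J(0)$ by $\langle a_0-a_1,s_{\tz,1}\rangle$.
\end{itemize}

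The coincidences you single out ($\tfrac{\al^2}{2}=\tfrac12$, $\tfrac{1-\al^2}{2}=\tfrac12$, and characteristic $3$ in general) never arise. At $\al=2$ they force $\ch(\F)\in\{2,3\}$, and in characteristic $3$ one has $2=\tfrac12$, that is $\al=\bt$, which is excluded. The characteristics that matter are $5$ and $7$, not $3$ and $5$.

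Finally, $3\A(2,\tfrac12)$ is the $4$-dimensional quotient $\mathcal H_3\cong\IY_3(2,\tfrac12;-\tfrac12)$, not a $3$-dimensional one. With these cases added and the identifications cited from \cite[Theorem~11.2]{HW}, your argument becomes the paper's.
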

\begin{proof}
Assume first that $\V$ is isomorphic to a quotient of $\hatH$. Then, 
by~\cite[Proposition 3.10]{HW} and~Lemma~\ref{quotient}, $\lm_i=1$ for every $i\in \Z$.

Conversely, let $\V=(V, \{a_0, a_1\})$ be a symmetric $2$-generated $\mathcal{M}(2,\tfrac{1}{2})$-axial algebra and suppose $\lmu=1$. By the Classification Theorem of Symmetric algebras, since $(\al,\bt)=(2,\tfrac{1}{2})$, either $\V$ is isomorphic to a quotient of $\hatH$ and we are done, or one of the following holds:
  \begin{enumerate}
      \item $\V$ is isomorphic to a quotient of $3\C(2)$, $\J(\delta)$, $\IY_3(2, \tfrac{1}{2}; \mu)$, or $\IY_5(2, \tfrac{1}{2})$;
      \item $\F$ has characteristic $5$ and $\V$ is isomorphic to $6\A(2, \tfrac{1}{2})$;
      \item $\F$ has characteristic $7$ and $\V$ is isomorphic to a quotient of $4\A(2, \tfrac{1}{2})$.
  \end{enumerate}
  By~\cite[Theorem~11.2]{HW}, all the above algebras are quotients of $\hatH$, except $\J(\delta)$, if $\delta\neq 0$, and $4\A(2, \tfrac{1}{2})$.
  
If $\V\cong \J(\delta)$, then by Lemma~\ref{subJ}\ref{subJ_1}, $\lmu=2\delta+1\neq 1$ provided $\delta\neq 0$. If $\ch(\F)=7$ and $\V\cong 4\A(2, \tfrac{1}{2})$, then by~Lemma~\ref{sub4A}\ref{sub4A_1}, $\lmu=\bt=\tfrac{1}{2}\neq 1$. 
\end{proof}

In view of Lemma~\ref{supernew}, of particular relevance in this paper are the quotients of $\hatH$ with axial dimension at most $3$. Let $L$ be the ideal of $\hat{\mathcal  H}$ generated by $(a_0-a_1-a_2+a_3)$. Set  $I\mathcal {H}_3:=\hatH/L$, whence 
\begin{equation}\label{modL}
    a_3\equiv -a_0+a_1+a_2\quad \bmod (L).
\end{equation}

\begin{lemma}
    \label{subIH3}
    Let  $\ch(\F)\neq 3$ and let $\V\cong I\mathcal H_3$. Then, the following assertions hold:
    \begin{enumerate}
        \item  $\lmu=1$; \label{subIH3_1}
        \item  for every $i\in \Z$, $a_{i+2}=-a_{i-1}+a_i+a_{i+1}$; \label{subIH3_2}
        \item  $V$ has basis $(a_0, a_1, a_2, s_{\tz,1})$; \label{subIH3_3}
        \item  $V^\ast=\langle a_0-a_1, a_1-a_2, s_{\tz,1}\rangle$; \label{subIH3_4}
        \item  $V^{\ast\ast}=\langle a_0-a_2\rangle$; \label{subIH3_5}
        \item  $\V$ has axet $X(\infty)$ if $\ch(\F)=0$, and $X(2p)$ if $\ch(\F)=p$; \label{subIH3_6}
        \item  suppose $\V/I$ is a proper quotient of $\V$, then $\V/I$ is isomorphic to an algebra in the set $\{1\A, \J(0)^\times, 3\C(2)\}$.  \label{subIH3_7}
    \end{enumerate}
\end{lemma}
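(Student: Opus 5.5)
We work inside $\hatH$, whose structure we take from Table~\ref{tablehatH} and~\cite{HW}: there $\lm_i=1$ for every $i$ (Lemma~\ref{lambdasut}), and the product of two axes is
\[
a_ia_j=\tfrac12(a_i+a_j)+s_{\overline{\min},|i-j|},
\]
with the elements $s_{\tr,n}$ multiplying among themselves and with the axes by the explicit rules of the table. Part~\ref{subIH3_1} then follows at once from Lemma~\ref{quotient}, as soon as we know that $a_0,a_1\notin L$; this we shall obtain from~\ref{subIH3_3}.

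For~\ref{subIH3_2}, take the congruence~\eqref{modL}, $a_3\equiv -a_0+a_1+a_2$, and apply powers of $\rho$ and the flip. Since $L$ is generated by a vector that is fixed, up to sign, by $\langle f,\Miy(\hatH)\rangle$, we get $a_{i+3}=-a_i+a_{i+1}+a_{i+2}$ for every $i$. Note that $\rho$ sends $a_0-a_1-a_2+a_3$ to $a_2-a_3-a_4+a_5$, so we must check that the ideal contains all of these translates. It does, because $L$ is an ideal and is invariant under $\tau_0$ and $\tau_1$; to see this we use that $\tau_0$ and $\tau_1$ send $a_0-a_1-a_2+a_3$ to $a_0-a_{-1}-a_{-2}+a_{-3}$ and to $a_2-a_1-a_0+a_{-1}$, respectively. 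Reindexing the recurrence gives~\ref{subIH3_2}.

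For~\ref{subIH3_3}, the recurrence shows that all axes lie in $\langle a_0,a_1,a_2\rangle$. From it we get $a_{i+2}-a_i=a_{i+1}-a_{i-1}$, so $a_{2k}-a_0=k(a_2-a_0)$ and similarly for the odd axes; in particular $a_{i+3}-a_i=a_2-a_0+a_1-a_{-1}$ is independent of the parity of $i$. We then need to show that all $s_{\tr,n}$ lie in $\langle a_0,a_1,a_2,s_{\tz,1}\rangle$. The plan is to use the identity $s_{\tz,n}=-\tfrac12(a_0-a_n)^2$, which holds since $\bt=\tfrac12$, and expand $a_n$ as $a_0+k(a_2-a_0)$ or $a_1+k(a_2-a_0)$; this expresses $s_{\tz,n}$ through $(a_0-a_2)^2=-2s_{\tz,2}$, $(a_0-a_1)^2=-2s_{\tz,1}$ and the products $(a_0-a_2)(a_0-a_1)$. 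The multiplication table of $\hatH$ should give $s_{\tz,2}$ as a combination of $a_0,a_1,a_2,s_{\tz,1}$ modulo $L$. This reduction is the step where $\ch(\F)\ne3$ presumably enters, as one has to invert a factor $3$ there. The spanning set is then closed under multiplication, and it remains to check that $a_0,a_1,a_2,s_{\tz,1}$ are linearly independent modulo $L$. The main obstacle is exactly this: determining $L$ explicitly, or equivalently exhibiting a $4$-dimensional algebra with the required multiplication that satisfies the axioms. The quicker route is to write down the $4$-dimensional multiplication table directly, verify that it is a quotient of $\hatH$ by checking the defining relations, and conclude that the dimension is exactly $4$.

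With the basis in hand, \ref{subIH3_4} and~\ref{subIH3_5} are computations of the same kind as in Lemmas~\ref{subIY3} and~\ref{subJ}. Since $a_{i+1}-a_{i-1}=a_2-a_0$ for all $i$, we get $V^{\ast\ast}=\langle a_0-a_2\rangle$, once we check that $(a_0-a_2)^2=-2s_{\tz,2}$ lies in $\langle a_0-a_2\rangle$ (presumably it equals $0$). Likewise $V^\ast$ is spanned by $a_0-a_1$, $a_1-a_2$ and their products, and $(a_0-a_1)^2=-2s_{\tz,1}$, which gives a $3$-dimensional subalgebra that is invariant under the Miyamoto group. For~\ref{subIH3_6}, the formula $a_{2k}=a_0+k(a_2-a_0)$ shows that $a_{2k}=a_0$ if and only if $k\cdot1_\F=0$, and $a_{2k}\ne a_1$ by the linear independence in~\ref{subIH3_3}; this gives axet $X(\infty)$ or $X(2p)$. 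Finally, for~\ref{subIH3_7}, any proper nonzero ideal $I$ avoids the axes. Analysing the ideals of the $4$-dimensional algebra (the radical of the Frobenius form, $\langle a_0-a_2\rangle$, and so on), the quotients are $V/\langle a_0-a_2\rangle$, which is $3$-dimensional with $a_2=a_0$ and is identified as $3\C(2)$ or $\J(0)^\times$ according to the image of $s_{\tz,1}$, and the smaller quotients, which yield $1\A$. Here we use Lemmas~\ref{sub3C} and~\ref{subJ} to match the algebras via their $\lmu$ and the relation between $a_{-1}$ and $a_1$.
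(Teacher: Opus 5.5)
There are genuine gaps. The main one is in part~\ref{subIH3_3}: you never show that $a_0,a_1,a_2,s_{\tz,1}$ are linearly independent modulo $L$, i.e.\ that $\dim(\hatH/L)\geq 4$. You name this the main obstacle and stop at a plan, yet everything else depends on it:
part~\ref{subIH3_1} via Lemma~\ref{quotient} needs $a_0,a_1\notin L$; part~\ref{subIH3_6} needs $a_2\neq a_0$ and $a_{2k}\neq a_1$; and parts~\ref{subIH3_4}, \ref{subIH3_5}, \ref{subIH3_7} are computations in a $4$-dimensional algebra. The paper does not compute $L$ at all; it takes the basis from~\cite[Theorem~9.6]{HWQ}.

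Your alternative can work, but its content is exactly the verification you skip. Take $W=\langle a_0,a_1,a_2,s\rangle$ with:
\begin{itemize}
\item $a_i^2=a_i$, $a_{-1}:=a_0+a_1-a_2$ and $a_3:=-a_0+a_1+a_2$;
\item $a_ia_{i+1}=\tfrac12(a_i+a_{i+1})+s$ for $i=0,1$, and $a_0a_2=\tfrac12(a_0+a_2)$;
\item $a_is=-\tfrac34a_i+\tfrac38(a_{i-1}+a_{i+1})+\tfrac32 s$ for $i=0,1,2$, and $s^2=\tfrac32 s$.
\end{itemize}
You would then have to check that $W$ is a primitive $\M(2,\tfrac12)$-axial algebra on $a_0,a_1$, with $a_0^{\tau_1}=a_2$, $a_1^{\tau_0}=a_{-1}$ and $\lmu=\lmf=\lmd=\lmdf=1$, and apply Theorem~\ref{HWthm}. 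The resulting map from $\hatH$ kills $a_0-a_1-a_2+a_3$.

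The spanning half, by contrast, is much easier than your $(a_0-a_n)^2$ chase. Put $u:=a_0-a_1-a_2+a_3$.
\begin{itemize}
\item One has $a_1u-\tfrac12u=s_{\tz,2}$, so $s_{\tz,2}\in L$. This settles your ``presumably'' in part~\ref{subIH3_5}.
\item Then $a_ks_{\tz,2}=-\tfrac34a_k+\tfrac38(a_{k-2}+a_{k+2})+\tfrac32s_{\tz,2}$ gives $a_{k-2}-2a_k+a_{k+2}\in L$ for every $k$. This is where $\ch(\F)\neq 3$ enters.
\item Together with the formula for $a_is_{\tz,1}$ and $s_{\tz,1}^2=\tfrac32s_{\tz,1}-\tfrac38s_{\tz,2}$, this shows that $\langle a_0,a_1,a_2,s_{\tz,1}\rangle$ is closed modulo $L$.
\end{itemize}
Since $a_{k-2}-2a_k+a_{k+2}$ is the sum of two consecutive translates of $u$, this also gives an actual proof of part~\ref{subIH3_2}. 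Your stated reason does not. The vector $u$ is not fixed up to sign by $\langle f,\Miy(\hatH)\rangle$; for instance $u^f=a_1-a_0-a_{-1}+a_{-2}$. Listing $u^{\tau_0}$ and $u^{\tau_1}$ only says what has to lie in $L$, not that it does. The paper instead uses that the quotient is symmetric, together with Lemma~\ref{translation}.

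Part~\ref{subIH3_7} is not argued (``analysing the ideals \ldots\ and so on''), and the outline is inaccurate. $\langle a_0-a_2\rangle$ is a $1$-dimensional ideal, and $V/\langle a_0-a_2\rangle$ is a single $3$-dimensional algebra, namely $3\C(2)$. It cannot be $\J(0)^\times$, which is $2$-dimensional and arises as $V/\langle a_0-2a_1+a_2,\,s_{\tz,1}\rangle$, a quotient by an ideal not containing $a_0-a_2$.

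What is missing is a mechanism forcing every nonzero ideal $I$ into one of these cases, and the paper's is the one to use. For $0\neq u=x_0a_0+x_1a_1+x_2a_2+ts_{\tz,1}\in I$ one has $u-u^{\tau_1}=(x_0-x_2)(a_0-a_2)$. When $x_0=x_2$, part~\ref{subIH3_2} gives $u-u^{\tau_0}=(2x_0+x_1)(a_2-a_0)$. So there are two cases:
\begin{itemize}
\item either $a_0-a_2\in I$, and $\V/I$ is a quotient of $3\C(2)$;
\item or $u\in\langle a_0-2a_1+a_2,\,s_{\tz,1}\rangle$, and multiplying by $a_0$ and by $s_{\tz,1}$ puts both $a_0-2a_1+a_2$ and $s_{\tz,1}$ into $I$, so $\V/I$ is a quotient of $\J(0)^\times$.
\end{itemize}
Neither $3\C(2)$ nor $\J(0)^\times$ is simple. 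In $3\C(2)$ the vectors $xa_0+ya_1+za^\ast$ with $x+y+z=0$ form an ideal, and in $\J(0)^\times$ so does $\langle a_0-a_1\rangle$; in both cases the quotient is $1\A$, which is where $1\A$ in the list comes from.

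Once part~\ref{subIH3_3} is established, your arguments for parts~\ref{subIH3_4}--\ref{subIH3_6} are fine. The formula $a_{2k}=a_0+k(a_2-a_0)$ is a cleaner substitute for the paper's Jordan form of $\tau_0\tau_1$.
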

\begin{proof}
  Part \ref{subIH3_1} follows by Lemma~\ref{lambdasut}.  Since $\V\cong I\mathcal H_3$, by Equation~\eqref{modL},
  \[
  a_3=-a_0+a_1+a_2.
  \]
  By Note~\ref{tablehatH quotients} in Table~\ref{tablehatH},  $I\mathcal H_3$ is symmetric, whence \ref{subIH3_2} follows by Lemma~\ref{translation}. Part \ref{subIH3_3} follows from~\cite[Theorem~9.6]{HWQ}. A direct check gives \ref{subIH3_4} and \ref{subIH3_5}. In order to prove \ref{subIH3_6}, note that, with respect to the basis in \ref{subIH3_3}, $\tau_0\tau_1$ has matrix
   \[
\renewcommand\arraystretch{1.2}
 \begin{pmatrix}
 0 & 0 & 1 & 0\\
 -1 & 1 & 1 & 0\\
 -1 & 0 & 2 & 0\\
 0 & 0 & 0 & 1
 \end{pmatrix}
 , \quad \mbox{whose Jordan form is} \quad 
\begin{pmatrix}
 1 & 1 & 0 & 0\\
 0 & 1 & 0 & 0\\
 0 & 0 & 1 & 0\\
 0 & 0 & 0 & 1
 \end{pmatrix}
\]

  Hence, $|\tau_0\tau_1|=p$, where  $p=\ch(\F)$ and $p=\infty$ if $\ch(\F)=0$, giving \ref{subIH3_6}.

  Let $I$ be a non-trivial ideal of $V$ and let $u=x_0a_0+x_1a_1+x_2a_2+ts_{\tz,1}$ be a non-zero vector in $I$. Suppose first that $x_0=x_1=x_2=0$. Then $s_{\tz,1}\in I$, whence $I$ contains $a_0s_{\ttz, 1}=-\frac{3}{4} a_0 + \frac{3}{8}( a_{-1}+ a_{1}) +\frac{3}{2} s_{\ttr ,1}$. We may therefore assume without loss of generality that $x_0=1$.  Then $I$ contains $u-u^{\tau_1}=(1-x_2)(a_0-a_2)$. If $x_2\neq 1$, then $a_0-a_2\in I$, whence, by~\cite[Lemma~11.3]{HWQ}, $\V/(u)\cong 3\C(2)$ and, since $3\C(2)$ is simple (here  $\ch(\F)\neq 3$), we get \ref{subIH3_7}. Assume $x_2=1$. Since \ref{subIH3_2} implies $a_1-a_{-1}=a_2-a_0$ and $a_2-a_{-2}=2(a_2-a_0)$, we get $u-u^{\tau_0}=x_1(a_1-a_{-1})+x_2(a_2-a_{-2})=(2+x_1)(a_2-a_0)\in I$. As above, \ref{subIH3_7} follows, unless $x_1=-2$. Finally, let $x_1=-2$. If $t=0$, then, by~\cite[Lemma~11.3]{HWQ}, $\V/(u) \cong \J(0)^\times$. If $t\neq 0$, then $I$ contains
  $$
s_{\tz,1}=\tfrac{1}{3t}( us_{\tz,1}+ \tfrac{3}{2}u)
  $$
  and 
  $$
a_0-2a_1+a_2=-\tfrac{8}{3}a_0s_{\tz,1}+4s_{\tz,1},  
  $$
  whence, again, $\V/I $ is isomorphic to a quotient of $\J(0)^\times$.
\end{proof}

\begin{lemma}
    \label{subH}
Let $\ch(\F)\neq 3$ and let $\V$ be a quotient of $\hatH$ with axial dimension $2$ or $3$. Then, one of the following occurs:
 \begin{enumerate}
     \item $\V\cong 3\C(2)$; \label{subH_1}
     \item $\V\cong \J(0)$ or $\J(0)^\times$; \label{subH_2}
     \item $\V\cong \IY_3(2, \tfrac{1}{2}; \mu)$ or $\IY_3(2, \tfrac{1}{2}; 1)^\times$; \label{subH_3}
     \item $\V\cong I\mathcal H_3$. \label{subH_4}
 \end{enumerate}
\end{lemma}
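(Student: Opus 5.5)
The plan is to rely on the structure of $\hatH$ and its ideals, as worked out in \cite{HW, HWQ}, instead of redoing any eigenvector calculations. Let $\V$ be a quotient of $\hatH$ with $\adim(\V)\in\{2,3\}$. Every quotient of $\hatH$ is symmetric, and by Lemma~\ref{lambdasut} we have $\lm_i=1$ for all $i$. Lemma~\ref{lemma:adim} then tells us that $(a_{-1},a_0,a_1)$ spans the linear span of the axes when $\adim(\V)=3$, and $(a_0,a_1)$ spans it when $\adim(\V)=2$.

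\emph{Axial dimension 3.} In this case $a_2$ is a linear combination of $a_{-1},a_0,a_1$. I would apply $\tau_0$ and the flip to this relation. Using $a_i^{\tau_0}=a_{-i}$ and $a_i^f=a_{1-i}$ (Lemma~\ref{translation}) and comparing coefficients in the basis $(a_{-1},a_0,a_1)$, the relation should reduce to one of the form $a_2=a_{-1}+c(a_1-a_0)$ for a scalar $c$. If $c=2\mu+1$, this is exactly the relation of Lemma~\ref{subIY3}\ref{subIY3_2}. The ideal of $\hatH$ generated by $a_{-1}-a_2-(2\mu+1)(a_0-a_1)$ then yields $\IY_3(2,\tfrac12;\mu)$. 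This is where I would cite the description of the ideals of $\hatH$ in \cite{HWQ} (\cite[Theorem~11.2]{HW} identifies $\IY_3$ as a quotient of $\hatH$). The parameter value $c=1$, that is $a_3=-a_0+a_1+a_2$, is the relation \eqref{modL} and gives $I\mathcal H_3$. I expect the two families to meet here: in $I\mathcal H_3$ one has $a_2=a_{-1}+(a_1-a_0)$, which corresponds to $\mu=0$. The remaining task is to show that any further quotient having the same axial dimension is $\IY_3(2,\tfrac12;1)^\times$, $\J(0)$ or $\J(0)^\times$. For $I\mathcal H_3$ this follows from Lemma~\ref{subIH3}\ref{subIH3_7}, which leaves only $1\A$, $\J(0)^\times$ and $3\C(2)$. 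For $\IY_3$ I would use the known list of its ideals \cite{MM}. The hypothesis $\ch(\F)\neq3$ is needed so that Lemma~\ref{subIH3} applies and $3\C(2)$ is simple.

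\emph{Axial dimension 2.} Here $a_2=xa_0+ya_1$, and the same symmetry argument forces $a_{-1}=a_2$, or $a_2-a_0$ to be a multiple of $a_0-a_1$. The first possibility is $3\C(2)$ by Lemma~\ref{sub3C}\ref{sub3C_2}. The second is $\J(\delta)$ with $\lmu=2\delta+1=1$, so $\delta=0$, by Lemma~\ref{subJ}. Quotients of these give $\J(0)^\times$.

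\emph{Main obstacle.} The hardest step is confirming that every quotient of $\hatH$ whose axes satisfy the relation above is exactly one of the listed algebras. In other words, one must show that the ideal of $\hatH$ generated by the linear relation among $a_{-1},\dots,a_2$ already determines the algebra, with no extra non-axial part surviving. I would handle this with the explicit ideal classification of $\hatH$ in \cite[Section~9--11]{HWQ}, checking it against the dimension count in Lemma~\ref{subIH3}\ref{subIH3_3}.
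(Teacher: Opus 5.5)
Your skeleton matches the paper's: find the linear relation $u$ among consecutive axes, identify $\hatH/(u)$ via \cite{HWQ}, then pass to further quotients. You derive the shape of $u$ by a symmetry computation instead of citing \cite[Theorem~9.5]{HWQ}. That computation is done incorrectly, and the case analysis is wrong in both axial dimensions.

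In axial dimension $3$, write $a_2=xa_{-1}+ya_0+za_1$. Applying the flip gives $a_{-1}=xa_2+ya_1+za_0$. Independence of $a_{-1},a_0,a_1$ then forces $x^2=1$ and $z=-xy$, so there are \emph{two} branches.
\begin{itemize}
\item For $x=1$ you get your family $a_2=a_{-1}+c(a_1-a_0)$, which is $\IY_3(2,\tfrac12;\mu)$ with $c=2\mu+1$.
\item For $x=-1$, applying $\lambda_{a_0}$ (all $\lm_i=1$) forces $a_2=-a_{-1}+a_0+a_1$. This is the relation of Lemma~\ref{subIH3}\ref{subIH3_2}, and it is the branch that produces $I\mathcal H_3$. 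You never obtain it.
\end{itemize}
Instead you claim that $c=1$ is \eqref{modL} and that the two families meet at $\mu=0$. Both claims are false. The value $c=1$ gives $a_3=a_0-a_1+a_2$, not $a_3=-a_0+a_1+a_2$. Moreover, if $a_2=a_{-1}+a_1-a_0$ held in $I\mathcal H_3$ together with $a_2=-a_{-1}+a_0+a_1$, subtracting would give $a_{-1}=a_0$. As written, $I\mathcal H_3$ is attached to the wrong ideal, and the palindromic relation ($\varepsilon=+1$ in the paper's notation) is never covered.

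The dimension-$2$ case has the same kind of error. From $a_2=xa_0+ya_1$ and $\tau_1$ you get $x^2=1$ and $y(x+1)=0$. Hence either $a_2=a_0$, or, using $\lambda_{a_0}$, $a_0-2a_1+a_2=0$.
\begin{itemize}
\item The first branch does give $3\C(2)$, but not for your reason. Here $3\C(2)$ is of Jordan type $\al=2$, so its Miyamoto involutions are trivial and $a_2=a_0$, $a_{-1}=a_1$. Lemma~\ref{sub3C} concerns $3\C(\bt)$ and does not apply. The relation $a_{-1}=a_2$ you wrote belongs to $3\A(2,\tfrac12)\cong\IY_3(2,\tfrac12;-\tfrac12)$, which has axial dimension $3$.
\item The second branch gives $\J(0)^\times$ directly, not $\J(0)$. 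In fact $\J(0)$ has axial dimension $3$, since $a_{-1}=2a_0-a_1-4s_{\tz,1}$ and $(a_0,a_1,s_{\tz,1})$ is a basis. It appears instead in the dimension-$3$ case, as a quotient of $\IY_3(2,\tfrac12;1)$ (the value $c=3$).
\end{itemize}
With these corrections your argument becomes the paper's proof. The one difference is that the shape $\sum_i\al_i=0$, $\al_i=\pm\al_{d-i}$ of $u$ is derived from the flip and the projections rather than cited.
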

\begin{proof}
Assume first that $\V$ is maximal with respect to being of axial dimension $d\in \{2,3\}$. Then, by~\cite[Theorem~9.5]{HWQ}, $\V=\hatH/(u)$, where $u=\sum_{i=0}^d \al_i a_i$ with $\al_0\neq 0\neq \al_d$, $\sum_{i=0}^d\al_i=0$, and $\al_i=\varepsilon \al_{d-i}$ for every $i\in \{0,\ldots , d\} $, $\varepsilon=\pm 1$. If $d=2$, then, up to scaling, either $u=a_0-a_2$ or $u=a_0-2a_1+a_2$. By~\cite[Lemma~11.3]{HWQ}, in the former case $\V\cong 3\C(2)$, while in the latter case $\V\cong \J(0)^\times$. Let $d=3$. If $\varepsilon=-1$, then, up to scaling, $u=a_0+\delta a_1-
\delta a_2-a_3$, for some $\delta\in \F$. Then,  by~\cite[Lemma~11.4]{HWQ}, $\V\cong \IY_3(2, \tfrac{1}{2}; \mu)$, with $\delta=-2\mu-1$. If $\varepsilon=1$, then up to scaling $u=a_0-a_1-a_2+a_3$ and \ref{subH_4} holds. 

Suppose now that $\V$ is not maximal with respect to being of axial dimension $d$. Then $\V$ is isomorphic to a quotient of the above four algebras. The algebras $3\C(2)$ and $\J(0)^\times$ are simple. The quotients of $\IY_3(2, \tfrac{1}{2}; \mu)$ with axial dimension $2$ or $3$ are isomorphic to one of the following $\J(0)$, $\J(0)^\times$,  $\IY_3(2, \tfrac{1}{2}; 1)^\times$, or $3\C(2)$ if $\mu=-1$ (see~\cite[Table~28]{HWQ}). 
Finally, by Lemma~\ref{subIH3}, the proper quotients of $I\mathcal H_3$ with  axial dimension $2$ or $3$ are isomorphic to either $\J(0)^\times$ or $3\C(2)$.
   \end{proof}

\section{$2$-generated non-symmetric $\Mab$-axial algebras}
\label{sec:ns}

The known $2$-generated non-symmetric $\Mab$-axial algebras are
\begin{description}
    \item [Table~\ref{tableQ2}] $4\QQ(2\bt,\bt)$ and its quotient $4\QQ(-1,-\tfrac{1}{2})^\times$ for $\bt=-\tfrac{1}{2}$;
    \item [Table~\ref{tableQ2bis}] $3\QQ^\prime(\tfrac{1}{3}, \tfrac{2}{3})$; 
    \item [Table~\ref{table3Cskew}] $3\C^\prime(\eta, 1-\eta)$ for $\eta\not \in \{ 0,1,\tfrac{1}{2}\}$;
    \item [Table~\ref{4Bq}] $4\B(-1, \tfrac{1}{2}; \nu)^\times$ for $\nu\neq \tfrac{1}{2}$.
\end{description}
In Tables~\ref{tableQ2}-\ref{4Bq} in Section~\ref{thetables}, for each algebra,  a basis, the structure constants and the relevant values of the Frobenius form are given. A straightforward computation shows that these are indeed axial algebras of Monster type. They all admit a Frobenius form $(\:, \:)$.

\section{The tables}\label{thetables}

\begin{table}[H] \label{table2}
\nomenclature{$1\A$}{\pageref{table2}}
\nomenclature{$2\B$}{\pageref{table2}}
\nomenclature{$3\C(\eta)$}{\pageref{table2}} 
\nomenclature{$3\C(-1)^\times$}{\pageref{table2}}
\nomenclature{$\J(\delta)$}{\pageref{table2}}
\nomenclature{$\J(0)^\times$}{\pageref{table2}}
\nomenclature{$S(8\delta+2)$}{\pageref{table2}}
\nomenclature{$\hat S(2)^\circ$}{\pageref{table2}}
\nomenclature{$S(2)^\circ$}{\pageref{table2}}

\caption{\Large The algebras $1\A$, $2\B$, $3\C(\eta)$ and $\J(\delta)$}

\bigskip

{\sc Other names}
\begin{enumerate}[enum_note]
\item $\J(\delta)$, for $\delta\neq 0$, is $\mathrm{Cl}^J(\F^2,b)$ in~\cite{HRS1}, and $S(8\delta+2)$ in~\cite{axet, survey}
\item $\J(0)$ is $\mathrm{Cl}^{00}(\F^2,b)$ in~\cite{HRS1}, and $\hat S(2)^\circ$ in ~\cite{axet, survey}
\item $\J(0)^\times$ is $\mathrm{Cl}^0(\F^2,b)$ in~\cite{HRS1}, and $S(2)^\circ$ in ~\cite{axet, survey}
\end{enumerate}

\bigskip

{\tiny \makegapedcells
\[
\begin{array}{|c|c|A|A|}
\hline
\text{Type} & \text{Basis} & \multicolumn{2}{l|}{\text{Products}} & \multicolumn{2}{l|}{\text{Form}}\\
\hline
1\A & a_0 & a_0\cdot a_0 &= a_0 & (a_0,a_0) &=1 \\
 \hline
2\B &\begin{aligned} &a_0,\\ &a_1 \end{aligned} & a_0\cdot a_1 &= 0 & (a_0, a_1) &= 0 \\
 \hline
\multirow{5}{*}{$3\C(\eta)$}
&\multirow{5}{*}{$\begin{aligned} &a_0,\\ &a_1,\\ &a^\ast \end{aligned}$}
& a_0\cdot a_1 &=\tfrac{\eta}{2}(a_0+a_1-a^\ast) & (a_0, a_1) &=\tfrac{\eta}{2}\\
&& a_0\cdot a^\ast &=\tfrac{\eta}{2}(a_0+a^\ast-a_1) & (a_0, a^\ast) &=\tfrac{\eta}{2} \\
&& a_1\cdot a^\ast &=\tfrac{\eta}{2}(a_1+a^\ast-a_0) & (a_1, a^\ast) &=\tfrac{\eta}{2} \\
&& a^\ast\cdot a^\ast &=a^\ast & (a^\ast, a^\ast) &=1 \\
\hline
 3\C(-1)^\times & \begin{aligned} &a_0,\\ &a_1 \end{aligned} & a_0\cdot a_1 &=-a_0-a_1 & (a_0,a_1) &=-\frac{1}{2} \\
 \hline
\multirow{5}{*}{$\J(\delta)$} & \multirow{5}{*}{$\begin{aligned} &a_0,\\ &a_1,\\ &s_{\tz,1} \end{aligned}$}
& & & (a_0,a_1) &= 2\delta+1\\
&& a_0\cdot a_1 &= \tfrac{1}{2}(a_0+a_1)+s_{\tz,1} & (a_0,s_{\tz,1})&=\delta\\
&& u\cdot s_{\tz,1} &= \delta  u, \mbox{ for every } u\in \supp(\J(\delta)) & (a_1,s_{\tz,1})&=\delta\\
&&&& (s_{\tz,1},s_{\tz,1}) &= 2\delta^2 \\
 \hline
 \J(0)^\times & \begin{aligned}
 &a_0,\\
 &a_1 \end{aligned}
& a_0\cdot a_1 &= \frac{1}{2}(a_0+a_1)
& (a_0,a_1) &=1 \\
\hline
\end{array} 
\]
}
 \bigskip
 \bigskip

 \begin{center} {\sc Notes } 
 \end{center}
 \bigskip
 
 \begin{enumerate}[enum_note]
    \item  In each one of the above algebras, for every $i\in \{1,0\}$, $a_i a_i=a_i$ and $(a_i, a_i)=1$.
     \item  $3\C(\eta)$ is of Jordan type $\eta$. The basis vector $a^\ast$ is the image of $a_1$ via the automorphism $\sigma_0$ that fixes the $1$- and  $0$-eigenspaces for $\ad_{a_0}$ and negates the $\eta$-eigenspace. In particular, $a^\ast=a_{-1}$ when $3\C(\eta)$ is considered as an $\mathcal{M}(\al,\eta)$-axial algebra.
     \item  $3\C(-1)^\times$ is the quotient of $3\C(-1)$ modulo the ideal $\F(a_0+a_1+a^\ast)$.
     \item  $\J(\delta)$ is of Jordan type $\tfrac{1}{2}$. 
     \item  $\J(0)^\times$ is the quotient of $\J(0)$ modulo the ideal $\F s_{\tz,1}$.
     \item  $2\B$ is isomorphic to the quotient of $\J(-\tfrac{1}{2})$ modulo the ideal $\F(a_0+a_1+2s_{\tz,1})$.
     \item  $3\C(\tfrac{1}{2})\cong \J(-\tfrac{3}{8})$. \label{table2 3C J}
     \item  In $\ch(\F)=3$, $3\C(-1)\cong \J(0)$ and $3\C(-1)^\times\cong \J(0)^\times$.
     \item $1\A$ has axet $X(1)$. $2\B$ has axet $X(2)$. The algebras $3\C(\eta)$ and $3\C(-1)^\times$ considered as $\Mab$-algebras with $\bt=\eta$ or $\bt=-1$ respectively, have axet $X(3)$; with $\al=\eta$ or $\al=-1$ respectively, they have axet $X(2)$. $\J(\delta)$ and $ \J(0)^\times$ have axet depending on the characteristic of $\F$ (see~\cite[\S 5.2]{axet}).
     \item In the setting of the Conway-Norton-Griess algebra and Majorana Theory, the algebra $3\C(\tfrac{1}{4})$ is denoted by $2\A$, the algebra $3\C(\tfrac{1}{32})$ is denoted by $3\C$.
 \end{enumerate}
  
\end{table}

 \begin{table}[H]\label{table3A}
\nomenclature{$3\A(\al,\bt)$}{\pageref{table3A}}
\nomenclature{$3\A^\prime_{\al,\bt}$}{\pageref{table3A}}
\nomenclature{$\mathrm{III}(\al, \bt, 0)$}{\pageref{table3A}}
\nomenclature{$3\A(\al, \tfrac{1-3\al^2}{3\al-1})^\times$}{\pageref{table3A}}

\caption{\Large $3\A(\al,\bt)$}

\bigskip

{\sc Other names}

$3\A^\prime_{\al,\bt}$ in~\cite{FelixPaper}, \quad $\mathrm{III}(\al, \bt, 0)$ in~\cite{Yabe}

\bigskip

{\tiny \makegapedcells
\[
\begin{array}{|c|A|}
\hline
 \text{Basis} &\multicolumn{2}{l|}{\text{Products and form ($i\in \Z_3$)}} \\
\hline
\multirow{12.5}{*}{$\begin{aligned} & a_{0}, \\ & a_1,\\ & a_2,\\ & z \end{aligned}$}
&
a_i\cdot a_i&= a_i\\
& a_{i}\cdot a_{i+1} &= \bt(a_i+a_{i+1})+\tfrac{\al-\bt}{2}(a_0+a_1+a_2)+z\\
& a_{i}z &= -\tfrac{\al(3\al^2+3\al\bt-\bt-1)}{4(2\al-1)}a_i\\
& z^2 &= -\tfrac{\al(3\al^2+3\al\bt-\bt-1)}{4(2\al-1)}z\\
\cline{2-3}
& (a_i, a_i) &= 1\\
& (a_{i} , a_{i+1}) &= \tfrac{3\al^2+3\al\bt-\al-2\bt}{4(2\al-1)}\\ 
& (a_i,z) &= -\tfrac{\al(3\al^2+3\al\bt-\bt-1)}{4(2\al-1)} \\
& (z,z) &= \tfrac{\al^2(9\al+\bt-5)(3\al^2+3\al\bt-\bt-1)}{16(2\al-1)^2}\\
 \hline
\end{array}
 \]}

 \bigskip
 \bigskip

\begin{center} {\sc Notes }
 \end{center}
 \bigskip
 
 \begin{enumerate}[enum_note]
     \item  $3\A(\al,\beta)$ is defined only for $\al\neq \tfrac{1}{2}$. 
     \item  If $\bt\neq \tfrac{1-3\al^2}{3\al-1}$, then $\mathbbm 1 := -\tfrac{4(2\al-1)}{\al(3\al^2+3\al\bt-\bt-1)}z$ is the identity element of the support of $3\A(\al,\beta)$.  If $\bt=\tfrac{1-3\al^2}{3\al-1}$, then there is no identity element and $z$ is an annihilating element.\label{table3A id ann}
     \item  If $\bt=\tfrac{1-3\al^2}{3\al-1}$, then the radical of the Frobenius form is the one dimensional ideal $\F z$. The factor algebra modulo $\F z$ is $3\A (\al, \tfrac{1-3\al^2}{3\al-1})^\times$.\label{table3A quotient}
     \item  Since  $-1\equiv_3 2$, the basis is the one used in~\cite{MP}.
     \item $3\A(\al,\beta)$ and its quotient $3\A (\al, \tfrac{1-3\al^2}{3\al-1})^\times$ have axet $X(3)$.
 \item In the setting of the Conway-Norton-Griess algebra and Majorana Theory, the algebra $3\A(\tfrac{1}{4}, \tfrac{1}{32})$ is denoted by $3\A$.
 \end{enumerate}
 
\end{table}

\newpage

 \begin{table}[H]\label{table4A}
\nomenclature{$4\A(\tfrac{1}{4},\bt)$}{\pageref{table4A}}
\nomenclature{$4\A_\bt$}{\pageref{table4A}}
\nomenclature{$\mathrm{IV}_1(\tfrac{1}{4}$}{\pageref{table4A}}

 \caption{\Large $4\A(\tfrac{1}{4},\bt)$}

\bigskip

{\sc Other names:}
$4\A_\bt$ in~\cite{FelixPaper},  $\mathrm{IV}_1(\tfrac{1}{4}, \bt)$ in~\cite{Yabe}

\bigskip

{\tiny  \makegapedcells
\[
\begin{array}{|c|A|}
\hline
 \text{Basis} & \multicolumn{2}{l|}{\text{Products and form ($i\in \Z_4$)}} \\
\hline
\multirow{13}{*}{$\begin{aligned} & a_{-1},\\ & a_{0}, \\ & a_1,\\ & a_2,\\ & e \end{aligned}$}
&
a_i\cdot a_i &= a_i\\
& a_{i}\cdot a_{i+1} &= \tfrac{1+4\bt}{8}(a_{i}+a_{i+1})+\tfrac{1-4\bt}{8}(a_{i-1}+a_{i+2})+e\\
& a_{i}\cdot a_{i+2} &=  0\\
& a_{i}e &= \tfrac{2\bt-1}{8}a_i\\
& e^2 &= \tfrac{2\bt-1}{8}e \\
\cline{2-3}
& (a_{i}, a_{i}) &= 1\\ 
& (a_{i}, a_{i+1}) &= \bt \\ 
& (a_{i}, a_{i+2}) &= 0\\ 
& (a_i,e) &= \tfrac{2\bt-1}{8}\\
& (e,e) &= \tfrac{(2\bt-1)^2}{16} \\
 \hline
\end{array} 
 \]
 }

\bigskip

\bigskip

\begin{center} {\sc Notes}
 \end{center}
 \bigskip
 
 \begin{enumerate}[enum_note]
     \item If $\bt\neq \tfrac{1}{2}$, then $\mathbbm 1 := \tfrac{8}{2\bt-1}e$ is the identity element of the support of $4\A(\tfrac{1}{4},\beta)$.  If $\bt=\tfrac{1}{2}$, then there is no identity element and $e$ is an annihilating element.
     \item If $\bt=\tfrac{1}{2}$, then $\F e$ is an ideal of $4\A(\tfrac{1}{4}, \tfrac{1}{2})$ contained in the radical of the Frobenius form. The factor algebra modulo this ideal is $4\A(\tfrac{1}{4}, \tfrac{1}{2})^\times$. 
     \item  \label{table4A evensub} The odd and the even subalgebras of $4\A(\tfrac{1}{4}, \tfrac{1}{2})$ and their images in the quotient $4\A(\tfrac{1}{4}, \tfrac{1}{2})^\times$ are all isomorphic to $2\B$.
     \item  The basis is the one used in~\cite{MP}.
     \item $4\A(\tfrac{1}{4}, \bt)$ and its quotient $4\A(\tfrac{1}{4}, \tfrac{1}{2})^\times$ have axet $X(4)$.
     \item In the setting of the Conway-Norton-Griess algebra and Majorana Theory, the algebra $4\A(\tfrac{1}{4}, \tfrac{1}{32})$ is denoted by $4\A$.
 \end{enumerate}
 
\end{table}

\newpage

 \begin{table}[H]\label{table4B}
 \nomenclature{$4\B(\al,\tfrac{\al^2}{2})$}{\pageref{table4B}}
\nomenclature{$4\B_\al$}{\pageref{table4B}}
\nomenclature{$\mathrm{IV}_2(\al,\tfrac{\al^2}{2})$}{\pageref{table4B}} 
 
 \caption{\Large $4\B(\al,\tfrac{\al^2}{2})$}

 \bigskip

{\sc Other names:}
$
4\B_\al\mbox{ in~\cite{FelixPaper}, }\;\;\; \mathrm{IV}_2(\al,\tfrac{\al^2}{2}) \mbox{ in~\cite{Yabe}} 
$

\bigskip

{\tiny
$$
\begin{array}{|c|A|}
\hline
 \text{Basis} & \multicolumn{2}{l|}{\text{Products and form ($i\in \Z_4)$}} \\
\hline
\multirow{9}{*}{$\begin{aligned} & a_{-1},\\ & a_{0}, \\ & a_1,\\ & a_2,\\ & c \end{aligned}$}
&
a_{i}\cdot a_{i} &= a_{i}\\
& a_{i}\cdot a_{i+1} &= \tfrac{\al^2}{4}\left(a_{i}+a_{i+1} +c-(a_{i+2}+a_{i-1})\right)\\
& a_{i}\cdot a_{i+2} &=  \tfrac{\al}{2}(a_0+a_2-c)\\
&  a_{i}c &= \tfrac{\al}{2}(a_{i}+c-a_{i+2}) \\
&  c^2 &= c \\
\cline{2-3}
& (a_i, a_i) &=  1\\
& (c,c) &= 1\\
& (a_{i}, a_{j}) &= \tfrac{\al^2}{4},\quad i\neq j\\ 
& (a_i,c) &= \tfrac{\al}{2} \\
 \hline
\end{array} 
 $$}

\bigskip

\bigskip

\begin{center} {\sc Notes }
 \end{center}
 \bigskip
 
 \begin{enumerate}[enum_note]
     \item From the formula for the product $a_0a_1$ we get 
     $$c=\tfrac{4}{a^2}a_0a_1+(a_{-1}+a_2)-(a_0+a_1).$$
     In particular $c^f=c$.  Moreover, $c$ is a $\mathcal J(\al)$-axis in the whole algebra (see~\cite[Section 5]{MM}).
     \item   
     The odd and the even subalgebras of $4\B(\al,\tfrac{\al^2}{2})$ are both isomorphic to $ 3\C(\al)$.
     Moreover, for every $i\in  \Z_4$, $\lla a_i, a_{i+2}\rra=\la a_i, a_{i+2},c\ra$, where $(a_i,a_{i+2},c)$ is the natural basis given in Table \ref{table2}.
     \item If $\al=-1$, then the radical of the form coincides with the algebra annihilator and is  a $2$-dimensional ideal with basis $(a_0+a_2+c,\quad a_{-1}+a_1+c)$. Moreover $\F \left (a_{-1}+a_0+a_1+a_2+2c\right )$ is an ideal of $4\B(-1, \tfrac{1}{2})$ which is invariant under the flip; the factor algebra modulo this ideal is the symmetric algebra denoted by $4\B(-1,\tfrac{1}{2})^\times$\nomenclature{$4\B(-1,\tfrac{1}{2})^\times$}{\pageref{table4B}} (see~\cite[Proposition~5.11]{MM}). There are also non-symmetric quotients which we describe in Section~\ref{sec:ns}.
     \item  The odd and the even subalgebras of $4\B(-1,\tfrac{1}{2})^\times$ are both isomorphic to  $3\C(-1)^\times$.
     \item The basis is the one used in~\cite{MP}. 
     \item $4\B(\al, \tfrac{\al^2}{2})$ and its quotient $4\B(-1,\tfrac{1}{2})^\times$ has axet $X(4)$.
     \item In the setting of the Conway-Norton-Griess algebra and Majorana Theory, the algebra $4\B(\tfrac{1}{4}, \tfrac{1}{32})$ is denoted by $4\B$.
      \end{enumerate}

\end{table}

\newpage

\begin{table}[H]\label{table4J}
\nomenclature{$4\J(2\bt,\bt)$}{\pageref{table4J}}

\caption{{\Large $4\J(2\bt,\bt)$}}

\bigskip 
 
{\sc Other names:}
$
\mathrm{IV}_1(\al, \tfrac{\al}{2}) \mbox{ in~\cite{Yabe}}
$
\nomenclature{$\mathrm{IV}_1(\al, \tfrac{\al}{2})$}{\pageref{table4J}}

\bigskip

{\tiny \makegapedcells 
\[
\begin{array}{|c|A|}
\hline
\text{Basis} &\multicolumn{2}{l|}{\text{Products and form ($i\in\Z_4$)}} \\
\hline
 \multirow{12}{*}{$\begin{aligned} & a_{-1},\\ & a_{0}, \\ & a_1,\\ & a_2,\\ & w \end{aligned}$}
& a_{i}\cdot a_{i}  &= a_{i}\\
&  a_{i}\cdot a_{i+1} &= \tfrac{\bt}{2}(2a_{i}+2a_{i+1}-w)\\
& a_{i}\cdot a_{i+2} &=  0\\
&  a_{i}\cdot w &= \bt(2a_i-(a_{i-1}+a_{i+1})+w)\\
&  w^2 &= w \\
 \cline{2-3}
&  (a_i, a_i) &= 1\\
&  (a_{i}, a_{i+1}) &= \bt\\
&  (a_{i}, a_{i+2}) &= 0\\
& (a_i,w) &= 2\bt\\
& (w,w) &= 2
\\
\hline
\end{array} 
 \]
 }
\bigskip

\bigskip

\begin{center} {\sc Notes} 
\bigskip

 \end{center}
 \begin{enumerate}[enum_note]
     \item  By~\cite{FMS2, MM}, $4\J(\tfrac{1}{2},\tfrac{1}{4})\cong 4\Y(\tfrac{1}{2},\tfrac{1}{4})$.
     \item  Let $q:=a_{-1}+a_0+a_1+a_2+w$. If $\bt\neq -\tfrac{1}{4}$, then $\mathbbm 1 := \tfrac{1}{4\bt+1}q$ is the identity element of the support of $4\J(2\bt,\bt)$.  If $\bt=-\tfrac{1}{4}$, then there is no identity element and $q$ is an annihilating element.
     \item  If $\bt=-\tfrac{1}{4}$, then the radical of the Frobenius form is the one dimensional ideal $\F q$. The factor algebra modulo the radical is $4\J(-\tfrac{1}{2}, -\tfrac{1}{4})^\times$\nomenclature{$4\J(-\tfrac{1}{2}, -\tfrac{1}{4})^\times$}{\pageref{table4J}} (see~\cite[Lemma 5.2]{FMS2}).
     \item  The odd and the even subalgebras of $4\J(2\bt, \bt)$ and $4\J(-\tfrac{1}{2}, -\tfrac{1}{4})^\times$ are all isomorphic to $2\B$.
     \item  The basis is the one used in~\cite{MP}.
     \item $4\J(2\bt, \bt)$ and it quotient $4\J(-\tfrac{1}{2}, -\tfrac{1}{4})^\times$ has axet $X(4)$.
 \end{enumerate} 
 
\end{table}

\newpage

 \begin{table}[H]
 \caption{{\Large $4\Y(\tfrac{1}{2},\bt)$}}
 \label{table4Y}
 \nomenclature{$4\Y(\tfrac{1}{2},\bt)$}{\pageref{table4Y}}
 
 \bigskip

{\sc Other names:}
$ 
\mathrm{IV}_2(\tfrac{1}{2},\bt) 
\mbox{ in~\cite{Yabe}} 
$
\nomenclature{$\mathrm{IV}_2(\tfrac{1}{2},\bt)$}{\pageref{table4Y}}

\bigskip

{\tiny \makegapedcells 
$$
\begin{array}{|c|A|}
\hline
 \text{Basis}& \multicolumn{2}{l|}{\text{Products and form ($i\in \Z_4$)}}\\
\hline
\multirow{12.5}{*}{$\begin{aligned} & a_{-1},\\ & a_{0}, \\ & a_1,\\ & a_2,\\ & z \end{aligned}$}
& a_{i}\cdot a_{i} &= a_{i}\\
& a_{i}\cdot a_{i+1} &= \tfrac{\bt}{2}(a_{i}+a_{i+1})-\tfrac{\bt}{2}(a_{i+2}+a_{i-1})+4\bt^2 z\\
& a_{i}\cdot a_{i+2} &=  \tfrac{1-4\bt}{2}(a_{i}+a_{i+2})+4\bt(4\bt-1) z\\
& a_{i}z &= \tfrac{1}{4}(a_i-a_{i+2})+2\bt z\\
& z^2 &= z \\
 \cline{2-3}
& (a_i, a_i) &= 1,\\
& (a_{i}, a_{i+1}) &= 4\bt^2\\ 
& (a_{i}, a_{i+2}) &= (4\bt-1)^2\\ 
& (a_i,z) &= 2\bt\\
&  (z,z) &=  1 \\
 \hline
\end{array} 
 $$}

\bigskip

\bigskip

\begin{center} {\sc Notes }
 \end{center}
 \bigskip
 
 \begin{enumerate}[enum_note]
      \item $\1=\tfrac{1}{1-2\bt}( \tfrac{1}{2}(a_{-1}+a_0+a_1+a_2)+(1-6\bt)z)$ is the identity of the algebra.
     \item   The odd and the even subalgebras of
     $4\Y(\tfrac{1}{2},\bt)$ are both isomorphic to $\J(4\bt(2\bt-1))$. Moreover, for every $i\in \Z_4$,
     $\lla a_i,a_{i+2}\rra=\la a_i,a_{i+2},z\ra$, so that  $z$ is an idempotent contained in both the odd and the even subalgebras. Notice that 
     the basis of $\lla a_i,a_{i+2}\rra$ corresponding to that given in Table~\ref{table2} for $\J(4\bt(2\bt-1))$ is  $( a_i, a_{i+2}, -2\bt(a_i+a_{i+2})+4\bt(4\bt-1)z)$. 
     \item  The basis is the one used in~\cite{MP}.
     \item $4\Y(\tfrac{1}{2},\bt)$ has axet $X(4)$.
 \end{enumerate} 
 
\end{table}

\newpage

 \begin{table}[H]
 \caption{{\Large $4\Y(\al, \tfrac{1-\al^2}{2})$}}\label{table4Yal}
\nomenclature{$4\Y(\al, \tfrac{1-\al^2}{2})$}{\pageref{table4Yal}} 

\bigskip

{\sc Other names:}
$
\mathrm{IV}_2(\al, \tfrac{1-\al^2}{2}) 
\mbox{ in~\cite{Yabe}}
$
\nomenclature{$\mathrm{IV}_2(\al, \tfrac{1-\al^2}{2})$}{\pageref{table4Yal}}

\bigskip
 
{\tiny \makegapedcells 
$$
\begin{array}{|c|A|}
\hline
\text{Basis} & \multicolumn{2}{l|}{\text{Products and form ($i\in\Z_4$)}} \\
\hline
 \multirow{13}{*}{$\begin{aligned} & a_{-1},\\ & a_{0}, \\ & a_1,\\ & a_2,\\ & c \end{aligned}$}
& a_{i}\cdot a_{i} &= a_{i}\\
& a_{i}\cdot a_{i+1} &= \tfrac{\bt}{2}(a_{i}+a_{i+1})-\tfrac{\bt}{2}(a_{i+2}+a_{i-1}) + \tfrac{(\al+1)^2}{4}c \\
& a_{i}\cdot a_{i+2} &=  \tfrac{\al-1}{2}(a_{i}+a_{i+2})+\tfrac{\al+1}{2}c\\
&  a_{i}c &= \tfrac{\al-1}{2}(a_{i+2}-a_{i})+\tfrac{\al+1}{2}c\\
&  c^2 &= c \\
 \cline{2-3}
&  (a_i, a_i) &= 1\\
&  (a_{i}, a_{i+1}) &= \tfrac{(2-\al)(\al+1)}{4}\\
&  (a_{i}, a_{i+2}) &= \tfrac{\al}{2}\\
& (a_i,c) &= \tfrac{2-\al}{2}\\
& (c,c) &= \tfrac{2-\al}{\al+1} \\
\hline
\end{array} 
 $$
 }

\bigskip

\bigskip

\begin{center} {\sc Notes}.
 \end{center}
 \bigskip
 
 \begin{enumerate}[enum_note]
     \item  Note that $\al+1\neq 0$, since $\bt\neq 0$.
     \item The element $c$ is a $\mathcal J(1-\al)$-axis in $4\Y(\al, \tfrac{1-\al^2}{2})$ (see~\cite[Table~14]{MM}).
     \item  
     The odd and the even subalgebras are both isomorphic to $3\C(\al)$. Moreover, 
     for every $i\in \Z_4$, $\lla  a_i,a_{i+2}\rra=\la a_1, a_{i+2}, c\ra$. The basis of $\lla a_i,a_{i+2}\rra$ corresponding to the one given in Table~\ref{table2} is $(a_i,a_{i+2},c_i)$, where, for $i\in \Z_4$, $c_i:=\tfrac{1}{2}(a_i+a_{i+2}-3c)$.
     \item  The basis is the same as the one used in~\cite{MP}.
     \item $4\Y(\al, \tfrac{1-\al^2}{2})$ has axet $X(4)$.
 \end{enumerate}  

\end{table}

\newpage 

 \begin{table}[H]
 \caption{{\Large $5\A(\al, \tfrac{5\al-1}{8})$}}\label{table5A}
 \nomenclature{$5\A(\al, \tfrac{5\al-1}{8})$}{\pageref{table5A}}

 \bigskip 
 
{\sc Other names:}
$
5\A_\al\mbox{ in~\cite{FelixPaper},}\;\;\; \mathrm{V}_1(\al, \tfrac{5\al-1}{8})\mbox{ in~\cite{Yabe}}
$
\nomenclature{$5\A_\al$}{\pageref{table5A}}
\nomenclature{$\mathrm{V}_1(\al, \tfrac{5\al-1}{8})$}{\pageref{table5A}}

\bigskip

{\tiny \makegapedcells 
$$
\begin{array}{|c|A|}
\hline
 \text{Basis} & \multicolumn{2}{l|}{\text{Products and form ($i\in \Z_5$)}} \\
\hline
\multirow{12}{*}{$\begin{aligned} & a_{-2}, \\& a_{-1},\\ & a_{0}, \\ & a_1,\\ & a_2,\\ & w \end{aligned}$}
& a_{i}\cdot a_{i} &= a_{i}\\
& a_{i}\cdot a_{i+1} &= \tfrac{5\al-1}{32} w + \tfrac{3(5\al-1)}{32}(a_{i}+a_{i+1})-\tfrac{5\al-1}{32}(a_{i+2}+a_{i-1}+a_{i-2})\\
& a_{i}\cdot a_{i+2} &= \tfrac{-5\al+1}{32} w + \tfrac{3(5\al-1)}{32}(a_{i}+a_{i+2})-\tfrac{5\al-1}{32}{4}(a_{i+1}+a_{i-1}+a_{i-2})\\
& a_{i}w &= \tfrac{3\al+1}{8} w + \tfrac{3\al+1}{8}(a_{i-1}+a_{i+1})-\tfrac{3\al+1}{8}(a_{i+2}+a_{i-2})\\
& w^2 &= \tfrac{(3\al+1)(7-3\al)}{8(5\al-1)}(a_{-2}+a_{-1}+a_0+a_1+a_2)\\
 \cline{2-3}
& (a_i,a_i) &= 1\\
& (a_{i}, a_{j}) &= \tfrac{3(5\al-1)}{32},\quad  i\neq j\\ 
& (a_i,w) &= 0\\
& (w,w) &= \tfrac{5(3\al+1)(7-3\al)}{8(5\al-1)} \\
 \hline
\end{array} 
 $$}

\bigskip

\bigskip 

\begin{center} {\sc Notes }
 \end{center}
 \bigskip
 
 \begin{enumerate}[enum_note]
     \item  If $\ch(\F)\neq 5$, then $\1=\tfrac{1}{5(\al-\bt)}(a_{-2}+a_{-1}+a_0+a_1+a_2)$ is the identity of $V$ (see~\cite[Section 6.1]{MM}).\label{table5A id}
     \item If $\ch(\F)= 5$, then $I:=\langle a_{-2}+a_{-1}+a_0+a_1+a_2\rangle $ is the annihilator of the algebra and the factor algebra $\V/I$ is denoted by $5A(\al, \tfrac{1}{2})^\times$ (see also Note~\ref{tableIY5 char5} in Table~\ref{table5A} and~\cite[Corollary 6.6]{MM}).  
     \item  The basis is the one used in~\cite{MP}.
     \item $5\A(\al, \tfrac{5\al-1}{8})$ has axet $X(5)$.
     \item In the setting of the Conway-Norton-Griess algebra and Majorana Theory, the algebra $5\A(\tfrac{1}{4}, \tfrac{1}{32})$ is denoted by $5\A$.
 \end{enumerate} 
\end{table}

\newpage

 \begin{table}[H]
 \caption{{\Large $6\A\left (\al, -\tfrac{\al^2}{4(2\al-1)}\right ) $}}\label{table6A}
\nomenclature{$6\A\left (\al, -\tfrac{\al^2}{4(2\al-1)}\right ) $}{\pageref{table6A}} 
 \bigskip 

{\sc  Other names:}
$
 6\A_\al 
 \mbox{ in~\cite{FelixPaper},}\;\;\;
 \mathrm{VI}_2\left (\al, -\tfrac{\al^2}{4(2\al-1)}\right )
 \mbox{ in~\cite{Yabe}}
$
\nomenclature{$6\A_\al$}{\pageref{table6A}}
\nomenclature{$\mathrm{VI}_2\left (\al, -\tfrac{\al^2}{4(2\al-1)}\right )$}{\pageref{table6A}}

\bigskip

{\tiny \makegapedcells 
\[
\begin{array}{|c|A|}
\hline
 \text{Basis} & \multicolumn{2}{l|}{\text{Products and form ($i\in \Z_6$)}}  \\
\hline
\multirow{24.5}{*}{$\begin{aligned} & a_{-2}, \\& a_{-1},\\ & a_{0}, \\ & a_1,\\ & a_2,\\ & a_3, \\& c,\\ & z \end{aligned}$}
& a_{i}\cdot a_{i} &= a_{i}\\
&  a_{i} \cdot a_{i+1} &= \tfrac{-\al^2}{8(2\al-1)}\left(c+z+a_{i}+a_{i+1}-(a_{i+2}+a_{i+3}+a_{i-1}+a_{i-2})\right)\\
&  a_{i}\cdot  a_{i+2} &= \tfrac{\al}{4}(a_i+a_{i+2})+\tfrac{\al(3\al-1)}{4(2\al-1)}a_{i+4}-\tfrac{\al(5\al-2)}{8(2\al-1)}z\\
& a_{i}\cdot a_{i+3} &= \tfrac{\al}{2}(a_i+a_{i+3}-c)\\
& a_{i}\cdot c &= \tfrac{\al}{2}(a_i+c-a_{i+3})\\
& a_i\cdot z &= \tfrac{\al(3\al-2)}{4(2\al-1)}(2a_i - a_{i-2}-a_{i+2} + z)\\
& c^2 &= c \\
& c\cdot z &= 0 \\
& z^2 &=  \tfrac{(\al+2)(3\al-2)}{4(2\al-1)}z\\
\cline{2-3}
& (a_i,a_i) &= 1\\
& (a_{i}, a_{i+1}) &=  -\tfrac{\al^2(3\al-2)}{(4(2\al-1))^2}\\
& (a_{i}, a_{i+2}) &= \tfrac{\al(21\al^2-18\al+4)}{(4(2\al-1))^2}\\
& (a_{i}, a_{i+3}) &= \tfrac{\al}{2}\\
& (a_i,c) &= \tfrac{\al}{2}\\
& (a_i, z) &= \tfrac{\al(7\al-4)(3\al-2)}{8(2\al-1)^2}\\
& (c,c) &= 1\\
& (c,z) &= 0\\
& (z,z) &= \tfrac{(\al+2)(7\al-4)(3\al-2)}{8(2\al-1)^2} \\
 \hline
\end{array} 
 \]}
  \end{table}

\bigskip

\begin{center} {\sc Notes } 
 \end{center}
 
 \bigskip
 
 \begin{enumerate}[enum_note]
    \item  Let 
    $q:= 2(2\al-1)(3\al-2)\sum_{i=-2}^3a_i+(5\al-2)(3\al-2)c+4(2\al-1)(3\al-1)z.$
    If $\al \notin \{\tfrac{2}{3},\tfrac{1-\sqrt{97}}{24},\tfrac{1+\sqrt{97}}{24}\}$, then $\mathbbm 1 := \tfrac{1}{(12\al^2-\al-2)(3\al-2)}q$ is the identity element of the support of $6\A\left (\al, -\tfrac{\al^2}{4(2\al-1)}\right )$.  If $\al \in \{\tfrac{2}{3},\tfrac{1-\sqrt{97}}{24},\tfrac{1+\sqrt{97}}{24}\}$, then there is no identity element and $q$ is an annihilating element.\label{table6A id ann}
    \item  If $\al=\tfrac{2}{3}$, then $z\in \la q\ra$ and $\F z$ is an ideal. The quotient modulo this ideal is 
    $6\A\left (\tfrac{2}{3}, -\tfrac{1}{3}\right )^\times$.\label{table6A quotient 2/3}
    \nomenclature{$6\A\left (\tfrac{2}{3}, -\tfrac{1}{3}\right )^\times$}{\pageref{table6A}}
    \item  If $\al\in\{\tfrac{1-\sqrt{97}}{24},\tfrac{1+\sqrt{97}}{24}\} $, then $\F q$ is an ideal and the quotient modulo $\F q$ is 
    $6\A\left (\al, -\tfrac{\al+2}{48(2\al-1)}\right )^\times$.\label{table6A quotient sqrt97}
    \nomenclature{$6\A\left (\al, -\tfrac{\al+2}{48(2\al-1)}\right )^\times$}{\pageref{table6A}}
    \item  Let $\al\neq \tfrac{2}{5}$. The odd and the even subalgebras of 
    $6\A\left (\al, -\tfrac{\al^2}{4(2\al-1)}\right )$ are  isomorphic to $3\A\left (\al,-\tfrac{\al^2}{4(2\al-1)}\right )$ (see~\cite[Table~20]{MM}). For every $i\in \Z_6$, 
     $\lla a_i,a_{i+2}\rra$ has basis $( a_i,a_{i+2},a_{i+4},z)$. The basis corresponding to that given in Table~\ref{table3A} is
    \[
    \left(a_i,\: a_{i+2},\: a_{i+4},\: \tfrac{-\al}{8(2\al-1)}((3\al-2)(a_i+a_{i+2}+a_{i+4})+(5\al-2)z)\right).
    \]
    \item  The odd and the even subalgebras of 
    $6\A\left (\tfrac{2}{5}, \tfrac{1}{5}\right )$ are both  isomorphic to $3\C\left (\tfrac{1}{5}\right )$ (see~\cite[Table~20]{MM}). For every $i\in \Z_6$,  
     $\lla a_i,a_{i+2}\rra$ has basis $( a_i,a_{i+2},a_{i+4})$ which is the same basis corresponding to that given in Table~\ref{table2}.
    \item   For $\al=\tfrac{2}{3}$,  the odd and the even subalgebras of the quotient $6\A\left (\tfrac{2}{3}, -\tfrac{1}{3}\right )^\times$ are both isomorphic to 
     $3\A(\tfrac{2}{3},-\tfrac{1}{3})^\times$. 
    \item For $\al \in \{\tfrac{1-\sqrt{97}}{24},\tfrac{1+\sqrt{97}}{24}\}$, the odd and the even subalgebras of the quotient $6\A\left (\al, -\tfrac{\al^2}{4(2\al-1)}\right )^\times$ are both isomorphic to $3\A\left (\al,-\tfrac{\al^2}{4(2\al-1)}\right )$.
    \item For every $i\in \Z_6$, the algebras $(\lla a_i, a_{i+3}\rra, \{ a_i, a_{i+3}\})$ are all isomorphic to $ 3\C(\al)$ (see~\cite[Table~20]{MM}). $\lla a_i, a_{i+3}\rra$ has basis $( a_i, a_{i+3},c)$, which is the basis corresponding to that given in Table~\ref{table2}.
    \item For $\al\in \{\tfrac{2}{3}, \tfrac{1-\sqrt{97}}{24},\tfrac{1+\sqrt{97}}{24}\}$ and for every $i\in \Z_6$, the images of the algebras $(\lla a_i, a_{i+3}\rra, \{ a_i, a_{i+3}\})$ in the quotient  $6\A\left (\al, -\tfrac{\al^2}{4(2\al-1)}\right )^\times$  are all isomorphic to $ 3\C(\al)$. 
     \item The basis is the one used in~\cite{MP}.
    \item  $6\A\left (\al, -\tfrac{\al^2}{4(2\al-1)}\right )$ and its quotients $6\A\left (\al, -\tfrac{\al^2}{4(2\al-1)}\right )^\times$  for $\al\in \{\tfrac{2}{3},\tfrac{1\pm\sqrt{97}}{24}\}$ have axet $X(6)$.
    \item In the setting of the Conway-Norton-Griess algebra and Majorana Theory, the algebra $6\A(\tfrac{1}{4}, \tfrac{1}{32})$ is denoted by $6\A$.
 \end{enumerate} 
 

 \newpage

 \begin{table}[H]
 \caption{{\Large $6\J(2\bt,\bt)$}}\label{table6J}
 \nomenclature{$6\J(2\bt,\bt)$}{\pageref{table6J}}
 
 \bigskip

{\tiny \makegapedcells 
\[
\begin{array}{|c|A|}
\hline
 \text{Basis} & \multicolumn{2}{l|}{\text{Products and form ($i\in \Z_6$)}}  \\
\hline
\multirow{21}{*}{$\begin{aligned} & a_{-2}, \\& a_{-1},\\ & a_{0}, \\ & a_1,\\ & a_2,\\ & a_3, \\& u,\\ & w \end{aligned}$}
& a_{i}\cdot a_{i} &= a_{i}\\
&  a_{i} \cdot a_{i+1} &= \tfrac{\bt}{2}(2(a_{i}+a_{i+1})-w)\\
&  a_{i}\cdot  a_{i+2} &= \tfrac{\bt}{2}(a_i+a_{i+2}-a_{i+4})\\
& a_{i}\cdot a_{i+3} &= \bt(a_i+a_{i+3}-u)\\
& a_{i}\cdot u &= \bt(a_i+u-a_{i+3})\\
& a_i\cdot w &= \bt(w+2a_i-a_{i-1}-a_{i+1})\\
& u^2 &= u \\
& u\cdot w &= \bt u \\
& w^2 &=  (\bt+1)w-\bt u\\
\cline{2-3}
& (a_i,a_i) &= 1\\
& (a_{i}, a_{i+1}) &=  \bt\\
& (a_{i}, a_{i+2}) &= \tfrac{\bt}{2}\\
& (a_{i}, a_{i+3}) &= \bt\\
& (a_i,u) &= \bt \\
& (a_i, w) &= 2\bt \\
& (u,u) &= 1\\
& (u,w) &= \bt\\
& (w,w) &= \bt+2 \\
 \hline
\end{array} 
 \]}
\bigskip

\bigskip

\begin{center} {\sc Notes} 
 \end{center}
 \bigskip
 
 \begin{enumerate}[enum_note]
 \item  By~\cite{FMS2, MM}, $6\J(\tfrac{2}{5},\tfrac{1}{5})\cong 6\A(\tfrac{2}{5},\tfrac{1}{5})$.
 \item  Let $q:= a_{-2}+a_{-1}+a_0+a_1+a_2+a_3+u+w$. If $\bt \neq -\tfrac{1}{7}$, then $\mathbbm 1 := \tfrac{1}{7\bt+1}q$ is the identity element of the support of $6\J(2\bt, \bt)$.  If $\bt = -\tfrac{1}{7}$, then there is no identity element and $q$ is an annihilating element (see~\cite[\S 7.2]{MM}).
     \item  If $\bt=-\tfrac{1}{7}$, then  $\F q$ is an ideal of $6\J(-\tfrac{2}{7}, -\tfrac{1}{7})$. The quotient modulo this ideal is 
     $6\J(-\tfrac{2}{7}, -\tfrac{1}{7})^\times$ 
     \nomenclature{$6\J(-\tfrac{2}{7}, -\tfrac{1}{7})^\times$}{\pageref{table6J}}  (see~\cite[Proposition~7.17]{MM}).
     \item  The odd and the even subalgebras of $6\J(2\bt, \bt )$ and $6\J(-\tfrac{2}{7}, -\tfrac{1}{7})^\times$ are all isomorphic to $3\C(\bt)$. For every $i\in \Z_6$, $\lla a_i, a_{i+2}\rra=\la a_i, a_{i+2},a_{i+4}\ra$ and $ (a_i, a_{i+2},a_{i+4})$ is the basis corresponding to that given in Table~\ref{table2}.
     \item  For every $i\in \Z_6$, $(\lla a_i, a_{i+3}\rra,\{a_i,a_{i+3}\})\cong 3\C(2\bt)$. Moreover,  $\lla a_i, a_{i+3}\rra=\la a_i, a_{i+3},u\ra$ and $ (a_i, a_{i+3},u)$ is the basis corresponding to that given in Table~\ref{table2}.
     \item  The basis is the one used in~\cite{MP}.
     \item $6\J(2\bt,\bt)$ and its quotient $6\J(-\tfrac{2}{7}, -\tfrac{1}{7})^\times$ have axet $X(6)$.
 \end{enumerate}
\end{table}

\newpage

 \begin{table}[H]
 \caption{{\Large $6\Y(\tfrac{1}{2},2)$}}\label{table6Y}
 \nomenclature{$6\Y\left (\tfrac{1}{2}, 2\right )$}{\pageref{table6Y}}

 \bigskip
 
{\tiny \makegapedcells
$$
\begin{array}{|c|A|}
\hline
\text{Basis} & \multicolumn{2}{l|}{\text{Products and form ($i\in 2\Z_6$)}}  \\
\hline
\multirow{14.5}{*}{$\begin{aligned} & a_{-2}, \\& a_{0},\\ & a_{2}, \\ & d,\\ & z \end{aligned}$}
& a_{i}\cdot a_{i} &= a_{i}\\
& a_{i} \cdot a_{i+2} &= a_{i}+a_{i+2}-a_{i+4}\\
& a_{i}\cdot d &= \tfrac{1}{2}d+z\\
& a_{i}\cdot z &= 0\\
& d\cdot d &= -2z\\
& z\cdot z &= 0\\
& z\cdot d &= 0 \\
\cline{2-3}
& (a_i,a_{j}) &= 1 \\
& (a_i,d) &= 0 \\ 
& (a_i,z) &= 0 \\
& (d,d) &=  0\\
& (d,z) &= 0 \\
& (z,z) &= 0 \\
 \hline
\end{array} 
 $$}
\bigskip

\bigskip

\begin{center} {\sc Notes}
 \end{center}
 
 \bigskip
 
 \begin{enumerate}[enum_note]
     \item  $a_1:=a_{-2}+d$. \label{table6Y axis relation}
     \item   $6\Y\left (\tfrac{1}{2}, 2\right )^\times$
     \nomenclature{$6\Y\left (\tfrac{1}{2}, 2\right )^\times$}{\pageref{table6Y}} is the quotient of $6\Y\left (\tfrac{1}{2}, 2\right )$ modulo the ideal $\F z$. \label{table6Y quotient}
     \item  $6\Y\left (\tfrac{1}{2}, 2\right )$ is isomorphic (but not axially isomorphic as a $\mathcal M(\frac{1}{2},2)$-algebra due to the change of fusion law from $\mathcal{M}(\frac{1}{2}, 2)$ to $\mathcal{M}(2, \frac{1}{2})$) to a quotient of the Highwater algebra $\mathcal H$, hence is also of Monster type $(2,\tfrac{1}{2})$ (see the Notes to Table~\ref{tablehatH}).
     \item  By definition $\lmu=\tfrac{(a_0,a_1)}{(a_0, a_0)}=1$.
     \item  By definition, $s_{\tz, 1}=z-(a_0+a_2+a_4+\tfrac{3}{2}d)$.
     \item   The odd and the even subalgebras of $6\Y\left (\tfrac{1}{2}, 2\right )$ and $6\Y\left (\tfrac{1}{2}, 2\right )^\times$ are isomorphic to $3\C(\al)$ (see~\cite[Table~20]{MM}).
     \item  For every $i\in \Z_6$, $(\lla a_i, a_{i+3}\rra,\{a_i,a_{i+3}\})\cong \J(0)$ (\cite[Table~20]{MM}). Moreover  $\lla a_i, a_{i+3}\rra=\la a_i, d,z\ra$ and $ (a_i, a_i+d,z)$ is the basis corresponding to that given in Table~\ref{table2}.
     \item   The image of $(\lla a_i, a_{i+3}\rra,\{a_i,a_{i+3}\})$ in $6\Y\left (\tfrac{1}{2}, 2\right )^\times$ is isomorphic to $\J(0)^\times$ (\cite[Table~20]{MM}).
     \item  The basis used in~\cite{MP, MM} is $(a_0,a_2, a_4, d,z)$, where $a_4=a_{-2}$.
     \item $6\Y\left (\tfrac{1}{2}, 2\right )$ and its quotient $6\Y\left (\tfrac{1}{2}, 2\right )^\times$ have axet $X(6)$.
 \end{enumerate} 
\end{table}

 \newpage


 \begin{table}[H]
 \caption{{\Large $\IY_3(\al, \tfrac{1}{2}; \mu)$}}\label{tableIY3bis}
 \nomenclature{$\IY_3(\al, \tfrac{1}{2}; \mu)$}{\pageref{tableIY3bis}}
 
  \bigskip

{\sc Other names:}
$
\mathrm{III}(\al, \tfrac{1}{2}, -2\mu-1) \mbox{ in~\cite{Yabe}}, \;\; S(b, \al), \mbox{ if $\al\neq -1$}, \mbox{ and } \widehat S(b,-1)^\circ \mbox{ in~\cite{split}}  
$
\nomenclature{$\mathrm{III}(\al, \tfrac{1}{2}, -2\mu-1)$}{\pageref{tableIY3bis}}
\nomenclature{$S(b, \al)$}{\pageref{tableIY3bis}}
\nomenclature{$\widehat S(b,-1)^\circ$}{\pageref{tableIY3bis}}

\bigskip
  
{\tiny \makegapedcells 
\[
\begin{array}{|c|A|}
\hline
 \text{Basis} & \multicolumn{2}{l|}{\text{Products and form ($i\in \Z$)}}  \\
\hline
\multirow{13}{*}{$\begin{aligned} & a_{-1}, \\ & a_0,\\ & a_1,\\ & s_{\tz,1} \end{aligned}$}
& a_{i}\cdot a_{i} &= a_i, \quad  i\in \{-1,0,1\}\\
& a_{i}\cdot a_{i+1} &= \tfrac{1}{2}(a_{i}+a_{i+1})+s_{\tz,1}, \quad  i\in \{-1,0\}\\
& a_{-1}\cdot a_{1} &= \tfrac{1}{2}(a_{-1}+a_{1})+2(\mu+1)s_{\tz,1}\\
& a_{i}s_{\tz,1} &= \tfrac{\al^2-\al+1}{4}(\mu-1)a_i+\tfrac{2\al-1}{8}(a_{-1}+a_{1}-2\mu a_0)+\tfrac{2\al-1}{2}s_{\tz,1}, \quad i\in \{-1,0, 1\}\\
& (s_{\tz,1})^2 &= \tfrac{(2\al-1)(\al^2-3\al+2)}{16}(\mu-1)(a_{-1}+a_1-2\mu a_0) -\tfrac{3\al^2-5\al+1}{4}(\mu-1)s_{\tz,1}\\
 \cline{2-3}
& (a_i, a_i) &= 1, \quad  i\in \{-1,0, 1\}\\
& (a_i , a_{i+1}) &= \tfrac{1}{4}\al(1-\mu)+\tfrac{1}{2}(\mu+1), \quad  i\in \{-1,0\}\\ 
& (a_{-1}, a_{1}) &= -\tfrac{1}{2}(\al\mu^2-2\mu^2-\al)\\
& (a_i,s_{\tz,1}) &= -\tfrac{1}{8}(\al-2)(\mu-1), \quad  i\in \{-1,0, 1\} \\ 
& (s_{\tz,1},s_{\tz,1}) &= -\tfrac{1}{16}(\al-2)(\al^2-\al+1)(\mu-1)^2 \\ 
 \hline
\end{array} 
 \]}

\bigskip

\bigskip

\begin{center} {\sc Notes} 
 \end{center}
 \bigskip
 
 \begin{enumerate}[label=$\arabic*.$, ref=$\arabic*$]
     \item There is a slight deviation to the notation in \cite{survey}, in which we use a semicolon instead of a comma to split the extra parameter that is not related to the fusion law.
     \item  $\IY_3(\al, \tfrac{1}{2}; \mu)$ is of Monster  type $(\al, \tfrac{1}{2})$. For $\mu\neq 1$, a detailed description of this algebra can be found in~\cite{split}, where it is denoted by $S(b, \al)$ when $\al\neq -1$ and  by $\widehat{S}(b,-1)^\circ$ when $\al=-1$. 
     \item  Let $q:=4s_{\tz,1}+(1-2\al)(a_{-1}+a_1-2\mu a_0)$. If $\al\neq -1$ and $\mu\neq 1$, then $\mathbbm 1 :=\tfrac{1}{\al(\al+1)(\mu-1)}q$ is the identity element for the support of $\IY_3(\al, \tfrac{1}{2}; \mu)$. If $\al=-1$ or $\mu=1$, then there is no identity element and $\langle q\rangle$ is the annihilator of the algebra. \label{qann}
     \item  If $\al=-1$ or $\mu=1$, then $\F q$ is an ideal of $\IY_3(\al, \tfrac{1}{2}; \mu)$ contained in the radical of the Frobenius form. The quotient modulo this ideal is 
     $\IY_3(-1, \tfrac{1}{2}; \mu)^\times$ if $\al=-1$
     \nomenclature{$\IY_3(-1, \tfrac{1}{2}; \mu)^\times$}{\pageref{tableIY3bis}} or  
     $\IY_3(\al, \tfrac{1}{2}; 1)^\times$ if $\mu=1$.
     \nomenclature{$\IY_3(\al, \tfrac{1}{2}; 1)^\times$}{\pageref{tableIY3bis}}
     \item  $\IY_3(\al, \tfrac{1}{2}; -\tfrac{1}{2})\cong 3\A(\al, \tfrac{1}{2})$ (see~\cite{MM}).\label{tableIY3 3A}
     \item \label{tableIY3basis1} Let $\al\neq -1$ and $\mu\neq 1$. In~\cite{MP, MM} the basis used is $(e,f, z_1, z_2)$, where 
     $$e:= \tfrac{1}{\mu-1}(a_{-1}-2a_0+a_1), \quad f:=\tfrac{1}{\mu-1}( a_{-1}-2\mu a_0+(2\mu-1)a_1),$$
     $$z_1:= \tfrac{1}{\al(\mu-1)}( (1-\al)(a_{-1}+a_1)+ 2\mu(\al-1)a_0+4s_{\tz,1}), \mbox{ and }$$
     $$z_2:= \tfrac{1}{ (\al+1)(\mu-1) }( (\al-2)(a_{-1}+a_1)-2\mu(\al-2)a_0-4s_{\tz,1} ).$$
     Moreover, the form is rescaled by $(\al+1)$.  Notice that $\mathbbm1=z_1+z_2$.
   \item \label{tableIY3basis2} Let $\al=-1$ and $\mu\neq 1$. The basis used in~\cite{MP} is $(e,f, z_1, n)$, where 
   $$e:= \tfrac{1}{\mu-1}(a_{-1}+a_1-2a_0 ), \quad  
f:= \tfrac{1}{\mu-1} ( a_{-1}-2\mu a_0+ (2\mu-1)a_1),$$
$$z_1:= -\tfrac{2}{\mu-1} ( a_{-1}+a_1-2\mu a_0+2s_{\tz,1}),  \mbox{ and } n:=-\tfrac{1}{\mu-1}q.$$
\item \label{tableIY3basis3} When  $\mu=1$, then  the basis used in~\cite{MP} is $(a_0,a_1, z, n)$, where 
$$z:=\tfrac{1}{\al}s_{\tz,1}+\tfrac{1}{4\al}(a_1+a_{-1}-2a_0) \quad \mbox{ and }\quad 
 n:=\tfrac{1}{8\al}q.$$
\item $\IY_3(\al, \tfrac{1}{2}; \mu)$ has axet depending on the characteristic of $\F$ (see~\cite[\S 5]{split}). 
 \end{enumerate}
 \end{table}

\newpage 

 \begin{table}[H]
 \caption{{\Large $\IY_5(\al,\tfrac{1}{2})$}}\label{tableIY5}
\nomenclature{$\IY_5(\al,\tfrac{1}{2})$}{\pageref{tableIY5}} 

 \bigskip
 
{\sc Other names:}
$
\mathrm{V}_2(\al, \tfrac{1}{2}) \mbox{ in~\cite{Yabe}}  
$
\nomenclature{$\mathrm{V}_2(\al, \tfrac{1}{2})$}{\pageref{tableIY5}} 

\bigskip
 
{\tiny \makegapedcells 
$$
\begin{array}{|c|A|}
\hline
\mbox{Basis} & \multicolumn{2}{l|}{\text{Products and form ($\{i,j\}\subseteq \{-2, -1, 0, 1, 2\}\subseteq \Z$)}}  \\
\hline
\multirow{13.5}{*}{$\begin{aligned} & a_{-2}, \\& a_{-1}, \\ & a_0,\\ & a_1,\\ & a_2,\\ & s_{\tz,1} \end{aligned}$}
& a_{i}\cdot a_{i} &= a_i\\
& a_{i}\cdot a_{j} &= \tfrac{1}{2}(a_{i}+a_{j})+s_{\tz,1}, \quad |i-j|=1\\
& a_{i}\cdot a_{j} &= \tfrac{1}{2}(a_{i}+a_{j})+4s_{\tz,1}-\tfrac{1}{4}(6a_0+a_{-2}+a_2-4(a_1+a_{-1})), \quad |i-j|=2\\
& a_{i}\cdot a_{j} &= \tfrac{1}{2}(a_i+a_j)-\tfrac{3}{2}(6a_0+a_2+a_{-2}-4(a_1+a_{-1}))+9s_{\tz,1},\\
 &&(i,j)\in \{ (-2,1), (-1,2)\}\\
& a_{-2}\cdot a_2 &= 16s_{0,1}+20(a_1+a_{-1})-(\tfrac{9}{2}(a_2+a_{-2})+30a_0)\\
& a_{i}s_{\tz,1} &= \tfrac{2\al-1}{8}(4s_{\tz,1}+a_{i-1}+a_{i+1}-2a_i)
 \\
& s_{\tz,1}\cdot s_{\tz,1} &= \tfrac{(2\al-1)(2\al-3)}{32}(6a_0+a_2+a_{-2}-4(a_1+a_{-1}))  \\
 \cline{2-3}
& (a_{i}, a_{j}) &= 1\\ 
& (a_{i}, s_{\tz,1}) &= 0\\
& (s_{\tz,1}, s_{\tz,1}) &= 0\\
 \hline
\end{array} 
 $$}

\bigskip

\bigskip

\begin{center} {\sc Notes}
 \end{center}
 \bigskip
 
 \begin{enumerate}[enum_note]
     \item  Let $n:=a_2+a_{-2}-4(a_1+a_{-1})+6a_0$. Then $n$ is an annihilating element in the algebra. $\F n$ is an ideal of $\IY_5(\al, \tfrac{1}{2})$ contained in the radical of the Frobenius form. The quotient modulo this ideal is $\IY_5(\al, \tfrac{1}{2})^\times$.
     \nomenclature{$\IY_5(\al,\tfrac{1}{2})^\times$}{\pageref{tableIY5}} 
     \item  If $\ch(\F)=5$, then $\IY_5(\al,\tfrac{1}{2})\cong 5\A(\al, \tfrac{5\al-1}{8})$ (see~\cite[Theorem on p.3]{MM}). If further $\al=2$, then $\IY_5(2,\tfrac{1}{2})$ is isomorphic to a quotient of $\hatH$ (see~\cite[Theorem~11.2]{HWQ}). \label{tableIY5 char5} 
     \item  In~\cite{MP, MM} the basis used is the same as here, with $z:=s_{\tz,1}$.
     \item $\IY_5(\al, \tfrac{1}{2})$ has axet depending on the characteristic of $\F$ (see~\cite[\S 7.3]{axet}).
 \end{enumerate}
 \end{table}

\newpage

 \begin{table}[H]
 \caption{{\Large $\hatH$}}\label{tablehatH}
\nomenclature{$\hatH$}{\pageref{tablehatH}}

 \bigskip

 For $r\in \Z$, denote by $\ttr$ the congruence class $r+3\Z$ and define $\delta \colon \Z_3 \to \F$ by $\delta(\ttz) = 0$, $\delta(\ttu) = 1$ and $\delta(\ttd) = -1$.
 
 \bigskip
 
{\tiny \makegapedcells 
\[
\begin{array}{|c|A|}
\hline
 \text{Basis} & \multicolumn{2}{l|}{\text{Products and form ($i\in \Z$) }}  \\
\hline
\multirow{11.5}{*}{$\begin{aligned}  &a_{i}, i\in \Z, \\ &s_{\ttz,j},
j\in \N, 
\\ 
 &s_{\ttu,j},
j\in 3\N, \\
 &s_{\ttd,j},
j\in 3\N, 
\end{aligned}$}
& a_{i}\cdot a_i &= a_i\\
& a_{i}\cdot a_{j} &= \tfrac{1}{2}(a_{i}+a_{j})+s_{\ttz,|i-j|}\\
& a_{i}s_{\ttr,j} &= -\frac{3}{4} a_i + \frac{3}{8}( a_{i-j}+ a_{i+j}) +\frac{3}{2} s_{\ttr ,j} + \delta(\tilde{\imath} - \ttr)( s_{\ttr -\ttu,j}- s_{\ttr  + \ttu,j})\\
& s_{\ttr, j}  s_{\ttT, k}&= \frac{3}{4}( s_{\ttr, j}+ s_{\ttT, k}) - \frac{1}{8} \sum_{\ttx \in \Z_3} (s_{\ttx, |j-k|} + s_{\ttx, j+k}),
 \quad \{i,j\}\not \subseteq 3\N \\
& s_{\tta, 3j}  s_{\ttb, 3k}  &= \frac{3}{4}\sum_{h = j,k} (s_{\tta, 3h}+ s_{\ttb, 3h}- s_{-(\tta+\ttb), 3h}) \\
 &&\phantom{{}={}} \quad - \frac{3}{8} \sum_{h = |j-k|, j+k} (s_{\tta, 3h}+ s_{\ttb, 3h}- s_{-(\tta+\ttb), 3h})\\
\cline{2-3} 
& (a_{i} , a_{j}) &= 1\\ 
& (a_{i} , s_{\ttz,j}) &= 0\\
& (s_{\ttr,j}, s_{\ttT,h}) &= 0\\
 \hline
\end{array} 
 \]}
\bigskip

\bigskip

\begin{center} {\sc Notes} 
 \end{center}
 \bigskip
 
 \begin{enumerate}[enum_note]
    \item  In the table the following notation is used: $s_{\ttz, 0}:=0$, and if $j\not \in 3\N$, then $s_{\ttu, j}:=s_{\ttd, j}:=s_{\ttz, j}$. 
    \item The subspace $J$, spanned by  $\{s_{\ttz,j}-s_{\ttd,j}, s_{\ttu,j}-s_{\ttz,j} :  j\in 3\N\}$ is an ideal of $\hatH$
     \item If $\ch(\F)=5$, then  $\hatH$ is of Monster  type $(2, \tfrac{1}{2})$ (see~\cite{FM, HWQ}). $\hatH$ is called the {\it \textup{(}characteristic $5$\textup{)}  cover of the Highwater algebra}\index{cover of the Highwater algebra}\index{Highwater algebra!cover of}.
     \item If $\ch(\F)\neq 3$, then $\mathcal H: = \hatH/J$\nomenclature{$\mathcal H$}{\pageref{tablehatH}} is of Monster  type $(2, \tfrac{1}{2})$ (see~\cite{HW}). $\mathcal H$ is called the {\it Highwater algebra}\index{Highwater algebra}.
     \item If $\ch(\F)= 3$, then $\hatH$ is not primitive and $\hatH/J$ is an infinite dimensional baric Jordan algebra~\cite[Theorem 3.4]{HW}.
     \item  With respect to the same pair of generating axes $a_0$ and $a_1$, $6\Y\left (\tfrac{1}{2}, 2\right )$ is also of Monster type $(2,\tfrac{1}{2})$ since, by~\cite[Theorem~11.9]{HWQ}, $6\Y(\tfrac{1}{2},2)\cong \hatH/\F q$, where $q:=2s_{\ttz,2}+a_{-1}+a_2-(a_0+a_1)$. 
     \item \label{tablehatH quotients} By~\cite[Theorem~1.4]{HWQ}, every quotient of ${\mathcal H}$ is a $2$-generated symmetric $\mathcal M(2,\tfrac{1}{2})$-axial algebra. 
     \item Both $\hat{ \mathcal H}$ (in characteristic 5) and $\mathcal H$ have axet $X(\infty)$.
 \end{enumerate}
 \end{table}
 
\newpage

\begin{table}[H]
\caption{{\Large $4\QQ(2\bt,\bt)$}}\label{tableQ2}
\nomenclature{$4\QQ(2\bt,\bt)$}{\pageref{tableQ2}}

 \bigskip
 
{\sc Other names:}
$
\QQ_2(\bt) \mbox{ in~\cite{Joshi, FMS2}}  
$
\nomenclature{$\QQ_2(\bt)$}{\pageref{tableQ2}} 

\bigskip

{\tiny \makegapedcells 
$$
\begin{array}{|c|A|}
\hline
 \mbox{Basis} & \multicolumn{2}{l|}{\text{Products and form ($i\in \Z_4$)}}  \\
\hline
\multirow{10.5}{*}{$\begin{aligned}  &a_{-1}, \\
& a_0, \\
& a_1, \\ 
& a_2
\end{aligned}$}
& a_i\cdot a_i &= a_i \\
& a_{i}\cdot a_{j} &= \tfrac{\bt}{2}(2a_i+a_{j}-a_{-j}),  \quad i\in \{0,2\}, j\in \{1,-1\}\\
& a_{-1}\cdot a_1&= \bt(a_1+a_{-1}-a_0-a_2)\\
& a_0\cdot a_2&= 0\\
\cline{2-3}
& (a_{i} , a_{i}) &= 1, \quad i\in \{0,2\}\\ 
& (a_{i} , a_{i}) &= 2, \quad i\in \{-1,1\}\\
& (a_i, a_j) &= \bt, \quad |i-j|\in \{1,3\}\\
& (a_1, a_{-1}) &= 2\bt \\
& (a_0, a_2) &= 0 \\
 \hline
\end{array} 
 $$}
\bigskip

\bigskip

\begin{center} {\sc Notes}
 \end{center}
 \bigskip 
 \begin{enumerate}[enum_note]
     \item  $a_0$ is a $\mathcal J(\bt)$-axis, $a_1$ is an $\mathcal M(2\bt, \bt)$-axis, whence $4\QQ(2\bt,\bt)$ is of Monster  type $(2\bt,\bt)$. 
     \item  If $\bt=-\tfrac{1}{2}$, then the form has non-trivial radical $\F (a_{-1}+a_0+a_1+a_2)$. $4\QQ(-1,-\tfrac{1}{2})^\times$
     \nomenclature{$4\QQ(-1,-\tfrac{1}{2})^\times$}{\pageref{tableQ2}} is the quotient of $4\QQ(-1,-\tfrac{1}{2})$ modulo the radical.
     \item  The Frobenius form is induced by the Frobenius form of the Matsuo algebra associated to the symmetric group $S_4$ (see~\cite[\S 5.3]{Joshi}).
     \item 
     The even subalgebra of $4\QQ(2\bt,\bt)$ is isomorphic to $2\B$.
     \item  
     The odd subalgebra of $4\QQ(2\bt,\bt)$ is isomorphic to $3\C(2\bt)$ and has basis $( a_{-1}, a_1, a_0+a_2)$ which is the basis corresponding to that given in Table \ref{table2}.
     \item   In $4\QQ(-1,-\tfrac{1}{2})^\times$,  
     the even subalgebra is isomorphic to $2\B$; the odd subalgebra is isomorphic to $3\C(-1)^\times$ and the elements of the basis corresponding to the one given in Table~\ref{table2} are the images of $a_{-1}$ and $a_1$.
  \item  The basis used in~\cite[Table~6]{Joshi} is $(a_0,a_2,a_1,a_{-1})$. 
     \item  $4\QQ(2\bt,\bt)$ and $4\QQ(-1,-\tfrac{1}{2})^\times$ have axet $X(4)$.
 \end{enumerate}

 \end{table}

 \newpage 

\begin{table}[H]
\caption{{\Large $3\QQ^\prime(\tfrac{1}{3}, \tfrac{2}{3})$}}\label{tableQ2bis}
\nomenclature{$3\QQ^\prime(\tfrac{1}{3}, \tfrac{2}{3})$}{\pageref{tableQ2bis}}

\bigskip 

{\sc  Other names:}
$
 \QQ_2(\tfrac{1}{3},\tfrac{2}{3}), \mbox{ if $\ch(\F)\neq 5$},  \mbox{ and } \QQ_2(\tfrac{1}{3})^\times\oplus\la \1 \ra, \mbox{ if $\ch(\F)= 5$}, \mbox{ in~\cite{Turner1}}
$
\nomenclature{$\QQ_2(\tfrac{1}{3},\tfrac{2}{3})$}{\pageref{tableQ2bis}}
\nomenclature{$\QQ_2(\tfrac{1}{3})\oplus\la \1 \ra$}{\pageref{tableQ2bis}}

\bigskip

{\tiny \makegapedcells 
$$
\begin{array}{|c|A|}
\hline
 \mbox{Basis} & \multicolumn{2}{l|}{\text{Products and form}} \\
\hline
\multirow{14}{*}{$\begin{aligned}   \\
& a_0, \\
& a_1,\\ 
& a_2, \\
& s
\end{aligned}$}
& a_i\cdot a_i &= a_i, \quad i\in \{0,1,2\} \\
& a_i\cdot a_1 &=  \tfrac{2}{3}a_i-\tfrac{1}{6}(s-a_1), \quad i\in \{0,2\} \\
& a_0\cdot a_2&= 0\\
& a_i\cdot s &= \tfrac{2}{3}a_i+\tfrac{1}{6}(s-a_1), \quad i\in \{0,2\}\\
& a_1\cdot s&= \tfrac{2}{3}(a_0+a_2)-\tfrac{1}{3}(a_1+s)\\
\cline{2-3}
& (a_{i} , a_{i}) &= \tfrac{5}{8},  \quad i\in \{0,2\}\\ 
& (a_{1} , a_{1}) &= 1 \\
& (a_i, a_j) &= \tfrac{5}{12},  \quad |i-j|=1\\
& (a_1, s) &= \tfrac{1}{6} \\
& (a_i, s) &= \tfrac{5}{12},  \quad i\in \{0,2\}\\
& (a_0, a_2) &= 0 \\
& (s,s) &= 1 \\
 \hline
\end{array} 
 $$}
\bigskip

\bigskip

\begin{center} {\sc Notes }
 \end{center}
 \bigskip
 \begin{enumerate}[enum_note]
     \item  $a_0$ is a $\mathcal J(\tfrac{1}{3})$-axis, $a_1$ is an $\mathcal M(\tfrac{1}{3}, \tfrac{2}{3})$-axis, whence $3\QQ^\prime(\tfrac{1}{3}, \tfrac{2}{3})$ is of Monster  type $(\tfrac{1}{3}, \tfrac{2}{3})$ (see~\cite{Turner1}).
     \item   If $\ch(\F)\neq 5$, then $\mathbbm 1:=\tfrac{3}{5}(a_{-1}+a_0+a_1+a_2)$ is the identity element of $V$. Moreover, $(V, \{a_0, \mathbbm 1-a_1\})\cong 4\QQ(\tfrac{2}{3},\tfrac{1}{3})$ (see~\cite[\S 3.2.1 and Lemma~6.3]{Turner1}). 
     \item  If $\ch (\F)= 5$, then $3\QQ^\prime(\tfrac{1}{3}, \tfrac{2}{3})$ is isomorphic to the algebra $\QQ_2(\tfrac{1}{3})^\times \oplus \la\mathbbm 1\ra$ defined in~\cite[\S 3.2.2]{Turner1}. With respect to the basis $(x,y,z,\mathbbm 1)$ used in~\cite[Table~4]{Turner1}, here $a_0=y$, $a_1=\mathbbm 1-z$, $a_2=x$, and $s=x+y+z+\mathbbm 1$ (see also~\cite[Lemma 6.4]{Turner1}).
     \item If $\ch(\F)\neq 5$, then the Frobenius form coincides with the form defined on the algebra $4\QQ(\tfrac{2}{3},\tfrac{1}{3})$ in~\cite{Turner1}, rescaled by $\tfrac{5}{8}$. Note that, if $\ch(\F)=5$, then $\{a_0, a_2\}$ is contained in the radical of the Frobenius form (see~\cite[\S 3.1]{survey}).  
     \item   The even subalgebra  is isomorphic to $
     2\B$. The odd subalgebra is isomorphic to $1\A$.
     \item  The algebra $3\QQ^\prime(\tfrac{1}{3}, \tfrac{2}{3})$ has axet $X'(3)$ (see~\cite{Turner1}).
       \end{enumerate}
\end{table}

\newpage

\begin{table}[H]
\caption{{\Large $3\C^\prime(\eta, 1-\eta)$, $\eta\not \in \{0,1, \tfrac{1}{2}\}$}}\label{table3Cskew}
\nomenclature{$3\C^\prime(\eta, 1-\eta)$}{\pageref{table3Cskew}}

 \bigskip
 
{\sc Other names:}
$
3\C(\eta,1-\eta) \mbox{ in~\cite{FMS2, Turner1}}  
$
\nomenclature{$3\C(\eta,1-\eta)$}{\pageref{table3Cskew}} 

\bigskip

{\tiny \makegapedcells 
$$
\begin{array}{|l|A|}
\hline
 \mbox{Basis} & \multicolumn{2}{l|}{\text{Products and form}}  \\
\hline
\multirow{13}{*}{$\begin{aligned}  
& a_0, \\
& a_1, \\
& a^\ast
\end{aligned}$}
& a_i\cdot a_i &= a_i \quad  i\in \{0,1\}\\
& a^\ast\cdot a^\ast &=  a^\ast \\
& a_{0}\cdot a^\ast &= \tfrac{1+\eta}{2}a_0+\tfrac{1-\eta}{2}(a^\ast-a_1)\\
& a^\ast\cdot a_1&= \tfrac{1+\eta}{2}a_0-\tfrac{1-\eta}{2}(a^\ast+a_1)\\
& a_0\cdot a_1 &=  \tfrac{1+\eta}{2}a_0+\tfrac{1-\eta}{2}(a_1-a^\ast) \\
 \cline{2-3}
& (a_{0} , a_{0}) &= 2-\eta, \\
& (a_1 , a_1) &= \eta+1\\
& (a^\ast , a^\ast) &= \eta+1\\
& (a_1, a^\ast) &= \tfrac{1}{2}\eta(\eta+1) \\
& (a_0, a_1) &= \tfrac{1}{2}(2-\eta)(\eta+1)\\
& (a_0, a^\ast) &= \tfrac{1}{2}(2-\eta)(\eta+1)\\
 \hline
\end{array} 
 $$}
\bigskip

\bigskip

\begin{center} {\sc Notes }
 \end{center}
 \bigskip
 
 \begin{enumerate}[enum_note]
    \item The values $\eta\in\{0,1,\tfrac{1}{2}\}$ are excluded,  since the above structure constants define, for $\eta\in\{0,1\}$, an algebra which is not primitive and, for $\eta=\tfrac{1}{2}$, a  $\mathcal J(\tfrac{1}{2})$-algebra isomorphic to $ \J(-\tfrac{1}{8})$ where $(a_0,a_1,\tfrac{1}{4}(a_0-a_1-a^\ast))$ is the corresponding basis in Table~\ref{table2}.
     \item  $a_0$ is a $\mathcal J(1-\eta)$-axis while $a_1$ and $a^\ast$ are $\mathcal J(\eta)$-axes. Hence $3\C^\prime(\eta,1-\eta)$ can be regarded as an algebra of Monster type $(\eta, 1-\eta)$ or of Monster type $(1-\eta, \eta)$. \label{table3Cskew axes}
     \item   
     $\mathbbm 1:=\tfrac{1}{\eta}(a_1+a^\ast-a_0)$ is the identity of the algebra. 
     \item  If $3\C^\prime(\eta, 1-\eta)$ is regarded as an  algebra of Monster type $(\eta, 1-\eta)$, then $a_1^{\tau_0}=a^\ast$.  
      \item  If $3\C^\prime(\eta, 1-\eta)$ is regarded as an algebra of Monster type $(1-\eta, \eta)$, then  $a_0^{\tau_1}=\mathbbm 1-a^\ast$.
    \item  Let $\V_1=(\lla a_1,a^\ast\rra,\{a_1,a^*\})$. If $\eta\neq -1$, then $\V_1\cong 3\C(\eta)$, where $(a_1,a^\ast,\mathbbm 1 -a_0)$ is the corresponding basis in Table \ref{table2}. If $\eta=-1$, then $\V_1\cong 3\C(-1)^\times$, where $(a_1,a^\ast)$ is the corresponding basis in Table \ref{table2}.
    \item Let $\V_0=(\lla a_0,\mathbbm1-a^\ast\rra,\{a_0,\mathbbm 1-a^*\})$. If $\eta\neq 2$, then $\V_0\cong 3\C(1-\eta)$, where $(a_0,\mathbbm 1 -a^\ast,\mathbbm 1-a_1)$ is the corresponding basis in Table \ref{table2}. If $\eta=2$, then $\V_0\cong 3\C(-1)^\times$, where $(a_0,\mathbbm 1- a^\ast)$ is the corresponding basis in Table \ref{table2}.
    \item  The Frobenius form is uniquely determined up to non-zero scalars. Note that, if $\eta=-1$, then $a_1$ and $a^\ast$ belong to the radical of the form; if $\eta=2$, then $a_0$ belongs to the radical of the form.  
    \item   $3\C^\prime(\eta,1-\eta)$ was first constructed in \cite[Theorem 4.1.1]{FelixThesis}. However, it was not observed to be a skew axial algebra until \cite{Turner1}.
    \item  The algebra $3\C^\prime(\eta, 1-\eta)$ has axet $X'(3)$ (see~\cite{Turner1}).
           \end{enumerate}
\end{table}

\newpage 

 \begin{table}[H]
 \caption{{\Large $4\B(-1, \tfrac{1}{2}; \nu)^{\times }$}}\label{4Bq}
\nomenclature{$4\B(-1, \tfrac{1}{2}; \nu)^{\times }$}{\pageref{4Bq}}

 \bigskip
 
{\tiny \makegapedcells 
\[
\begin{array}{|l|A|}
\hline
 \mbox{Basis} & \multicolumn{2}{l|}{\text{Products and form}}  \\
\hline
\multirow{10.5}{*}{$\begin{aligned}  
& a_{-1}, \\
& a_0, \\
& a_1,\\
& u
\end{aligned}$}
& a_i\cdot a_i &= a_i \quad i\in \{-1,0,1\}\\
& a_{0}\cdot a_i &= \tfrac{1}{2}a_0-\tfrac{1}{4}a_i-\tfrac{3}{4}a_{-i}+\tfrac{1+\nu}{4}u,  \quad i\in \{1,-1\}\\
& a_{-1}\cdot a_1 &= -a_{-1}-a_1+\tfrac{\nu}{2} u\\
& a_i\cdot u &=  0, \quad i\in \{-1,0,1\}\\
& u^2 &= 0\\
\cline{2-3}
& (a_i , a_i) &= 1, \\
& (a_i, a_j) &= \tfrac{\al^2}{4},  \quad |i-j|=1\\
& (a_{-1}, a_1) &= \tfrac{\al}{2}\\
& (a_i,u) &= 0 \\
 \hline
\end{array} 
\]
}
\bigskip

\bigskip

\begin{center} {\sc Notes} 
\bigskip

 \end{center}
 \begin{enumerate}[enum_note]
     \item  The algebra $4\B(-1,\tfrac{1}{2}; \nu)^{\times}$ is isomorphic to the quotient of $4\B(-1,\tfrac{1}{2})$ over the ideal $\F(\nu(a_0+a_2)+(1-\nu)(a_1+a_{-1})+c)$. If $\nu\neq \tfrac{1}{2}$, it is non-symmetric. If $\nu=\tfrac{1}{2}$, then $4\B(-1,\tfrac{1}{2}; \tfrac{1}{2})^{\times}$ is isomorphic to $4\B(-1, \tfrac{1}{2})^\times$ (see~\cite[Proposition~5.21]{MM}).
     \item    $a_2=a_0^{\tau_1}=-a_0+a_1+a_{-1}-u$.
     \item   If $\nu\neq 0$, then the odd subalgebra is isomorphic to $3\C(-1)$ and has basis $(a_1,a_{-1},-a_1-a_{-1}+\nu u)$ which is the basis corresponding to that given in Table \ref{table2}. If $\nu=0$, then the odd subalgebra is isomorphic to $3\C(-1)^\times$ and has basis $(a_1,a_{-1})$ which is the basis corresponding to that given in Table \ref{table2}.
     \item    If $\nu\neq 1$, then the even subalgebra is isomorphic to $3\C(-1)$ and has basis $(a_0,a_2,-a_1-a_{-1}+\nu u)$ which is the basis corresponding to that given in Table \ref{table2}. If $\nu=1$, then the even subalgebra is isomorphic to $3\C(-1)^\times$ and has basis $(a_0,a_2)$ which is the basis corresponding to that given in Table \ref{table2}.
     \item $4\B(-1,\tfrac{1}{2}; \nu)^{\times}$ has axet $X(4)$.
\end{enumerate}
\end{table}

\chapter{Algebras of $\mathcal H$-type}
\label{H}

In this chapter we prove Theorem~\ref{HWthm}, therefore we assume, for the remainder of this chapter, that $\V=(V, \{a_0, a_1\})$ is a $\mathcal{M}(2,\tfrac{1}{2})$-axial algebra such that $\{\lmu,\lmf,\lmd,\lmdf\}=\{1\}$. 
\bigskip



\begin{lemma}\label{s=0}
 For every $r\in \Z$, $s_{\tr, 0}=0$.   
\end{lemma}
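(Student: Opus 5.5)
The proof is a direct computation from the definition of $s_{\ti,n}$ in Equation~\eqref{defs} on page~\pageref{defs}, specialised to $n=0$, combined with the standing assumption of this chapter that $\bt=\tfrac{1}{2}$. No earlier structural result is needed.

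First I will check that the symbol makes sense when $n=0$. The congruence class $\tr=r+0\Z$ is just $\{r\}$, so $s_{\tr,0}$ is determined by $r$ alone and no compatibility condition (as in Lemma~\ref{invariant}) has to be verified. By Equation~\eqref{defs} with $i=r$ and $n=0$,
\[
s_{\tr,0}=a_ra_{r}-\bt(a_r+a_{r}).
\]
Since $a_r=a_0^{\rho^j}$ or $a_r=a_1^{\rho^j}$ for a suitable $j$, and $\rho$ is an algebra automorphism, $a_r$ is an axis and in particular an idempotent (axiom A0). Hence $a_ra_r=a_r$ and
\[
s_{\tr,0}=(1-2\bt)a_r .
\]

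Finally, in this chapter $\V$ is an $\mathcal M(2,\tfrac{1}{2})$-axial algebra, so $\bt=\tfrac12$ and $1-2\bt=0$, giving $s_{\tr,0}=0$ for every $r\in\Z$. Alternatively, one can use the identity $s_{\ti,n}=-\tfrac12(a_i-a_{i+n})^2$, valid when $\bt=\tfrac12$, which gives $s_{\tr,0}=-\tfrac12(a_r-a_r)^2=0$ immediately. There is no real obstacle; the only point needing care is the observation above that the $n=0$ case of the notation is well defined.
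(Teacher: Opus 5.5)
Your proof is correct and is essentially the paper's own argument. The paper's proof is the single remark that, since $\bt=\tfrac{1}{2}$, the claim follows immediately from Equation~\eqref{defs}; your computation $s_{\tr,0}=a_ra_r-\bt(a_r+a_r)=(1-2\bt)a_r=0$, using that $a_r$ is idempotent, simply spells that remark out.
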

\begin{proof}
 Since $\bt=\tfrac{1}{2}$,  this follows immediately by Equation~\eqref{defs} on page~\pageref{defs}.  
\end{proof}
\begin{lemma}\label{prodottifacili}
For every $i,j\in \Z$,
 $a_ia_j=\tfrac{1}{2}(a_i+a_j)+s_{\ti , |i-j|}$.
\end{lemma}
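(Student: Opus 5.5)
This is essentially a direct unwinding of the definitions. The plan is to split into the cases $i=j$, $j>i$, and $j<i$, using Equation~\eqref{defs} together with $\bt=\tfrac{1}{2}$ and Lemma~\ref{s=0}.

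\emph{The case $i=j$.} Here $a_ia_i=a_i=\tfrac12(a_i+a_i)+s_{\ti,0}$, since $s_{\ti,0}=0$ by Lemma~\ref{s=0}.

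\emph{The case $j>i$.} Set $n:=j-i\geq 1$. By Equation~\eqref{defs} with $\bt=\tfrac12$,
\[
s_{\ti,n}=a_ia_{i+n}-\tfrac12(a_i+a_{i+n}),
\]
and rearranging gives $a_ia_j=\tfrac12(a_i+a_j)+s_{\ti,|i-j|}$. Here $\ti$ denotes the class $i+n\Z$, with $n=|i-j|$.

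\emph{The case $j<i$.} Set $n:=i-j\geq 1$. Commutativity and the previous case give
\[
a_ia_j=a_ja_i=\tfrac12(a_i+a_j)+s_{\tj,n}.
\]
We then need $s_{\tj,n}=s_{\ti,n}$. Since $i\equiv j \pmod n$, these are the same congruence class modulo $n$, and Lemma~\ref{invariant}(b) gives $a_ja_{j+n}-\bt(a_j+a_{j+n})=a_ia_{i+n}-\bt(a_i+a_{i+n})$. So the notation $s_{\ti,n}$ is well defined and the identity holds.

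The only point needing care is this well-definedness of $s_{\ti,n}$ on congruence classes, and Lemma~\ref{invariant}(b) supplies it directly. There is no genuine obstacle.
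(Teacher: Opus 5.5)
Your proof is correct and is the paper's argument: the paper says only that the identity follows from Equation~\eqref{defs} with $\bt=\tfrac12$. Your case split, the use of Lemma~\ref{s=0} for $i=j$, and the appeal to Lemma~\ref{invariant} for the well-definedness of $s_{\ti,n}$ when $j<i$ spell out what the paper leaves implicit.
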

\begin{proof}
 This follows by Equation~\eqref{defs} on page~\pageref{defs}. 
\end{proof}

\begin{lemma}\label{evenlevel}
For every  $i,r,t\in 2\Z$, $j,k\in 2\Z_{\geq 0}$ the following hold:
\begin{enumerate}
    \item $\lm_{i}=\lm_{i}^f=1$;\label{evenlevel i}
    \item  if $\ch (\F)\neq 5$ or $j\not \equiv_6 0$, then $s_{\tz,j}=s_{\tr,j}$ and $s_{\tu,j}=s_{\tr+\tu,j}$;\label{evenlevel ii}
    \item  if $\ch (\F)= 5$,  $j\equiv_6 0$, and $r\equiv_6 t$, then $s_{\tr,j}=s_{\tT,j}$ and $s_{\tr+\tu,j}=s_{\tT+\tu,j}$.\label{evenlevel iii}
\end{enumerate}
Moreover, if $\{j,k\}\not \subseteq 6\N$, then
\begin{enumerate}\setcounter{enumi}{3}
    \item  $s_{\tr, j}  s_{\tT, k}= \frac{3}{4}( s_{\tr, j}+ s_{\tT, k}) - \frac{1}{8} \sum_{l\in\{0,2,4\}} (s_{\bar{l}, |j-k|} + s_{\bar{l}, j+k})$;\label{evenlevel iv}
 \item 
 $s_{\tu+\tr, j}  s_{\tu+\tT, k}= \frac{3}{4}( s_{\tu+\tr, j}+ s_{\tu+\tT, k}) - \frac{1}{8} \sum_{l\in\{1,3,5\}} (s_{\bar{l}, |j-k|} + s_{\bar{l}, j+k})$.\label{evenlevel v}
\end{enumerate} 
\end{lemma}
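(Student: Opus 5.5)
The plan is to reduce everything to the symmetric subalgebra $\V_e$ (and, via the flip-type symmetry of the hypotheses, to $\V_o$). Recall that $\V_e=(V_e,\{a_0,a_2\})$ is a symmetric $2$-generated $\mathcal M(2,\tfrac12)$-axial algebra. By definition $\lmd=\lambda_{a_0}(a_2)=1$ and $\lmdf=\lambda_{a_1}(a_{-1})=1$. Projections computed inside a subalgebra agree with those in $V$, since the decomposition of $a_2$ with respect to $\ad_{a_0}$ inside $V_e$ is also a decomposition in $V$ and the decomposition is unique. Hence Lemma~\ref{lambdasut} applies to $\V_e$, and $\V_e$ is a quotient of $\mathcal H$ or, in characteristic $5$, of $\hatH$.

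By the "in particular" clause of Lemma~\ref{lambdasut}, every projection $\lambda_{a_0}(a_{2m})=1$. This gives $\lm_i=1$ for even $i$. The same argument applied to $\V_o=(V_o,\{a_{-1},a_1\})$, which is also symmetric with $\lambda_{a_1}(a_{-1})=1$, gives $\lambda_{a_1}(a_{1+2m})=1$, that is, $\lm_i^f=1$ for even $i$. This proves part \ref{evenlevel i}.

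For parts \ref{evenlevel ii}--\ref{evenlevel v}, I transport the relations holding in $\hatH$ (respectively $\mathcal H$) to $V_e$. Index the generators of $V_e$ as $b_k:=a_{2k}$, so that $b_0=a_0$, $b_1=a_2$, and the Miyamoto involutions are compatible. Then the element $s_{\tr,j}$ of $V$, with $r$ and $j$ even, equals the element $s_{\overline{r/2},\,j/2}$ computed in $\V_e$. The table for $\hatH$ in Table~\ref{tablehatH} then gives the following.

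\begin{enumerate}
\item If $j/2\notin 3\N$, then $s_{\ttu,j/2}=s_{\ttd,j/2}=s_{\ttz,j/2}$, so $s_{\tr,j}=s_{\tz,j}$ for all even $r$.
\item If we are in $\mathcal H$, which is the only case when $\ch\F\neq5$ by Lemma~\ref{lambdasut}, the ideal $J$ identifies all three classes.
\item If $\ch\F=5$, the dependence is only on $r/2 \bmod 3$, that is, on $r \bmod 6$.
\end{enumerate}

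Since $j/2\in3\N$ if and only if $j\equiv_6 0$, this yields \ref{evenlevel ii} and \ref{evenlevel iii} for the even-indexed $s$. Parts \ref{evenlevel iv} and \ref{evenlevel v} are the fourth product line of Table~\ref{tablehatH}, read in $\V_e$ and $\V_o$ respectively. The condition $\{j,k\}\not\subseteq 6\N$ corresponds to $\{j/2,k/2\}\not\subseteq 3\N$, and the sum over $\ttx\in\Z_3$ becomes the sum over $l\in\{0,2,4\}$, respectively $l\in\{1,3,5\}$. The odd statements, about $s_{\tu+\tr,j}$, follow identically from $\V_o$ with $b_k:=a_{2k-1}$; the required identification is $s_{\overline{2k-1},j}$ matching $s_{\tilde k,j/2}$ there.

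The step I expect to be the main obstacle is the careful bookkeeping of these identifications, for two reasons. First, the sequence $b_k$ defined from $\V_e$ via its own $\rho_e=\tau_{a_0}\tau_{a_2}$ must coincide with $a_{2k}$. This needs checking: $\tau_{a_2}=\tau_1\tau_0\tau_1$, so $\rho_e=\tau_0\tau_1\tau_0\tau_1=\rho^2$, whose orbit on $a_0$ gives only $a_{4k}$. The points $a_{4k+2}$ arise as $a_0^{\tau_{a_2}\rho_e^k}$, consistent with the indexing in Equation~\eqref{aroi} for $\V_e$. Second, the statements must hold for quotients and not just for $\hatH$ itself. Relations of the form "$x=y$" and the product formulas pass to quotients, and Lemma~\ref{quotient} ensures that the projection values are preserved, so this causes no difficulty.
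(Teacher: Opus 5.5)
Your proposal is correct and is essentially the paper's proof: apply Lemma~\ref{lambdasut} to the symmetric subalgebras $\V_e$ and $\V_o$ (using $\lmd=\lmdf=1$), then read the relations off the structure of $\mathcal H$ and $\hatH$; the paper does this last step by citing \cite[Definition~3.1 and Lemma~3.4]{HWQ} rather than Table~\ref{tablehatH}. One harmless slip in your bookkeeping remark: $a_0^{\tau_{a_2}}=a_4$, so the odd-indexed generators of $\V_e$ are $a_2^{\rho_e^k}=a_{4k+2}$, which comes directly from Equation~\eqref{aroi}, and not $a_0^{\tau_{a_2}\rho_e^k}$; the identification $b_k=a_{2k}$ is unaffected.
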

\begin{proof}
Let $x\in \{e,o\}$. By hypothesis, $\lm_{a_0}(a_2)=\lmd=1$, if $x=e$, or $\lm_{a_{1}}(a_{-1})=\lmdf=1$, if $x=o$.  Since, by Lemma~\ref{lem:VeVo}, $\V_x$ is symmetric,   by Lemma~\ref{lambdasut}, either $\V_x$  is isomorphic to a quotient of $\mathcal H$, or  $
\ch(\F)=5$ and $\V_x$ is isomorphic to a quotient of $\hatH$. So the result follows by~\cite[Definition~3.1 and Lemma 3.4]{HWQ}. 
\end{proof}

For $r\in \Z$ denote by $\tilde r$ the congruence class $r+3\Z$. 
Define

\begin{equation*}
\begin{aligned}
\dl \colon \Z/3\Z &\longrightarrow \F \\
\tilde r & \longmapsto 0 \\
\tilde 0 & \longmapsto 0 \\
\tilde 1 & \longmapsto 1 \\
\tilde 2 & \longmapsto -1
\end{aligned}
\begin{aligned}
&\left.\vphantom{
    \begin{aligned}
    \dl \colon \Z/3\Z &\longrightarrow \F
    \end{aligned}
}\right. \\
&\left.\vphantom{
    \begin{aligned}
    \tilde r & \longmapsto 0
    \end{aligned}
} \right. \phantom{\rbrace}\quad \mbox{for every $\tilde r\in \Z/3\Z$ if $\ch( \F)\neq 5$,}\\
&\left.\vphantom{
\begin{aligned}
\tilde 0 & \longmapsto 0 \\
\tilde 1 & \longmapsto 1 \\
\tilde 2 & \longmapsto -1
\end{aligned}
}
\right\rbrace\quad \mbox{if $\ch( \F)= 5$.}
\end{aligned}
\end{equation*}

\begin{lemma}\label{newformulas}
Let $i\geq 2$. Suppose that for every  $l,r\in \Z$, $1\leq j\leq i$ the following hold: 
\begin{enumerate}[enum_arabic]
\item  if $\ch (\F)\neq 5$ or $j\not \equiv_3 0$, then  $s_{\tr,j}=s_{\tz,j}$;  
\item  if $\ch (\F)= 5$,  $j\equiv_3 0$, and $r\equiv_3 t$, then $s_{\tr,j}=s_{\tT,j}$; 
\item  $a_l s_{\tr,j}= -\tfrac{3}{4} a_l+\tfrac{3}{8}( a_{l-j}+ a_{l+j})+\tfrac{3}{2} s_{\tr,j}+\delta(\tilde \jmath-\tilde r)(s_{\tr-\tu,j}-s_{\tr+\tu,j}).$\label{newformulas c}
\end{enumerate}
 Then, for every $1\leq h\leq k\leq i$, 
\begin{enumerate}
\item 
$s_{\tz, h}(a_k+a_{-k})=-\tfrac{3}{4}(a_k+a_{-k})+\tfrac{3}{8} (a_{k-h}+ a_{k+h}+a_{-k-h}+ a_{-k+h})+3s_{\tz,h}$;\label{newformulas i}
\item $
\begin{aligned}[t]
u_hu_k &= -\tfrac{9}{16}a_0+\tfrac{9}{32}(a_h+a_{-h}+a_k+a_{-k}) \\
&\phantom{{}={}} \quad -\tfrac{9}{64} (a_{k-h}+ a_{k+h}+a_{-k-h}+ a_{-k+h})\\
&\phantom{{}={}} \quad +\tfrac{9}{64}(s_{\tih,|h-k|}+s_{\tih,h+k}+s_{-\tih,h+k}+s_{-\tih,|h-k|}) \\
&\phantom{{}={}} \quad -\tfrac{9}{16}s_{\tz,h} -\tfrac{9}{16}s_{\tz,k} +\tfrac{1}{4}s_{\tz,h}s_{\tz,k};
\end{aligned}
$\label{newformulas ii}
\item
$
\begin{aligned}[t]
u_hv_k &= \tfrac{3}{16}a_0-\tfrac{3}{32}(a_k+a_{-k}+a_h+a_{-h})\\
&\phantom{{}={}} \quad +\tfrac{3}{64}  (a_{k-h}+ a_{k+h}+a_{-k-h}+ a_{-k+h})\\
&\phantom{{}={}} \quad +\tfrac{3}{64} (s_{\tih,|h-k|}+s_{\tih,h+k}+s_{-\tih,h+k}+s_{-\tih,|h-k|}) \\
&\phantom{{}={}} \quad -\tfrac{3}{16}s_{\tz, k}-\tfrac{3}{16}s_{\tz,h} -\tfrac{1}{4}s_{\tz,h}s_{\tz,k};
\end{aligned}
$\label{newformulas iii}
\item 
$
\begin{aligned}[t]
v_hv_k &= \tfrac{3}{16}a_0-\tfrac{3}{32}(a_k+a_{-k}+a_h+a_{-h}) \\
&\phantom{{}={}} \quad +\tfrac{3}{64}  (a_{k-h}+ a_{k+h}+a_{-k-h}+ a_{-k+h})\\
&\phantom{{}={}} \quad +\tfrac{1}{64}(s_{\tih,|h-k|}+s_{\tih,h+k}+s_{-\tih,h+k}+s_{-\tih,|h-k|}) \\
&\phantom{{}={}} \quad -\tfrac{1}{16}s_{\tz,h}-\tfrac{1}{16}s_{\tz, k}+\tfrac{1}{4}s_{\tz,h}s_{\tz,k}.
\end{aligned}
$\label{newformulas iv}
\end{enumerate}
Similarly, 
\begin{enumerate}[resume*]
\item
$
\begin{aligned}[t]
s_{\tu, h}(a_{1+k}+a_{1-k}) &= -\tfrac{3}{4}(a_{1+k}+a_{1-k})+\tfrac{3}{8} (a_{1+k-h}+ a_{1+k+h}+a_{1-k-h} \\
&\phantom{{}={}-\tfrac{3}{4}(a_{1+k}+a_{1-k})+\tfrac{3}{8} (} \quad + a_{1-k+h})+3s_{\tu,h};
\end{aligned}
$\label{newformulas v}
\item
$
\begin{aligned}[t]
\tilde u_h\tilde u_k &= -\tfrac{9}{16}a_1+\tfrac{9}{32}(a_{1-h}+a_{1+h}+a_{1-k}+a_{1+k}) \\
&\phantom{{}={}} \quad -\tfrac{9}{64} (a_{1+k-h}+ a_{1+k+h} +a_{1-k-h}+ a_{1-k+h}) \\
&\phantom{{}={}} \quad +\tfrac{9}{64}(s_{\tih+\tu,|h-k|}+s_{\tih+\tu,h+k}+s_{\tu-\tih,h+k}+s_{\tu-\tih,|h-k|}) \\
&\phantom{{}={}} \quad -\tfrac{9}{16}s_{\tu,h} -\tfrac{9}{16}s_{\tu,k} +\tfrac{1}{4}s_{\tu,h}s_{\tu,k};
\end{aligned}
$\label{newformulas vi}
\item
$
\begin{aligned}[t]
\tilde u_h\tilde v_k &= \tfrac{3}{16}a_1-\tfrac{3}{32}(a_{1+k}+a_{1-k}+a_{1+h}+a_{1-h}) \\
&\phantom{{}={}} \quad +\tfrac{3}{64}(a_{1+k-h}+ a_{1+k+h} +a_{1-k-h}+ a_{1-k+h}) \\
&\phantom{{}={}} \quad +\tfrac{3}{64} (s_{\tih+\tu,|h-k|}+s_{\tih+\tu,h+k}+s_{\tu-\tih,h+k}+s_{\tu-\tih,|h-k|})\\
&\phantom{{}={}} \quad -\tfrac{3}{16}s_{\tu, k}-\tfrac{3}{16}s_{\tu,h} -\tfrac{1}{4}s_{\tu,h}s_{\tu,k};
\end{aligned}
$\label{newformulas vii}

\item
$
\begin{aligned}[t]
\tilde v_h\tilde v_k &= \tfrac{3}{16}a_1-\tfrac{3}{32}(a_{1+k}+a_{1-k}+a_{1+h}+a_{1-h}) \\
&\phantom{{}={}} \quad +\tfrac{3}{64}  (a_{1+k-h}+ a_{1+k+h}+a_{1-k-h}+ a_{1-k+h}) \\
&\phantom{{}={}} \quad +\tfrac{1}{64}(s_{\tih+\tu,|h-k|}+s_{\tih+\tu,h+k}+s_{\tu-\tih,h+k}+s_{\tu-\tih,|h-k|})\\
&\phantom{{}={}} \quad -\tfrac{1}{16}s_{\tu,h}-\tfrac{1}{16}s_{\tu, k}+\tfrac{1}{4}s_{\tu,h}s_{\tu,k}.
\end{aligned}
$\label{newformulas viii}
\end{enumerate}
\end{lemma}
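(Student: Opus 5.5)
Everything follows from the definitions in Equation~\eqref{defui} specialised to $(\al,\bt)=(2,\tfrac12)$ and $\lm_h=\lm_k=1$, by direct bilinear expansion. We first record the specialised forms. With $\al=2$, $\bt=\tfrac12$, $\lm_h=1$ we get $\lm_h-\bt-\al\lm_h=-\tfrac32$ and $\tfrac12(\al-\bt)=\tfrac34$. Hence
\[
u_h=-\tfrac34 a_0+\tfrac38(a_h+a_{-h})-\tfrac12 s_{\tz,h},\qquad v_h=-\tfrac14 a_0+\tfrac18(a_h+a_{-h})+\tfrac12 s_{\tz,h}.
\]
The analogous formulas for $\tilde u_h,\tilde v_h$ hold with $a_0$ replaced by $a_1$, $a_{\pm h}$ by $a_{1\mp h}$, and $s_{\tz,h}$ by $s_{\tu,h}$. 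In the lemma $\lmf=\lmdf=1$ (and $\lm_h^f=1$ where needed), using Lemma~\ref{evenlevel}\ref{evenlevel i} for even indices. For $h>2$ odd we need $\lm_h=1$ as well. This presumably comes from the inductive setting in which the lemma is applied: from hypothesis (3) and Lemma~\ref{s}, $\lm_{a_0}(s_{\tz,h})=\lm_h-\tfrac12-\tfrac12\lm_h$. We will state it as part of the standing assumptions, or derive it by applying $\lm_{a_0}$ to (3) with $l=0$, $r=0$.

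Part~\ref{newformulas i} comes from (3) applied with $l=\pm k$, $r=0$, $j=h$. We have $s_{\tz,h}a_{\pm k}=-\tfrac34a_{\pm k}+\tfrac38(a_{\pm k-h}+a_{\pm k+h})+\tfrac32 s_{\tz,h}+\delta(\tilde h)(s_{\tilde 2,h}-s_{\tu,h})$. By (1) and (2) the $\delta$-term vanishes. If $\ch\F\neq5$ or $h\not\equiv_3 0$, then $\delta=0$ or all the $s_{\tr,h}$ coincide. If $h\equiv_30$, then $\delta(\tilde h)=\delta(\tilde 0)=0$. Adding the two equations gives~\ref{newformulas i}.

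Parts~\ref{newformulas ii}--\ref{newformulas iv} are obtained by expanding $u_hu_k$, $u_hv_k$, $v_hv_k$ into nine types of products. Each product is evaluated as follows.
\begin{itemize}
\item $a_0a_0=a_0$.
\item $a_0a_{\pm k}=\tfrac12(a_0+a_{\pm k})+s_{\tz,k}$, by Lemma~\ref{prodottifacili}.
\item $a_{\pm h}a_{\pm k}$, by Lemma~\ref{prodottifacili}. This produces the terms $s_{\tih,|h-k|}$, $s_{\tih,h+k}$, $s_{-\tih,h+k}$, $s_{-\tih,|h-k|}$, using $s_{\overline{-h},h+k}=s_{\overline{-h-k}+\bar k,\,\ldots}$. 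Here we take care with the index conventions for $s$ (e.g.\ $a_{-h}a_{k}$ gives $s_{-\tih,h+k}$, and $a_ha_{-k}$ gives $s_{\overline{-k},h+k}=s_{\tih,h+k}$ up to the invariance in Lemma~\ref{invariant}).
\item $a_0s_{\tz,k}$ and $a_0s_{\tz,h}$, from (3) with $l=0$; the $\delta$-terms vanish as above.
\item $(a_h+a_{-h})s_{\tz,k}$, from part~\ref{newformulas i} with the roles of $h$ and $k$ interchanged where needed. Since (3) holds for all $j\le i$, part~\ref{newformulas i} also holds for $h\ge k$.
\item $s_{\tz,h}s_{\tz,k}$, which is left unexpanded.
\end{itemize}
Collecting coefficients gives the displayed formulas. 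The $s$-terms with indices $|h-k|$ and $h+k$ may exceed $i$; they appear only as symbols and need no hypothesis.

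Parts~\ref{newformulas v}--\ref{newformulas viii} follow by the same computation centred at $a_1$. Alternatively, they can be obtained by transporting the previous identities with a $\sigma$-flip in the universal algebra: the hypotheses are symmetric and $\lm_i^f=\lm_i=1$.

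The only real obstacle is bookkeeping: tracking the residue classes $\tih$ versus $-\tih$ in the $s$-indices from $a_{\pm h}a_{\pm k}$, and checking that every $\delta$-correction cancels. We will check the latter carefully in characteristic $5$ when $h$ or $k$ is divisible by $3$. There the relevant $\delta$ argument is $\tilde\jmath-\tilde r$ with $r\equiv0$ or $r\equiv\pm h$. The terms cancel pairwise between $a_k$ and $a_{-k}$ because $\delta$ is odd.
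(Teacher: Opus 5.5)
Your proposal is correct and is essentially the paper's proof: you specialise $u_h,v_h$ (and $\tilde u_h,\tilde v_h$) at $\al=2$, $\bt=\tfrac12$, $\lm_h=1$ and expand bilinearly using Lemma~\ref{prodottifacili}, hypothesis (3) and part (i); your remark that $\lm_h=\lm_h^f=1$ is itself forced by (3) (apply $\lm_{a_0}$, resp.\ $\lm_{a_1}$, and use Lemma~\ref{s}) makes explicit something the paper uses silently. The differences are minor:
\begin{itemize}
\item Where the paper cites \cite[Lemma~11]{FM} for $\ch\F=5$, you compute directly. The operative point there is the one in your last paragraph, not the ``vanishes individually'' claim in part (i). With the intended coefficient $\delta(\tilde l-\tilde r)$ in (3), as in Table~\ref{tablehatH}, the corrections in $a_{\pm k}s_{\tz,h}$ need not vanish separately, but they cancel because $\delta$ is odd.
\item For (v)--(viii), rely on the direct computation at $a_1$, or equivalently on the formal reindexing $a_m\mapsto a_{1-m}$, which preserves (1)--(3). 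The flip of the universal algebra does not act on $V$, so identities in $V$ cannot be transported through it.
\end{itemize}
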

\begin{proof}
We prove~\ref{newformulas i}-\ref{newformulas iv}, the proof of~\ref{newformulas v}-\ref{newformulas viii} is similar.
If $\ch (\F)=5$, the result follows by~\cite[Lemma~11]{FM}. Let $\ch (\F)\neq 5$.
By~\ref{newformulas c},
\begin{align*}\label{newformula1}
s_{\tz, h}(a_k+a_{-k})&=-\tfrac{3}{4} a_k+\tfrac{3}{8}( a_{k-h}+ a_{h+k}) +\tfrac{3}{2}s_{\tz,h}  \nonumber\\
&\phantom{{}={}} \quad -\tfrac{3}{4} a_{-k}+\tfrac{3}{8}( a_{-k-h}+ a_{h-k}) +\tfrac{3}{2}s_{\tz,h} \nonumber\\
&=-\tfrac{3}{4}(a_k+a_{-k})+\tfrac{3}{8} (a_{k-h}+ a_{k+h}+a_{-k-h}+ a_{-k+h})+3s_{\tz,h}, \nonumber
\end{align*}
giving~\ref{newformulas i}.
Using~\ref{newformulas i} and the explicit expressions of $u_k$ and $u_h$ given in Lemma~\ref{ui}, we obtain
\begin{align*}
4u_hu_k&=(-\tfrac{3}{2}a_0+\tfrac{3}{4}a_h+\tfrac{3}{4}a_{-h}-s_{\tz, h})(-\tfrac{3}{2}a_0+\tfrac{3}{4}a_k+\tfrac{3}{4}a_{-k}-s_{\tz, k})\\
&=\tfrac{9}{4}a_0-\tfrac{9}{8}a_0(a_h+a_{-h}+a_k+a_{-k})+\tfrac{3}{2}a_0s_{\tz,k}+\tfrac{3}{2}a_0s_{\tz,h} \\
&\phantom{{}={}} \quad +\tfrac{9}{16}(a_h+a_{-h})(a_k+a_{-k})-\tfrac{3}{4}s_{\tz,h}(a_k+a_{-k})-\tfrac{3}{4}s_{\tz, k}(a_h+a_{-h})\\
&\phantom{{}={}} \quad +s_{\tz,h}s_{\tz,k} \\
&=\tfrac{9}{4}a_0-\tfrac{9}{4}a_0-\tfrac{9}{16}(a_h+a_{-h}+a_k+a_{-k})-\tfrac{9}{4}s_{\tz,h}-\tfrac{9}{4}s_{\tz, k}\\
&\phantom{{}={}} \quad -\tfrac{9}{8}a_0+\tfrac{9}{16}(a_k+a_{-k})+\tfrac{9}{4}s_{\tz,k}-\tfrac{9}{8}a_0+\tfrac{9}{16}(a_h+a_{-h})+\tfrac{9}{4}s_{\tz,h}\\
&\phantom{{}={}} \quad +\tfrac{9}{16}(a_h+a_{-h}+a_k+a_{-k})\\
&\phantom{{}={}} \quad +\tfrac{9}{16}(s_{\tih,|h-k|}+s_{\tih,h+k}+s_{-\tih,h+k}+s_{-\tih,|h-k|}) \\
&\phantom{{}={}} \quad +\tfrac{9}{16}(a_k+a_{-k})-\tfrac{9}{32} (a_{k-h}+ a_{k+h}+a_{-k-h}+ a_{-k+h})-\tfrac{9}{4}s_{\tz,h} \\
&\phantom{{}={}} \quad +\tfrac{9}{16}(a_h+a_{-h})-\tfrac{9}{32} (a_{h-k}+ a_{h+k}+a_{-h-k}+ a_{-h+k})-\tfrac{9}{4}s_{\tz,k} \\
&\phantom{{}={}} \quad +s_{\tz,h}s_{\tz,k}.
\end{align*}
Rearranging the summands we get claim~\ref{newformulas ii}. Claims~\ref{newformulas iii} and~\ref{newformulas iv} are obtained in a similar way.
\begin{align*}
4u_hv_k&=(-\tfrac{3}{2}a_0+\tfrac{3}{4}a_h+\tfrac{3}{4}a_{-h}-s_{\tz, h})(-\tfrac{1}{2}a_0+\tfrac{1}{4}a_k+\tfrac{1}{4}a_{-k}+s_{\tz, k})\\
&=\tfrac{3}{4}a_0-\tfrac{3}{8}a_0(a_h+a_{-h}+a_k+a_{-k})-\tfrac{3}{2}a_0s_{\tz,k}+\tfrac{1}{2}a_0s_{\tz,h} \\
&\phantom{{}={}} \quad +\tfrac{3}{16}(a_h+a_{-h})(a_k+a_{-k})-\tfrac{1}{4}s_{\tz,h}(a_k+a_{-k})+\tfrac{3}{4}s_{\tz, k}(a_h+a_{-h})\\
&\phantom{{}={}} \quad -s_{\tz,h}s_{\tz,k} \\
&= \tfrac{3}{4}a_0-\tfrac{3}{4}a_0-\tfrac{3}{16}(a_h+a_{-h}+a_k+a_{-k})-\tfrac{3}{4}s_{\tz,h}-\tfrac{3}{4}s_{\tz, k}\\
&\phantom{{}={}} \quad +\tfrac{9}{8}a_0-\tfrac{9}{16}(a_k+a_{-k})-\tfrac{9}{4}s_{\tz,k}-\tfrac{3}{8}a_0+\tfrac{3}{16}(a_h+a_{-h})+\tfrac{3}{4}s_{\tz,h} \\
&\phantom{{}={}} \quad +\tfrac{3}{16}(a_h+a_{-h}+a_k+a_{-k})\\
&\phantom{{}={}} \quad +\tfrac{3}{16}(s_{\tih,|h-k|}+s_{\tih,h+k}+s_{-\tih,h+k}+s_{-\tih,|h-k|}) \\
&\phantom{{}={}} \quad +\tfrac{3}{16}(a_k+a_{-k})-\tfrac{3}{32} (a_{k-h}+ a_{k+h}+a_{-k-h}+ a_{-k+h})-\tfrac{3}{4}s_{\tz,h} \\
&\phantom{{}={}} \quad -\tfrac{9}{16}(a_h+a_{-h})+\tfrac{9}{32} (a_{h-k}+ a_{h+k}+a_{-h-k}+ a_{-h+k})+\tfrac{9}{4}s_{\tz,k} \\
&\phantom{{}={}} \quad -s_{\tz,h}s_{\tz,k}.
\end{align*}

\begin{align*}
4v_hv_k&= (-\tfrac{1}{2}a_0+\tfrac{1}{4}a_h+\tfrac{1}{4}a_{-h}+s_{\tz, h})(-\tfrac{1}{2}a_0+\tfrac{1}{4}a_k+\tfrac{1}{4}a_{-k}+s_{\tz, k})\\
&= \tfrac{1}{4}a_0-\tfrac{1}{8}a_0(a_h+a_{-h}+a_k+a_{-k})-\tfrac{1}{2}a_0s_{\tz,k}-\tfrac{1}{2}a_0s_{\tz,h} \\
&\phantom{{}={}} \quad +\tfrac{1}{16}(a_h+a_{-h})(a_k+a_{-k})+\tfrac{1}{4}s_{\tz,h}(a_k+a_{-k})+\tfrac{1}{4}s_{\tz, k}(a_h+a_{-h})\\
&\phantom{{}={}} \quad +s_{\tz,h}s_{\tz,k} \\
&= \tfrac{1}{4}a_0-\tfrac{1}{4}a_0-\tfrac{1}{16}(a_h+a_{-h}+a_k+a_{-k})-\tfrac{1}{4}s_{\tz,h}-\tfrac{1}{4}s_{\tz, k}\\
&\phantom{{}={}} \quad +\tfrac{3}{8}a_0-\tfrac{3}{16}(a_k+a_{-k})-\tfrac{3}{4}s_{\tz,k}+\tfrac{3}{8}a_0-\tfrac{3}{16}(a_h+a_{-h})-\tfrac{3}{4}s_{\tz,h} \\
&\phantom{{}={}} \quad +\tfrac{1}{16}(a_h+a_{-h}+a_k+a_{-k})\\
&\phantom{{}={}} \quad +\tfrac{1}{16}(s_{\tih,|h-k|}+s_{\tih,h+k}+s_{-\tih,h+k}+s_{-\tih,|h-k|}) \\
&\phantom{{}={}} \quad -\tfrac{3}{16}(a_k+a_{-k})+\tfrac{3}{32} (a_{k-h}+ a_{k+h}+a_{-k-h}+ a_{-k+h})+\tfrac{3}{4}s_{\tz,h} \\
&\phantom{{}={}} \quad -\tfrac{3}{16}(a_h+a_{-h})+\tfrac{3}{32} (a_{h-k}+ a_{h+k}+a_{-h-k}+ a_{-h+k})+\tfrac{3}{4}s_{\tz,k} \\
&\phantom{{}={}} \quad +s_{\tz,h}s_{\tz,k}.\qedhere
\end{align*}
\end{proof}

\begin{proposition}\label{exclaim}
In the hypotheses of Theorem~\ref{HWthm}, for every $i,j\in \Z_+$, $j< i$, $r,l\in \Z$, and $t\in \{0,1,2\}$  the following assertions hold:
\begin{enumerate}
\item $\lm_i=\lm_i^f=1$;\label{exclaim i} 
\item if $\ch (\F)\neq 5$, then $s_{\tr,i}=s_{\tz,i}$;\label{exclaim ii} 
\item if $\ch (\F)= 5$ and $i\not \equiv_3 0$, then $s_{\tr,i}=s_{\tz,i}$;\label{exclaim iii} 
\item if $\ch (\F)= 5$, $i\equiv_3 0$, and $t\equiv_3 r$,  then $s_{\tr,i}=s_{\tT,i}$;\label{exclaim iv} 
\item $\lambda_{a_0}(s_{\tr,i})=0$; \label{exclaim v} 
\item $
 a_l s_{\tr,i}= -\tfrac{3}{4} a_l+\tfrac{3}{8}( a_{l-i}+ a_{l+i})+\tfrac{3}{2}s_{\tr,i}+\delta(\tilde \imath-\tilde r)(s_{\tr-\tu,i}-s_{\tr+\tu,i});
 $\label{exclaim vi} 
 \item  $s_{\tr, j}s_{\tr, i-j}=\tfrac{3}{4}(s_{\tr, j}+s_{\tr, i-j})-\tfrac{3}{8}s_{\tr, |i-2j|}-\tfrac{3}{4}s_{\tr, i}+\tfrac{3}{16}(s_{\tr+\tu, i}+s_{\tr+\bar 2, i}).$\label{exclaim vii} 
\end{enumerate}
\end{proposition}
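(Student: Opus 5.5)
The proof will go by induction on $i$, following the scheme of~\cite{HW} and~\cite[Lemma~11]{FM} for the symmetric case. The missing flip is replaced by the information on $\V_e$ and $\V_o$ given by Lemma~\ref{evenlevel}. For the base cases $i\in\{1,2\}$ the ingredients are as follows. Assertion~\ref{exclaim i} is the hypothesis $\{\lmu,\lmf,\lmd,\lmdf\}=\{1\}$. Assertions~\ref{exclaim ii}--\ref{exclaim iv} for $i=1$ hold because there is a single class modulo $1$. For $i=2$ they come from Lemma~\ref{evenlevel}\ref{evenlevel ii}, together with Equation~\eqref{equa3}: with $(\al,\bt)=(2,\tfrac12)$ and $\lmu=\lmf$, $\lmd=\lmdf$, we have $S=S^f$, $Q=Q^f$, $R=R^f$, and $T\ne U$, so~\eqref{equa3} forces $s_{\tz,2}=s_{\tu,2}$ modulo the $a_j$'s; we then check that the $a_j$-part vanishes. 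Assertion~\ref{exclaim v} follows from Lemma~\ref{s}\ref{s_2} (giving $\lm-\tfrac12-\tfrac12\lm=0$ when $\lm=1$) and Lemma~\ref{s}\ref{s_4}. Assertion~\ref{exclaim vi} for $i\le 2$ follows from Lemma~\ref{primo}\ref{primo_1}--\ref{primo_5} specialised to $\al=2$, $\bt=\tfrac12$, $\lm_1=\lm_2=1$, where $\gamma_i=-\tfrac34$, using Lemma~\ref{invariant} to move between indices. Assertion~\ref{exclaim vii} for $i=2$ is Lemma~\ref{primo}\ref{primo_6} after substitution.

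In the inductive step we assume \ref{exclaim i}--\ref{exclaim vii} for all indices up to $i$ and prove them for $i+1$. We use the fusion law at $a_0$ in the form of the equations $\lambda_{a_0}(u_hu_k+u_hv_k)=0$, the vanishing of the $1$-component of products of eigenvectors, and the fact that $u_hv_k$ is an $\al$-eigenvector. The products are expanded via Lemma~\ref{newformulas}, whose hypotheses are exactly the inductive hypotheses. Choosing $h+k=i+1$ with $1\le h\le k\le i$, the only unknown terms are $s_{\pm\tih,i+1}$ and $s_{\tz,h}s_{\tz,k}$. We first use~\ref{exclaim vii} at level $i$ to control the lower-level products. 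Then the $0$-eigenvector condition for $a_0$ applied to $u_1u_i+u_1v_i$ yields $\lambda_{a_0}(s_{\tz,i+1})=0$, hence $\lm_{i+1}=1$ by Lemma~\ref{s}\ref{s_2}. The same computation at $a_1$, using the tilde version of Lemma~\ref{newformulas}, gives $\lm_{i+1}^f=1$, which proves~\ref{exclaim i} and~\ref{exclaim v}. To obtain~\ref{exclaim vi} at level $i+1$, we multiply the $\al$-eigenvector identity $a_0(u_1v_i)=2\,u_1v_i$ out and solve for $a_0s_{\tz,i+1}$; the formula for general $a_l$ and $s_{\tr,i+1}$ then follows by conjugating with $\rho$ and using Lemma~\ref{action}.

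The main obstacle is~\ref{exclaim ii}--\ref{exclaim iv} at level $i+1$, that is, showing that $s_{\tz,i+1}$ and $s_{\tu,i+1}$ coincide (or differ by the characteristic-$5$ pattern modulo $3$) without a flip. The plan is to compute $a_0s_{\tu,i+1}$ in two ways. The first uses the formula just obtained at $a_1$ (via $\tau_0$ and $\rho$); the second expands $s_{\tu,i+1}$ as $a_1a_{i+2}-\tfrac12(a_1+a_{i+2})$ and uses associativity-type constraints from the fusion law at $a_0$. Comparing the two expressions gives a relation of the form $c\,(s_{\tz,i+1}-s_{\tu,i+1})\in\langle a_j\rangle$, with $c$ a nonzero multiple of a small integer; in characteristic $5$ this coefficient vanishes exactly when $i+1\equiv_3 0$. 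If $i+1$ is even, we instead invoke Lemma~\ref{evenlevel} directly, since then $s_{\tr,i+1}$ lies in $\V_e$ or $\V_o$. If $i+1$ is odd, we need the two-way comparison above, where the $a_j$-terms cancel by the symmetry $j\mapsto -j$. Finally,~\ref{exclaim vii} at level $i+1$ is read off from the $0$-eigenvector condition $u_hu_k+u_hv_k\in V_0^{a_0}$ with $h+k=i+1$. Applying $a_0$ and the formula~\ref{exclaim vi} just proved isolates $s_{\tz,h}s_{\tz,k}$, and the general $\tr$ follows by applying $\rho$.
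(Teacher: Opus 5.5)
Your inductive step has no working mechanism for \ref{exclaim i}. By Lemma~\ref{newformulas} and the inductive hypothesis, the relation $\lambda_{a_0}(u_1u_i+u_1v_i)=0$ reduces to $\lambda_{a_0}(s_{\pm\tu,i+1})=\tfrac12(\lm_{i+1}-1)$. The level-$(i+1)$ terms that occur are $s_{\pm\tu,i+1}$, not $s_{\tz,i+1}$. Moreover, Lemma~\ref{s}\ref{s_2} already gives $\lambda_{a_0}(s_{\tz,i+1})=\tfrac12(\lm_{i+1}-1)$ for every value of $\lm_{i+1}$. So this projection is compatible with any $\lm_{i+1}$ and does not yield $\lm_{i+1}=1$; the same is true at $a_1$. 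This is exactly where the missing flip must be compensated, and the paper does it as follows.
\begin{itemize}
\item If $i+1$ is even, $\lm_{i+1}=\lm_{i+1}^f=1$ comes from Lemma~\ref{evenlevel}\ref{evenlevel i}.
\item If $i+1$ is odd, one first derives from the fusion law at $a_0$, with $\lm_{i+1}$ still unknown, an explicit formula for $s_{\tz,h}s_{\tz,k}$ ($h+k=i+1$) whose $a_0$-coefficient is $-\tfrac34(1-\lm_{i+1})$. The analogous formula at $a_1$ gives $s_{\tu,h}s_{\tu,k}$ with $a_1$-coefficient $-\tfrac34(1-\lm^f_{i+1})$.
\item By induction $s_{\tz,h}=s_{\tu,h}$ and $s_{\tz,k}=s_{\tu,k}$ are $\Miy(\V)$-invariant (in characteristic $5$ one chooses $h,k\not\equiv_3 0$). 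Hence $(s_{\tz,h}s_{\tz,k})^{\rho^{(i+2)/2}}=s_{\tu,h}s_{\tu,k}$.
\item Comparing the two formulas gives $(1-\lm_{i+1})a_{i+2}=(1-\lm^f_{i+1})a_1$, so both projections equal $1$.
\end{itemize}
This cross-comparison of one product computed at the two generating axes is the key idea your plan lacks.

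The treatment of \ref{exclaim ii}--\ref{exclaim iv} and \ref{exclaim vi} has further gaps.
\begin{itemize}
\item For $i+1$ even, Lemma~\ref{evenlevel}\ref{evenlevel ii} only gives $s_{\tz,i+1}=s_{\overline{2r},i+1}$ and $s_{\tu,i+1}=s_{\overline{2r+1},i+1}$. Since $s_{\tz,i+1}\in V_e$ and $s_{\tu,i+1}\in V_o$, it says nothing about $s_{\tz,i+1}=s_{\tu,i+1}$. The paper links the two classes by computing $s_{\tz,2}s_{\tz,i-1}$ in $V_e$ and $s_{\tu,2}s_{\tu,i-1}$ in $V_o$ via Lemma~\ref{evenlevel}\ref{evenlevel iv},\ref{evenlevel v}; these products are equal by induction.
\item For $i+1$ odd, a ``two-way computation of $a_0s_{\tu,i+1}$ using associativity-type constraints'' is not an argument. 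The paper again uses $s_{\tz,h}s_{\tz,k}=s_{\tu,h}s_{\tu,k}$ with the explicit formulas for $h=1,2$, obtaining $10(s_{\tz,i+1}-s_{\tu,i+1})=0$. That coefficient vanishes in characteristic $5$ for every $i$, so the mod-$3$ pattern there needs a separate comparison with the $\tau_{k/2}$-image. It is not a coefficient that vanishes exactly when $3\mid i+1$.
\item For \ref{exclaim vi}, note that $a_0s_{\tz,i+1}$ is already given by Lemma~\ref{primo}\ref{primo_1}; the real unknowns are $a_0s_{\pm\tih,i+1}$. Expanding $a_0(u_1v_i)=2u_1v_i$ cannot produce them. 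Indeed $u_1v_i$ contains $-\tfrac14 s_{\tz,1}s_{\tz,i}$, a level-$(i+1)$ product outside the inductive hypothesis, so $a_0(s_{\tz,1}s_{\tz,i})$ is unknown.
\item You need combinations in which $s_{\tz,h}s_{\tz,k}$ cancels. The $0$-eigenvector $u_hu_k-v_hv_k+\lambda_{a_0}(v_hv_k)a_0$, together with the $\bt$-eigenvector $s_{\tih,h+k}-s_{-\tih,h+k}$, gives $a_0s_{\pm\tih,h+k}$. Then $a_0(u_hu_k+u_hv_k)=2u_hv_k$ isolates $s_{\tz,h}s_{\tz,k}$.
\item That last right-hand side is not $0$: $u_hv_k\in V^{a_0}_\al$, so $u_hu_k+u_hv_k$ is not a $0$-eigenvector as your last paragraph states.
\end{itemize}
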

\begin{proof}
We proceed by induction on $i$.
\medskip

\noindent{\bf Step 1.} 
{\it Assume $i\in\{1,2\}$, then parts \ref{exclaim i}-\ref{exclaim vii} hold.}
\medskip

Note that, since $i\not \equiv_3 0$, we do not need to prove \ref{exclaim iv}. By hypothesis $$\lm_i=\lm_i^f=1,$$ giving \ref{exclaim i}.  By Equation~\eqref{defs} on page~\pageref{defs}, for every $r\in \Z$, 
\begin{equation} \label{s01}
s_{\tr,1}=s_{\tz,1},
\end{equation}
and, since,  by~\cite[Corollary~7.2]{FMS3}, $s_{\tz,2}=s_{\tu,2}$, for every $r\in \Z$,
\begin{equation}\label{s12x}
s_{\tr,2}=s_{\tz,2},
\end{equation}
giving \ref{exclaim ii} and \ref{exclaim iii}. Since $\bt=\tfrac{1}{2}$, by Lemma~\ref{s}\ref{s_2},
\[
\lm_{a_0}(s_{\tz,i})=0,
\]
giving \ref{exclaim v}. Since $\al=2$ and $\bt=\tfrac{1}{2}$, by  Lemma~\ref{primo}\ref{primo_1},
\begin{equation}\label{a0s1x}
a_0 s_{\tz,i}= -\tfrac{3}{4} a_0+\tfrac{3}{8}( a_{-i}+ a_{i})+\tfrac{3}{2}s_{\tz,i}.
\end{equation}
By Equations~\eqref{s01} and~\eqref{s12x}, taking the orbits of both members of Equation~\eqref{a0s1x} under $\Miy(\V)$, we get \ref{exclaim vi}. Finally, since, by hypothesis  $0<j<i<2$, the only possibility is $j=1$ and $i=2$. By Equation~\eqref{defs} on page~\pageref{defs} 
$s_{\tr,1}=s_{\tz, 1}$ and, by Lemma~\ref{s=0},  $s_{\tr,0}=0$, 
thus, part \ref{exclaim vii} reduces to proving the equality  
$$s_{\tz, 1}s_{\tz, 1}=\tfrac{3}{4}(s_{\tz, 1}+s_{\tz, 1})-\tfrac{3}{4}s_{\tz, 2}+\tfrac{3}{16}(s_{\tu, 2}+s_{\bar 2, 2}),$$
 which follows by the second formula in~\cite[Lemma~6.8]{FMS3} (computed for the parameters $\al=2$ and  $\bt=\tfrac{1}{2}$)  and Equation~\eqref{s12x}.
\medskip

Next assume  $i\geq 3$ and the result true for every $m\leq i$. Let $1\leq h,k\leq i$ be such that $h+k=i+1$. In particular
\begin{equation}\label{hk}
h\equiv_{i+1} -k
\end{equation}
\medskip


\noindent{\bf Step 2.} {\it The following identities hold:}
\begin{enumerate}[enum_arabic]
\item  $\lambda_{a_0}(u_hu_{k})=\lambda_{a_0}(u_hv_{k})=0$;\label{step2 a}
\item  $\lambda_{a_0}(s_{\tih,h+k})=\tfrac{1}{2}(\lm_{h+k}-1)$; \label{step2 b} 
\item $\lm_{a_0}(s_{\tz,h}s_{\tz,k})=\tfrac{9}{16}(\lm_{h+k}-1)$; \label{step2 c}
\item  $\lambda_{a_0}(v_hv_k)=\tfrac{1}{4}(\lm_{h+k}-1)$. \label{step2 d}
\end{enumerate}
\medskip

 Claim \ref{step2 a} follows since, by the fusion law, $u_hu_{k}$ and $u_hv_{k}$ are a $0$- and a $2$-eigenvector for $\ad_{a_0}$. Adding equations  in parts \ref{newformulas ii} and  \ref{newformulas iii} of Lemma~\ref{newformulas}, we get 
\begin{align}\label{u+v}
u_hu_{k}+u_hv_{k}&=-\tfrac{3}{8}a_0+\tfrac{3}{16}(a_h+a_{-h}+a_k+a_{-k}) \nonumber \\
&\phantom{{}={}} \quad -\tfrac{3}{32} (a_{k-h}+ a_{k+h}+a_{-k-h}+ a_{-k+h})\\
&\phantom{{}={}} \quad +\tfrac{3}{16}(s_{\tih,|h-k|}+s_{\tih,h+k}+s_{-\tih,h+k}+s_{-\tih,|h-k|}) -\tfrac{3}{4}s_{\tz,k} -\tfrac{3}{4}s_{\tz,h},\nonumber 
\end{align}
Since, by claim~\ref{step2 a}, $\lm_{a_0}(u_hu_k)=\lm_{a_0}(u_hv_k)=0$, applying $\lambda_{a_0}$ to both members of Equation~\eqref{u+v}, by the linearity of  $\lambda_{a_0}$, we get 
\begin{align} \label{new14}
\nonumber 0 &= -\tfrac{3}{8}\lm_{a_0}(a_0)\\
\nonumber &\phantom{{}={}} \quad +\tfrac{3}{16}(\lm_{a_0}(a_k)+\lm_{a_0}(a_{-k})+\lm_{a_0}(a_h)+\lm_{a_0}(a_{-h}))\\
&\phantom{{}={}} \quad -\tfrac{3}{32} (\lm_{a_0}(a_{k-h})+ \lm_{a_0}(a_{h+k})+\lm_{a_0}(a_{-h-k})+ \lm_{a_0}(a_{-k+h}))\\
\nonumber&\phantom{{}={}} \quad +\tfrac{3}{16}(\lm_{a_0}(s_{\tih,|k-h|})+\lm_{a_0}(s_{\tih,h+k})+\lm_{a_0}(s_{-\tih,h+k})+\lm_{a_0}(s_{-\tih,|k-h|}))\\
\nonumber &\phantom{{}={}} \quad -\tfrac{3}{4}(\lm_{a_0}(s_{\tz,h})+\lm_{a_0}(s_{\tz,k})).
\end{align}
By definition $\lm_{a_0}(a_0)= 1$, by the inductive hypothesis and parts \ref{s_3} and \ref{s_4} of Lemma~\ref{s}, 
$$\lm_{a_0}(a_k) = \lm_{a_0}(a_{-k}) = \lm_{a_0}(a_h) =\lm_{a_0}(a_{-h}) = \lm_{a_0}(a_{k-h})=\lm_{a_0}(a_{-k+h})=1,$$
and 
$$\lm_{a_0}(s_{\tih,|k-h|})= \lm_{a_0}(s_{-\tih,|k-h|})=\lm_{a_0}(s_{\tz,h})=\lm_{a_0}(s_{\tz,k}) =0,$$ hence, again by parts \ref{s_3} and \ref{s_4} of Lemma~\ref{s},   Equation~\eqref{new14} becomes  
$$0=\tfrac{3}{16}-\tfrac{3}{16}\lm_{h+k}+\tfrac{3}{8}\lm_{a_0}(s_{\tih,h+k}),$$
giving \ref{step2 b}. 
Similarly,  by claims \ref{step2 a} and \ref{step2 b}, Lemma~\ref{newformulas}\ref{newformulas ii}, and the inductive hypothesis, we get
\begin{align*}
0&=\lm_{a_0}(u_hu_k)\\
&=\tfrac{9}{32}-\tfrac{9}{32}\lm_{h+k}+\tfrac{9}{32}\lm_{a_0}(s_{\tih,h+k})+\tfrac{1}{4}\lm_{a_0}(s_{\tz,h}s_{\tz,k})\\
&=\tfrac{9}{32}-\tfrac{9}{32}\lm_{h+k}+\tfrac{9}{64}(\lm_{h+k}-1)+\tfrac{1}{4}\lm_{a_0}(s_{\tz,h}s_{\tz,k})\\
&= -\tfrac{9}{64}(\lm_{h+k}-1)+\tfrac{1}{4}\lm_{a_0}(s_{\tz,h}s_{\tz,k}),
\end{align*}
giving \ref{step2 c}. 
By Lemma~\ref{newformulas}\ref{newformulas iv}, the inductive hypothesis, Lemma~\ref{s}, and claims \ref{step2 b} and \ref{step2 c}, 
\begin{align*}
\lambda_{a_0}(v_hv_k)&=\tfrac{3}{16}-\tfrac{3}{8}+\tfrac{3}{32}+\tfrac{3}{32}\lm_{h+k}+\tfrac{1}{32}\lm_{a_0}(s_{\tih,h+k})+\tfrac{1}{4}\lm_{a_0}(s_{\tz,h}s_{\tz,k})\\
&=\tfrac{3}{32}(\lm_{h+k}-1)+\tfrac{1}{64}(\lm_{h+k}-1)+\tfrac{9}{64}(\lm_{h+k}-1)\nonumber \\
&=\tfrac{1}{4}(\lm_{h+k}-1),\nonumber 
 \end{align*}
 giving \ref{step2 d}.
 
 \medskip
\noindent{\bf Step 3.} {\it The following identity holds:}
\begin{enumerate}[resume*]
 \item $a_0(s_{\tih,|h-k|}+s_{-\tih,|h-k|})=-\tfrac{3}{2}a_0+\tfrac{3}{4}(a_{-|h-k|}+a_{|h-k|})+3s_{\tz,|h-k|}$.\label{step3 e}
\end{enumerate}

\medskip

If $\ch (\F)\neq 5$ or $|h-k|\not \equiv_3 0$, by the inductive hypotheses and parts \ref{exclaim ii}, \ref{exclaim iii}, and \ref{exclaim v}, 
\[
s_{\tih, |h-k|}=s_{-\tih, |h-k|}=s_{\tz, |h-k|}
\]
and  
\[
a_0s_{\tz,|h-k|}=-\tfrac{3}{4}a_0+\tfrac{3}{8}(a_{-|h-k|}+a_{|h-k|})+\tfrac{3}{2}s_{\tz, |h-k|},
\]
and \ref{step3 e} follows. Assume $\ch (\F)=5$ and $|h-k|\equiv_3 0$. Then, by the inductive hypotheses and parts \ref{exclaim iii} and \ref{exclaim v}, 
\begin{align*}
a_0(s_{\tih,|h-k|}+s_{-\tih,|h-k|}) &= -\tfrac{3}{4}a_0+\tfrac{3}{8}(a_{-|h-k|}+a_{|h-k|})+\tfrac{3}{2}s_{\tih, |h-k|}\\
&\phantom{{}={}} \quad +\delta(-\tilde h)(s_{\tilde h-\tilde 1, |h-k|}-s_{\tilde h+\tilde 1, |h-k|})\\
&\phantom{{}={}} \quad -\tfrac{3}{4}a_0+\tfrac{3}{8}(a_{-|h-k|}+a_{|h-k|})+\tfrac{3}{2}s_{-\tih, |h-k|}\\
&\phantom{{}={}} \quad +\delta(\tilde h)(s_{-\tilde h-\tilde 1, |h-k|}-s_{-\tilde h+\tilde 1, |h-k|})\\
&= -\tfrac{3}{2}a_0+\tfrac{3}{4}(a_{-|h-k|}+a_{|h-k|}) + c
\end{align*} 
where
\begin{align*}
c &:=  \delta(-\tilde h)(s_{\tilde h-\tilde 1, |h-k|}-s_{\tilde h+\tilde 1, |h-k|})+\delta(\tilde h)(s_{-\tilde h-\tilde 1, |h-k|}-s_{-\tilde h+\tilde 1, |h-k|})\\
&\phantom{{}={}} \quad +\tfrac{3}{2}(s_{\tilde h, |h-k|}+s_{-\tilde h, |h-k|}).
\end{align*}
Thus the result follows if we show that 
$$
c=3s_{\tz, |h-k|}.
$$
This is immediate  if  $h\equiv_3 0$, since by definition, $\delta(\tilde 0)=0$.  If $h\not \equiv_3 0$, then $\{\tilde h, -\tilde h\}=\{\tilde 1, \tilde 2\}$ and we may without loss of generality assume that $\tilde h=\tilde1$ and $-\tilde h=\tilde2$. Since $\ch(\F)=5$, $-2\equiv_5 3$, whence  
\begin{align*}
c&= \delta(-\tilde 1)(s_{\tilde 1-\tilde 1, |h-k|}-s_{\tilde 1 +\tilde 1, |h-k|})+\delta(\tilde 1)(s_{-\tilde 1-\tilde 1, |h-k|}-s_{-\tilde 1 +\tilde 1, |h-k|}) \\
&\phantom{{}={}} \quad +\tfrac{3}{2}(s_{\tilde1, |h-k|}+s_{\tilde 2, |h-k|})\\
&= -(s_{\tilde 0, |h-k|}-s_{\tilde 2, |h-k|})+(s_{\tilde 1, |h-k|}-s_{\tilde 0, |h-k|})-(s_{\tilde1, |h-k|}+s_{\tilde 2, |h-k|})\\
&= -2s_{\tilde 0, |h-k|}=3s_{\tilde 0, |h-k|}.
\end{align*}

\medskip

\noindent{\bf Step 4.} {\it The following identities hold:} 
\begin{enumerate}[resume*]
    \item $ \begin{aligned}[t]
a_0s_{\tih,h+k}&=(\tfrac{1}{4}-\lambda_{h+k})a_0+\tfrac{3}{8}( a_{k+h}+a_{-k-h})+\tfrac{3}{2}s_{0,h+k}\\
&\phantom{{}={}} \quad +\tfrac{1}{4}s_{-\tih,h+k}-\tfrac{1}{4}s_{\tih,h+k};
\end{aligned}$\label{step4 f}
    \item $ \begin{aligned}[t]
a_0s_{-\tih,h+k}&=(\tfrac{1}{4}-\lambda_{h+k})a_0+\tfrac{3}{8}( a_{k+h}+a_{-k-h})+\tfrac{3}{2}s_{0,h+k}\\
&\phantom{{}={}} \quad -\tfrac{1}{4}s_{-\tih,h+k}+\tfrac{1}{4}s_{\tih,h+k};
\end{aligned}$\label{step4 g}
 \item $ \begin{aligned}[t]
s_{\tz,h}s_{\tz,k}&=-\tfrac{3}{4}(1-\lm_{i+1})a_0+\tfrac{3}{4}(s_{\tz,k}+s_{\tz,h}-s_{\tz, |k-h|}-s_{\tz, i+1})\\
&\phantom{{}={}} \quad +\tfrac{3}{16}(s_{\tih,|h-k|}+s_{\tik,|h-k|}+s_{\tih,i+1}+s_{\tik,i+1}); 
\end{aligned}$\label{step4 h}
\item $ \begin{aligned}[t]
s_{\tu,h}s_{\tu,k}&=-\tfrac{3}{4}(1-\lm_{i+1}^f)a_1+\tfrac{3}{4}(s_{\tu,k}+s_{\tu,h}-s_{\tu, |k-h|}-s_{\tu, i+1})\\
&\phantom{{}={}} \quad +\tfrac{3}{16}(s_{\tu+\tih, |k-h|}+s_{\tu-\tih, |k-h|}+s_{\tu-\tih,i+1}+s_{\tu-\tik,i+1}). 
\end{aligned}$\label{step4 j}
\end{enumerate}

\medskip

By parts \ref{newformulas ii} and \ref{newformulas iv} of Lemma~\ref{newformulas}, and claim \ref{step2 d}, 
\begin{align*}
u_h u_k-v_h v_k+\lambda_{a_0}(v_hv_k)a_0 &= \tfrac{3}{8}(a_h+a_{-h}+a_k+a_{-k})\\
&\phantom{{}={}} \quad -\tfrac{3}{16} (a_{k-h}+ a_{k+h}+a_{-k-h}+ a_{-k+h})\\
&\phantom{{}={}} \quad +\tfrac{1}{8}(s_{\tih,|h-k|}+s_{\tih,h+k}+s_{-\tih,h+k}+s_{-\tih,|h-k|})\\
&\phantom{{}={}} \quad -\tfrac{1}{2}s_{\tz,h} -\tfrac{1}{2}s_{\tz,k}+(\tfrac{1}{4}\lm_{h+k}-1)a_0.
\end{align*}
Since, by the fusion law, $u_h u_k-v_h v_k+\lambda_{a_0}(v_hv_k)a_0$ is a $0$-eigenvector for $\ad_{a_0}$ and $a_0a_0=a_0$, multiplying by $a_0$ the above equation, we get 
\begin{align*}
0 &= \tfrac{3}{8}a_0(a_h+a_{-h}+a_k+a_{-k})\\
&\phantom{{}={}} \quad -\tfrac{3}{16} a_0(a_{k-h}+ a_{k+h}+a_{-k-h}+ a_{-k+h})\\
&\phantom{{}={}} \quad +\tfrac{1}{8}a_0(s_{\tih,|h-k|}+s_{\tih,h+k}+s_{-\tih,h+k}+s_{-\tih,|h-k|})\\
&\phantom{{}={}} \quad -\tfrac{1}{2}a_0s_{\tz,h} -\tfrac{1}{2}a_0s_{\tz,k}+(\tfrac{1}{4}\lm_{h+k}-1)a_0,
\end{align*}
whence 
\begin{align} \label{a0*sum}
a_0(s_{\tih,h+k}+s_{-\tih,h+k}) &= -8(\tfrac{1}{4}\lm_{h+k}-1)a_0-3a_0(a_h+a_{-h}+a_k+a_{-k})\nonumber  \\
&\phantom{{}={}} \quad +\tfrac{3}{2} a_0(a_{k-h}+ a_{k+h}+a_{-k-h}+ a_{-k+h})\\
&\phantom{{}={}} \quad -a_0(s_{\tih,|h-k|}+s_{-\tih,|h-k|})+4a_0s_{\tz,h}+4a_0s_{\tz,k}.\nonumber 
\end{align}
By Lemma~\ref{prodottifacili}, the inductive hypothesis, and claim \ref{step3 e},  the last term of the above equation is 
\begin{multline*} 
(8-2\lm_{h+k})a_0 -3(2s_{\tz,h}+2s_{\tz,k}+2a_0+\tfrac{1}{2}(a_h+a_{-h}+a_k+a_{-k})) \nonumber\\
\shoveleft{ +\tfrac{3}{4}(a_{k-h}+ a_{k+h}+a_{-k-h}+ a_{-k+h})+3(s_{\tz,h+k}+s_{\tz,|h-k|})+3a_0} \\
\shoveleft{ +\tfrac{3}{2}a_0-\tfrac{3}{4}(a_{-|h-k|}+a_{|h-k|})-3s_{\tz,|h-k|}} \\
 \shoveleft{ -6a_0+\tfrac{3}{2}(a_{-h}+a_{h}+a_{-k}+a_{k})+6s_{\tz,h} +6s_{\tz,k}} \\
= (\tfrac{1}{2}-2\lambda_{h+k})a_0+\tfrac{3}{4}( a_{k+h}+a_{-k-h})+3s_{0,h+k},\nonumber 
\end{multline*} 
whence
\[
a_0(s_{\tih,h+k}+s_{-\tih,h+k})=(\tfrac{1}{2}-2\lambda_{h+k})a_0+\tfrac{3}{4}( a_{k+h}+a_{-k-h})+3s_{0,h+k}.
\]
On the other hand, since $s_{\tih,h+k}-s_{-\tih,h+k}$ is a $\tfrac{1}{2}$-eigenvector for $\ad_{a_0}$, 
\begin{equation}\label{a0*diff}
a_0(s_{\tih,h+k}-s_{-\tih,h+k})=\tfrac{1}{2}(s_{\tih,h+k}-s_{-\tih,h+k}).
\end{equation} 
Taking the sum and the difference of both members of Equations~\eqref{a0*sum} and~\eqref{a0*diff}, we get \ref{step4 f} and \ref{step4 g}.
By the fusion law, $u_hu_{k}$ (respectively $u_hv_{k}$) is a $0$-eigenvector (respectively $2$-eigenvector) for $\ad_{a_0}$, thus
$$0=a_0(u_hu_{k}+u_hv_{k})-2u_hv_k.$$  Substituting in the second term of  the above equation the expression of $(u_hu_{k}+u_hv_{k})$ given in Equation~\eqref{u+v} and  the expression of $u_hv_k$ given in  Lemma~\ref{newformulas}\ref{newformulas iii}, we get  
\begin{align*}
0&= a_0\big (-\tfrac{3}{8}a_0+\tfrac{3}{16}(a_h+a_{-h}+a_k+a_{-k})-\tfrac{3}{32} (a_{k-h}+ a_{k+h}+a_{-k-h}+ a_{-k+h}) \\
&\phantom{{}={}\big(} \quad +\tfrac{3}{16}(s_{\tih,|h-k|}+s_{\tih,h+k}+s_{-\tih,h+k}+s_{-\tih,|h-k|}) -\tfrac{3}{4}s_{\tz,k} -\tfrac{3}{4}s_{\tz,h} \big )
\\
&\phantom{{}={}} \  -2\big (\tfrac{3}{16}a_0-\tfrac{3}{32}(a_k+a_{-k}+a_h+a_{-h})+\tfrac{3}{64}  (a_{k-h}+ a_{k+h}+a_{-k-h}+ a_{-k+h}) \\
&\phantom{{}={}\ -2\big(}  +\tfrac{3}{64} (s_{\tih,|h-k|}+s_{\tih,h+k}+s_{-\tih,h+k}+s_{-\tih,|h-k|})\\
&\phantom{{}={}\ -2\big(} -\tfrac{3}{16}(s_{\tz, k}+s_{\tz,h} +\tfrac{4}{3}s_{\tz,h}s_{\tz,k})     \big).
\end{align*}
Thus, by Lemma~\ref{prodottifacili},
\begin{align*}
0&= -\tfrac{3}{8}a_0+\tfrac{3}{16}(2a_0+\tfrac{1}{2}a_k+\tfrac{1}{2}a_{-k}+\tfrac{1}{2}a_{h}+\tfrac{1}{2}a_{-h}+2s_{\tz,k}+2s_{\tz,h})\\
&\phantom{{}={}} \quad -\tfrac{3}{32}(2a_0+\tfrac{1}{2}a_{k-h}+\tfrac{1}{2} a_{h+k}+\tfrac{1}{2}a_{-h-k}+\tfrac{1}{2} a_{-k+h}+2s_{\tz, |k-h|}+2s_{\tz, h+k})\\
&\phantom{{}={}} \quad +\tfrac{3}{16}(a_0s_{\tih,h+k}+a_0s_{\tih,|k-h|}+a_0s_{-\tih,h+k}+a_0s_{-\tih,|k-h|})\\
&\phantom{{}={}} \quad -\tfrac{3}{4}a_0s_{\tz,h}-\tfrac{3}{4}a_0s_{\tz,k}\\
&\phantom{{}={}} \quad -\tfrac{3}{8}a_0+\tfrac{3}{16}(a_k+a_{-k}+a_h+a_{-h})-\tfrac{3}{32} (a_{k-h}+ a_{k+h}+a_{-k-h}+ a_{-k+h})\\
&\phantom{{}={}} \quad -\tfrac{3}{32}(s_{\tih,|h-k|}+s_{\tih,h+k}+s_{-\tih,h+k}+s_{-\tih,|h-k|})+\tfrac{3}{8}s_{\tz,h}+\tfrac{3}{8}s_{\tz, k}+\tfrac{1}{2}s_{\tz,h}s_{\tz,k}.
\end{align*}
Using the inductive hypothesis to compute the products 
$$a_0s_{\tz,h},\; a_0s_{\tz,k}, \; a_0s_{\tih,|k-h|},  \mbox{ and } \;a_0s_{-\tih,|k-h|}$$
and Equation~\eqref{a0*sum} to compute the products 
$$ 
a_0s_{\tih,h+k} \;\mbox{ and  } \;a_0s_{-\tih,h+k},
$$
the above equation becomes 
\begin{align*}
0&= -\tfrac{3}{16}a_0+\tfrac{3}{32}(a_k+a_{-k}+a_{h}+a_{-h})+\tfrac{3}{8}(s_{\tz,k}+s_{\tz,h})\\
&\phantom{{}={}} \quad -\tfrac{3}{64}(a_{k-h}+ a_{h+k}+a_{-h-k}+ a_{-k+h})-\tfrac{3}{16}(s_{\tz, |k-h|}+s_{\tz, h+k})\\
&\phantom{{}={}} \quad +\tfrac{3}{16}\left ((\tfrac{1}{2}-2\lm_{h+k})a_0+\tfrac{3}{4}(a_{h+k}+a_{-h-k})+3s_{\tz,h+k}-\tfrac{3}{2}a_0 \right .\\
&\phantom{{}={}} \quad \left .+\tfrac{3}{4}(a_{k-h}+a_{-k+h})+3s_{\tz,|k-h|}\right )\\
&\phantom{{}={}} \quad -\tfrac{3}{4}\left (-\tfrac{3}{2}a_0+\tfrac{3}{8}(a_h+a_{-h}+a_k+a_{-k})+\tfrac{3}{2}s_{\tz,h}+\tfrac{3}{2}s_{\tz,k}  \right )\\
&\phantom{{}={}} \quad -\tfrac{3}{8}a_0+\tfrac{3}{16}(a_k+a_{-k}+a_h+a_{-h})-\tfrac{3}{32} (a_{k-h}+ a_{k+h}+a_{-k-h}+ a_{-k+h})\\
&\phantom{{}={}} \quad -\tfrac{3}{32}(s_{\tih,|h-k|}+s_{\tih,h+k}+s_{-\tih,h+k}+s_{-\tih,|h-k|})+\tfrac{3}{8}s_{\tz,h}+\tfrac{3}{8}s_{\tz, k}+\tfrac{1}{2}s_{\tz,h}s_{\tz,k}\\
&= \tfrac{3}{8}(1-\lm_{h+k})a_0-\tfrac{3}{8}(s_{\tz,k}+s_{\tz,h})+\tfrac{3}{8}s_{\tz, |k-h|}+\tfrac{3}{8}s_{\tz, h+k} \\
&\phantom{{}={}} \quad -\tfrac{3}{32}(s_{\tih,|h-k|}+s_{-\tih,|h-k|}+s_{\tih,h+k}+s_{-\tih,h+k})+\tfrac{1}{2}s_{\tz,h}s_{\tz,k}.
\end{align*}
Making explicit $s_{\tz,h}s_{\tz,k}$, we get
\begin{align*}
s_{\tz,h}s_{\tz,k}&=-\tfrac{3}{4}(1-\lm_{i+1})a_0+\tfrac{3}{4}(s_{\tz,k}+s_{\tz,h}-s_{\tz, |k-h|}-s_{\tz, i+1})\\
&\phantom{{}={}} \quad +\tfrac{3}{16}(s_{\tih,|h-k|}+s_{-\tih,|h-k|}+s_{\tih,i+1}+s_{-\tih,i+1}), 
\end{align*}     
which implies \ref{step4 h}, since, by Equation~\eqref{hk}, $s_{-\tih, i+1}=s_{\tik, i+1}$. 
By repeating the same argument, replacing $a_0$ by $a_1$ and the eigenvectors $u_h$ and $v_h$ by $\tilde u_h$ and $\tilde v_h$ respectively, we obtain \ref{step4 j}.
\medskip

\noindent{\bf Step 5.} {\it If $i$ is odd, then parts \ref{exclaim i} and \ref{exclaim v} hold.}
\medskip

Since $i+1$ is even, part \ref{exclaim i} follows by Lemma~\ref{evenlevel}\ref{evenlevel i} while part \ref{exclaim v} follows by \ref{exclaim i} and claim \ref{step2 b}.
\medskip

\bigskip

\noindent{\bf Step 6.} {\it If $i$ is even and $\ch(\F)\neq 5$, then \ref{exclaim i} and \ref{exclaim v} hold.}
\medskip

By the inductive hypothesis, $s_{\tz,h}=s_{\tu, h}$, $s_{\tz,k}=s_{\tu, k}$, and $s_{\tr, |k-h|}=s_{\tz, |k-h|}$ for every $r\in \Z$. Since $s_{\tz,h}$, $s_{\tz,k}$, and $s_{\tr, |k-h|}$  are invariant under the map $\rho^{(i+2)/2}$,
$$
0=(s_{\tz,h}s_{\tz,k})^{\rho^{(i+2)/2}}-s_{\tu,h}s_{\tu,k}.
$$
By \ref{step4 h}, \ref{step4 j}, and Lemma~\ref{action}\ref{action_1} (note that $i+1$ is odd), the second term of the above equation is 
\begin{multline*}
 -\tfrac{3}{4}(1-\lm_{i+1})a_{i+2}-\tfrac{3}{4}s_{\tu, i+1} +\tfrac{3}{16}(s_{\tih+\tu,i+1}+s_{\tik+\tu,i+1})\\
+\tfrac{3}{4}(1-\lm_{i+1}^f)a_1+\tfrac{3}{4}s_{\tu, i+1} -\tfrac{3}{16}(s_{\tu-\tih,i+1}+s_{\tu-\tik,i+1})
\end{multline*}
Since $\tih=-\tik$, we get
$0=-\tfrac{3}{4}(1-\lm_{i+1})a_{i+2}+\tfrac{3}{4}(1-\lm_{i+1}^f)a_1$, whence
\begin{equation}\label{stheta}
(1-\lm_{i+1})a_{i+2}=(1-\lm_{i+1}^f)a_1.     
\end{equation}
Since both $a_1$ and $a_{i+2}$ are idempotents, Equation~\eqref{stheta} implies that 
$$
\mbox{either}\quad
 \lm_{i+1}=\lm_{i+1}^f=1 \quad\mbox{or } \quad  a_1=a_{i+2}.
 $$
In the former case, we are done. In the latter case, by Lemma~\ref{s}\ref{s_3}, 
\[
\lambda_{i+1}^f=\lambda_{a_1}(a_{i+2})=\lambda_{a_1}(a_{1})=1,
\]
whence Equation~\eqref{stheta} becomes $(1-\lm_{i+1})a_{i+2}=0$, which implies $\lm_{i+1} =1$, proving \ref{exclaim i}. 
By \ref{step2 b}, $\lambda_{a_0}(s_{\tz, i+1})=0$, proving \ref{exclaim v}.
\medskip

\noindent{\bf Step 7.} {\it If $i$ is even and $\ch(\F)\neq 5$, then part \ref{exclaim ii} holds.}
\medskip

By Steps 5 and 6,  claims ~\ref{step4 h} and ~\ref{step4 j} become respectively, for every $1\leq h\leq i$ and $k=i+1-h$,
\begin{equation}\label{shsk}
s_{\tz,h}s_{\tz,k}=\tfrac{3}{4}(s_{\tz,k}+s_{\tz,h})
-\tfrac{3}{8}s_{\tz, |k-h|}-\tfrac{3}{4}s_{\tz, i+1} +\tfrac{3}{16}(s_{\tih,i+1}+s_{\tik,i+1}) 
\end{equation}
and
\begin{equation}\label{shskf}
s_{\tu,h}s_{\tu,k}=\tfrac{3}{4}(s_{\tu,k}+s_{\tu,h})-\tfrac{3}{8}s_{\tu, |k-h|}-\tfrac{3}{4}s_{\tu, i+1} +\tfrac{3}{16}(s_{\tu-\tih,i+1}+s_{\tu-\tik,i+1}).
\end{equation} 
Since $\ch(\F)\neq 5$, by the inductive hypothesis, $s_{\tz,h}s_{\tz,k}=s_{\tu,h}s_{\tu,k}$ and so, subtracting Equation~\eqref{shskf} from Equation~\eqref{shsk} and rescaling by $\tfrac{16}{3}$, we get, for every $1\leq h\leq i$ and $k=i+1-h$,
\begin{equation}\label{eqhk}
-4s_{\tz, i+1} +s_{\tih,i+1}+s_{\tik,i+1}+4s_{\tu, i+1} -s_{\tu-\tih,i+1}-s_{\tu-\tik,i+1}=0. 
\end{equation}
If $h=1$, then $$ s_{-\tu, i+1}-5(s_{\tz, i+1} -s_{\tu, i+1}) -s_{-\bar 2,i+1}=0,$$
whence taking the orbits of both members of the above equation under $\langle \rho^{(i+2)/2}\rangle$ and making explicit $s_{\tT, i+1}$, we get, for every $\tT \in \Z/(i+1)\Z$
\begin{equation}\label{formulast}
s_{\tT, i+1}=5(s_{\tT+\tu, i+1} -s_{\tT +\tilde 2, i+1}) +s_{\tT+\tilde 3,i+1}.
\end{equation}
If $h=2$, then Equation~\eqref{eqhk} becomes
\begin{equation*}\label{h=2}
0=-4s_{\tz, i+1} +s_{\bar 2,i+1}+s_{-\bar 2,i+1}+4s_{\tu, i+1} -s_{-\tu,i+1}-s_{\bar 3,i+1} 
\end{equation*}
whence, using Equation~\eqref{formulast} to substitute $s_{\tT, i+1}$ for $\tT\in \{-\bar 2, \bar 3\}$, 
we get
\begin{equation*}\label{pass}
0=-10(s_{\tz, i+1}-s_{\tu, i+1})+4(s_{-\tu, i+1}-s_{\bar 2, i+1}).
\end{equation*}
Since, by Equation~\eqref{formulast} with $\tT=-\tu$,  $s_{-\tu, i+1}-s_{\bar 2, i+1}=5(s_{\tz, i+1}-s_{\tu, i+1})$, it follows that
\[
0 = 10(s_{\tz, i+1} -s_{\tu, i+1}).
\]
Since $\ch (\F)\neq 5$, we get $s_{\tz, i+1} =s_{\tu, i+1}$. Taking the orbits of both members under $\langle \rho^{(i+2)/2}\rangle $, we get $s_{\tz, i+1} =s_{\tih, i+1}$ for every $\tih \in \Z/(i+1)\Z$.
\medskip

\noindent{\bf Step 8.} {\it If $i$ is even and $\ch(\F)=5$, then parts \ref{exclaim i} and \ref{exclaim v} hold.}
\medskip

 Choose $h$ and $k$ such that none of them is a multiple of $3$. Then, by the inductive hypothesis, $s_{\tz,h}=s_{\tu, h}$, $s_{\tz,k}=s_{\tu, k}$, whence, as in Step 6, 
\[
0=(s_{\tz,h}s_{\tz,k})^{\rho^{(i+2)/2}}-s_{\tu,h}s_{\tu,k} 
\]
and so by Equations~\ref{step4 h} and \ref{step4 j}, and Lemma~\ref{action}\ref{action_1}, 
\begin{align*}
0&= 3(1-\lm_{i+1})a_{i+2}-3(1-\lm_{i+1}^f)a_1\\
&\phantom{{}={}} \quad +3(s_{\ti+\bar 2, |h-k|} +s_{\tih+\ti+\bar 2, |h-k|}+s_{-\tih+\ti+\bar 2, |h-k|})\nonumber \\
&\phantom{{}={}} \quad -3(s_{\tu, |k-h|} +s_{\tu+\tih,|k-h|}+s_{\tu-\tih,|k-h|}).
\end{align*}
Since $h$ is not a multiple of $3$,   $\Z/3\Z=\{\tilde r, \tilde r+\tilde h, \tilde r-\tilde h\}$, for every $r\in \Z$. By the inductive hypothesis, this yields that, for every $r\in \Z$,
$$\{s_{\tr, |h-k|} ,s_{\tih+\tr, |h-k|},s_{-\tih+\tr, |h-k|}\}=\{s_{\tz, |k-h|}, s_{\tu, |k-h|}, s_{\bar 2, |k-h|}\},
$$
the two sets collapsing into singletons if $|k-h|\not \equiv_3 0$, whence 
$$0=3(1-\lm_{i+1})a_{i+2}-3(1-\lm_{i+1}^f)a_1.
$$
As in the proof of Step 6, we get  $\lm_{i+1}=\lm_{i+1}^f=1$ 
and $\lambda_{a_0}(s_{\tih, i+1})=0$. 
\bigskip

\noindent{\bf Step 9.} {\it Assume $\ch(\F)=5$ and $i$ is even, then parts \ref{exclaim iii} and \ref{exclaim iv} hold.}
 \medskip

By Step 8, claim~\ref{step4 h} becomes 
\begin{align}\label{shsk5}
s_{\tz,h}s_{\tz,k} &= 2(s_{\tz,k}+s_{\tz,h})\\
&\phantom{{}={}} \quad -2(s_{\tz, |k-h|}+s_{\tih,|h-k|}+s_{-\tih,|h-k|}+s_{\tz, i+1}+s_{\tih,i+1}+s_{-\tih,i+1}). \nonumber
\end{align}
Since $i\geq 3$, without loss of generality we may choose $k\in 2\Z$ and none of $h$ and $k$ lying in $3\Z$. Then, as in Step 6, 
$$s_{\tz,h}s_{\tz,k}=(s_{\tz,h}s_{\tz,k})^{\tau_{k/2}}, $$
whence, by Equation~\eqref{shsk5},
\begin{align*}
0&=  s_{\tz,h}s_{\tz,k}-(s_{\tz,h}s_{\tz,k})^{\tau_{k/2}} \nonumber\\
&\phantom{{}={}} \quad -2(s_{\tz, |k-h|}+s_{\tih,|h-k|}+s_{-\tih,|h-k|}+s_{\tz, i+1}+s_{\tih,i+1}+s_{-\tih,i+1}) \nonumber\\
&\phantom{{}={}} \quad +2(s_{\tik, |k-h|}+s_{\tik-\tih,|h-k|}+s_{\tik+\tih,|h-k|}+s_{\tik, i+1}+s_{\tik-\tih,i+1}+s_{\tik+\tih,i+1})\\
&= 2(s_{\tik-\tih,i+1}-s_{\tih,i+1}).
\end{align*}
Thus, by Equation~\eqref{hk}, 
\begin{equation}\label{s5}
 s_{\tih,i+1} =s_{\tik-\tih,i+1}= s_{-2\tih,i+1}. 
\end{equation}
If $i\not \equiv_3 0$, then, for $h=1$ and $k=i$, Equation~\eqref{s5} gives $s_{\tu,i+1} = s_{-\bar 2,i+1}$, whence, by Lemma~\ref{action}\ref{action_1}, for every $r\in \Z$,
\begin{equation}\label{mod3}
s_{\tr, i+1}=(s_{-\bar 2,i+1})^{\rho^{(i+2)(r+2)/2}}=(s_{\tu,i+1})^{\rho^{(i+2)(r+2)/2}} =s_{\tr+\bar 3, i+1},
\end{equation}
which, for $i\equiv _3 2$, is equivalent to \ref{exclaim iii}. 

Assume $i\equiv_3 1$. Then $ 3$ and $i+1$ are coprime and there exists $a\in \Z$ such that $1\equiv_{i+1} 3a$. Thus, by Equation~\eqref{mod3}, 
$$
s_{\tz, i+1}=s_{3\bar a, i+1}=s_{\tu, i+1},
$$
and \ref{exclaim iii} follows by Lemma~\ref{action}\ref{action_1}. 

Finally, assume $i\equiv_3 0$. Then, for $h=i-1$ and $k=2$, Equation~\eqref{s5} gives $s_{-\bar 2,i+1} = s_{\bar 4,i+1}$. As above, by Lemma~\ref{action}\ref{action_1}, for every $r\in \Z$,
\begin{equation*}\label{mod6}
s_{\tr, i+1}=(s_{-\bar 2,i+1})^{\rho^{(i+2)(r+2)/2}}=(s_{\bar 4,i+1})^{\rho^{(i+2)(r+2)/2}} =s_{\tr+\bar 6, i+1}.
\end{equation*} 
Since $i+1$ is odd and $i\equiv_3 0$, $6$ is coprime to $i+1$. Taking $a\in \Z$ such that $1\equiv_{i+1}6a$ we get 
$$
s_{\tz, i+1}=s_{6\bar a, i+1}=s_{\tu, i+1},
$$ 
and \ref{exclaim iv} follows by Lemma~\ref{action}. 
\medskip

\noindent{\bf Step 10.} {\it If $i$ is odd, then the following hold:
\begin{align}
s_{\tz,2}s_{\tz,i-1} &= \tfrac{3}{4}(s_{\tz,2}+s_{\tz,i-1})-\tfrac{1}{8}\sum_{x\in \{0,2,4\}}(s_{\overline{x},i-3}+s_{\overline{x},i+1})\label{evens}
\intertext{and}
s_{\tu,2}s_{\tu,i-1} &= \tfrac{3}{4}(s_{\tu,2}+s_{\tu,i-1})-\tfrac{1}{8}\sum_{x\in\{0,2,4\}}(s_{\tu+\overline{x},i-3}+s_{\tu+\overline{x},i+1}).\label{odds}
\end{align}
}
\medskip
Since $i+1$ is even, the formulas are particular cases of Lemma~\ref{evenlevel}\ref{evenlevel iv}.
\medskip

\noindent{\bf Step 11.} {\it Assume $\ch(\F)\neq 5$ and $i$ is odd, then part \ref{exclaim ii} holds.}
\medskip

Since $i+1$ is even and $\ch(\F)\neq 5$, by Lemma~\ref{evenlevel},  
$$s_{\tz,i+1}=s_{2\tr, i+1}, \mbox{ and } s_{\tu,i+1}=s_{2\tr+\tu,i+1}$$ for every $r\in \Z$. By the inductive hypothesis, 
$$s_{0,2}=s_{1,2}, \;\;s_{0,i-3}=s_{1,i-3} \mbox{ and } s_{0,i-1}=s_{1,i-1},$$ 
whence 
$$s_{\tz,2}s_{\tz,i-1}=s_{\tu,2}s_{\tu,i-1}.$$ 
Taking the difference between Equations~\eqref{evens} and~\eqref{odds} we get $s_{\tz,i+1}=s_{\tu,i+1}$ and part \ref{exclaim ii} follows.
\medskip

\noindent{\bf Step 12.} {\it Assume $\ch(\F)= 5$ and $i$ is odd, then parts \ref{exclaim iii} and \ref{exclaim iv} hold.}
\medskip

We consider separately the cases $i\equiv_3 0$, $i\equiv_3 1$, and $i\equiv_3 2$. Recall that we are assuming $i\geq 3$.
\medskip

Assume $i\equiv_3 0$. By the inductive hypothesis and, for $i=3$, by Lemma~\ref{s=0}, $$s_{\tz, 2}=s_{\tu,2},\;\;s_{\tz, i-1}=s_{\tu,i-1},\mbox{ and } \sum_{x\in \{0,2,4\}}s_{\bar x,i-3}=\sum_{x\in \{0,2,4\}}s_{\tu+\bar x,i-3}.$$ Since $i+1\not \equiv_3 0$, by Lemma~\ref{evenlevel}\ref{evenlevel ii}, 
\begin{equation}\label{pign}
s_{\tz,i+1}=s_{2\tr, i+1} \mbox{ and } s_{\tu,i+1}=s_{2\tr+\tu,i+1} \mbox{  for every } r\in \Z .
\end{equation}  Therefore, as in Step 11, taking the difference between  Equations~\eqref{evens} and~\eqref{odds} we get 
$$
3s_{\tz,i+1}=3s_{\tu,i+1}. 
$$
Since $\ch(\F) \neq 3$, part \ref{exclaim iii} follows by Equation~\eqref{pign}.

Assume $i\equiv_3 1$. By the inductive hypothesis,   $s_{\tz, 2}=s_{\tu, 2}$, 
$$
\{ s_{\tz, i-1}, s_{\tu, i-1}, s_{\bar 2, i-1}\}=\{ s_{\tz, i-1}, s_{\bar 2, i-1}, s_{\bar 4, i-1}\}=\{ s_{\tu, i-1}, s_{\bar 3, i-1}, s_{\bar 5, i-1}\}.
$$
It follows that  
$$
0=s_{\tz, 2}\left (\sum_{x\in \{0,2,4\}} s_{\bar x, i-1}\right )-s_{\tu, 2}\left (\sum_{x\in \{1,3,5\}} s_{\bar x, i-1}\right ).
$$
Since the index $i-1$ is even, rewriting the second term of the above equality using Lemma~\ref{evenlevel}\ref{evenlevel v}, \ref{evenlevel iv}, and \ref{evenlevel ii}, we get 
\begin{align*}
   0&= s_{\tz, i+1}+s_{\bar 2,i+1}+s_{\bar 4,i+1}-s_{\tu, i+1}-s_{\bar 3,i+1}-s_{\bar 5,i+1}\\
   &= 3(s_{\tz, i+1}-s_{\tu, i+1}),
\end{align*}
whence, as in the previous case, $s_{\tz, i+1}=s_{\tu, i+1}$ and part \ref{exclaim iii} follows.

Finally, assume $i \equiv_3 2$. Then $i\geq 5$. Since  $\tfrac{3}{4}=2$ and $\tfrac{3}{16}=-2$ in characteristic $5$, by evaluating the equations in~\ref{step4 h} and~\ref{step4 j} for $h=3$ and $k=i-2$, and using Equation~\eqref{hk}, we get, respectively,
\begin{equation}\label{fin1}
    s_{\tz, 3}s_{\tz, i-2}=2(s_{\tz,3}+s_{\tz,i-2}-s_{\tz,i-5}-s_{\bar 3,i-5}-s_{-\bar 3,i-5}-s_{\tz,i+1}-s_{\bar 3,i+1}-s_{-\bar 3,i+1})
\end{equation}
and
\begin{equation}\label{fin2}
    s_{\tu, 3}s_{\tu, i-2}=2(s_{\tu,3}+s_{\tu,i-2}-s_{\tu,i-5}-s_{\bar 4,i-5}-s_{-\bar 2,i-5}-s_{\tu,i+1}-s_{\bar 4,i+1}-s_{-\bar 2,i+1}).
\end{equation}
Since by the inductive hypothesis, 
$$s_{\bar 4,i-2}=s_{\tu, i-2}, \mbox{ and } s_{\bar 4,i-5}=s_{\tu, i-5}=s_{\bar 7, i-5}=s_{-\bar 2, i-5},$$
applying $\rho^2$ to Equation~\eqref{fin1}, we get 
\begin{equation}\label{fin3}
    s_{\tu, 3}s_{\tu, i-2}=2(s_{\tu,3}+s_{\tu,i-2}-s_{\tu,i-5}-s_{\bar 7,i-5}-s_{\tu,i-5}-s_{\bar 4,i+1}-s_{\bar 7 ,i+1}-s_{\tu,i+1}).
\end{equation}
By Lemma~\ref{evenlevel}\ref{evenlevel iii}, $s_{\bar 7 ,i+1}=s_{\tu ,i+1}$, thus, taking the difference between Equations~\eqref{fin2} and~\eqref{fin3}, we get
\[
s_{-\bar 2,i+1}=s_{\tu ,i+1}.
\]
Part \ref{exclaim iii} now follows by Lemma~\ref{evenlevel}\ref{evenlevel iii} and applying $\rho$ and $\rho^2$ to the above equation.
\medskip

\noindent{\bf Step 13.} {\it Part \ref{exclaim vi} holds.}
\medskip

If $l$ is even, the result follows by claim~\ref{step4 f} in Step 4 and the action of $\Miy(\V)$. If $l$ is odd, by proceeding as in Step 4, replacing $a_0$ by $a_1$ and the eigenvectors $u_h$ and $v_h$ by $\tilde u_h$ and $\tilde v_h$, we get 
$$
a_1s_{\tih, h+k}=-\tfrac{3}{4}a_1+\tfrac{3}{8}(a_{1-h-k}+a_{1+h+k})+\tfrac{3}{2}s_{\tu, h+k}-\tfrac{1}{4}s_{\tih, h+k}+\tfrac{1}{4}s_{\bar 2-\tih, h+k},
$$
whence \ref{exclaim vi} again follows by the action of $\Miy(\V)$.
\medskip

\noindent{\bf Step 14.} {\it Part \ref{exclaim vii} holds.}
\medskip

This follows by~\ref{step4 h} and ~\ref{step4 j} in Step 4, parts \ref{exclaim ii}-\ref{exclaim iv} and the action of $\Miy(\V)$.
\end{proof}

In order to complete the proof of Theorem~\ref{HWthm}, we need only to show that if $\ch(\F)=5$, then the product $s_{\tr, 3i}s_{\tT, 3j}$ satisfies the same formula as in the algebra $\hatH$, for every $i,j\in \N$, $r,t\in \Z$. Note that, by Proposition~\ref{exclaim}, we may take $\{r,t\}\subseteq \{0,1,2\}$. This is the goal of the following lemma, whose proof is essentially the same as the one of~\cite[Lemma 13]{FM}.

\begin{lemma}\label{prodss3} 
Let $\ch(\F)=5$. For every $i,j\in 3\N$ and $\{r,t\}\subseteq \{0,1,2\}$,
 $$
s_{\tr, i}s_{\tT, j}=2\sum_{h\in \{i,j\}}(s_{\tr, h}+s_{\tT, h}-s_{-(\tr+\tT), h})- \sum_{h\in \{|i-j|,i+j\}}(s_{\tr, h}+s_{\tT, h}-s_{-(\tr+\tT), h}).
 $$
\end{lemma}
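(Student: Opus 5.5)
I would mimic the strategy of~\cite[Lemma 13]{FM}: derive the product $s_{\tr,i}s_{\tT,j}$ from products already computed in Proposition~\ref{exclaim}, using the fusion law at a single axis to fix the unknown part. The product is bilinear, and Proposition~\ref{exclaim} already gives the products of $a_l$ with every $s_{\tr,k}$. It also reduces all indices to $\{0,1,2\}$ modulo $3$. So the only genuinely new products are those of two $s$'s whose lengths are multiples of $3$. Write $i=3m$, $j=3n$. I would proceed by induction on $m+n$. Symmetry and the action of $\Miy(\V)$ let me assume $i\ge j$, and $\rho$ lets me normalise $r$.

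Step 1. First I recover each $s_{\tr,3m}$ from eigenvectors at $a_0$. For $h+k=3m$, Step~4 of Proposition~\ref{exclaim} gives $s_{\tz,h}s_{\tz,k}$ in terms of $a_0$ and the $s_{\cdot,3m}$. Choosing $h,k\not\equiv_3 0$, this expresses $s_{\tih,3m}+s_{-\tih,3m}$ modulo terms of smaller length. Symmetrising over $\tih$ and taking differences via $\tau_k$, as in Step~9, I can write each $s_{\tr,3m}$ as a combination of products $s_{\tz,h}s_{\tz,k}$ with $h,k\not\equiv_30$, plus $a$'s and $s$'s of length less than $3m$.

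Step 2. Next I substitute into $s_{\tr,3m}s_{\tT,3n}$. This reduces the product to three kinds of term: products of an $s_{\cdot,3n}$ with some $a_l$ (known by Proposition~\ref{exclaim}\ref{exclaim vi}); products with $s$'s of length not divisible by $3$ (known by Proposition~\ref{exclaim}\ref{exclaim vii} together with Lemma~\ref{evenlevel}); and triple products $(s_{\tz,h}s_{\tz,k})s_{\tT,3n}$. The triple products are the real obstruction. I would handle them as in~\cite{FM}: consider $u_hu_k$, $u_hv_k$, $v_hv_k$ and their products with $u_{3n}$, $v_{3n}$, $w_{3n}$ (from~\eqref{defui}), and impose the fusion law for $\ad_{a_0}$. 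Concretely, I would require that $0$-eigenvectors times $0$-eigenvectors have no $a_0$-, $\al$- or $\bt$-component, and that the $\bt$-part changes sign under $\tau_0$. This yields linear equations whose unknowns are the bracketed combinations $s_{\tr,h}+s_{\tT,h}-s_{-(\tr+\tT),h}$.

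Step 3. Finally, I would compare with $\hatH$. Because $\ch(\F)=5$, the coefficients $\tfrac34=2$ and $\tfrac38=1$ collapse. I expect the resulting system to have a unique solution, and the formula of the lemma, valid in $\hatH$ (Table~\ref{tablehatH}), certainly satisfies it. So the solution must be that formula. The sign of the $-(\tr+\tT)$ term would be confirmed by the $\delta$-twisted terms in Proposition~\ref{exclaim}\ref{exclaim vi}.

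The main obstacle is uniqueness in Step~2: showing that the fusion-law equations pin down the product, rather than only some symmetrisation of it. If the equations at $a_0$ do not suffice, I would add the same equations at $a_1$ (via $\tilde u$, $\tilde v$), which supply the missing orbit under the residues mod $3$.
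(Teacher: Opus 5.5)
There is a genuine gap, and it already sits in Step~1. In characteristic $5$ one has $-\tfrac34=\tfrac{3}{16}=3$. So for $h+k=3m$ with $h,k\not\equiv_3 0$, the formula of Step~4 (recorded in Proposition~\ref{exclaim}\ref{exclaim vii}) gives $s_{\tz,h}s_{\tz,k}=3(s_{\tz,3m}+s_{\tu,3m}+s_{\td,3m})$ modulo terms of length less than $3m$. Applying $\tau_k$ or $\rho$ only permutes the three residues, so no individual $s_{\tr,3m}$ can be isolated this way. In fact your claim fails in $\hatH$ itself, which satisfies all the hypotheses. There $s_{\tz,3m},s_{\tu,3m},s_{\td,3m}$ are independent basis vectors, whereas by Table~\ref{tablehatH} a product of two $s$'s whose lengths are not divisible by $3$ involves lengths divisible by $3$ only through the sums $\sum_{\ttx\in\Z_3}s_{\ttx,\ell}$.

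Step~2 is then circular. Since $s_{\tz,h}s_{\tz,k}$ is a \emph{known} vector, your ``triple product'' $(s_{\tz,h}s_{\tz,k})s_{\tT,3n}$ is just $3(s_{\tz,3m}+s_{\tu,3m}+s_{\td,3m})s_{\tT,3n}$ plus known terms, i.e.\ the unknown you started from. The fusion conditions you then impose on products such as $(u_hu_k)u_{3n}$ involve $a_0$ times the unknown products, so they are not linear equations in those products. The uniqueness you ``expect'' is exactly what must be proved, and Step~3 only shows that the $\hatH$ formula is \emph{a} solution. (Also, the brackets $s_{\tr,h}+s_{\tT,h}-s_{-(\tr+\tT),h}$ are known vectors, not unknowns.)

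The missing idea is to choose eigenvectors so that the fusion law gives an equation \emph{linear} in a single unknown product. The paper takes $\overline u_i:=a_0+2(a_{-i}+a_i)-2(s_{\tz,i}+s_{\tu,i}+s_{\td,i})$, which is a $0$-eigenvector of $\ad_{a_0}$ by Proposition~\ref{exclaim}\ref{exclaim vi}. It then uses that $u_j+v_j=-a_0+\tfrac12(a_j+a_{-j})$ (as $\lm_j=1$) contains no $s$-term. Consider the fusion-law identity $a_0(\overline u_iu_j+\overline u_iv_j)=2\overline u_iv_j$ (recall $\al=2$). Its left side only needs products of an axis with an axis or with some $s_{\tr,k}$, all of which are known. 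Its right side contains $(s_{\tz,i}+s_{\tu,i}+s_{\td,i})s_{\tz,j}$ linearly. Since $s_{\tz,i}s_{\tz,j}$ is already known, this gives $s_{\tz,j}(s_{\tu,i}+s_{\td,i})$ at once, with no induction and no triple products.

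The same computation at $a_1$, with $\hat u_i:=a_1+2(a_{1-i}+a_{1+i})-2(s_{\tz,i}+s_{\tu,i}+s_{\td,i})$ and $\tilde u_j,\tilde v_j$, gives $s_{\tu,j}(s_{\tz,i}+s_{\td,i})$. The paper combines the two identities, together with $i\leftrightarrow j$ and $\tau_1$, to extract $s_{\tz,i}s_{\tu,j}$, and then applies $\rho,\rho^2$.

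Be aware that identities of this type only determine, for fixed $i,j$, the sums $\sum_r s_{\tr,i}s_{\tT,j}$ and $\sum_t s_{\tr,i}s_{\tT,j}$. The paper's subtraction of the $a_1$-identity from \eqref{s3*sum} tacitly uses $s_{\tz,j}s_{\tu,i}=s_{\tu,j}s_{\tz,i}$. This is automatic when $i=j$, but for $i\neq j$ it is precisely the kind of uniqueness question you flagged and needs its own justification.
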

\begin{proof}
Fix $i,j\in 3\N$. Set 
\begin{equation*}\label{u3}
\overline{ u}_{i}:= a_0+2(a_{-i}+ a_{i}) -2( s_{\tz,i}+ s_{\tu,i}+ s_{\td,i}).
\end{equation*}
Using Proposition~\ref{exclaim}\ref{exclaim vi} it is straightforward to see that  $\overline{ u}_{i}$ is a $0$-eigenvector for $\ad_{a_0}$. Hence, 
 by the fusion law, we have 
 $$a_0(\overline{u}_{i}u_{j}+\overline{u}_{i}v_{j})=2 \overline{u}_{i}v_{j}
 $$ 
 (where $u_j$ and $v_j$ are the eigenvectors defined in Equation~\eqref{defui} on page~\pageref{defui}). 
 Substituting the expressions for $\overline u_i$ and $u_j$ in the above equation and using Proposition~\ref{exclaim} to expand the multiplications, we get 
\begin{equation}\label{s3*sum}
s_{\tz,j}(s_{\tu,i}+s_{\td,i})=-s_{\tz,j}-s_{\tz,i}-2(s_{\tz,|j-i|}+s_{\tz,i+j}). 
\end{equation}
Similarly, define
\begin{equation*}\label{u3f}
\hat u_i:=a_1+2(a_{1-i}+a_{1+i})-2(s_{\tilde 0,i}+ s_{\tu,i}+ s_{\td,i}).    
\end{equation*}
It is straightforward to see that $\hat u_i$ is a $0$-eigenvector for $\ad_{a_1}$. Hence, 
 by the fusion law, we have 
 $$a_1(\hat{u}_{i}u_{j}+\hat{u}_{i}v_{j})=2 \hat {u}_{i}v_{j}
 $$  
and, as above, we get
\[
s_{\tu,j}(s_{\tz,i}+s_{\td,i})=-s_{\tu,j}-s_{\tu,i}-2(s_{\tu ,|j-i|}+s_{\tu,i+j}). 
\]
Taking the difference between Equation~\eqref{s3*sum} and this, we obtain
\begin{align*}
s_{\td,i}(s_{\tz,j}-s_{\tu,j}) &= -(s_{\tz,j}-s_{\tu ,j}+s_{\tz,i}-s_{\tu,i})\\
&\phantom{{}={}} \quad -2(s_{\tz,|j-i|}-s_{\tu,|j-i|}+s_{\tz,i+j}-s_{\tu,i+j}). \nonumber
\end{align*}
Furthermore, swapping $i$ and $j$ in Equation~\eqref{s3*sum}, we have
\begin{equation*}\label{s3f*sum2}
s_{\tz,i}(s_{\tu,j}+s_{\td,j})=-s_{\tz,i}-s_{\tz,j}-2(s_{\tz,|j-i|}+s_{\tz,i+j}), 
\end{equation*}
whence
\begin{align*}
s_{\tz,i}s_{\tu,j} &= 3\left ( s_{\tz,i}(s_{\tu ,j}+s_{\td ,j})-(s_{\td,i}(s_{\tz,j}-s_{\tu,j}))^{\tau_1}\right )\\
&= 3\Big(-s_{\tz,i}-s_{\tz,j}-2(s_{\tz,|j-i|}+s_{\tz ,i+j})\\
&\phantom{{}={}} \quad -\big( -s_{\tz,j}+s_{\tu,j}-s_{\tz,i}+s_{\tu,i}-2(s_{\tz,|j-i|}-s_{\tu,|j-i|}+s_{\tz,i+j}-s_{\tu,i+j})       \big)^{\tau_1} \Big)\\
&= 2(s_{\tz,i}+s_{\tu,i}-s_{\td,i}+s_{\tz,j}+s_{\tu,j}-s_{\td,j})\\
&\phantom{{}={}} \quad -(s_{\tz,|i-j|}+s_{\tu,|i-j|}-s_{\td,|i-j|}+s_{\tz ,i+j}+s_{\tu,i+j}-s_{\td, i+j}).
\end{align*}
By applying $\rho$ and $\rho^2$ to the above equation, by Proposition~\ref{exclaim}\ref{exclaim iii}, we get  the formulas for the products $s_{\tu,h}s_{\td,k}$ and $s_{\tz,h}s_{\td,k}$, respectively. 
\end{proof}

\begin{proof}[Proof of Theorem~\ref{HWthm}]
By Lemma~\ref{prodottifacili}, Proposition~\ref{exclaim}, and Lemma~\ref{prodss3}, the subspace of $V$ spanned by all axes $a_i$ and all vectors $s_{\tr, j}$, for $i,r\in \Z$, $j\in \N$, is closed under the algebra multiplication. Hence it coincides with $V$. It follows that there exists a surjective algebra homomorphism from $\mathcal H$ (or $\hat{ \mathcal H}$ when $\ch (\F)=5$) to $\V$, whence the result follows. 
\end{proof}

\begin{cor}\label{reduction}
 Let $\al=4\bt$ and let $\V=(V, \{a_0, a_1\})$ be a $2$-generated $\Mab$-axial algebra. If $Q=Q^f=0$, then 
  $\V$ is either isomorphic to a quotient of a symmetric algebra or has a skew axet.
\end{cor}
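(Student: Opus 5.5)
Since $\al=4\bt$, the plan is to show that $\lmu=\lmf$ and $\lmd=\lmdf$, and then to finish with Proposition~\ref{reg}, or with Theorem~\ref{HWthm} in the one exceptional parameter case. The degenerate configurations are handled separately through Lemma~\ref{lhyp1}.

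\emph{Step 1: $\lmu=\lmf$.} Since $Q-Q^f=0$, Lemma~\ref{newQ-Q^f}\ref{newQ-Q^f_1} gives either $\lmu=\lmf$, or $\al\neq 2$ and $\bt=\tfrac{\al(\al-1)}{2(\al-2)}$. In the second case, substituting $\al=4\bt$ gives $2(4\bt-2)=4(4\bt-1)$, so $\bt=0$, which is excluded. Hence $\lmu=\lmf$.

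\emph{Step 2: $\lmu=\tfrac{18\bt-1}{8}$.} Now use $Q=0$ and Lemma~\ref{newQ-Q^f}\ref{eqQ=0}. When $\al=4\bt$ the coefficient of $\lmf$ in that formula vanishes. What remains is
\[
\lmu=\tfrac{\bt(40\bt^2-4\bt^2+4\bt-6\bt)}{16\bt^2}=\tfrac{18\bt-1}{8}.
\]
By Step~1 the same value holds for $\lmf$.

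\emph{Step 3: $\lmd=\lmdf$, or we are in the setting of Lemma~\ref{lhyp1}.} Since $Q=Q^f=0$, Lemma~\ref{l7.3} yields
\[
C^f(a_{-1}-a_1)=0\quad\text{and}\quad C(a_2-a_0)=0 .
\]
If $a_{-1}=a_1$ or $a_0=a_2$, then Lemma~\ref{lhyp1} applies. Either $\V$ is of Jordan type, and hence symmetric, or $\V$ has a skew axet, and in both cases we are done. Otherwise $C=C^f=0$. In the definition of $C$, the only term not symmetric under $\lmu\leftrightarrow\lmf$ apart from $\lmd$ is the combination of $\lmu^2$, $\lmf$ and $\lmu$ terms. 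Once $\lmu=\lmf$, these terms agree in $C$ and in $C^f$, so $0=C-C^f=-3\bt^2(\lmd-\lmdf)$. This gives $\lmd=\lmdf$.

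\emph{Step 4: conclusion.} If $(\al,\bt)\neq(2,\tfrac12)$, Proposition~\ref{reg}\ref{reg_1} shows that $\V$ is isomorphic to a quotient of a symmetric algebra.

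The remaining case $(\al,\bt)=(2,\tfrac12)$ is compatible with $\al=4\bt$, and I expect it to be the main point needing care. Here Step~2 gives $\lmu=\lmf=1$. Substituting $\bt=\tfrac12$ and $\lmu=\lmf=1$ into $C=0$ gives
\[
2+7-1-11+\tfrac{15}{4}-\tfrac34\lmd=0,
\]
that is, $\tfrac34-\tfrac34\lmd=0$, so $\lmd=1$. Then $\lmdf=\lmd=1$ as well, so $\V$ is of $\mathcal H$-type. Theorem~\ref{HWthm} now shows that $\V$ is a quotient of $\mathcal H$ or of $\hatH$, and hence a quotient of a symmetric algebra.
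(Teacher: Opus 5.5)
Your proof is correct and follows the paper's argument. You reduce via Lemma~\ref{lhyp1}, obtain $\lmu=\lmf=\tfrac{18\bt-1}{8}$ from Lemma~\ref{newQ-Q^f}, force $\lmd=\lmdf$, and conclude with Proposition~\ref{reg}\ref{reg_1} or Theorem~\ref{HWthm}. The only difference is how you get $\lmd=\lmdf$: you use $C=C^f=0$ from Lemma~\ref{l7.3}, whereas the paper observes that $P=0$ when $\al=4\bt$ and uses Equations~\eqref{equa1} and~\eqref{equa2} to get $R=R^f=0$. Both relations need $a_0\neq a_2$ and $a_1\neq a_{-1}$, and they give the same value of $\lmd$. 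Your division by $3\bt^2$ is legitimate, since $\al=4\bt\neq\bt$ forces $\ch(\F)\neq 3$.
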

\begin{proof}
If $a_0= a_2$ or $a_1= a_{-1}$, then the result follows by Lemma~\ref{lhyp1}. Assume $a_0\neq a_2$ and $a_1\neq a_{-1}$.
Since $\al=4\bt$, $P=0$. Hence, by the hypothesis $Q=Q^f=0$ and Equations~\eqref{equa1} and~\eqref{equa2} on page~\pageref{equa1}, we get $R=R^f=0$. By Lemma~\ref{newQ-Q^f} and the definition of $R$ and $R^f$, it follows that 
  \[
 \lmu=\lmf=\tfrac{18\bt-1}{8} \quad \mbox{ and } \quad \lmd=\lmdf=\tfrac{480\bt^3-228\bt^2+28\bt-1}{64\bt^2}.
  \]
Thus, if $\al\neq 2$, then the result follows by Proposition~\ref{reg}\ref{reg_1}. If $\al=2$, then $\bt=\tfrac{1}{2}$ and  $\lmu=\lmf=\lmd=\lmdf=1$. Thus claim holds by Theorem~\ref{HWthm}. 
\end{proof}

\chapter{The case $V\in \{V_e,V_o\}$}\label{chaplast}

In this chapter, we prove Theorem~\ref{teonsa}. By Theorems~\ref{al=2bt}, \ref{skew}, and Lemma~\ref{lhyp1}, we may assume that $\V$ is a $2$-generated $\Mab$-axial algebra satisfying the following properties:
\begin{hyp}\label{Hypns}~
\begin{enumerate}[enum_arabic]
    \item $\al\neq 2\bt$, \label{Hypns_1}
    \item $ a_0\neq a_2$ and  $ a_1\neq a_{-1}$, \label{Hypns_2}
    \item $\V$ has regular axet, \label{Hypns_3}
    \item  $V=V_e$. \label{Hypns_4}
\end{enumerate}
\end{hyp}

Note that conditions \ref{Hypns_2} and \ref{Hypns_3} in Hypothesis~\ref{Hypns} are equivalent to saying that $\V$ has axet $X(n+n)$ with $n\geq 2$. In particular, by Lemma~\ref{ui}, $a_0-a_2$ (respectively $a_{-1}-a_1$) is a non-zero $\bt$-eigenvector for $\ad_{a_1}$ (respectively $\ad_{a_0}$), whence
\begin{equation}\label{nontrivial}
   V_\bt^{a_i}\neq \{0\}\quad \mbox{ for every }\quad  i\in \Z. 
\end{equation}



If $I$ is an ideal of an algebra $A$, then clearly every idempotent $e \in A$ induces an idempotent $\bar{e}$ (possibly trivial) of $A/I$.  The following examines when we can lift idempotents of $A/I$ to idempotents of $A$.

\begin{lemma}\label{annihilating idempotents}
Let $A$ be an algebra, $I \unlhd A$ such that $I \leq \Ann(A)$ and $e\in \A$. If $\bar{e}$ is an idempotent of $A/I$, then $\bar{e}$ lifts to a unique idempotent in $A$.  
\end{lemma}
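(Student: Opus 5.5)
The plan is to write the lift as $e+x$ with $x\in I$ and solve for $x$ using that $I$ annihilates $A$. Since $\bar e$ is an idempotent of $A/I$, we have $e^2-e=:i_0\in I$. Every lift of $\bar e$ has the form $e+x$ with $x\in I$, so we look for $x\in I$ such that $(e+x)^2=e+x$.

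Expanding, $(e+x)^2=e^2+2ex+x^2$. Because $I\leq \Ann(A)$, we get $ex=0$ and $x^2=0$, hence $(e+x)^2=e^2=e+i_0$. The condition $(e+x)^2=e+x$ is therefore equivalent to $x=i_0$. So $e':=e+i_0=e^2$ is an idempotent lifting $\bar e$. As a check, $(e^2)^2=(e+i_0)^2=e^2$, again because $i_0$ annihilates everything.

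For uniqueness, suppose $f$ is any idempotent of $A$ with $\bar f=\bar e$. Then $f=e+y$ with $y\in I$, and the same computation gives $f=f^2=e^2$. Hence $f=e^2$, and the lift is independent of the chosen representative $e$.

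There is no real obstacle here. The only point to state carefully is that $A$ is commutative, which holds throughout the paper, so $ex=xe$ and the cross term is $2ex$. Even without commutativity, both $ex$ and $xe$ vanish because $x\in\Ann(A)$. The characteristic plays no role.
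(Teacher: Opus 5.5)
Your proof is correct and follows essentially the same route as the paper. Both write $e^2=e+r$ with $r\in I$, use $I\leq \Ann(A)$ to get $(e+x)^2=e^2$ for every $x\in I$, and conclude that $e+r=e^2$ is the unique idempotent lift; your extra remark that this lift equals $e^2$, and so does not depend on the chosen representative, is a harmless sharpening.
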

\begin{proof}
Since $\bar{e}$ is an idempotent of $A/I$, there exists $r \in I$ such that $e^2 = e+r$.  Then,
\[
(e+r)^2 = e^2 = e+r
\]
and so the lift $e+r$ of $\bar{e}$ is an idempotent of $A$.  Now, assume that $e+s$ is another idempotent in $A$, where $s \in I$.  Then 
\[
e+s = (e+s)^2 = e^2 = e+r
\]
and so $s = r$.
\end{proof}

 If $e \in A$ is an idempotent and $\ad_e$ is semisimple, then clearly if $ev = \lm v$, then $\bar{e} \bar{v} = \lm \bar{v}$ and so the eigenvalues of $\ad_{\bar{e}}$ are a subset of the eigenvalues of $\ad_e$.

%

\section{Jordan type}

\begin{lemma}\label{lem:JT}
Assume $\V$ satisfies Hypothesis~$\ref{Hypns}$. If $\V_e$ is an algebra of Jordan type, then $\V$ is symmetric.
\end{lemma}
\begin{proof}
Assume $\V$ satisfies Hypothesis~\ref{Hypns} and let $\V_e$ be an algebra of Jordan type. Then,  by Hypothesis~\ref{Hypns}\ref{Hypns_4} and Equation~\eqref{nontrivial}, $a_0$ is a $\mathcal J(\bt)$-axis, whence,  by Table~\ref{table2} and up to isomorphism,
\[
\V_e\in \{ 3\C(\bt), 3\C(-1)^\times, \J(\delta), \J(0)^\times\}.
\]

If $a_1\in a_0^{\Miy(\V_e)}$, then $\V$ is symmetric and we are done. So, from now on, suppose that $a_1\notin a_0^{\Miy(\V_e)}$.

If $\V_e=3\C(-1)^\times$, 
then an easy check (see~\cite[Proposition~2.3.3]{FelixThesis}) shows that the only idempotents in $V_e\setminus \{0, \1\}$ belong to $a_0^{\Miy(\V_e)}$, contradicting $a_1\notin a_0^{\Miy(\V_e)}$.

 If $\V_e= 3\C(\beta)$, then 
 by~\cite[Theorem~2.3.4]{FelixThesis} (or by a straightforward computation), the idempotents in $V_e\setminus \{0,\1\}$ are
\[
a_0, \;\; a_2,\;\; a_4,\;\;\1-a_0,\;\; \1-a_2,\;\; \1-a_4.
\] 
Since  $a_1\notin a_0^{\Miy(\V_e)}$,  $a_1\in \{\mathbbm 1-a_0, \mathbbm 1-a_2, \mathbbm 1-a_4 \}$ and so $a_1$ is a $\mathcal J(1-\beta)$-axis.  Since $a_1$ must also be a $\Mab$ axis with a non-trivial $\bt$-eigenspace, $1-\bt = \bt$ and so $\bt = \frac{1}{2}$. By Note~\ref{table2 3C J} in Table \ref{table2}, $3\C(\frac{1}{2}) \cong \J(-\tfrac{3}{8})$.


Suppose that $\V_e$ is isomorphic to either $\J(\delta)$, or $\J(0)^\times$, and $\bt=\tfrac{1}{2}$.  An easy calculation (or see, for example, \cite[p. 82]{axet}) shows that the only non-trivial idempotents in $V\setminus \{0,\1\}$ are $\mathcal{J}(\frac{1}{2})$-axes. 
So $a_1$ is a $\mathcal J(\tfrac{1}{2})$-axis, thus $\V$ is of Jordan type whence symmetric.
%
%
\end{proof}


\section{Quotients of $\IY$-algebras that are not of Jordan type}

The algebras $\IY_3(\al, \frac{1}{2}; \mu)$ and $\IY_5(\al, \frac{1}{2})$ and their quotients generically have infinitely many axes of Monster type $(\al, \frac{1}{2})$.

\begin{lemma}
 \label{Ve=IY3}
If $\V_e$ is isomorphic to either $\IY_3(\al, \frac{1}{2}; \mu)$, $\IY_3(-1, \frac{1}{2}; \mu)^\times$, or $\IY_3(\al, \frac{1}{2};1)^\times$, then $\V$ is symmetric.
\end{lemma}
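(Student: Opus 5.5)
We work under Hypothesis~\ref{Hypns}, so $V=V_e$ and $a_1$ is an $\Mab$-axis of $V_e$ with $V_\bt^{a_1}\neq\{0\}$ by Equation~\eqref{nontrivial}; here $\bt=\tfrac12$. The strategy mirrors Lemma~\ref{lem:JT}: we classify the idempotents of $V_e$ that are primitive axes of Monster type $(\al,\tfrac12)$ with non-trivial $\tfrac12$-eigenspace. We then show that any such $a_1$ admits an automorphism of $V$ swapping $a_0$ and $a_1$. The goal is to prove that $a_1$ lies in the family of axes that $\Aut(V_e)$ moves onto $a_0$, or in a family of axes with the same projection data as $a_0$.

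First I would describe the idempotents of $\IY_3(\al,\tfrac12;\mu)$ explicitly. For $\al\neq-1$ and $\mu\neq1$, Note~\ref{tableIY3basis1} of Table~\ref{tableIY3bis} gives the basis $(e,f,z_1,z_2)$ with $\1=z_1+z_2$. In this basis the algebra splits, following~\cite{split}, as a sum of a $2$-dimensional piece on which $z_1,z_2$ act as idempotents and a nilpotent/Jordan-like part spanned by $e,f$. Writing a general element $x=x_1e+x_2f+y_1z_1+y_2z_2$ and imposing $x^2=x$ yields a small polynomial system. I expect the non-trivial idempotents to form two one-parameter families, of the shape $\{z_i+\text{(terms in }e,f)\}$, together with their complements $\1-x$. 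The axes $a_j$, $j\in\Z$, lie in one family, and the family is parametrised by a conic or line on which $\Miy$ acts by translation. I would next compute, for each idempotent, the eigenvalues of its adjoint. This should show which are $\mathcal M(\al,\tfrac12)$-axes with non-zero $\tfrac12$-space: generically exactly one family, namely the one containing $a_0$ and its complement-type partner. The $\1-x$ type idempotents have eigenvalues $1-\al$ and $\tfrac12$ and so qualify only if $1-\al\in\{\al,\tfrac12\}$, which is excluded or reduces to Jordan type. The degenerate cases $\IY_3(-1,\tfrac12;\mu)^\times$ and $\IY_3(\al,\tfrac12;1)^\times$ would be handled the same way using the bases in Notes~\ref{tableIY3basis2} and~\ref{tableIY3basis3}, and also lifting idempotents through the annihilator quotient via Lemma~\ref{annihilating idempotents} where convenient.

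Having found that $a_1$ lies in the same family of axes as $a_0$, I would exhibit an automorphism $\phi$ of $V_e$ with $a_0^\phi=a_1$ and $a_1^\phi=a_0$. On a one-parameter family, the natural candidate is the reflection of the parameter line about the midpoint of the parameters of $a_0$ and $a_1$. I would check that this reflection is induced by an algebra automorphism, and this is plausible because the family carries a transitive affine action coming from the Miyamoto involutions of all its members. Alternatively, it suffices to show that $\lmu=\lmf$ and $\lmd=\lmdf$, since for $\bt=\tfrac12$ and $(\al,\bt)\neq(2,\tfrac12)$, Proposition~\ref{reg}\ref{reg_1} then gives that $\V$ is a quotient of a symmetric algebra. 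Since $V=V_e$ has dimension at most $4$, it is in fact symmetric by comparing dimensions. For $\al=2$, both projections equal $1$, and Theorem~\ref{HWthm} applies instead.

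The main obstacle will be the idempotent classification: solving the quadratic system in full generality in $\al$ and $\mu$ and characteristic, and verifying that no sporadic idempotent in $V_e$ is an $\mathcal M(\al,\tfrac12)$-axis. Such special parameter values include $\mu=-\tfrac12$, where $\IY_3\cong3\A(\al,\tfrac12)$, and $\al=2$. I would handle these by the projection-equality route rather than explicit automorphisms, computing $\lambda_{a_1}(a_0)$ directly from the eigenvector decomposition.
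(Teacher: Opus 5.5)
Your primary route is the paper's: locate $a_1$ among the idempotents of $V=V_e$, then exhibit an automorphism interchanging $a_0$ and $a_1$. For $\mu=1$ your picture is accurate. The axes are $x_1(\zeta)$, $\zeta\in\F$, with $a_0=x_1(1)$, $a_2=x_1(0)$ and $a_1=x_1(\zeta')$. Your midpoint reflection $\zeta\mapsto 1+\zeta'-\zeta$ is exactly the automorphism the paper writes down and verifies.

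For $\mu\neq 1$, however, the step that carries the lemma is missing. There the relevant axes are $x_a(u)=\tfrac12(u+\al z_1+(\al+1)z_2)$ with $u$ on the conic $b(u,u)=1$ in $E=\langle e,f\rangle$; for $\al=-1$ they are $x_{-1}(u)=\tfrac12(u-z_1+n)$. Here $a_0=x_a(e)$ and $a_1=x_a(u)$, so there is no parameter line, and your plausibility argument does not produce a swap, for three reasons.
\begin{itemize}
\item Transitivity of a group on the family only gives an automorphism sending $a_0$ to $a_1$, not one interchanging them.
\item The Miyamoto involution of $x_a(w)$ fixes $z_1,z_2$ and acts on $E$ as $-t_w$.
\item For non-degenerate $b$, the only isometry of $E$ interchanging $e$ and $u$ is the reflection $t_{e-u}$. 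A spinor norm count shows it is a product of maps $-t_w$ only when $2+2b(e,u)$ is a square in $\F$.
\end{itemize}
The missing ingredient is \cite[Theorems~3.11 and~6.6]{split}: $\Aut(V_e)$ fixes $z_1,z_2$ (resp.\ $z_1,n$) and induces the \emph{full} orthogonal group $\mathrm{O}(E,b)$ on $E$. Hence $t_{e-u}$ lifts to the required automorphism. For $\IY_3(-1,\tfrac12;\mu)^\times$ you then lift $a_1$ through the annihilator by Lemma~\ref{annihilating idempotents}, as you suggest.

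Both of your routes also rest on the idempotent classification, which you only conjecture. The paper imports it from \cite[Theorem~2, Proposition~6.2]{split} and \cite[Proposition~8.2]{MM}. The non-trivial idempotents are $\1$, $z_1$, $z_2$, the $x_a(u)$ and the $x_b(u)=\1-x_a(-u)$. Your exclusions of the Jordan-type and $\mathcal M(1-\al,\tfrac12)$-type idempotents are the right ones.

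Your fallback through Proposition~\ref{reg} is a genuinely different route and can be made to work; it is how the paper treats $\IY_5$ in Lemma~\ref{Ve=IY5}. Its last step is wrong as written, though. Proposition~\ref{reg} only yields a quotient of a symmetric algebra, and Theorem~\ref{teoq} then still allows $4\B(-1,\tfrac12;\nu)^\times$. That algebra is $4$-dimensional and of type $(-1,\tfrac12)$, exactly like $\IY_3(-1,\tfrac12;\mu)$, so in that case comparing $\dim V$ excludes nothing. Use $V=V_e$ instead: the even subalgebra of $4\B(-1,\tfrac12;\nu)^\times$ is $3\C(-1)$ or $3\C(-1)^\times$, a proper subalgebra (Table~\ref{4Bq}).

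Once $a_1=x_a(u)$ is known, the projection identities are a short computation with the Frobenius form of Table~\ref{tableIY3bis}. That form restricts to $(2-\al)b$ on $E$ and makes $E$ orthogonal to $z_1,z_2$, so $\lm_{x_a(u)}(x_a(v))$ depends only on $b(u,v)$. This gives $\lmu=\lmf$ at once. Reading off from the eigenspaces that $\tau_0,\tau_1$ act on $E$ as $-t_e,-t_u$ gives $\lmd=\lmdf$, both equal to the value at $2b(e,u)^2-1$. For $\al=2$ the form vanishes on $E$ and all four projections equal $1$; use Theorem~\ref{HWthm} there, noting that the quotients of $\mathcal H$ and $\hatH$ are symmetric. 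The repaired fallback then proves the lemma without constructing the swap, trading the automorphism-group theorem of \cite{split} for Proposition~\ref{reg} and Theorem~\ref{teoq}.
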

\begin{proof} We show that in all cases $V$ admits an involutory automorphism that swaps $a_0$ and $a_1$, so, by definition, $\V$ is symmetric.

First assume that $\al \neq -1$ and $\mu \neq 1$ and so $\V_e \cong \IY_3(\al, \frac{1}{2}; \mu)$. By Note~\ref{tableIY3basis1} in Table~\ref{tableIY3bis}, $V_e$ has basis $(e,f,z_1,z_2)$. As in~\cite[Definition 1 and Theorem 4]{split}, let $E:=\la e,f\ra$ and let  $$b\colon E\times E\to \F$$ be  the unique symmetric bilinear form such that 
\[
  b(e,e)=1=b(f,f)\quad  \mbox{ and } \quad  b(e,f)=\mu. 
\]
By \cite[Theorem~2]{split}, the non-trivial idempotents of $V = V_e$ are $\1$ and
\begin{enumerate}
    \item $z_1$, which is a primitive $\mathcal J(\al)$-axis,
    \item  $z_2$, which is a primitive $\mathcal J(1-\al)$-axis,
    \item $x_a(u):=\tfrac{1}{2}(u+\al z_1+(\al+1)z_2)$,  for $u\in \la e,f\ra$ with $b(u,u)=1$, which are primitive $\mathcal M(\al, \tfrac{1}{2})$-axes,
    \item  $x_b(u):=\tfrac{1}{2}(u+(2-\al) z_1+(1-\al)z_2)$,  for $u\in \la e,f\ra$ with $b(u,u)=1$, which are primitive $\mathcal M(1-\al, \tfrac{1}{2})$-axes.
\end{enumerate}
By Note~\ref{tableIY3basis1} in Table~\ref{tableIY3bis}, $a_0 = x_a(e)$.  Since $\bt\neq 1-\al$ (otherwise $\al=\bt$) and $a_1$ is, by hypothesis, a $\mathcal M(\al, \frac{1}{2})$-axis with a non-trivial $\frac{1}{2}$-eigenspace, 
\[
a_1\not \in \{\1, z_1, z_2,x_b(u)\}.
\]
Hence $a_1 = x_a(u)$, for some $u \in E $, with $b(u,u)=1$.  By \cite[Theorem~3.11]{split}, $\Aut (V)$ fixes $z_1$ and $z_2$ and acts on $E$ as the full orthogonal group $\mathrm{O}(E, b)$.  Note that $u \neq e$, otherwise $a_1=x_a(e)=a_0$.  So we see that the reflection $t_{e-u}$  in $\mathrm{O}(E, b)$ of centre $\langle e-u\rangle$ (see~\cite[p. 93]{Asch})
   swaps $e$ and $u$.  This reflection lifts to an involutory automorphism of $V$ that swaps $a_0 = x_a(e)$ and $a_1 = x_a(u)$.

Next assume $\al = -1$ and so $\V_e\cong \IY_3(-1, \frac{1}{2}; \mu)$; we proceed similarly to the previous case. By Note~\ref{tableIY3basis2} in Table~\ref{tableIY3bis}, $V_e$ has basis $(e,f,z_1,n)$. Let $E$ and $b$ be as in the previous case.
By \cite[Proposition~6.2]{split} the idempotents of $V=V_e$ are 
\begin{enumerate}
    \item $z_1$, which is a $\mathcal J(-1)$-axis, and 
    \item $x_{-1}(u) := \frac{1}{2}(u -z_1+n)$ (where $u$ is an element of $E$ such that $b(u,u) = 1$) which is a  $\mathcal M(-1, \frac{1}{2})$-axis.
    \end{enumerate}  
So $a_1 = x_{-1}(u)$ for some $u \in E$ where $b(u,u) = 1$. As above, by \cite[Theorem 6.6]{split}, $\Aut (V)$ fixes $z_1$ and $n$ and acts on $E$ as the full  orthogonal group $\mathrm{O}(E, b)$. Similarly to above, the reflection $t_{e-u}$ lifts to an involutory automorphism of $V$ that swaps  $a_0$ and $a_1$.

If $\V_e\cong \IY_3(-1, \tfrac{1}{2};\mu)^\times$, then, by Note~\ref{qann} in Table~\ref{tableIY3bis}, we may assume that $\V_e=\IY_3(-1, \frac{1}{2}; \mu)/I$, where $I$ is the annihilator of the algebra $ U :=\supp( \IY_3(-1, \frac{1}{2}; \mu))$ and, by Lemma~\ref{quotient}, $a_0=x_{-1}(e)+I$.  By Lemma~\ref{annihilating idempotents}, $a_1 \in V_e$ lifts to a unique idempotent $a^{\uparrow}$ of $U$. As above, by~\cite[Proposition~6.2]{split}, $\ad_{a^\uparrow}$ is semisimple.  Since by assumption, $a_1$ has a non-trivial $\frac{1}{2}$-eigenspace and $I \subseteq A_0^{a^\uparrow}$, $a^{\uparrow}$ also has a non-trivial $\frac{1}{2}$-eigenspace and hence $a^\uparrow = x_{-1}(u)$, for some $u \in E$ with $b(u,u)=1$, and $u\neq e$ since $a_1\neq a_0$.  As above, the reflection $t_{e-u}$ lifts to an automorphism of $U$ that induces on $U/I$ an automorphism that swaps $a_0$ and $a_1$.


Assume $\mu = 1$ and $\V_e= \IY_3(\al, \frac{1}{2}; 1)$. By Note~\ref{tableIY3basis3} in Table~\ref{tableIY3bis}, $V_e$ has basis $(a_0, a_2, z,n)$. By \cite[Proposition 8.2]{MM}, the idempotents of $V=V_e$ are $x_1(\zeta) := \zeta a_0 + (1-\zeta)a_2 -\zeta(1-\zeta) (z - 2n)$, where $\zeta \in \F$ and these are all $\mathcal M(\al, \frac{1}{2})$-axes. In particular there is $\zeta' \in \F$ such that $a_1=x_1(\zeta^\prime)$. Let $\phi$ be the linear map $V\to V$ that fixes $z$ and $n$ and maps $a_0 \mapsto x_1(\zeta')$ and $a_2 \mapsto x_1(1+\zeta')$. A direct computation shows that $\phi$ is an involutory  automorphism of the algebra $V$ that swaps $a_0$ and $a_1$. Finally, if $\V_e=\IY_3(\al, \frac{1}{2};1)^\times$, then an analogous argument to the previous quotient produces an automorphism swapping $a_0$ and $a_1$.
\end{proof}




\begin{lemma}
    \label{Ve=IY5}
 If $\V_e$ is isomorphic to $\IY_5(\al, \tfrac{1}{2})^\times$, or $\IY_5(\al, \tfrac{1}{2})$, then $\V$ is symmetric.
\end{lemma}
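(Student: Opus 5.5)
The plan is to follow the template of Lemma~\ref{Ve=IY3}. We determine all idempotents of $V=V_e$ that are $\Mab$-axes with non-trivial $\tfrac{1}{2}$-eigenspace. We then show that $a_1$ is one of them, and produce an involutory automorphism of $V$ swapping $a_0$ and $a_1$. By Hypothesis~\ref{Hypns} we have $V=V_e$, and by Equation~\eqref{nontrivial} $a_1$ is a primitive $\mathcal M(\al,\tfrac12)$-axis with $V^{a_1}_{1/2}\neq\{0\}$. First treat $\V_e\cong \IY_5(\al,\tfrac12)$; the quotient case is handled afterwards.

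\textbf{Step 1: reduce to the semisimple part.} Let $n:=a_2+a_{-2}-4(a_1+a_{-1})+6a_0$ (indices of $\V_e$) be the annihilating element of Table~\ref{tableIY5}. Put $U:=V_e$ and $N:=\F n$, so that $N\le \Ann(U)$ and $U/N\cong \IY_5(\al,\tfrac12)^\times$. By Lemma~\ref{annihilating idempotents}, idempotents of $U/N$ lift uniquely to idempotents of $U$. Also, eigenvalues of $\ad$ on $U/N$ form a subset of those on $U$. So it suffices to classify idempotents in the $5$-dimensional algebra $U/N$ and lift them. This simultaneously covers both $\IY_5(\al,\tfrac12)$ and $\IY_5(\al,\tfrac12)^\times$, exactly as in the $\IY_3(-1,\tfrac12;\mu)^\times$ argument.

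\textbf{Step 2: find the idempotents.} I would look for a description of $\IY_5(\al,\tfrac12)^\times$ analogous to the split spin factor of \cite{split}. I expect that $V^\ast$, which by Lemma~\ref{subIY5}\ref{subIY5_3} is $5$-dimensional and the radical of the form, carries a nilpotent/Jordan-type structure. The even axes $a_{2i}$ are then parametrised as $x(\zeta)$ by a polynomial curve, and $a_{i+5}$ is given by the binomial recursion of Lemma~\ref{subIY5}\ref{subIY5_2}, which suggests axes of the form $x(t)=\sum_k \binom{t}{k}\Delta^k a_0$ with $\Delta^k a_0$ iterated differences. I would solve $x^2=x$ directly in the basis $a_{-2},\dots,a_2,s_{\tz,1}$. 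The expected outcome, as for $\IY_3(\al,\tfrac12;1)$ via \cite[Proposition~8.2]{MM}, is that the idempotents other than $0$ (and possibly a $\mathcal J$-axis) form a one-parameter family $x(\zeta)$, $\zeta\in\F$, with $x(k)=a_{2k}$ in $V_e$-indexing. All these are $\mathcal M(\al,\tfrac12)$-axes. Any $\mathcal J(\al)$- or $\mathcal J(1-\al)$-type idempotent is excluded, since $a_1$ has a non-trivial $\tfrac12$-eigenspace and $\bt\ne 1-\al$. Hence $a_1=x(\zeta')$ for some $\zeta'\in\F$.

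\textbf{Step 3: build the flip.} Define the linear map $\phi$ fixing the ``constant'' part (the span of $s_{\tz,1}$ and $n$ suitably adjusted), with $x(\zeta)\mapsto x(\zeta'-\zeta)$. This is the affine reflection $\zeta\mapsto\zeta'-\zeta$ of the parameter line. It is involutory and swaps $x(0)=a_0$ and $x(\zeta')=a_1$. One checks directly that it preserves the multiplication; the multiplication should be polynomial in the parameter and invariant under affine reparametrisation, as is the case for $\IY_3(\al,\tfrac12;1)$. The same $\phi$ descends to $U/N$, giving the quotient case.

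\textbf{Main obstacle.} The main obstacle is Step 2: explicitly classifying the idempotents and verifying that the parametrisation is affine-invariant. If the direct computation is unwieldy, I would first try to use the known isomorphism in characteristic $5$ with $5\A(\al,\tfrac{5\al-1}{8})$ and otherwise work in the basis of differences $\Delta^k a_0$, where the products should become triangular.
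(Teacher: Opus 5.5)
\textbf{The gap.} Your Steps~2 and~3 carry the whole argument, but both are only conjectured. You do not prove that the $\M(\al,\tfrac{1}{2})$-axes of $V$ with non-trivial $\tfrac{1}{2}$-eigenspace form a one-parameter family $x(\zeta)$ containing $a_1$. Nor do you prove that $x(\zeta)\mapsto x(\zeta'-\zeta)$ extends to a linear map of $V$ preserving the product.

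The second point is the serious one. The automorphisms you get for free, namely $\Miy(\V_e)$ and the flip of $\V_e$, only realise $\zeta\mapsto\pm\zeta+c$ with $c\in\Z$, while $\zeta'$ is an arbitrary element of $\F$. Saying that the multiplication is ``invariant under affine reparametrisation'' is exactly the assertion that new automorphisms exist, which is what has to be proved. The analogy with $\IY_3(\al,\tfrac{1}{2};1)$ does not supply this: there \cite[Proposition~8.2]{MM} gives the idempotents explicitly and the map can be checked by hand. Interpolating identities from integer parameters to all of $\F$ is also not automatic in positive characteristic. In characteristic $3$ your degree-$4$ binomial parametrisation is not even defined, since the binomial coefficients of degree $3$ and $4$ in $t$ involve division by $3$. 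As it stands this is a programme rather than a proof.

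\textbf{The missing idea.} You need neither the idempotents nor the flip explicitly; the Frobenius form does the work. In $\IY_5(\al,\tfrac{1}{2})$ one has $(a_i,a_j)=1$ and $(a_i,s_{\tz,1})=(s_{\tz,1},s_{\tz,1})=0$. Hence $(x,y)=w(x)w(y)$, where $w$ is the sum of the coefficients of the axes. This $w$ is an algebra homomorphism onto $\F$, and its kernel $V^\perp$ is a nilpotent ideal. So every non-zero idempotent has $w=1$, i.e.\ is of the form $\zeta a_0+(1-\zeta)a_2+u$ with $u\in V^\perp$ (\cite[Proposition~8.8]{MM}). The same holds in the quotient by $\F n\subseteq V^\perp$.

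Therefore $(x,y)=1$ for any two axes of $V$. Since $\lambda_a(b)=(a,b)/(a,a)$, you get $\lmu=\lmf=\lmd=\lmdf=\lambda_3=\lambda_3^f=1$ at once. Proposition~\ref{reg} (or Theorem~\ref{HWthm} if $\al=2$) then shows that $\V$ is a quotient of a symmetric algebra. Theorem~\ref{teoq} leaves $4\B(-1,\tfrac{1}{2};\nu)^\times$ as the only non-symmetric possibility, and that algebra is $4$-dimensional whereas $\dim V\geq 5$.

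This is essentially the paper's proof. Your route, if completed, would exhibit an explicit flip, but only after a full classification of the idempotents of $\IY_5(\al,\tfrac{1}{2})$ and a construction of the extra automorphisms, uniformly in the characteristic.
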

\begin{proof}
By Hypothesis~\ref{Hypns}\ref{Hypns_4}, $V=V_e$.
By \cite[Proposition 8.8 and Table 30]{MM}, every idempotent in $V$ is of the form 
\[
x(\zeta)= \zeta a_0 + (1-\zeta) a_2 +u
\]
for some $\zeta\in \F$ and $u$ in $V^\perp$. By the bilinearity the Frobenius form, for every $\zeta_1,\zeta_2\in \F$,
\[\begin{split}
(x(\zeta_1),x(\zeta_2))&=\zeta_1\zeta_2(a_0,a_0)+\zeta_1(1-\zeta_2)(a_0,a_2)\\
&\phantom{{}+{}}  +\zeta_2(1-\zeta_1)(a_2,a_0)+(1-\zeta_1)(1-\zeta_2)(a_0,a_2)\\
&=1. 
\end{split}\]
Since $a_{-2}, a_{-1}, a_0,a_1,a_2$ are also of the form $x(\zeta)$, we get
\[
\lmu=\tfrac{(a_0,a_1)}{(a_0,a_0)}=1=\tfrac{(a_0,a_1)}{(a_1,a_1)}=\lmf,
\]
\[
\lmd=\tfrac{(a_0,a_2)}{(a_0,a_0)}=1=\tfrac{(a_{-1},a_1)}{(a_1,a_1)}=\lmdf,
\]
and
\[
\lm_3=\tfrac{(a_0,a_3)}{(a_0,a_0)}=1=\tfrac{(a_{-2},a_1)}{(a_1,a_1)}=\lm_3^f.
\]
Thus, 
by Lemma~\ref{reg}\ref{reg_2}, $\V$ is a quotient of a symmetric algebra, whence by Theorem~\ref{teoq}, $\V$ is either symmetric or isomorphic to $4\B(-1, \tfrac{1}{2}; \nu)^{\times}$. The latter case cannot occur, since $\dim V \geq 5$ while $4\B(-1, \tfrac{1}{2}; \nu)^{\times}$ has dimension $4$.  
\end{proof}

\section{Quotients of the Highwater algebra and cover}

In this section we assume that $\V$ satisfies Hypothesis~\ref{Hypns}.

\begin{lemma}\label{lem:H1}
If $\V_e$ is isomorphic to a quotient of $\hat{\mathcal  H}$, then either $\V$ is symmetric or $\adim(\V_e)\leq 3$.
\end{lemma}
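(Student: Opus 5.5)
We assume Hypothesis~\ref{Hypns}, that $\V_e$ is isomorphic to a quotient of $\hatH$, and that $\adim(\V_e)\geq 4$. The plan is to show that $\V$ then falls under Proposition~\ref{reg} or Theorem~\ref{HWthm}, and hence is symmetric.

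Since $\hatH$ is an $\mathcal M(2,\tfrac12)$-algebra and $V_\bt^{a_0}\neq\{0\}$ by \eqref{nontrivial}, we have $(\al,\bt)=(2,\tfrac12)$. In this case $\al=4\bt$ and $P=0$. By Lemma~\ref{lambdasut} applied to the symmetric algebra $\V_e=(V_e,\{a_0,a_2\})$, every projection between axes of $\V_e$ equals $1$. In particular $\lmd=\lambda_{a_0}(a_2)=1$. Moreover, by Lemma~\ref{lambdasut} and the Frobenius form of $\hatH$ (all $(a_i,a_j)=1$), every axis $a$ of $V$ in the orbit $a_0^{\Miy(\V_e)}$ satisfies $\lambda_a(a_{2j})=1$.

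The key step is to locate $a_1$ and its projections. Because $V=V_e$, the element $a_1$ is an idempotent of $V_e$, and it is an $\mathcal M(2,\tfrac12)$-axis with a nontrivial $\tfrac12$-eigenspace. I would use the structure of the quotients of $\hatH$ (\cite{HWQ}) together with $\adim(\V_e)\geq4$ and Lemma~\ref{supernew}\ref{supernew_1}. That lemma gives either $a_3=a_{-1}$ or $Q=C=0$. In the second case it yields $\lmu=\tfrac{18\bt-1}{8}=1$ and $\lmd=\frac{40\bt^2-14\bt+1}{12\bt^2}\lmf-\frac{(10\bt-1)^2}{192\bt^2}$, which forces $\lmf=1$ since $\lmd=1$.

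It then remains to get $\lmdf=1$. I plan to obtain this from Lemma~\ref{supernew}\ref{supernew_2} or from Corollary~\ref{reduction}: if also $Q^f=0$, that corollary applies directly, since the axet is regular. Otherwise $Q^f\neq0$, and I would use \eqref{eqs3-4bt} and \eqref{eqs2-4bt} to express $a_{-2}-a_2$ as a multiple of $a_{-1}-a_1$. Corollary~\ref{a1xa2} then gives a relation between $a_3-a_{-3}$ and $a_1-a_{-1}$, which should contradict $\adim(\V_e)\geq4$ unless the coefficients force $\lmdf=1$. Once $\{\lmu,\lmf,\lmd,\lmdf\}=\{1\}$, $\V$ is of $\mathcal H$-type, and Theorem~\ref{HWthm} makes $\V$ a quotient of $\mathcal H$ or $\hatH$, all of which are symmetric (Note~\ref{tablehatH quotients} and \cite{HWQ}).

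The case $a_3=a_{-1}$ needs separate handling. There $\V_o$ has axial dimension at most $2$, so by the Classification Theorem of the Symmetric Algebras it is $1\A$, $2\B$, $3\C(\tfrac12)$, $\J(\delta)$ or a quotient. I would show that this is incompatible with the odd axes lying in a quotient of $\hatH$ of axial dimension at least $4$. The argument would compare projections: all $a_{2j}$ have mutual projection $1$, while the relations of Lemma~\ref{s1s2}-type would force $a_0-a_2$ and $a_{-2}-a_4$ to be dependent, contradicting Lemma~\ref{lemma:adim}.

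The hardest step will be the control of $\lmdf$, that is, the projections inside $\V_o$, when $Q^f\neq 0$. I would try first to compute $\lambda_{a_1}(a_{-1})$ directly, by writing $a_1$ in the spanning set $\{a_i,s_{\tr,j}\}$ of $V_e$ and using the invariant form of $\hatH$, whose Gram values on axes are all $1$ and on the $s$'s are $0$.
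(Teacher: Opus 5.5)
You have the first half exactly as the paper does it: $(\al,\bt)=(2,\tfrac12)$, $\lmd=1$ by Lemma~\ref{lambdasut}, and Lemma~\ref{supernew}\ref{supernew_1} gives $\lmu=\lmf=1$ when $Q=C=0$. The gap is in the two branches you leave open, because the arguments you sketch for them do not work.
\begin{itemize}
\item In the branch $a_3=a_{-1}$ you appeal to ``Lemma~\ref{s1s2}-type'' relations. Lemma~\ref{s1s2} requires $\al\notin\{2\bt,4\bt\}$, and here $\al=4\bt$.
\item In the branch $Q^f\neq 0$, Corollary~\ref{a1xa2} cannot force $\lmdf=1$, since its coefficient involves only $\lmf-\lmu$ and $x$. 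With $\lmu=\lmf$ it only gives $a_3-a_{-3}\in\langle a_1-a_{-1}\rangle$, i.e.\ $\adim(\V_o)\le 3$. Transported by $\tau_0$ and $\tau_1$, this becomes $a_4-a_{-4}\in\langle a_2-a_{-2}\rangle$. That is one relation among five consecutive even axes, which Lemma~\ref{lemma:adim} allows when $\adim(\V_e)=4$. So there is no contradiction, and $\lmdf$ stays undetermined.
\end{itemize}

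Both branches are in fact empty, and once you see this your argument is complete.
\begin{itemize}
\item Since $a_{-2}\neq a_2$, you have $|a_0^{\Miy(\V)}|\ge 3$. But $a_3=a_{-1}$ would give $|a_1^{\Miy(\V)}|\le 2$, so the regular axet rules this branch out immediately. This is how the paper disposes of it.
\item For $\al=4\bt$ the coefficient of $\lmf$ in $Q$ vanishes (cf.\ Lemma~\ref{newQ-Q^f}\ref{eqQ=0}). At $(\al,\bt)=(2,\tfrac12)$, $Q$ is a nonzero multiple of $\lmu-1$, so $Q^f$ is a nonzero multiple of $\lmf-1$. Your $\lmf=1$ therefore already gives $Q^f=0$.
\item Then \eqref{equa2}, with $P=Q^f=0$ and $a_{-1}\neq a_1$, yields $R^f=0$. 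Evaluating $R^f$ at $\lmu=\lmf=1$ gives $3(\lmdf-1)$, so $\lmdf=1$ (note $\ch(\F)\neq 3$ since $\al\neq\bt$). Theorem~\ref{HWthm} then applies.
\end{itemize}
This is exactly what Corollary~\ref{reduction} does internally, so your ``$Q^f=0$'' branch is the whole proof.

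The paper instead argues by contradiction from $\lmdf\neq 1$ and splits on $\adim(\V_o)$. When $\adim(\V_o)\ge 4$ it uses Lemma~\ref{supernew}\ref{supernew_2}; otherwise it uses the short list of small symmetric $\mathcal M(2,\tfrac12)$-algebras with Lemmas~\ref{sub3C}, \ref{subJ} and~\ref{subIY3}. The observation about $Q^f$ makes that split unnecessary.

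Your Frobenius-form fallback would also work, but two points need justification. First, the rank-one form of $\hatH$ must descend to $V_e$, which holds because ideals avoiding the generating axes lie in its radical. Second, you need $(a_1,a_1)=(a_0,a_1)^2=\lmu^2\neq 0$.
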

\begin{proof}
Let $\V_e$ be isomorphic to a quotient of $\hatH$. Then $(\al,\bt)=(2,\tfrac{1}{2})$ and, by Lemma~\ref{lambdasut}, $\lmd=1$.
Assume, for a contradiction, that $\V$ is non-symmetric and $\adim(\V_e)> 3$, so $a_{-2} \neq a_2$.  From Lemma~\ref{supernew}\ref{supernew_1}, it follows that either $a_3=a_{-1}$, or $\lmu=\lmf=1$.  However, since we assume that $\V$ has a regular axet and $a_{-2} \neq a_2$, we must have $\lmu = \lmf = 1$. Since $\V$ is non-symmetric, Theorem \ref{HWthm} implies 
$$
\lmdf\neq 1.
$$


If $\adim(\V_o)$ is also greater than $3$, then Lemma~\ref{supernew}\ref{supernew_2} implies that $\lmdf=1$, a contradiction.    Hence  $\adim(\V_o)\leq 3$ and a direct check (or see~\cite[p. 470]{HWQ}) shows that 
\[
\V_o\in \{ 3\C(2), \J(\delta), \IY_3(2,\tfrac{1}{2}; \mu), \IY_3(2,\tfrac{1}{2}; 1)^\times\}.
\]
By Lemmas~\ref{sub3C}, \ref{subJ}, and~\ref{subIY3}, it follows that $\lmdf=1$, again a contradiction.  
\end{proof}

Recall the definition of the algebra $I\mathcal H_3 = \hatH/(a_0-a_1-a_2+a_3)$ from page~\pageref{subIH3}.

\begin{lemma}\label{Ve<>IH3}
If $\V$ is non-symmetric, then  $\V_e\not \cong I\mathcal H_3$.
\end{lemma}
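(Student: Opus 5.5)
We argue by contradiction. Assume $\V$ satisfies Hypothesis~\ref{Hypns}, is non-symmetric, and $\V_e\cong I\mathcal H_3$. The plan is to show that $a_1$ is forced to be an axis of $V=V_e$ of the same kind as $a_0$, and then to produce an automorphism of $V_e$ swapping $a_0$ and $a_1$. This mirrors the proofs of Lemmas~\ref{Ve=IY3} and~\ref{Ve=IY5}. Since $I\mathcal H_3$ is a quotient of $\hatH$, we have $(\al,\bt)=(2,\tfrac12)$. By Lemma~\ref{subIH3}, $V$ has basis $(a_0,a_2,a_4,s)$ with $s:=s_{\tz,2}$ computed inside $\V_e$ (reindexing the basis $(a_0,a_1,a_2,s_{\tz,1})$ of that lemma to the generators $a_0,a_2$). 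Also $V_e^\ast$ is $3$-dimensional and $V_e^{\ast\ast}=\langle a_0-a_4\rangle$. Moreover $\lmd=1$ by Lemma~\ref{lambdasut}.

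\textbf{Step 1: locate $a_1$.} Write $a_1=x_0a_0+x_2a_2+x_4a_4+ts$ and impose $a_1^2=a_1$ using the multiplication of $\hatH$ modulo $L$. By Equation~\eqref{modL}, $a_6=-a_0+a_2+a_4$ in even indexing, and $a_0s=-\tfrac34a_0+\tfrac38(a_{-2}+a_2)+\tfrac32 s$. I expect this to force $t$ to be a specific function of $(x_0,x_2,x_4)$ together with a quadratic relation on the $x_i$. Next, $a_1$ must be a primitive $\M(2,\tfrac12)$-axis with nontrivial $\tfrac12$-eigenspace, by~\eqref{nontrivial}. Imposing this through the characteristic polynomial of $\ad_{a_1}$, I expect the surviving idempotents to be exactly the family of axes analogous to $x_1(\zeta)$ in the proof of Lemma~\ref{Ve=IY3}. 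These should be affine combinations $\sum x_ia_i$ with $\sum x_i=1$, suitably corrected by a multiple of $s$. In particular we should get $\lmu=\lmf=1$, since the Frobenius form of $\hatH$ gives $(a_i,a_j)=1$ and $(a_i,s)=0$. Alternatively, one could invoke Lemma~\ref{supernew} if $\adim(\V_e)\geq4$, but here $\adim(\V_e)=3$, so the direct computation seems necessary.

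\textbf{Step 2: conclude.} With $\lmu=\lmf=\lmd=1$, we also need $\lmdf=1$. Here $\V_o$ is a symmetric $\M(2,\tfrac12)$-subalgebra of $V$ of axial dimension at most $3$ (as $V^{\ast\ast}$-type considerations bound it). Hence, as in the proof of Lemma~\ref{lem:H1}, $\V_o$ is one of $3\C(2)$, $\J(\delta)$, $\IY_3(2,\tfrac12;\mu)$, $\IY_3(2,\tfrac12;1)^\times$, or a quotient of $\hatH$. Lemmas~\ref{sub3C}, \ref{subJ} and~\ref{subIY3} give $\lmdf=1$, except for $\J(\delta)$ with $\delta\neq0$. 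That exception should be excluded because $V_o\subseteq V$ and the axes of $V$ all have pairwise form value $1$. Thus $\V$ is of $\mathcal H$-type, and Theorem~\ref{HWthm} makes $\V$ a quotient of $\hatH$, hence symmetric by Note~\ref{tablehatH quotients}, a contradiction. As a fallback, we can exhibit the swapping automorphism directly: the linear map fixing $s$ (appropriately) and sending $a_0\mapsto a_1$, $a_2\mapsto$ the reflection of $a_0$ through $a_1$ within the affine family. This is checked against the multiplication table as in the $\mu=1$ case of Lemma~\ref{Ve=IY3}.

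\textbf{Main obstacle.} The hard part is Step 1: classifying the idempotents of $I\mathcal H_3$ that are primitive $\M(2,\tfrac12)$-axes with nontrivial $\tfrac12$-eigenspace. This is a polynomial system in four unknowns, and primitivity and the fusion law must be verified by eigenspace computations. I would first solve $a_1^2=a_1$ in the basis and then specialize to test the fusion law. Characteristic issues ($\ch\F=3$ is excluded by Lemma~\ref{subIH3}, and $\ch\F=5$ for $\hatH$) may need separate care.
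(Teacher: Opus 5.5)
Your proposal has a genuine gap. Step~1 is the whole argument, but it is only conjectured (``I expect\dots''), never carried out. Step~2 and the fallback automorphism then rest on predicted objects that in fact do not exist.

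The missing idea is that you need not classify the Monster-type idempotents of $I\mathcal H_3$ at all. Two linear constraints you already have pin down $a_1$.
\begin{itemize}
\item First, $a_1$ is fixed by its own Miyamoto involution $\tau_1$, and $\tau_1$ acts explicitly on $V=V_e$. In the basis $(a_{-2},a_0,a_2,s_{\tz,2})$ it swaps $a_0$ and $a_2$, fixes $s_{\tz,2}$, and sends $a_{-2}$ to $a_4=-a_{-2}+a_0+a_2$ (Lemma~\ref{subIH3}\ref{subIH3_2}). Comparing coefficients gives $a_1=\gamma(a_0+a_2)+t\,s_{\tz,2}$.
\item Second, $a_0-a_2=a_0-a_0^{\tau_1}$ is a $\tfrac12$-eigenvector of $\ad_{a_1}$. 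Using the products of $\hatH$ together with $a_4=-a_{-2}+a_0+a_2$, this condition reads
\[
\tfrac34 t\,a_{-2}+(\gamma-\tfrac32 t-\tfrac12)a_0-(\gamma-\tfrac34 t-\tfrac12)a_2=0 .
\]
\end{itemize}
Hence $t=0$ and $\gamma=\tfrac12$, and then $a_1^2-a_1=\tfrac12 s_{\tz,2}\neq 0$. So no admissible $a_1$ exists, whether or not $\V$ is symmetric. This is the paper's proof, and it makes Theorem~\ref{HWthm}, the analysis of $\V_o$, and the swapping automorphism unnecessary.

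Even as a plan, Step~2 has unsupported links.
\begin{itemize}
\item The bound $\adim(\V_o)\le 3$ ``by $V^{\ast\ast}$-type considerations'' is not justified.
\item Reading off $\lmu=\lmf=1$ from $(a_i,a_j)=1$ and $(a_i,s)=0$ presupposes that $a_1$ has coefficient sum $1$, so that $(a_1,a_1)\neq 0$ and the form computes $\lambda_{a_1}$. That needs proof, because $V_e$ also contains idempotents of coefficient sum $0$. For example, $s_{\tz,2}^2=\tfrac32 s_{\tz,2}$ in $I\mathcal H_3$, so $\tfrac23 s_{\tz,2}$ is an idempotent.
\item Your prediction that $a_1$ lies in an affine family of axes analogous to $x_1(\zeta)$, and satisfies $\lmu=\lmf=1$, is exactly what the linear computation above refutes.
\end{itemize}
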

\begin{proof}
Assume for a contradiction that $\V$ is non-symmetric and $\V_e\cong I\mathcal H_3$. Since $V=V_e$, by Lemma~\ref{subIH3}, $V$ has basis $(a_{-2}, a_0, a_2, s_{\tz,2})$. Let $\gm_{-2}, \gm_0, \gm_2, t\in \F$ be such that 
\[
a_1=\gm_{-2}a_{-2}+\gm_0a_0+\gm_2a_2+ts_{\tz,2}.
\]
By Lemma~\ref{subIH3}\ref{subIH3_2}, 
\[
(a_{-2})^{\tau_1} = a_4 = -a_{-2}+a_0+a_2.
\]
Since $\tau_1$ swaps $a_0$ and $a_2$ and fixes $a_1$ and $s_{\tz,2}$, it follows that $\gm_{-2}=0$ and  $\gm_2=\gm_0$, hence 
\[
a_1=\gm_0(a_0+a_2)+ts_{\tz,2}.
\]
As $a_0-a_2$ is a $\tfrac{1}{2}$-eigenvector for $\ad_{a_1}$, by Table~\ref{tablehatH}, we have
\begin{align*}
0 &= a_1(a_0-a_2)-\tfrac{1}{2}(a_0-a_2)\\
&= (\gm_0(a_0+a_2)+ts_{\tz,2})(a_0-a_2)-\tfrac{1}{2}(a_0-a_2)\\
&= \tfrac{3}{4}ta_{-2}+ (\gm_0-\tfrac{3}{2}t-\tfrac{1}{2})a_0 -(\gm_0-\tfrac{3}{4}t-\tfrac{1}{2})a_2, 
\end{align*}
whence $t=0$ and $\gm_0=\tfrac{1}{2}$. It follows that $0=a_1^2-a_1=\tfrac{1}{2}s_{\tz,2}$, a contradiction.
\end{proof}

\begin{cor}
    \label{cor:H}
    If $\V$ satisfies Hypothesis~$\ref{Hypns}$ and is non-symmetric, then $\V_e$ is not isomorphic to a quotient of $\hatH$.
\end{cor}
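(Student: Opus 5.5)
We argue by contradiction: assume $\V$ satisfies Hypothesis~\ref{Hypns}, is non-symmetric, and $\V_e$ is isomorphic to a quotient of $\hatH$. In particular $(\al,\bt)=(2,\tfrac{1}{2})$. By Lemma~\ref{lem:H1}, since $\V$ is non-symmetric, $\adim(\V_e)\leq 3$. Note also $\ch(\F)\neq 3$: in characteristic $3$ we would have $\al=2=-1$ and $\bt=\tfrac12=-1$, so $\al=\bt$, which is excluded. Thus Lemma~\ref{subH} applies to $\V_e$ whenever its axial dimension is $2$ or $3$. Axial dimension $1$ would force $a_0=a_2$, contradicting Hypothesis~\ref{Hypns}\ref{Hypns_2}.

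So $\V_e$ is isomorphic to one of $3\C(2)$, $\J(0)$, $\J(0)^\times$, $\IY_3(2,\tfrac{1}{2};\mu)$, $\IY_3(2,\tfrac{1}{2};1)^\times$, or $I\mathcal H_3$.

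\emph{Jordan type.} The algebras $3\C(2)$, $\J(0)$ and $\J(0)^\times$ are of Jordan type. Lemma~\ref{lem:JT} then gives that $\V$ is symmetric, a contradiction.

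\emph{The $\IY_3$ algebras.} If $\V_e$ is $\IY_3(2,\tfrac{1}{2};\mu)$ or $\IY_3(2,\tfrac{1}{2};1)^\times$, Lemma~\ref{Ve=IY3} gives symmetry. This lemma covers $\al\neq -1$, and here $\al=2\neq -1$ because $\ch(\F)\neq 3$. Again we have a contradiction.

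\emph{The algebra $I\mathcal H_3$.} This case is excluded directly by Lemma~\ref{Ve<>IH3}.

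These cases are exhaustive, so no such $\V$ exists. The only delicate point is checking that the hypotheses of Lemma~\ref{subH} hold, namely $\ch(\F)\neq 3$ and $\adim\in\{2,3\}$. Once those are confirmed, the corollary is a direct assembly of Lemmas~\ref{lem:H1}, \ref{subH}, \ref{lem:JT}, \ref{Ve=IY3} and \ref{Ve<>IH3}.
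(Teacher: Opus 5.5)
Your proof is correct and follows the paper's argument: Lemma~\ref{lem:H1} reduces to $\adim(\V_e)\le 3$, Lemma~\ref{subH} supplies the list, Lemma~\ref{Ve<>IH3} removes $I\mathcal H_3$, and Lemmas~\ref{lem:JT} and~\ref{Ve=IY3} dispose of the remaining cases. Your explicit checks that $\ch(\F)\neq 3$ and $\adim(\V_e)\neq 1$ verify hypotheses of Lemma~\ref{subH} that the paper leaves implicit.
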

\begin{proof}
Suppose for a contradiction that $\V$ is isomorphic to    a quotient of $\hatH$. By Lemmas~\ref{lem:H1}, ~\ref{subH}, and~\ref{Ve<>IH3}, $\V_e$ is isomorphic to one of the following algebras
$$
3\C(2), \quad \J(0), \quad \J(0)^\times, \quad \IY_3(2, \tfrac{1}{2}; \mu), \quad \IY_3(2, \tfrac{1}{2}; 1)^\times. 
$$
Thus we get a contradiction, using Lemma~\ref{lem:JT} and  
 Corollary~\ref{Ve=IY3}.
\end{proof}

\section{Algebras with finite axet}

In this section, we will consider the remaining algebras and show that if $\V = \V_e$, then they are all symmetric.  For the majority of these algebras, we show that there does not exists a Monster type $(\al, \bt)$ axis $a_1$.

assume that $\V$ is a non-symmetric algebra satisfying Hypothesis~\ref{Hypns} and $\V_e$ is isomorphic to a symmetric  algebra with finite axet. Since $\V$ is non-symmetric, the following condition holds 
\begin{equation*}\label{a1noteven}
    a_1^{\Miy(\V)}\cap a_0^{\Miy(\V)}=\emptyset .
\end{equation*}

\begin{lemma}\label{lem:Ve 3A}
The algebra $\V_e$ is not isomorphic to $3\A(\al, \bt)$.
\end{lemma}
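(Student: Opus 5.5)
We argue by contradiction, following the strategy announced at the start of this section: assume $\V_e\cong 3\A(\al,\bt)$ (so $V=V_e$ by Hypothesis~\ref{Hypns}), and show that $V$ contains no axis $a_1$ with the required properties. The axis $a_1$ must be a primitive $\Mab$-axis of $V$, not in $a_0^{\Miy(\V)}=\{a_0,a_2,a_4\}$. By Equation~\eqref{nontrivial}, $a_0-a_2$ must be a non-zero $\bt$-eigenvector of $\ad_{a_1}$. Moreover, $\tau_1$ must swap $a_0$ and $a_2$ and act as an automorphism of $V$. The plan is to use the last two conditions to pin $a_1$ down to a small family of candidate vectors, and then show that none of these candidates is an idempotent.

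First, we handle automorphisms. Since $\tau_1$ swaps $a_0$ and $a_2$, it maps $\lla a_0,a_2\rra=V$ to itself. Note that $a_4=a_0^{\tau_1\tau_0\tau_1}$ is determined. By the multiplication table in Table~\ref{table3A}, the element $z$ is determined by $a_0a_2$. If $\bt\neq \tfrac{1-3\al^2}{3\al-1}$, then $z$ is a multiple of the identity $\1$, so $\tau_1$ fixes $z$; this follows by Note~\ref{table3A id ann}. In the quotient case, $z$ spans the annihilator, which is characteristic, and $z$ is fixed because $z=a_0a_2-\bt(a_0+a_2)-\tfrac{\al-\bt}{2}(a_0+a_2+a_4)$ and $\tau_1$ fixes $a_0a_2$. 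Hence $\tau_1$ swaps $a_0,a_2$, fixes $z$, and so maps $a_4$ to itself (as a symmetric function of $a_0,a_2$). Consequently the $+1$-eigenspace of $\tau_1$ is $\la a_0+a_2, a_4, z\ra$, and the $-1$-eigenspace is $\la a_0-a_2\ra$. Since $a_1$ is fixed by $\tau_1$, we get $a_1=x(a_0+a_2)+ya_4+tz$. Moreover, since $V^{a_1}_\bt=V^{a_1}_-$ is the $-1$-eigenspace of $\tau_1$, we have $V^{a_1}_\bt=\la a_0-a_2\ra$.

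Second, we impose the conditions and solve. The equation $a_1(a_0-a_2)=\bt(a_0-a_2)$ is linear in $(x,y,t)$, and computing it with Table~\ref{table3A} gives coefficients on $a_0$, $a_2$, $a_4$ and $z$. The $a_4$-component and the $z$-component of $(a_0+a_2)(a_0-a_2)$ vanish by symmetry, so we obtain a small linear system. We then impose $a_1^2=a_1$ and the primitivity condition $V^{a_1}_1=\F a_1$. We expect the solutions to be exactly the elements of $a_0^{\Miy(\V)}$, possibly together with complements such as $\1-a_i$. Any solution $a_1=a_4$ is excluded by $a_1\notin a_0^{\Miy(\V)}$. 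A candidate like $\1-a_4$ is a $\mathcal J$-type axis whose eigenvalues are $1-\al$ and $1-\bt$, and these are incompatible with $\Mab$ given $\al\ne\bt$ and $\al\neq 2\bt$. We also need to check the remaining fusion constraint: $V^{a_1}_\bt$ is one-dimensional, whereas $\dim V=4$ (or $3$ in the quotient) forces the $0$- and $\al$-eigenspaces to have specific dimensions.

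The main obstacle is the idempotent computation in the second step. The equations are polynomial in $\al$ and $\bt$, and degenerate parameter values may occur, such as $3\al^2+3\al\bt-\bt-1=0$, or the values where $z$ is an annihilator. These require separate treatment of the quotient $3\A(\al,\tfrac{1-3\al^2}{3\al-1})^\times$, and possibly use of the excluded values $\al\neq\tfrac12$ and $\al\neq 2\bt$. If a general idempotent enumeration is messy, an alternative is the following. Since $\V$ has regular axet $X(3+3)$, $a_1$ generates with $a_0$ a $6$-axis configuration inside a $4$-dimensional algebra. One could then compare $\lmu$ and $\lmf$ through the Frobenius form of Table~\ref{table3A}, which is unique up to scalar, and derive a contradiction from Lemma~\ref{supernew} or Equation~\eqref{equa1}.
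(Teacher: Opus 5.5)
\paragraph{The gap: the case $\bt=\tfrac12$.}
You never treat $\bt=\tfrac12$ separately. The only consequence of non-symmetry you use is $a_1\notin a_0^{\Miy(\V)}$, and for $\bt=\tfrac12$ this is not enough.

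Take $\bt=\tfrac12$ and $\ch(\F)\neq 3$. Your $\tau_1$-fixed subalgebra $\la a_0+a_2,a_4,z\ra$ of $3\A(\al,\tfrac12)$ contains $e:=\tfrac23(a_0+a_2)-\tfrac13 a_4$. It has the following properties:
\begin{enumerate}[enum_roman]
\item $e$ is an idempotent with $e(a_0-a_2)=\tfrac12(a_0-a_2)$;
\item $e$ has eigenvalues $1,0,\al$ on that subalgebra;
\item $e$ is one of the axes of $\IY_3(\al,\tfrac12;-\tfrac12)\cong 3\A(\al,\tfrac12)$ described in the proof of Lemma~\ref{Ve=IY3}, so it is a primitive $\mathcal M(\al,\tfrac12)$-axis;
\item $e$ is different from $a_0,a_2,a_4$ and is not of the form $\1-a_i$.
\end{enumerate}
Taking $a_1:=e$ gives a genuine $2$-generated algebra with $V=V_e$ and regular axet $X(3+3)$; it is in fact symmetric. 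So idempotency, primitivity and the fusion law cannot produce a contradiction here, and your expected list of solutions is false. The missing step is the paper's first line: for $\bt=\tfrac12$, Note~\ref{tableIY3 3A} and Lemma~\ref{Ve=IY3} show that $\V$ is symmetric, contradicting the standing assumption.

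\paragraph{The case $\bt\neq\tfrac12$.}
Here your route is viable in principle, but the decisive computation is only announced, and its outcome is not what you predict. In $3\A(5,-1)$ in characteristic $0$, the element $w:=\tfrac15(a_0+a_2+a_4)+\tfrac{3}{25}z$ of $\la a_0+a_2,a_4,z\ra$ is an idempotent with $w(a_0-a_2)=\bt(a_0-a_2)$. It is excluded only because, being invariant under all permutations of $a_0,a_2,a_4$, it has a $2$-dimensional $\bt$-eigenspace, contradicting $V^{a_1}_\bt=\la a_0-a_2\ra$. You mention this check, but it cannot be skipped.

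To complete your argument you would need to:
\begin{enumerate}[enum_roman]
\item solve the system completely;
\item check the third eigenvalue on $\la a_0+a_2,a_4,z\ra$ for every solution;
\item redo all of this when $3\al^2+3\al\bt-\bt-1=0$, where there is no identity.
\end{enumerate}
In that last case your argument that $\tau_1$ fixes $z$ is circular; use instead $a_4=a_{-2}$ and $a_{-2}^{\tau_1}=a_4$. Your fallback via Lemma~\ref{supernew} does not apply, since $\adim(\V_e)=3$.

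\paragraph{How the paper avoids this.}
The paper does not enumerate idempotents. It works with $a_3=a_{-3}$ instead of $a_1$. The subalgebra $\lla a_0,a_3\rra$ is of Jordan type $\al$. After disposing of the case without identity and of $\lla a_0,a_3\rra\cong 2\B$, one gets $a_3=\tfrac{\al}{2}a_0+\gamma(\1-a_0)+\mu v_2$ in terms of the one-dimensional eigenspaces of $\ad_{a_0}$. Idempotency of $a_3$ then forces $(2\al-1)(2\bt-1)=0$.
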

\begin{proof}
If $\bt = \tfrac{1}{2}$, then, by Note~\ref{tableIY3 3A} in Table~\ref{tableIY3bis}, $3\A(\al, \tfrac{1}{2}) \cong \IY_3(\al, \frac{1}{2}; -\frac{1}{2})$ and the result follows from Lemma~\ref{Ve=IY3}. Assume for a contradiction that  
\[
\bt\neq \tfrac{1}{2} \quad \mbox{ and }\quad  \V_e\cong 3\A(\al, \bt).
\]
By Table~\ref{table3A}, $V=V_e$ has basis $a_0$, $a_2$, $a_4$, $z$, and, by~\cite[\S 8]{FelixPaper}, every eigenspace of $\ad_{a_0}$ has dimension one.
By Lemma~\ref{sub3A}\ref{sub3A_2}, $\V_e$ has axet $X(3)$, whence, as we assume $\V$ has a regular axet, $\V_o$ has axet $X(3)$ too. Thus 
\[
a_0=a_6, \quad  a_{-3}=a_3, \quad  a_{-2}\neq a_2, \quad a_{-1}\neq a_3.
\]
In particular, $\tau_0$ and $\tau_3$ act as the identity on $\lla a_0, a_3\rra$, whence the $\bt$-eigenspaces for $\ad_{a_0}$ and $\ad_{a_3}$ in $\lla a_0, a_3\rra$ are trivial. Define 
\begin{equation*}
\V_3=(\lla a_0, a_3\rra, \{a_0,a_3\}) \mbox{ and } V_3=\supp(\V_3).
\end{equation*}
Thus  $\V_3$ is a $\mathcal J(\al)$-axial algebra. 

By the definition of $3\A(\al, \bt)$, $\al \neq \frac{1}{2}$, so the classification of $2$-generated axial algebras of Jordan type (see Table~\ref{table2}) implies that $\V_3$ is isomorphic to $2\B$, $3\C(\al)$, or $3\C(-1)^\times$.  Let
\begin{equation*}
    \label{pol}
   p(\al, \bt) := 3\al^2 + 3\al\bt -\bt-1. 
\end{equation*}
We split into two cases: $p(\al,\bt)=0$ and $p(\al,\bt)\neq 0$.

Suppose first that $p(\al,\bt)=0$. Then, by Note ~\ref{table3A id ann} in Table~\ref{table3A}, $\Ann(V)=\la z\ra$, whence $\Ann(V)$ is the $0$-eigenspace of $\ad_{a_0}$. Either $V_3$ contains the $0$-eigenspace or it does not. If it does, $V_3$ has a non-trivial annihilator, whence $\V_3\cong 3\C(-1)$ and $\al=-1$. If not, $\V_3\cong 3\C(-1)^\times$ and $\al=-1$. Either way, $\al=-1$. Notice that $0=p(-1, \bt) = 3 - 3\bt - \bt -1 = 2(1-2\bt)$ and so $\bt = \frac{1}{2}$, contradicting our assumptions.

Therefore $p(\al, \bt)\neq 0$. By Note ~\ref{table3A id ann} in Table~\ref{table3A}, $V_e$ has an identity 
\[
\1 := -\tfrac{4(2\al-1)}{\al p(\al,\bt)}z.
\] 
Clearly, $\1 - a_0$ is contained in the $0$-eigenspace of $\ad_{a_0}$.  Since every eigenspace of $\ad_{a_0}$ is $1$-dimensional, $V_0^{a_0}=\langle \1-a_0 \ra$.  
%
We claim that $\V_3 \not \cong 2\B$.  Suppose for a contradiction that $\V_3 \cong 2\B$. Then, as $a_3$ is primitive, $a_3 = \1-a_0$ and so $a_1=(a_3)^{\tau_2}=\1-a_{-2}$.  So $a_1$ is simultaneously a $\mathcal M(\al, \bt)$-axis and a $\mathcal M(1-\al, 1-\bt)$-axis. Since $\al\not \in \{0,1,\bt\}$, this implies $\bt = 1-\al$.  Let $A_\al(a_{-2}) = \la v \ra$, then as $a_1 = \1-a_{-2}$, $v \in A_{1-\al}(a_1)$.  However, we know that $a_0-a_2$ is a $\bt=1-\al$ eigenvector for $a_1$.  So, $0 \neq a_0-a_2 \in A_\al(a_{-2}) \cap A_\bt(a_{-2})$, a contradiction.  Hence $\V_3 \not \cong 2\B$.

Therefore $\V_3 \cong 3\C(\al)$, or $\V_3 \cong 3\C(-1)^\times$. In either case, $a_3\in V_{\{0,1,\al\}}^{a_0}$. 
By Lemma~\ref{sub3C}\ref{sub3C_1}, $\lm_{a_0}(a_3)= \tfrac{\al}{2}$.  Let
\[
v_2 := \tfrac{(\al+\bt-1)}{4(2\al-1)}a_0+\tfrac{1}{2}(a_2+a_{-2})+\tfrac{1}{\al}z.
\]
be the $\al$-eigenvector of $\ad_{a_0}$ defined in Equation~\eqref{defui} on page \pageref{defui}. So as $A_0(a_0) = \la \1-a_0 \ra$, $A_\al(a_0) = \la v_2 \ra$, there exist $\gamma, \mu\in \F$ such that    
\begin{equation}
    \label{trya3}
    a_3 = \tfrac{\al}{2} a_0 + \gamma (\1-a_0) + \mu v_2.
\end{equation}
Then, with $q(\al, \bt):=3\al^2+3\al\bt-9\al-2\bt+4$,
\[
(v_2)^2=-\tfrac{3\al-\bt-1}{16\al(2\al-1)^2}\left (\al^2q(\al, \bt)a_0+\al(\al-1)p(\al,\bt)(\1-a_0)\right ),
\]
whence
\begin{align*}
a_3^2 &= \tfrac{\al^2}{4} a_0 + \gamma^2 (\1-a_0) + \mu^2 v_2^2+\al^2\mu v_2+2\gamma \mu (1-\al)v_2 
\end{align*}
and
\begin{align*}
 0 &= a_3^2-a_3\\
 &=  \tfrac{\al^2}{4} a_0 + \gamma^2 (\1-a_0) -\mu^2\tfrac{3\al-\bt-1}{16\al(2\al-1)^2}\left (\al^2q a_0+\al(\al-1)p(\1-a_0)\right )\\
 &\phantom{{}={}} \quad +\al^2\mu v_2+2\gamma \mu (1-\al)v_2-\tfrac{\al}{2} a_0 - \gamma (\1-a_0) - \mu v_2\\
 &= \left ( \tfrac{\al^2}{4}-\tfrac{\al}{2} -\mu^2\tfrac{\al^2(3\al-\bt-1)}{16\al(2\al-1)^2}q\right )a_0 + \left (\gamma^2-\gamma -\mu^2\tfrac{\al(\al-1)(3\al-\bt-1)}{16\al(2\al-1)^2} p\right )(\1-a_0)\\
 &\phantom{{}={}} \quad +(\al^2\mu +2\gamma \mu (1-\al)  - \mu )v_2
\end{align*}
Since $a_0$, $\1-a_0$ and $v_2$ are distinct eigenvectors, they are linearly independent and so
\begin{align}
      0 &= \tfrac{\al^2}{4}-\tfrac{\al}{2} -\mu^2\tfrac{\al^2(3\al-\bt-1)}{16\al(2\al-1)^2}q \label{sy1} \\
      0 &= \gamma^2-\gamma -\mu^2\tfrac{\al(\al-1)(3\al-\bt-1)}{16\al(2\al-1)^2} p           \label{sy2} \\
    0 &= \mu(\al-1)(\al+1-2\gamma).    \label{sy3}
\end{align}
If $\mu=0$, from Equations~\eqref{sy1} and~\eqref{sy2} we get $\al=2$ and $\gamma\in \{0,1\}$. From Equation~\eqref{trya3}, it follows that if $\gamma=0$, then $a_3 = a_0$, contradicting ${a_0}^{\Miy(\V)} \cap {a_1}^{\Miy(\V)} = \emptyset$, and if $\gamma=1$, then  $a_3 = \1$, another contradiction as $\1$ is not primitive. Hence $\mu\neq 0$.   From Equation~\eqref{sy3}, as $\al \neq 1$, we get 
\[
\gamma=\tfrac{\al+1}{2}.
\]
Taking the difference between $(\al-1) p$ times Equation~\eqref{sy1} and $\al q$ times Equation~\eqref{sy2} and 
substituting the above value of $\gm$, we get 
\[
0=\left (\tfrac{\al^2}{4}-\tfrac{\al}{2} \right )(\al-1)p-\tfrac{1}{4}\al(\al^2-1)q=\tfrac{1}{4}\al(\al-1)\big ( (\al-2)p-(\al+1)q \big ),
\]
whence, 
since $\al\not \in \{0,1\}$,  
\[
0=(\al-2)p-(\al+1)q= -2(2\al-1)(2\bt-1).
\]
This is a contradiction, since $\al\neq \tfrac{1}{2}$ for $\V_e = 3\A(\al, \bt)$ and $\bt\neq \tfrac{1}{2}$ by the initial assumption.
\end{proof}

\begin{lemma}\label{Ve=3Ax}
The algebra $\V_e$ is not isomorphic to $3\A(\al, \tfrac{1-3\al^2}{3\al-1})^\times$.
\end{lemma}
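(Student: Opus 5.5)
We argue by contradiction, in the same way as in Lemma~\ref{lem:Ve 3A}. Assume that $\V$ is non-symmetric, that it satisfies Hypothesis~\ref{Hypns}, and that $\V_e\cong 3\A(\al,\bt)^\times$ with $\bt=\tfrac{1-3\al^2}{3\al-1}$, so that $p(\al,\bt)=0$.

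First, if $\bt=\tfrac12$, then solving $\tfrac{1-3\al^2}{3\al-1}=\tfrac12$ forces a specific value of $\al$. In that case we would like to identify $3\A(\al,\tfrac12)^\times$ with a quotient of $\IY_3(\al,\tfrac12;-\tfrac12)$ via Note~\ref{tableIY3 3A}, and then conclude by Lemma~\ref{Ve=IY3}. In fact $\bt=\tfrac12$ gives $6\al^2+3\al-3=0$, that is, $\al\in\{\tfrac12,-1\}$. Since $\al\neq\tfrac12$, we get $\al=-1$, and the quotient is $\IY_3(-1,\tfrac12;\mu)^\times$, which Lemma~\ref{Ve=IY3} covers. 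So we may assume $\bt\neq\tfrac12$.

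Now $V=V_e$ is $3$-dimensional with basis $a_0,a_2,a_4$. The axet of $\V_e$ is $X(3)$, and since the axet of $\V$ is regular, $a_3=a_{-3}$ and $a_0=a_6$. As in the previous lemma, $\V_3=(\lla a_0,a_3\rra,\{a_0,a_3\})$ is a $\mathcal J(\al)$-axial algebra, hence isomorphic to $2\B$, $3\C(\al)$ or $3\C(-1)^\times$.

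Next we work out the eigenspace decomposition of $\ad_{a_0}$ in the quotient. This quotient is $3\A(\al,\bt)$ modulo $\F z$, where $z$ spans the $0$-eigenspace of $a_0$ in $3\A(\al,\bt)$. So in $V$, the map $\ad_{a_0}$ has eigenvalues $1,\al,\bt$, each with a $1$-dimensional eigenspace, and no $0$-eigenvectors at all. Each eigenspace stays $1$-dimensional because the dimension drops by exactly one. The $\al$-eigenspace is spanned by the image of $v_2$, namely $\tfrac{\al+\bt-1}{4(2\al-1)}a_0+\tfrac12(a_2+a_{-2})$, and the $\bt$-eigenspace by $a_2-a_{-2}$.

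We then rule out each possibility for $\V_3$. If $\V_3\cong 2\B$, then $a_3$ is a nonzero $0$-eigenvector of $\ad_{a_0}$, which is impossible. If $\V_3\cong 3\C(\al)$ or $3\C(-1)^\times$, then $a_3\in V^{a_0}_{\{1,\al\}}$ with $\lambda_{a_0}(a_3)=\tfrac{\al}{2}$ by Lemma~\ref{sub3C}. Moreover, in $3\C(\al)$ the difference $a_3-\tfrac{\al}2 a_0$ has a nonzero $0$-component, so the case $3\C(\al)$ already fails. Hence $\al=-1$ and $\V_3\cong 3\C(-1)^\times$. Then $a_3=-\tfrac12 a_0+\mu v_2$, and since $a_3\neq a_0$ we have $\mu\neq0$. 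Imposing $a_3^2=a_3$ and comparing the coefficients of $a_0$ and $v_2$ in $v_2^2$ (computed from the product table of $3\A(\al,\bt)$ reduced modulo $z$) gives two polynomial equations in $\mu$. At $\al=-1$ we have $\bt=\tfrac{1-3}{-4}=\tfrac12$, which contradicts $\bt\neq\tfrac12$. So this case is ruled out without computing $v_2^2$.

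The step most likely to need care is verifying that the $0$-eigenspace of $a_0$ really vanishes in the quotient. This uses Note~\ref{table3A id ann}, which gives $\Ann(V)=\F z$ in $3\A(\al,\bt)$ together with the $1$-dimensionality of the eigenspaces from \cite[\S 8]{FelixPaper}. A further point to check is the exclusion of $3\C(\al)$ when $\al\neq-1$, via the component computation used in Lemma~\ref{sub3C}.
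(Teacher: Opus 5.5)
Apart from the preliminary case split, your argument is the paper's. In $3\A(\al,\bt)^\times$ the map $\ad_{a_0}$ has no $0$-eigenvalue, so $(\lla a_0,a_3\rra,\{a_0,a_3\})$ can only be $3\C(-1)^\times$. This forces $\al=-1$ and $\bt=\tfrac12$, and then $\V_e\cong\IY_3(-1,\tfrac12;-\tfrac12)^\times$ is handled by Lemma~\ref{Ve=IY3}. The paper runs this in one pass, without separating $\bt=\tfrac12$ from $\bt\neq\tfrac12$. Your split is exactly where the argument breaks.

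The step ``$\bt=\tfrac12$ gives $6\al^2+3\al-3=0$, that is, $\al\in\{\tfrac12,-1\}$'' divides by $3$. Characteristic $3$ is allowed, and there $\bt=\tfrac{1-3\al^2}{3\al-1}=-1=\tfrac12$ for \emph{every} admissible $\al$. So the equation is vacuous, $\al$ is not forced to be $-1$ (and $-1=\tfrac12$ is excluded anyway), and the quotient is not $\IY_3(-1,\tfrac12;\mu)^\times$. Your second case is empty in characteristic $3$, so as written that characteristic is not covered.

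Either repair works:
\begin{enumerate}
\item Drop the split and run the eigenvalue argument unconditionally. It gives $\al=-1$; in characteristic $3$ this contradicts $\al\neq\bt=-1$, and otherwise it yields $\bt=\tfrac12$.
\item Note that in characteristic $3$ one has $\mu=-\tfrac12=1$. So the relevant quotient is $\IY_3(\al,\tfrac12;1)^\times$, which Lemma~\ref{Ve=IY3} also covers.
\end{enumerate}

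Two minor points. Passing from $3\A(-1,\tfrac12)\cong\IY_3(-1,\tfrac12;-\tfrac12)$ (Note~\ref{tableIY3 3A}) to the ${}^\times$ quotients needs the remark that both are quotients by the annihilator, which any isomorphism preserves; the paper simply quotes \cite[Proposition~4.1]{MM}. Also, the paragraph writing $a_3=-\tfrac12 a_0+\mu v_2$ and imposing $a_3^2=a_3$ is never used and can be deleted.
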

\begin{proof}
Let $\bt=\tfrac{1-3\al^2}{3\al-1}$ and assume, for a contradiction, that $\V_e \cong 3\A(\al, \bt)^\times$.  As before, the axet of $\V$ is regular and is $X(6)$; hence $(\la\la a_0, a_3\ra\ra, \{a_0, a_3\})$ is a $\mathcal{J}(\al)$-axial algebra. By Note~\ref{table3A quotient} to Table~\ref{table3A}, $3\A(\al, \bt)^\times$ is the quotient of $3\A(\al, \bt)$ by the annihilator of the algebra. So $\ad_{a_0}$ has eigenvalues $1$, $\al$ and $\bt$ and every eigenspace has dimension one.  In particular, $0$ is not an eigenvalue of $\ad_{a_0}$, so $(\lla a_0, a_3\rra, \{a_0, a_3\})$ is isomorphic to $3\C(-1)^\times$. Thus $\al=-1$ and so $\bt = \tfrac{1-3\al^2}{3\al-1} = \tfrac{1}{2}$. However, by \cite[Proposition 4.1]{MM}, $V_e\cong \IY_3(-1,\tfrac{1}{2};-\tfrac{1}{2})^\times$ and so $\V$ is symmetric by Corollary \ref{Ve=IY3}.
\end{proof}

\begin{lemma}\label{Ve=4}
The algebra $\V_e$ is not isomorphic to one of the following algebras: $4\A(\tfrac{1}{4},\bt)$, $4\A(\tfrac{1}{4},\tfrac{1}{2})^\times$,  $4\B(\al,\tfrac{\al^2}{2})$, $4\B(-1,\tfrac{1}{2})^\times$, $4\Y(\tfrac{1}{2},\bt)$, or $4\Y(\al,\tfrac{1-\al^2}{2})$. 
\end{lemma}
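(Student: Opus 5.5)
We argue by contradiction, in the setting of this section: $\V$ satisfies Hypothesis~\ref{Hypns}, is non-symmetric (so $a_1^{\Miy(\V)}\cap a_0^{\Miy(\V)}=\emptyset$), and $V=V_e$ with $\V_e$ one of the listed algebras. Each of these has axet $X(4)$, so $a_{-2}=a_6$. Since $\V$ has a regular axet, $\V_o$ also has axet $X(4)$, and $\V$ has axet $X(4+4)$, i.e.\ $|X|=8$. In particular $a_4=a_{-4}$, so $\tau_4=\tau_0$ and $\tau_0$ fixes $a_4$. As in the proof of Lemma~\ref{lem:Ve 3A}, the pair $\V_4:=(\lla a_0,a_4\rra,\{a_0,a_4\})$ is then an axial algebra on which $\tau_0$ and $\tau_4$ act trivially, hence of Jordan type $\al$. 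Note that $a_4=a_{-2}^{\tau_1}$... but $a_4$ already lies in $V_e$, since its orbit has size four. So the useful additional object is $a_1$ itself: we must find a primitive $\Mab$-axis $a_1\in V_e$, outside $a_0^{\Miy(\V_e)}$, such that $a_0-a_2$ is a nonzero $\bt$-eigenvector of $\ad_{a_1}$ and $\tau_1$ swaps $a_0$ and $a_2$.

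The plan is to parametrise $a_1$ through $\tau_1$. Since $\tau_1$ is an automorphism of $V_e$ that swaps $a_0$ and $a_2$ and normalises $\Miy(\V_e)$, it either permutes $\{a_{-2},a_0,a_2,a_4\}$ as the reflection through the edge midpoint, or lies in an automorphism coset. Its fixed space $V_+^{a_1}$ contains $a_1$, $a_0+a_2$, and the images of the fixed points. Take the explicit bases from Tables~\ref{table4A}, \ref{table4B}, \ref{table4Y}, \ref{table4Yal} and use the fact that $a_1$ commutes with the symmetric structure. This lets us write
\[
a_1=\gamma(a_0+a_2)+\gamma'(a_{-2}+a_4)+\epsilon x,
\]
where $x$ is the distinguished extra basis vector ($e$, $c$ or $z$). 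This is the same trick as in Lemma~\ref{Ve<>IH3}, where $\tau_1$-invariance forced the coefficients of $a_0$ and $a_2$ to agree.

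Next impose the two conditions $a_1(a_0-a_2)=\bt(a_0-a_2)$ and $a_1^2=a_1$. Using the multiplication tables, these give a small polynomial system in $\gamma,\gamma',\epsilon$, with $\al,\bt$ tied by the family relation. I would also use primitivity, namely $\lambda_{a_1}$ together with the values $\lmu$ from Lemmas~\ref{sub4A}, \ref{sub4B}, \ref{sub4Y1/2}, \ref{sub4Y}. The expected outcome is that every solution is either an element of $a_0^{\Miy(\V_e)}$, the identity, $\1-a_i$ (a $\mathcal M(1-\al,1-\bt)$-axis, which forces $\bt=1-\al$ and leads to a contradiction as in Lemma~\ref{lem:Ve 3A}), or a Jordan axis such as $c$ (Notes in Tables~\ref{table4B} and \ref{table4Yal}), which has no $\bt$-eigenspace, contradicting Equation~\eqref{nontrivial}.

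For the quotients $4\A(\tfrac14,\tfrac12)^\times$ and $4\B(-1,\tfrac12)^\times$, lift $a_1$ to the cover using Lemma~\ref{annihilating idempotents}, since the kernel is annihilating, and argue as in Lemma~\ref{Ve=IY3}.

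The main obstacle is solving the idempotent and eigenvector system, especially for $4\B$ and $4\Y(\tfrac12,\bt)$, where the parameters are generic. Degenerate parameter values such as $\al=-1$, $\bt=\tfrac12$, or characteristic $3$ may produce extra solutions. For these I would check whether the resulting $\V$ is in fact symmetric or a quotient of a symmetric algebra, in which case it is covered by Theorem~\ref{teoq}.
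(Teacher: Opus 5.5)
Your parametrisation $a_1=\gamma(a_0+a_2)+\gamma'(a_{-2}+a_4)+\epsilon x$, obtained from $\tau_1$-invariance, is exactly the paper's starting point (the paper writes $s_{\tz,2}$ instead of $x$). After that, the paper uses a second axis rather than idempotency, and this is the idea your plan is missing.

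\textbf{The paper's step.} The vector $a_5=a_1^{\tau_3}$ is the same expression with $\gamma$ and $\gamma'$ interchanged. Also, $a_0-a_2$ is a $\bt$-eigenvector of $\ad_{a_5}$, because $\tau_5$ swaps $a_0$ and $a_{10}=a_2$. Subtracting gives $(\gamma-\gamma')\bigl((a_0+a_2)-(a_{-2}+a_4)\bigr)(a_0-a_2)=0$. Since $a_1\neq a_5$, this yields $(1-2\bt)(a_0-a_2)+s_{\td,4}-s_{\tz,4}=0$. Adding its image under $\rho$ gives $(1-2\bt)(a_0-a_4)=0$, so $\bt=\tfrac12$ for all six families at once. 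Only $4\A(\tfrac14,\tfrac12)$, $4\B(-1,\tfrac12)$ and their quotients survive. There the relation $s_{\td,4}=s_{\tz,4}$ contradicts the linear independence of the axes: in $4\A$ use $a_0a_4=a_2a_{-2}=0$; in $4\B$ the $c$-terms cancel and $\tfrac{\al}{2}-\bt=-1$. This is one linear identity and needs no nonlinear analysis.

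\textbf{Your route.} Imposing $a_1(a_0-a_2)=\bt(a_0-a_2)$ and $a_1^2=a_1$ family by family can be made to work. As written, though, the step carrying all the content is only forecast, and the forecast is wrong. Elements of $a_0^{\Miy(\V_e)}$ and $\1-a_i$ are not $\tau_1$-fixed, while $\1$ and $c$ violate the eigenvector condition, so none of your candidate solutions can occur. The system is in fact inconsistent, as follows.
\begin{enumerate}
\item The eigenvector condition is linear and leaves a one-parameter family. For $4\A(\tfrac14,\bt)$ it forces $\gamma'=0$. For $4\B(\al,\tfrac{\al^2}{2})$ it gives $\epsilon=(1-\al)\gamma'$ and $\gamma+\al\gamma'=\tfrac{\al^2}{2}$. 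The two $4\Y$ families behave similarly.
\item Take the sum and difference of the $(a_0+a_2)$- and $(a_{-2}+a_4)$-coordinates of $a_1^2=a_1$. These force $\bt\in\{0,1\}$ or $\al=\bt$, except for one isolated solution in three cases: $\al=-1,\ \gamma'=-\tfrac18$ in $4\B$; $\bt=\tfrac15$ in $4\Y(\tfrac12,\bt)$; $\al=-3$ in $4\Y(\al,\tfrac{1-\al^2}{2})$.
\item Each isolated solution is killed by the $x$-coordinate, because $\ch(\F)\neq 2$ and $\al\neq\bt$.
\end{enumerate}
You would have to carry this out explicitly for each family. The fallback to Theorem~\ref{teoq} is never needed.

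\textbf{The quotients.} Do not lift $a_1$ to the cover. The lifted idempotent satisfies the eigenvector relation only modulo the kernel. Moreover, the argument of Lemma~\ref{Ve=IY3} relies on a classification of idempotents that you do not have for $4\A$ or $4\B$. Computing directly in $4\A(\tfrac14,\tfrac12)^\times$ and $4\B(-1,\tfrac12)^\times$ is immediate: the eigenvector condition forces $a_1=\tfrac12(a_0+a_2)$, which is not idempotent.
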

\begin{proof}\label{4}
By Lemmas~\ref{sub4A}, \ref{sub4B}, \ref{sub4Y}, \ref{sub4Y1/2}, and~\ref{sub4Y}, $\V_e$ has axet $X(4)$ and so, as the axet of $\V$ is regular, $\V$ has axet $X(8)$. By Tables~\ref{table4A}, \ref{table4B}, \ref{table4Y}, and~\ref{table4Yal}, 
$$V=V_e=\la a_{-2}, a_0,a_2, a_4, s_{\tz,2}\ra,
$$ using a change of indices to coincide with the even subalgebra.  As $a_1$ is invariant under its own Miyamoto map, we get
\[ a_1=\mu_0(a_0+a_2)+\mu_1(a_4+a_{-2})+\mu_2 s_{\tz,2},\]
where $\mu_0, \mu_1, \mu_2\in \F$.
Applying $\tau_3$, we get
\[ a_5=\mu_1(a_0+a_2)+\mu_0(a_4+a_{-2})+\mu_2 s_{\tz,2}.\]
Notice that $a_1(a_0-a_2)=a_5(a_0-a_2)=\bt(a_0-a_2)$. Hence,
\begin{align*}
0&=(a_1-a_5)(a_0-a_2)=(\mu_0-\mu_1)\big((a_0+a_2)-(a_4+a_{-2})\big)(a_0-a_2)\\
&=(\mu_0-\mu_1)\big( (a_0-a_2) - s_{\tz,4}-\bt(a_0+a_4) - s_{\tz,2}-\bt(a_0+a_{-2})\\
&\phantom{{}={}} \quad +s_{\td,2}+\bt(a_2+a_4)+s_{\td,4}+\bt(a_2+a_{-2})\big)\\
&=(\mu_0-\mu_1)\big((1-2\bt)(a_0-a_2)+s_{\td,4}-s_{\tz,4}\big)
\end{align*}
by Lemma \ref{invariant}, as $2 \equiv 0 \mod 2$.  Note that $\mu_0 \neq \mu_1$, otherwise $a_1=a_5$, a contradiction. Hence,
\[
    0=(1-2\bt)(a_0-a_2)+s_{\td,4}-s_{\tz,4}.
\]
Since $\rho=\tau_0\tau_1$ swaps $s_{\tz,4}$ and $s_{\td,4}$, taking the sum of the above and its image under $\rho$ we get
\[
    0=(1-2\bt)(a_0-a_2+a_2-a_4)=(1-2\bt)(a_0-a_4),
\]
whence, as $a_0\neq a_4$, $\bt=\tfrac{1}{2}$ and so, from above, $s_{2,4}=s_{0,4}$.  In particular, the only algebras with $\bt=\frac{1}{2}$ are
\[
4\A(\tfrac{1}{4},\tfrac{1}{2}),\quad  4\A(\tfrac{1}{4},\tfrac{1}{2})^\times, \quad  4\B(-1,\tfrac{1}{2}), \quad  4\B(-1,\tfrac{1}{2})^\times .
\]
If $\V_e$ is isomorphic either to $4\A(\tfrac{1}{4},\frac{1}{2})$, or $4\A(\tfrac{1}{4},\tfrac{1}{2})^\times$, by Table~\ref{table4A}, $a_0a_4=a_2a_{-2}=0$, whence
\[
-\bt(a_2+a_{-2})=s_{\td,4}=s_{\tz,4}=-\bt(a_0+a_4),
\]
a contradiction to the linearly independence of the basis. 
Hence $\V_e$ is isomorphic to $4\B(-1,\frac{1}{2})$ or to $4\B(-1,\tfrac{1}{2})^\times$. By Table~\ref{table4B}, we get
\[
\tfrac{\al}{2}(a_0+a_4-c)-\bt(a_0+a_4)=s_{\tz,4}=s_{\td,4}=\tfrac{\al}{2}(a_2+a_{-2}-c)-\bt(a_2+a_{-2}),
\]
where $c$ is the third $\mathcal{J}(-1)$ axis shared in the subalgebras of $\lla a_0,a_4\rra$ and $\lla a_2,a_{-2}\rra$. Hence
\[
(\tfrac{\al}{2}-\bt)(a_0+a_4)=(\tfrac{\al}{2}-\bt)(a_2+a_{-2})
\]
As $\tfrac{\al}{2}-\bt = -1 \neq 0$, this is a contradiction with the linear independence of the basis. 
\end{proof}

\begin{lemma}\label{Ve=5}
The algebra $\V_e$ is not isomorphic to $5\A\left (\al,\frac{5\al-1}{8} \right )$.
\end{lemma}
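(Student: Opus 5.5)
We argue by contradiction, in the style of Lemma~\ref{Ve=4}: assume $\V$ is non-symmetric, satisfies Hypothesis~\ref{Hypns}, and $\V_e\cong 5\A(\al,\tfrac{5\al-1}{8})$, so $\bt=\tfrac{5\al-1}{8}$.

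\textbf{Axet and ambient space.} By Table~\ref{table5A}, $\V_e$ has axet $X(5)$. Since the axet of $\V$ is regular, $\V$ has axet $X(10)$, with $a_{i+10}=a_i$. We have $V=V_e$, and, after reindexing to match the even subalgebra, $V$ has basis $(a_{-4},a_{-2},a_0,a_2,a_4,w)$. Write
\[
a_1=\sum_i\mu_i a_{2i}+\nu w .
\]
The even axes through $a_1$ are $a_0,a_2$, then $a_{-2},a_4$, then $a_{-4}$ (opposite). Since $\tau_1$ fixes $a_1$, swaps these in pairs, and fixes $w$ up to sign (as $w^f=w$ is built symmetrically), we get
\[
a_1=\mu_0(a_0+a_2)+\mu_1(a_{-2}+a_4)+\mu_2 a_{-4}+\nu w .
\]
Applying $\tau_3,\tau_5,\ldots$ (equivalently $\rho$) expresses every odd axis $a_{2j+1}$ by the same formula rotated.

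\textbf{Eigenvector constraints.} By Lemma~\ref{ui}, $a_0-a_2$ is a $\bt$-eigenvector of $\ad_{a_1}$, and similarly $a_{-2}-a_4$. Using the $5\A$ multiplication table, I would expand $a_1(a_0-a_2)-\bt(a_0-a_2)=0$ and $a_1(a_{-2}-a_4)-\bt(a_{-2}-a_4)=0$ in the basis. This gives linear equations in $\mu_0,\mu_1,\mu_2,\nu$ with coefficients depending on $\al$. Adding the idempotency condition $a_1^2=a_1$ gives a small polynomial system.

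The hope is that the linear equations force $\mu_0=\mu_1=\mu_2$, that is, $a_1\in\la \sum a_{2i},w\ra$. If $\ch\F\ne5$, then $\sum a_{2i}$ is a multiple of $\1$ (Note~\ref{table5A id} in Table~\ref{table5A}). In that case $a_1$ would be $\tau_0$-invariant and lie in a small subalgebra, and $a_1=a_{-1}$ would follow, contradicting Hypothesis~\ref{Hypns}. The alternative is that the system has no solution except $\bt$ values excluded by $\al\ne\bt$, or $\al\in\{0,1\}$.

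\textbf{Alternative route and main obstacle.} The main obstacle is the computation, together with possible special parameter values ($\ch\F=5$, $\al=\tfrac{7}{3}$, $\al=-\tfrac13$, where $w^2$ degenerates). A cleaner route is to compare $a_1$ with $a_5$ as in Lemma~\ref{Ve=4}. Here $\tau_5$, via $a_5=a_{-5}$, swaps $a_4$ with $a_{-4}$ and $a_2$ with $a_{-2}$, which gives $a_1-a_5$ explicitly. Both $a_1$ and $a_5$ act as $\bt$ on suitable differences, yielding relations such as $(\mu_0-\mu_1)\,(\ldots)=0$. From these I would try to extract a contradiction like $(1-2\bt)(a_0-a_4)=0$, which forces $\bt=\tfrac12$ and $\al=1$, which is impossible.

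A fallback uses the Frobenius form. By Lemma~\ref{sub5A}, $\lm_{a_0}(a_{2j})=\tfrac{3(5\al-1)}{32}$ for all even $j$. I would compute $\lmu=\lm_{a_0}(a_1)$ and $\lm_1^f$ in two ways, from the projections and from the decomposition of Lemma~\ref{ui}. Then invoke Lemma~\ref{supernew}, which needs $\adim(\V_e)=5\ge4$, to obtain $A^f=B^f=0$ (since $a_3\ne a_{-1}$ in axet $X(10)$), and Lemma~\ref{lemma:A+Af}, which gives $\lmu=\lmf$ and reduces to Proposition~\ref{reg} and Theorem~\ref{teoq}. These leave only symmetric algebras or $4\B(-1,\tfrac12;\nu)^\times$, and the latter is excluded by dimension.
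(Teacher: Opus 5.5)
Your set-up is correct and is, after reindexing, the paper's: the paper expands $a_5$, which is $\tau_0$-fixed because $a_5=a_{-5}$ in $X(10)$, against the $\bt$-eigenvectors $a_2-a_{-2}$ and $a_4-a_{-4}$. The gap is in what you expect the computation to give.

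\textbf{The main route.} The conditions $a_1(a_0-a_2)=\bt(a_0-a_2)$ and $a_1(a_{-2}-a_4)=\bt(a_{-2}-a_4)$ yield $\nu=0$, $(2\bt-1)(\mu_0-\mu_1)=0$, and the single relation $\bt\mu_2+(1+2\bt)\mu_0=\bt$. They do \emph{not} force $\mu_2=\mu_0$. So for $\ch\F\neq5$ (where $\bt\neq\tfrac12$, since $\al\neq1$) you only get $a_1\in\la a_{-4},\1\ra$. Your planned finish via $\tau_0$-invariance then fails, because $a_{-4}=a_6$ is not $\tau_0$-fixed.

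The missing idea is to write $a_1=\gm_0a_{-4}+\gm_1(\1-a_{-4})$. Since $a_{-4}$ and $\1-a_{-4}$ are orthogonal idempotents, $a_1^2=a_1$ forces $\gm_0,\gm_1\in\{0,1\}$. Then $0$, $\1$ and $\1-a_{-4}$ are not primitive axes. For the last, note that the $0$-eigenspace of $\ad_{a_{-4}}$ in $5\A$ is $2$-dimensional, so $\1-a_{-4}$ has a $2$-dimensional $1$-eigenspace. Finally, $a_1=a_{-4}$ contradicts $a_1^{\Miy(\V)}\cap a_0^{\Miy(\V)}=\emptyset$.

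Also, $\ch\F=5$ is not just a degenerate parameter value to check. There $\bt=\tfrac12$, the subtraction step gives nothing, and $\sum a_{2i}$ is annihilating rather than a multiple of $\1$. The paper handles this case separately: in characteristic $5$, $5\A(\al,\tfrac{5\al-1}{8})\cong\IY_5(\al,\tfrac12)$, and Lemma~\ref{Ve=IY5} applies.

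\textbf{The alternative routes.} Neither closes the gap as written.

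\emph{Comparing $a_1$ with $a_5$.} In $X(10)$, $\tau_5$ sends $a_1$ to $a_9=a_{-1}$, not to $a_5$; in fact $\tau_5=\tau_0$ on $V=V_e$. The $\bt$-eigenvectors of $\ad_{a_5}$ supplied by Lemma~\ref{ui} are $a_{-4}-a_4$ and $a_{-2}-a_2$. So the shared eigenvector $a_0-a_2$ that drives Lemma~\ref{Ve=4} is not available. Moreover, $\bt=\tfrac12$ does not force $\al=1$ when $\ch\F=5$.

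\emph{The fallback via Lemma~\ref{supernew}.} With $\adim(\V_e)\geq4$, that lemma gives only $A^f=B^f=0$, whereas Lemma~\ref{lemma:A+Af} needs $A=B=0$ as well. For that you must show $\adim(\V_o)\geq 4$. For $\ch\F\neq5$ this can be read off the symmetric classification: $\V_o$ has axet $X(5)$ and $\bt\neq\tfrac12$, which forces $\V_o\cong5\A$. You must also treat $\al=\tfrac13$ separately. There $\al=4\bt$, Lemma~\ref{supernew} yields $Q=C=Q^f=C^f=0$, and you conclude with Corollary~\ref{reduction} instead of Lemma~\ref{lemma:A+Af}.

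With these additions, together with Theorem~\ref{teoq} and the dimension count ($\dim V=6$), the fallback would give a valid but much heavier proof for $\ch\F\neq5$. The paper's direct eigenvector-plus-idempotent computation is more elementary.
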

\begin{proof}
If $\ch(\F)=5$, then by Note~\ref{tableIY5 char5} in Table \ref{tableIY5}, $5\A\left (\al,\frac{5\al-1}{8}\right )\cong \IY_5(\al, \tfrac{1}{2})$ and the result follows from Lemma~\ref{Ve=IY5}. 
So we may suppose that $\ch(\F)\neq 5$ and, for a contradiction, assume that $\V_e\cong 5\A\left (\al,\tfrac{5\al-1}{8}\right )$. By Table~\ref{table5A}, $V=V_e$ has basis 
\[
a_{-4}, \: a_{-2},\: a_0, \: a_2, \: a_4,\: w .
\]
As $a_5$ is fixed under $\tau_0$, we have
\[ a_5:=\mu_0a_0+\mu_1(a_2+a_{-2})+\mu_2(a_4+a_{-4})+\mu_3w,\]
for some $\mu_0,\mu_1,\mu_2,\mu_3\in \F$. Since $a_2-a_{-2}$ is a $\bt$-eigenvector for $a_5$, we get
\[\begin{split}
0&=a_5(a_2-a_{-2})-\bt(a_2-a_{-2})\\
&=\bt\mu_0(a_2-a_{-2})+\mu_1(a_2-a_{-2})+\mu_2(a_2-a_{-2})(a_4+a_{-4})+\mu_3w(a_2-a_{-2})\\
&\phantom{{}+{}} \quad -\bt(a_2-a_{-2})\\
&=(\bt\mu_0+\mu_1-\bt)(a_2-a_{-2})+2\bt\mu_2(a_{2}-a_{-2})\\
&\phantom{{}+{}} \quad +\mu_3(\al-\bt)\big(2(a_{4}-a_{-4})+a_{2}-a_{-2}\big)\\
&=\big(\bt\mu_0+\mu_1-\bt+2\bt\mu_2+(\al-\bt)\mu_3\big)(a_2-a_{-2})+2(\al-\bt)\mu_3(a_{4}-a_{-4}).
\end{split}\]
Since $a_{-4}$, $a_{-2}$, $a_2$, and $a_4$ are linearly independent, we get
\begin{equation}\label{mu3}
   \mu_3=0 \quad \mbox{ and } \quad  \bt\mu_0+\mu_1+2\bt\mu_2-\bt+(\al-\bt)\mu_3=0
\end{equation}
As $a_4-a_{-4}$ is also a $\bt$-eigenvector for $a_5$, a similar calculation gives
\[
\begin{split}
0&=a_5(a_4-a_{-4})-\bt(a_4-a_{-4})\\
&=\bt\mu_0(a_4-a_{-4})+\mu_1(a_2+a_{-2})(a_4-a_{-4})+\mu_2(a_4-a_{-4})-\bt(a_4-a_{-4})\\
&=(\bt\mu_0+2\bt\mu_1+\mu_2-\bt)(a_4-a_{-4}),
\end{split}\]
whence 
\begin{equation}\label{mu0}
    \bt\mu_0+2\bt\mu_1+\mu_2-\bt=0.
\end{equation}
Taking the difference between the second equation in Equation~\eqref{mu3} and Equation~\eqref{mu0} we get
\[ 
0=(\bt\mu_0+\mu_1+2\bt\mu_2-\bt)-(\bt\mu_0+2\bt\mu_1+\mu_2-\bt)=(2\bt-1)(\mu_2-\mu_1).
\]
Since by hypothesis $\bt=\tfrac{5\al-1}{8}$, $\al\neq 1$ and $\ch(\F)\neq 5$, it follows that $\bt\neq \tfrac{1}{2}$.  Therefore $\mu_1=\mu_2$.

By Note~\ref{table5A id} in Table~\ref{table5A}, $\1=\tfrac{1}{5(\al-\bt)}(a_0+a_2+a_4+a_6+a_8)$ is the identity element of $V$. Note that 
\[
    a_5=\gm_0 a_0+ \gm_1 (\1-a_0),
\]
where 
\[ \begin{split}
\gm_0&:=\mu_0+(5(\al-\bt)-1)\mu_1;\\
\gm_1&:=5(\al-\bt)\mu_1.
\end{split}\]
Recall that $a_0(\1-a_0)=0$ and $(\1-a_0)^2=(\1-a_0)$. As $a_5$ is an idempotent, we get
\[\begin{split}
0=a_5^2-a_5&=(\gm_0^2 a_0+\gm_1^2(\1-a_0))-(\gm_0 a_0+\gm_1(\1-a_0))\\
&=\gm_0(\gm_0-1)a_0+\gm_1(\gm_1-1)(\1-a_0).  
\end{split}\]
Hence $\gm_0,\gm_1\in \{0,1\}$ and $a_5\in \{0,a_0,\1-a_0,\1\}$. By \cite[Lemma~6.2]{MM}, the $0$-eigenspace of $\ad_{a_0}$ has dimension two, and so the $1$-eigenspace of $\ad_{\1-a_0}$ has dimension two. Thus  $0$, $\1$, and $\1-a_0$ are not primitive axes and $a_5\not \in \{0,\1, \1-a_0\}$. Hence $a_5=a_0$, a contradiction.
\end{proof}

\begin{lemma}\label{Ve=6}
The algebra $\V_e$ is not isomorphic to one of the following algebras: 
\[
6\A\left (\al,-\tfrac{\al^2}{4(2\al-1)}\right ),\quad 
 6\A\left (\tfrac{2}{3},-\tfrac{1}{3}\right )^\times,\quad  \mbox { or }\quad  6\A\left (\tfrac{1\pm \sqrt{97}}{24},\tfrac{53\pm 5\sqrt{97}}{192}\right )^\times.
\]
\end{lemma}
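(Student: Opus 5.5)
We argue by contradiction, following the pattern of Lemmas~\ref{Ve=4} and~\ref{Ve=5}. Suppose $\V$ satisfies Hypothesis~\ref{Hypns}, is non-symmetric, and $\V_e$ is isomorphic to one of the listed $6\A$ algebras. By Note~11 of Table~\ref{table6A}, $\V_e$ has axet $X(6)$. Since the axet of $\V$ is regular, $\V$ has axet $X(12)$, and so $a_{12}=a_0$ while $a_0,a_2,\dots,a_{10}$ are pairwise distinct. Relabelling the basis of Table~\ref{table6A} along the even indices, $V=V_e$ is spanned by $a_{-4},a_{-2},a_0,a_2,a_4,a_6,c,z$. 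Here $c$ is the $3\C(\al)$-axis of $\lla a_0,a_6\rra$ (Note~8), and for the quotients $z$ or $q$ is killed.

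Next we use $\tau_3$, which swaps $a_2$ and $a_4$ and fixes $a_3$. Equivalently, we use that $a_1$ is fixed by its own involution $\tau_1$, which acts on the even axes as $a_{1+j}\leftrightarrow a_{1-j}$. This gives
\[
a_1=\mu_0(a_0+a_2)+\mu_1(a_{-2}+a_4)+\mu_2(a_{-4}+a_6)+\mu_3c+\mu_4z .
\]
We also need to know how $\tau_1$ acts on $c$ and $z$. Since $\tau_1$ normalises $\lla a_i,a_{i+6}\rra$ and $c$ is canonically attached to the pair $\{a_i,a_{i+6}\}$, it permutes these pairs; so $c$ is fixed, or else $c$ must be shown to depend only on the opposite pair. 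Applying $\rho$ (a rotation by $2$ on even indices) yields $a_3,a_5$, and the analogous formula for $a_7=a_1^{\rho^3}$.

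Then we impose the eigenvector conditions, as in Lemma~\ref{Ve=5}. For every $j$, the vector $a_{1-j}-a_{1+j}$ is a $\bt$-eigenvector for $\ad_{a_1}$, with $j=1,3,5$ giving the differences of even axes. We expand $a_1(a_{1-j}-a_{1+j})-\bt(a_{1-j}-a_{1+j})$ using the products in Table~\ref{table6A}. Because the basis is linearly independent, this yields a linear system in $\mu_0,\dots,\mu_4$, with coefficients rational in $\al$ and $\bt=-\tfrac{\al^2}{4(2\al-1)}$. We expect this system to force $\mu_3=\mu_4=0$ and linear relations among $\mu_0,\mu_1,\mu_2$. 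We then impose $a_1^2=a_1$ and the primitivity of $a_1$ (its $1$-eigenspace is one-dimensional). We also use the value of $\lm_{a_0}(a_1)$ from the Frobenius form: all non-symmetric $\Mab$-algebras here admit such a form with $\lmu$ determined by the axes.

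The goal is to show that the only idempotent solutions are $a_1\in\{0,\1,a_0,\ldots\}$ or non-primitive elements like $\1-a_0$. The element $\1$ exists when $q$ is not annihilating (Note~1). As an alternative, one could show that the remaining solutions force $\bt\in\{\tfrac12\}$ or $\al\in\{\tfrac12,\tfrac25\}$, which are handled elsewhere (Lemma~\ref{Ve=IY3}, or $6\A(\tfrac25,\tfrac15)\cong6\J$ via Table~\ref{table6J}). The quotient cases $\al\in\{\tfrac23,\tfrac{1\pm\sqrt{97}}{24}\}$ follow by the same computation modulo $z$ or $q$, using Lemma~\ref{annihilating idempotents} to lift $a_1$ to an idempotent of the cover.

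The main obstacle is the size of this polynomial system, which is eight-dimensional with a parameter $\al$. We would first try a shortcut mirroring Lemma~\ref{Ve=4}: compute $(a_1-a_7)(a_{-2}-a_4)$. Here $a_7$ is the image of $a_1$ under $\rho^3$, and the difference $a_1-a_7$ involves only $\mu_0-\mu_1$ and $\mu_1-\mu_2$ combinations, so this product should give a simple relation. One expects it to force $(1-2\bt)(\cdots)=0$, hence $\bt=\tfrac12$, i.e.\ $\al^2=-2(2\al-1)$, a case we can check by hand. If that fails, we fall back on computer algebra (as in \cite{codenightmare}) to compute a Gröbner basis of the system.
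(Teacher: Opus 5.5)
Your proposal has a genuine gap. The one concrete computation you commit to carries no information. The rest of the plan (solving the full system with idempotency, primitivity, projections and Gr\"obner bases, and the hope that $\mu_3=\mu_4=0$) is stated as an expectation rather than carried out.

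Write $a_1-a_7=(\mu_0-\mu_2)w$ with $w:=a_0+a_2-a_6-a_{-4}$. This uses that $\tau_4$ (equivalently $\rho^3$) fixes $a_{-2}+a_4$, $c$ and $z$, and swaps $a_0+a_2$ with $a_6+a_{-4}$. For $c$, note that $c=a_{2k}+a_{2k+6}-\tfrac{2}{\al}a_{2k}a_{2k+6}$ for every $k$, by Table~\ref{table6A}.

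Since $a_{-4}=a_8$ and $a_{-2}=a_{10}$, we have $w=(a_0-a_8)+(a_2-a_6)$. By Lemma~\ref{ui} and the action of $\rho$, this is a $\bt$-eigenvector of both $a_4$ and $a_{10}=a_{-2}$. Hence $w(a_{-2}-a_4)=\bt w-\bt w=0$ identically. So your shortcut $(a_1-a_7)(a_{-2}-a_4)=0$ is the trivial identity $0=0$, and it cannot produce a factor $1-2\bt$.

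Also, appealing to a Frobenius form on ``all non-symmetric algebras'' would be circular, since that is a consequence of the Main Theorem. Here only the form of $6\A$ on $V=V_e$ is available, and it is not needed.

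The missing idea is to pair $a_1-a_7$ with the $\bt$-eigenvector $a_0-a_2=a_{1-1}-a_{1+1}$. This is also a $\bt$-eigenvector of $a_7$, since $a_2-a_0=a_{7+7}-a_{7-7}$. You then use $\mu_0\neq\mu_2$, which holds because $a_1\neq a_7$ in the axet $X(6+6)$; this is where regularity of the axet really enters. Expanding $(a_1-a_7)(a_0-a_2)=0$ and cancelling $\mu_0-\mu_2$ gives
\[
(1-2\bt)(a_0-a_2)-s_{\tz,6}-s_{\tz,4}+s_{\td,4}+s_{\td,6}=0 .
\]
Subtracting its $\tau_0$-image yields $(2\bt-1)(a_2-a_{-2})+s_{\td,6}-s_{\bar 4,6}=0$. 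Adding to this its images under $\rho$ and $\rho^2$ gives $(1-2\bt)(a_{-2}+a_0-a_4-a_6)=0$, whence $\bt=\tfrac12$.

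Then, since $\{a_2,a_{-4}\}$ and $\{a_4,a_{-2}\}$ are opposite pairs,
\[
0=s_{\td,6}-s_{\bar 4,6}=(\tfrac{\al}{2}-\bt)(a_2+a_{-4}-a_4-a_{-2}),
\]
so $\al=1$, a contradiction.

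This argument is purely linear and works uniformly for $6\A$ and its three quotients, because the axes involved remain linearly independent there. No analysis of idempotents, primitivity, or lifting through Lemma~\ref{annihilating idempotents} is needed. Your $j=1$ and $j=5$ eigenvector conditions for $a_1$ do contain this relation (the latter is the $\tau_4$-image of ``$a_0-a_2$ is a $\bt$-eigenvector of $a_7$''), but you never extract it.
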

\begin{proof}
For the sake of contradiction, assume $\V_e$ is isomorphic to one of the algebras in the statement. By Table~\ref{table6A} and the Notes ~\ref{table6A id ann}--~\ref{table6A quotient sqrt97} to Table~\ref{table6A}, $V=V_e$ is spanned by the set  
\[
\{a_{-4},a_{-2},a_0,a_2,a_4,a_6, c,z\}.
\]
%
As $a_1$ is fixed under its own Miyamoto involution, there are $\mu_0$, $\mu_1$, $\mu_2$, $\mu_3$, $\mu_4\in \F$ such that
\[ a_1=\mu_0(a_0+a_2)+\mu_1(a_4+a_{-2})+\mu_2(a_6+a_{-4})+\mu_3c+\mu_4z.\]
Applying $\tau_4$, we get:
\[ 
a_7=(a_1)^{\tau_4}=\mu_0(a_6+a_{-4})+\mu_1(a_4+a_{-2})+\mu_2(a_0+a_2)+\mu_3c+\mu_4z.
\]
Since $a_0-a_2$ is a $\bt$-eigenvector for both $\ad_{a_1}$ and $\ad_{a_7}$, by arguing as before, we get
\[
    0=(a_1-a_7)(a_0-a_2)=(\mu_0-\mu_2)(a_0+a_2-a_6-a_{-4})(a_0-a_2).
\]
Note that $\mu_0 \neq \mu_2$, otherwise $a_1=a_7$, a contradiction. So, the above gives
\begin{align*}
0 &= (a_0+a_2-a_6-a_{-4})(a_0-a_2) \\
&=(a_0-a_2)-s_{\tz,6}-\bt(a_0+a_6)-s_{\tz,4}-\bt(a_0+a_{-4})\\
&\phantom{{}={}} \quad +s_{\td,4}+\bt(a_2+a_6)+s_{\td,6}+\bt(a_2+a_{-4})\\
&= (1-2\bt)(a_0-a_2)-s_{\tz,6}-s_{\tz,4}+s_{\td,4}+s_{\td,6}.
\end{align*}
Taking the difference between the above and its image under $\tau_0$ we get
\begin{equation}\label{NN}
 \begin{split}
  0 &= (1-2\bt)(a_0-a_2)-s_{\tz,6}-s_{\tz,4}+s_{\td,4}+s_{\td,6}\\
  &\phantom{{}={}} \quad -\big((1-2\bt)(a_0-a_{-2})-s_{\tz,6}-s_{\tz,4}+s_{\td,4}+s_{\bar{4},6}\big)\\
  &= (2\bt-1)(a_2-a_{-2})+s_{\td,6}-s_{\bar{4},6}
 \end{split}   
\end{equation}
Summing this with its images under $\rho$ and $\rho^2$ we get 
\[
0 = (1-2\bt)(a_{-2}+a_0-a_4-a_6).
\]
Since $a_{-2}$, $a_0$, $a_4$, and $a_6$ are linearly independent, it follows that $\bt=\tfrac{1}{2}$.  Thus, by Equation~\eqref{NN} and Table~\ref{table6A}, 
\[
\begin{split}
0& =s_{\td,6}-s_{\bar{4},6}\\
&= a_{2}a_{-4}-\bt(a_2+a_{-4})- a_{4}a_{-2}+\bt(a_4+a_{-2})\\
&= (\tfrac{\al}{2}-\bt)(a_2 + a_{-4} - a_{4} - a_{-2}),
\end{split}
\]
Since $\{a_2, a_{-4}, a_{4}, a_{-2}\}$ are linearly independent in all the algebras, we have $0 = \tfrac{\al}{2}-\bt = \tfrac{\al}{2}-\frac{1}{2}$ and so $\al = 1$, a contradiction.
\end{proof}
\begin{lemma}\label{lem:Ve 6Y}
The algebra $\V_e$ is not isomorphic to $ 6\Y(\tfrac{1}{2},2)$ or $ 6\Y(\tfrac{1}{2},2)^\times$.
\end{lemma}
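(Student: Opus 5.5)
Assume, for a contradiction, that $\V$ satisfies Hypothesis~\ref{Hypns}, is non-symmetric, and that $\V_e\cong 6\Y(\tfrac{1}{2},2)$ or $6\Y(\tfrac{1}{2},2)^\times$. Here $(\al,\bt)=(\tfrac{1}{2},2)$, and by Table~\ref{table6Y} the even subalgebra has axet $X(6)$ only in its own indexing. Re-indexed inside $\V$, its axes $a_0,a_2,\dots$ form two orbits of size $3$ under $\Miy(\V_e)$, together with the Jordan-type relations $a_ia_{i+2}=a_i+a_{i+2}-a_{i+4}$ on $\{a_{-2},a_0,a_2\}$ (indices mod $6$ in the old labelling). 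I would first read off from Table~\ref{table6Y} that $V=V_e$ has basis $a_{-2},a_0,a_2,d,z$ (respectively $a_{-2},a_0,a_2,d$ in the quotient). I would also record that the subalgebra $\la a_{-2},a_0,a_2\ra$ is a $3\C(-1)^\times$-type block with $\la a_0-a_2,a_0-a_{-2}\ra$ an ideal, as in Lemma~\ref{sub6Y}.

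Next I would express $a_1$ in this basis. Since $a_1$ is fixed by $\tau_1$, and $\tau_1$ swaps $a_0$ and $a_2$, the expansion takes the form $a_1=\mu_0(a_0+a_2)+\mu_1 a_{-2}+\mu_2 d+\mu_3 z$ (here I would check from the table how $\tau_1$, i.e.\ the Miyamoto involution of an axis of $V_e$ relabelled, acts on $d$ and $z$; in $6\Y$ the odd-indexed axes are $a_j+d$, so $d$ is the natural candidate to be fixed or negated). I would then impose three conditions. First, $a_0-a_2$ must be a $\bt=2$-eigenvector of $\ad_{a_1}$, by Lemma~\ref{ui}. Second, $a_1^2=a_1$. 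Third, $a_1$ must be primitive with a nontrivial $2$-eigenspace, by \eqref{nontrivial}. Using the products $a_i d=\tfrac12 d+z$, $d^2=-2z$, and $z$ annihilating, these give a small polynomial system in the $\mu_j$.

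I expect the solutions to be of the form $a_1=a_j+\lambda d+\kappa z$ with $\lambda,\kappa$ constrained. The second step is to check that each such idempotent is either one of the genuine axes $a_j+d$ of the original $6\Y$ structure, or fails primitivity or the fusion law. In the first case, Table~\ref{table6Y} supplies an automorphism of $V$ (the flip of $6\Y(\tfrac12,2)$ composed with a Miyamoto element) that swaps $a_0$ and $a_1$, so $\V$ is symmetric, a contradiction. For the spurious solutions I would compute $\ad_{a_1}$ on the $5$-dimensional (or $4$-dimensional) space and show that its $1$-eigenspace has dimension greater than $1$, or that it has an eigenvalue outside $\{1,0,\tfrac12,2\}$.

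The main obstacle is the case analysis of the idempotent system, in particular the one-parameter families in $\lambda$ coming from the nilpotent part $\la d,z\ra$. If the direct computation becomes messy, I would first use the $\bt$-eigenvector condition on $a_0-a_2$ (and on $a_{-2}-a_2$ via $\tau_0$-conjugates) to kill $\mu_1$ and fix $\mu_0$, as was done in Lemmas~\ref{Ve=4} and~\ref{Ve=6}, before imposing idempotency. In the quotient $6\Y(\tfrac12,2)^\times$ the same argument applies with $z=0$. Alternatively, Lemma~\ref{annihilating idempotents} lifts $a_1$ uniquely to $6\Y(\tfrac12,2)$, since $\F z$ is annihilating.
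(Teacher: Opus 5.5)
Your route (parametrise the $\tau_1$-fixed vectors of $V=V_e$, then impose idempotency and the $\bt$-eigenvector condition) is viable and genuinely different from the paper's. As written, however, the set-up mixes two labellings, and the ansatz for $a_1$ is wrong.

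Write $a'_k$ for the axes of Table~\ref{table6Y}, so that $a_{2k}=a'_k$. The basis you quote is $a'_{-2},a'_0,a'_2,d,z$, and the products $a'_id=\tfrac{1}{2}d+z$ and $a'_ia'_{i+2}=a'_i+a'_{i+2}-a'_{i+4}$ refer to it. (This is a $3\C(2)$ block with identity $\tfrac{1}{3}(a'_{-2}+a'_0+a'_2)$, not a $3\C(-1)^\times$ block.)

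On this basis $\tau_1$ does not swap $a'_0$ and $a'_2$. It acts on the axes as $a'_k\mapsto a'_{1-k}$, i.e.\ as the flip of $6\Y(\tfrac{1}{2},2)$. Thus $a'_0\mapsto a'_{-2}+d$, $a'_2\mapsto a'_2+d$, $a'_{-2}\mapsto a'_0+d$, $d\mapsto -d$ and $z\mapsto z$. Its fixed space is $\la a'_0+a'_{-2}+d,\ a'_2+\tfrac{1}{2}d,\ z\ra$, that is, $\la a_0+a_2,\ a_4+a_{10},\ z\ra$. Your form $\mu_0(a'_0+a'_2)+\mu_1a'_{-2}+\mu_2d+\mu_3z$ misses this space.

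If you instead read your basis in $\V$'s labels, the ansatz becomes harmless but the quoted products become false. Indeed $a_2=a'_{-2}+d$ and $a_{-2}=a'_2+d$, so for example $a_2d=\tfrac{1}{2}d-z$. Likewise, the eigenvector supplied by Lemma~\ref{ui} is $a_0-a_2=a'_0-a'_{-2}-d$, not $a'_0-a'_2$.

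Finally, $\tau_1$ is not the Miyamoto involution of any $6\Y$-axis, since those act as $a'_k\mapsto a'_{2m-k}$. Hence no $6\Y$-axis is $\tau_1$-fixed, and your ``genuine axis'' case is vacuous.

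With the correct data the computation closes quickly; note $\ch\F\neq 3$, since $\tfrac{1}{2}\neq 2$. Write $a_1=x(a'_0+a'_{-2}+d)+t(a'_2+\tfrac{1}{2}d)+wz$ and set $e=x+\tfrac{t}{2}$. Idempotency then amounts to $3x^2=x$, $t^2+4xt-2x^2=t$ and $2e^2=e=w$. The only solutions are
$0$, $a'_2+\tfrac{1}{2}(d+z)$, $\tfrac{1}{3}(a'_0+a'_2+a'_{-2})+\tfrac{1}{2}(d+z)$ and $\tfrac{1}{3}(a'_0+a'_{-2}-2a'_2)$.
These map $a'_0-a'_{-2}-d$ to $0$, $2(a'_0-a'_{-2})-\tfrac{1}{2}d$, $a'_0-a'_{-2}-\tfrac{1}{2}d$ and $-(a'_0-a'_{-2})$, respectively. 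None of these equals $2(a'_0-a'_{-2}-d)$. The quotient $6\Y(\tfrac{1}{2},2)^\times$ behaves identically with $z=0$.

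So idempotency together with your first condition already gives the contradiction. No primitivity or spectral analysis is needed, and the hypothesis $a_1\neq a_7$ is never used.

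The paper bypasses all of this. The involution $\tau_4$ acts as $a'_k\mapsto a'_{4-k}$, so it fixes $d$, $z$, $a'_0+a'_{-2}+d$ and $a'_2+\tfrac{1}{2}d$, hence every $\tau_1$-fixed vector. Therefore $a_7=a_1^{\tau_4}=a_1$, contradicting the regular axet $X(6+6)$. The paper's argument is one line but relies on regularity; yours, once repaired, needs a short computation but excludes $a_1$ outright.
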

\begin{proof}
For a contradiction, suppose $\V_e$ is isomorphic to either $6\Y(\tfrac{1}{2},2)$ or $6\Y(\tfrac{1}{2},2)^\times$. By Table \ref{table6Y} and Note ~\ref{table6Y quotient} in Table \ref{table6Y}, $V=V_e$ has a spanning set
$\{a_0,a_4,a_8,d,z\}$. By Note~\ref{table6Y axis relation} in Table~\ref{table6Y}, $a_2=a_8+d$, $a_6=a_0+d$ and $a_{10}=a_4+d$. Since $a_1$ is invariant under its own Miyamoto map, there are $\mu_0$, $\mu_1$, $\mu_2$, $\mu_3$, and $\mu_4\in \F$ such that 
\[
\begin{split}
a_1&=\mu_0(a_0+a_2)+\mu_1(a_4+a_{10})+\mu_2(a_6+a_{8})+\mu_3 d+\mu_4z\\
&=\mu_0(a_0+a_8+d)+\mu_1(a_4+a_4+d)+\mu_2(a_0+d+a_8)+\mu_3 d+\mu_4z\\
&=(\mu_0+\mu_2)(a_0+a_8)+2\mu_1a_4+(\mu_0+\mu_1+\mu_2+\mu_3) d+\mu_4z.
\end{split}
\]
Since, by \cite[Lemma 7.19]{MM}, $\tau_4$ fixes $d$ and $z$,
\[
a_7=a_1^{\tau_4}=(\mu_0+\mu_2)(a_8+a_0)+2\mu_1a_4+(\mu_0+\mu_1+\mu_2+\mu_3) d+\mu_4z=a_1.
\]
A contradiction. 
\end{proof}

\begin{proof}[Proof of Theorem~\ref{teonsa}]
Let $\V$  be an axial algebra satisfying the conditions of Theorem ~\ref{teonsa}. By Theorems~\ref{al=2bt}, \ref{skew}, and Lemma~\ref{lhyp1}, we may assume that $\V$ satisfy Hypothesis \ref{Hypns}. By Lemma ~\ref{lem:VeVo}, $\V_e$ is a symmetric $\Mab$-axial algebra. By the Classification Theorem of the Symmetric Algebras, there is a complete list of what $\V_e$ is. By Lemma ~\ref{lem:JT}, if $\V_e$ is an axial algebra of Jordan type, then $\V$ is symmetric. By Lemma~\ref{Ve=IY3} and ~\ref{Ve=IY5}, if $\V_e$ is isomorphic to either $\IY_3(\al,\tfrac{1}{2};\mu)$, $\IY_5(\al,\tfrac{1}{2})$, or any of their quotients, then $\V$ is symmetric. By Corollary~\ref{cor:H}, if $\V_e$ is isomorphic to a quotient of $\hatH$, then $\V$ is symmetric. Finally, Lemmas ~\ref{lem:Ve 3A}--~\ref{lem:Ve 6Y} show that $\V_e$ cannot be any of the remaining algebras. Since these cases exhaust all possibilities of $\V_e$, the result follows. 
\end{proof}


\chapter{A result on algebras with regular axet and small axial dimension}\label{ch:infinite}

Let 
\[
{\mathcal L}:=\{ \J(\delta), \; \J(0)^\times, \; \IY_3(\al, \tfrac{1}{2}; \mu),\; \IY_3(\al, \tfrac{1}{2}; 1)^\times,\; \IY_3(-1, \tfrac{1}{2}; \mu)^\times :  \delta, \al, \mu \in \F\}.
\]
 
In this chapter we study the case when $\bt=\tfrac{1}{2}$ and $\{\V_e, \V_o\}\subseteq \mathcal L$.
By Theorems~\ref{al=2bt},~\ref{teoq},~\ref{skew},~\ref{teonsa}, and Lemma~\ref{lhyp1}, we can restrict ourselves to studying the $2$-generated $\mathcal{M}(\al,\bt)$-axial algebras $\V$ over $\F$  satisfying the following conditions: 
\begin{hyp}\label{hyp1}~
\begin{enumerate}[enum_arabic]
    \item $\bt=\tfrac{1}{2}$ \textup{(}hence $\al\neq 2\bt$\textup{)}, \label{hyp1_1}
    \item  $\V$ is not isomorphic to a quotient of a symmetric algebra, \label{hyp1_2}
    \item $V\not \in \{V_e, V_o\}$, \label{hyp1_3}
    \item $a_0\neq a_2$ and  $ a_1\neq a_{-1}$, \label{hyp1_4}
    \item $\V$ has regular axet. \label{hyp1_5}
\end{enumerate}
\end{hyp}

Note that, as we assume $\bt=\tfrac{1}{2}$, $\al\not \in\{0,1,\tfrac{1}{2}\}$. Moreover, for $W\in \mathcal L$, by Lemmas~\ref{subJ} and~\ref{subIY3}, $\adim(W)\leq 3$.  Since $\ch(\F)\neq 2$, by~\cite[Lemma~5.3]{split} and~\cite[\S 7.3]{axet},  $W$ has axet $X(n)$ with $n\geq 3$. In this chapter, it will be  useful to have at hand the possible dimensions of $W$ and of its subalgebras $W^\ast$ and $W^{\ast\ast}$ (as defined in Chapter~\ref{known}). These dimensions can be deduced immediately from Lemmas~\ref{subJ} and~\ref{subIY3}. For the convenience of the reader, we list them in the following table.

{\renewcommand{\arraystretch}{1.5}
\begin{table}[H] 
{\Small
\[
\begin{array}{|l|c|c|c|}
\hline

W& \dim(W) &\dim(W^\ast) &\dim(W^{\ast \ast})\\
\hline
\J(\delta),\;
\delta\not \in \{0,-\tfrac{1}{2}\}& 3 & 3 & 3 \\
\hline
\J(0)& 3& 2& 2 \\
\hline
\J(-\tfrac{1}{2})& 3& 3& 1 \\
\hline
\J(0)^\times & 2& 1& 1 \\
\hline
\IY_3(\al,\tfrac{1}{2};\mu), \; \al\neq 2, \; \mu\not\in\{1,-1\} & 4& 4& 4 \\
\hline
\IY_3(2,\tfrac{1}{2};\mu), \; \mu\neq -1 & 4& 3& 3 \\
\hline
\IY_3(2,\tfrac{1}{2};-1) & 4& 3& 1 \\
\hline
\IY_3(\al,\tfrac{1}{2};-1),\; \al\neq 2 & 4& 4& 1 \\
\hline
\IY_3(\al,\tfrac{1}{2};1)& 4& 3& 3 \\
\hline
\IY_3(-1,\tfrac{1}{2};\mu)^\times, \; \mu\not\in\{1,-1\} & 3& 3& 3 \\
\hline
\IY_3(-1,\tfrac{1}{2};-1)^\times & 3& 3& 1 \\
\hline
\IY_3(\al,\tfrac{1}{2};1)^\times  & 3& 2 & 2\\
\hline
\end{array} 
 \]}
  \caption{Dimensions of $W$, $W^\ast$, and $W^{\ast\ast}$, for $W\in \mathcal L$.}\label{table3}
 \end{table}
 }

We  prove the following result.
\begin{theorem}\label{case4}
 Let $\V$ be a $2$-generated $\mathcal{M}(\al,\bt)$-axial algebra over $\F$ satisfying Hypothesis~$\ref{hyp1}$. Then $\{\V_e,\V_o\}\not \subseteq \mathcal L$.
\end{theorem}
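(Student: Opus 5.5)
Suppose, for a contradiction, that $\V$ satisfies Hypothesis~\ref{hyp1} and that both $\V_e$ and $\V_o$ lie in $\mathcal L$. The proof plays off the relations of Section~\ref{relVeVo} between the even and odd subalgebras against the possible dimensions recorded in Table~\ref{table3}.

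First we collect the structural consequences of $\{\V_e,\V_o\}\subseteq\mathcal L$. By Lemmas~\ref{subJ}\ref{subJ_2} and~\ref{subIY3}\ref{subIY3_2}, both $\V_e$ and $\V_o$ have axial dimension at most $3$. There are explicit three-term recurrences
\[
a_{i+4}=a_{i-2}-(2\mu_e+1)(a_i-a_{i+2}), \qquad a_{i+3}=a_{i-3}-(2\mu_o+1)(a_{i-1}-a_{i+1}),
\]
or the Jordan-type analogues. We also record $\lmd$ and $\lmdf$ in terms of the parameters $\delta$ or $\mu$ of $\V_e$ and $\V_o$, via Lemma~\ref{subJ}\ref{subJ_1} and Lemma~\ref{subIY3}\ref{subIY3_1}.

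Second, we use Lemma~\ref{S} (here $\bt=\tfrac12$). If $\lmu\neq\lmf$, then $s_{\tz,1}\in\langle V_e,V_o\rangle$, so $V=V_e+V_o+\langle s_{\tz,1}\rangle$ is finite dimensional and spanned by the listed vectors. If $\lmu=\lmf$, we instead aim to show $\lmd=\lmdf$ and apply Proposition~\ref{reg}, giving that $\V$ is a quotient of a symmetric algebra. This contradicts Hypothesis~\ref{hyp1}\ref{hyp1_2}. The exception is $(\al,\bt)=(2,\tfrac12)$, which we treat separately using Theorem~\ref{HWthm} and the condition $\lm_3=\lm_3^f$.

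The main tool is Lemma~\ref{s1s2}, since $\al\neq 4\bt$ unless $\al=2$, together with Lemma~\ref{Ve*Vo*}. When $B\neq0$ we get $V_o^\ast=V_e^{\ast\ast}$, and when $B^f\neq0$ we get $V_e^\ast=V_o^{\ast\ast}$. Comparing dimensions in Table~\ref{table3} cuts the possible pairs $(\V_e,\V_o)$ down to a short list. For instance, $\dim V_o^\ast=\dim V_e^{\ast\ast}$ forces matching rows. Moreover, Equation~\eqref{s1s2eq} gives the explicit proportionality $a_{-1}-a_1=x(a_{-2}-a_2)$, which feeds into Corollary~\ref{a1xa2}. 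Combining this with the recurrences expresses $a_3-a_{-3}$ through $a_1-a_{-1}$, yielding polynomial identities in $\lmu,\lmf,\mu_e,\mu_o,\al$. In the degenerate cases $A=B=0$ or $A^f=B^f=0$, Lemma~\ref{lemma:A+Af} and Lemma~\ref{newQ-Q^f} give $\lmu=\lmf$ directly, and we return to the previous paragraph.

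The step I expect to be the main obstacle is the remaining case analysis over pairs of rows of Table~\ref{table3}, especially the degenerate rows with $\dim W^{\ast\ast}=1$ ($\mu=-1$, $\delta=-\tfrac12$) and the row $\J(0)^\times$. There the relations collapse and one must use the finer relation Equation~\eqref{equa3}, or the fusion law applied to explicit eigenvectors such as $a_0-a_2$ for $\ad_{a_1}$. For each such pair I would try first to show that the resulting consistent solutions force $\lmu=\lmf$ and $\lmd=\lmdf$, reducing to Proposition~\ref{reg}. Failing that, I would try to show they force $V=V_e$ or $V=V_o$ (contradicting Hypothesis~\ref{hyp1}\ref{hyp1_3}), or $a_1=a_{-1}$ (contradicting Hypothesis~\ref{hyp1}\ref{hyp1_4}). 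These computations are presumably best done with the structure constants in~\cite{codenightmare}.
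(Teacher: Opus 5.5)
There is a genuine gap. What you give is a plan, and the case analysis you flag as the main obstacle is where the whole argument lives. None of your branches actually reaches a contradiction, and several steps are wrong as stated.
\begin{itemize}
\item Lemma~\ref{lemma:A+Af} needs all four of $A,A^f,B,B^f$ to vanish, since its proof uses $A+A^f$ and $B+B^f$. Here $A=0\iff B=0$ by Lemma~\ref{s1s2}, because $a_{-2}\neq a_2$ and $a_{-1}\neq a_1$. So the case $B=0\neq B^f$ does not give $\lmu=\lmf$ ``directly''.
\item In your branch $\lmu=\lmf$ you give no mechanism that produces $\lmd=\lmdf$.
\item For $\al=2$ you invoke Theorem~\ref{HWthm} without any reason why $\lmu=\lmf=\lmd=\lmdf=1$. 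Moreover, the condition $\lm_3=\lm_3^f$ belongs to Proposition~\ref{reg}\ref{reg_2}, not to Theorem~\ref{HWthm}.
\item Substituting $a_1-a_{-1}=x(a_2-a_{-2})$ and $a_3-a_{-3}=\varepsilon_o(a_1-a_{-1})$ into Corollary~\ref{a1xa2} gives a single scalar relation among $\lmu-\lmf$, the unknown $x$, and $\varepsilon_o$. By itself this forces nothing.
\item Lemma~\ref{S} only places $s_{\tz,1}$ in $V_e+V_o$, which is no contradiction either.
\end{itemize}

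You are missing two ideas.

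(i) Split on $Q,Q^f$ rather than on $B,B^f$; this also covers $\al=4\bt=2$, where Lemma~\ref{s1s2} is unavailable. The recurrence gives $a_{-2}-a_4=\varepsilon_e(a_0-a_2)$. Inserting this into Equations~\eqref{equa1} and~\eqref{equa2} yields $(P\varepsilon_e-P+R)(a_0-a_2)=Q(a_3-a_{-1})$ and its flip.
\begin{itemize}
\item If $Q=Q^f=0$, you get $P=P\varepsilon_e+R=P\varepsilon_o+R^f$. With Lemma~\ref{newQ-Q^f}, after excluding the branch $\al^2-2\al+2=0$ of Lemma~\ref{newQ-Q^f}\ref{newQ-Q^f_1}, this gives $\lmu=\lmf$ and $\lmd-\lmdf=-\tfrac{(2\al-1)(\al-2)}{4\al}(\varepsilon_e-\varepsilon_o)$. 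Then either Corollary~\ref{reduction} or Proposition~\ref{reg} applies, or $A=-A^f\neq 0$ and Table~\ref{table3} forces $\varepsilon_e=\varepsilon_o=3$.
\item If $\{Q,Q^f\}\neq\{0\}$, one shows $V_e^{\ast\ast}=V_e^\ast=V_o^\ast=V_o^{\ast\ast}$, properly contained in both $V_e$ and $V_o$. This leaves only the pairs of Table~\ref{tableposs}, all with $\lmd=\lmdf=1$.
\end{itemize}

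(ii) In each remaining pair, $a_1-a_{-1}$ is a $(-1)$-eigenvector of $\tau_0$ lying in $V_e^\ast$. Hence $a_1-a_{-1}=x(a_2-a_{-2})$, or $x(a_2-a_0)$. Squaring shows that $s_{\tu,2}=-\tfrac12(a_1-a_{-1})^2$ is a nonzero multiple of $s_{\tz,2}$. Projecting onto $a_0$ and $a_1$ via Lemma~\ref{s}\ref{s_2} and Corollary~\ref{cor:lm2f} gives $\lmu(\lmu-\lmf)=0=\lmf(\lmf-\lmu)$. So $\lmu=\lmf$ and $\al=2$ (Lemma~\ref{smult}). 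A direct computation then shows $V$ is spanned by a small explicit set. Proposition~\ref{reg}, or the results of \cite{FMS3}, then contradicts Hypothesis~\ref{hyp1}\ref{hyp1_2}.

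Without these steps, the proposal does not close any of its cases.
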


For the remainder of this chapter we assume $\V$ satisfies the hypothesis of Theorem~\ref{case4} and we suppose for the sake of contradiction that $\{\V_e,\V_o\} \subseteq \mathcal L$. 

For $x\in \{e,o\}$, define
\[
    \varepsilon_x:= \begin{cases}
        8\delta+3 & \mbox{ if } \V_x \quad \mbox{ is isomorphic to a quotient of } \J(\delta); \\
        2\mu+1 & \mbox{ if } \V_x \quad \mbox{ is isomorphic to a quotient of }\quad  \IY_3(\al, \tfrac{1}{2}; \mu).
    \end{cases} 
\]

\begin{lemma}\label{a3-am3}
 The following identities hold in the algebra $\V$: 
\begin{enumerate}
\item  $a_{-2}-a_{4}=\varepsilon_e (a_{0}-a_{2})$, \label{suba3-am3_1}
    \item  $a_3-a_{-3}=\varepsilon_o(a_{1}-a_{-1})$.  \label{suba3-am3_2}
    \end{enumerate}
\end{lemma}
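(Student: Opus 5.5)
The plan is to derive both identities from the explicit recurrences for the axes inside the symmetric subalgebras $\V_e$ and $\V_o$, namely Lemma~\ref{subJ}\ref{subJ_2} and Lemma~\ref{subIY3}\ref{subIY3_2}. These recurrences are transported into $\V$ with the correct relabelling of indices.

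\textbf{Step 1: relabel the subalgebras.} The even subalgebra $\V_e$ is generated by $a_0,a_2$. Its own sequence of axes, built from its generators and its Miyamoto involutions $\tau_0,\tau_2$, is
\[
b_j:=a_{2j}, \qquad j\in\Z.
\]
This holds because $\tau_0\tau_2$ acts as $\rho^2$ on the indices of $\V$. Similarly, $\V_o$ is generated by $a_{-1},a_1$, and its axes are $c_j:=a_{2j-1}$; in particular $c_0=a_{-1}$ and $c_1=a_1$.

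\textbf{Step 2: the $\IY_3$ case.} If $\V_x$ is a quotient of $\IY_3(\al,\tfrac12;\mu)$, then Lemma~\ref{subIY3}\ref{subIY3_2}, together with Lemma~\ref{quotient} for the quotients $\IY_3(\al,\tfrac12;1)^\times$ and $\IY_3(-1,\tfrac12;\mu)^\times$, gives for every $i$
\[
b_{i+2}=b_{i-1}-(2\mu+1)(b_i-b_{i+1}).
\]
Rewriting this as $b_{i+2}-b_{i-1}=\varepsilon_x(b_{i+1}-b_i)$ and taking $i=-1$ yields
\[
b_1-b_{-2}=\varepsilon_x(b_0-b_{-1}).
\]
\begin{itemize}
\item For $x=e$: $b_1-b_{-2}=a_2-a_{-4}$ and $b_0-b_{-1}=a_0-a_{-2}$. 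Apply the Miyamoto involution $\tau_1$ of $\V$, which acts on indices by $j\mapsto 2-j$. This gives $a_0-a_6=\varepsilon_e(a_2-a_4)$, which is not yet the stated form. Using $i=0$ instead gives $a_4-a_{-2}=\varepsilon_e(a_2-a_0)$, which is exactly part~\ref{suba3-am3_1}.
\item For $x=o$: take $i=0$ for the $c_j$. This gives $c_2-c_{-1}=\varepsilon_o(c_1-c_0)$, that is $a_3-a_{-3}=\varepsilon_o(a_1-a_{-1})$, which is part~\ref{suba3-am3_2}.
\end{itemize}

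\textbf{Step 3: the Jordan case.} Here Lemma~\ref{subJ}\ref{subJ_2} gives a three-term relation
\[
b_{i-1}+b_{i+1}=(2+8\delta)b_i-4s,
\]
where $s$ is the product element of the subalgebra. Since $s$ is $\Miy(\V_x)$-invariant, subtracting the relations at consecutive $i$ eliminates $s$:
\[
b_{i+2}-b_{i-1}=(3+8\delta)(b_{i+1}-b_i).
\]
Since $\varepsilon_x=8\delta+3$, the same index choices as in Step~2 give both identities. For $\J(0)^\times$ the relation is obtained by passing to the quotient.

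\textbf{Main obstacle.} The main difficulty is the index bookkeeping. One must check that the tables' labelling $a_i$ of the symmetric algebra, which follows Equation~\eqref{aroi}, matches $b_j=a_{2j}$ with the correct orientation. In particular, the recurrence uses $b_{i+1}$ and $b_{i-1}$ in the direction induced by $\tau_0\tau_2=\rho$, and reversing the orientation would flip a sign.

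A second point to confirm is that the invariance of $s_{\tz,1}$ in the subalgebra holds. This follows from Lemma~\ref{invariant}, which justifies the elimination step in Step~3.
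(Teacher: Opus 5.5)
Your proposal is correct and is essentially the paper's own argument. The paper subtracts the three-term relation of Lemma~\ref{subJ}\ref{subJ_2} at two consecutive positions in $\V_e$, reads the $\IY_3$ case directly off Lemma~\ref{subIY3}\ref{subIY3_2}, and repeats this for $\V_o$. The orientation concern you flag is harmless, since both recurrences are invariant under the index reversal $j\mapsto -j$, and the detour through $i=-1$ and $\tau_1$ in Step~2 can simply be dropped.
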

\begin{proof}
 Suppose $\V_e\cong \J(\delta)$. By Lemma~\ref{subJ}, $a_{4}=-a_0+(8\delta+2) a_{2}-4s_{0,2}$ and $a_{-2}=-a_{2}+(8\delta +2)a_{0}-4s_{0,2}$. Taking the difference of the above equations, we get  
 $$
a_{-2}-a_{4}=(8\delta+3)(a_0-a_{2}). 
 $$
The proof when $\V_e\cong \J(0)^\times$ is similar. Suppose $\V_e$ is isomorphic to a quotient of $\IY_3(\al, \tfrac{1}{2};\mu)$. Then, by Lemma~\ref{subIY3}, $a_{-2}=a_{4}+(2\mu+1)(a_{0}-a_2)$, giving~\ref{suba3-am3_1}.
The proof of \ref{suba3-am3_2} is obtained considering $\V_o$ in the place of $\V_e$.
\end{proof}

Let $P$, $Q$, $Q^f$, $R$, $R^f$, $A$, $A^f$, $B$, $B^f$, $C$, and $C^f$ be as defined in Chapter~\ref{ch1} on pages~\pageref{primo}, ~\pageref{A}, and~\pageref{evaluation}.

\begin{cor}\label{eq8}
The following identities hold in the algebra $\V$:
\begin{equation}\label{Pee}
    (P\varepsilon_e -P+R)(a_0-a_2)=Q(a_{3}-a_{-1})
\end{equation}
and
\begin{equation}\label{Peo}
    (P\varepsilon_o -P+R^f)(a_1-a_{-1})=Q^f(a_{-2}-a_2).
\end{equation}
In particular, if $Q\neq 0$ (respectively $Q^f\neq 0$), then \begin{equation*}
    V_e^\ast=V_o^{\ast\ast}\quad  (\mbox{respectively}\quad  V_o^\ast=V_e^{\ast\ast}).
\end{equation*}
\end{cor}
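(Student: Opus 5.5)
The plan is to substitute the relations of Lemma~\ref{a3-am3} into Equations~\eqref{equa1} and~\eqref{equa2}, both of which hold in $\V$ after evaluation, as explained in Section~\ref{evaluation}.

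For \eqref{Pee}, start from \eqref{equa1}:
\[
P(a_{-2}-a_4)+(P-R)(a_2-a_0)=Q(a_3-a_{-1}).
\]
By Lemma~\ref{a3-am3}\ref{suba3-am3_1}, $a_{-2}-a_4=\varepsilon_e(a_0-a_2)$. The left-hand side then becomes $(P\varepsilon_e-P+R)(a_0-a_2)$, which is exactly \eqref{Pee}.

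For \eqref{Peo}, start from \eqref{equa2}:
\[
P(a_3-a_{-3})+(P-R^f)(a_{-1}-a_1)=Q^f(a_{-2}-a_2).
\]
Substitute $a_3-a_{-3}=\varepsilon_o(a_1-a_{-1})$ from Lemma~\ref{a3-am3}\ref{suba3-am3_2}. The left-hand side becomes $(P\varepsilon_o-P+R^f)(a_1-a_{-1})$, giving \eqref{Peo}. Here $P$ is a polynomial in $\al,\bt$ only, so $P^f=P$; this is consistent with the way $P$ appears in both \eqref{equa1} and \eqref{equa2}.

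For the final claim, suppose $Q\neq 0$. Then \eqref{Pee} gives $a_3-a_{-1}\in\langle a_0-a_2\rangle$.
\begin{enumerate}
\item By Lemma~\ref{lem:VeVo}, $V_o^{\ast\ast}$ is the subalgebra generated by the $\Miy(\V)$-images of $a_3-a_{-1}$, so it lies inside the subalgebra generated by the images of $a_0-a_2$, namely $V_e^\ast$.
\item Conversely, Hypothesis~\ref{hyp1}\ref{hyp1_4} gives $a_3\neq a_{-1}$ (apply $\rho^{-1}$ and $\tau$'s to $a_1\neq a_{-1}$). Hence the scalar $P\varepsilon_e-P+R$ is nonzero, and $a_0-a_2$ is a nonzero multiple of $a_3-a_{-1}$.
\item Therefore the images of $a_0-a_2$ lie in $V_o^{\ast\ast}$, so $V_e^\ast\subseteq V_o^{\ast\ast}$, and equality follows.
\end{enumerate}
The case $Q^f\neq0$ is symmetric, using \eqref{Peo} and $a_{-2}\neq a_2$.

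The only delicate point is justifying $a_3\neq a_{-1}$ and $a_2\neq a_{-2}$, as in Lemma~\ref{Ve*Vo*}. Since $a_3=a_{-1}^{\rho^2}$, the equality $a_3=a_{-1}$ would make the odd orbit have size at most $2$. That would force the axet of $\V_o$ to have size at most $2$, contradicting the fact that algebras in $\mathcal L$ have axet $X(n)$ with $n\ge3$.
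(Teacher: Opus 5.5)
Your proposal is correct and follows the paper's own proof: substitute Lemma~\ref{a3-am3} into \eqref{equa1} and \eqref{equa2}, then use $a_3\neq a_{-1}$ (resp.\ $a_2\neq a_{-2}$) together with Lemma~\ref{lem:VeVo}. One caveat: the parenthetical in your item~2 is wrong, because $a_1\neq a_{-1}$ alone does not imply $a_3\neq a_{-1}$ (this fails for axet $X(2+2)$), and the valid justification is the one in your closing paragraph — algebras in $\mathcal L$ have axet $X(n)$ with $n\geq 3$, applied to $\V_o$ (resp.\ $\V_e$) — which is exactly what the paper invokes.
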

\begin{proof}
Equation~\eqref{Pee} (respectively~\eqref{Peo}) follows at once from Equation~\eqref{equa1} (respectively \eqref{equa2}) on page~\pageref{equa1} and Lemma~\ref{a3-am3}. Assume $Q\neq 0$. Since, for $\V_e\in \mathcal L$, $a_{3}\neq a_{-1}$, by Equation~\eqref{Pee} and  Lemma~\ref{lem:VeVo} we get $V_e^\ast=V_o^{\ast\ast}$.  Similarly, if $Q^f\neq 0$, the result follows using  Equation~\eqref{Peo}. 
\end{proof}

By Hypothesis~\ref{hyp1}\ref{hyp1_3}, $a_0\neq a_2$ and $a_1\neq a_{-1}$. Thus,  if $\{Q, Q^f\}\neq\{0\}$, then Equations~\eqref{Pee} and~\eqref{Peo} give relations between the generating axes of the odd and the even subalgebras $\V_o$ and $\V_e$. 
On the other hand, if $\{Q, Q^f\}=\{0\}$, then we get the identities 
\begin{equation}\label{QQf=0}
    P=P\varepsilon_e+R=P\varepsilon_o+R^f.
\end{equation}
This leads to the dichotomy $\{Q, Q^f\}\neq\{0\}$ or $\{Q, Q^f\}=\{0\}$. We shall show that both cases lead to a contradiction.

\begin{proposition}
\label{*=**}
Assume $\{Q, Q^f\}\neq \{0\}$ or $\al\neq 4\bt$ and $\{B, B^f\}\neq \{0\}$. Then
$V_e^{\ast\ast}=V_e^\ast=V_o^{\ast}=V_o^{\ast\ast}$.
\end{proposition}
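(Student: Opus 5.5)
The plan is to derive one inclusion between the four subspaces from the hypothesis, and then use the dimension data in Table~\ref{table3} together with the general containments $V_x^{\ast\ast}\subseteq V_x^\ast$ to force all four spaces to coincide. The link between the even and odd parts comes from Corollary~\ref{eq8}, which handles the case $\{Q,Q^f\}\neq\{0\}$, and from Lemma~\ref{Ve*Vo*}, which handles the case $\al\neq 4\bt$, $\{B,B^f\}\neq\{0\}$. Since $\bt=\tfrac12$, we have $\al\neq 2\bt$, so Lemma~\ref{Ve*Vo*} applies, and Hypothesis~\ref{hyp1} guarantees $a_0\neq a_2$ and $a_{-1}\neq a_1$.

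\textbf{Step 1: one equality.} In either case we obtain at least one of
\[
V_e^\ast=V_o^{\ast\ast}\qquad\text{or}\qquad V_o^\ast=V_e^{\ast\ast}.
\]
The even and odd subalgebras play symmetric roles: interchanging them amounts to applying the flip in the universal algebra. So we may assume without loss of generality that $V_e^\ast=V_o^{\ast\ast}$.

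\textbf{Step 2: a chain of inclusions.} From Step 1 and the containments $V_x^{\ast\ast}\subseteq V_x^\ast$ we get
\[
V_e^{\ast\ast}\subseteq V_e^\ast=V_o^{\ast\ast}\subseteq V_o^\ast .
\]
It remains to show $V_e^{\ast\ast}=V_e^\ast$ and $V_o^{\ast\ast}=V_o^\ast$, that is, that neither subalgebra lies in a degenerate row of Table~\ref{table3}.

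\textbf{Step 3: reduce to a finite list of exceptions.} Equality fails only in the rows with $\dim W^{\ast\ast}=1<\dim W^\ast$. These are $\J(-\tfrac12)$, $\IY_3(\al,\tfrac12;-1)$ (including the case $\al=2$), and $\IY_3(-1,\tfrac12;-1)^\times$, i.e.\ exactly the cases $\varepsilon_x=-1$.

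\textbf{Step 4: exclude the exceptions.} This is the main obstacle.
\begin{enumerate}
\item Suppose $\V_o$ is exceptional. Then $V_o^{\ast\ast}=\langle a_{-1}-a_1\rangle$ is one-dimensional. So $V_e^\ast$ is one-dimensional, and Table~\ref{table3} forces $\V_e\cong\J(0)^\times$. In that case $a_0-a_2\in V_e^\ast$ would be a scalar multiple of $a_{-1}-a_1$.
\item That proportionality contradicts Corollary~\ref{a1xa2}, combined with Lemma~\ref{a3-am3} and the fact that the axets have size at least $3$ (so $a_{-2}\neq a_4$ and $a_3\neq a_{-3}$). If that does not settle it, I would use instead that $a_{-1}-a_1$ is a $\bt$-eigenvector for $\ad_{a_0}$ while $a_0-a_2$ is not, since $a_0(a_0-a_2)$ involves $s_{\tz,2}$.
\item Suppose instead $\V_e$ is exceptional, with $\V_o$ not. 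Then $V_e^{\ast\ast}$ is one-dimensional while $V_e^\ast=V_o^{\ast\ast}=V_o^\ast$ has dimension at least $2$. To rule this out I would use the second relation~\eqref{Peo}, or $B^f$, to obtain the reverse inclusion $V_o^\ast=V_e^{\ast\ast}$. When the relevant coefficient vanishes, I would fall back on Equation~\eqref{QQf=0}-type identities: with $\varepsilon_e=-1$ these pin down $\lmu$ and $\lmf$, and combined with Lemma~\ref{lemma:A+Af} they yield symmetry of a quotient, contradicting Hypothesis~\ref{hyp1}\ref{hyp1_2}.
\end{enumerate}

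Once both exceptional cases are excluded, all inclusions in the chain of Step 2 become equalities, which is the statement.
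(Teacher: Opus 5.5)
Your Steps 1--3 follow the paper's skeleton, but Step~4, which is where the proposition is actually proved, does not go through as written.

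\textbf{Step 4(a)--(b): wrong generator.} In $\V_o$ the axis $a_3=a_{-1}^{\tau_1}$ plays the role of ``$a_2$''. So for the exceptional odd algebras $V_o^{\ast\ast}=\langle a_3-a_{-1}\rangle$, whereas $a_{-1}-a_1$ lies only in $V_o^\ast$. Neither of your suggested contradictions works.
\begin{enumerate}
\item Even with the correct proportionality $a_0-a_2=x(a_{-1}-a_3)$, Corollary~\ref{a1xa2}\ref{a1xa2_2} with $\varepsilon_e=3$ only gives a scalar relation between $x$ and $\lmu-\lmf$.
\item Your fallback is false. In $\J(0)^\times$ one has $s_{\tz,2}=0$, so $a_0-a_2$ \emph{is} a $\tfrac12$-eigenvector of $\ad_{a_0}$.
\end{enumerate}
The repair is a sign check under $\tau_0$. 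From $a_{-2}=2a_0-a_2$ you get $(a_0-a_2)^{\tau_0}=-(a_0-a_2)$. From $\varepsilon_o=-1$ you get $(a_3-a_{-1})^{\tau_0}=a_{-3}-a_1=a_3-a_{-1}$. Since $\ch(\F)\neq2$, these two nonzero vectors cannot span the same line.

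\textbf{Step 4(c): the genuine gap.} Your reverse-inclusion step does dispose of $Q^f\neq0$. It also disposes of the case $\al\neq4\bt$, $B\neq0$; note you wrote $B^f$, but Lemma~\ref{Ve*Vo*} needs $B\neq 0$ to give $V_o^\ast=V_e^{\ast\ast}$. For the remaining case your fallbacks are unavailable:
\begin{enumerate}
\item Equation~\eqref{QQf=0} requires $Q=Q^f=0$.
\item Lemma~\ref{lemma:A+Af} requires $A=A^f=B=B^f=0$ and $\al\neq4\bt$.
\item The case that actually survives is $Q\neq0=Q^f$ with $\al=4\bt$.
\end{enumerate}

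The missing idea is Hypothesis~\ref{hyp1}\ref{hyp1_3}. Since $V_e^\ast=V_o^{\ast\ast}\subseteq V_o$ and $V_o^\ast=V_e^\ast\subseteq V_e$, you must have $V_e^\ast<V_e$ and $V_o^\ast<V_o$; otherwise $V=V_o$ or $V=V_e$. Table~\ref{table3} then forces
\[
\V_e\cong\IY_3(2,\tfrac12;-1)\quad\text{and}\quad \V_o\cong\IY_3(2,\tfrac12;\mu),\ \mu\neq-1.
\]
The argument then closes as follows.
\begin{enumerate}
\item These algebras give $(\al,\bt)=(2,\tfrac12)$, so $\al=4\bt$ and the $B$-alternative of the hypothesis is vacuous. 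They also give $\lmd=\lmdf=1$.
\item At these values $Q^f=6(\lmf-1)$, so $Q^f=0$ gives $\lmf=1$.
\item Moreover $C^f=0$, since otherwise Lemma~\ref{Ve*Vo*} would give $V_o^\ast=V_e^{\ast\ast}$. At these values $C^f=\lmu-1$, so $\lmu=1$.
\end{enumerate}
Thus $\V$ is of $\mathcal H$-type. It is Theorem~\ref{HWthm}, not Lemma~\ref{lemma:A+Af}, that then contradicts Hypothesis~\ref{hyp1}\ref{hyp1_2}.
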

\begin{proof}
Suppose, for a contradiction, that $\V$ does not satisfy the condition
$$V_e^{\ast\ast}=V_e^\ast=V_o^{\ast}=V_o^{\ast\ast}.
$$
Assume $Q\neq 0$ or $\al\neq 4\bt$ and $B^f\neq 0$. Then, by Corollary~\ref{eq8} or by Lemma~\ref{Ve*Vo*} (respectively), $V_e^\ast=V_o^{\ast\ast}$. Whence, Hypothesis~\ref{hyp1}\ref{hyp1_3} implies $V_e^\ast<V_e$. 
We claim that 
\begin{equation}\label{eq*<**}
V_e^{\ast\ast}<V_e^\ast=V_o^{\ast\ast}=V_o^\ast.
\end{equation}
Suppose, for the sake of contradiction, that $V_o^{\ast\ast}<V_o^\ast$. Then, by Table~\ref{table3}, $\V_o$ is isomorphic to one of
$$
\J(-\tfrac{1}{2}), \quad \IY_3(\al, \tfrac{1}{2}; -1), \quad \IY_3(-1, \tfrac{1}{2}; -1)^\times,
$$
in particular $\dim(V_e^\ast)=\dim(V_o^{\ast\ast})=1$, whence,  again by Table~\ref{table3}, $\V_e\cong \J(0)^\times$. By Lemma~\ref{subJ}\ref{subJ_3}, 
 $V_e^\ast=\langle a_{0}-a_2\rangle$,  whence, by Lemma~\ref{subJ}\ref{subJ_2},
\begin{equation}\label{action-1}
(a_0-a_{2})^{\tau_0}=a_0^{\tau_0}-a_{2}^{\tau_0}=a_0-a_{-2}=a_0-(-a_2+2a_0)=-(a_0-a_{2}).
\end{equation}
If  $\V_o\cong \J(-\tfrac{1}{2})$, then, by Lemma~\ref{subJ}, $V_o^{\ast\ast}=\langle a_3-a_{-1}\rangle$ and 
$$
a_3-a_{-1}=-2(a_1+a_{-1}+2s_{\tu,2})=-2(a_1+a_{-1}+2s_{\tu,2})^{\tau_0}=(a_3-a_{-1})^{\tau_0}. 
$$ 
Since $V_o^{\ast \ast}=V_e^\ast=\langle a_{0}-a_2\rangle$, this contradicts Equation~\eqref{action-1}. 
 Similarly, if $\V_o\cong \IY_3(\al,\tfrac{1}{2}; -1)$ or $\V_o\cong \IY_3(-1,\tfrac{1}{2}; -1)^\times$, then, by parts \ref{subIY3_2} and \ref{subIY3_4} of Lemma~\ref{subIY3},
 $$
 a_{-3}-a_{1}= a_3-a_{-1} \quad \mbox{ and }\quad V_o^{\ast\ast}=\langle a_3-a_{-1}\rangle,
 $$ 
 whence
 $$
  (a_{3}-a_{-1})^{\tau_0}=a_{3}^{\tau_0}-a_{-1}^{\tau_0}=a_{-3}-a_{1}= a_3-a_{-1}.
 $$
As above, it follows that $\tau_0$ acts trivially on $V_e^\ast$, contradicting Equation~\eqref{action-1}, proving $V_o^{\ast\ast}=V_o^\ast$ and Equation~\eqref{eq*<**}. 
Now Equation~\eqref{eq*<**} and Hypothesis~\ref{hyp1}\ref{hyp1_3} imply $V_o^\ast<V_o$. Thus, comparing the dimensions of $V_e^\ast$, $V_e^{\ast\ast}$, $V_o^\ast$,  and $V_o^{\ast\ast}$ in Table~\ref{table3}, it follows that 
$\V_e\cong \IY_3( 2, \tfrac{1}{2}; -1)$ and $\V_o\cong \IY_3( 2, \tfrac{1}{2}; \mu)$ with $\mu\neq -1$. 

Clearly, swapping the role of $\V_e$ and $\V_o$, the above argument shows that if $Q^f\neq 0$ or $\al\neq 4\bt$ and $B\neq 0$, then 
\begin{equation}\label{eq*=**bis}
V_o^{\ast\ast}<V_o^\ast=V_e^{\ast\ast}=V_e^\ast.
    \end{equation}
and $\V_o\cong \IY_3( 2, \tfrac{1}{2}; -1)$ and $\V_e\cong \IY_3( 2, \tfrac{1}{2}; \mu)$ with $\mu\neq -1$. 

Since Equations~\eqref{eq*<**} and~\eqref{eq*=**bis} are incompatible, we cannot have $Q\neq 0\neq Q^f$. Assume $Q\neq 0$ and $Q^f=0$. By the above discussion $\V_e\cong \IY_3( 2, \tfrac{1}{2}; -1)$ and $\V_o\cong \IY_3( 2, \tfrac{1}{2}; \mu)$ with $\mu\neq -1$, in particular $(\al, \bt)=(2,\tfrac{1}{2})$ and,  by Lemma~\ref{subIY3}\ref{subIY3_1}, $\lmd=\lmdf=1$. Substituting  $\al$ and $\bt$ by $2$ and $\tfrac{1}{2}$ in  Lemma~\ref{newQ-Q^f}\ref{eqQ=0}, we obtain $\lmf=1$. 
Furthermore, Equation~\eqref{eq*<**} and Lemma~\ref{Ve*Vo*} imply $C^f=0$. Evaluating $C^f$ in $(\al, \bt, \lmf, \lmdf)=(2, \tfrac{1}{2}, 1,1)$, we get $\lmu=1$. 
By Theorem~\ref{HWthm}, $\V$ is isomorphic to a quotient of a symmetric algebra, contradicting Hypothesis~\ref{hyp1}\ref{hyp1_2}.
\end{proof}





\section{The case $\{Q, Q^f\}=\{0\}$}
\label{sec:Q=Qf=0}

\begin{lemma}\label{lm<>lmf}
Assume that $Q=Q^f=0$. Then 
\begin{enumerate}
    \item  $\lmf=\lmu$; \label{lm<>lmf_1}
    \item  $\lmd-\lmdf=-\tfrac{(2\al-1)(\al-2)}{4\al}(\varepsilon_e-\varepsilon_o)$. \label{lm<>lmf_2}
\end{enumerate} 
\end{lemma}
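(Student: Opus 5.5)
Under Hypothesis~\ref{hyp1} we have $\bt=\tfrac{1}{2}$, so every coefficient can be specialised to $(\al,\tfrac12)$. The plan is to use only $Q-Q^f=0$ for part (a), and then the identities~\eqref{QQf=0} for part (b).

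\emph{Part (a).} Since $Q=Q^f=0$, in particular $Q-Q^f=0$. By Lemma~\ref{newQ-Q^f}\ref{newQ-Q^f_1}, either $\lmu=\lmf$, or $\al\neq 2$ and $\tfrac12=\bt=\tfrac{\al(\al-1)}{2(\al-2)}$. The latter condition is $\al(\al-1)=\al-2$, i.e.\ $\al^2-2\al+2=0$, so we have to exclude this case.

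In that case I would use $Q=0$ itself. Lemma~\ref{newQ-Q^f}\ref{eqQ=0} with $\bt=\tfrac12$ expresses $\lmu$ as an affine function of $\lmf$; similarly $Q^f=0$ expresses $\lmf$ in terms of $\lmu$. Together these form a $2\times2$ linear system. Its determinant is $1-\big(\tfrac{(\al-1)(\al-2)}{\al}\big)^2$, and modulo $\al^2-2\al+2$ we have $(\al-1)(\al-2)=\al^2-3\al+2=-\al$, so the determinant vanishes. The system then either forces $\lmu=\lmf$ directly or is inconsistent. I would check the constant terms: consistency requires $\lmu+\lmf$ to equal a specific value, and I expect that to contradict $\lmu\neq\lmf$ only after further input.

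The backup I would use is Equation~\eqref{QQf=0}: $R+P\varepsilon_e=R^f+P\varepsilon_o=P$. Combined with $R-R^f$ from Lemma~\ref{newQ-Q^f}\ref{newQ-Q^f_3}, this gives one more relation between the $\lm$'s and the $\varepsilon$'s. Excluding this degenerate branch is where I expect the real work to be.

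\emph{Part (b).} Once $\lmu=\lmf$ is known, substitute $\lmu=\lmf$ and $\bt=\tfrac12$ into Lemma~\ref{newQ-Q^f}\ref{newQ-Q^f_3}. The first two terms vanish, leaving
\[
R-R^f=2\al\cdot\tfrac12\big(\al-\tfrac12\big)(\lmd-\lmdf)=\tfrac{\al(2\al-1)}{2}(\lmd-\lmdf).
\]
On the other hand, \eqref{QQf=0} gives $R-R^f=P(\varepsilon_o-\varepsilon_e)$, and
\[
P=\tfrac12\big(\al-\tfrac12\big)^2(\al-2)=\tfrac{(2\al-1)^2(\al-2)}{8}.
\]
Equating the two expressions and dividing by $\tfrac{\al(2\al-1)}{2}$, which is nonzero because $\al\notin\{0,\tfrac12\}$, gives
\[
\lmd-\lmdf=-\tfrac{(2\al-1)(\al-2)}{4\al}(\varepsilon_e-\varepsilon_o),
\]
as required.
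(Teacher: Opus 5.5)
Your part (b) is correct and follows the paper's argument. Part (a) has a genuine gap: the branch $\al^2-2\al+2=0$ is never excluded, and excluding it is all of (a), since the other branch of Lemma~\ref{newQ-Q^f}\ref{newQ-Q^f_1} is already $\lmu=\lmf$.

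In that branch, $Q=0$ and $Q^f=0$ are neither inconsistent nor forcing $\lmu=\lmf$. Both reduce to the same equation $\lmu+\lmf=\tfrac12(\al+1)$. Your backup is to subtract the two identities in \eqref{QQf=0} and compare with Lemma~\ref{newQ-Q^f}\ref{newQ-Q^f_3}. Using $\al^2=2\al-2$ and $\lmu+\lmf=\tfrac12(\al+1)$, this becomes
\[
2(\al-1)(\lmu-\lmf)+\tfrac{3\al-4}{2}(\lmd-\lmdf)=P(\varepsilon_o-\varepsilon_e).
\]
This introduces the new unknowns $\lmd-\lmdf$ and $\varepsilon_e-\varepsilon_o$. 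It only shows that $(\lmd,\varepsilon_e)=(\lmdf,\varepsilon_o)$ would force $\lmu=\lmf$. The identities you list cannot by themselves rule out $\lmu\neq\lmf$.

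The missing idea is to bring in $B$, $B^f$ and the structure of $\V_e,\V_o$.
\begin{enumerate}
\item In this branch $\al\neq 2=4\bt$. So if $B\neq 0$, Proposition~\ref{*=**} gives $V_o^{\ast\ast}=V_o^\ast=V_e^\ast=V_e^{\ast\ast}\subseteq V_e\cap V_o$.
\item Since $V\notin\{V_e,V_o\}$ by Hypothesis~\ref{hyp1}\ref{hyp1_3}, $V_e\cap V_o$ is proper in both $V_e$ and $V_o$. Table~\ref{table3} then forces $\V_e,\V_o\in\{\J(0),\J(0)^\times,\IY_3(\al,\tfrac12;1),\IY_3(\al,\tfrac12;1)^\times\}$.
\item These algebras give $(\lmd,\varepsilon_e)=(1,3)=(\lmdf,\varepsilon_o)$. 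By the displayed relation this forces $\lmu=\lmf$, a contradiction. Hence $B=0$, and likewise $B^f=0$.
\item In this branch $B+B^f=16\al\bigl(\lmu-\tfrac14(\al+1)\bigr)^2$. So $\lmu=\tfrac14(\al+1)$, and then $\lmf=\tfrac12(\al+1)-\lmu=\lmu$, which is the required contradiction.
\end{enumerate}
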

\begin{proof}
In order to prove \ref{lm<>lmf_1}, suppose, for the sake of contradiction, that $\lmu\neq \lmf$. Since, by Hypothesis~\ref{hyp1}\ref{hyp1_3}, $V_e\neq V\neq V_o$,   $V_e\cap V_o$ is  properly contained in both $V_e$ and $V_o$.

Since $Q=Q^f=0$ and $\lmu\neq \lmf$, by Lemma~\ref{newQ-Q^f}\ref{newQ-Q^f_1}, $\al\neq 2$ and $\bt=\tfrac{\al(\al-1)}{2(\al-2)}$, whence $\bt=\tfrac{1}{2}$ implies 
\begin{equation}\label{alcond}
    \al^2-2\al+2=0.
\end{equation} 
In particular,
\begin{equation}\label{alcond1}
\al^2=2\al-2 \quad \mbox{ and } \quad 
\al^{-1}=-\tfrac{1}{2}\al + 1.
\end{equation} 
Substituting $\bt$ by $\tfrac{1}{2}$ and the above values for $\al^2$ and $\al^{-1}$ in Lemma~\ref{newQ-Q^f}\ref{eqQ=0},  we get 
 \begin{equation}\label{L}
 \lmu=-\lmf+\tfrac{1}{2}(\al+1).
     \end{equation}     
We claim that 
\begin{equation}\label{cond}
    (\lmd,\varepsilon_e)\neq (\lmdf, \varepsilon_o), \mbox{ in particular } \V_e\not \cong \V_o.
\end{equation} 
Assume for a contradiction that $(\lmd,\varepsilon_e)=(\lmdf, \varepsilon_o)$. Then 
$\lmd=\lmdf$, whence Lemma~\ref{newQ-Q^f}\ref{newQ-Q^f_3} yields   
$$
4(\al-1)\lmu-(2\al-3)= R-R^f .
$$ 
 On the other hand, since $\varepsilon_e=\varepsilon_o$, by Equation~\eqref{QQf=0}, we get 
\[
R-R^f=0,
\]
whence, by Equation~\eqref{alcond1}, 
\[
4(\al-1)\lmu=2\al-3 =\al^2-1.
\]
Using this and $4(\al-1)$ times Equation~\eqref{L}, we get
$$
4(\al-1)\lmf=-4(\al-1)\lmu+2\al^2-2= -\al^2+1+2\al^2-2=\al^2-1= 4(\al-1)\lmu.
$$
Since $\al\neq 1$, this implies $\lmu=\lmf$, contradicting our assumption. This proves Equation~\eqref{cond}.

Suppose $B\neq 0$. Then,  Proposition~\ref{*=**} implies
\[
V_o^{\ast\ast}=V_o^\ast=V_e^{\ast}=V_e^{\ast\ast}\subseteq V_e\cap V_o.
\] 
Since $V_e\cap V_o$ is properly contained in both $V_e$ and $V_o$, by Table~\ref{table3}, we get that  $\V_o$ and $\V_e$ are isomorphic to one of the following
\begin{equation*}
\J(0),\quad  \J(0)^\times,\quad  \IY_3(\al, \tfrac{1}{2}; 1), \quad \IY_3(\al, \tfrac{1}{2}; 1)^\times.
\end{equation*}
By Lemmas~\ref{subJ} and~\ref{subIY3}, and the definition of $\varepsilon_e$, $\varepsilon_o$, it follows 
$(\lmdf, \varepsilon_o)=(1,3)=(\lmd, \varepsilon_e)$, contradicting Equation~\eqref{cond}.
Therefore $B=0$. By repeating the above argument swapping the roles of $V_e$ and $V_o$ we get also $B^f=0$. Then, substituting in the defining formulas of $B$ and $B^f$ the value $\bt=\tfrac{1}{2}$, and using Equations~\eqref{alcond} and~\eqref{L}, we get
$$
0= B+B^f=16\al(\lmu-\tfrac{1}{4}(\al+1))^2,
$$
whence $\lmu=\tfrac{1}{4}(\al+1)$ and so, by Equation~\eqref{L},  $\lmf=\lmu$, contradicting our assumption. This proves \ref{lm<>lmf_1}.

Since $\lmu=\lmf$, the equation in Lemma~\ref{newQ-Q^f}\ref{newQ-Q^f_3} with $\bt=\tfrac{1}{2}$ (and $\al\neq \bt$) implies  
 \begin{equation}\label{lm-lmf1}
\lmd-\lmdf=\tfrac{2}{\al(2\al-1)}(R-R^f). 
\end{equation}
 On the other hand, by Equation~\eqref{QQf=0} we get 
$$
 R-R^f=-P(\varepsilon_e - \varepsilon_o). 
$$
 Substituting  this value for $ R-R^f$ in Equation~\eqref{lm-lmf1}, we get
\begin{equation*}\label{lm2-lm2f}
 \lmd-\lmdf=-\tfrac{2}{\al(2\al-1)}P(\varepsilon_e - \varepsilon_o),  
\end{equation*} 
which implies \ref{lm<>lmf_2} by the definition of $P$. 
\end{proof}

\begin{lemma}\label{lm=lmf}
  $\{Q, Q^f\}\neq \{0\}$. 
\end{lemma}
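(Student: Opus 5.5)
We argue by contradiction: assume $Q=Q^f=0$. By Lemma~\ref{lm<>lmf}, $\lmu=\lmf$ and $\lmd-\lmdf=-\tfrac{(2\al-1)(\al-2)}{4\al}(\varepsilon_e-\varepsilon_o)$. The plan is to show that these conditions force $\lmd=\lmdf$. Then Proposition~\ref{reg}\ref{reg_1} applies when $(\al,\bt)\neq(2,\tfrac12)$, and Theorem~\ref{HWthm} applies otherwise, so in either case $\V$ is a quotient of a symmetric algebra. This contradicts Hypothesis~\ref{hyp1}\ref{hyp1_2}.

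First we dispose of the easy cases. If $\al=2$, the formula gives $\lmd=\lmdf$ directly. Since $\bt=\tfrac12$, we get $\lmu=\lmf$ from $Q=0$ via Lemma~\ref{newQ-Q^f}\ref{eqQ=0}. We then still need $\lmu=1$, and also $\lmd=1$ when $(\al,\bt)=(2,\tfrac12)$, before Theorem~\ref{HWthm} can be invoked. These values should follow from $Q=0$ (which fixes $\lmu$ as a number) together with Equation~\eqref{QQf=0}, which links $R$ to $\varepsilon_e$. This part needs checking.

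If $\al\neq2$ and $\varepsilon_e=\varepsilon_o$, then again $\lmd=\lmdf$ and we are done by Proposition~\ref{reg}\ref{reg_1}, provided $\al\neq4\bt=2$, which holds.

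The remaining case is $\al\neq2$, $\varepsilon_e\neq\varepsilon_o$. Here we use the explicit parametrisations coming from the Lemmas on the subalgebras. For $W\in\mathcal L$, Lemmas~\ref{subJ} and~\ref{subIY3} express $\lm_{a_0}(a_2)$ in terms of the parameter: $\lmd=\tfrac{1}{4}(\varepsilon_e-1)+1$ for $\J(\delta)$, and $\lmd=\tfrac{1}{4}\al(1-\mu)+\tfrac12(\mu+1)$ for $\IY_3$, with $\mu=(\varepsilon_e-1)/2$. The same holds for $\lmdf$ with $\varepsilon_o$. In every case $\lmd-\lmdf$ is a fixed nonzero multiple of $\varepsilon_e-\varepsilon_o$, namely $\tfrac14$ or $\tfrac{2-\al}{8}$ according to the type. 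Equating this with $-\tfrac{(2\al-1)(\al-2)}{4\al}(\varepsilon_e-\varepsilon_o)$ and dividing by $\varepsilon_e-\varepsilon_o$ gives polynomial conditions on $\al$ alone. We must handle four pairings (J/J, J/IY, IY/J, IY/IY); the mixed ones need care because the two coefficients differ.

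Next we feed these $\al$ values, together with the value of $\lmu$ forced by $Q=0$ and the relation $P\varepsilon_e+R=P\varepsilon_o+R^f=P$ from Equation~\eqref{QQf=0}, into the remaining constraints. These are $R=P(1-\varepsilon_e)$ and the $A,B$ relations of Lemma~\ref{s1s2}. Where $\{B,B^f\}\neq\{0\}$, Proposition~\ref{*=**} together with Table~\ref{table3} restricts the types as in the proof of Lemma~\ref{lm<>lmf}. We expect a contradiction such as $\al\in\{0,1,\tfrac12\}$, or $\varepsilon_e=\varepsilon_o$.

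The main obstacle is this last case: the bookkeeping of $\lmd$ versus $\varepsilon$ across the mixed types, and making sure the resulting polynomial system in $\al$, $\varepsilon_e$, $\varepsilon_o$ has no admissible solution. This may require a computer-algebra check of the kind already used in the paper.
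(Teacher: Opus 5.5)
Your case split matches the paper's, and the first two cases are fine. For $\al=2$, the step you flag as ``needs checking'' is exactly Corollary~\ref{reduction}: with $\al=4\bt$ one has $P=0$, so \eqref{QQf=0} gives $R=R^f=0$. Together with $Q=Q^f=0$ this forces $\lmu=\lmf=\lmd=\lmdf=1$, and Theorem~\ref{HWthm} applies. For $\al\neq 2$ with $\varepsilon_e=\varepsilon_o$, Proposition~\ref{reg} applies.

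The genuine gap is the main case $\al\neq2$, $\varepsilon_e\neq\varepsilon_o$. You leave it open, and the scalar route you propose cannot close it.
\begin{itemize}
\item The pairing $\J/\J$ does give $(\al-1)^2=0$.
\item The pairing $\IY_3/\IY_3$ only gives $3\al=2$. At $(\al,\bt)=(\tfrac23,\tfrac12)$ one finds $Q=\tfrac29\lmu-\tfrac4{27}\lmf$, so $Q=Q^f=0$ forces $\lmu=\lmf=0$. Then $R=P(1-\varepsilon_e)$ and $R^f=P(1-\varepsilon_o)$ hold identically: inserting $\lmd=\tfrac12+\tfrac16\varepsilon_e$, both sides equal $\tfrac1{54}(\varepsilon_e-1)$. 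So every scalar condition you list has solutions with $\varepsilon_e\neq\varepsilon_o$.
\item In the mixed pairings, $\lmd-\lmdf$ is not a multiple of $\varepsilon_e-\varepsilon_o$ at all, so you get one relation in the three unknowns $\al,\varepsilon_e,\varepsilon_o$. Note also that for $\J(\delta)$ one has $\lmd=2\delta+1=\tfrac14(\varepsilon_e+1)$, not $\tfrac14(\varepsilon_e+3)$.
\end{itemize}
Lemma~\ref{s1s2} is a vector identity; fed in as a further scalar constraint it yields nothing.

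The missing idea is to use that vector identity structurally, as the paper does.
\begin{itemize}
\item Since $\lmu=\lmf$, \eqref{A} shows that $A=-A^f$ is a nonzero multiple of $\lmd-\lmdf$, because $\al\notin\{0,\tfrac12,2\}$. Moreover $\lmd\neq\lmdf$ by Proposition~\ref{reg}.
\item Both subalgebras have axet $X(n)$ with $n\geq 3$, so $a_{-2}\neq a_2$ and $a_{-1}\neq a_3$. Lemma~\ref{s1s2} then makes $a_{-2}-a_2$ a nonzero multiple of $a_{-1}-a_1$, and $a_3-a_{-1}$ a nonzero multiple of $a_2-a_0$.
\item Hence $V_e^{\ast\ast}=V_o^\ast$ and $V_e^\ast=V_o^{\ast\ast}$, so all four subspaces coincide.
\item If $V_e^\ast=V_e$, then $V_e\subseteq V_o$ and $V=V_o$, contrary to Hypothesis~\ref{hyp1}\ref{hyp1_3}. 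So $V_e^\ast<V_e$ and, symmetrically, $V_o^\ast<V_o$.
\item With $\al\neq 2$, Table~\ref{table3} then leaves only $\J(0)$, $\J(0)^\times$, $\IY_3(\al,\tfrac12;1)$ and $\IY_3(\al,\tfrac12;1)^\times$, each with $\varepsilon=3$. Thus $\varepsilon_e=\varepsilon_o$, a contradiction.
\end{itemize}
No polynomial system or computer check is needed. You did point to Proposition~\ref{*=**} and Table~\ref{table3}. What was missing is that $A\neq 0$ forces $B\neq 0$ (and $A^f\neq 0$ forces $B^f\neq 0$), so no split on $\{B,B^f\}$ is needed and this restriction alone ends the proof.
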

\begin{proof}
Assume, for the sake of contradiction, that $\{Q, Q^f\}= \{0\}$. Then Lemma~\ref{lm<>lmf}\ref{lm<>lmf_1} implies $\lmu=\lmf$.
If $\al=4\bt=2$, then by Lemma~\ref{lm<>lmf}\ref{lm<>lmf_2}, $\lmd=\lmdf$ and, by Corollary~\ref{reduction}, either $\V$ is   isomorphic to a quotient of a symmetric algebra, contradicting Hypothesis~\ref{hyp1}\ref{hyp1_2}, or $\V$ has a skew axet, contradicting Hypothesis~\ref{hyp1}\ref{hyp1_5}.
Thus $\al\neq 4\bt$, in particular 
\begin{equation}\label{al<>2}
 \al\neq 2. 
\end{equation}
Then, by Proposition~\ref{reg},
\begin{equation}\label{hyplamba2}
\lmd\neq \lmdf.
\end{equation}
By Lemma~\ref{quotient}, it follows that $\V_e$ and $\V_o $ are not isomorphic to quotients of the same algebra, and,  
by Lemma~\ref{lm<>lmf}\ref{lm<>lmf_2}, 
\begin{equation}\label{eeo}
    \varepsilon_e\neq \varepsilon_o.
\end{equation}
Since $\lmu=\lmf$, by Equation~\eqref{A} (see also the paragraph at the beginning of Section~\ref{evaluation}), 
$$
A=-A^f=\tfrac{1}{16}\al(2\al-1)^2(\al-2)(\lmd-\lmdf).
$$
Since  $\al\not \in \{0, \bt\}$ and $\bt=\tfrac{1}{2}$,  Equations~\eqref{al<>2} and~\eqref{hyplamba2} imply
\begin{equation*}\label{eqA}
    A=-A^f\neq 0.
\end{equation*}
By the remark before Table~\ref{table3}, $\V_e$ and $\V_o$ have axet $X(n)$ with $n>2$, whence $a_{-2}\neq a_2$ and $a_{-3}\neq a_1$. Thus, by Lemma~\ref{s1s2}, it follows that $V_e^{\ast}=V_o^{\ast\ast}$ and $V_o^\ast=V_e^{\ast \ast}$. Since $V_o^{\ast\ast}\subseteq V_o^\ast$ and $V_e^{\ast \ast}\subseteq V_e^\ast$, we get 
\[
V_e^{\ast}=V_e^{\ast\ast}=V_o^\ast=V_o^{\ast \ast}.
\]
If $V_e^\ast=V_e$ (respectively $V_o^\ast=V_o$), then this gives $V_e\subseteq V_o$ (respectively $V_o\subseteq V_e$), whence $V=V_o$ (respectively $V=V_e$), a contradiction to Hypothesis~\ref{hyp1}\ref{hyp1_3}. Hence 
$$
V_e^\ast<V_e \quad \mbox{ and }\quad  V_o^\ast<V_o.
$$
Then, by Table~\ref{table3}, $\V_e$ (respectively $\V_o$) is isomorphic to one of the following
$$
\J(0), \quad \J(0)^\times, \quad \IY_3(\al,\tfrac{1}{2}; 1), \quad \IY_3(\al,\tfrac{1}{2}; 1)^\times, 
$$
which implies $\{\varepsilon_e, \varepsilon_o\}=\{3\}$, contradicting Equation~\eqref{eeo}.
\end{proof}

\section{The case $\{Q, Q^f\}\neq\{0\}$}
\label{sec:largeint}

Throughout this section we assume $\{Q, Q^f\}\neq\{0\}$.
By Proposition~\ref{*=**} and  Hypothesis~\ref{hyp1}\ref{hyp1_3}, 
\begin{equation}
    \label{eq*=**}
  V_o\neq V_o^{\ast}=V_o^{\ast\ast}=V_e^{\ast\ast}=V_e^{\ast}\neq V_e.  
\end{equation}

\begin{lemma}\label{possipairs}
Assume $\{\V_e, \V_o\}\subseteq \mathcal {L}$ and Equation~\eqref{eq*=**} is satisfied. Then 
\begin{enumerate}
    \item  the unordered pair $(\V_e, \V_o)$ is (up to isomorphism) one of those listed in the first column of Table~$\ref{tableposs}$;
    \item  for each possible pair, the dimension of $V_e^\ast$ is given in the second column of Table~$\ref{tableposs}$.
\end{enumerate}
\end{lemma}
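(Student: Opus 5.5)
The approach is a finite case check driven entirely by Table~\ref{table3}. Equation~\eqref{eq*=**} imposes three requirements on each of $\V_e$ and $\V_o$: for $x\in\{e,o\}$ we need $V_x^{\ast}=V_x^{\ast\ast}$, we need $V_x^\ast\neq V_x$, and the two common subspaces must coincide, so in particular $\dim(V_e^\ast)=\dim(V_o^\ast)$.

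The first step is to go through the rows of Table~\ref{table3} and discard every $W\in\mathcal L$ that violates one of the first two conditions.
\begin{itemize}
\item $\J(\delta)$ with $\delta\notin\{0,-\tfrac12\}$, $\IY_3(\al,\tfrac12;\mu)$ with $\al\neq2$ and $\mu\neq\pm1$, and $\IY_3(-1,\tfrac12;\mu)^\times$ with $\mu\neq\pm1$ all have $W^\ast=W$, so they are excluded.
\item $\J(-\tfrac12)$, $\IY_3(2,\tfrac12;-1)$, $\IY_3(\al,\tfrac12;-1)$ and $\IY_3(-1,\tfrac12;-1)^\times$ all have $\dim W^{\ast\ast}<\dim W^\ast$, so they are excluded.
\end{itemize}
The survivors are $\J(0)$ and $\IY_3(\al,\tfrac12;1)^\times$, with $\dim W^\ast=2$; $\J(0)^\times$, with $\dim W^\ast=1$; and $\IY_3(2,\tfrac12;\mu)$ for $\mu\neq-1$ and $\IY_3(\al,\tfrac12;1)$, with $\dim W^\ast=3$.

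The second step pairs these survivors by equal values of $\dim V^\ast$, using the bases of $W^\ast$ given in Lemma~\ref{subJ}\ref{subJ_3} and Lemma~\ref{subIY3}\ref{subIY3_6},\ref{subIY3_7}. The pairs may mix families only when $\al=2$, since $\IY_3(2,\tfrac12;\mu)$ exists only at $\al=2$.

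The third step uses the remaining constraint to prune pairs further: the identification $V_e^\ast=V_o^\ast$ must be compatible with the actions of $\tau_0$ and $\tau_1$. Here $\tau_0\in\Miy(\V_e)$ and $\tau_1\in\Miy(\V_o)$, while $\tau_0$ acts on $V_o$ and $\tau_1$ acts on $V_e$. One useful test is Lemma~\ref{subJ}, which says that the flip acts as $-1$ on $V^{\ast\ast}$ for $\J(0)^\times$. Comparing the eigenvalues of the Miyamoto involutions on the common subspace in this way may rule out some mixed pairs. Where this comparison is inconclusive, I will simply keep the pair in the table, since the lemma only claims a list of possibilities.

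I expect the main obstacle to be bookkeeping. For $\dim=2$ the candidate pairs are $\{\J(0),\J(0)\}$, $\{\J(0),\IY_3(\al,\tfrac12;1)^\times\}$ and $\{\IY_3(\al,\tfrac12;1)^\times,\IY_3(\al,\tfrac12;1)^\times\}$. For $\dim=1$ the only pair is $\{\J(0)^\times,\J(0)^\times\}$. For $\dim=3$ the pairs are drawn from $\IY_3(2,\tfrac12;\mu)$ with $\mu\neq-1$ and $\IY_3(\al,\tfrac12;1)$, which can be combined when $\al=2$. Listing these pairs with their common $\dim V_e^\ast$ gives the first two columns of Table~\ref{tableposs}. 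This step needs care to make sure each parameter restriction ($\al=2$, $\mu=1$, and so on) matches Table~\ref{table3} exactly.
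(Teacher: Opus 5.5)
Your argument is correct and matches what the paper intends. The paper writes no separate proof, but, as in the proof of Lemma~\ref{possipairs3}, the lemma is read off Table~\ref{table3}: keep the $W\in\mathcal L$ with $W^\ast=W^{\ast\ast}\neq W$, then pair them by $\dim W^\ast$, using that $\V_e$ and $\V_o$ share the same $\al$. Your third step (pruning via Miyamoto involution eigenvalues) is unnecessary, because the lemma only asserts that the pair lies in the list, and your first two steps already establish that.
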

{\renewcommand{\arraystretch}{1.5}
\begin{table}[H]
{\Small
\[
\begin{array}{|l|c|}
\hline
\begin{array}{l}(\V_e, \V_o)\end{array} & \dim(V_e^\ast) \\
\hline
  \begin{array}{l} \left (\J(0)^\times,\; \J(0)^\times\right )\end{array}  & 1\\
 \hline
 \begin{array}{l}
 \left (\J(0),\; \J(0)\right ),\\
 \left (\J(0), \;\IY_3(\al,\tfrac{1}{2}; 1)^\times\right ), \\\left (\IY_3(\al,\tfrac{1}{2}; 1)^\times, \;\IY_3(\al,\tfrac{1}{2}; 1)^\times\right )
 \end{array} & 2
 \\
 \hline
 \begin{array}{l}
 \left (\IY_3(2,\tfrac{1}{2}; \mu),\; \IY_3(2,\tfrac{1}{2}; \mu^\prime)\right ),\quad  \mbox{ with }\quad  \{\mu,\mu^\prime\}\subseteq \F\setminus \{-1\},\\
 \left (\IY_3(\al,\tfrac{1}{2}; 1), \;\IY_3(\al,\tfrac{1}{2}; 1)\right ),\quad   \mbox{ with }\quad  \al\in \F\setminus \{2\}.
 \end{array} & 3
\\
\hline
\end{array}
\]

\caption{{ Possible pairs $(\V_e, \V_o)$ with $
 V_o\neq V_o^{\ast\ast}=V_o^\ast=V_e^{\ast}=V_e^{\ast\ast}\neq V_e.
$}
}\label{tableposs}}
\end{table}
}

\begin{remark}\label{remlambda1}
By Lemmas~$\ref{subJ}$\ref{subJ_1} and~$\ref{subIY3}$\ref{subIY3_1}, in all the algebras appearing in Table~$\ref{tableposs}$, $\lmu=1$. Hence, in the algebra $\V$, $\lmd=\lmdf=1$. Since, by Hypothesis~$\ref{hyp1}$\ref{hyp1_2}, $\V$ is not isomorphic to quotient of a  symmetric algebra, by Proposition~$\ref{reg}$ it follows that 
\begin{equation}\label{eqrem}
 \mbox{either}\quad  \lmu\neq \lmf \quad \mbox{or}\quad \al=2. 
\end{equation}
\end{remark}

In the following lemmas, we shall show that each of the above pairs leads to a contradiction. We start with the following observation. 

\begin{lemma}\label{smult}
Assume $s_{\tu,2}=ys_{\tz,2}$, for some $y\in \F\setminus \{0\}$, then $\al=2$ and $\lmu=\lmf$.   \end{lemma}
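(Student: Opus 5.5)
The plan is to exploit the fact that the ratio of $s_{\tu,2}$ to $s_{\tz,2}$ must be compatible both with the fusion law at $a_0$ and at $a_1$, evaluated through the projections $\lm_{a_0}$ and $\lm_{a_1}$ and through the $\tau$-actions. We are in the setting of this section: $\bt=\tfrac12$, $\{\V_e,\V_o\}\subseteq\mathcal L$ with the pair in Table~\ref{tableposs}, so by Remark~\ref{remlambda1} we have $\lmd=\lmdf=1$.

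First apply $\lm_{a_0}$ to $s_{\tu,2}=ys_{\tz,2}$. By Lemma~\ref{s}\ref{s_2}, $\lm_{a_0}(s_{\tz,2})=\lmd-\bt-\bt\lmd=0$ since $\lmd=1$ and $\bt=\tfrac12$. Corollary~\ref{cor:lm2f}\ref{cor:lm2f_1} then gives
\[
0=\tfrac{2(\al-1)}{\al-\tfrac12}\lmu(\lmu-\lmf),
\]
the linear terms cancelling because $(1-2\bt)\lmu+\bt\lmd-\bt=0$. Symmetrically, applying $\lm_{a_1}$ and Corollary~\ref{cor:lm2f}\ref{cor:lm2f_2} yields $0=\lmf(\lmf-\lmu)$, the factor $y\neq0$ allowing the division. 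Subtracting gives $(\lmu-\lmf)^2=0$, so $\lmu=\lmf$ (using $\al\ne1$).

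It remains to show $\al=2$. Here I would use the multiplication rule Lemma~\ref{primo}\ref{primo_5}: compute $a_0s_{\tu,2}$ in two ways, once via \ref{primo_5} and once as $y\,a_0s_{\tz,2}$ via \ref{primo_1}, with $\lmu=\lmf$, $\lmd=\lmdf=1$ substituted. Comparing coefficients of the even axes $a_{\pm2}$ (which are independent from $a_0$ since $\V_e$ has axet $X(n)$, $n\ge3$) gives $(\al-2\bt)\,y\,\tfrac{\bt}{2}(\al-\bt)=H$, i.e. $y=\bt=\tfrac12\cdot\ldots$ a definite value, while the $s_{\tz,2}$ coefficient gives $(\al-2\bt)y(\al-\bt)=L$. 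With $\bt=\tfrac12$ these read $(\al-1)y(\al-\tfrac12)=\tfrac12\cdot\ldots$; eliminating $y$ between the two (and also the odd-axis coefficient $I$, which must vanish against whatever $a_{\pm1}$ terms appear) forces a polynomial identity in $\al$ whose admissible root should be $\al=2$. The delicate point is linear independence of the spanning vectors used for coefficient comparison: $a_{\pm1}$, $a_0$, $a_{\pm2}$, $s_{\tz,1}$, $s_{\tz,2}$ need not be independent in $V$. I would handle this by instead applying $\lm_{a_0}$ and the projections onto the $\al$-eigenspace (using $u_2,v_2,w_2$ from Lemma~\ref{ui}), and, if needed, applying $\tau_1$ to $s_{\tu,2}=ys_{\tz,2}$: since $s_{\tu,2}$ is $\tau_1$-fixed, $s_{\tz,2}^{\tau_1}=s_{\bar2,2}=s_{\tz,2}$ consistently, while $\tau_0$ gives $s_{\tu,2}^{\tau_0}=s_{-\tu,2}=s_{\tu,2}$; combining with the explicit $s_{\tz,2}$ in the concrete $\V_e\in\mathcal L$ gives extra constraints.

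The main obstacle is this second step: extracting $\al=2$ robustly without assuming independence. My fallback is a case check over Table~\ref{tableposs}, writing $s_{\tz,2}$ explicitly in $\V_e$ and using that $s_{\tu,2}\in V_o$, so $ys_{\tz,2}\in V_e\cap V_o$, and testing against the known structure of $V_e^\ast=V_o^\ast$ there; for $\J(0)$-type pairs $\lmu=1$ already forces $\al$ via the $\IY_3(\al,\tfrac12;1)$ eigenvalue relations.
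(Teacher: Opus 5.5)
Your first step is exactly the paper's argument for $\lmu=\lmf$. Lemma~\ref{s}\ref{s_2} with $\lmd=\lmdf=1$ and $\bt=\tfrac12$ gives $\lm_{a_0}(s_{\tz,2})=0=\lm_{a_1}(s_{\tu,2})$. The relation transfers this through $y$ and $y^{-1}$. Corollary~\ref{cor:lm2f} then leaves $\lmu(\lmu-\lmf)=0=\lmf(\lmf-\lmu)$; note it is adding these two, not subtracting, that gives $(\lmu-\lmf)^2=0$.

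The gap is the second step. You miss the one-line argument the paper uses: $\al=2$ is precisely Equation~\eqref{eqrem} of Remark~\ref{remlambda1}, the same remark you quote for $\lmd=\lmdf=1$. Once $\lmu=\lmf$ and $\lmd=\lmdf$, suppose $\al\neq 2$. Then $(\al,\bt)\neq(2,\tfrac12)$, so Proposition~\ref{reg}\ref{reg_1} makes $\V$ isomorphic to a quotient of a symmetric algebra. This contradicts Hypothesis~\ref{hyp1}\ref{hyp1_2}.

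The computational route you sketch instead cannot deliver $\al=2$. Compare $(\al-2\bt)a_0s_{\tu,2}$ from Lemma~\ref{primo}\ref{primo_5} with $y(\al-2\bt)a_0s_{\tz,2}$ from Lemma~\ref{primo}\ref{primo_1}, even granting the linear independence of $a_{\pm2},a_{\pm1},a_0,s_{\tz,1},s_{\tz,2}$ that you rightly doubt.
\begin{itemize}
\item The coefficients of $a_{\pm2}$ and of $s_{\tz,2}$ give the \emph{same} equation $y(\al-2\bt)=2\bt$, i.e.\ $y(\al-1)=1$. This determines $y$, not $\al$, so there is nothing to eliminate.
\item The $a_{\pm1}$-coefficient $I=(2\al-1)(\tfrac{\al}{2}-\lmu)$ then forces $\lmu=\tfrac{\al}{2}$.
\item After that, the $a_0$-coefficients $J$ and $\gamma_2$ both equal $\tfrac14-\tfrac{\al}{2}$, identically in $\al$.
\end{itemize}
So no condition on $\al$ emerges.

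More fundamentally, the data $\bt=\tfrac12$, $\lmu=\lmf$, $\lmd=\lmdf=1$ and $s_{\tu,2}=ys_{\tz,2}$ do not imply $\al=2$ at all. In $\IY_5(\al,\tfrac12)$, for every admissible $\al$, one has $\lmu=\lmf=\lmd=\lmdf=1$. Its multiplication table also gives $s_{\tu,2}=s_{\tz,2}=4s_{\tz,1}-\tfrac14(a_{-2}-4a_{-1}+6a_0-4a_1+a_2)$. The non-symmetry hypothesis is therefore indispensable, and it enters exactly through Proposition~\ref{reg}.

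Your fallback, that $\lmu=1$ in the pairs of Table~\ref{tableposs} forces $\al$, also fails. That value is the projection inside $\V_e$, i.e.\ $\lmd$ of $\V$. For example, it equals $1$ in $\IY_3(\al,\tfrac12;1)$ for every $\al$.
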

\begin{proof}
Since $\bt=\tfrac{1}{2}$ and, as noted in Remark~\ref{remlambda1}, $\lmd=\lmdf=1$, 
 Lemma~\ref{s}\ref{s_2} implies $\lm_{a_0}(s_{\tz,2})=0$ and $\lm_{a_1}(s_{\tu,2})=0$, whence 
$$
\lm_{a_0}(s_{\tu,2})=y\lm_{a_0}(s_{\tz,2})=0\quad \mbox{ and }\quad 
 \lm_{a_1}(s_{\tz,2})=y^{-1}\lm_{a_1}(s_{\tu,2})=0.
$$
Thus, by Corollary~\ref{cor:lm2f},
$$0=\lm_{a_0}(s_{\tu,2})=\tfrac{2(\al-1)}{\al-\bt}\lmu(\lmu-\lmf)+(1-2\bt)\lmu+\bt\lmd-\bt=\tfrac{2(\al-1)}{\al-\bt}\lmu(\lmu-\lmf)$$
$$0= \lm_{a_1}(s_{\tz,2})=\tfrac{2(\al-1)}{\al-\bt}\lmf(\lmf-\lmu)+(1-2\bt)\lmf+\bt\lmdf-\bt=\tfrac{2(\al-1)}{\al-\bt}\lmf(\lmf-\lmu),$$
whence $\lmu=\lmf$. By Equation~\eqref{eqrem}, $\al=2$.   
\end{proof}

\begin{lemma}\label{Jx} $(\V_e, \V_o)\neq (\J(0)^\times, \J(0)^\times)$. In particular, $\dim(V_e^\ast)\neq 1$.
\end{lemma}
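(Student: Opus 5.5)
Suppose, for a contradiction, that $\V_e\cong\V_o\cong\J(0)^\times$. The second assertion is then immediate from the first, since by Table~\ref{tableposs} this is the only pair with $\dim(V_e^\ast)=1$.

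First I would make everything explicit. In $\J(0)^\times$ we have $s_{\tz,1}=0$, because this algebra is $\J(0)$ modulo $\F s_{\tz,1}$. Applying Lemma~\ref{subJ} to $\V_e$ (generated by $a_0,a_2$) and to $\V_o$ (generated by $a_{-1},a_1$) gives
\[
a_{2k+2}=2a_{2k}-a_{2k-2},\qquad a_{2k+1}=2a_{2k-1}-a_{2k-3},
\]
\[
a_0a_2=\tfrac12(a_0+a_2),\qquad a_{-1}a_1=\tfrac12(a_{-1}+a_1).
\]
Hence $s_{\tz,2}=s_{\tu,2}=0$, and the even and odd axes form arithmetic progressions:
\[
a_{2k}=a_0+k(a_2-a_0),\qquad a_{2k+1}=a_1+k(a_1-a_{-1}).
\]
Moreover $V_e^\ast=\langle a_0-a_2\rangle$ and $V_o^\ast=\langle a_{-1}-a_1\rangle$. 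Equation~\eqref{eq*=**} says these lines coincide, so
\[
a_1-a_{-1}=x(a_2-a_0)
\]
for some nonzero $x\in\F$. This places us in the situation of Corollary~\ref{a1xa2}\ref{a1xa2_1}.

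Next I would feed this into Corollary~\ref{a1xa2}. From the progressions,
\[
a_3-a_{-3}=3(a_1-a_{-1}),\qquad a_2-a_{-2}=2(a_2-a_0),
\]
so the hypothesis of Corollary~\ref{a1xa2}\ref{a1xa2_1}, stated with $a_2-a_{-2}$, holds with $x$ replaced by $x/2$. The corollary then yields the scalar identity
\[
(1-\al)(\lmf-\lmu)+\tfrac14(\al-\tfrac12)\bigl(\tfrac{2}{x}-\tfrac{x}{2}\bigr)-\tfrac34(\al-\tfrac12)x=0,
\]
because $a_1\neq a_{-1}$. Part \ref{a1xa2_2} of the same corollary gives a second relation, with $x^{-1}$ playing the role of $x$. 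I would also compute $\lmu$ directly. Since $a_0-a_2$ and $a_{-1}-a_1$ are $\tfrac12$-eigenvectors of $\ad_{a_1}$ and $\ad_{a_0}$ respectively, the relation $a_1-a_{-1}=x(a_2-a_0)$ forces $a_0-a_2$ to be simultaneously a $\tfrac12$-eigenvector of $\ad_{a_0}$ and, up to the relation, a $\tfrac12$-eigenvector of $\ad_{a_1}$.

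The more decisive route is via Lemma~\ref{smult}'s circle of ideas, together with the fact that $s_{\tz,2}=s_{\tu,2}=0$ forces $\lm_{a_0}(s_{\tu,2})=0$ and $\lm_{a_1}(s_{\tz,2})=0$. The computation in the proof of Lemma~\ref{smult} then goes through verbatim and gives $\lmu=\lmf$ (one needs $\lmu\neq0$, which I expect to follow from $\lmd=1$ and the products above). Remark~\ref{remlambda1} now forces $\al=2$, so $(\al,\bt)=(2,\tfrac12)$ and $\lmu=\lmf$, $\lmd=\lmdf=1$.

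For the conclusion, in the case $\lmu=\lmf=1$, Theorem~\ref{HWthm} makes $\V$ a quotient of a symmetric algebra, contradicting Hypothesis~\ref{hyp1}\ref{hyp1_2}. Otherwise, $\lmu=\lmf\neq1$ with $\al=2$, and I would use Equation~\eqref{equa3}, in which $s_{\tz,2}=s_{\tu,2}=0$, together with the fact that $V$ is spanned by $a_{-1},a_0,a_1,a_2,s_{\tz,1}$, to derive $\lm_3=\lm_3^f$. Proposition~\ref{reg}\ref{reg_2} then makes $\V$ a quotient of a symmetric algebra, again a contradiction. I expect this last case — ruling out $\lmu\neq1$ when $\al=2$, or verifying the hypotheses of Proposition~\ref{reg}\ref{reg_2} — to be the main obstacle. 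If it resists, I would instead try to show $\lmu=1$ directly from Corollary~\ref{a1xa2} by computing $\lm_{a_0}$ of the relation $a_1-a_{-1}=x(a_2-a_0)$ and of its square.
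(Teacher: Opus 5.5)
Your main line is the paper's argument up to the decisive point. From $V_e^\ast=V_o^\ast$ you get $a_1-a_{-1}=x(a_2-a_0)$, and you note $s_{\tz,2}=s_{\tu,2}=0$. The argument of Lemma~\ref{smult}, which you can simply cite with $y=1$, then gives $\lmu=\lmf$ and $\al=2$.

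Your parenthetical worry about needing $\lmu\neq 0$ is unfounded. Corollary~\ref{cor:lm2f} gives both $\lmu(\lmu-\lmf)=0$ and $\lmf(\lmf-\lmu)=0$, and adding them yields $(\lmu-\lmf)^2=0$. The detour through Corollary~\ref{a1xa2} and the eigenvector remark play no role. (Incidentally, the last coefficient in your scalar identity should be $\tfrac38(\al-\tfrac12)x$, not $\tfrac34(\al-\tfrac12)x$.)

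The ending is where you diverge, and as written it is unfinished. You leave $\lm_3=\lm_3^f$ as the ``main obstacle'' and propose attacking it through Equation~\eqref{equa3}, which is not the right tool. In fact it is immediate from the progressions you already derived, namely $a_3=2a_1-a_{-1}$ and $a_{-2}=2a_0-a_2$. By Lemma~\ref{s}\ref{s_3},
\[
\lm_3=\lambda_{a_0}(a_3)=2\lmu-\lmu=\lmu,\qquad \lm_3^f=\lambda_{a_1}(a_{-2})=2\lmf-\lmf=\lmf,
\]
so $\lm_3=\lm_3^f$.

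The spanning claim, which you assert without proof, also needs a line. The space $\langle a_0,a_1,a_2,s_{\tz,1}\rangle$ contains $a_{-2},a_{-1},a_3$. It is closed under products of axes, because $s_{\tr,1}=s_{\tz,1}$ and $s_{\tz,2}=0$. It is closed under multiplication by $s_{\tz,1}$ by Lemma~\ref{primo}\ref{primo_1}--\ref{primo_3}, together with $\tau_1$ for the product $a_2s_{\tz,1}$. Finally $s_{\tz,1}s_{\tz,1}$ lies in it by Lemma~\ref{primo}\ref{primo_6}, since $s_{\tz,2}=s_{\tu,2}=0$.

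With these two facts, Proposition~\ref{reg}\ref{reg_2} applies whatever the value of $\lmu$. So your case split and the appeal to Theorem~\ref{HWthm} are unnecessary, though harmless.

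The paper finishes differently. It proves the same spanning statement $V=\langle a_0,a_1,a_2,s_{\tz,1}\rangle$ and, together with $\lmu=\lmf$, invokes \cite[Corollary~6.15]{FMS3} directly. Your route stays inside the paper's own Proposition~\ref{reg}\ref{reg_2}, at the price of the one-line check $\lm_3=\lm_3^f$ above.
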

\begin{proof}
Assume, for a contradiction, that 
$$\V_e\cong \V_o\cong \J(0)^\times.$$
By Lemma~\ref{subJ} and Equation~\eqref{eq*=**},  $\langle a_0-a_2\rangle= V_e^\ast=V_o^\ast=\langle a_1-a_{-1}\rangle$. Hence, there exists $x\in \F\setminus \{0\}$ such that 
\begin{equation}\label{a1m1=xa1m2}
 a_1-a_{-1}=x(a_2-a_0).
\end{equation}
Thus
$a_{-1}=a_1-x(a_2-a_0)$ and so, by Lemma~\ref{primo}, it follows that 
\begin{equation}\label{eqdim}
V=\langle a_0, a_1, a_2, s_{\tz,1}\rangle.
\end{equation} 
By Equation~\eqref{defs} on page~\pageref{defs}
\begin{equation*}\label{s2f=a1-am1}
(a_{-1}-a_1)^2=a_{-1}+a_1-2a_{-1}a_1=a_{-1}+a_1-a_{-1}-a_1-2s_{\tu,2}=-2s_{\tu,2}
\end{equation*}
and similarly
\begin{equation*}\label{s2=a0-a2}
(a_{0}-a_2)^2=-2s_{\tz,2}.
\end{equation*}
Hence Equation~\eqref{a1m1=xa1m2} implies
$$
s_{\tu,2}=-\tfrac{1}{2}(a_{-1}-a_1)^2=-\tfrac{1}{2}x^2(a_2-a_0)^2=x^2s_{\tz,2},
$$
whence, by Lemma~\ref{smult}, $\lmu=\lmf$. Then, by Equation~\eqref{eqdim} and~\cite[Corollary~6.15]{FMS3}, $\V$ is isomorphic to a quotient of a symmetric algebra, contradicting Hypothesis~\ref{hyp1}\ref{hyp1_2}. This prove that $(\V_e, \V_o)\neq (\J(0)^\times, \J(0)^\times)$. By Lemma~\ref{possipairs}, $\dim(V_e^\ast)\neq 1$.
\end{proof}

\begin{lemma}\label{JJ} 
$(\V_e,\V_o)\not \in \{(\J(0),\J(0)), (\J(0), \IY_3(\al,\tfrac{1}{2}; 1)^\times)\}$.
\end{lemma}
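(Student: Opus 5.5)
The plan is to imitate the proof of Lemma~\ref{Jx}: use Equation~\eqref{eq*=**} to identify $V_e^\ast$ with $V_o^\ast$, force a proportionality between $s_{\tu,2}$ and $s_{\tz,2}$, and then apply Lemma~\ref{smult} together with Remark~\ref{remlambda1} and Proposition~\ref{reg}. In both pairs under consideration $\dim(V_e^\ast)=2$. By Lemma~\ref{subJ}\ref{subJ_3}, applied to $\V_e\cong\J(0)$ with generators $a_0,a_2$, we have $V_e^\ast=\langle a_0-a_2, s_{\tz,2}\rangle$. For $\V_o$ we use either the same lemma or Lemma~\ref{subIY3}\ref{subIY3_7}, in the case $\IY_3(\al,\tfrac12;1)^\times$, which gives $V_o^\ast=\langle a_{-1}-a_1, a_3-a_1\rangle$. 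In that case Lemma~\ref{subIY3}\ref{subIY3_2} with $\mu=1$ gives $a_3=a_{-3}+3(a_1-a_{-1})$, so the generators can be rewritten in terms of $a_{-1}-a_1$ and $a_3-a_{-1}$.

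The key step is to exploit the behaviour of the Miyamoto involutions on the common $2$-dimensional space $U:=V_e^\ast=V_o^\ast$. This space is invariant under $\tau_0$ and $\tau_1$ and is a subalgebra. Inside $\J(0)$, viewed as $\V_e$, the element $s_{\tz,2}$ spans the annihilator-like piece, since $us_{\tz,2}=0$ for all $u$. Moreover $(a_0-a_2)^2=-2s_{\tz,2}$, while $s_{\tz,2}^2=0$ and $(a_0-a_2)s_{\tz,2}=0$. Hence $U$ has a $1$-dimensional square $U^2=\langle s_{\tz,2}\rangle$.

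On the odd side the same holds. If $\V_o\cong\J(0)$, then $U^2=\langle s_{\tu,2}\rangle$, because $(a_1-a_{-1})^2=-2s_{\tu,2}$. If $\V_o\cong\IY_3(\al,\tfrac12;1)^\times$, I would compute $U^2$ from Table~\ref{tableIY3bis}; I expect it to be spanned by $(a_1-a_{-1})^2=-2s_{\tu,2}$ as well, or else to be zero. In the nonzero case we obtain $s_{\tu,2}=y s_{\tz,2}$ with $y\neq0$. Lemma~\ref{smult} then gives $\al=2$ and $\lmu=\lmf$. Together with $\lmd=\lmdf=1$ from Remark~\ref{remlambda1}, this puts us in the $\mathcal H$-type situation when $(\al,\bt)=(2,\tfrac12)$. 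Theorem~\ref{HWthm} then shows that $\V$ is a quotient of a symmetric algebra, contradicting Hypothesis~\ref{hyp1}\ref{hyp1_2}.

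The main obstacle is the mixed pair $(\J(0),\IY_3(\al,\tfrac12;1)^\times)$. There the square of $U$ computed on the odd side might vanish, because in $\IY_3(\al,\tfrac12;1)^\times$ the element $s_{\tu,2}$ could lie in the annihilator ideal killed in the quotient. Since $(a_0-a_2)^2=-2s_{\tz,2}\neq0$ on the even side, a vanishing odd square would contradict $U$ being the same algebra, because $s_{\tz,2}\neq 0$ in $\J(0)$; so this case should also lead to a contradiction. I would first verify via Table~\ref{tableIY3bis} exactly which of the two alternatives occurs, and then conclude in either case.
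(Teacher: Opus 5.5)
Your replacement for the paper's $\tau_0$-eigenvector argument works. The square $U^2$ is intrinsic to $U=V_e^\ast=V_o^\ast$, and from the even side it equals $\langle s_{\tz,2}\rangle$. From the odd side it contains $(a_1-a_{-1})^2=-2s_{\tu,2}$, which is nonzero in both cases. In $\IY_3(\al,\tfrac12;1)^\times$, $s_{\tu,2}$ is a nonzero multiple of $a_{-1}+a_3-2a_1$, so the degenerate alternative you worry about does not occur. Hence $s_{\tu,2}=y\,s_{\tz,2}$ with $y\neq0$.

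The gap is the final step. Lemma~\ref{smult} only gives $\al=2$ and $\lmu=\lmf$. Being of $\mathcal H$-type also requires $\lmu=\lmf=1$, which you never establish, and which is actually false here. This lemma sits in Section~\ref{sec:largeint}, where $\{Q,Q^f\}\neq\{0\}$. At $(\al,\bt)=(2,\tfrac12)$ the coefficients $Q$ and $Q^f$ are nonzero constant multiples of $\lmu-1$ and $\lmf-1$ respectively (cf.\ the proof of Lemma~\ref{dim33}; note $\ch(\F)\neq 3$ since $\al\neq\bt$). So $\lmu=\lmf\neq 1$, and Theorem~\ref{HWthm} cannot be invoked. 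Proposition~\ref{reg}\ref{reg_1} is also unavailable, because $(\al,\bt)=(2,\tfrac12)$.

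What you still need is a spanning argument, so that \cite[Corollary~5.10]{FMS3} applies (or Proposition~\ref{reg}\ref{reg_2}, which additionally needs $\lm_3=\lm_3^f$). The paper gets this from a linear relation that your $U^2$ argument does not provide:
\begin{enumerate}
\item Since $a_{-2}-a_2=2(a_0-a_2)-4s_{\tz,2}$, the pair $(a_{-2}-a_2,\,s_{\tz,2})$ is a basis of $V_e^\ast=V_o^\ast$, consisting of a $\tau_0$-negated and a $\tau_0$-fixed vector.
\item The vector $a_1-a_{-1}\in V_o^\ast$ is negated by $\tau_0$, so $a_1-a_{-1}=x(a_2-a_{-2})$ with $x\neq 0$.
\item This gives $a_{-1}=a_1-2x(a_2-a_0+2s_{\tz,2})$, $a_3=a_1-2x(a_0-a_2+2s_{\tz,2})$ and $s_{\tu,2}=4x^2s_{\tz,2}$.
\item Using \cite[Lemma~7.6]{FMS3}, it follows that $V=\langle a_0,a_1,a_2,s_{\tz,1},s_{\tz,2}\rangle$.
\end{enumerate}
Together with $\lmu=\lmf$ and $\lmd=\lmdf$, this makes $\V$ a quotient of a symmetric algebra, contradicting Hypothesis~\ref{hyp1}\ref{hyp1_2}.
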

\begin{proof}
Up to swapping $\V_e$ with $\V_o$, we may assume, for the sake of contradiction, that $\V_e\cong \J(0)$ and $\V_o$ is isomorphic either to $\J(0)$ or to $\IY_3(\al,\tfrac{1}{2}; 1)^\times$.  
By Table~\ref{table2},  $V_e$ has basis $(a_0, a_2, s_{\tz,2})$. 
By Lemma~\ref{subJ}, 
\begin{equation}\label{lambdaJ0}
    \lmd=1, \quad \dim(V_e^\ast)=2,\quad  a_{-2}=-a_2+2a_0-4s_{\tz,2},
\end{equation}
and 
\begin{equation*}\label{V*IY3m1x}
    V_e^\ast=\langle a_0-a_2, s_{\tz,2}\rangle.
\end{equation*}
As $\V_o$ is isomorphic either to $\J(0)$ or to $\IY_3(\al,\tfrac{1}{2}; 1)^\times$, by Lemmas~\ref{subJ} and \ref{subIY3},
$$
\lmdf=1=\lmd\quad  \mbox{ and }\quad  V_e^\ast=V_o^\ast=\langle a_{-1}-a_1, a_1-a_3\rangle.
$$
Since 
\begin{equation}\label{eqa2am2}
a_{-2}-a_2=-a_2+2a_0-4s_{\tz,2}-a_2=2(a_0-a_2)-4s_{\tz,2}\in V_e^\ast,
\end{equation}
it follows that $(a_{-2}-a_2, s_{\tz,2})$ is a basis for $V_e^\ast$.
Hence $a_1-a_{-1}$ is a linear combination of $a_{-2}-a_2$ and $s_{\tz, 2}$. Since $\tau_0$ fixes  $s_{\tz,2}$ and negates  $a_1-a_{-1}$ and $a_{-2}-a_2$, there exists $x\in \F\setminus \{0\}$ such that 
\begin{equation}\label{s12bis}
  a_1-a_{-1}=x(a_2-a_{-2}).
\end{equation}
Substituting the expression for $a_{-2}$ given in Equation~\eqref{lambdaJ0} into this, we get 
\begin{equation}\label{am1}
   a_{-1}=a_1-2x(a_2-a_{0}+2s_{\tz,2})
\end{equation}
and 
\begin{equation}\label{a3}
a_3=(a_{-1})^{\tau_1}=\left (a_1-2x(a_2-a_{0}+2s_{\tz,2})\right )^{\tau_1}=a_1-2x(a_0-a_2+2s_{\tz,2}).
\end{equation}
By Equations~\eqref{defs} on page~\pageref{defs}, \eqref{eqa2am2}, ~\eqref{s12bis}, and Table~\ref{table3}, 
\begin{align}\label{s12n}
s_{\tu,2}&= -\tfrac{1}{2}(a_1-a_{-1})^2 \nonumber \\
&= -\tfrac{1}{2}x^2(a_2-a_{-2})^2 \nonumber \\
&= -\tfrac{1}{2}x^2(2(a_0-a_2)-4s_{\tz,2})^2  \\
&= -2x^2(a_0-a_2)^2\nonumber \\
&= 4x^2s_{\tz,2}.\nonumber 
\end{align}
By Lemma~\ref{smult}, $\al=2=4\bt$ and $\lmu=\lmf$. Since, by Tables~\ref{table2} and~\ref{tableIY3bis}, $V_o=\langle a_{-1}, a_1, a_3, s_{\tu,2}\rangle$, by Equations~\eqref{am1}, \eqref{a3}, 
\eqref{s12n}, Equation~\eqref{defs} on page~\pageref{defs}, Lemma~\ref{s}, Table~\ref{table2}, and~\cite[Lemma~7.6]{FMS3}, it follows that
$$
V=\langle a_0, a_1, a_2, s_{\tz,1}, s_{\tz,2}\rangle .
$$
Hence, by~\cite[Corollary ~5.10]{FMS3}, $\V$ is isomorphic to a quotient of a symmetric algebra, contradicting Hypothesis~\ref{hyp1}\ref{hyp1_2}.
\end{proof}

\begin{lemma}\label{Ve=IY3x}
   $(\V_e,\V_o)\neq ( \IY_3(\al, \tfrac{1}{2}; 1)^\times, \IY_3(\al, \tfrac{1}{2}; 1)^\times)$. 
\end{lemma}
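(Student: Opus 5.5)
We argue by contradiction, following the template of Lemmas~\ref{Jx} and~\ref{JJ}. Assume $\V_e\cong\V_o\cong \IY_3(\al,\tfrac{1}{2};1)^\times$. By Lemma~\ref{subIY3} (parts \ref{subIY3_1}, \ref{subIY3_2} and \ref{subIY3_7}), applied to $\V_e$ with generators $a_0,a_2$ and to $\V_o$ with generators $a_{-1},a_1$, we have $\lmd=\lmdf=1$ and
\[
a_{-2}=a_4+3(a_0-a_2),\qquad a_3=a_{-3}+3(a_1-a_{-1}),
\]
together with $V_e^\ast=\langle a_0-a_{-2},a_2-a_0\rangle$ and $V_o^\ast=\langle a_1-a_{-1},a_3-a_1\rangle$, each of dimension $2$. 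By Equation~\eqref{eq*=**} these two planes coincide. Hence $a_1-a_{-1}$ is a linear combination of $a_0-a_2$ and $a_0-a_{-2}$.

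We then exploit the action of $\tau_0$. It negates $a_1-a_{-1}$ and $a_2-a_{-2}$, and it sends $a_0-a_2$ to $a_0-a_{-2}$. The $(-1)$-eigenspace of $\tau_0$ on $V_e^\ast$ is therefore spanned by $a_2-a_{-2}$; it is one-dimensional, since $a_2\neq a_{-2}$ (the axet is $X(n)$ with $n\ge 3$). So there is $x\in\F\setminus\{0\}$ with
\[
a_1-a_{-1}=x(a_2-a_{-2}).
\]

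Next we square both sides. Since $\bt=\tfrac12$, we have $s_{\tu,2}=-\tfrac12(a_1-a_{-1})^2=-\tfrac{x^2}{2}(a_2-a_{-2})^2$. The identity $a_{-2}-a_2=a_4-a_2+3(a_0-a_2)$, combined with the multiplication in Table~\ref{tableIY3bis} (in the quotient by $\F q$, with $\mu=1$), should express $(a_2-a_{-2})^2$ as a scalar multiple of $s_{\tz,2}=-\tfrac12(a_0-a_2)^2$. Since $V_e^\ast$ is a $\tau$-invariant subalgebra, we expect $(a_2-a_{-2})^2=4(a_0-a_2)^2$, the same as in the $\J(0)$ case of Lemma~\ref{JJ}. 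This gives $s_{\tu,2}=y\,s_{\tz,2}$ with $y\neq0$, and Lemma~\ref{smult} then yields $\al=2$ and $\lmu=\lmf$.

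Finally we show that $\V$ is a quotient of a symmetric algebra. Using the linear relation, $a_{-1}$, $a_3$ and $s_{\tu,2}$ lie in $\langle a_0,a_1,a_2,s_{\tz,2}\rangle$. Together with Lemma~\ref{primo} and \cite[Lemma~7.6]{FMS3} (the case $\al=4\bt$), this should give $V=\langle a_0,a_1,a_2,s_{\tz,1},s_{\tz,2}\rangle$. Since $\lmu=\lmf$ and $\lmd=\lmdf$, \cite[Corollary~5.10]{FMS3} (equivalently Proposition~\ref{reg}, or Theorem~\ref{HWthm} since all the $\lambda$'s equal $1$) shows that $\V$ is a quotient of a symmetric algebra, contradicting Hypothesis~\ref{hyp1}\ref{hyp1_2}.

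The main obstacle is the squaring step. It requires verifying, from Table~\ref{tableIY3bis} modulo $q$, that $(a_2-a_{-2})^2$ is a \emph{nonzero} multiple of $s_{\tz,2}$. If this computation instead yields a nontrivial combination involving $a_0-a_2$, we would fall back on comparing the $\tau_0$-eigenspaces of both sides to force the scalar relation directly.
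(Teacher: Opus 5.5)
Your proposal follows the paper's proof step for step. The steps are: $V_e^\ast=V_o^\ast$; the $\tau_0$-eigenvector relation $a_1-a_{-1}=x(a_2-a_{-2})$; squaring to get $s_{\tu,2}$ as a nonzero multiple of $s_{\tz,2}$; Lemma~\ref{smult}; a small spanning set; and \cite[Corollary~5.10]{FMS3}.

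The step you flagged does go through, but the relation $a_{-2}=a_4+3(a_0-a_2)$ is not what makes it work. What you need is the following fact about the quotient. Modulo $\F q$ (with $\mu=1$), $V_e$ has basis $(a_{-2},a_0,a_2)$ and $s_{\tz,2}=\tfrac{2\al-1}{4}(a_{-2}+a_2-2a_0)$. Hence $a_2-a_{-2}=2(a_2-a_0)-\tfrac{4}{2\al-1}s_{\tz,2}$. For $\mu=1$, Table~\ref{tableIY3bis} shows that $a_{2i}s_{\tz,2}$ is independent of $i$, so $(a_0-a_2)s_{\tz,2}=0$, and also $s_{\tz,2}^2=0$. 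Therefore $(a_2-a_{-2})^2=4(a_2-a_0)^2$, and $s_{\tu,2}=-2x^2(a_0-a_2)^2=4x^2s_{\tz,2}$. The same relation is what places $a_{-2}$ in $\langle a_0,a_2,s_{\tz,2}\rangle$. This in turn puts $a_{-1}=a_1-x(a_2-a_{-2})$ and $a_3=a_{-1}^{\tau_1}$ in $\langle a_0,a_1,a_2,s_{\tz,2}\rangle$, which your spanning step requires.

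One correction at the end: the alternative via Theorem~\ref{HWthm} is not available. Throughout this section $\{Q,Q^f\}\neq\{0\}$. At $(\al,\bt)=(2,\tfrac{1}{2})$, $Q$ and $Q^f$ are nonzero multiples of $\lmu-1$ and $\lmf-1$ (compare Equation~\eqref{QandC}). So $\lmu=\lmf\neq 1$, and the $\lambda$'s are not all equal to $1$.

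Proposition~\ref{reg}\ref{reg_1} is also excluded, since $(\al,\bt)=(2,\tfrac{1}{2})$. Part~\ref{reg_2} would additionally require $\lm_3=\lm_3^f$. That does hold here, because $\lm_{a_0}(s_{\tz,2})=0=\lm_{a_1}(s_{\tz,2})$ gives $\lm_3=\lmu$ and $\lm_3^f=\lmf$. Your primary citation, \cite[Corollary~5.10]{FMS3}, is the one the paper uses.
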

\begin{proof}
Suppose for a contradiction that $\V_e\cong \V_o\cong \IY_3(\al, \tfrac{1}{2}; 1)^\times$. 
By Table~\ref{tableIY3bis},
$V_e$ has basis $(a_{-2}, a_0, a_2)$, and $s_{\tz,2}=\tfrac{2\al-1}{4}(-2a_0+a_2+a_{-2})$. Thus 
\begin{equation}\label{a2IY3m1x}
    a_{-2}=2a_0-a_2+\tfrac{4}{2\al-1}s_{\tz,2} 
\end{equation}
By Lemma~\ref{subJ}, 
\begin{equation*}\label{V*IY3m1xbis}
    V_e^\ast=\langle a_0-a_2, a_0-a_{-2}\rangle=\langle a_0-a_2, s_{\tz,2}\rangle.
\end{equation*}
Similarly,  
$$
V_o^\ast=\langle a_{-1}-a_1, s_{\tu,2}\rangle.
$$
By arguing as in the proof of Lemma~\ref{JJ}, we get that there exists $x\in \F\setminus\{0\}$ such that
\begin{equation}\label{a1am1}
    a_1-a_{-1}=x(a_2-a_{-2}).
\end{equation}
Substituting the expression for $a_{-2}$ given in Equation~\eqref{a2IY3m1x} into this, we get
\begin{equation}\label{am1bis}
    a_{-1}=a_1-2x(a_2-a_0-\tfrac{2}{2\al-1}s_{\tz,2})
\end{equation}
and
\begin{align}\label{a3IY3x}
  a_3&= (a_{-1})^{\tau_1}=\left (a_1-2x(a_2-a_0-\tfrac{2}{2\al-1}s_{\tz,2})\right )^{\tau_1}\\
  &= a_1-2x(a_0-a_2-\tfrac{2}{2\al-1}s_{\tz,2}).  \nonumber
\end{align}
Now note that,
by Table~\ref{tableIY3bis},
\begin{equation}\label{s2s2}
  s_{\tz,2}\cdot s_{\tz,2}=0,\quad \mbox{ and }\quad (a_0-a_2)s_{\tz,2}=0.
\end{equation}
Thus, by Equations~\eqref{defs}, \eqref{a1am1} and \eqref{a2IY3m1x},
\begin{align}\label{s12ter}
 s_{\tu,2} &= -\tfrac{1}{2}(a_1-a_{-1})^2\nonumber \\
 &= -\tfrac{1}{2}x^2(a_2-a_{-2})^2  \nonumber\\
 &= -\tfrac{1}{2}x^2\left ( 2a_2-2a_0-\tfrac{4}{2\al-1}s_{\tz,2}\right )^2\\
 &= -2x^2(a_2-a_0)^2  \nonumber\\
 &= 4x^2s_{\tz,2} \nonumber
\end{align}
By Lemma~\ref{smult}, $\al=2=4\bt$ and  $\lmu=\lmf$.  Thus, by Table~\ref{tableIY3bis}, Equations~\eqref{a2IY3m1x}, \eqref{am1bis}, \eqref{a3IY3x}, \eqref{s2s2}, \eqref{s12ter}, Lemma~\ref{primo}, and~\cite[ Lemma~7.6]{FMS3} we get  $V=\langle a_0, a_1, a_2, s_{\tz,2}\rangle$. Hence, by~\cite[Corollary~5.10]{FMS3}, $\V$ is isomorphic to a quotient of a symmetric algebra, contradicting Hypothesis~\ref{hyp1}\ref{hyp1_2}. 
\end{proof}

\begin{cor}\label{corveast2}
$\dim(V_e^\ast)\neq 2$
\end{cor}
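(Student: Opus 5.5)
This corollary is a bookkeeping consequence of Lemma~\ref{possipairs} and the three lemmas just proved. By Lemma~\ref{possipairs}, under the standing assumptions of this section, namely $\{\V_e,\V_o\}\subseteq\mathcal L$ and Equation~\eqref{eq*=**}, the unordered pair $(\V_e,\V_o)$ must appear in the first column of Table~\ref{tableposs}. Moreover, $\dim(V_e^\ast)=2$ occurs for exactly three rows:
\[
(\J(0),\J(0)),\quad (\J(0),\IY_3(\al,\tfrac{1}{2};1)^\times),\quad (\IY_3(\al,\tfrac{1}{2};1)^\times,\IY_3(\al,\tfrac{1}{2};1)^\times).
\]

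I would argue by contradiction. Assume $\dim(V_e^\ast)=2$. Then $(\V_e,\V_o)$ is, up to order, one of these three pairs.

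The first two pairs are excluded by Lemma~\ref{JJ}. The unordered-pair caveat causes no trouble here, because the proof of Lemma~\ref{JJ} already begins by swapping $\V_e$ and $\V_o$ if necessary. The third pair is excluded by Lemma~\ref{Ve=IY3x}.

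All possibilities with $\dim(V_e^\ast)=2$ are therefore ruled out, and so $\dim(V_e^\ast)\neq 2$. I foresee no real obstacle. The only point to check is that the standing hypotheses of this section are still in force, so that Lemma~\ref{possipairs} applies; this holds because we are working under $\{Q,Q^f\}\neq\{0\}$ with the contrary assumption $\{\V_e,\V_o\}\subseteq\mathcal L$.
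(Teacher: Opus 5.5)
Your proposal is correct and follows the paper's proof exactly. The paper also derives the corollary directly from Lemma~\ref{possipairs}, using Lemma~\ref{JJ} to rule out the two pairs involving $\J(0)$ and Lemma~\ref{Ve=IY3x} to rule out the pair $(\IY_3(\al,\tfrac{1}{2};1)^\times,\IY_3(\al,\tfrac{1}{2};1)^\times)$.
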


\begin{proof}
This follows from Lemmas~\ref{possipairs},~\ref{JJ} and~\ref{Ve=IY3x}. 
\end{proof}

\begin{lemma}\label{IY3al1}
$\dim(V_e^\ast) \neq 3$.
\end{lemma}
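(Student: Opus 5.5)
Suppose, for a contradiction, that $\dim(V_e^\ast)=3$. By Lemma~\ref{possipairs} and Table~\ref{tableposs}, the unordered pair $(\V_e,\V_o)$ is then either $(\IY_3(2,\tfrac{1}{2};\mu),\IY_3(2,\tfrac{1}{2};\mu'))$ with $\mu,\mu'\neq -1$, or $(\IY_3(\al,\tfrac{1}{2};1),\IY_3(\al,\tfrac{1}{2};1))$ with $\al\neq 2$. In both cases $\dim V_e=\dim V_o=4$, and by Equation~\eqref{eq*=**} the common $3$-dimensional subspace $V_e^\ast=V_o^\ast$ has codimension one in each of $V_e$ and $V_o$. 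I would follow the pattern of Lemmas~\ref{JJ} and~\ref{Ve=IY3x}.

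First, I write $V_e^\ast$ explicitly using Lemma~\ref{subIY3}\ref{subIY3_6},\ref{subIY3_7}, after reindexing to the even subalgebra: it has basis $(a_0-a_{-2},\,a_0-a_2,\,s_{\tz,2})$, and similarly $V_o^\ast$ has basis $(a_1-a_{-1},\,a_1-a_3,\,s_{\tu,2})$. Next I decompose under $\tau_0$. On $V_e^\ast$, $\tau_0$ fixes $s_{\tz,2}$ and $a_{-2}+a_2-2a_0$ and negates $a_2-a_{-2}$. On $V_o^\ast$ it negates $a_1-a_{-1}$. Because $\mu'\neq-1$, Lemma~\ref{subIY3}\ref{subIY3_2} (the relation $a_3-a_{-3}=\varepsilon_o(a_1-a_{-1})$) shows that the $(-1)$-eigenspace of $\tau_0$ on $V_o^\ast$ is one-dimensional, spanned by $a_1-a_{-1}$; the same holds on the even side. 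Comparing the two gives $a_1-a_{-1}=x(a_2-a_{-2})$ for some $x\neq 0$. Applying $\tau_1$ to the corresponding statement on the odd side gives $a_0-a_2=x'(a_{-1}-a_3)$.

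Then I square this relation. Since $\bt=\tfrac{1}{2}$, $s_{\tu,2}=-\tfrac{1}{2}(a_1-a_{-1})^2=x^2\cdot(-\tfrac{1}{2})(a_2-a_{-2})^2$, and $(a_2-a_{-2})^2$ can be computed inside $V_e$ from Table~\ref{tableIY3bis}. I expect it to be a combination of $s_{\tz,2}$ and $a_{-2}+a_2-2a_0$ (for $\mu=1$, $s_{\tz,4}$ should reduce via the relation in Lemma~\ref{subIY3}\ref{subIY3_2}). This should yield $s_{\tu,2}=y s_{\tz,2}+z(a_{-2}+a_2-2a_0)$. In parallel I apply Corollary~\ref{a1xa2}\ref{a1xa2_1}: together with Lemma~\ref{a3-am3} it gives $\big((1-\al)(\lmf-\lmu)+\tfrac{1}{8}(\al-\tfrac{1}{2})(\tfrac{1}{x}-x)-\tfrac{1}{8}(\al-\tfrac12)x\varepsilon_o\big)(a_1-a_{-1})=0$, which is a scalar equation linking $x$, $\varepsilon_o$ and $\lmu-\lmf$. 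The analogous equation from part~\ref{a1xa2_2} links $x'$ and $\varepsilon_e$.

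The aim is to force $\lmu=\lmf$. If $z=0$, Lemma~\ref{smult} gives $\lmu=\lmf$ and $\al=2$ directly. Otherwise I apply $\lambda_{a_0}$ and $\lambda_{a_1}$ to the expression for $s_{\tu,2}$, using Corollary~\ref{cor:lm2f} and $\lmd=\lmdf=1$ (Remark~\ref{remlambda1}), to obtain polynomial equations in $\lmu,\lmf$. Once $\lmu=\lmf$ (and hence $\al=2$ by Equation~\eqref{eqrem}, which excludes the second pair), the relations above express $a_{-1},a_3$ in terms of $a_0,a_1,a_2,s_{\tz,1},s_{\tz,2}$. By Lemma~\ref{primo} and \cite[Lemma~7.6]{FMS3}, $V=\langle a_0,a_1,a_2,s_{\tz,1},s_{\tz,2}\rangle$, and then \cite[Corollary~5.10]{FMS3} shows $\V$ is a quotient of a symmetric algebra, contradicting Hypothesis~\ref{hyp1}\ref{hyp1_2}. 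The main obstacle is the case $z\neq 0$, where extracting $\lmu=\lmf$ requires the explicit $\IY_3$ products. If the elimination does not close directly, I would use the equation $Q(a_3-a_{-1})=(P\varepsilon_e-P+R)(a_0-a_2)$ from Corollary~\ref{eq8} as an additional scalar constraint.
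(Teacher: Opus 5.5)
Your opening follows the paper's argument. You decompose $V_e^\ast=V_o^\ast$ under $\tau_0$ to get $a_1-a_{-1}=x(a_2-a_{-2})$ with $x\neq 0$, then square. The case $z\neq 0$ that you flag as the main obstacle never occurs. By Table~\ref{tableIY3bis}, transported to $\V_e$, $a_{-2}a_2=\tfrac12(a_{-2}+a_2)+2(\mu+1)s_{\tz,2}$. Hence $(a_2-a_{-2})^2=-4(\mu+1)s_{\tz,2}$ and $s_{\tu,2}=2x^2(\mu+1)s_{\tz,2}$ with $2x^2(\mu+1)\neq 0$. Lemma~\ref{smult} then gives $\al=2$ and $\lmu=\lmf$ at once, so the detours through Corollaries~\ref{a1xa2} and~\ref{eq8} are unnecessary.

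The genuine gap is the endgame, which copies Lemmas~\ref{JJ} and~\ref{Ve=IY3x} where it does not apply.
\begin{itemize}
\item \emph{The spanning set is wrong.} In those lemmas $a_{-2}\in\langle a_0,a_2,s_{\tz,2}\rangle$. Here $V_e$ has basis $(a_{-2},a_0,a_2,s_{\tz,2})$, so both $a_{-1}=a_1-x(a_2-a_{-2})$ and $a_3=a_1-x(a_0-a_4)=xa_{-2}-2x(\mu+1)a_0+a_1+(2\mu+1)xa_2$ genuinely involve $a_{-2}$. Your claim that they lie in $\langle a_0,a_1,a_2,s_{\tz,1},s_{\tz,2}\rangle$ is unjustified.
\item \emph{Closure needs an extra computation.} The right candidate is $\langle a_{-2},a_0,a_1,a_2,s_{\tz,1},s_{\tz,2}\rangle$. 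To show it is closed you must handle the $s_{\tr,3}$. Computing $a_{-2}a_1$ gives $s_{\tu,3}=s_{\tz,1}+2(\mu+1)xs_{\tz,2}$. Applying $\tau_0$ and then $\tau_1$ gives $s_{\bar 2,3}=s_{\tz,3}=s_{\tu,3}$. With these, Lemma~\ref{primo} and \cite[Lemma~7.6]{FMS3} give the closure.
\item \emph{The final criterion needs $\lm_3=\lm_3^f$.} Since $(\al,\bt)=(2,\tfrac12)$, knowing only $\lmu=\lmf$ and $\lmd=\lmdf$ does not give the flip-invariance of $\ker(\varphi|_{\tilde\F})$ that \cite[Corollary~5.10]{FMS3} needs. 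Proposition~\ref{reg}(a) excludes exactly this pair, and Proposition~\ref{reg}\ref{reg_2} additionally requires $\lm_3=\lm_3^f$. Your proposal never addresses this.
\end{itemize}
The missing equality comes in two parts. From the expression for $a_3$ and $\lmd=1$ you get $\lm_3=\lmu$. For the other side, write $2(\mu+1)xs_{\tz,2}=x^{-1}s_{\tu,2}$ and use $\lambda_{a_1}(s_{\tu,2})=\lmdf-\bt-\bt\lmdf=0$. Applying $\lambda_{a_1}$ to the expression for $s_{\tu,3}$ then gives $\lm_3^f=\lmf$. Only after this does Proposition~\ref{reg} give the contradiction with Hypothesis~\ref{hyp1}\ref{hyp1_2}.
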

\begin{proof}
Suppose for a contradiction that $\dim(V_e^\ast) = 3$.
By Lemma~\ref{possipairs}, 
\begin{enumerate}
\item either $(\V_e,\V_o) = (\IY_3(\al, \tfrac{1}{2}; 1), \IY_3(\al, \tfrac{1}{2}; 1))$,  where $\al\neq 2$; or
\item $(\V_e,\V_o)=  (\IY_3(2, \tfrac{1}{2}; \mu), \IY_3(2, \tfrac{1}{2}; \mu^\prime))$,  where $\{\mu, \mu^\prime\}\subseteq \F\setminus \{ -1\}$.
\end{enumerate} 
In particular, by Lemma~\ref{subIY3}\ref{subIY3_1}, 
\begin{equation*}\label{lm2=lm2f}
    \lmd=\lmdf=1.
\end{equation*}
By Table~\ref{tableIY3bis}, $V_e$ has basis $(a_{-2}, a_0, a_2, s_{\tz,2})$ and  $V_o$ has basis 
 $(a_{-1}, a_1, a_3, s_{\tu,2})$. 
    By Lemma~\ref{subIY3} and Equation~\eqref{eq*=**} on page~\pageref{eq*=**}, 
\begin{equation*}
    \label{*IY3}
    \langle a_0-a_{-2}, a_0-a_2, s_{\tz,2}\rangle =V_e^\ast=V_o^\ast=\langle a_1-a_{-1}, a_1-a_3, s_{\tu,2}\rangle . 
\end{equation*}
In particular, $a_1-a_{-1}\in V_e^\ast$. Note that $V_e^\ast$ has basis also
$$(a_2-a_{-2}, 2a_0-a_2-a_{-2}, s_{\tz,2}).
$$ Since $\tau_0$ fixes $2a_0-a_2-a_{-2}$ and $s_{\tz,2}$ and negates $a_1-a_{-1}$ and  $a_2-a_{-2}$, there exists $x\in \F\setminus \{0\}$ such that 
\begin{equation}
    \label{a1am1ter}
    a_1-a_{-1}=x(a_2-a_{-2}), \quad \mbox{ whence }\quad a_{-1}=a_1-x(a_2-a_{-2}).
\end{equation}
Thus, by 
Equations~\eqref{defs} and ~\eqref{a1am1ter}, and Table~\ref{tableIY3bis},
\begin{align}
    \label{s12ter3}
    s_{\tu,2} &= -\tfrac{1}{2}(a_{1}-a_{-1})^2=-\tfrac{1}{2}x^2(a_2-a_{-2})^2=-\tfrac{1}{2}x^2(-4(\mu+1)s_{\tz,2}) \\
    &= 2x^2(\mu+1)s_{\tz,2}.\nonumber
\end{align}
By Lemma~\ref{smult}, $\al=2=4\bt$ and  $\lmu=\lmf$. We claim  that  
\begin{equation}\label{genVclaim}
    V=\langle  a_{-2}, a_0, a_1, a_2, s_{\tz,1}, s_{{\tz},2}\rangle . 
\end{equation}
Since, by Lemma~\ref{subIY3}, $a_4=a_{-2}-(2\mu+1)(a_0-a_2)$, Equation~\eqref{a1am1ter} implies
\begin{align}
     \label{a3ter}
    a_3&= (a_{-1})^{\tau_1}=\left (a_1-x(a_2-a_{-2})\right )^{\tau_1} =a_1-x(a_0-a_{4})\\
    &= xa_{-2}-2x(\mu+1)a_0+a_1+(2\mu+1)xa_2 \nonumber
\end{align}
Further, by Equations~\eqref{a1am1ter} and~\eqref{defs}, and Table~\ref{tableIY3bis},
\begin{align}
    a_{-2}a_1&=  a_{-2}\left (a_{-1}+x(a_2-a_{-2})\right )\nonumber \\
    &= a_{-2}a_{-1}+x(a_{2}-a_{-2})a_{-2}\nonumber \\
    &= \tfrac{1}{2}(a_{-2}+a_{-1})+s_{\tz,1}+x\left (\tfrac{1}{2}(a_{2}+a_{-2})+2(\mu+1)s_{\tz,2}\right )-xa_{-2}\nonumber\\
    &= \tfrac{1}{2}(a_{-2}+a_{1})+s_{\tz,1}+ 2(\mu+1)xs_{\tz,2},\nonumber 
\end{align}
whence, by Equation~\eqref{defs},
\begin{align}
    \label{s13}
    s_{\tu,3} 
    &= s_{\tz,1}+ 2(\mu+1)xs_{\tz,2},
\intertext{and}
    \label{s23}
    s_{\bar 2,3} &= (s_{\tu,3})^{\tau_0}=(s_{\tz,1}+ 2(\mu+1)xs_{\tz,2})^{\tau_0}=s_{\tu,3},\\
    \label{s03}
    s_{\tz,3} &= (s_{\bar 2,3})^{\tau_1}=(s_{\tz,1}+ 2(\mu+1)xs_{\tz,2})^{\tau_1}=s_{\tu,3}.
\end{align}
Therefore, Equation~\eqref{genVclaim} follows from Table~\ref{tableIY3bis}, Equations~\eqref{a1am1ter}, \eqref{a3ter}, ~\eqref{s12ter3}, Lemma~\ref{primo}, \cite[Lemma~7.6]{FMS3} and Equations~\eqref{s13}, \eqref{s23}, and~\eqref{s03}.
Thus, by Proposition~\ref{reg}, we get a contradiction to Hypothesis~\ref{hyp1}\ref{hyp1_2} once we prove that
\begin{equation}\label{newl}
 \lm_3^f=\lm_3.
\end{equation}
By Equation~\eqref{a3ter} and Lemma~\ref{s},
$$
\lm_3=x\lmd-2x(\mu+1)+\lmu+(2\mu+1)x\lmd=\lmu
$$
On the other hand, by Equations~\eqref{s13} and~\eqref{s12ter},
\begin{align*}
   \lm_3^f-\bt-\bt\lm_3^f &= \lm_{a_1}(s_{\tu,3})\\
   &= \lm_{a_1}(s_{\tz,1}+ 2(\mu+1)xs_{\tz,2})\\
   &= \lmf-\bt-\bt\lmf+\tfrac{1}{x}\lm_{a_1}(s_{\tu,2})\\
   &= \lmf-\bt-\bt\lmf
\end{align*}
whence, $\lm_3^f=\lmf=\lmu=\lm_3$, giving Equation~\eqref{newl}. 
\end{proof}

\begin{proof}[Proof of Theorem~\ref{case4}] 
Let $\V$ be as in Theorem~\ref{case4}. Then, by Lemma~\ref{lm=lmf}, $\{Q,Q^f\}\neq \{0\}$. Hence, the result follows by Lemmas~\ref{possipairs},~\ref{Jx}, Corollary~\ref{corveast2}, and Lemma~\ref{IY3al1}, and Corollary~\ref{corveast2}. 
\end{proof}

\chapter{Algebras with regular axet  }\label{chap:regular}

In this chapter we prove Theorems~\ref{teo2}, \ref{teo3}, and~\ref{teolarger}. By Theorems~\ref{al=2bt},~\ref{teoq},~\ref{skew},~\ref{teonsa} 
we can restrict ourselves to studying the $2$-generated $\Mab$-axial algebras $\V$ satisfying the following conditions:
\begin{hyp}\label{Hyp2}~
\begin{enumerate}[enum_arabic]
    \item $\al\neq 2\bt$, \label{Hyp2_1}
    \item  $\V$ is not isomorphic to a quotient of a symmetric algebra, \label{Hyp2_2}
    \item $V\not \in \{V_e, V_o\}$,  \label{Hyp2_3}
    \item $ \V$ has axet $X(n+n)$ for $n\in\N\cup \{\infty\}$ and $n\geq 2$. \label{Hyp2_4}
\end{enumerate}
\end{hyp}

\begin{remark}\label{rem:reg}
Hypotheses~$\ref{Hyp2}$\ref{Hyp2_3} and~\ref{Hyp2_4} are equivalent to saying that $\V_e$ and $\V_o$ are distinct and
both  have axet $X(n)$.    
\end{remark}

\section{Algebras with axet $X(2+2)$}\label{chap:axet2}

In this section we assume that $\V$ satisfies Hypothesis~\ref{Hyp2} with $n=2$.
By Remark~\ref{rem:reg}, 
\begin{equation}\label{hyp2}
 a_0\neq a_2,\quad  a_1\neq a_{-1}, \quad  \mbox{ and }\quad  a_{i}=a_{i+4},  \quad \mbox{  for every  }\quad  i\in \Z .
\end{equation}

\begin{lemma}\label{jordan}
Under the above hypothesis, $\V_e$ and $\V_o$  are $\mathcal J(\al)$-axial algebras.
\end{lemma}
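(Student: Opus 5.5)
The plan is to show that the Miyamoto involutions of the generating axes of $\V_e$ act trivially on $V_e$, and likewise for $\V_o$. This forces the $\bt$-eigenspaces inside these subalgebras to vanish. Since $\V_e$ and $\V_o$ are $\Mab$-axial algebras, an axis with trivial $\bt$-eigenspace satisfies the fusion law $\mathcal J(\al)$. So it suffices to prove that, for $a_0$ (and symmetrically $a_2$, $a_{\pm1}$), the restriction of $\tau_0$ to $V_e$ is the identity.

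By Equation~\eqref{hyp2}, $a_{i+4}=a_i$ for all $i$. By definition, $a_{-2}=a_2^{\tau_0}$, and $a_{-2}=a_{2}$ because $-2\equiv 2 \pmod 4$. Hence $\tau_0$ fixes both generators $a_0$ and $a_2$ of $V_e$. Since $\tau_0$ is an algebra automorphism, it fixes pointwise the subalgebra $V_e$ they generate. In the same way, $\tau_2=\tau_{a_2}$ fixes $a_2$ and $a_0^{\tau_2}=a_4=a_0$, so it is trivial on $V_e$. Note that $\tau_2$ is the Miyamoto involution of $a_2$ in $V$; its restriction to $V_e$ is the Miyamoto involution of $a_2$ in $V_e$, because the eigenspace decomposition of $V_e$ under $\ad_{a_2}$ is the intersection of the decomposition of $V$ with $V_e$.

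It follows that $(V_e)^{a_0}_\bt$ and $(V_e)^{a_2}_\bt$ lie in the $(-1)$-eigenspace of the identity, so they are zero, since $\ch(\F)\neq 2$. Therefore $V_e=(V_e)^{a_0}_{\{1,0,\al\}}$, and the restriction of the fusion law $\Mab$ to $\{1,0,\al\}$ is exactly $\mathcal J(\al)$. Primitivity is inherited from $V$, because $(V_e)^{a_0}_1\subseteq V^{a_0}_1=\F a_0$. Thus $a_0$ and $a_2$ are primitive $\mathcal J(\al)$-axes of $V_e$, and $\V_e$ is a $\mathcal J(\al)$-axial algebra.

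The same argument applied to $a_{-1},a_1$ uses $a_{-3}=a_1$ and $a_3=a_{-1}$, both from period $4$, to show that $\tau_1$ and $\tau_{-1}$ are trivial on $V_o$. The only point needing care is the restriction claim for eigenspaces and Miyamoto involutions, which is routine. I do not expect a real obstacle here.
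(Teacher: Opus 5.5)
Your proposal is correct and follows the paper's argument: the paper also uses $a_{i+4}=a_i$ to conclude that $\tau_0,\tau_2$ (resp.\ $\tau_1,\tau_3$) act trivially on $V_e$ (resp.\ $V_o$), so the relevant $\bt$-eigenspaces vanish and the fusion law collapses to $\mathcal J(\al)$. You supply routine details the paper leaves implicit: the explicit checks $a_2^{\tau_0}=a_{-2}=a_2$ and $a_0^{\tau_2}=a_4=a_0$, the compatibility of eigenspaces with restriction to the subalgebra, and the inheritance of primitivity.
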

\begin{proof}
By Equation~\eqref{hyp2},  the Miyamoto maps $\tau_0$ and $\tau_2$ (respectively $\tau_1$ and $\tau_3$) act trivially on $\V_e$  (respectively $\V_o$),  hence the $\beta$-eigenspaces of $\ad_{a_0}$ and $\ad_{a_2}$ in $V_e$ (respectively $\ad_{a_1}$ and $\ad_{a_3}$ in $V_o$) are trivial, that is $\V_e$ and $\V_o$  are $\mathcal J(\al)$-axial algebras.
\end{proof}

Recall from Chapter~\ref{ch1} (pages~\pageref{primo}, ~\pageref{A}, and~\pageref{evaluation}) the definitions of $P$, $Q$, $Q^f$, $R$, $R^f$, $A$, $A^f$, $B$, $B^f$, $C$, and $C^f$.

\begin{lemma}\label{lem2+2:1}
In the algebra $\V$ the following hold:
\begin{equation}\label{eq2+2:1}
 R=R^f=2P,
 \end{equation}
\begin{equation}\label{eq2+2:3}
0=(2P-Q^f)(a_0+a_2)+(Q-2P)(a_1+a_3)+(S-S^f)s_{\tz,1}+(T-U)(s_{\tz,2}-s_{\tu,2}),
\end{equation} 
\begin{equation}\label{eq2+2:2}
   \mbox{if } \al\neq  4\bt,\: B=B^f=0, \quad  \mbox{ if } \al= 4\bt, \: C=C^f=0,
\end{equation} 
\begin{align}\label{eqlm2}
\lmd &= \tfrac{2(\lmu-\bt)}{\al\bt(\al-\bt)}\Big ((3\al^2-4\al\bt-\al+2\bt)\lmu+\al(\al-1)\lmf \\
&\phantom{{}+{} \tfrac{2(\lmu-\bt)}{\al\bt(\al-\bt)}\Big (} \qquad\qquad -(\al^3+\al^2\bt-2\al\bt^2-2\al\bt+2\bt^2)\Big ), \nonumber 
\end{align} 
\begin{align}\label{eqlm2f}
\lmdf &= \tfrac{2(\lmf-\bt)}{\al\bt(\al-\bt)}\Big ((3\al^2-4\al\bt-\al+2\bt)\lmf+\al(\al-1)\lmu \\
&\phantom{{}+{} \tfrac{2(\lmf-\bt)}{\al\bt(\al-\bt)}\Big (} \qquad\qquad -(\al^3+\al^2\bt-2\al\bt^2-2\al\bt+2\bt^2)\Big),\nonumber 
\end{align}
\begin{equation}\label{lem2+2sym}
\lmu\neq \lmf.
\end{equation}
\end{lemma}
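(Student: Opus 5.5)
The plan is to specialise the universal relations of Section~\ref{relVeVo} to the periodicity $a_i=a_{i+4}$ from Equation~\eqref{hyp2}, and then to read off the projections from the symmetric subalgebras. With $a_4=a_0$, $a_{-2}=a_2$, $a_3=a_{-1}$ and $a_{-3}=a_1$, Equation~\eqref{equa1} becomes $P(a_2-a_0)+(P-R)(a_2-a_0)=Q(a_{-1}-a_{-1})=0$, that is, $(2P-R)(a_2-a_0)=0$. Since $a_0\neq a_2$, this gives $R=2P$. In the same way, Equation~\eqref{equa2} with $a_3=a_{-1}$, $a_{-3}=a_1$ and $a_{-2}=a_2$ gives $(2P-R^f)(a_{-1}-a_1)=0$, hence $R^f=2P$, which is \eqref{eq2+2:1}. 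Substituting these values and the identifications $a_{-2}=a_2$, $a_{-1}=a_3$ into Equation~\eqref{equa3}, and collecting the coefficients of $a_0+a_2$ and $a_1+a_3$, should produce \eqref{eq2+2:3} directly.

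For \eqref{eq2+2:2}, the left-hand sides of \eqref{s1s2eq} and \eqref{s1s2eq2} (respectively \eqref{eqs2-4bt} and \eqref{eqs3-4bt}) vanish, because $a_{-2}=a_2$ and $a_3=a_{-1}$. The right-hand sides are multiples of $a_{-1}-a_1\neq 0$ and $a_2-a_0\neq0$, so when $\al\notin\{2\bt,4\bt\}$ we get $B=B^f=0$. When $\al=4\bt$ we get $C^f=0$ from \eqref{eqs2-4bt}, after which the same reasoning with \eqref{eqs3-4bt} gives $C=0$.

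For \eqref{eqlm2}, I will use Corollary~\ref{cor:lm2f}. Because $a_{-2}=a_2$, we have $s_{\tu,2}=a_1a_3-\bt(a_1+a_3)=a_{-1}a_1-\bt(a_{-1}+a_1)$, and by Lemma~\ref{invariant} this is the same as $s_{\tu,2}$ defined in the usual way. Its projection onto $a_0$ can be computed in two ways. On one hand, Lemma~\ref{jordan} says that $\V_o$ is of Jordan type $\al$, so it is one of $2\B$, $3\C(\al)$, $3\C(-1)^\times$ (we have $\al\neq\frac12$ here, since otherwise $\bt=\frac14$ gives $\al=2\bt$). I expect this route to be messy, so instead I will use the $0$/$\al$-eigenvector relations of the fusion law, as in the proof of Corollary~\ref{cor:lm2f}. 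Concretely, I will evaluate $\lm_{a_0}$ on a suitable product; the cleanest option is probably to apply $\lm_{a_0}$ to Lemma~\ref{primo}\ref{primo_6} with the identifications in force. Combining this with the relation $R=2P$, which is itself a polynomial identity in $\lmu,\lmf,\lmd$, I expect to be able to solve for $\lmd$. Indeed, $R-2P=0$ is linear in $\lmd$ with coefficient $2\al\bt(\al-\bt)\neq0$, and solving it should give exactly \eqref{eqlm2}; I will verify this by direct expansion of $R$. Applying $f$, that is using $R^f=2P$, gives \eqref{eqlm2f}.

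Finally, for \eqref{lem2+2sym}, suppose $\lmu=\lmf$. Then \eqref{eqlm2} and \eqref{eqlm2f} agree, so $\lmd=\lmdf$. If $(\al,\bt)\neq(2,\frac12)$, Proposition~\ref{reg}\ref{reg_1} shows that $\V$ is a quotient of a symmetric algebra, contradicting Hypothesis~\ref{Hyp2}\ref{Hyp2_2}. If $(\al,\bt)=(2,\frac12)$, then $\al=4\bt$, and the alternative part of Proposition~\ref{reg} applies: the spanning condition holds because $a_{-2}=a_2$, $a_3=a_{-1}$, and the algebra is spanned by the $a_i$ and $s$'s via \cite[Theorem~7.12]{FMS3}; $\lm_3=\lm_3^f$ follows from $a_3=a_{-1}$ together with $\lm_1=\lmf$. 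This is the step I expect to be the main obstacle: handling the case $\al=4\bt$ and checking the spanning hypothesis. A fallback is to use $C=C^f=0$ together with $Q$ to force the values $\lmu=\lmd=1$ and then invoke Theorem~\ref{HWthm}.
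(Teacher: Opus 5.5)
Your proof of \eqref{eq2+2:1}--\eqref{eq2+2:2} is the paper's. Likewise \eqref{eqlm2} and \eqref{eqlm2f} are nothing more than $R=2P$ and $R^f=2P$ solved for $\lmd$ and $\lmdf$, since the coefficient of $\lmd$ in $R$ is $2\al\bt(\al-\bt)$. The detours through Corollary~\ref{cor:lm2f} and Lemma~\ref{primo}\ref{primo_6} are unnecessary. Your aside ``$\al\neq\tfrac12$, since otherwise $\bt=\tfrac14$'' is false: nothing forces it, and the paper later has to treat $\al=\tfrac12$ separately. You never use it, though. The following steps are also fine:
\begin{itemize}
\item deducing $\lmd=\lmdf$ from $\lmu=\lmf$ via the symmetry of \eqref{eqlm2} and \eqref{eqlm2f};
\item disposing of $(\al,\bt)\neq(2,\tfrac12)$ by Proposition~\ref{reg}\ref{reg_1};
\item the identity $\lm_3=\lm_{a_0}(a_{-1})=\lmu=\lmf=\lm_{a_1}(a_2)=\lm_3^f$.
\end{itemize}

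The gap is the spanning hypothesis of Proposition~\ref{reg}\ref{reg_2} when $(\al,\bt)=(2,\tfrac12)$. You cannot get it from \cite[Theorem~7.12]{FMS3}. The paper invokes that theorem only for $\al=4\bt$ with $\al\neq 2$. At $(2,\tfrac12)$ no general statement spanning $V$ by finitely many $a_i$'s and $s$'s can hold, because the Highwater algebra $\mathcal H$ is infinite-dimensional.

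The spanning has to come from the $4$-periodicity, by showing that $\overline V:=\langle a_{-1},a_0,a_1,a_2,s_{\tz,1},s_{\tz,2}\rangle$ is closed under multiplication:
\begin{itemize}
\item $s_{\tu,2}\in\overline V$ by \cite[Corollary~7.2]{FMS3};
\item the products $a_ia_j$ lie in $\overline V$ by \eqref{defs} and $a_{i+4}=a_i$;
\item the products $a_is_{\tz,1}$ and $a_is_{\tz,2}$ lie in $\overline V$ by Lemma~\ref{primo} and \cite[Corollary~7.4]{FMS3};
\item $s_{\tz,1}^2\in\overline V$ by Lemma~\ref{primo}\ref{primo_6};
\item crucially, periodicity gives $s_{\tz,3}=s_{\tu,3}=s_{\td,3}=s_{\tz,1}$, which is what lets \cite[Lemmas~7.6 and~7.7]{FMS3} place $s_{\tz,1}s_{\tz,2}$ and $s_{\tz,2}^2$ in $\overline V$.
\end{itemize}

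Your fallback does not replace this. Take $\lmu=\lmf=\lambda$ at $(2,\tfrac12)$. Then \eqref{eqlm2} gives $\lmd=12\lambda^2-16\lambda+5$. Substituting this into $C=2\lmu\lmf+7\lmu^2-\lmf-11\lmu-\tfrac34\lmd+\tfrac{15}{4}$ yields $C=0$ identically, so $C=C^f=0$ carries no information. The relations you invoke also do not force $Q=0$ or $Q^f=0$. In \eqref{equa1}, \eqref{equa2}, \eqref{eqs2-4bt} and \eqref{eqs3-4bt} these coefficients multiply $a_3-a_{-1}=0$ or $a_{-2}-a_2=0$. So nothing you have forces $\lmu=1$, and Theorem~\ref{HWthm} cannot be reached this way.
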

\begin{proof}
By Equation~\eqref{hyp2}, $$a_2-a_0\neq 0, \quad a_{-1}-a_1\neq 0, \quad 
a_{-2}=a_2,\quad \;\mbox{ and }\quad a_3=a_{-1}.$$
Substituting these values in Equations~\eqref{equa1} and~\eqref{equa2} on page~\pageref{equa1} we get 
$$(2P-R)(a_2-a_0)=0\;\;\;\mbox{ and  }\;\;(2P-R^f)(a_{-1}-a_1)=0,$$
respectively, whence  
Equation~\eqref{eq2+2:1} follows. Now, Equation~\eqref{eq2+2:3} follows by replacing, in  Equation~\eqref{equa3} on page~\pageref{equa3}, $R$ and $R^f$ by $2P$, $a_{-2}$ by $a_2$, and $a_{-1}$ by $a_3$. Similarly, Equation~\eqref{eq2+2:2} follows from Lemmas~\ref{s1s2} and~\ref{l7.3}. Equations~\eqref{eqlm2} and~\eqref{eqlm2f} are obtained by rearranging Equation~\eqref{eq2+2:1}. Finally, assume for a contradiction that 
\[
\lmu=\lmf.
\]
By Equation~\eqref{eq2+2:1}, $R-R^f=0$ and so, by Lemma~\ref{newQ-Q^f}\ref{newQ-Q^f_3}, 
  \begin{equation}\label{lm2lm2axet2}
  \lmd=\lmdf.
  \end{equation}
By Proposition~\ref{reg}\ref{reg_1} and  Hypothesis~\ref{Hyp2}\ref{Hyp2_2}  we get that $(\al,\bt)=(2,\tfrac{1}{2})$.  
Let 
$$
\overline{V}:=\langle  a_{-1}, a_0, a_1, a_2,  s_{\tz,1}, s_{\tz,2}\rangle.
$$
We claim that 
\begin{equation}
    \label{VV}
   \overline{V}= V.
\end{equation} Since $\overline{V}$ contains $a_0$ and $a_1$, this  is equivalent to say that $\overline{V}$ is closed under the algebra multiplication. Since $\al=4\bt$, \cite[Corollary 7.2]{FMS3} implies that  $s_{\tu,2}\in \overline{V}$. By Equation~\eqref{defs} on page~\pageref{defs} and Equation~\eqref{hyp2}, it follows that  
$$
a_ia_j \in \overline{V} \quad \mbox{ for every }\quad \{i,j\}\subseteq \{-1,0,1,2\}.
$$
By Lemma~\ref{primo} and~\cite[Corollary 7.4]{FMS3}, 
$$
\{a_is_{\tz,1}, a_is_{\tz,2}\}
\subseteq \overline{V} \quad \mbox{  for every  }\quad i\in \{-1,0,1,2\}.
$$ 
By Lemma~\ref{primo}\ref{primo_6}, $s_{\tz,1}s_{\tz,1}\in \overline{V}$. 
Equation~\eqref{hyp2} implies 
$$
\{s_{\tz,3}, s_{\tu,3}, s_{\td, 3}\}=\{s_{\tz,1}\}\subseteq \overline{V},
$$
whence, by~\cite[Lemma~7.6, and Lemma~7.7]{FMS3}, $\{s_{\tz,1}s_{\tz,2}, s_{\tz,2}s_{\tz,2}\}\subseteq \overline{V}$. Thus $\overline{V}$ is a subalgebra of $V$. 
Again by Equation~\eqref{hyp2}, 
\begin{equation}
    \label{lm3axet2}
    \lm_3=\lm_{a_0}(a_3)=\lm_{a_0}(a_{-1})=\lmu=\lmf=\lm_{a_1}(a_0)=\lm_{a_1}(a_4)=\lm_3^f.
\end{equation}
Thus, Equations~\eqref{lm3axet2}, \eqref{lm2lm2axet2}, and \eqref{VV} and Proposition~\ref{reg}\ref{reg_2} yield that $\V$ is isomorphic to a quotient of a symmetric algebra, contradicting 
Hypothesis~\ref{Hyp2}\ref{Hyp2_2}.
\end{proof}

\begin{lemma}
In the algebra $\V$ the following identities hold:
\begin{align}
0 &= (2P-Q^f)(1+\lmd)+2(Q-2P)\lmu+(S-S^f)((1-\bt)\lmu-\bt)\label{eq2+2:proj1}\\
&\phantom{{}+{}} \quad +(T-U)\left(-\tfrac{2(\al-1)}{\al-\bt}\lmu(\lmu-\lmf)-(1-2\bt)\lmu+(1-2\bt)\lmd \right), \notag\\
0 &= 2(2P-Q^f)\lmf+(Q-2P)(1+\lmdf)+(S-S^f)((1-\bt)\lmf-\bt)\label{eq2+2:proj2}\\
&\phantom{{}+{}} \quad + (T-U)\left(\tfrac{2(\al-1)}{\al-\bt}\lmf(\lmf-\lmu)+(1-2\bt)\lmf-(1-2\bt)\lmdf \right), \notag\\
    0&=2\bt(\al-2\bt)(Q-2P)+(\al-2\bt)(2P-Q^f)\label{eq2+2:a0 mult}\\
    &\phantom{{}+{}} \quad +(\al-2\bt)(S-S^f)((1-\al)\lmu+\bt(\al-\bt-1))\notag\\
    &\phantom{{}+{}} \quad +(\al-2\bt)(T-U)((1-\al)\lmd-\bt)-(T-U)(J-\bt L), \notag\\
\intertext{and}
    0&=2\bt(\al-2\bt)(2P-Q^f)+(\al-2\bt)(Q-2P)\label{eq2+2:a1 mult}\\
    &\phantom{{}+{}} \quad +(\al-2\bt)(S-S^f)((1-\al)\lmf+\bt(\al-\bt-1)) \notag\\
    &\phantom{{}+{}} \quad -(\al-2\bt)(T-U)((1-\al)\lmdf-\bt)+(T-U)(J^f-\bt L). \notag
\end{align}
%
\end{lemma}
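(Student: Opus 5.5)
All four identities will be derived from Equation~\eqref{eq2+2:3} of Lemma~\ref{lem2+2:1}, which expresses a vanishing linear combination
\[
0=(2P-Q^f)(a_0+a_2)+(Q-2P)(a_1+a_3)+(S-S^f)s_{\tz,1}+(T-U)(s_{\tz,2}-s_{\tu,2}).
\]
Call the right-hand side $z$. For \eqref{eq2+2:proj1} and \eqref{eq2+2:proj2} we apply the linear functionals $\lm_{a_0}$ and $\lm_{a_1}$ to $z=0$. For \eqref{eq2+2:a0 mult} and \eqref{eq2+2:a1 mult} we multiply $z$ by $a_0$ (respectively $a_1$), reduce using Lemma~\ref{primo}, and then again take $\lm_{a_0}$ (respectively $\lm_{a_1}$), or equivalently take the $a_0$-coefficient after projection.

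For \eqref{eq2+2:proj1}, the needed values are as follows.
\begin{itemize}
\item $\lm_{a_0}(a_0+a_2)=1+\lmd$.
\item $\lm_{a_0}(a_1+a_3)=2\lmu$, since $a_3=a_{-1}$ by \eqref{hyp2} and Lemma~\ref{s}\ref{s_3}.
\item $\lm_{a_0}(s_{\tz,1})=\lmu-\bt-\bt\lmu$, by Lemma~\ref{s}\ref{s_2}.
\item $\lm_{a_0}(s_{\tz,2})=\lmd-\bt-\bt\lmd$.
\item $\lm_{a_0}(s_{\tu,2})$ is given by Corollary~\ref{cor:lm2f}\ref{cor:lm2f_1}.
\end{itemize}
Subtracting the last two gives exactly the bracket multiplying $T-U$, since $\lmd-\bt\lmd-\bt-\big(\tfrac{2(\al-1)}{\al-\bt}\lmu(\lmu-\lmf)+(1-2\bt)\lmu+\bt\lmd-\bt\big)$ simplifies to that expression. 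Identity \eqref{eq2+2:proj2} follows symmetrically with $\lm_{a_1}$, using $\lm_{a_1}(a_0+a_2)=2\lmf$ (because $a_2=a_{-2}$ corresponds to $\lm_{a_1}(a_{-1})$ shifted, i.e.\ $\lambda_{a_1}(a_2)=\lmf$ by Lemma~\ref{s}) and $\lm_{a_1}(a_1+a_3)=1+\lmdf$. It also uses the flipped value $\lm_{a_1}(s_{\tz,2})$ from Corollary~\ref{cor:lm2f}\ref{cor:lm2f_2}.

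For \eqref{eq2+2:a0 mult}, multiply $(\al-2\bt)z$ by $a_0$, writing each product in the spanning set $\mathfrak B$:
\begin{itemize}
\item $a_0a_i=s_{\tz,i}+\bt(a_0+a_i)$;
\item $a_0s_{\tz,1}$ and $a_0s_{\tz,2}$ come from Lemma~\ref{primo}\ref{primo_1}, whose $a_0$-coefficient is $\gamma_i$;
\item $(\al-2\bt)a_0s_{\tu,2}$ comes from Lemma~\ref{primo}\ref{primo_5}, whose $a_0$-coefficient is $J$ and whose $s_{\tz,2}$-coefficient is $L$.
\end{itemize}
Applying $\lm_{a_0}$ then reduces, via Lemma~\ref{s}\ref{s_1}, to $\lm_{a_0}(a_0z)=\lm_{a_0}(z)$, which only reproduces \eqref{eq2+2:proj1}. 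So this is the wrong route. Instead, subtract $\lm$-times the previous identity, or more simply extract a specific coordinate.

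The form of \eqref{eq2+2:a0 mult} suggests the following: compute $a_0z-\bt z$, in which the $a_0$-parts combine into the displayed coefficients. For instance, $(Q-2P)$ acquires $2\bt$ from $a_0(a_1+a_3)=2s_{\tz,1}+2\bt a_0+\bt(a_1+a_{-1})$, the $(S-S^f)$ term acquires $\gamma_1$, and the $(T-U)$ term acquires $(\al-2\bt)\gamma_2-J$ together with the $\bt L$ correction. One then takes the coefficient of $a_0$ in a relation that is known to be independent, or applies $\lm_{a_0}$ after first eliminating the non-$a_0$ components using $z=0$.

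I expect this to be the main obstacle: determining precisely which linear combination of $a_0z$ and $z$, and which coordinate or functional, produces exactly \eqref{eq2+2:a0 mult}. I would first try the following: compute $(\al-2\bt)(a_0z-\bt z)$ explicitly in $\mathfrak B$, substitute $a_{-2}=a_2$ and $a_3=a_{-1}$, and read off the $a_0$-coefficient, checking the remaining coefficients against $z$. Identity \eqref{eq2+2:a1 mult} then follows by applying the flip $\bf f$ in the universal algebra to the analogous computation with $a_1$. Under the flip, $P$, $T$, $L$, $\al$ and $\bt$ are fixed, while $Q\leftrightarrow Q^f$, $S\leftrightarrow S^f$, $J\mapsto J^f$ and $s_{\tz,2}\leftrightarrow s_{\tu,2}$. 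This explains the sign changes on the $T-U$ terms. The relation is then evaluated in $V$ via $\varphi$.
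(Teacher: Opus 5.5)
Your treatment of \eqref{eq2+2:proj1} and \eqref{eq2+2:proj2} is correct and coincides with the paper's: apply $\lm_{a_0}$, resp.\ $\lm_{a_1}$, to \eqref{eq2+2:3} and use Lemma~\ref{s} and Corollary~\ref{cor:lm2f}. For \eqref{eq2+2:a0 mult}, however, you have no argument, and the route you propose does not work.

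Reading off the $a_0$-coefficient of a relation written in $\mathfrak B$ is not legitimate. $\mathfrak B$ is only a spanning set, and it is certainly dependent in $V$: the relation \eqref{eq2+2:3} itself, and $a_{-2}=a_2$, $a_3=a_{-1}$, are dependencies. Even formally your guess gives the wrong scalar. The $a_0$-coordinate of $(\al-2\bt)(a_0z-\bt z)$ differs from the expression in \eqref{eq2+2:a0 mult} by $\bt(\al-\bt)(\al-4\bt)(T-U)$.

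Projections do not rescue it either. $\lm_{a_0}(a_0z)=\lm_{a_0}(z)$ just returns \eqref{eq2+2:proj1}. Combining $\lm_{a_0}$ with $\lm_{a_2}$ yields the desired identity only multiplied by $1-\lmd$, a factor you cannot cancel at this stage.

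The missing idea is to antisymmetrize under $\tau_1$. Expand $(\al-2\bt)a_0z$ using \eqref{defs} and Lemma~\ref{primo}\ref{primo_1} and~\ref{primo_5}. Substitute $a_{-2}=a_2$ and $a_3=a_{-1}$, and use $2H=\bt L$ and $I=\tfrac{\bt}{2}K$. The product then collapses to
\[
0=W_0a_0+W_1(\bt a_2+s_{\tz,2})+W_2\bigl(\tfrac{\bt}{2}(a_1+a_{-1})+s_{\tz,1}\bigr),
\]
where $W_2$ is some scalar and
\[
W_0=(\al-2\bt)\bigl((1+\bt)(2P-Q^f)+2\bt(Q-2P)+\gamma_1(S-S^f)+\gamma_2(T-U)\bigr)-J(T-U),
\]
\[
W_1=(\al-2\bt)(2P-Q^f)+\bigl((\al-2\bt)(\al-\bt)-L\bigr)(T-U).
\]
Now $\tau_1$ swaps $a_0$ and $a_2$, fixes $a_1$, $s_{\tz,1}$ and $s_{\tz,2}$, and sends $a_{-1}$ to $a_3=a_{-1}$. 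Subtracting the $\tau_1$-image therefore leaves $(W_0-\bt W_1)(a_0-a_2)=0$, and $a_0\neq a_2$ by \eqref{hyp2}. The identity $W_0-\bt W_1=0$ is exactly \eqref{eq2+2:a0 mult}, because $\gamma_2-\bt(\al-\bt)=(1-\al)\lmd-\bt$.

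For \eqref{eq2+2:a1 mult} you cannot literally apply ${\bf f}$. The relation \eqref{eq2+2:3} holds in $V$, not in $\Vo$, and $V$ need not have a flip. Instead, redo the computation with $(\al-2\bt)a_1$, using Lemma~\ref{primo}\ref{primo_2} and~\ref{primo_4} together with \eqref{4.7.1f}. Then antisymmetrize under $\tau_0$, which swaps $a_1$ and $a_{-1}$ and fixes $a_0$, $a_2=a_{-2}$, $s_{\tz,1}$ and $s_{\tu,2}$. The hypothesis $a_1\neq a_{-1}$ then gives the identity.
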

\begin{proof}
Taking the image of Equation~\eqref{eq2+2:3} under the linear map $\lm_{a_0}$ and using Equation~\eqref{deflmlmf} and Lemma~\ref{s}\ref{s_3}, we get 
\begin{equation}\label{eq2+2:proj1bis}
\begin{split}
0&=(2P-Q^f)(1+\lmd)+2(Q-2P)\lmu+(S-S^f)\lm_{a_0}(s_{\tz,1})\\
&\phantom{{}+{}} \quad +(T-U)\left ( \lm_{a_0}(s_{\tz,2})-\lm_{a_0}(s_{\tu,2})\right ).     
\end{split}
\end{equation}
By Lemma~\ref{s}\ref{s_2}, 
\begin{equation}\label{x1}
\lm_{a_0}(s_{\tz,1})=(1-\bt)\lmu-\bt\;\;\;\mbox{ and } \quad 
\lm_{a_0}(s_{\tz,2})=(1-\bt)\lmd-\bt.
\end{equation}
By Corollary~\ref{cor:lm2f}, 
\begin{equation}\label{x2}
 \lm_{a_0}(s_{\tu,2})=\frac{2(\al-1)}{\al-\bt}\lmu(\lmu-\lmf)+(1-2\bt)\lmu+\bt\lmd-\bt.   
\end{equation}
Equation~\eqref{eq2+2:3} then follows by Equations~\eqref{eq2+2:proj1bis},~\eqref{x1}, and~\eqref{x2}. 
Equation~\eqref{eq2+2:proj2} is obtained in a similar way, by taking the image of Equation~\eqref{eq2+2:3} under the linear map $\lm_{a_1}$.

Multiplying by $(\al-2\bt)a_0$ both terms of Equation~\eqref{eq2+2:3} (where the products between $a_0$ and the elements of $V$ appearing in that equation are given by  Equation~\eqref{defs} on page~\pageref{defs}, Lemma~\ref{primo}, we get  
\begin{equation}\label{eq2+2:4}
0=W_0a_0+W_1(\bt a_2+s_{0,2})+W_2\left(\tfrac{1}{2}\bt(a_1+a_{-1})+s_{0,1}\right),
\end{equation}
where 
\begin{align*}
    W_0&:= (\al-2\bt)(2P-Q^f)(1+\bt)+2\bt(\al-2\bt)(Q-2P)\\
    &\phantom{{}={}} \quad +(\al-2\bt)(S-S^f)((1-\al)\lmu+\bt(\al-\bt-1))\\
    &\phantom{{}={}} \quad +(\al-2\bt)(T-U)((1-\al)\lmd+\bt(\al-\bt-1))-(T-U)J,\\
    W_1&:= (\al-2\bt)(2P-Q^f)+(\al-2\bt)(T-U)(\al-\bt)-(T-U)L,\\
    W_2&:= 2(Q-2P)(\al-2\bt)+(\al-2\bt)(S-S^f)(\al-\bt)-(T-U)K.
\end{align*}
Taking the difference between Equation \eqref{eq2+2:4} and its image under $\tau_1$, we get 
\[ (W_0-\bt W_1)(a_0-a_2)=0,\]
whence $W_0-\bt W_1=0$, which is exactly Equation \eqref{eq2+2:a0 mult}. 
Equation~\eqref{eq2+2:a1 mult} is obtained in a similar way, multiplying by $(\al-2\bt)a_1$ both terms of Equation~\eqref{eq2+2:3} and using Equation~\eqref{4.7.1f} on page~\pageref{4.7.1f}.
\end{proof}

\begin{lemma}\label{lem2+2:al and bt}
In the algebra $\V$, either $\al=4\bt$ or $\bt=\tfrac{\al^2}{2(\al-1)}$.
\end{lemma}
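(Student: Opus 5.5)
The argument is elimination in the polynomial ring $\F[\al,\bt,\lmu,\lmf]$. We have already reduced $\lmd$ and $\lmdf$ to functions of $(\al,\bt,\lmu,\lmf)$. We then exploit the remaining relations and the fact that $\lmu\neq\lmf$ (Equation~\eqref{lem2+2sym}). The plan is to assume $\al\neq 4\bt$ and deduce $2(\al-1)\bt=\al^2$.

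First, substitute the expressions \eqref{eqlm2} and \eqref{eqlm2f} for $\lmd$ and $\lmdf$ into all the identities available. These are:
\begin{itemize}
\item $B=B^f=0$ from \eqref{eq2+2:2}, which holds since $\al\neq 4\bt$;
\item the two projection identities \eqref{eq2+2:proj1} and \eqref{eq2+2:proj2};
\item the two multiplication identities \eqref{eq2+2:a0 mult} and \eqref{eq2+2:a1 mult}.
\end{itemize}
Each becomes a polynomial in $\lmu,\lmf$ with coefficients in $\F[\al,\bt]$, after clearing the denominators $\al\bt(\al-\bt)$ and $\al-\bt$, which are nonzero.

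Next, use the antisymmetry under the flip. All these relations come in pairs swapped by $\lmu\leftrightarrow\lmf$. The differences of paired relations, for example $B-B^f$, or \eqref{eq2+2:a0 mult} minus \eqref{eq2+2:a1 mult}, are antisymmetric polynomials, so each is divisible by $\lmu-\lmf$. Since $\lmu\neq\lmf$, we may cancel this factor. Lemma~\ref{lemma:A+Af} and Lemma~\ref{newQ-Q^f} already do this kind of computation; the factor $\al^2-2\al\bt-\al+4\bt$ from $Q-Q^f$ is a natural candidate to appear.

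We thus obtain a system of symmetric polynomials in $\lmu,\lmf$. Rewrite these in terms of $\sigma=\lmu+\lmf$ and $\pi=\lmu\lmf$. Compute resultants, or a Gröbner basis with an elimination order, removing $\pi$ and then $\sigma$. The aim is a polynomial in $\al,\bt$ alone whose factorisation, after discarding the excluded factors $\al$, $\bt$, $\al-1$, $\bt-1$, $\al-\bt$, $\al-2\bt$, is $(\al-4\bt)^a(\al^2-2\al\bt+2\bt)^b$. Since $\al\neq 4\bt$, this gives $\bt=\tfrac{\al^2}{2(\al-1)}$. In practice this elimination would be done with the {\sc Singular} code in \cite{codenightmare}, as in Lemma~\ref{s1s2}.

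The main obstacle is ensuring that no spurious factor survives the elimination. Candidates are $\bt=\tfrac12$, $\al=2$, or factors forcing $\lmu=\lmf$. If such factors appear, each must be handled as a separate branch.
\begin{itemize}
\item If a factor forces $\lmu=\lmf$, it contradicts \eqref{lem2+2sym}.
\item If some coefficient such as $T-U$ or $S-S^f$ vanishes, return to the original relations in that specialisation.
\item If the solution has $(\al,\bt)=(2,\tfrac12)$, then $\al=4\bt$ and the statement holds anyway.
\end{itemize}
I would first try the pair $B-B^f$ together with \eqref{eq2+2:a0 mult} minus \eqref{eq2+2:a1 mult}, since these have the lowest degree, and bring in the projection identities only if that pair fails to isolate $\al^2-2\al\bt+2\bt$.
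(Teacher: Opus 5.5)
Your ingredients are the right ones ($B=B^f=0$ from \eqref{eq2+2:2}, the substitutions \eqref{eqlm2} and \eqref{eqlm2f}, and $\lmu\neq\lmf$ from \eqref{lem2+2sym}), but the proposal does not prove the lemma. The factorisation $(\al-4\bt)^a(\al^2-2\al\bt+2\bt)^b$ is only the hoped-for output of an elimination you never carry out. Moreover, your reduction discards the relation that actually settles the question.

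A pair $\{X=0,\,X^f=0\}$ is equivalent to $X-X^f=0$ \emph{together with} $X+X^f=0$. You keep only the differences divided by $\lmu-\lmf$, and you propose to start from $B-B^f$. Look at the structure of $B$. By \eqref{B} it is a combination of $\lmu-\lmf$ and $\lmdf-\lmd$, and after \eqref{eqlm2}, \eqref{eqlm2f} one has
\[
\lmd-\lmdf=\tfrac{2(\lmu-\lmf)}{\al\bt(\al-\bt)}\bigl((3\al^2-4\al\bt-\al+2\bt)(\lmu+\lmf)-(\al^3+3\al^2\bt-6\al\bt^2-2\al\bt+4\bt^2)\bigr).
\]
Also, the quadratic terms in the first bracket of \eqref{B} cancel against those coming from the $\lmd$ term. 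Hence $B=(\lmu-\lmf)S$ with $S$ of degree one in $\lmu,\lmf$. It follows that:
\begin{itemize}
\item $B-B^f=(\lmu-\lmf)(S+S^f)$, where $S+S^f$ is in general a non-constant affine function of $\lmu+\lmf$. So your first-choice pair only produces relations that still involve $\lmu+\lmf$.
\item $B+B^f=(\lmu-\lmf)(S-S^f)$, and $S-S^f$ is $\lmu-\lmf$ times a scalar depending only on $\al,\bt$.
\end{itemize}

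This is the paper's argument. The single relation $B+B^f=0$ becomes a nonzero scalar multiple of
\[
(\al-1)(\al-2\bt)(\al-4\bt)(\al^2-2\al\bt+2\bt)(\lmu-\lmf)^2=0.
\]
Since $\al\neq 1$, $\al\neq 2\bt$ by hypothesis, $\al\neq 4\bt$ by assumption, and $\lmu\neq\lmf$ by \eqref{lem2+2sym}, this gives $\al^2-2\al\bt+2\bt=0$, i.e.\ $\bt=\tfrac{\al^2}{2(\al-1)}$. None of the following are needed: the projection identities, the multiplication identities \eqref{eq2+2:a0 mult} and \eqref{eq2+2:a1 mult}, the $(\sigma,\pi)$ elimination, or the case branches you anticipate. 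The factor $\al^2-2\al\bt-\al+4\bt$ from $Q-Q^f$, which you expected to appear, plays no role.
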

\begin{proof}
Assume $\al\neq 4\bt$. By Equation~\eqref{eq2+2:2}, $B=B^f=0$. Substituting the expressions of $\lmd$  and $\lmdf$ given in Equations~\eqref{eqlm2} and~\eqref{eqlm2f} respectively, equation $B+B^f=0$ simplifies to 
\begin{equation*}
    8(\al-1)(\al-2\bt)(\lmu-\lmf)^2(\al-4\bt)(\al^2-2\al\bt+2\bt)=0.
\end{equation*}
By Equation~\eqref{lem2+2sym}, $\lmu\neq \lmf$; by the initial assumption and Hypothesis~\ref{Hyp2}\ref{Hyp2_1}, $\al\not \in \{ 2\bt, 4\bt\}$, and $\al\neq 1$, so the above equation implies $\al^2-2\al\bt+2\bt=0$, whence $\bt=\tfrac{\al^2}{2(\al-1)}$.
\end{proof}

\subsection{The half case}

In this subsection, we assume $\al=\frac{1}{2}$. By
Lemma~\ref{lem2+2:al and bt}, either $\bt=\tfrac{1}{8}$ or $\bt=-\tfrac{1}{4}.$ We prove that in both cases we get a contradiction.


\begin{lemma}\label{lem2+2:half 2}
In the algebra $\V$,  $\bt\neq-\tfrac{1}{4}$.
\end{lemma}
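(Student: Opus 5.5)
We argue by contradiction: assume $(\al,\bt)=(\tfrac12,-\tfrac14)$, so that $\al=-2\bt$. We are in the regime $\al\neq 4\bt$ and $\al\neq 2\bt$, and Lemma~\ref{lem2+2:al and bt} is consistent with this choice, since $\bt=\tfrac{\al^2}{2(\al-1)}=-\tfrac14$. With $\al$ and $\bt$ fixed, the only unknowns in the identities of Lemma~\ref{lem2+2:1} and the subsequent lemma are $\lmu,\lmf,\lmd,\lmdf$. The plan is to eliminate these by polynomial algebra until we obtain $\lmu=\lmf$, which contradicts Equation~\eqref{lem2+2sym}.

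\textbf{Step 1: eliminate $\lmd$ and $\lmdf$.} Substitute $\al=\tfrac12$, $\bt=-\tfrac14$ into Equations~\eqref{eqlm2} and~\eqref{eqlm2f}. This writes $\lmd$ and $\lmdf$ as explicit quadratic polynomials in $\lmu$ and $\lmf$. After this, every remaining identity becomes a polynomial equation in the two variables $\lmu,\lmf$ alone.

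\textbf{Step 2: collect polynomial relations.} We take $B=0$ and $B^f=0$ from Equation~\eqref{eq2+2:2}; their sum is already $0$ by the choice of $\bt$, so we use $B$ itself, or equivalently $B-B^f$. Next we substitute into the four scalar identities \eqref{eq2+2:proj1}, \eqref{eq2+2:proj2}, \eqref{eq2+2:a0 mult} and \eqref{eq2+2:a1 mult}. Here $P,Q,Q^f,S,S^f,T,U,J,J^f,L$ are the explicit expressions from Chapter~\ref{ch1}, with the mixed terms expressed in $\lmu,\lmf$. Note that $\al-2\bt=1\neq 0$, so the factors $(\al-2\bt)$ in these identities do not vanish.

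\textbf{Step 3: factor and conclude.} Wherever possible we form differences of the form (identity) minus its $f$-image, since these are divisible by $\lmu-\lmf$. By Equation~\eqref{lem2+2sym} we may divide out that factor. This leaves a small system of polynomial equations, of degree at most about $3$, in $\lmu,\lmf$. We then compute resultants or a Gröbner basis to show that this system has no common solution with $\lmu\neq\lmf$ over any field of characteristic not $2$. One subtlety is that the admissible characteristics are constrained: we need $\al,\bt\notin\{0,1\}$ and $\al\neq\bt$, which excludes characteristics $3$ and $5$, since in characteristic $5$ we would have $\tfrac12=-\tfrac14$. The aim is to reach a nonzero constant, or a power of $2$, $3$ or $5$, showing the system is inconsistent.

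The main obstacle is Step 3. The resulting polynomials might share a common factor, leaving a curve of solutions rather than a contradiction. If that happens, we would bring in an additional relation. The first candidate is to apply $\lm_{a_0}$ and $\lm_{a_1}$ to Equation~\eqref{s1s2eq}, or to use the vanishing of $A$ and $A^f$. This uses the fact that $a_{-2}=a_2$ and $a_3=a_{-1}$, so that Lemma~\ref{s1s2} gives $A\cdot 0=\tfrac{2}{\bt(\al-\bt)}B(a_{-1}-a_1)$. As a further alternative, we would use the Jordan-type structure of $\V_e$ and $\V_o$ from Lemma~\ref{jordan}. By Table~\ref{table2}, these are of type $\mathcal J(\tfrac12)$, so each is $\J(\delta)$ or a quotient of it. Lemma~\ref{subJ} would then pin down $\lmd=2\delta+1$ and the relations $a_{i-1}=-a_{i+1}+(2+8\delta)a_i-4s$. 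Combined with $a_{-2}=a_2$, these should force $\delta=-\tfrac12$, hence $\lmd=\lmdf=0$, and that extra constraint should close the system.
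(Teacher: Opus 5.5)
This is the paper's route, and it closes without any of your fallbacks. After substituting \eqref{eqlm2} and \eqref{eqlm2f}, the identities \eqref{eq2+2:a0 mult} and \eqref{eq2+2:a1 mult} factor as $\tfrac12(\lmu-\tfrac14)(\lmu-\lmf+\tfrac98)=0$ and $\tfrac12(\lmf-\tfrac14)(\lmu-\lmf-\tfrac98)=0$. Together with $\lmu\neq\lmf$ these force $(\lmu,\lmf)\in\{(\tfrac14,-\tfrac78),(-\tfrac78,\tfrac14)\}$, and then \eqref{eq2+2:proj1} (respectively \eqref{eq2+2:proj2}) reduces to $0=\pm\tfrac{27}{32}$. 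So the feared common factor never arises and $B$ is not needed.

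Your fallbacks would not have worked anyway. Lemma~\ref{s1s2} gives only $B=B^f=0$ here and nothing on $A,A^f$. Also, $a_{-2}=a_2$ carries no information on $\delta$, since $\tau_0$ is trivial on $V_e$.

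One correction on characteristics: characteristic $3$ is excluded because there $\al=\bt$, and it is also the only characteristic in which $\tfrac{27}{32}$ could vanish. Characteristic $5$ is excluded because there $\bt=-\tfrac14=1$, not because $\tfrac12=-\tfrac14$.
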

\begin{proof}
For the sake of contradiction, assume that $\bt=-\frac{1}{4}$. Note that, in this case, since $\al\neq \bt$, $\ch(\F)\neq 3$. Substituting, in Equations~\eqref{eq2+2:a0 mult} and~\eqref{eq2+2:a1 mult}, the values  of $\lmd$ and $\lmdf$ given in Equations~\eqref{eqlm2} and~\eqref{eqlm2f}, we get
\begin{equation}\label{eqlm14}
    0=\tfrac{1}{2}(\lmu-\tfrac{1}{4})(\lmu-\lmf+\tfrac{9}{8})
\end{equation}
and
\begin{equation}\label{eqlmf14}
    0=\tfrac{1}{2}(\lmf-\tfrac{1}{4})(\lmu-\lmf-\tfrac{9}{8}).
\end{equation}
From Equation~\eqref{eqlm14} we get that either $\lmu=\tfrac{1}{4}$ or $\lmu=\lmf-\tfrac{9}{8}$. In the former case, by Equation~\eqref{lem2+2sym}, $\lmu\neq \lmf$, hence  Equation~\eqref{eqlmf14} implies $\lmf=\lmu-\tfrac{9}{8}=-\tfrac{7}{8}$. In the latter case, since $\ch(\F)\neq 3$, Equation~\eqref{eqlmf14} implies $\lmf=\tfrac{1}{4}$, whence $\lmu=-\tfrac{7}{8}$. 
Thus, by Equations~\eqref{eqlm2} and~\eqref{eqlm2f}, we get 
$$
(\lmu, \lmf, \lmd, \lmdf)\in \left \{\left (\tfrac{1}{4}, -\tfrac{7}{8}, 1, \tfrac{35}{4}\right ), \left (-\tfrac{7}{8}, \tfrac{1}{4}, \tfrac{35}{4}, 1 \right) \right \}.
$$
Substituting these values in Equations~\eqref{eq2+2:proj1} and~\eqref{eq2+2:proj2}, respectively, we get
\[
0=\tfrac{27}{32} ,
\]
which is a contradiction, as $\ch(\F)\neq 3$. 
\end{proof}

\begin{lemma}\label{lem2+2:half 3}
In the algebra $\V$, 
$\bt\neq \tfrac{1}{8}$. 
\end{lemma}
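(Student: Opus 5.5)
Assume for a contradiction that $\bt=\tfrac{1}{8}$ with $\al=\tfrac{1}{2}$; since $\al=4\bt$ in this case, this is the $\al=4\bt$ branch of the $X(2+2)$ analysis. The plan mirrors the proof of Lemma~\ref{lem2+2:half 2}. First I will substitute $(\al,\bt)=(\tfrac{1}{2},\tfrac{1}{8})$ into the expressions for $\lmd$ and $\lmdf$ in Equations~\eqref{eqlm2} and~\eqref{eqlm2f}, so that everything depends only on $\lmu$ and $\lmf$. Since $\al=4\bt$, Equation~\eqref{eq2+2:2} gives $C=C^f=0$ in place of $B=B^f=0$. Also $P=\bt(\al-\bt)^2(\al-4\bt)=0$, so Equation~\eqref{eq2+2:1} gives $R=R^f=0$.

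Next, I will use $C=0$ and $C^f=0$, which for $\bt=\tfrac{1}{8}$ read
\[
C=\lmu^2+\tfrac{1}{8}\lmf...
\]
after simplification: explicitly $C=2(4\bt-1)\lmu\lmf+(16\bt-1)\lmu^2-2\bt(4\bt-1)\lmf-4\bt(13\bt-1)\lmu-3\bt^2\lmd+3\bt^2(12\bt-1)$ becomes $-\lmu\lmf+\lmu^2+\tfrac{1}{8}\lmf-\tfrac{5}{16}\lmu-\tfrac{3}{64}\lmd+\tfrac{3}{128}$. I will take the difference $C-C^f$ and use the substituted $\lmd-\lmdf$. This difference should factor as $(\lmu-\lmf)$ times a linear form. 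Since $\lmu\neq\lmf$ by Equation~\eqref{lem2+2sym}, the linear form gives a linear relation $\lmu+\lmf=\text{const}$. Then $C+C^f=0$ becomes a single-variable polynomial equation, with finitely many candidate pairs $(\lmu,\lmf)$.

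As a cross-check, and to shrink the candidate set, I will also use $R=R^f=0$ (equivalently, the explicit formulas for $\lmd,\lmdf$ already encode these) and the additional constraints \eqref{eq2+2:a0 mult} and \eqref{eq2+2:a1 mult}. These involve $Q,Q^f,S,S^f,T,U,J,J^f,L$, all computable polynomials in $\lmu,\lmf,\lmd$ once $(\al,\bt)$ is fixed. As in the previous lemma, I expect them to factor as $(\lmu-c_1)(\lmu-\lmf+c_2)$ and $(\lmf-c_1)(\lmu-\lmf-c_2)$. This would give two symmetric candidate quadruples $(\lmu,\lmf,\lmd,\lmdf)$, swapped by exchanging the roles of $a_0$ and $a_1$.

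Finally I will plug each candidate into the projection identities \eqref{eq2+2:proj1} and \eqref{eq2+2:proj2}. I expect them to reduce to a nonzero rational constant $=0$, whose numerator involves only small primes such as $3,5,7$. Where a prime could divide it, I will exclude that characteristic using $\al\neq\bt$ (which forces $\ch(\F)\neq3$ here), $\bt\neq0,1$, and $\al\neq2\bt$ (which forces $\ch(\F)\neq 3$ and $\ch(\F)\neq7$ via $\tfrac12\neq\tfrac14$, $\tfrac18\neq1$). Any leftover characteristic would be handled by one of the remaining unused equations, such as $C+C^f=0$ or \eqref{eq2+2:proj2}.

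The main obstacle is the bookkeeping of the symbolic substitution and the possibility that a candidate survives in some exceptional characteristic. If that happens, my fallback is to show $\V_e,\V_o$ are $\mathcal J(\tfrac12)$-algebras (Lemma~\ref{jordan}) with $\lmd=\lmu$-values forced by Table~\ref{table2}. That is, $\lmd\in\{0,\tfrac14\}$ for $2\B$, $3\C(\tfrac12)$, or $3\C(-1)^\times$ (with $\lambda$-value $\tfrac{\al}{2}$), or $\lmd=2\delta+1$ for $\J(\delta)$. I would then compare these values with the candidates to get the contradiction.
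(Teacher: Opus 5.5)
Once you drop your first step, your plan is the paper's proof. Substituting \eqref{eqlm2} and \eqref{eqlm2f} into \eqref{eq2+2:a0 mult} and \eqref{eq2+2:a1 mult} gives $0=\tfrac18(\lmu-\tfrac1{16})(\lmu-\lmf+\tfrac{9}{32})$ and its flipped analogue. Using $\lmu\neq\lmf$ and $\ch(\F)\neq 3$, this forces $(\lmu,\lmf,\lmd,\lmdf)$ to be $(\tfrac{1}{16},-\tfrac{7}{32},-\tfrac18,\tfrac{55}{16})$ or its swap. Then \eqref{eq2+2:proj1} and \eqref{eq2+2:proj2} reduce to $0=-\tfrac{27}{2048}$, which is impossible since $\al\neq\bt$ forces $\ch(\F)\neq 3$. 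So your ``cross-check'' route, with the factorization you anticipate, carries the whole argument.

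The step you present as the main source of finiteness, using $C=C^f=0$, gives no information. For $\al=4\bt$ the definitions give $R=-8\bt C$ identically. Equations \eqref{eqlm2} and \eqref{eqlm2f} are exactly $R=R^f=2P=0$ solved for $\lmd,\lmdf$.

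Concretely, for $(\al,\bt)=(\tfrac12,\tfrac18)$, \eqref{eqlm2} reads $\lmd=\tfrac{64}{3}(\lmu-\tfrac18)(\lmu-\lmf-\tfrac{3}{16})$. Substituting this into your expression $-\lmu\lmf+\lmu^2+\tfrac18\lmf-\tfrac{5}{16}\lmu-\tfrac{3}{64}\lmd+\tfrac{3}{128}$ gives $C\equiv 0$, and similarly $C^f\equiv 0$. Thus $C-C^f$ is not $(\lmu-\lmf)$ times a nonzero linear form, and there is no relation of the form $\lmu+\lmf=$ constant. Likewise $C+C^f=0$ is not a one-variable equation. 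Indeed the candidate quadruples above satisfy $C=C^f=0$, as they must.

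Finiteness therefore has to come from \eqref{eq2+2:a0 mult} and \eqref{eq2+2:a1 mult}. Make that the main step, and drop the $C$ computation, including the idea of using $C+C^f=0$ to handle leftover characteristics.

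Two minor points, neither needed for the proof:
\begin{itemize}
\item $\ch(\F)\neq 7$ follows from $\bt\neq 1$, not from $\al\neq 2\bt$, which imposes nothing here. In any case only $\ch(\F)=3$ matters for the final constant.
\item Your fallback via Table~\ref{table2} would not work, since $\V_e$ could be $\J(\delta)$ with $\lmd=2\delta+1$ unconstrained.
\end{itemize}
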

\begin{proof}
For the sake of contradiction, assume  $\bt=\frac{1}{8}$. In particular, since $\tfrac{1}{2}=\al\neq \bt$, $\ch(\F)\neq 3$. Substituting in Equations~\eqref{eq2+2:a0 mult} and~\eqref{eq2+2:a1 mult} the expressions of $\lmd$ and $\lmdf$ given in Equations~\eqref{eqlm2} and~\eqref{eqlm2f}, we get
\begin{equation}
    0=\tfrac{1}{8}(\lmu-\tfrac{1}{16})(\lmu-\lmf+\tfrac{9}{32})
\end{equation}
and
\begin{equation}
    0=\tfrac{1}{8}(\lmu-\tfrac{1}{16})(\lmu-\lmf-\tfrac{9}{32}).
\end{equation}
As in the proof of Lemma~\ref{lem2+2:half 2}, we get 
\[
(\lmu, \lmf, \lmd, \lmdf)\in \left \{\left (\tfrac{1}{16}, -\tfrac{7}{32}, -\tfrac{1}{8}, \tfrac{55}{16}\right ), \left (-\tfrac{7}{32}, \tfrac{1}{16}, \tfrac{55}{16}, -\tfrac{1}{8} \right ) \right \}.
\]
Substituting these values in Equations~\eqref{eq2+2:proj1} and~\eqref{eq2+2:proj2} respectively, we get
\[0=-\tfrac{27}{2048},\]
which is a contradiction, as $\ch(\F)\neq 3$. \end{proof}

\begin{cor}\label{alneqonehalf}
In the algebra $\V$, $\al\neq\tfrac{1}{2}$.
\end{cor}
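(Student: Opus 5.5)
This corollary should follow by combining Lemma~\ref{lem2+2:al and bt} with Lemmas~\ref{lem2+2:half 2} and~\ref{lem2+2:half 3}. I would argue by contradiction and assume $\al=\tfrac{1}{2}$.

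First, $\al\neq 4\bt$ in this setting. Indeed, $\al=4\bt$ would force $\bt=\tfrac{1}{8}$. So in either branch of Lemma~\ref{lem2+2:al and bt} the value of $\bt$ is pinned down: either $\al=4\bt$, giving $\bt=\tfrac18$, or
\[
\bt=\tfrac{\al^2}{2(\al-1)}=\tfrac{1/4}{2\cdot(-1/2)}=-\tfrac{1}{4}.
\]
Here $\al-1=-\tfrac12\neq 0$, since $\ch(\F)\neq 2$, so the formula makes sense.

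Thus $\bt\in\{\tfrac18,-\tfrac14\}$, which is exactly the dichotomy recorded at the start of the subsection. Lemma~\ref{lem2+2:half 2} excludes $\bt=-\tfrac14$, and Lemma~\ref{lem2+2:half 3} excludes $\bt=\tfrac18$. This is the required contradiction, so $\al\neq\tfrac12$.

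There is no real obstacle here, since the substantive work is already in the two preceding lemmas. The only point to check is that those lemmas used nothing beyond the standing hypotheses of the section, namely Hypothesis~\ref{Hyp2} with $n=2$, together with $\al=\tfrac12$. They did, since their characteristic caveats ($\ch(\F)\neq 3$) were derived internally from $\al\neq\bt$. So the case split above is exhaustive and the corollary follows immediately.
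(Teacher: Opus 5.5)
Your argument is correct and matches the paper's: for $\al=\tfrac12$, Lemma~\ref{lem2+2:al and bt} forces $\bt\in\{\tfrac18,-\tfrac14\}$, and Lemmas~\ref{lem2+2:half 2} and~\ref{lem2+2:half 3} exclude these two values. The one slip is your opening claim that $\al\neq 4\bt$; you then correctly keep $\al=4\bt$, i.e.\ $\bt=\tfrac18$, as one of the two branches.
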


\subsection{The non-half case}

 By Corollary~\ref{alneqonehalf}, $\al\neq \tfrac{1}{2}$, hence, by Lemma~\ref{jordan} and Table~\ref{table2},   we may assume that  
\begin{equation}\label{hyp2B3C}
\{\V_e, \V_o\}\subseteq \{ 2\B, 3\C(\al), 3\C(-1)^\times\}.
\end{equation}
\begin{lemma}\label{lem:2+2lm}
We have 
\begin{equation}\label{eq:2+2lm1}
\lmu=\bt(1-\lmd)+\lm_0(a_2a_1)  
\end{equation}
and
\begin{equation}\label{eq:2+2lm2}
\lmf=\bt(1-\lmdf)+\lm_1(a_3a_0). 
\end{equation}
\end{lemma}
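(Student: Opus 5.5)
We prove \eqref{eq:2+2lm1} by computing $\lm_{a_0}(a_2a_1)$ in two ways, using Lemma~\ref{s}\ref{s_1}, the $\tau_1$-symmetry, and the $X(2+2)$ relations \eqref{hyp2}; then \eqref{eq:2+2lm2} comes from the same argument with the roles of $a_0$ and $a_1$ exchanged. Here $\lm_0$ and $\lm_1$ denote $\lm_{a_0}$ and $\lm_{a_1}$.

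\textbf{Step 1.} Write $a_2a_1$ through the invariant elements. By Equation~\eqref{defs}, $a_1a_2=s_{\tu,1}+\bt(a_1+a_2)$. By Lemma~\ref{invariant}, $s_{\tu,1}=s_{\tz,1}$. Therefore
\[
a_2a_1=s_{\tz,1}+\bt(a_1+a_2).
\]

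\textbf{Step 2.} Apply $\lm_{a_0}$ to this identity. We need three values.
\begin{itemize}
\item $\lm_{a_0}(a_1)=\lmu$, by definition.
\item $\lm_{a_0}(a_2)=\lmd$, by definition.
\item $\lm_{a_0}(s_{\tz,1})=\lmu-\bt-\bt\lmu$, by Lemma~\ref{s}\ref{s_2}.
\end{itemize}
This gives
\[
\lm_0(a_2a_1)=(\lmu-\bt-\bt\lmu)+\bt\lmu+\bt\lmd=\lmu-\bt+\bt\lmd .
\]
Rearranging yields $\lmu=\bt(1-\lmd)+\lm_0(a_2a_1)$, which is \eqref{eq:2+2lm1}.

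\textbf{Step 3.} For \eqref{eq:2+2lm2}, by \eqref{hyp2} we have $a_3=a_{-1}$, so $a_3a_0=a_{-1}a_0$. By Equation~\eqref{defs} and Lemma~\ref{invariant},
\[
a_{-1}a_0=s_{\tz,1}+\bt(a_{-1}+a_0).
\]
We now apply $\lm_{a_1}$ and need three values.
\begin{itemize}
\item $\lm_{a_1}(a_0)=\lmf$.
\item $\lm_{a_1}(a_{-1})=\lmdf$, since $a_{-1}=a_{1-2}$.
\item $\lm_{a_1}(s_{\tz,1})=\lm_{a_1}(s_{\tu,1})=\lmf-\bt-\bt\lmf$, by Lemma~\ref{s}\ref{s_2}.
\end{itemize}
Summing these exactly as in Step~2 gives \eqref{eq:2+2lm2}.

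The only delicate point is the identification $s_{\tu,1}=s_{\tz,1}$ and the use of \eqref{hyp2} to replace $a_3$ by $a_{-1}$; both follow from Lemma~\ref{invariant} and \eqref{hyp2}. The rest is linear bookkeeping.
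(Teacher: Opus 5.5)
Your proof is correct and is essentially the paper's argument. The paper likewise writes $s_{\tz,1}=(s_{\tz,1})^{\tau_1}=a_2a_1-\bt(a_2+a_1)$, which is the same as your $s_{\tu,1}=s_{\tz,1}$ via Lemma~\ref{invariant}, and applies $\lm_{a_0}$ together with Lemma~\ref{s}. It then obtains the second identity symmetrically through $\tau_0$ and $\lm_{a_1}$, where $a_3=a_{-1}$ from \eqref{hyp2} is needed only for that second identity.
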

\begin{proof}
By Equation~\eqref{defs} on page~\pageref{defs} and Lemma~\ref{invariant}, $ 
s_{\tz,1}=(s_{\tz,1})^{\tau_1}=a_2 a_1-\bt(a_2+a_1)$. 
Thus, by Lemma~\ref{s},
\[ 
\lmu-\bt-\bt\lmu=\lm_{a_0}(s_{\tz,1})=\lm_{a_0}(a_2a_1)-\bt(\lmd+\lmu).
\]
Rearranging the above equation, we get Equation \eqref{eq:2+2lm1}. Equation \eqref{eq:2+2lm2} follows in a similar way,  by applying  $\lm_{a_1}$ and $\tau_0$ to $s_{\tz,1}$.
\end{proof}

\begin{cor}\label{lem:2+2 orthogonal case}
If $\V_e\cong 2\B$, then $(\lmu,\lmd)=(\bt,0)$. If $\V_o\cong 2\B$, then $(\lmf,\lmdf)=(\bt,0)$.
\end{cor}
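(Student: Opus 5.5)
The plan is to combine Lemma~\ref{lem:2+2lm} with the structure of $2\B$ from Lemma~\ref{sub2B}; I treat the case $\V_e\cong 2\B$, and the case $\V_o\cong 2\B$ is identical after applying the flip notation (replacing $a_0,a_1,a_2$ by $a_1,a_0,a_3$ and using Equation~\eqref{eq:2+2lm2} in place of Equation~\eqref{eq:2+2lm1}).

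Assume $\V_e\cong 2\B$ with generating axes $a_0,a_2$. By Table~\ref{table2}, $a_0a_2=0$, so $a_2$ is a $0$-eigenvector for $\ad_{a_0}$. By uniqueness of the decomposition in Equation~\eqref{xlambda}, this gives $\lmd=\lm_{a_0}(a_2)=0$; this is also Lemma~\ref{sub2B}\ref{sub2B_1} applied to $\V_e$. Substituting into Equation~\eqref{eq:2+2lm1}, it remains to show $\lm_{a_0}(a_2a_1)=0$.

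For this, write $a_1=\lmu a_0+x_0+x_\al+x_\bt$ with respect to $\ad_{a_0}$. Since $a_2\in V^{a_0}_0$, the fusion law gives
\[
a_2x_0\in V^{a_0}_0,\qquad a_2x_\al\in V^{a_0}_\al,\qquad a_2x_\bt\in V^{a_0}_\bt,
\]
and $a_2(\lmu a_0)=0$. Hence $a_2a_1\in V^{a_0}_{\{0,\al,\bt\}}$, so its projection onto $a_0$ vanishes. Alternatively, Lemma~\ref{s}\ref{s_1} gives
\[
\lm_{a_0}(a_2a_1)=\lm_{a_0}(a_0(a_2a_1)),
\]
and one can argue via this identity instead. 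Either way $\lmu=\bt(1-0)+0=\bt$.

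I expect no real obstacle. The only point needing care is that the $0\star\mu$ entries of the fusion law never contain $1$, which is read directly from Table~\ref{fusion}.
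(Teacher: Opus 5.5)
Your proposal is correct and follows the paper's proof: $a_0a_2=0$ gives $a_2\in V_0^{a_0}$ and $\lmd=0$, the fusion law puts $a_2a_1$ in $V^{a_0}_{\{0,\al,\bt\}}$ so that $\lm_{a_0}(a_2a_1)=0$, and Equation~\eqref{eq:2+2lm1} then gives $\lmu=\bt$. In the odd case, the one point to state explicitly is that $a_1a_3=(a_1a_{-1})^{\tau_1}=0$, so $a_3\in V_0^{a_1}$; the same argument then goes through via Equation~\eqref{eq:2+2lm2}.
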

\begin{proof}
Assume $\V_e\cong 2\B$.  Then, by Table~\ref{table2}, $a_2$ is a $0$-eigenvector for $\ad_{a_0}$, whence, $\lmd=0$ and, by the fusion law, $a_2 a_1\in V_{\{0,\al,\bt\}}^{a_0}$.  Thus $\lm_{a_0}(a_2a_1)=0$ and, by Equation~\eqref{eq:2+2lm1}, $\lm_1=\bt(1-0)+0=\bt$. A similar argument holds when $\V_o\cong 2\B$.
\end{proof}

\begin{cor}\label{cor2B}
$\{\V_e,\V_o\}\neq \{2\B\}$. 
\end{cor}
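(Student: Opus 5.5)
Assume for a contradiction that $\V_e\cong\V_o\cong 2\B$. By Corollary~\ref{lem:2+2 orthogonal case} this gives
\[
(\lmu,\lmd)=(\bt,0)=(\lmf,\lmdf).
\]
In particular $\lmu=\lmf$. This already contradicts Equation~\eqref{lem2+2sym} of Lemma~\ref{lem2+2:1}, which says that $\lmu\neq\lmf$ under our running Hypothesis~\ref{Hyp2} with $n=2$. So the proof should be essentially one line.

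For safety I would also record a second route through Equations~\eqref{eqlm2} and~\eqref{eqlm2f}, in case the reader prefers not to lean on \eqref{lem2+2sym} directly. Substituting $\lmu=\bt$ into \eqref{eqlm2} gives $\lmd=0$, because of the factor $(\lmu-\bt)$. So that equation is consistent and yields no contradiction. The contradiction really does come from the symmetric values $\lmu=\lmf$.

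One point needs checking: that \eqref{lem2+2sym} was proved under assumptions still in force here. It is the statement that $\lmu=\lmf$ forces $\lmd=\lmdf$, via $R=R^f$. Together with Proposition~\ref{reg} and the spanning argument, this makes $\V$ a quotient of a symmetric algebra, contradicting Hypothesis~\ref{Hyp2}\ref{Hyp2_2}. Lemma~\ref{lem2+2:1} is stated for every $\V$ in this section, so the hypotheses match.

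No step is an obstacle. The only care needed is to cite Corollary~\ref{lem:2+2 orthogonal case} for both the even and the odd subalgebra, and then Equation~\eqref{lem2+2sym}.
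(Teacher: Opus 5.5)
Your proof is correct and is exactly the paper's argument: apply Corollary~\ref{lem:2+2 orthogonal case} to both $\V_e$ and $\V_o$ to get $\lmu=\lmf=\bt$, which contradicts Equation~\eqref{lem2+2sym}. The paragraph on Equations~\eqref{eqlm2} and~\eqref{eqlm2f} is not a second route, since, as you note yourself, it yields no contradiction, so you can simply drop it.
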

\begin{proof}
If $\{\V_e,\V_o\}= \{2\B\}$, then 
by Lemma~\ref{lem:2+2 orthogonal case}, we get $\lmu=\lmf=\bt$, contradicting Equation~\eqref{lem2+2sym}.
\end{proof}

\begin{lemma}\label{lem3C}
$\{\V_e,\V_o\}\subseteq \{ 3\C(\al), 3\C(-1)^\times\}$.
\end{lemma}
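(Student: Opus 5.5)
By \eqref{hyp2B3C} and Corollary~\ref{cor2B}, it suffices to exclude the mixed case where exactly one of $\V_e,\V_o$ is $2\B$ and the other lies in $\{3\C(\al),3\C(-1)^\times\}$. By swapping the roles of $a_0$ and $a_1$, i.e.\ applying the flip ${\bf f}$ on the universal algebra, we may assume $\V_e\cong 2\B$ and $\V_o\in\{3\C(\al),3\C(-1)^\times\}$.

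\textbf{Projections.} Corollary~\ref{lem:2+2 orthogonal case} gives $(\lmu,\lmd)=(\bt,0)$. Since $\al$ is the Jordan parameter of $\V_o$, Lemma~\ref{sub3C}\ref{sub3C_1} gives $\lmdf=\tfrac{\al}{2}$; this value is the projection of $a_{-1}$ onto $a_1$ inside $3\C(\al)$, whose form is the same in the quotient. Now substitute into Equation~\eqref{eqlm2}. With $\lmu=\bt$ its right-hand side vanishes identically, which is consistent with $\lmd=0$ and so gives no information. Equation~\eqref{eqlm2f}, however, becomes a quadratic in $\lmf$ with right-hand side $\tfrac{\al}{2}$. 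Lemma~\ref{lem2+2:al and bt} then splits the argument into $\al=4\bt$ and $\bt=\tfrac{\al^2}{2(\al-1)}$.

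\textbf{Case analysis.} In each case, feed $\lmu=\bt$, $\lmd=0$, $\lmdf=\tfrac{\al}{2}$ into the scalar identities \eqref{eq2+2:proj1}, \eqref{eq2+2:proj2}, \eqref{eq2+2:a0 mult}, \eqref{eq2+2:a1 mult}. For $\al\neq 4\bt$, also use $B=B^f=0$ from \eqref{eq2+2:2}; for $\al=4\bt$, use $C=C^f=0$. The result is a small polynomial system in $\al$ and $\lmf$. The goal is to show that every solution has $\lmf=\bt=\lmu$, contradicting \eqref{lem2+2sym}, or forces a forbidden value $\al\in\{0,1,\bt,2\bt\}$.

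\textbf{Structural check.} If a solution survives (for instance at special characteristics), work inside $V$ directly. Here $a_0a_2=0$, $a_3=a_{-1}$ and $V_o=\langle a_1,a_{-1},c\rangle$. Write $a_0-a_2$ as a $\bt$-eigenvector of $\ad_{a_1}$ and use that $\tau_0$ negates $a_1-a_{-1}$. Then Equation~\eqref{eq2+2:3}, reduced via Equation~\eqref{eq2+2:1}, expresses $s_{\tz,1}$ or $s_{\tz,2}-s_{\tu,2}$ inside $\langle V_e,V_o\rangle$. From this, try to derive $V\in\{V_e,V_o\}$, contradicting Hypothesis~\ref{Hyp2}\ref{Hyp2_3}, or obtain $\lmu=\lmf$ as before.

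\textbf{Main obstacle.} The hard step is the elimination: checking that the resultants have no admissible common roots. This includes tracking small characteristics, where coefficients such as $3$ or $5$ may vanish, one by one. I would first try computing $\lmf$ from \eqref{eqlm2f} and substituting into \eqref{eq2+2:a0 mult} minus \eqref{eq2+2:a1 mult}, hoping the difference factors through $(\lmu-\lmf)$.
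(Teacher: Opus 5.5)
Your set-up is fine: the swap, $(\lmu,\lmd)=(\bt,0)$ from Corollary~\ref{lem:2+2 orthogonal case}, $\lmdf=\tfrac{\al}{2}$, and the remark that \eqref{eqlm2} is vacuous once $\lmu=\bt$. The problem is that the proposal stops where the proof has to happen. The elimination is stated as a goal and never carried out. Nothing you write shows that the system formed by \eqref{eqlm2f}, \eqref{eq2+2:proj1}--\eqref{eq2+2:a1 mult} and $B=B^f=0$ (resp.\ $C=C^f=0$) has no admissible solution.

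Note also that \eqref{eqlm2f} itself already excludes $\lmf=\bt$: its right-hand side has the factor $\lmf-\bt$, while $\lmdf=\tfrac{\al}{2}\neq 0$. So what you must show is that the system is inconsistent outright, and that is not automatic. For instance, for $\al=4\bt$ with $\lmu=\bt$, $\lmd=0$, the identity \eqref{eq2+2:proj1} is linear in $\lmf$. Eliminating $\lmf$ against \eqref{eqlm2f} then leaves a non-trivial polynomial equation in $\al$, not a contradiction. You would therefore face resultant computations and exceptional characteristics, as in Lemmas~\ref{lob} and~\ref{lob2}. The ``structural check'' does not close the gap either: knowing that $s_{\tz,1}$ or $s_{\tz,2}-s_{\tu,2}$ lies in $V_e+V_o$ gives no route to $V\in\{V_e,V_o\}$.

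The missing idea is to use $\V_e\cong 2\B$ not only through projections but through the element identity $a_0a_2=0$, i.e.\ $s_{\tz,2}=-\bt(a_0+a_2)$. Then $(\al-2\bt)a_1s_{\tz,2}=-\bt(\al-2\bt)\bigl(2s_{\tz,1}+2\bt a_1+\bt(a_0+a_2)\bigr)$. Subtracting the $\tau_0$-image, using $a_{-2}=a_2$, gives $(\al-2\bt)(a_1-a_{-1})s_{\tz,2}=-2\bt^2(\al-2\bt)(a_1-a_{-1})$. The same antisymmetrisation of \eqref{4.7.1f}, using $a_3=a_{-1}$, $a_{-3}=a_1$, $a_{-2}=a_2$, gives $(\al-2\bt)(a_1-a_{-1})s_{\tz,2}=(J^f-2H)(a_1-a_{-1})$. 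Since $a_1\neq a_{-1}$, we get $J^f-2H+2\bt^2(\al-2\bt)=0$. Together with $R^f=2P$ from \eqref{eq2+2:1},
\[
0=\tfrac{1}{2}(R^f-2P)+(\al-\bt)\bigl(J^f-2H+2\bt^2(\al-2\bt)\bigr)=-2(\al-1)(\al-2\bt)\bigl((\al-2\bt)\lmf+\bt\lmu-\bt(\al-\bt)\bigr),
\]
which for $\lmu=\bt$ equals $-2(\al-1)(\al-2\bt)^2(\lmf-\bt)$. Hence $\lmf=\bt=\lmu$, contradicting \eqref{lem2+2sym}.

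This is the paper's argument, written there with $\V_o\cong 2\B$ via Lemma~\ref{primo}\ref{primo_5}. The $\lmdf$-terms cancel, so neither the type of $\V_o$ nor the case split of Lemma~\ref{lem2+2:al and bt} is needed.
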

\begin{proof}
For the sake of contradiction, assume $\{\V_o,\V_e\}\not \subseteq \{ 3\C(\al), 3\C(-1)^\times\}$. Up to swapping $\V_e$ and $\V_o$, we may suppose $\V_o\cong 2\B$. Then, by Lemma~\ref{lem:2+2 orthogonal case}, \begin{equation}\label{xyz}
(\lmf,\lmdf)=(\bt,0).
\end{equation}
Further,  $a_1a_{-1}=0$, whence 
$$
s_{\tu,2}=a_1a_{-1}-\bt(a_1+a_{-1})=-\bt(a_1+a_{-1}).
$$ 
Therefore
\begin{align}\label{a0s12}
(\al-2\bt)a_0s_{\tu,2} &= -\bt(\al-2\bt)a_0(a_1+a_{-1})\\
&= -\bt(\al-2\bt)(2s_{\tz,1}+2\bt a_0+\bt(a_1+a_{-1})) . \nonumber 
\end{align}
Taking the difference between Equation~\eqref{a0s12} and its image under $\tau_1$, we get
\begin{equation}\label{eq2+2:2B3C 1}
(\al-2\bt)(a_0-a_2)s_{\tu,2}=-2\bt^2(\al-2\bt)(a_0-a_2).
\end{equation}
Taking the difference between Lemma~\ref{primo}\ref{primo_5} and its image under $\tau_1$, we get
\begin{equation}\label{eq2+2:2B3C 2}
        (\al-2\bt)(a_0-a_2)s_{\tu,2}=(J-2H)(a_0-a_2).
\end{equation}
Since $a_0-a_2\neq 0$, multiplying by $(\al-\bt)$ the difference between   Equations~\eqref{eq2+2:2B3C 1} and \eqref{eq2+2:2B3C 2}, we obtain
\begin{equation*}
\begin{split}\label{eq2+2:2B3C 3}
0&=(\al-\bt)(J-2H+2\bt^2(\al-2\bt)),
\end{split}
\end{equation*}
whence, by Equation~\eqref{eq2+2:1}, we get 
\begin{equation}\label{bella}
    0=\tfrac{1}{2}(R-2P)+(\al-\bt)(J-2H+2\bt^2(\al-2\bt)).
\end{equation}
By Equation~\eqref{hyp2B3C} and Corollary~\ref{cor2B}, $\V_e$ is isomorphic to $3\C(\al)$ or to $3\C(-1)^\times$, hence by Lemma~\ref{sub3C}, 
\begin{equation}\label{xyzt}
    \lmd=\tfrac{\al}{2}.
\end{equation} 
Replacing, in Equation~\eqref{bella},   $R$, $P$, $J$, and $H$ by their expressions given on page~\pageref{primo} in Chapter~\ref{ch1}, and substituting, in each of these expressions, $\lmf$, $\lmd$, and $\lmdf$ by the values given in Equations~\eqref{xyz} and~\eqref{xyzt},  we get  
  \begin{equation}
0=-2(\al-1)(\al-2\bt)^2(\lmu-\bt). 
\end{equation}
Since, by hypothesis,   $\al\not\in \{ 1, 2\bt\}$, it follows that $\lmu=\bt=\lmf$, contradicting  Equation~\eqref{lem2+2sym}.  \end{proof}

\begin{lemma}\label{lob}
$\al= 4\bt$. 
\end{lemma}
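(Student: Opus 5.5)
We argue by contradiction: suppose $\al\neq 4\bt$. By Lemma~\ref{lem2+2:al and bt}, $\bt=\tfrac{\al^2}{2(\al-1)}$, and by Corollary~\ref{alneqonehalf}, $\al\neq\tfrac{1}{2}$. By Lemma~\ref{lem3C}, both $\V_e$ and $\V_o$ are isomorphic to $3\C(\al)$ or $3\C(-1)^\times$, so Lemma~\ref{sub3C}\ref{sub3C_1} gives
\[
\lmd=\lmdf=\tfrac{\al}{2}.
\]
This pins down everything except $\lmu$ and $\lmf$, and we will show that Lemma~\ref{lem2+2:1} leaves no room for them.

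\textbf{Step 1.} Substitute $\lmd=\tfrac{\al}{2}$ into Equation~\eqref{eqlm2} and $\lmdf=\tfrac{\al}{2}$ into Equation~\eqref{eqlm2f}, with $\bt=\tfrac{\al^2}{2(\al-1)}$ throughout. This yields two quadratic relations in $(\lmu,\lmf)$. Their difference factors as $(\lmu-\lmf)\cdot L(\lmu,\lmf)$ for a linear form $L$, because the quadratic parts of the two relations are symmetric up to swapping $\lmu$ and $\lmf$. By Equation~\eqref{lem2+2sym}, $\lmu\neq\lmf$, so $L(\lmu,\lmf)=0$. We then use this to express $\lmf$ in terms of $\lmu$ and $\al$, and substitute back to obtain a single polynomial equation in $\lmu$ and $\al$.

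\textbf{Step 2.} Bring in the remaining constraints with $\lmd=\lmdf=\tfrac{\al}{2}$ already inserted: Equation~\eqref{eq2+2:a0 mult} and Equation~\eqref{eq2+2:a1 mult} (or Equations~\eqref{eq2+2:proj1} and~\eqref{eq2+2:proj2}), and the equation $B=0$ from Equation~\eqref{eq2+2:2}. Eliminating $\lmu$ by resultants should leave a polynomial condition in $\al$ alone. We also have a second source of information: in $3\C(\al)$ the relations $a_{-2}=a_2$ and $a_2=a_{-1}$ for the subalgebra interact with Equation~\eqref{hyp2}. Together these should force $\al\in\{0,1,2\bt,\tfrac{1}{2}\}$ or give a characteristic condition. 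Any surviving exceptional value will be checked directly in the original equations; we also keep track of the case $\V_e\cong 3\C(-1)^\times$, where $\al=-1$ and hence $\bt=-\tfrac{1}{4}$, and treat it by direct substitution.

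\textbf{Main obstacle.} The hard part is the elimination in Step 2, which is heavy symbolic algebra of the kind the paper delegates to~\cite{codenightmare}. In particular, it may be difficult to confirm that the resulting univariate factors really exclude every admissible $\al$ in every characteristic. If the elimination leaves a stubborn factor, the fallback is Equation~\eqref{eq2+2:3} itself. In the basis of $V$ spanned by $a_0,a_1,a_2,a_3,s_{\tz,1},s_{\tz,2},s_{\tu,2}$, linear independence of $a_0+a_2$ and $a_1+a_3$ modulo the span of the $s$'s should force the individual coefficients $2P-Q^f$, $Q-2P$, $S-S^f$ to vanish, which gives additional equations to finish the argument.
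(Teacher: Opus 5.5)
\textbf{There is a genuine gap.}

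Your Step 1 matches the paper's first move. With $\lmd=\lmdf$, the difference of \eqref{eqlm2} and \eqref{eqlm2f} is $R-R^f=0$, and after dividing by $\lmu-\lmf$ this is a linear equation in $\lmu+\lmf$ alone. Before solving it you must exclude $\al^2-3\al+1=0$, where the coefficient of $\lmu+\lmf$ (a multiple of $\al(\al^2-3\al+1)/(\al-1)$) vanishes. The paper handles this case: it would also force $2\al^2-7\al+4=0$, and these two quadratics have resultant $-1$.

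The real gap is Step 2, which is the entire content of the lemma. You only say the elimination ``should'' produce a contradiction. A single univariate condition in $\al$ will not do. You need at least two, together with a check that their resultants share no prime factor other than $2$, so that no field of characteristic $\neq 2$ admits a solution.

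The paper obtains these conditions as follows. Once $\lmu+\lmf$ is fixed, the sums \eqref{eq2+2:proj1}$+$\eqref{eq2+2:proj2} and \eqref{eq2+2:a0 mult}$+$\eqref{eq2+2:a1 mult} collapse to $(\lmu-\lmf)\,p(\al)$ and $(\lmu-\lmf)\,q(\al)$, with $\deg p=6$ and $\deg q=7$. A suitable combination of $R-2P$ with $B/(\lmu-\lmf)$ cancels every $\lmu$-term and leaves a multiple of $(\al+1)\,t(\al)$, with $\deg t=5$. The value $\al=-1$ is excluded because $p(-1)=52$ and $q(-1)=-11$. Finally $\mathrm{Res}(p,q)=2^9\cdot 36559$ and $\mathrm{Res}(p,t)=2^7\cdot 22709$ have only the prime $2$ in common.

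Note that when $\lmd=\lmdf$, $B$ itself carries the factor $\lmu-\lmf$. You must divide this out before taking resultants, or the excluded branch $\lmu=\lmf$ will introduce spurious factors.

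Neither of your auxiliary sources can supply the missing conditions.
\begin{itemize}
\item The relation $a_2=a_{-1}$ of Lemma~\ref{sub3C} is about $3\C(\bt)$ with $\bt$ as the Jordan eigenvalue, which has axet $X(3)$. Here $\V_e\cong 3\C(\al)$ has trivial $\bt$-eigenspaces and trivial Miyamoto involutions. The only axis relation it gives is $a_0=a_4$, which is already in \eqref{hyp2}, so it adds nothing.
\item The fallback is unjustified. $V$ is only known to be spanned by $a_0,a_1,a_2,a_3,s_{\tz,1},s_{\tz,2},s_{\tu,2}$. Nothing makes $a_0+a_2$ and $a_1+a_3$ independent modulo $\la s_{\tz,1},s_{\tz,2},s_{\tu,2}\ra$. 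Indeed \eqref{eq2+2:3} is itself a linear relation among these vectors, so you cannot conclude that $2P-Q^f$, $Q-2P$ and $S-S^f$ vanish individually.
\end{itemize}
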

\begin{proof}
Assume for a contradiction that  
$$
\al\neq 4\bt.
$$ 
By Lemma~\ref{lem3C}, $\{\V_o,\V_e\}\subseteq \{ 3\C(\al), 3\C(-1)^\times\}$, whence, by Lemma~\ref{subJ}\ref{subJ_1},
\begin{equation}\label{mario1}
    \lmd=\lmdf=\tfrac{\al}{2}
\end{equation} 
and, 
by Lemma~\ref{lem2+2:al and bt}, 
\begin{equation}\label{mario2}
 \bt=\tfrac{\al^2}{2\al-2}.   
\end{equation} 
  By Equation~\eqref{lem2+2sym}, $\lmu\neq \lmf$, hence, by Equation~\eqref{eq2+2:1} and Lemma~\ref{newQ-Q^f}\ref{newQ-Q^f_3}, we get
 \begin{equation}
    \label{fr}
 \begin{split}
  0&=\tfrac{(\al-1)^2}{2\al(\lmu-\lmf)}(R-R^f) \\
  &=
  2(\al-1)(\al^2-3\al+1)(\lm+\lmf)-\al^2(2\al^2-7\al+4).
  \end{split}
 \end{equation}
 Note that $$(\al-1)(\al^2-3\al+1)\neq 0,$$ otherwise, since $\al\not \in \{0,1\}$, by Equation~\eqref{fr},  $2\al^2-7\al+4=0$ and $\al^2-3\al+1=0$, a contradiction, since these two polynomials have no common root, their resultant being $-1$.
Thus, by Equation~\eqref{fr}, 
 \begin{equation}\label{2+2:lmf}
    \lmu+ \lmf=\tfrac{\al^2(2\al^2-7\al+4)}{2(\al-1)(\al^2-3\al+1)}.
 \end{equation}
Taking the sum of Equations~\eqref{eq2+2:proj1} and~\eqref{eq2+2:proj2} and evaluating the expressions of $P$, $Q$, $Q^f$, $S$, $S^f$, $T$, and $U$ given on page~\pageref{equa1} for the values of $\bt$, $\lmd$, $\lmdf$, and $\lmu+\lmf$ given in Equations~\eqref{mario1}, \eqref{mario2}, and~\eqref{2+2:lmf}, respectively, we obtain
\begin{equation}\label{eq173+eq174}
0=\tfrac{\al^2(\lmu-\lmf)}{2(\al-1)^2(\al^2-3\al+1)}p(\al),
\end{equation}
where 
\begin{equation*}\label{p}
    p(\al)=4\al^6-19\al^5+26\al^4-12\al^3-\al^2+4\al-4.
\end{equation*}
Since $\al\neq 0$ and, by Equation~\eqref{lem2+2sym}, $\lmu-\lmf\neq 0$, Equation~\eqref{eq173+eq174} implies 
\begin{equation}\label{p=0}
p(\al)=0.
\end{equation}
Similarly,  taking the sum of Equations~\eqref{eq2+2:a0 mult} and~\eqref{eq2+2:a1 mult} and evaluating the expressions of $P$, $Q$, $Q^f$, $S$, $S^f$, $T$, $U$, $J$, $J^f$, and $L$ given on page~\pageref{equa1} for the values of $\bt$, $\lmd$, $\lmdf$, and $\lmu+\lmf$ given in Equations~\eqref{mario1}, \eqref{mario2}, and~\eqref{2+2:lmf}, respectively, we obtain
\begin{equation}\label{eq176+eq177}
0=-\tfrac{\al^3(\lmu-\lmf)}{(\al-1)^4(\al^2-3\al+1)}q(\al),
\end{equation}
where 
\begin{equation*}
    \label{q}
     q(\al)= \al^7-4\al^6+3\al^5+3\al^4-5\al^3+4\al^2-5\al+2.
\end{equation*}
As above, since $\al\neq 0$ and, by Equation~\eqref{lem2+2sym}, $\lmu-\lmf\neq 0$, Equation~\eqref{eq176+eq177} implies 
\begin{equation}\label{q=0}
q(\al)=0.
\end{equation}
By Equation~\eqref{eq2+2:1} and Lemma~\ref{s1s2}, we get   
 \begin{equation}\label{condR-2P}
0=\tfrac{4(\al-1)^3(\al^2-3\al+1)}{\al^2}(R-2P)-\tfrac{8(\al-1)^4(\al^2-3\al+1)}{\al^3(\lmu-\lmf)}B .
\end{equation}
%
Evaluating the expressions of $R$, $P$, and $B$ given in Chapter~\ref{ch1}, for  the values of $\bt$, $\lmf$, $\lmd$, and $\lmdf$ given in Equations~\eqref{mario2}, \eqref{2+2:lmf}, and \eqref{mario1}, respectively, Equation~\eqref{condR-2P}  becomes  
 \begin{equation}\label{condR-2Pconti}
 \begin{split}
0
&= \left (16(\al-1)(\al^2-3\al+1)\lmu^2-8\al^2(2\al^2-7\al+4)\lmu \right .\\
&\phantom{{}+{}} \quad \left . +\al^3(\al^3-\al^2-5\al-2)\right )\\
&\phantom{{}+{}} \quad -\left (16(\al-1)^2(\al^2-3\al+1)\lmu^2-8(\al-1)\al^2(2\al^2-7\al+4)\lmu \right .\\
&\phantom{{}+{}} \quad \left .+\al^3(4\al^6-16\al^5+11\al^4+12\al^3-24\al^2+\al+4) \right ) \\
&=-2\al^3(\al+1)t(\al)
\end{split}
 \end{equation}
%
where 
\begin{equation*}
    \label{t}
    t(\al):=
  2\al^5-10\al^4+15\al^3-8\al^2-2\al+1.
\end{equation*}
Since $p(-1)=52$ and $q(-1)=-11$, $-1$ is not a common root of $p$ and $q$. Therefore, Equations~\eqref{p=0} and~\eqref{q=0} imply that $\al\neq -1$. 
  Since also $\al\neq 0$, by Equation~\eqref{condR-2Pconti} it follows that
\begin{equation}\label{condal}
   t(\al)=0.
\end{equation}
 The resultant between $p(\al)$ and $q(\al)$ is equal to $2^9\cdot 36559$, while the resultant between $p(\al)$ and $t(\al)$ is equal to $2^7\cdot 22709$. Since their only common prime factor is $2$ and $\ch(\F)\neq 2$, there is no value of $\al$ satisfying Equations~\eqref{p=0}, ~\eqref{q=0}, and \eqref{condal}, a contradiction. \end{proof}

\begin{lemma}\label{lob2}
 $\al\neq  4\bt$. 
 \end{lemma}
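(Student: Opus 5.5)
We argue by contradiction, assuming $\al=4\bt$, and mirror the structure of the proof of Lemma~\ref{lob}, with the identities coming from $B,B^f$ replaced by those coming from $C,C^f$.

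First we fix the data. By Lemma~\ref{lem3C}, both $\V_e$ and $\V_o$ are isomorphic to $3\C(\al)$ or $3\C(-1)^\times$. Hence, by Lemma~\ref{sub3C}, $\lmd=\lmdf=\tfrac{\al}{2}=2\bt$. Moreover $\lmu\neq\lmf$ by Equation~\eqref{lem2+2sym}. Since $\al=4\bt$, we have $P=0$, so Equation~\eqref{eq2+2:1} gives $R=R^f=0$. Now $R-R^f=0$ together with Lemma~\ref{newQ-Q^f}\ref{newQ-Q^f_3} (where the $\lmd-\lmdf$ term vanishes) lets us divide by $\lmu-\lmf$. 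This gives a linear relation expressing $\lmu+\lmf$ in terms of $\bt$, unless its coefficient $-3\al^2+4\al\bt+\al-2\bt$ vanishes; that degenerate case is treated separately as below.

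Next we collect independent polynomial conditions in $\bt$ and the symmetric functions of $\lmu,\lmf$.
\begin{enumerate}
\item From Equation~\eqref{eq2+2:2}: $C=C^f=0$. Take $C-C^f$ and $C+C^f$. Since $C$ is quadratic in $\lmu,\lmf$, the difference $C-C^f$ should factor as $(\lmu-\lmf)$ times a linear form, so dividing by $\lmu-\lmf$ gives a second linear relation on $\lmu+\lmf$. Combining it with the relation from $R-R^f$, either $\bt$ satisfies a polynomial equation, or $\lmu+\lmf$ is determined.
\item $R=0$ alone (substituting $\lmd=2\bt$) gives a quadratic in $\lmu$. With $\lmu+\lmf$ known, this determines $\lmu\lmf$ as a function of $\bt$.
\item Finally substitute everything into the sums of Equations~\eqref{eq2+2:proj1}, \eqref{eq2+2:proj2} and of Equations~\eqref{eq2+2:a0 mult}, \eqref{eq2+2:a1 mult}. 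Factor out $\lmu-\lmf\neq 0$ wherever it appears.
\end{enumerate}
This yields a small system of univariate polynomials in $\bt$.

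The contradiction comes from showing this system has no common root. Compute pairwise resultants and check that they only have prime factors $2$ and $3$, say. Then either use $\ch(\F)\neq 2$, or handle the residual characteristics directly: for instance $\ch(\F)=3$ forces $\al=4\bt=\bt$, which is excluded. Also discard the roots excluded by $\bt\notin\{0,1\}$, $\al\neq1$ and $\al\neq 2\bt$.

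The special value $\al=2$, $\bt=\tfrac12$ needs separate care, since there the relations degenerate. Here we would use $\lmd=\lmdf=1$ and aim to show $\lmu=\lmf=1$ from $C=C^f=0$ and $R=0$. Then $\V$ is of $\mathcal H$-type, and Theorem~\ref{HWthm} contradicts Hypothesis~\ref{Hyp2}\ref{Hyp2_2}; alternatively $\lmu=\lmf$ directly contradicts \eqref{lem2+2sym}.

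The main obstacle is the elimination step. It requires explicit simplification of $C,C^f,Q,Q^f,S,S^f,T,U,J,J^f,L$ at $\al=4\bt$ and resultant computations that must be done by computer. We must also make sure that the degenerate subcases (vanishing leading coefficients when solving for $\lmu+\lmf$) are each checked separately rather than silently divided out.
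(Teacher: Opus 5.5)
Your set-up is the paper's own. You assume $\al=4\bt$, so $\ch(\F)\neq 3$. Lemma~\ref{lem3C} gives $\lmd=\lmdf=\tfrac{\al}{2}$. Cancelling $\lmu-\lmf$ in $R-R^f=0$ gives $(16\bt-1)(\lmu+\lmf)=2\bt(22\bt-1)$; the degenerate case $16\bt=1$ forces $\tfrac{3}{64}=0$, which is impossible. Then you add \eqref{eq2+2:proj1} to \eqref{eq2+2:proj2} and \eqref{eq2+2:a0 mult} to \eqref{eq2+2:a1 mult}.

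Your steps (1) and (2) add nothing. At $\al=4\bt$ one has $R=-8\bt C$ and $R^f=-8\bt C^f$, so $C-C^f$ reproduces the $R-R^f$ relation. Moreover, the two summed equations are antisymmetric and quadratic in $\lmu,\lmf$, so after cancelling $\lmu-\lmf$ they depend only on $\lmu+\lmf$; $\lmu\lmf$ is never needed.

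The genuine gap is in the elimination, whose outcome you predict wrongly. After substitution and cancelling the nonzero factors $\bt^3(\lmu-\lmf)$ and $\bt^4(\lmu-\lmf)$, the two sums become $248\bt^2+2\bt+11=0$ and $320\bt^2-40\bt-19=0$. Their resultant is $2^8\cdot 3^4\cdot 29\cdot 479$, not a product of $2$s and $3$s.

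\begin{itemize}
\item In characteristic $29$ there is no problem, since the common root is $\bt=1$, which is excluded.
\item In characteristic $479$ there is a genuine common root $\bt=-200$, giving $\al=158$, $\lmd=\lmdf=79$ and $\lmu+\lmf=136$.
\end{itemize}

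At these values every relation in your plan holds. The two sums vanish identically. $R=R^f=0$, equivalently $C=C^f=0$, only fixes $\lmu\lmf$, and the resulting quadratic for $\lmu$ has nonzero discriminant, so $\lmu\neq\lmf$ is allowed. Hence ``handle the residual characteristics directly'' cannot be done with the equations you list. You need the unsymmetrized relations, as the paper uses: \eqref{eq2+2:proj1} and \eqref{eq2+2:a0 mult} taken separately, with $\lmf=136-\lmu$, give $83\lmu^2+208\lmu+4$ and $50\lmu^2-94\lmu-172$, which have nonzero resultant modulo $479$.

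Your separate treatment of $(\al,\bt)=(2,\tfrac12)$ is unnecessary and would not work as proposed. Nothing degenerates there: the two quadratics in $\bt$ take the values $74$ and $41$, which share no prime factor, so the general argument already covers this case. Also, $C=C^f=0$ does not yield $\lmu=\lmf=1$: with $\lmu\neq\lmf$ it gives $7(\lmu+\lmf)=10$, which excludes $\lmu=\lmf=1$ outright.
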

\begin{proof}
Assume for a contradiction that  
\begin{equation}\label{alfa}
\al=4\bt
\end{equation} 
(whence $\ch(\F)\neq 3$, since $\al\neq \bt$). By Lemma~\ref{lem3C}, $\{\V_o,\V_e\}\subseteq \{ 3\C(\al), 3\C(-1)^\times\}$, whence 
\begin{equation}
    \label{Mario1}
    \lmd=\lmdf=\tfrac{\al}{2}.
\end{equation} 
By Equations~\eqref{eq2+2:1}, \eqref{lem2+2sym}, and Lemma~\ref{newQ-Q^f}\ref{newQ-Q^f_3}, we get
 \begin{equation}
  0=\tfrac{1}{8\bt(\lmu-\lmf)}(R-R^f)=
  (16\bt-1)(\lmu+\lmf)-2\bt(22\bt-1). 
 \end{equation}
 As in the proof of Lemma~\ref{lob}, $16\bt-1\neq0$, otherwise  $0=2\bt(22\bt-1)=-\tfrac{3}{64}$, a contradiction, since $\ch(\F)\neq 3$. Hence 
 \begin{equation}\label{2+2:lmf4bt}
     \lm+\lmf=\tfrac{2\bt(22\bt-1)}{16\bt-1}.
 \end{equation}
 Taking the sum of Equations~\eqref{eq2+2:proj1} and~\eqref{eq2+2:proj2} and evaluating the expressions of $P$, $Q$, $Q^f$, $S$, $S^f$, $T$, and $U$ given on page~\pageref{equa1} for the values of $\al$, $\lmd$, $\lmdf$, and $\lmu+\lmf$ given in Equations~\eqref{alfa}, \eqref{Mario1}, and~\eqref{2+2:lmf4bt}, respectively, we obtain
\begin{equation}\label{eq173+eq174bis}
0=\tfrac{16\bt^3(\lmu-\lmf)}{16\bt-1}(248\bt^2+2\bt+11).
\end{equation}
Similarly, taking the sum of Equations~\eqref{eq2+2:a0 mult} and~\eqref{eq2+2:a1 mult} and evaluating the expressions of $P$, $Q$, $Q^f$, $S$, $S^f$, $T$, $U$, $J$, $J^f$, and $L$ given on page~\pageref{equa1} for the values of $\al$, $\lmd$, $\lmdf$, and $\lmu+\lmf$ given in Equations~\eqref{alfa}, \eqref{Mario1}, and~\eqref{2+2:lmf4bt}, respectively, we obtain 
\begin{equation}\label{eq176+eq177bis}
0=-\tfrac{16\bt^4(\lmu-\lmf)}{16\bt-1}(320\bt^2-40\bt-19).
\end{equation}
Since, by hypothesis, $\bt\neq 0$ and, by Equation~\eqref{lem2+2sym}, $\lmf\neq \lmu$, from Equations~\eqref{eq173+eq174bis} and~
\eqref{eq176+eq177bis} it follows that 
\begin{equation}\label{ch479}
248\bt^2+2\bt+11=0 \quad \mbox{ and }\quad 320\bt^2-40\bt-19=0.
\end{equation}
The resultant between $ 248\bt^2+2\bt+11$ and $320\bt^2-40\bt-19$ is equal to $2^8\cdot  3^4\cdot 29\cdot 479$. Hence $\ch(\F)\in \{29, 479\}$. If $\ch(\F)=29$, then 
\[
0=248\bt^2+2\bt+11=3(\bt+12)(\bt-1) \quad \mbox{ and }\quad 0=320\bt^2-40\bt-19=(\bt-1)(\bt-10)
\]
imply $\bt=1$, which not allowed. Finally, let $\ch(\F)=479$. Then Equation~\eqref{ch479} becomes
\[
0=17(\bt+82)(\bt+200) \quad \mbox{ and }\quad 0=-159(\bt+200)(\bt+219),
\]
whence $\bt=-200$, $\al=158$, $\lmd=\lmdf=79$, and $\lmf=-\lmu+136$. Substituting these values in Equations~\eqref{eq2+2:proj1} and~
\eqref{eq2+2:a0 mult} we get that $\lmu$ is a common root of the polynomials
\[
83\lmu^2+208\lmu+4 \quad \mbox{ and } \quad 50\lmu^2-94\lmu-172,
\]
a contradiction since these polynomials have non zero resultant in characteristic $479$.
\end{proof}

\begin{proof}[Proof of Theorem~\ref{teo2}]
Let $\V$ be as in Theorem~\ref{teo2}, that is $\al\neq 2\bt$ and $\V$ has axet $X(2+2)$.
The result follows, since, by Corollary~\ref{alneqonehalf} and Lemmas~\ref{lob} and~\ref{lob2}, there is no algebra $\V$ satisfying Hypothesis~\ref{Hyp2} with $n=2$. 
    \end{proof}

\section{Algebras with axet $X(3+3)$}\label{chap:axet3}

In this section we assume that $\V$ satisfies Hypothesis~\ref{Hyp2} with $n=3$. 
By Remark~\ref{rem:reg},
\begin{equation}\label{hyp}
\begin{split}
 & a_{i}=a_{j}  \quad \mbox{for every}\quad i,j\in \Z \quad \mbox{with}\quad i\equiv_6 j,  \\
& a_i\neq a_j \quad \mbox{for every}\quad i,j\in \Z \quad \mbox{with}\quad i\not \equiv_6 j .
\end{split}
\end{equation}
In particular, $a_0-a_{-2}=a_0-a_4$ and $a_1-a_{-1}=a_1-a_5$, whence 
\begin{equation}\label{odd**}
V_e^{\ast\ast}=V_e^\ast\quad \mbox{and}\quad V_o^{\ast\ast}=V_o^\ast.
\end{equation}
We start by listing the symmetric algebras with axet $X(3)$. 

\begin{lemma}\label{axet3}
Let $\mathcal{W}$ be a symmetric $2$-generated $\mathcal{M}(\al,\bt)$-axial algebra over a field $\F$, with  axet $X(3)$. Then $\mathcal{W}$ is isomorphic to one of the following
\begin{enumerate}
 \item  $3\C(\bt)$ or, when $\bt=-1$,   $3\C(-1)^\times$;
 \item  $3\A(\al,\bt)$ or $3\A(\al, \tfrac{1-3\al^2}{3\al-1})^\times$.
\end{enumerate}
\end{lemma}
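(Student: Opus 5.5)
The plan is to go through the Classification Theorem of the Symmetric Algebras and discard every entry whose axet is not $X(3)$, reading the axet off the Notes of the tables in Section~\ref{thetables} and off the relevant lemmas of Chapter~\ref{known}. Since $\mathcal W$ is symmetric, the theorem says it is a quotient of one of the listed algebras. The first step is to recall that, by Lemma~\ref{quotient}, a quotient by an ideal not containing the generating axes has an axet that is a quotient of the original one. Concretely, the size can only drop if two axes $a_i,a_j$ become congruent modulo the ideal. So, for each family, I will check directly which quotients have exactly three axes.

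Jordan type cases. If $\mathcal W$ has Jordan type $\al$, then $\Miy(\mathcal W)$ is trivial and the axet has size at most $2$, so this case is excluded. For Jordan type $\bt$ I use Note~9 of Table~\ref{table2}. The algebras $1\A$ and $2\B$ have axets $X(1)$ and $X(2)$. The algebras $3\C(\bt)$ and $3\C(-1)^\times$ have axet $X(3)$ (compare Lemma~\ref{sub3C}\ref{sub3C_2}). For $\J(\delta)$ and $\J(0)^\times$ I will use Lemma~\ref{subJ}\ref{subJ_2}, which gives the recursion $a_{i+1}+a_{i-1}=(2+8\delta)a_i-4s_{\tz,1}$. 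Imposing $a_3=a_0$ and $a_{-1}=a_2$ and comparing coefficients in the basis $(a_0,a_1,s_{\tz,1})$ forces specific values of $\delta$. By Note~\ref{table2 3C J}, these should be exactly the cases where $\J(\delta)\cong 3\C(\tfrac12)$, which are already on the list.

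Non-Jordan cases. Using the Notes on axets, $3\A$ and its quotient have axet $X(3)$. The families $4\A$, $4\B$, $4\J$, $4\Y$ have axet $X(4)$, $5\A$ has $X(5)$, and $6\A$, $6\J$, $6\Y$ have $X(6)$. The task is to show that proper quotients of these that keep $a_0,a_1$ distinct cannot collapse to three axes. For the finite-dimensional families, their ideals are classified in~\cite{MM}. The approach is to note that identifying $a_i=a_{i+3}$ in, say, a $6$-family means that the ideal contains $a_0-a_3$. That element generates a large ideal, typically containing $a_0$ or forcing the quotient into one of $3\C(\bt)$ or $3\A$ (for instance, in $6\A$ the quotient by $a_0-a_3$ should be $3\A$). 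For the $4$- and $5$-families, $3\nmid$ the orbit length, so a collapse to three axes would force further identifications and hence a smaller axet. The families $\IY_3$, $\IY_5$, $\mathcal H$ and $\hatH$ are handled through Lemmas~\ref{subIY3}\ref{subIY3_2} and~\ref{subIY5}\ref{subIY5_2}, together with~\cite{HWQ}. In each case, setting $a_3=a_0$ and $a_{-1}=a_2$ in the linear recursion yields a linear relation on the basis. By Note~\ref{tableIY3 3A}, that relation either fixes $\mu=-\tfrac12$, giving $\IY_3(\al,\tfrac12;-\tfrac12)\cong 3\A(\al,\tfrac12)$, or lands in a quotient already listed.

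The main obstacle is the bookkeeping of quotients, particularly for $\IY_3$, $\IY_5$ and the Highwater quotients, whose axets depend on the characteristic. Here I will rely on the axial-dimension classification in~\cite[Theorem~9.5]{HWQ} and Lemma~\ref{subH}. An axet $X(3)$ means axial dimension at most $3$ and $a_3=a_0$. Checking the short list in Lemma~\ref{subH} shows that only $3\C(2)$, or $\J(0)$-type algebras with $p=3$, can occur, and these are covered by case (a) or excluded.
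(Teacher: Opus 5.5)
Your overall route is the paper's: scan the Classification Theorem of the Symmetric Algebras, read the axets off the Notes in Section~\ref{thetables}, and treat $\IY_3$ (where $\mu=-\tfrac12$ gives $3\A(\al,\tfrac12)$) and the Highwater quotients separately. Your recursions for $\J(\delta)$, $\IY_3$ and $\IY_5$ work, and so does the order-of-$\bar\rho$ argument for the $4$- and $5$-families. The Highwater paragraph, however, is wrong as stated, in both directions.

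\begin{enumerate}
\item Since $(\al,\bt)=(2,\tfrac12)$ and $\al\neq\bt$, we have $\ch(\F)\neq 3$. So no ``$\J(0)$-type algebra with $p=3$'' can occur.
\item $3\C(2)=\hatH/(a_0-a_2)$ has $\eta=2=\al$. It is of Jordan type $\al$, with trivial Miyamoto group and axet $X(2)$. It therefore does not have axet $X(3)$, and it is not $3\C(\bt)$ either, so it is not ``covered by case (a)''.
\item You miss the quotient that actually occurs. By Lemma~\ref{subIY3}\ref{subIY3_2}, the entry $\IY_3(2,\tfrac12;\mu)$ of Lemma~\ref{subH} satisfies $a_2=a_{-1}$ if and only if $\mu=-\tfrac12$. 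This gives $\hatH/(a_0-a_3)\cong\IY_3(2,\tfrac12;-\tfrac12)\cong 3\A(2,\tfrac12)$. The other entries are excluded: $\J(0)$, $\J(0)^\times$ and $\IY_3(2,\tfrac12;1)^\times$ would need characteristic $3$, and $I\mathcal H_3$ has axet $X(\infty)$ or $X(2p)$ by Lemma~\ref{subIH3}.
\end{enumerate}

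This is exactly the point the paper makes using \cite{HWQ}: the maximal quotient of $\hatH$ with axet $X(3)$ is $\hatH_3=\mathcal H_3\cong 3\A(2,\tfrac12)$. Since that algebra lies in case (b), the lemma survives once you correct this step. As you wrote it, though, the check is false.

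Your parenthetical claim that the quotient of $6\A$ by $a_0-a_3$ ``should be $3\A$'' is also false. Use $a_0a_3=\tfrac{\al}{2}(a_0+a_3-c)$ and $a_0c=\tfrac{\al}{2}(a_0+c-a_3)$. Any ideal $I$ containing $a_0-a_3$ then satisfies $\tfrac{\al}{2}c\equiv(\al-1)a_0$, and multiplying by $a_0$ gives $(\al-1)(\al-2)a_0\in I$.
\begin{itemize}
\item If $\al\neq 2$, then $a_0\in I$.
\item If $\al=2$, the $X(3)$ condition $a_i\equiv a_{i+3}$ for all $i$ gives $c\equiv a_i$ for every $i$. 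Hence $a_0\equiv a_1$.
\end{itemize}
So no quotient of $6\A$ has axet $X(3)$. In any case you do not need an ideal analysis for the finite families. Section~\ref{sec:symmetric} lists all their symmetric quotients up to isomorphism, and the Notes record axets $X(4)$, $X(5)$ or $X(6)$ for each of them. That is how the paper disposes of these cases in one line.
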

\begin{proof}
The result follows by the Classification Theorem of the Symmetric Algebras (page~\pageref{symmetric} in the Introduction), Section~\ref{thetables} and~\cite[Section 7.3]{axet}. Note that the maximal quotient of the  algebra $\hatH$ with axet $X(3)$ is $\hatH_3$ (see the definition in~\cite[p. 469]{HWQ}) which, by~\cite[Corollary~10.1 and  Lemma~11.4]{HWQ}, is equal to $\mathcal H_3\cong 3\A(2,\tfrac{1}{2})$. Moreover, by Note~\ref{tableIY3 3A} in Table~\ref{tableIY3bis},  $\IY_3(\al, \tfrac{1}{2}; -\tfrac{1}{2})\cong 3\A(\al,\tfrac{1}{2})$.
\end{proof}

Let $P$, $Q$, $Q^f$, $R$, $R^f$, $A$, $A^f$, $B$, $B^f$ be as defined in Chapter~\ref{ch1} on pages~\pageref{primo}, \pageref{A}, and \pageref{evaluation}.

\begin{lemma}\label{cases}
The following identities hold in the algebra $\V$:
\begin{equation}\label{P-R}
(P-R)(a_2-a_0)=Q(a_3-a_{-1})
\end{equation}
and 
\begin{equation}\label{P-R^f}
(P-R^f)(a_{-1}-a_1)=Q^f(a_{2}-a_{-2}),
\end{equation}
\end{lemma}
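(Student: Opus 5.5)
The plan is to obtain both identities by specialising the general relations \eqref{equa1} and \eqref{equa2} of Chapter~\ref{ch1} to the axet $X(3+3)$. These relations hold in $\V$ after evaluation by $\varphi$, as explained at the beginning of Section~\ref{evaluation}. The only input from the present hypothesis is Equation~\eqref{hyp}, namely $a_i=a_j$ whenever $i\equiv_6 j$.

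For the first identity, Equation~\eqref{equa1} reads
\[
P(a_{-2}-a_4)+(P-R)(a_2-a_0)=Q(a_3-a_{-1}).
\]
Since $-2\equiv_6 4$, Equation~\eqref{hyp} gives $a_{-2}=a_4$. The $P$-term therefore vanishes, and what remains is exactly Equation~\eqref{P-R}.

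For the second identity, I start from Equation~\eqref{equa2}:
\[
P(a_3-a_{-3})+(P-R^f)(a_{-1}-a_1)=Q^f(a_{-2}-a_2).
\]
Here $3\equiv_6 -3$, so $a_3=a_{-3}$ by Equation~\eqref{hyp}, and the first term drops out. This leaves
\[
(P-R^f)(a_{-1}-a_1)=Q^f(a_{-2}-a_2).
\]
To reach the orientation $a_2-a_{-2}$ of Equation~\eqref{P-R^f}, my first attempt will be to apply an element of $\Miy(\V)$ that inverts the differences. The natural candidate is $\tau_0$, which fixes the coefficients and sends $a_i\mapsto a_{-i}$.

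The main obstacle is this orientation step, and I expect it to fail. Applying $\tau_0$ negates both $a_{-1}-a_1$ and $a_{-2}-a_2$ simultaneously, so it reproduces the same relation rather than the stated one. I also plan to re-derive Equation~\eqref{equa2} directly as a check: take the difference between Equation~\eqref{D} and its image under $\tau_0$, using that $s_{\tz,1}$ and $s_{\tu,2}$ are $\tau_0$-invariant by Lemma~\ref{invariant}. I expect this again to give the right-hand side $Q^f(a_{-2}-a_2)$.

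If that check confirms the sign, then the literal form $Q^f(a_2-a_{-2})$ in Equation~\eqref{P-R^f} is a sign slip. It would be equivalent to the derived relation only when $Q^f(a_2-a_{-2})=0$, and since $a_2\ne a_{-2}$ that would force $Q^f=0$. In that case I would prove the statement in the form $(P-R^f)(a_{-1}-a_1)=Q^f(a_{-2}-a_2)$. I would then check that every later use of Equation~\eqref{P-R^f} in this section only needs the proportionality, for instance the conclusion $V_o^\ast=V_e^{\ast\ast}$ when $Q^f\ne0$, or $R^f=P$ when $Q^f=0$. Both conclusions are insensitive to the sign of the right-hand side.
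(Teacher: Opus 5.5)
Your proposal is correct and is the paper's argument: the paper's proof consists of specialising \eqref{equa1} and \eqref{equa2} via \eqref{hyp} (so $a_{-2}=a_4$ and $a_3=a_{-3}$). Your sign check is also right: \eqref{equa2} (equivalently, the $\tau_0$-difference of \eqref{D}, or the image of \eqref{equa1} under $\mathbf f$) gives $(P-R^f)(a_{-1}-a_1)=Q^f(a_{-2}-a_2)$, so the printed right-hand side $Q^f(a_2-a_{-2})$ is a sign slip. The slip is harmless, because Lemma~\ref{end} only uses that $a_{-1}-a_1$ and $a_{-2}-a_2$ are proportional when $Q^f\neq 0$, and that $P=R^f$ when $Q^f=0$.
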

\begin{proof}
Equations~\eqref{P-R} and~\eqref{P-R^f} follow immediately from Equations~\eqref{equa1}, \eqref{equa2} on page~\pageref{equa1}, and~\eqref{hyp}.  
\end{proof}

\begin{lemma}\label{end}
$V_e^\ast=V_o^\ast$.
\end{lemma}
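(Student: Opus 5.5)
The plan is to use Lemma~\ref{cases} together with Equation~\eqref{odd**}. By Lemma~\ref{lem:VeVo}, $V_e^\ast$ is the subalgebra generated by the $\Miy(\V)$-orbit of $a_0-a_2$, and $V_o^{\ast\ast}=V_o^\ast$ is generated by the orbit of $a_3-a_{-1}$. Symmetrically, $V_o^\ast$ is generated by the orbit of $a_{-1}-a_1$, and $V_e^{\ast\ast}=V_e^\ast$ by the orbit of $a_2-a_{-2}$.

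First suppose $Q\neq 0$ and $P\neq R$. Then Equation~\eqref{P-R} makes $a_0-a_2$ and $a_3-a_{-1}$ nonzero multiples of each other, since both are nonzero by Equation~\eqref{hyp}. Hence $V_e^\ast=V_o^{\ast\ast}=V_o^\ast$. The same argument applies to Equation~\eqref{P-R^f} when $Q^f\neq 0\neq P-R^f$. It remains to exclude the degenerate cases. If exactly one side of Equation~\eqref{P-R} vanishes, the nonzero vector $a_2-a_0$ or $a_3-a_{-1}$ would be $0$, which is impossible. So either both coefficients in Equation~\eqref{P-R} are nonzero, or $Q=0$ and $P=R$. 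Likewise, either $Q^f\neq 0\neq P-R^f$, or $Q^f=0$ and $P=R^f$.

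The main obstacle is the case $Q=Q^f=0$, $R=R^f=P$. First, $Q=Q^f$ gives, by Lemma~\ref{newQ-Q^f}\ref{newQ-Q^f_1}, either $\lmu=\lmf$ or $\bt=\tfrac{\al(\al-1)}{2(\al-2)}$. Second, $R=R^f$ combined with Lemma~\ref{newQ-Q^f}\ref{newQ-Q^f_3} gives a relation between $\lmd-\lmdf$ and $\lmu-\lmf$.

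Next I use Lemma~\ref{axet3}. Both $\V_e$ and $\V_o$ are among $3\C(\bt)$, $3\C(-1)^\times$, $3\A(\al,\bt)$ and $3\A(\al,\tfrac{1-3\al^2}{3\al-1})^\times$. Lemmas~\ref{sub3C} and~\ref{sub3A} then give $\lmd,\lmdf\in\{\tfrac{\bt}{2},\tfrac{3\al^2+3\al\bt-\al-2\bt}{4(2\al-1)}\}$. Note that $P=0$ forces $\al=4\bt$, because $\al\neq\bt$. Substituting these values into $Q=0$, $Q^f=0$ (Lemma~\ref{newQ-Q^f}\ref{eqQ=0}) and $R=P$ yields finitely many polynomial constraints in $\al,\bt,\lmu,\lmf$.

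I then split according to whether $\lmu=\lmf$:
\begin{itemize}
\item If $\lmu=\lmf$, then $\lmd=\lmdf$. In the case $(\al,\bt)\neq(2,\tfrac{1}{2})$, Proposition~\ref{reg}\ref{reg_1} makes $\V$ a quotient of a symmetric algebra, contradicting Hypothesis~\ref{Hyp2}\ref{Hyp2_2}. In the case $(\al,\bt)=(2,\tfrac{1}{2})$, I expect Corollary~\ref{reduction} (with $\al=4\bt$, $Q=Q^f=0$) to finish.
\item If $\lmu\neq\lmf$, I plan a resultant computation, as in Lemma~\ref{lob}, on the polynomial system above. I expect this to show there are no admissible solutions, except possibly in small characteristics, which would be checked by hand.
\end{itemize}
If this computation leaves residual solutions, I would try additionally applying Lemma~\ref{s1s2} ($A$, $B$ relations), using $a_{-2}-a_2=a_4-a_2$.

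Finally, the mixed cases still need handling: for instance, Equation~\eqref{P-R} nondegenerate while $Q^f=0$. In that case the first paragraph already gives $V_e^\ast=V_o^\ast$. So the degenerate case is the only real work.
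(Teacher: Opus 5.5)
Your treatment of the non-degenerate cases and of the branch $\lmu=\lmf$ is fine. The branch $Q=Q^f=0$, $R=R^f=P$, $\lmu\neq\lmf$, however, has a genuine gap: you aim for a contradiction there, and that cannot be expected to work.

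In that branch, $Q-Q^f=2(\al-\bt)(\al^2-2\al\bt-\al+4\bt)(\lmu-\lmf)$ forces $\bt=\tfrac{\al(\al-1)}{2(\al-2)}$. This automatically gives $\al\neq 4\bt$. Adding $Q=0$ and $Q^f=0$ then only fixes $\lmu+\lmf$ as a function of $\al$. For instance, if $\lmd=\lmdf$, the condition $R=R^f$ collapses to a single nonconstant polynomial equation in $\al$. The condition $R+R^f=2P$ then determines $(\lmu-\lmf)^2$. So the scalar system you propose to feed into a resultant computation is essentially square and will in general have solutions over suitable fields. Your fallback via Lemma~\ref{s1s2} is also framed as adding constraints towards a contradiction. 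The $A,B$ relations, however, are relations between vectors of $V_e$ and $V_o$, not further equations on the scalars. The reduction to the $X(3)$ list and the values of $\lmd,\lmdf$ from Lemma~\ref{axet3} are not needed at all.

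The missing idea is to prove the conclusion directly in this branch rather than to exclude it. The argument runs as follows.
\begin{enumerate}
\item Since $a_{-2}\neq a_2$ and $a_{-1}\neq a_1$, and $\al\notin\{2\bt,4\bt\}$, Lemma~\ref{s1s2} gives $A=0\iff B=0$ and $A^f=0\iff B^f=0$.
\item If $B=B^f=0$, then $A=A^f=B=B^f=0$, and Lemma~\ref{lemma:A+Af} forces $\lmu=\lmf$, which contradicts the branch assumption.
\item Hence $B\neq 0$ or $B^f\neq 0$. Lemma~\ref{Ve*Vo*} then gives $V_o^\ast=V_e^{\ast\ast}$ or $V_e^\ast=V_o^{\ast\ast}$.
\item By Equation~\eqref{odd**}, either equality is exactly $V_e^\ast=V_o^\ast$.
\end{enumerate}
The paper first disposes of $\al=4\bt$ via Corollary~\ref{reduction}, so that Lemma~\ref{s1s2} applies. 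It then shows $\lmu\neq\lmf$ using Proposition~\ref{reg}, which is your $\lmu=\lmf$ branch.
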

\begin{proof}
Assume first that $$Q\neq 0.$$ Since by Equation~\eqref{hyp}, $a_2-a_0\neq 0$ and $a_3- a_{-1}\neq 0$, Equation~\eqref{P-R} implies that $P-R\neq 0$ and $\langle a_2-a_0\rangle=\langle a_3-a_{-1}\rangle$. By Lemma~\ref{lem:VeVo} it follows that $V_e^\ast=V_o^{\ast\ast}$  and  Equation~\eqref{odd**} implies that $V_e^{\ast}=V_o^\ast$. A similar argument applies if $\{P-R, P-R^f, Q, Q^f\}\neq \{0\}$, giving the result.
So we may assume that   
\begin{equation}\label{allzero}
\{ P-R,P-R^f, Q,Q^f \} = \{0\}.
\end{equation}
Since $\V$ has regular axet, Hypothesis~\ref{Hyp2}\ref{Hyp2_2} and Corollary~\ref{reduction} imply that 
\begin{equation*}\label{alneq4bt}
\al\neq 4\bt.
\end{equation*}
We claim that
\begin{equation}\label{claimlmnotlmf}
    \lmu\neq \lmf.
\end{equation}
Assume, for the sake of contradiction, that $\lmu=\lmf$.
Since $R-R^f=P-R^f-(P-R)$, Equation~\eqref{allzero} implies that 
$$R-R^f=0,$$
 thus, by Lemma~\ref{newQ-Q^f}\ref{newQ-Q^f_3},  $\lmd=\lmdf$. Then, by Proposition~\ref{reg}, $\V$ is isomorphic to a quotient of a symmetric algebra, contradicting  Hypothesis~\ref{Hyp2}\ref{Hyp2_2}, proving Equation~\eqref{claimlmnotlmf}. 
%
Let $A$, $A^f$, $B$, $B^f$ be as defined on page~\pageref{A} in Chapter~\ref{ch1}. Since $a_2\neq a_{-2}$ and $a_3\neq a_{-1}$, by Lemma~\ref{s1s2}, $A=0$ (respectively $A^f=0$) if and only if $B=0$  (respectively $B^f=0$). Thus, if $\{B,B^f\}=\{0\}$, then  $\{A,A^f, B, B^f\}= \{0\}$ and Lemma~\ref{lemma:A+Af} implies $\lmu=\lmf$, contradicting Equation~\eqref{claimlmnotlmf}. So $\{B,B^f\}\neq \{0\}$ and the result follows from  Lemma~\ref{Ve*Vo*} and Equation~\eqref{hyp}. 
\end{proof}


By Lemma~\ref{end} and Hypothesis~\ref{Hyp2}\ref{Hyp2_3}, we are left with the case when 
\begin{equation}
    \label{eq*=**3}
  V_e^\ast < V_e,\;\;\;V_o^{\ast}< V_o, \;\;\mbox{ and }\;\; V_e^\ast=V_o^\ast .  
\end{equation}
\begin{lemma}\label{possipairs3}
Assume Equation~\eqref{eq*=**3} is satisfied. Then 
\begin{enumerate}
    \item  up to isomorphism of its components, the unordered pair $(\V_e, \V_o)$ is  one of those listed in the first column of Table~\ref{tableposs3};
    \item  for each possible pair, the dimension of $V_e^\ast$ is given in the second column of Table~\ref{tableposs3}.
\end{enumerate}
\end{lemma}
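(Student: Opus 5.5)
By Lemma~\ref{axet3}, each of $\V_e$ and $\V_o$ is isomorphic to one of $3\C(\bt)$, $3\C(-1)^\times$ (when $\bt=-1$), $3\A(\al,\bt)$, or $3\A(\al,\tfrac{1-3\al^2}{3\al-1})^\times$. The plan is to go through this list and keep only the algebras $W$ with $W^\ast<W$, using the computations of $V^\ast$ already done in Chapter~\ref{known}.

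First, for $3\C(\bt)$, Lemma~\ref{sub3C}\ref{sub3C_4} gives $V^\ast=V$ unless $\bt=2$, and then $V^\ast=\langle a_0-a_1,a_0-a_2\rangle$ has dimension $2$. For $3\C(-1)^\times$ we have $\bt=-1$, so $W^\ast<W$ forces $\bt=-1=2$, that is $\ch(\F)=3$, and then $V^\ast=\langle a_0-a_1\rangle$ has dimension $1$. For $3\A(\al,\bt)$, Lemma~\ref{sub3A}\ref{sub3A_4} gives $V^\ast<V$ exactly when
\[
(3\al^2+3\al\bt-9\al-2\bt+4)(3\al+\bt-2)=0,
\]
and then $V^\ast=\langle a_0-a_1,a_0-a_2,(2\bt-1)a_1+s_{\tz,1}\rangle$ has dimension $3$. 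For the quotient $3\A(\al,\tfrac{1-3\al^2}{3\al-1})^\times$ we always have $V^\ast=V$, so it is excluded.

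Next we pair these up using $V_e^\ast=V_o^\ast$. Equal subspaces have equal dimensions, so the unordered pair must consist of two algebras with the same $\dim W^\ast$. This gives the following possible pairs:
\begin{enumerate}
\item $(3\C(-1)^\times,3\C(-1)^\times)$ with $\ch(\F)=3$, where $\dim V_e^\ast=1$;
\item $(3\C(2),3\C(2))$, where $\dim V_e^\ast=2$;
\item the pairs from $\{3\A(\al,\bt)\}$ with $(\al,\bt)$ on one of the two curves above, where $\dim V_e^\ast=3$.
\end{enumerate}
Both components share the same $(\al,\bt)$, so in the $3\A$ case the pair is $(3\A(\al,\bt),3\A(\al,\bt))$. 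One should check the case $\ch(\F)=3$, $\bt=-1=2$ separately, since there $3\C(2)=3\C(-1)$ and $3\C(-1)^\times$ coexist. Their $\ast$-dimensions are $2$ and $1$, so they cannot be paired with each other.

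The main obstacle is bookkeeping rather than mathematics. One has to make sure that the degenerate coincidences of parameters are handled consistently with how Table~\ref{tableposs3} is organised. These are $\bt=\tfrac12$, where $3\A(\al,\tfrac12)\cong\IY_3(\al,\tfrac12;-\tfrac12)$, and characteristic $3$. The second column of the table then records $\dim V_e^\ast$ as read off from Lemmas~\ref{sub3C} and~\ref{sub3A}.
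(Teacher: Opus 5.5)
Your proposal is correct and follows essentially the same route as the paper. Lemma~\ref{axet3} restricts both components to the $3\C$ and $3\A$ families, Lemmas~\ref{sub3C} and~\ref{sub3A} keep only those with $W^\ast<W$, and $V_e^\ast=V_o^\ast$ then forces the two components to have the same $\dim W^\ast$; your explicit check that $3\C(2)$ and $3\C(-1)^\times$ cannot be paired in characteristic $3$ (dimensions $2$ and $1$) is exactly the dimension comparison the paper invokes.
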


{\renewcommand{\arraystretch}{1.5}
\begin{table}[H]
{\Small
\[
\begin{array}{|l|c|}
\hline
\begin{array}{l}(\V_e, \V_o)\end{array} & \dim(V_e^\ast) \\
\hline
  \begin{array}{l} 
 \left (3\C(-1)^\times, \;3\C(-1)^\times\right ), \quad  \mbox{ with }\quad \bt=-1, \quad \ch(\F)=3
 \end{array} & 1 \\
 \hline
 \begin{array}{l}
 \left (3\C(2), \;3\C(2))\right ), \quad  \mbox{ with }\quad \bt=2
 \end{array} & 2
 \\
 \hline
 \begin{array}{l}
 \left ( 3\A(\al, \bt), 3\A(\al, \bt)\right ), \quad  \mbox{ with }\quad (3\al^2+3\al\bt-9\al-2\bt+4)(3\al+\bt-2)=0
 \end{array} & 3
\\
\hline
\end{array}
\]

\caption{{\small Possible pairs $(\V_e, \V_o)$ with $
 V_o\neq V_o^\ast=V_e^{\ast}\neq V_e.
$}
}\label{tableposs3}}
\end{table}

\begin{proof} By Lemma~\ref{lem:VeVo}, $\V_e$ and $\V_o$ are symmetric. Since they have both axet $X(3)$, by  Lemma~\ref{axet3}, $\V_e$ and $\V_o$ are isomorphic to one of the following algebras:
\[ 
3\C(\bt), \quad  3\C(-1)^\times, \quad 3\A(\al, \bt), \quad 3\A(\al, \tfrac{1-3\al^2}{3\al-1})^\times. 
 \]
Since, by Equation~\eqref{eq*=**3}, $V_e^\ast<V_e$ and $V_o^\ast<V_o$, Lemmas~\ref{sub3C},~\ref{subJ}, and~\ref{sub3A} imply that $\V_e$ (respectively $\V_o$) is isomorphic to one of the algebras appearing in the pairs listed in Table~\ref{tableposs3}. Since $V_o^\ast=V_e^\ast$, comparing the dimensions of $V_e^\ast$ and $V_o^\ast$, we get the result. 
\end{proof}

\begin{cor}\label{J(0)3}
$\dim(V_e^\ast)\neq 1$.
\end{cor}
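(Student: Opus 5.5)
To prove $\dim(V_e^\ast)\neq 1$, I would assume the contrary and follow the template of Lemma~\ref{Jx}. By Lemma~\ref{possipairs3}, the assumption forces $\V_e\cong\V_o\cong 3\C(-1)^\times$ with $\bt=-1$ and $\ch(\F)=3$. Note that then $\bt=-1=2=\al$ is excluded unless $\al\neq 2$, so $\al\notin\{0,1,2\}$. By Lemma~\ref{sub3C}, $V_e^\ast=\langle a_0-a_2\rangle$ and $V_o^\ast=\langle a_1-a_{-1}\rangle$. Equation~\eqref{eq*=**3} then gives some $x\in\F\setminus\{0\}$ with
\[
a_1-a_{-1}=x(a_2-a_0).
\]
Moreover, Lemma~\ref{sub3C} gives $\lmd=\lmdf=\tfrac{\bt}{2}=-\tfrac12=1$ (characteristic $3$).

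The first key step applies $\tau_0$ to this relation. Since $\tau_0$ fixes $a_0$, swaps $a_2$ with $a_{-2}=a_4$, and negates $a_1-a_{-1}$, we get $-(a_1-a_{-1})=x(a_4-a_0)$. Adding the two relations yields $0=x(a_2+a_4-2a_0)$. In characteristic $3$, $-2=1$, so $a_0+a_2+a_4=0$. But $3\C(-1)^\times$ is precisely the quotient of $3\C(-1)$ in which $a_0+a_2+a_4=0$ holds, so this relation is automatic and gives no contradiction. I would therefore use the relation instead to express $a_{-1}=a_1-x(a_2-a_0)$. This shows
\[
V=\langle a_0,a_1,a_2,s_{\tz,1}\rangle
\]
by Lemma~\ref{primo}, exactly as in Equation~\eqref{eqdim}.

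Next I would compare $s_{\tz,2}$ and $s_{\tu,2}$. In $3\C(-1)^\times$ we have $a_0a_2=-a_0-a_2=a_4$, so $s_{\tz,2}=a_0a_2+(a_0+a_2)=0$, and similarly $s_{\tu,2}=0$. Lemma~\ref{smult} therefore does not apply directly. Instead I would use Corollary~\ref{cor:lm2f}. Since $s_{\tu,2}=0$ and $s_{\tz,2}=0$, the corollary gives $0=\tfrac{2(\al-1)}{\al-\bt}\lmu(\lmu-\lmf)+(1-2\bt)\lmu+\bt\lmd-\bt$. With $\bt=-1$ and $\lmd=1$, the last two terms cancel and $1-2\bt=3=0$, so $\lmu(\lmu-\lmf)=0$. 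Symmetrically, $\lmf(\lmf-\lmu)=0$. Either way $\lmu=\lmf$.

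Finally, with $\lmu=\lmf$, $\lmd=\lmdf$, and $(\al,\bt)\neq(2,\tfrac12)$, Proposition~\ref{reg}\ref{reg_1} shows that $\V$ is a quotient of a symmetric algebra, contradicting Hypothesis~\ref{Hyp2}\ref{Hyp2_2}. The main obstacle I expect is justifying that the formula of Corollary~\ref{cor:lm2f} and Lemma~\ref{s} remain valid under these degenerate characteristic-$3$ values, in particular that $\al-\bt\neq0$. This holds because $\al\neq\bt$ by assumption.
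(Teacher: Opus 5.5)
Your argument is correct, and it takes a more direct route than the paper. The paper's proof is a two-line reduction. In characteristic $3$ one has $\bt=-1=\tfrac12$ and $3\C(-1)^\times\cong\J(0)^\times$, so the pair $(\V_e,\V_o)$ falls under Lemma~\ref{Jx}, whose running hypotheses (Hypothesis~\ref{hyp1} and Equation~\eqref{eq*=**}) hold in the present section.

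You instead re-prove the content of Lemma~\ref{Jx} inline for $3\C(-1)^\times$. You use the relation $a_1-a_{-1}=x(a_2-a_0)$, the vanishing of $s_{\tz,2}$ and $s_{\tu,2}$, and then Corollary~\ref{cor:lm2f} to obtain $\lmu=\lmf$. That corollary is exactly what the proof of Lemma~\ref{smult} uses. Incidentally, Lemma~\ref{smult} does apply here with $y=1$, since both sides are zero, so your remark that it does not is inaccurate, though harmless.

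Your finishing step is cleaner than the paper's. Lemma~\ref{Jx} establishes a spanning set and invokes \cite{FMS3}. You observe instead that in characteristic $3$ the exceptional pair $(\al,\bt)=(2,\tfrac12)$ would force $\al=\bt$, so Proposition~\ref{reg}\ref{reg_1} applies at once with $\lmu=\lmf$ and $\lmd=\lmdf=1$. As a result, your $\tau_0$ computation and the spanning-set claim $V=\langle a_0,a_1,a_2,s_{\tz,1}\rangle$ are never used and can be dropped. Your sentence about the excluded values of $\al$ is garbled; all you need is $\al\neq\bt=2$.

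The paper's route buys brevity by recycling an earlier lemma. Yours is self-contained and avoids having to check that the hypotheses of Chapter~\ref{ch:infinite} are in force.
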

\begin{proof}
Assume for a contradiction that $\dim(V_e^\ast)= 1$.
By Lemma~\ref{possipairs3},
$$
\V_e\cong \V_o \cong 3\C(-1)^\times, \;\;\;  \ch(\F)=3\;\;\;\mbox{ and }\bt=-1.
$$ 
In this case, by Table~\ref{table2},  $3\C(-1)^\times\cong \J(0)^\times$. The result then follows from  
Lemma~\ref{Jx}.  
\end{proof}

\begin{lemma}\label{bt=2}
$\dim(V_e^\ast)\neq 2$.
\end{lemma}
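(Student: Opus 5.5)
By Lemma~\ref{possipairs3}, if $\dim(V_e^\ast)=2$ then $\V_e\cong\V_o\cong 3\C(2)$ with $\bt=2$. I will derive a contradiction by copying the argument of Lemmas~\ref{JJ} and~\ref{Ve=IY3x}: show that the odd axes are determined by the even ones, deduce $\lmu=\lmf$, and conclude that $\V$ is a quotient of a symmetric algebra, which Hypothesis~\ref{Hyp2}\ref{Hyp2_2} forbids.

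First, Lemma~\ref{sub3C}\ref{sub3C_4} gives
\[
V_e^\ast=\langle a_0-a_2,\,a_0-a_4\rangle,\qquad V_o^\ast=\langle a_1-a_{-1},\,a_1-a_3\rangle .
\]
Lemma~\ref{sub3C}\ref{sub3C_1} gives $\lmd=\lmdf=1$, since $\bt=2$. Equation~\eqref{eq*=**3} says $V_e^\ast=V_o^\ast$, so $a_1-a_{-1}\in V_e^\ast$.

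Inside $V_e^\ast$, $\tau_0$ fixes $a_0$ and swaps $a_2$ and $a_{-2}=a_4$. Hence $\tau_0$ negates $a_2-a_{-2}$ and fixes $2a_0-a_2-a_{-2}$, and these two vectors form a basis of $V_e^\ast$. Since $\tau_0$ negates $a_1-a_{-1}$, there is $x\neq0$ with
\[
a_1-a_{-1}=x(a_2-a_{-2}).
\]
This expresses $a_{-1}$, and after applying $\tau_1$ also $a_3$, through $a_1$ and the even axes.

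Next I compare $s_{\tz,2}$ and $s_{\tu,2}$, which I expect to be the main point. In $3\C(2)$ the multiplication table gives $a_ia_j=a_i+a_j-a_k$ for distinct $i,j,k$. With $\bt=2$ this yields
\[
s_{\tz,2}=a_0a_2-2(a_0+a_2)=-(a_0+a_2+a_4),
\]
and similarly $s_{\tu,2}=-(a_1+a_3+a_5)$. The identity $s=-\tfrac12(\cdot)^2$ used earlier holds only for $\bt=\tfrac12$, so I will compute directly instead. I expand $(a_1-a_{-1})^2=x^2(a_2-a_{-2})^2$ using the $3\C(2)$ table in both subalgebras. The aim is a relation $s_{\tu,2}=y\,s_{\tz,2}$ with $y\neq0$, or at least $\lm_{a_0}(s_{\tu,2})=0=\lm_{a_1}(s_{\tz,2})$.

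Lemma~\ref{smult} assumes $\bt=\tfrac12$, so I cannot invoke it; I will rerun its argument with $\bt=2$. By Lemma~\ref{s}\ref{s_2} and $\lmd=\lmdf=1$ we get $\lm_{a_0}(s_{\tz,2})=-3$, which is nonzero, so the multiple relation does not by itself force the vanishing. Instead I will compute $\lm_{a_0}(s_{\tu,2})$ from the explicit expression of $s_{\tu,2}$ in even axes and compare it with Corollary~\ref{cor:lm2f}. With $\lmd=1$ and $\bt=2$, that corollary gives
\[
\lm_{a_0}(s_{\tu,2})=\tfrac{2(\al-1)}{\al-2}\lmu(\lmu-\lmf)-3\lmu-1 .
\]
Doing the same with $\lm_{a_1}$ should yield two equations that, combined, force $\lmu=\lmf$. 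If they do not, I will fall back on Lemma~\ref{s1s2} together with Equation~\eqref{claimlmnotlmf}-type bookkeeping, or on Equation~\eqref{equa3}, to obtain $B=B^f=0$ and $A=A^f=0$, and then apply Lemma~\ref{lemma:A+Af}. This step, pinning down $\lmu=\lmf$, is the expected obstacle.

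Once $\lmu=\lmf$ holds, $\lmd=\lmdf$ already holds, and Proposition~\ref{reg}\ref{reg_1} applies, because $\bt=2$ means $(\al,\bt)\neq(2,\tfrac12)$. It shows $\V$ is a quotient of a symmetric algebra, contradicting Hypothesis~\ref{Hyp2}\ref{Hyp2_2}.
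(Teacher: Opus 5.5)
Your outline matches the paper's at both ends: you obtain $a_1-a_{-1}=x(a_2-a_{-2})$ with $x\neq 0$, and you finish with Proposition~\ref{reg}\ref{reg_1}. However, the step that carries the proof, $\lmu=\lmf$, is left unproved, and the one explicit formula you give for it is wrong. With $\bt=2$ and $\lmd=1$, Corollary~\ref{cor:lm2f} gives $(1-2\bt)\lmu+\bt\lmd-\bt=-3\lmu$, not $-3\lmu-1$. With your constant, comparing against the direct value $-3\lmu$ would only yield $\tfrac{2(\al-1)}{\al-2}\lmu(\lmu-\lmf)=1$, which does not give $\lmu=\lmf$.

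Your first aim is also unattainable. From $V_o\cong 3\C(2)$ and your expressions for $a_{-1}$ and $a_3$ one gets $s_{\tu,2}=-(a_1+a_3+a_{-1})=-3a_1+x(a_0+a_2-2a_{-2})$. So outside characteristic $3$, a relation $s_{\tu,2}=y\,s_{\tz,2}$ would put $a_1\in V_e$, i.e.\ $V=V_e$. The fallback as described does not work either. Since $a_1-a_{-1}$ and $a_2-a_{-2}$ are proportional, Lemma~\ref{s1s2} yields only a linear relation between $A$ and $B$, not $A=B=0$; it also requires $\al\neq 4\bt=8$.

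Either of two short computations closes the gap.
\begin{enumerate}
\item With the corrected constant, your $\lambda$-comparison works without knowing $x$.
On the $a_0$ side, $a_3=a_1-x(a_0-a_{-2})$ and $\lmd=1$ give $\lm_{a_0}(a_3)=\lmu$. Hence $\lm_{a_0}(s_{\tu,2})=-3\lmu$, and Corollary~\ref{cor:lm2f} yields $\lmu(\lmu-\lmf)=0$.
Symmetrically, $a_{-2}=a_2-x^{-1}(a_1-a_{-1})$ and $\lmdf=1$ give $\lm_{a_1}(a_{-2})=\lmf$. Hence $\lm_{a_1}(s_{\tz,2})=-\lm_{a_1}(a_0+a_2+a_{-2})=-3\lmf$, so $\lmf(\lmf-\lmu)=0$.
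Adding the two gives $(\lmu-\lmf)^2=0$.
\item The squaring you proposed already determines $x$. It reads $2a_3-a_1-a_{-1}=x^2(2a_0-a_2-a_{-2})$. Substituting your expressions for $a_{-1}$ and $a_3$ turns the left side into $-x(2a_0-a_2-a_{-2})$, so $x(x+1)=0$ and $x=-1$. Then Corollary~\ref{a1xa2}\ref{a1xa2_1}, with $a_{-3}=a_3$, gives $(1-\al)(\lmf-\lmu)=0$.
\end{enumerate}
The second route is the paper's, except that the paper finds $x=-1$ by computing $a_{-1}a_3$ in two ways (through $a_1a_{-2}$ and $s_{\tu,3}$) rather than by squaring.

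Finally, in characteristic $3$ one has $2=\tfrac12$. So justify $(\al,\bt)\neq(2,\tfrac12)$ by $\al\neq\bt$, as the paper does, not by $\bt=2$ alone.
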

\begin{proof}
Suppose for a contradiction that $\dim(V_e^\ast)= 2$.  By Lemma~\ref{possipairs3}, $\bt=2$ and $\V_e\cong \V_o\cong 3\C(2)$. Then, by  Lemma~\ref{sub3C}\ref{sub3C_1}
\begin{equation}
    \label{lm2=lm2fcaso3}
    \lmd=\lmdf=1
\end{equation}
and, by  Lemma~\ref{sub3C}\ref{sub3C_3}  and Equation~\eqref{eq*=**3},
\begin{equation*}
  \langle a_0-a_2, a_0-a_{-2}\rangle=V_e^\ast=V_o^\ast=\langle a_{-1}-a_1, a_{-1}-a_3\rangle .  
\end{equation*} 
Then 
$2a_0-a_2-a_{-2}$ is a $1$-eigenvector in $V_e^\ast$ for $\tau_0$ while $a_2-a_{-2}$  and  $a_1-a_{-1}$ are $-1$-eigenvectors for $\tau_0$. It follows that there exists $x\in \F$ such that 
\begin{equation}\label{bt=2:eq1}
 a_1-a_{-1}=x(a_2-a_{-2}), \quad \mbox{ with } x\neq 0. 
\end{equation}
Thus 
\begin{equation}\label{am1a3}
a_{-1}=a_1-x(a_2-a_{-2}) \quad \mbox{ and } \quad  a_3=(a_1)^{\tau_1}=a_1-x(a_0-a_{-2}).
\end{equation}
Since, by Equation~\eqref{am1a3}, $a_1=a_3+x(a_0-a_{-2})$ and, by Equation~\eqref{hyp}, $a_{-2}=a_4$, by Equation~\eqref{defs} on page~\pageref{defs} and Table~\ref{table2}, we get
\begin{align}
    \label{newa1am2}
    a_1a_{-2}&= \left (a_3+x(a_0-a_{-2})\right )a_{-2} \nonumber\\
    &= a_3a_{-2}+x(a_0-a_{-2})a_{-2}\nonumber\\
    &= s_{\tz,1}+2(a_3+a_{-2})+x(a_0+a_{-2}-a_2-a_{-2}) \\
    &= s_{\tz,1}+2\left ((a_1-x(a_0-a_{-2}))+a_{-2} \right )+x(a_0+a_{-2}-a_2-a_{-2})\nonumber \\
    &= s_{\tz,1}+2a_1-x(a_0+a_2)+2(x+1)a_{-2}.\nonumber
\end{align}
By Equation~\eqref{defs} on page~\pageref{defs} and Equation~\eqref{newa1am2}, it follows that 
\begin{align*}
s_{\tu,3} &= a_1a_{-2}-2(a_1+a_{-2})\\
&= s_{\tz,1}+2a_1-x(a_0+a_2)+2(x+1)a_{-2}-2(a_1+a_{-2})\\
&= s_{\tz,1}-x(a_0+a_2-2a_{-2}),
\end{align*}
whence, by Equation~\eqref{am1a3}, 
\begin{equation}\label{pam1a3}
\begin{split}
a_{-1}a_3&=\left (a_1-x(a_2-a_{-2})\right )\left (a_1-x(a_0-a_{-2})\right )\\
&=a_1-xa_1(a_2-a_{-2})-xa_1(a_0-a_{-2})+x^2(a_2-a_{-2})(a_0-a_{-2})\\
&=(a_1-x\left (a_1a_2+a_1a_0-2a_1a_{-2}\right ))+x^2(a_2-a_{-2})(a_0-a_{-2})\\
&=(a_1-x\left (2s_{\tz,1}+2(2a_1+a_0+a_2))-(2s_{\tu,3}-4(a_1+a_{-2})\right )\\
&\phantom{{}+{}} \quad -x^2\left (2a_2-2a_{-2}-a_2-a_{-2}+a_0+a_{-2}\right ))\\
&=a_1-x\left (2s_{\tz,1}+2(a_0+a_2)-2s_{\tu,3}-4a_{-2}\right )+x^2\left (a_2-2a_{-2}+a_0\right )\\
&=a_1-x\left (2s_{\tz,1}+2(a_0+a_2)-2s_{\tz,1}+2x(a_0+a_2-2a_{-2})-4a_{-2}\right )\\
&\phantom{{}+{}} \quad +x^2\left (a_2-2a_{-2}+a_0\right )\\
&=a_1-x\left (2(a_0+a_2)+2x(a_0+a_2-2a_{-2})-4a_{-2}\right )\\
&\phantom{{}+{}} \quad +x^2\left (a_2-2a_{-2}+a_0\right )\\
&=a_1+2x(x+2) a_{-2}-x(x+2)  (a_0+a_2).
\end{split}
\end{equation}
On the other hand, since, by hypothesis, $\V_o\cong 3\C(2)$, Table~\ref{table2} and Equation~\eqref{bt=2:eq1} imply 
\begin{equation}\label{pam1a3bis}
   a_{-1}a_3=a_{-1}+a_3-a_1=a_1-x(a_0+a_2)+2xa_{-2}. 
\end{equation}
Taking the difference between  Equations~\eqref{pam1a3} and~\eqref{pam1a3bis}, we get
$$
x(x+1)(a_0+a_2-a_{-2})=0.
$$
Since $x\neq 0$ and $a_0+a_2-a_{-2}$ is not the zero vector in $3\C(2)$, it follows that $x=-1$. Thus, by Equation~\eqref{bt=2:eq1} and Corollary~\ref{a1xa2}, we get
$$
0=(1-\al)(\lmf-\lmu)+\tfrac{\bt}{2}(\al-\bt)(\tfrac{1}{x}-x)=(1-\al)(\lmf-\lmu),
$$
whence
\begin{equation}
    \label{lmlm}
    \lmu=\lmf.
\end{equation}
Since $\bt=2$ and $\bt\neq \al$, $(\al,\bt)\neq (2, \tfrac{1}{2})$. Hence, by Equations~\eqref{lm2=lm2fcaso3} and \eqref{lmlm} and by Proposition~\ref{reg}, $\V$ is isomorphic to a quotient of a symmetric algebra, contradicting Hypothesis~\ref{Hyp2}\ref{Hyp2_2}.
\end{proof}

\begin{lemma}\label{3A}
$\dim(V_e)\neq 3$.
\end{lemma}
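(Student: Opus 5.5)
The statement as printed reads $\dim(V_e)\neq 3$, but in context (the last row of Table~\ref{tableposs3}) it should be read as $\dim(V_e^\ast)\neq 3$. I will prove that version, mirroring Lemma~\ref{bt=2}.

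\textbf{Setup.} Suppose for a contradiction that $\dim(V_e^\ast)=3$. By Lemma~\ref{possipairs3}:
\begin{itemize}
\item $\V_e\cong\V_o\cong 3\A(\al,\bt)$;
\item $(3\al^2+3\al\bt-9\al-2\bt+4)(3\al+\bt-2)=0$;
\item $V_e^\ast=V_o^\ast$;
\item $\lmd=\lmdf$ is the common value from Lemma~\ref{sub3A}\ref{sub3A_1}.
\end{itemize}
By Lemma~\ref{sub3A}\ref{sub3A_4},
\[
V_e^\ast=\langle a_0-a_2,\ a_0-a_{-2},\ (2\bt-1)a_2+s_{\tz,2}\rangle .
\]
Now decompose $V_e^\ast$ under $\tau_0$. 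The vectors $2a_0-a_2-a_{-2}$ and $c_e:=(2\bt-1)(a_2+a_{-2})+2s_{\tz,2}$ are fixed by $\tau_0$. The vector $a_2-a_{-2}$ is negated by $\tau_0$ and spans the $(-1)$-eigenspace of $\tau_0$ in $V_e^\ast$. The vector $a_1-a_{-1}$ lies in $V_o^\ast=V_e^\ast$ and is also negated by $\tau_0$. Hence
\[
a_1-a_{-1}=x(a_2-a_{-2})\qquad\text{for some } x\neq 0 .
\]

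\textbf{Step 1: force $x=\pm1$.} Apply $\tau_1$ (which maps $a_{-2}\mapsto a_4=a_{-2}$ via Equation~\eqref{hyp}) to get
\[
a_3=a_1-x(a_0-a_{-2}),\qquad a_{-1}=a_1-x(a_2-a_{-2}).
\]
Compute the product $a_{-1}a_3$ in two ways:
\begin{itemize}
\item by expanding with these expressions, the $3\A$ products inside $V_e$, and the products $a_1a_j$ (obtained as in Equation~\eqref{newa1am2}, via $s_{\tu,3}$ written in terms of $s_{\tz,1}$ and elements of $V_e$);
\item directly from the $3\A(\al,\bt)$ multiplication in $V_o$, namely $a_{-1}a_3=\bt(a_{-1}+a_3)+\tfrac{\al-\bt}{2}(a_{-1}+a_1+a_3)+z_o$.
\end{itemize}
Here $z_o$ is the central element of $V_o$. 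To express it, use $V_o^\ast=V_e^\ast$ to write $c_o:=(2\bt-1)(a_{-1}+a_3)+2s_{\tu,2}$ in the basis of $V_e^\ast$; by $\tau_0$-invariance it is a combination of $2a_0-a_2-a_{-2}$ and $c_e$. Equating coefficients in the two expressions for $a_{-1}a_3$ should give a polynomial condition on $x$, which I expect to be $x^2=1$ (analogous to $x(x+1)=0$ in Lemma~\ref{bt=2}).

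\textbf{Step 2: conclude.} Once $x=\pm1$, Corollary~\ref{a1xa2}\ref{a1xa2_1} applies and the term $\tfrac{1}{x}-x$ vanishes. The remaining relation is
\[
(1-\al)(\lmf-\lmu)(a_1-a_{-1})=\tfrac{\bt}{2}(\al-\bt)x(a_3-a_{-3}),
\]
and $a_3-a_{-3}=a_3-a_3=0$ by Equation~\eqref{hyp}. Since $\al\neq1$ and $a_1\neq a_{-1}$, this gives $\lmu=\lmf$. Together with $\lmd=\lmdf$, Proposition~\ref{reg} then yields a contradiction with Hypothesis~\ref{Hyp2}\ref{Hyp2_2}. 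The case $(\al,\bt)=(2,\tfrac12)$ must be routed separately through Proposition~\ref{reg}\ref{reg_2} or Theorem~\ref{HWthm}, since there $\lmu=1$ for $3\A(2,\tfrac12)$.

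\textbf{Main obstacle.} Step 1 is the hard part. Writing $z_o$ inside $V_e$ and carrying out the $3\A$ product bookkeeping is heavy. It is also possible that $x$ is not forced to $\pm1$ outright but only under one factor of the parameter constraint. If so, the fallback is to substitute the other factor directly (for example $\bt=2-3\al$) into Equations~\eqref{P-R} and~\eqref{P-R^f}, together with the explicit value of $\lmd$, to derive $\lmu=\lmf$ directly.
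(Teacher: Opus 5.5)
Your reading of the statement as $\dim(V_e^\ast)\neq 3$ is correct, and for $\bt\neq\tfrac12$ your outline is essentially the paper's, up to one slip. The element $c_o=(2\bt-1)(a_{-1}+a_3)+2s_{\tu,2}$ is not $\tau_0$-invariant, because $\tau_0$ sends $a_{-1}\mapsto a_1$ and fixes $a_3=a_{-3}$. You do not need $z_o$ or $c_o$ at all. The second expression is simply $a_{-1}a_3=\bt(a_{-1}+a_3)+s_{\tu,2}$, so comparing it with the expansion gives $s_{\tu,2}=(1-2\bt)a_1+x^2(2\bt-1)a_{-2}+x^2s_{\tz,2}$. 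The $\tau_0$-invariance of $s_{\tu,2}$ then yields $x(x+1)(2\bt-1)(a_{-2}-a_2)=0$, so $x=-1$ (stronger than $x^2=1$). Your Step~2 with Proposition~\ref{reg}\ref{reg_1} then finishes this case.

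The genuine gap is the case $\bt=\tfrac12$. Together with $\al\neq\bt$, the parameter constraint forces $(\al,\bt)=(2,\tfrac12)$ there. In this case the relation above is vacuous: comparing the two expressions for $a_{-1}a_3$ only gives $s_{\tu,2}=x^2s_{\tz,2}$. Corollary~\ref{a1xa2}\ref{a1xa2_1} then gives only $\lmf-\lmu=\tfrac38(\tfrac1x-x)$, so neither $x=\pm1$ nor $\lmu=\lmf$ follows, and your fallback does not address this. You need an additional relation. The paper applies the fusion law at $a_1$ to the $\tau_1$-fixed axis $a_{-2}=a_4$: $a_1a_{-2}-\lm_3^fa_1$ must be an $\al$-eigenvector of $\ad_{a_1}$, which yields $(x^2-1)(2a_{-2}-a_0-a_2+4s_{\tz,2})=0$. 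Alternatively, in $3\A(2,\tfrac12)$ one has $s^2=\tfrac98 s$ for $s=a_ia_j-\tfrac12(a_i+a_j)$. Squaring $s_{\tu,2}=x^2s_{\tz,2}$ then gives $x^2(x^2-1)s_{\tz,2}=0$; here $\ch(\F)\neq 3$, since $2\neq\tfrac12$.

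Even once $x^2=1$ and hence $\lmu=\lmf$, Proposition~\ref{reg}\ref{reg_1} is unavailable. You must then verify the hypotheses of Proposition~\ref{reg}\ref{reg_2}, which you only mention:
\begin{itemize}
\item $\lm_3=\lmu-x(1-\lmd)=\lmu$ and $\lm_3^f=\lmf-x^{-1}(1-\lmdf)=\lmf$, using $\lmd=\lmdf=1$;
\item $V$ is spanned by $a_{-2},\dots,a_3,s_{\tz,1},s_{\tz,2}$, checking closure via the expressions for $s_{\tu,2}$ and $s_{\tu,3}=s_{\tz,1}+x\bigl((2\bt-1)a_{-2}+s_{\tz,2}\bigr)$.
\end{itemize}
Theorem~\ref{HWthm} is not a substitute, since nothing here gives $\lmu=1$.
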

\begin{proof}
Assume, for a contradiction, that $\dim(V_e^\ast)= 3$. Then, by Lemma~\ref{possipairs3}, $(3\al^2+3\al\bt-9\al-2\bt+4)(3\al+\bt-2)=0$ and  $\V_e\cong\V_o\cong 3\A(\al,\bt)$.
In particular, 
\begin{equation}\label{lmdtris}
   \lmd=\lmdf . 
\end{equation}
Moreover, by Equation~\eqref{eq*=**3} and Lemma~\ref{sub3A}\ref{sub3A_4},   
\begin{equation*}
  \langle a_0-a_2, a_0-a_{-2}, (2\bt-1)a_0+s_{\tz,2}\rangle=V_e^\ast=V_o^\ast=\langle a_{-1}-a_1, a_{1}-a_3, (2\bt-1)a_1+s_{\tu,2}\rangle .  
\end{equation*} 
Then $a_1-a_{-1}$ belongs to the $\bt$-eigenspace of $\ad_{a_0}$ in $V_e$ which is generated by $a_2-a_{-2}$. Hence \begin{equation}\label{a1m13A}
    a_1-a_{-1}=x(a_2-a_{-2})\quad \mbox{  for some  }\quad x\in \F\setminus \{0\},
\end{equation} 
and
\begin{equation}\label{eq25sec}
a_{-1}=a_1-x(a_2-a_{-2}), \qquad a_{-3}=a_3=a_1-x(a_0-a_{-2}).
\end{equation}
By Equation~\eqref{defs} on page~\pageref{defs} and Equation~\eqref{eq25sec},
\begin{equation}
\begin{aligned}\label{s13bis}
s_{\tu,3}&= a_1a_{-2}-\bt(a_1+a_{-2}) \\
&= \left (a_3+x(a_0-a_{-2})\right )a_{-2}-\bt(a_1+a_{-2}) \\
&= s_{\tz,1}+\bt(a_3+a_{-2})+x\left (s_{\tz,2}+\bt(a_0+a_{-2})-a_{-2}\right )-\bt(a_1+a_{-2}) \\
&= s_{\tz,1}+\bt a_3+xs_{\tz,2}+x(\bt-1)a_{-2}+x\bt a_0-\bt a_1\\
&= s_{\tz,1}+\bt \left (a_1-x(a_0-a_{-2})\right )+xs_{\tz,2}+x(\bt-1)a_{-2}+x\bt a_0-\bt a_1 \\
&= s_{\tz,1}+x\left ((2\bt-1)a_{-2}+s_{0,2}\right ).
\end{aligned}
\end{equation}
Using the above expression for $s_{\tu,3}$ and Table~\ref{table3A},
\begin{align}\label{prodm13}
a_{-1}a_3 &= \left (a_1-x(a_2-a_{-2})\right )\left (a_1-x(a_0-a_{-2})\right ) \nonumber \\
&= a_1-xa_1(a_2-a_{-2})-xa_1(a_0-a_{-2})+x^2(a_2-a_{-2})(a_0-a_{-2})\nonumber\\
&= a_1-x\left (2s_{\tz,1}+\bt(a_0+2a_1+a_2)-2s_{\tu,3}-2\bt(a_1+a_{-2})\right ) \nonumber\\
&\phantom{{}={}} \quad +x^2\left (\bt(a_2-a_{-2})-s_{\tz,2}-\bt(a_2+a_{-2})+a_{-2}\right )\\
&= a_1-x\left (-2x(2\bt-1)a_{-2}-2xs_{\tz,2} +\bt(a_0+a_2-2a_{-2})\right )\nonumber\\
&\phantom{{}={}} \quad -x^2\left ((2\bt-1)a_{-2}+s_{\tz,2}\right )\nonumber\\
&= a_1+x^2(2\bt-1)a_{-2}+x^2s_{\tz,2} -x\bt(a_0+a_2-2a_{-2}).\nonumber
\end{align}
On the other hand, by the multiplication table of $\V_o\cong 3\A(\al, \bt)$ and Equation~\eqref{eq25sec}, $$a_{-1}a_3=s_{\tu,2}+\bt(a_{-1}+a_3)=s_{\tu,2}+2\bt a_1-x\bt(a_0+a_2-2a_{-2}),
$$ 
whence, by comparing the above expression for $a_{-1}a_3$ with that given in Equation~\eqref{prodm13}, we get 
\begin{equation}\label{s123A}
   s_{\tu,2}=(1-2\bt)a_1+x^2(2\bt-1)a_{-2}+x^2s_{\tz,2}.  
\end{equation}
Since $s_{\tz,2}$ and $s_{\tu,2}$ are $\tau_0$-invariant, by Equations~\eqref{s123A} and~\eqref{eq25sec}, we get 
\begin{align*}
0&= s_{\tu,2}-(s_{\tu,2})^{\tau_0}\\
&= (1-2\bt)(a_1-a_{-1})+x^2(2\bt-1)(a_{-2}-a_2)\\
&= x(x+1)(2\bt-1)(a_{-2}-a_2),
\end{align*}
whence $x(x+1)(2\bt-1)=0$ and 
\begin{equation}\label{eqcases}
\mbox{ either }\quad  x=-1 \quad \mbox{ or }\quad  \bt=\tfrac{1}{2}.
\end{equation}
By Equation~\eqref{a1m13A} and Corollary~\ref{a1xa2}\ref{a1xa2_1}, as $a_{-3}=a_3$, we get 
\begin{equation}\label{eqfin}
0=(1-\al)(\lmf-\lmu)+\tfrac{\bt}{2}(\al-\bt)(\tfrac{1}{x}-x).
\end{equation}
Assume $\bt\neq \tfrac{1}{2}$. Then, by Equation~\eqref{eqcases}, $x=-1$ and by Equation~\eqref{eqfin}, $\lmf=\lmu$. Hence, by Equation~\eqref{lmdtris} and Proposition~\ref{reg}\ref{reg_1}, $\V$ is isomorphic to a quotient of a symmetric algebra, contradicting Hypothesis~\ref{Hyp2}\ref{Hyp2_2}. 

Therefore $\bt=\tfrac{1}{2}$. Since by hypothesis $(3\al^2+3\al\bt-9\al-2\bt+4)(3\al+\bt-2)=0$ and $\al\neq \bt$, we get $\al=2$ and, by Lemma~\ref{sub3A}\ref{sub3A_1}, 
\begin{equation}\label{3Clm2}
    \lmd=\lmdf=1.
\end{equation}
From Equation~\eqref{eq25sec} it follows that
\begin{equation}\label{lambda3}
\lm_3=\lm_{a_0}(a_3)=\lm_{a_0}(a_1-x(a_0-a_{-2}))=\lmu-x(1-\lmd)=\lmu.
\end{equation}
and $\lmd^f=\lm_{a_1}(a_3)=\lm_{a_1}(a_1-x(a_0-a_{-2}))=1-x(\lmf-\lm_{a_1}(a_{-2}))$, whence 
\begin{equation}\label{lambda3f3A}
\lm_3^f=\lm_{a_1}(a_{-2})=\lmf-x^{-1}(1-\lmd^f)=\lmf.
\end{equation}
%
Since, by Equation~\eqref{hyp},  $a_{-2}=a_4$, $a_{-2}$ is $\tau_1$-invariant, hence, by the fusion law, $a_1a_{-2}-\lm_3^fa_1$ is an $\al$-eigenvector for $\ad_{a_1}$. By Equation~\eqref{eq25sec}
\begin{align*}
a_1a_{-2}&= \left (a_3+x(a_0-a_{-2})\right )a_{-2}\\
&= s_{\tz,1}+\bt(a_3+a_{-2})+x\left (s_{\tz,2}+\bt(a_0+a_{-2})-a_{-2}\right )\\
&= s_{\tz,1}+\bt a_3+xs_{\tz,2}+\left (\bt+x(\bt-1)\right )a_{-2}+x\bt a_0\\
&= s_{\tz,1}+\bt \left (a_1-x(a_0-a_{-2})\right )+xs_{\tz,2}+\left (\bt+x(\bt-1)\right )a_{-2}+x\bt a_0\\
&= s_{\tz,1}+\bt a_1+xs_{\tz,2}+\left (\bt+x(2\bt-1)\right )a_{-2},
\end{align*}
whence it follows that
\begin{equation*}
    0=a_1(a_1a_{-2}-\lm_3^fa_1)-\al(a_1a_{-2}-\lm_3^fa_1)= \tfrac{3}{8}(x^2-1)(2a_{-2}-a_0-a_2+4s_{\tz,2}).
\end{equation*}
Since $2a_{-2}-a_0-a_2+4s_{\tz,2}\neq 0$ in $V_e$, we get  $x^2=1$. By Equation~\eqref{eqfin}  
\begin{equation}\label{3Clm1}
    \lmf=\lmu
\end{equation}
and by Equations~\eqref{lambda3} and~\eqref{lambda3f3A}, \begin{equation}\label{lm3=lm3f}
   \lm_3^f=\lm_3. 
\end{equation}
Let 
$$
\overline{V}:=\langle a_{-2}, a_{-1}, a_0, a_1, a_2, a_3, s_{\tz,1}, s_{\tz,2}\rangle.
$$
We claim that 
\begin{equation}
    \label{genV}
   \overline{V}= V.
\end{equation} 
From Equations~\eqref{s13bis} and~\eqref{s123A} it follows  that 
$$
\{s_{\tu,2}, s_{\tu,3}\}\subseteq \overline{V}.
$$
Since, by Equation~\eqref{hyp}, $a_3=a_{-3}$ and $a_4=a_{-2}$, 
$$\overline{V}^{\tau_0}=\overline{V} \quad  \mbox{ and }\quad \overline{V}^{\tau_1}=\overline{V},
$$ 
whence $s_{\bar 2,3}=(s_{\tu, 3})^{\tau_0}\in \overline{V}$ and $s_{\bar 0,3}=(s_{\bar 2, 3})^{\tau_1}\in \overline{V}$.
Hence, by~\cite[Corollary 7.4, Lemma~7.6, and Lemma~7.7]{FMS3}, $\overline{V}$ is a subalgebra of $V$. Since $\overline{V}$ contains $a_0$ and $a_1$, we get $\overline{V}=V$. By Equations~\eqref{3Clm1}, \eqref{3Clm2}, \eqref{lm3=lm3f}, 
$$
\lmu=\lmf, \quad  \lmd=\lmdf, \quad \lm_3=\lm_3^f,
$$
respectively. 
By Equation~\eqref{genV} and Proposition~\ref{reg}\ref{reg_2}, $\V$ is isomorphic to a quotient of a symmetric algebra, contradicting Hypothesis~\ref{Hyp2}\ref{Hyp2_2}.
\end{proof}



\begin{proof}[Proof of Theorem~\ref{teo3}]
Let $\V$ be as in Theorem~\ref{teo3}, that is $\al\neq 2\bt$ and $\V$ has axet $X(3+3)$. By Lemmas~\ref{end},~\ref{possipairs3}, Corollary~\ref{J(0)3}, Lemma~\ref{bt=2} and Lemma~\ref{3A}, $\V$ does not satisfy Hypothesis~\ref{Hyp2}. Hence, either $\V$ is isomorphic to a quotient of a symmetric algebra, thus satisfying the hypotheses of Theorem~\ref{teoq}, or $V\in \{V_e, V_o\}$, satisfying the hypotheses of Theorem~\ref{teonsa}. 
\end{proof}

\section{Algebras with larger regular axets} \label{chap:larger}

In this section we assume that $\V$ satisfies Hypothesis~\ref{Hyp2} with $n\in \N\cup \{\infty\}$ and $n\geq 4$.
By Remark~\ref{rem:reg},
\begin{equation*}\label{hyplarger}
a_{-2}\neq a_2 \quad \mbox{  and }\quad  a_{-1}\neq a_3.
\end{equation*}

Let
\[
{\mathcal L}:=\{ \J(\delta), \; \J(0)^\times, \; \IY_3(\al, \tfrac{1}{2}; \mu),\; \IY_3(\al, \tfrac{1}{2}; 1)^\times,\; \IY_3(-1, \tfrac{1}{2}; \mu)^\times :  \delta, \al, \mu \in \F\}.
\]
as defined at the beginning of Chapter~\ref{ch:infinite}, on page~\pageref{ch:infinite}.

\begin{lemma}\label{axialdim}
Let $\W$ be a symmetric $2$-generated $\mathcal{M}(\al,\bt)$-axial algebra over a field $\F$, with $\al\neq 2\bt$ and  axet $X(n)$, where $n\geq 4$. Then
\begin{enumerate}
\item   if $\adim(\W)\leq 3$, then $\bt=\tfrac{1}{2}$ and $\W$ is isomorphic to an algebra in $\mathcal L\cup \{I\mathcal H_3 \}$,  if further $W\neq W^\ast$, then  $\dim(W^\ast)\leq 3$; \label{axialdim_1}
\item  if  $\adim(\W)>3$ and  $W\neq W^{\ast\ast}$, then $\dim(W^{\ast\ast})\geq 4$ unless $\W$ is isomorphic to a quotient of $\hatH$ or to $4\Y(2, -\tfrac{3}{2})$, $6\Y(\frac{1}{2}, 2)$, or $6\Y(\frac{1}{2}, 2)^\times$. \label{axialdim_2}
    \end{enumerate}
\end{lemma}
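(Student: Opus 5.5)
The proof will be a case-by-case inspection of the Classification Theorem of the Symmetric Algebras, using the explicit data collected in Chapter~\ref{known}. First discard everything that cannot occur. Since $\al\neq 2\bt$, the families $4\J(2\bt,\bt)$, $6\J(2\bt,\bt)$ and their quotients are excluded. The requirement that the axet be $X(n)$ with $n\geq 4$ removes the following algebras, using the axet notes to the tables and Lemmas~\ref{axet3} and~\ref{sub2B}:
\[
1\A,\quad 2\B,\quad 3\C(\eta),\quad 3\C(-1)^\times,\quad 3\A(\al,\bt),\quad 3\A(\al,\tfrac{1-3\al^2}{3\al-1})^\times,
\]
together with any quotient of $\hatH$ whose axet is smaller than $X(4)$. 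What remains is the $4$-, $5$- and $6$-families, $\J(\delta)$, $\J(0)^\times$, the $\IY_3$- and $\IY_5$-algebras and their quotients, and the quotients of $\mathcal H$ and $\hatH$. Note also that a $\mathcal J(\al)$-algebra has trivial Miyamoto group and so cannot have axet $X(n)$ with $n\geq4$; hence any Jordan type $\W$ here is of type $\bt=\tfrac12$.

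For part~\ref{axialdim_1}, I will read $\adim$ off each family. By Lemma~\ref{lemma:adim} one has $\adim(\W)\geq 4$ for the $4$-, $5$- and $6$-families, since $a_{-1},a_0,a_1,a_2$ (respectively more axes) are linearly independent basis vectors in the tables. The same holds for $\IY_5(\al,\tfrac12)$ and its quotient, where $a_{-2},\dots,a_2$ are independent: the annihilator $n$ involves $a_{-2},\dots,a_2$ and is killed only in the quotient, so four consecutive axes stay independent. Therefore $\adim(\W)\leq3$ forces $\W$ to be of Jordan type $\tfrac12$, or a $\IY_3$-algebra or one of its quotients (Lemmas~\ref{subJ}, \ref{subIY3}), or a quotient of $\hatH$ of axial dimension at most $3$. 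In every one of these cases $\bt=\tfrac12$. Lemma~\ref{subH} (valid for $\ch\F\neq3$; in characteristic $3$ the algebra $\hatH$ is not of Monster type) then puts the last case inside $\mathcal L\cup\{I\mathcal H_3\}$, because $3\C(2)$ has axet $X(3)$ and is therefore excluded. The bound $\dim(W^\ast)\leq 3$ whenever $W\neq W^\ast$ is read from Table~\ref{table3} for $\mathcal L$ and from Lemma~\ref{subIH3}\ref{subIH3_4} for $I\mathcal H_3$.

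For part~\ref{axialdim_2}, go through the remaining families with $\adim>3$ and use the lemmas describing $W^{\ast\ast}$:
\begin{enumerate}
\item By Lemmas~\ref{sub4A}, \ref{sub4B}, \ref{sub4Y1/2} and \ref{sub4Y}\ref{sub4Y_4}, $W=W^{\ast\ast}$ for $4\A$, $4\B$, $4\Y(\tfrac12,\bt)$ and for $4\Y(\al,\tfrac{1-\al^2}{2})$ with $\al\neq2$. The case $\al=2$ is the listed exception $4\Y(2,-\tfrac32)$.
\item For $5\A$, Lemma~\ref{sub5A} gives $W^{\ast\ast}=W^\ast$ of dimension $5$ or $6$.
\item For $6\A$ and its quotients, Lemma~\ref{subA6}\ref{sub6A_4} gives $\dim W^{\ast\ast}\geq5$.
\item For $\IY_5$, Lemma~\ref{subIY5} gives dimension $5$.
\item $6\Y$ and $6\Y^\times$ are the listed exceptions (Lemma~\ref{sub6Y}).
\item Quotients of $\hatH$ are excluded by hypothesis.
\end{enumerate}

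The main obstacle I expect is the bookkeeping for quotients. Every proper quotient of each family has to be checked against both conditions, the axet bound and the dimension bounds, and we need the axial dimension of a quotient rather than only of the parent algebra. I would handle this by appealing to the ideal classification in \cite{MM} and \cite{HWQ}, as summarized in the notes to the tables. In particular I would confirm that quotients such as $4\B(-1,\tfrac12)^\times$ and the $6\A^\times$ algebras keep their axial dimension, which the lemmas cited above already cover.
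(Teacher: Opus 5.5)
Your proposal is correct and follows essentially the same route as the paper. You discard the families ruled out by $\al\neq 2\bt$ and by the axet condition, read the axial dimension off the bases in the tables (using Lemma~\ref{subH} for the quotients of $\hatH$), and finish with Table~\ref{table3}, Lemma~\ref{subIH3} and the lemmas on $W^{\ast\ast}$. Two slips are harmless: as an $\M(2,\tfrac{1}{2})$-algebra $3\C(2)$ has axet $X(2)$ rather than $X(3)$, and for $\IY_5(\al,\tfrac{1}{2})^\times$ the space $W^{\ast\ast}$ has dimension $4$ rather than $5$. Neither affects the conclusion.
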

\begin{proof}
Assume $\adim(\W)\leq 3$. Then there are at most three axes in $W$ that are linearly independent. By checking the bases given in Tables~\ref{table2} to~\ref{tablehatH} and by Lemma~\ref{subH}, it follows that either $\W$ is isomorphic to an algebra in $\mathcal L\cup\{I{\mathcal H}_3 \}$,  or $\W$ is isomorphic to one of the following algebras:
\begin{equation*}\label{candidates}
2\B, \;3\C(\al),\;3\C(\bt),\; 3\C(-1)^\times,\; 3\A(\al,\bt),\; 3\A(\al, \tfrac{1-3\al^2}{3\al-1})^\times,   
\end{equation*} which are excluded,   since they have axet $X(n)$ with $n\leq 3$. If further $W\neq W^\ast$, by Table~\ref{table3} and Lemma~\ref{subIH3}, we get $\dim(W^\ast)\leq 3$.

Now assume $\adim(\W)>3$ and suppose that $\W$ is not isomorphic to a quotient of either $\hatH$. By checking the bases given in Tables~\ref{table2} to~\ref{tablehatH}, it follows that $\W$ is isomorphic to a quotient of one of the following algebras: 
\begin{equation*}\label{candidates2}
\begin{split}
   & 4\A(\tfrac{1}{4}, \bt), \;4\B(\al, \tfrac{\al^2}{2}), \;4\Y( \tfrac{1}{2}, \bt), \;4\Y(\al, \tfrac{1-\al^2}{2}), 
   \\
   & 
  5\A(\al, \tfrac{5\al-1}{8}),\; 6\A(\al, \tfrac{\al^2}{4(2\al-1)}), \; 6\Y(\tfrac{1}{2}, 2),\; \IY_5(\al, \tfrac{1}{2}). 
\end{split}   
\end{equation*}
By Lemmas~\ref{sub4A}, \ref{sub4B}, \ref{sub4Y1/2}, \ref{sub4Y}, \ref{sub5A}, \ref{subA6}, \ref{sub6Y} and \ref{subIY5}, only the algebras $4\Y(2,-\tfrac{3}{2})$, $6\Y(\frac{1}{2}, 2)$ and $6\Y(\frac{1}{2}, 2)^\times$ satisfy the condition $W\neq W^{\ast\ast}$  and $\dim(W^{\ast\ast})<4$.
\end{proof}

\begin{lemma}\label{AQ}
Let $A,A^f, B, B^f, C, C^f, Q, Q^f$ be as in Sections~$\ref{universal}$, $\ref{relVeVo}$, and $\ref{evaluation}$. Then
 \begin{enumerate}
     \item  if $\al\neq 4\bt$, then $\{A, A^f\}\neq \{0\}\neq \{B, B^f\}$; \label{AQ_1}
     \item  if $\al= 4\bt$, then $\{Q,Q^f\}\neq \{0\}\neq \{C, C^f\}$. \label{AQ_2}
 \end{enumerate} 
\end{lemma}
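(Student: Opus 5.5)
We argue by contradiction in each part, using that under Hypothesis~\ref{Hyp2} with $n\geq 4$ we have $a_{-2}\neq a_2$ and $a_{-1}\neq a_3$. Throughout we may also assume $a_0\neq a_2$ and $a_1\neq a_{-1}$, so Lemma~\ref{lhyp1} does not apply and $\V$ has regular axet.

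For part~\ref{AQ_1}, suppose first $\al\neq 4\bt$ and $A=A^f=0$. By Lemma~\ref{s1s2}, equation \eqref{s1s2eq} gives $\tfrac{2}{\bt(\al-\bt)}B(a_{-1}-a_1)=0$, and since $a_{-1}\neq a_1$ we get $B=0$. Similarly \eqref{s1s2eq2} together with $a_0\neq a_2$ gives $B^f=0$. Then Lemma~\ref{lemma:A+Af} shows that $\V$ is a quotient of a symmetric algebra, contradicting Hypothesis~\ref{Hyp2}\ref{Hyp2_2}.

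Conversely, suppose $B=B^f=0$. Then \eqref{s1s2eq} reads $A(a_{-2}-a_2)=0$, and $a_{-2}\neq a_2$ forces $A=0$. Likewise \eqref{s1s2eq2} reads $A^f(a_3-a_{-1})=0$, and $a_3\neq a_{-1}$ forces $A^f=0$. So we are back in the previous case, again a contradiction. Hence $\{A,A^f\}\neq\{0\}\neq\{B,B^f\}$.

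For part~\ref{AQ_2}, let $\al=4\bt$. If $Q=Q^f=0$, Corollary~\ref{reduction} says $\V$ is either a quotient of a symmetric algebra or has skew axet. The first contradicts Hypothesis~\ref{Hyp2}\ref{Hyp2_2} and the second contradicts the regular axet $X(n+n)$, so $\{Q,Q^f\}\neq\{0\}$.

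Now suppose $C=C^f=0$. By Lemma~\ref{l7.3}, \eqref{eqs2-4bt} gives $\tfrac{1}{2\bt}Q^f(a_{-2}-a_2)=0$, so $Q^f=0$ because $a_{-2}\neq a_2$. Similarly \eqref{eqs3-4bt} with $a_3\neq a_{-1}$ gives $Q=0$. This reduces to the previous case, which is a contradiction.

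The only delicate point is the chain of vector non-vanishings, namely $a_{-1}\neq a_1$, $a_0\neq a_2$, $a_{-2}\neq a_2$ and $a_3\neq a_{-1}$. These all follow from the axet being $X(n+n)$ with $n\geq4$; the rest consists of direct substitutions into the cited relations.
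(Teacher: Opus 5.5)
Your proof is correct and follows the paper's argument: you use Lemma~\ref{s1s2} (respectively Lemma~\ref{l7.3}) to pass vanishing between $A,B$ (respectively $Q,C$), then conclude with Lemma~\ref{lemma:A+Af} (respectively Corollary~\ref{reduction}) against Hypothesis~\ref{Hyp2}. You are slightly more careful than the paper in noting that the implications $A=0\Rightarrow B=0$ and $A^f=0\Rightarrow B^f=0$ also require $a_{-1}\neq a_1$ and $a_0\neq a_2$, not just $a_{-2}\neq a_2$ and $a_{-1}\neq a_3$.
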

\begin{proof}
Assume $\al\neq 4\bt$. Since $a_{-2} \neq a_2$ and $a_{-1} \neq a_3$, by Lemma~\ref{s1s2}, $A=0$ (respectively $A^f=0$) if and only if $B=0$  (respectively $B^f=0$). Thus, if $\{B,B^f\}=\{0\}$ or $\{A, A^f\}=\{0\}$, then  $\{A,A^f, B, B^f\}= \{0\}$ and, by Lemma~\ref{lemma:A+Af}, $\V$ is isomorphic to a quotient of a symmetric algebra, contradicting Hypothesis~\ref{Hyp2}\ref{Hyp2_2}. This proves \ref{AQ_1}. 

Now let $\al= 4\bt$. Since, by assumption, $\V$ has a regular axet and is not a quotient of a symmetric algebra, by Corollary~\ref{reduction}, $\{Q,Q^f\}\neq \{0\}$. Now, similarly to the previous case, by Lemma~\ref{l7.3} we get that $\{C, C^f\}\neq \{0\}$.
\end{proof}

\begin{lemma}\label{dim33}
$    
\{\adim(\V_e),\adim(\V_o)\}\not \subseteq \{1,2,3\} .
$  
\end{lemma}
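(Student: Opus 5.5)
We argue by contradiction: assume $\adim(\V_e)\leq 3$ and $\adim(\V_o)\leq 3$. We work under Hypothesis~\ref{Hyp2} with $n\geq 4$, so by Remark~\ref{rem:reg} both $\V_e$ and $\V_o$ are symmetric with axet $X(n)$, $n\geq 4$, and $\al\neq 2\bt$.

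\textbf{Step 1: $\bt=\tfrac12$ and both subalgebras lie in $\mathcal L\cup\{I\mathcal H_3\}$.} This follows from Lemma~\ref{axialdim}\ref{axialdim_1} applied to $\W=\V_e$ and to $\W=\V_o$.

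\textbf{Step 2: remove $I\mathcal H_3$.} Suppose $\V_e\cong I\mathcal H_3$. Then $(\al,\bt)=(2,\tfrac12)$, so $\al=4\bt$, and $\lmd=1$ by Lemma~\ref{subIH3}\ref{subIH3_1}. The subalgebra $\V_o$ lies in $\mathcal L\cup\{I\mathcal H_3\}$ with $\al=2$. By Lemmas~\ref{subJ} and~\ref{subIY3}, almost all members of this family have projection $1$, giving $\lmdf=1$ as well; the exception is $\J(\delta)$ with $\delta\neq 0$, which I would rule out through the $\lmdf$ relation derived below. Because $\adim(\V_e)\geq 4$ fails, Lemma~\ref{supernew} gives no direct information. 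Instead I would use Lemma~\ref{AQ}\ref{AQ_2}, which says $\{Q,Q^f\}\neq\{0\}$.

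\begin{itemize}
\item If $Q\neq 0$, Equation~\eqref{equa1} together with $a_{-2}-a_4=\varepsilon(a_0-a_2)$ (from Lemma~\ref{subIH3}\ref{subIH3_2}, with $\varepsilon=1$ here) yields $V_e^\ast\supseteq\langle a_3-a_{-1}\rangle$. Since $V_e^{\ast\ast}=\langle a_0-a_2\rangle$ is one-dimensional, comparing $\tau_0$-actions should force $a_3-a_{-1}\in V_e$.
\item This makes $\V_o$ small, and the relations then give $\lmu=\lmf=1$ from $Q$ or $C$ as in the proof of Proposition~\ref{*=**}.
\item Theorem~\ref{HWthm} then yields a quotient of a symmetric algebra, contradicting Hypothesis~\ref{Hyp2}\ref{Hyp2_2}.
\end{itemize}

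The case $\V_o\cong I\mathcal H_3$ is symmetric, by exchanging the roles of $e$ and $o$.

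\textbf{Step 3: invoke Theorem~\ref{case4}.} Once $I\mathcal H_3$ is excluded, $\{\V_e,\V_o\}\subseteq\mathcal L$ and $\bt=\tfrac12$. Hypothesis~\ref{hyp1} follows from Hypothesis~\ref{Hyp2}: regular axet and $a_0\neq a_2$, $a_1\neq a_{-1}$ hold because $n\geq 4$. This contradicts Theorem~\ref{case4}.

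\textbf{Main obstacle.} The hard part is Step 2. The analysis of Chapter~\ref{ch:infinite} used $\varepsilon_e$ and Table~\ref{table3}, and it must be extended to include $I\mathcal H_3$. For this extension I would:
\begin{itemize}
\item note $\varepsilon=1$ for $I\mathcal H_3$;
\item use $\dim V^\ast=3$ and $\dim V^{\ast\ast}=1$;
\item rerun the argument of Proposition~\ref{*=**} and Lemma~\ref{lm<>lmf} with these values.
\end{itemize}
I expect the subcase $Q=Q^f=0$ to reduce via Corollary~\ref{reduction}, since $\al=4\bt$. The subcase $\{Q,Q^f\}\neq\{0\}$ should force $V_e^\ast=V_o^{\ast\ast}$. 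That equality is incompatible with the dimensions in Table~\ref{table3}, except for the pair $\bigl(I\mathcal H_3,\IY_3(2,\tfrac12;-1)\bigr)$ or pairs built from $\J(-\tfrac12)$. For these remaining pairs, the $\tau_0$-action argument of Proposition~\ref{*=**} should finish the proof.
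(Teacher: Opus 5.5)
\paragraph{Verdict.} Your Steps~1 and~3 agree with the paper: Lemma~\ref{axialdim}\ref{axialdim_1} first, then Theorem~\ref{case4} to reduce, up to swapping, to $\V_e\cong I\mathcal H_3$. The target of Step~2 also agrees: reach $\{\lmu,\lmf,\lmd,\lmdf\}=\{1\}$ via Lemma~\ref{AQ}\ref{AQ_2} and conclude with Theorem~\ref{HWthm}. But Step~2 is the whole content of the lemma, and as written it is not an argument. Several of its ingredients are wrong.

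\paragraph{Errors in Step~2.} $I\mathcal H_3$ does not satisfy $a_{-2}-a_4=\varepsilon(a_0-a_2)$. Lemma~\ref{subIH3}\ref{subIH3_2} gives $a_{-2}+a_4=a_0+a_2$ in $V_e$, so $a_{-2}-a_4=2a_{-2}-a_0-a_2$, which is not a multiple of $a_0-a_2$. This is harmless only because $P=0$ when $\al=4\bt$, so \eqref{equa1} reads $R(a_0-a_2)=Q(a_3-a_{-1})$. After that, ``comparing $\tau_0$-actions'' adds nothing, since $a_3-a_{-1}$ already lies in $\la a_0-a_2\ra$. Nor can you get $\lmu=1$ ``from $Q$ or $C$'' in the branch $Q\neq 0$. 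At $(\al,\bt)=(2,\tfrac12)$ (so $\ch(\F)\neq 3$), $Q$ is a nonzero multiple of $1-\lmu$, so there $\lmu\neq1$. The information must come from the other pair $(Q^f,C^f)$, which you never show vanishes. Your claim that $\{Q,Q^f\}\neq\{0\}$ forces $V_e^\ast=V_o^{\ast\ast}$ drops the branch $Q=0\neq Q^f$. Your exceptional pairs also do not hold up: $\IY_3(2,\tfrac12;-1)$ and $\J(-\tfrac12)$ have $\dim W^{\ast\ast}=1\neq 3=\dim V_e^\ast$. Meanwhile $(I\mathcal H_3,\IY_3(2,\tfrac12;\mu))$ with $\mu\neq-1$ is dimensionally compatible with $V_e^\ast=V_o^{\ast\ast}$, so no dimension count finishes the proof.

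\paragraph{The missing argument.}
\begin{itemize}
\item By Lemma~\ref{l7.3}, valid since $a_0\neq a_2$, $a_{-2}\neq a_2$, $a_{-1}\neq a_1$ and $a_{-1}\neq a_3$, we have $Q=0\iff C=0$ and $Q^f=0\iff C^f=0$. Moreover, by Lemma~\ref{lem:VeVo}, $Q\neq0$ gives $V_e^\ast=V_o^{\ast\ast}$ and $Q^f\neq 0$ gives $V_o^\ast=V_e^{\ast\ast}$.
\item For $I\mathcal H_3$ one has $V_e^{\ast\ast}<V_e^\ast$ (dimensions $1$ and $3$). Both equalities together would give $V_e^\ast=V_o^{\ast\ast}\subseteq V_o^\ast=V_e^{\ast\ast}<V_e^\ast$, which is impossible. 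Combined with Lemma~\ref{AQ}\ref{AQ_2}, exactly one of the pairs $(Q,C)$, $(Q^f,C^f)$ vanishes.
\item At $(\al,\bt)=(2,\tfrac12)$ with $\lmd=1$, $Q$ has no $\lmf$ term and $C=2\lmu\lmf+7\lmu^2-\lmf-11\lmu+3$. So $Q=C=0$ yields $\lmu=\lmf=1$. Symmetrically, $Q^f=C^f=0$ yields the same, provided $\lmdf=1$.
\item That proviso is your $\J(\delta)$ worry, and Hypothesis~\ref{Hyp2}\ref{Hyp2_3} settles it. In the branch $Q^f=0$, $V_o^{\ast\ast}=V_e^\ast$ is $3$-dimensional. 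If $\V_o\cong\J(\delta)$ with $\delta\notin\{0,-\tfrac12\}$, then $V_o=V_o^{\ast\ast}\subseteq V_e$, hence $V=V_e$, which is excluded. This leaves only $\IY_3(2,\tfrac12;\mu)$ with $\mu\neq-1$, whose projection is $1$.
\item In the branch $Q=0$, $\dim V_o^\ast=1$ forces $\V_o\cong\J(0)^\times$, whose projection is also $1$.
\end{itemize}
In both branches $\{\lmu,\lmf,\lmd,\lmdf\}=\{1\}$, and Theorem~\ref{HWthm} contradicts Hypothesis~\ref{Hyp2}\ref{Hyp2_2}.
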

\begin{proof}
    Assume, for a  contradiction, that 
    $$
\adim(\V_e)\leq 3\quad \mbox{ and }\quad \adim(\V_o)\leq 3.
$$ 
Then, by Lemma~\ref{axialdim}, $\{\V_e, \V_o\}\subseteq \mathcal L\cup \{I\mathcal H_3\}$.  By Theorem~\ref{case4}, $\{\V_e, \V_o \}\not \subseteq \mathcal L$, and so, without loss of generality, we may assume that
\begin{equation*}\label{Ve=IH3}
    \V_e\cong I\mathcal H_3,
\end{equation*}
whence \begin{equation*}\label{albeta}
    (\al,\bt)=(2, \tfrac{1}{2})\quad \mbox{ and }\quad \lmd=\lmdf=1.
\end{equation*} 
Hence, by Theorem~\ref{HWthm} and our assumption in Hypothesis~\ref{Hyp2}\ref{Hyp2_2} that $\V$ is not a quotient of a symmetric algebra,
\begin{equation*}
    \label{ll1}
    \{\lmu, \lmf\}\neq \{1\}.
\end{equation*}
Substituting $\al=2$ and $\bt =\frac{1}{2}$ into the definitions of $Q$ and $C$, we get
\begin{equation}\label{QandC}
Q = 6(1-\lmu) \quad \mbox{and} \quad C = 2\lmu\lmf + 7\lmu^2 - \lmf-11\lmu+3
\end{equation}
Note that, as $\ch \F \neq 3$, $\lmu = 1$ if and only if $Q = 0$ and so, $\lmf = 1$ if and only if $Q^f=0$.

Since $a_{-2} \neq a_2$, $a_2 \neq a_0$, $a_3 \neq a_{-1}$ and $a_{-1} \neq a_1$ by Hypothesis~\ref{Hyp2}\ref{Hyp2_4}, Lemma~\ref{l7.3} gives $Q = 0$ if and only if $C = 0$ and $Q^f=0$ if and only if $C^f = 0$.  In addition,
\begin{equation}\label{l7.3 consequences}
Q \neq 0 \mbox{ implies } V_e^\ast=V_o^{\ast\ast}, \quad \mbox{and} \quad Q^f \neq 0 \mbox{ implies } V_o^\ast=V_e^{\ast\ast}.
\end{equation}
Now, as $\al = 4\bt$, by Lemma~\ref{AQ}, $Q$ and $Q^f$ are not both zero.  So either $V_e^\ast=V_o^{\ast\ast}$, or $V_o^\ast=V_e^{\ast\ast}$.

Suppose first that $V_o^\ast=V_e^{\ast\ast}$.  As $\V_e \cong I\mathcal{H}_3$, by Lemma~\ref{subIH3}, $V_e^{\ast\ast}<V_e^\ast$ and so
\[
V_o^{\ast\ast} \leq V_o^\ast=V_e^{\ast\ast} < V_e^\ast
\]
hence $V_o^{\ast\ast} \neq V_e^\ast$. However, by Equation \eqref{l7.3 consequences}, this implies that $Q=0$ and so $C=0$.  Now, from Equation \eqref{QandC}, we get $\lmu=1$ and so $0 = C = \lmf -1$ and $\lmf=1$, a contradiction.

Similarly, if $V_e^\ast=V_o^{\ast\ast}$, then as $\V_e \cong I\mathcal{H}_3$, we get $V_e^{\ast\ast}<V_e^\ast=V_o^{\ast\ast}\leq V_o^\ast$.  So $V_o^\ast\neq V_e^{\ast\ast}$ and hence $Q^f = 0 = C^f$.  This then yields $\lmf = 1 = \lmu$, another contradiction.
%
\end{proof}

\begin{lemma}
    \label{dim>4}
 $
\{\adim(\V_e),\adim(\V_o)\}\cap \{1,2,3\}\neq \emptyset .
$   
\end{lemma}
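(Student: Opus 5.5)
Assume, for a contradiction, that $\adim(\V_e)\geq 4$ and $\adim(\V_o)\geq 4$. The plan is to apply Lemma~\ref{supernew} to both subalgebras and combine the outcomes with Lemma~\ref{lemma:A+Af}, Lemma~\ref{AQ} and Corollary~\ref{reduction}. We are under Hypothesis~\ref{Hyp2} with $n\geq 4$, so $a_3\neq a_{-1}$ and $a_2\neq a_{-2}$.

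\emph{Case $\al\neq 4\bt$.} Lemma~\ref{supernew}\ref{supernew_1}(a) applied to $\V_e$, using $a_3\neq a_{-1}$, gives $A^f=B^f=0$. Lemma~\ref{supernew}\ref{supernew_2}(a) applied to $\V_o$, using $a_2\neq a_{-2}$, gives $A=B=0$. Hence $\{A,A^f,B,B^f\}=\{0\}$. Lemma~\ref{AQ}\ref{AQ_1} says this cannot happen under Hypothesis~\ref{Hyp2}. Equivalently, Lemma~\ref{lemma:A+Af} then makes $\V$ a quotient of a symmetric algebra, contradicting Hypothesis~\ref{Hyp2}\ref{Hyp2_2}.

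\emph{Case $\al=4\bt$.} Lemma~\ref{supernew}\ref{supernew_1}(b) gives $Q=C=0$, and Lemma~\ref{supernew}\ref{supernew_2}(b) gives $Q^f=C^f=0$. This contradicts Lemma~\ref{AQ}\ref{AQ_2}. Alternatively, $Q=Q^f=0$ together with Corollary~\ref{reduction} forces $\V$ either to be a quotient of a symmetric algebra or to have a skew axet; both are excluded by Hypothesis~\ref{Hyp2}.

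Since both cases give a contradiction, $\{\adim(\V_e),\adim(\V_o)\}\cap\{1,2,3\}\neq\emptyset$.

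The only point needing care is checking the hypotheses of Lemma~\ref{supernew}. In particular, the non-degeneracy $a_3\neq a_{-1}$ and $a_2\neq a_{-2}$ must follow from the axet being $X(n+n)$ with $n\geq 4$, which is Remark~\ref{rem:reg}. Once that holds, the argument is immediate.
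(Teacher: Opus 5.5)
Your proof is correct and is essentially the paper's argument. You apply Lemma~\ref{supernew} to both $\V_e$ and $\V_o$, using $a_3\neq a_{-1}$ and $a_2\neq a_{-2}$, to force $\{A,A^f,B,B^f\}=\{0\}$ when $\al\neq 4\bt$, or $\{Q,Q^f\}=\{0\}$ when $\al=4\bt$, and in each case this contradicts Lemma~\ref{AQ}.
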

\begin{proof}
Assume the assertion were false. 
By Lemma~\ref{supernew}, if $\al=4\bt$, then $\{Q,Q^f\}=\{0\}$, and otherwise $\{A,A^f,B,B^f\}=\{0\}$. In both cases, Lemma~\ref{AQ} leads to a contradiction. 
\end{proof}

\begin{proof}[Proof of Theorem~\ref{teolarger}]
Let $\V$ be as in Theorem~\ref{teolarger}, that is $\al\neq 2\bt$ and $\V$ has axet $X(n+n)$ with $n\geq 4$. It is enough to show that $\V$ does not satisfy Hypothesis~\ref{Hyp2}. In order to obtain a contradiction, assume that $\V$ satisfies Hypothesis~\ref{Hyp2}. Then, by Lemmas~\ref{dim33} and~\ref{dim>4}, up to swapping $\V_e$ and $\V_o$ we may assume
\begin{equation}\label{dim34}
    \adim(\V_e)\leq 3\quad \mbox{ and }\quad \adim(\V_o)\geq 4.
\end{equation} 
By Lemma~\ref{axialdim}\ref{axialdim_1}, this implies that $\V_e\in \mathcal L \cup \{I\mathcal H_3\}$ and $\bt = \frac{1}{2}$.

We claim that 
\begin{equation}\label{lastclaim}
V_o^{\ast\ast}=V_e^\ast.
\end{equation}
First suppose that $\al=4\bt$.  Since $\adim(\V_o) \geq 4$, Lemma~\ref{supernew}\ref{supernew_2} implies that
\[
Q^f=C^f=0.
\]
By Lemma~\ref{AQ}, it follows that $C\neq 0$, whence, by Lemma~\ref{Ve*Vo*}, we get Equation~\eqref{lastclaim}. Similarly, if $\al\neq 4\bt$, then Lemma~\ref{supernew}\ref{supernew_2} implies that
\[
A=B=0,
\]
whence, by Lemma~\ref{AQ}, $B^f\neq 0$. Thus, the claim in Equation~\eqref{lastclaim} follows from Lemma~\ref{Ve*Vo*}\ref{Ve*Vo*_2}.

Now, $V_e^\ast<V_e$.  Otherwise, if $V_e = V_e^\ast$, then by Equation \ref{lastclaim}, $V_e = V_e^\ast = V_o^{\ast\ast} \leq V_o$.  Hence $V = V_o$, contradiction our assumptions in Hypothesis~\ref{Hyp2}\ref{Hyp2_3}.
Then, by Lemma~\ref{axialdim}\ref{axialdim_1}, we get
\[
\dim(V_o^{\ast\ast})=\dim(V_e^\ast)\leq 3.
\]
Since $\adim(V_o) \geq 4$, $V_o^{\ast\ast}< V_o$ and so Lemma~\ref{axialdim}\ref{axialdim_2} yields that $\V_o$ is isomorphic 
\[
\mbox{either to one of } 4\Y(2, -\tfrac{3}{2}), 6\Y(\tfrac{1}{2}, 2), 6\Y(\tfrac{1}{2}, 2)^\times,\mbox{ or to a quotient of }\hatH.
\]
Since we saw above that $\bt = \frac{1}{2}$, we must have that $\V_o$ is a quotient of $\hatH$.  So, applying Lemma \ref{lambdasut} to $\V_o$, we get $\lmdf = 1$.  Similarly, as $\V_e\in \mathcal L \cup \{I\mathcal H_3\}$, Lemmas~\ref{subJ}, \ref{subIY3}, and \ref{lambdasut} yield $\lmd = 1$.  

Since $\V_o$ is a quotient of $\hatH$ and $\adim(V_o) \geq 4$, it is not of Jordan type and so we see that $\al = 2 = 4\bt$.  As we saw above, by Lemma~\ref{supernew}\ref{supernew_2}, 
\[
Q^f=C^f=0.
\]
%
%
Substituting into $Q^f$ and $C^f$ the values $\al=2$, $\bt=\tfrac{1}{2}$, $\lmd=1$, $\lmdf=1$, we get $\lmu=\lmf=1$.  Finally, as $\lmu = \lmf = \lmd = \lmdf = 1$, by Theorem~\ref{HWthm}, $\V$ is isomorphic to a quotient of a symmetric algebra, contradicting Hypothesis~\ref{Hyp2}\ref{Hyp2_2}.
\end{proof}

\chapter{Proofs of the Main Theorem and its corollaries}\label{proof}

\begin{proof}[Proof of the Main Theorem]
For the sake of contradiction, suppose $\V$ is an algebra that is not listed in the theorem. By Theorems~\ref{al=2bt} and \ref{skew}, $\al\neq 2\bt$ and $\V$ does not have a skew axet. Therefore $\V$ has a regular axet of size $2n$, with $n\geq 1$, and it does not satisfy the hypotheses of Theorems~\ref{teoq} and~\ref{teonsa}.  By Lemma~\ref{lhyp1} and Theorems~\ref{teo2}, \ref{teo3}, and \ref{teolarger}, we get the desired contradiction.  
\end{proof}
\begin{proof}[Proof of  Corollaries \ref{mcor:FF} and \ref{mcor:bt<>1/2}] 
This follows by direct inspection using the tables in Section~\ref{thetables}.
\end{proof}
\begin{proof}[Proof of Corollary \ref{cor6trans}]
Recall that a $6$-transposition group is a  group generated by a normal set $\mathcal T$ of involutions such that the product of every two elements of $\mathcal T$ has order less or equal to $6$ (see~\cite[\S 5.8.3]{Wilson}).  
Taking $\mathcal T=X$, with $X$ as in Equation~\eqref{defX}, the result follows by Corollary~\ref{mcor:bt<>1/2} with $\bt\neq \tfrac{1}{2}$, since every subalgebra generated by two axes in $X$ will have axet of size less than or equal to $6$. 
\end{proof}

\begin{lemma}\label{JM}
Let $\V$ be a $2$-generated $\mathcal M(\tfrac{1}{4}, \tfrac{1}{32})$-algebra over a field $\F$. If $\V$ is not a Norton-Sakuma algebra, then one of the following holds
\begin{enumerate}
    \item $\ch(\F)=5$, $(\tfrac{1}{4},\tfrac{1}{32})=(-1,\tfrac{1}{2})=(\tfrac{2}{3},-\tfrac{1}{3})$ and $\V$ is isomorphic to one of 
    \begin{enumerate}
        \item $3\C(-1)^\times$, 
        \item $\J(\dl)$ with $\dl\neq -\tfrac{3}{8}$, 
        \item $\J(0)^\times$, 
        \item $3\A(-1, \frac{1}{2})^\times$, 
        \item $4\A(\tfrac{1}{4},\tfrac{1}{2})^\times$, 
        \item $4\B(-1,\tfrac{1}{2})^\times$, 
        \item $4\B(-1,\tfrac{1}{2};\nu)^\times$, 
        \item $6\A(\tfrac{2}{3},-\tfrac{1}{3})^\times$, 
        \item $\IY_3(-1,\tfrac{1}{2};\mu)$ with $\mu\neq -\tfrac{1}{2}$
        \item $\IY_3(-1,\tfrac{1}{2}; \mu)^\times$ with $\mu\neq -\tfrac{1}{2}$,
        \item $\IY_5(-1,\tfrac{1}{2})^\times$, 
    \end{enumerate}
    \item $\ch(\F)=11$, $(\tfrac{1}{4},\tfrac{1}{32})=(3,-1)$ and $\V\cong 3\C(-1)^\times$; 
    \item $\ch(\F)=23$, $(\tfrac{1}{4},\tfrac{1}{32})=(6,-5)$ and $\V\cong 3\C^\prime (6,-5)$.
\end{enumerate}
 \end{lemma}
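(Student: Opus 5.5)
By the Main Theorem, $\V$ is either symmetric, hence a quotient of one of the algebras in the Classification Theorem of the Symmetric Algebras, or it is one of the five non-symmetric algebras listed there. The plan is to specialise $(\al,\bt)=(\tfrac14,\tfrac1{32})$ in every family and determine, for each field characteristic, when a family can occur. Since $\al\neq\bt$ and $\al,\bt\notin\{0,1\}$ are required, we must have $\ch(\F)\notin\{2,3,7,31\}$: indeed $\tfrac14=\tfrac1{32}$ forces $\ch(\F)=7$, $\tfrac14=1$ forces $3$, and $\tfrac1{32}=1$ forces $31$.

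The families are indexed by algebraic conditions on $(\al,\bt)$, for instance $\bt=\tfrac{\al^2}{2}$, $\al=2\bt$, $\bt=\tfrac{5\al-1}{8}$, $\bt=-\tfrac{\al^2}{4(2\al-1)}$, $\bt=\tfrac12$, $\al+\bt=1$, $\al=\tfrac13$, or $(\al,\bt)=(-1,\tfrac12)$. For each condition we substitute the values and read off which primes divide the resulting integer.

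\begin{itemize}
\item In characteristic $0$, the identities $\bt=\tfrac{\al^2}{2}$, $\bt=\tfrac{5\al-1}{8}$ and $\bt=-\tfrac{\al^2}{4(2\al-1)}$ all hold, as does the generic $3\A$ case. These give exactly $4\B$, $5\A$, $6\A$, $3\A$ and $4\A(\tfrac14,\bt)$, together with $1\A$, $2\B$, $3\C(\al)$ and $3\C(\bt)$.
\item The Norton--Sakuma algebras are then the generic members of these families. Anything further arises only when an extra coincidence holds modulo $p$.
\item The relevant coincidences are $\bt=\tfrac12$ (so $\tfrac1{32}=\tfrac12$, giving $p=3$ or $5$, leaving $5$), $\bt=-1$ (so $33\equiv0$, giving $p=11$), $\al=-1$ (giving $p=5$), and $\al+\bt=1$ (giving $9/32=1$, so $p=23$, where $\tfrac14=6$ and $\tfrac1{32}=-5$).
\item Further coincidences are $\al=2\bt$ (giving $p=7$, excluded), $\al=\tfrac13$ (giving $p$ dividing $1$), $\bt=\tfrac{1-3\al^2}{3\al-1}$, $\al=\tfrac23$, $\al=\tfrac{1\pm\sqrt{97}}{24}$, $\bt=-\tfrac14$ and $\bt=-\tfrac17$. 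For each of these we compute the prime and keep only those with $p\notin\{2,3,7,31\}$.
\end{itemize}

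For each surviving prime we then list which quotient algebras appear.

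\begin{itemize}
\item For $p=11$ with $\bt=-1$, we get $3\C(-1)^\times$.
\item For $p=23$, we get $3\C'(\al,1-\al)$.
\item For $p=5$, where $(\tfrac14,\tfrac1{32})=(-1,\tfrac12)=(\tfrac23,-\tfrac13)$, all the $\bt=\tfrac12$ families and all the $\al=-1$ quotients appear. These are $\J(\dl)$ and $\J(0)^\times$, $\IY_3(-1,\tfrac12;\mu)$ and its quotient, and $\IY_5(-1,\tfrac12)^\times$. They also include the $\times$-quotients of $3\A$, $4\A$, $4\B$ and $6\A$, together with $4\B(-1,\tfrac12;\nu)^\times$.
\item In characteristic $5$ we remove the algebras that coincide with Norton--Sakuma ones. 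We have $\J(-\tfrac38)\cong3\C(\tfrac12)$, which is $3\C(\tfrac1{32})$. We also have $\IY_3(\al,\tfrac12;-\tfrac12)\cong3\A(\al,\tfrac12)$, and $\IY_5\cong5\A$ in characteristic $5$.
\end{itemize}

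The main obstacle is the completeness bookkeeping in characteristic $5$. Here we must use the identifications in the table notes, and check which quotients genuinely differ from the Norton--Sakuma list; for example, we must confirm that $\IY_5(-1,\tfrac12)$ itself is $5\A$ while its $\times$-quotient is new. We also need to check that $3\C(-1)$ with $\al=-1$ coincides with $3\C(\tfrac14)$, so only $3\C(-1)^\times$ is new.
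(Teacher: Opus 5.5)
Your plan follows the paper's proof: specialise $(\al,\bt)=(\tfrac14,\tfrac1{32})$ in every family allowed by the Main Theorem, observe that only $p\in\{5,11,23\}$ produce algebras beyond the Norton--Sakuma ones, and prune the characteristic-$5$ list with the identifications in the table notes (the paper cites \cite[Theorem~1.5]{FMS3} for characteristic $0$ rather than reading it off the Main Theorem). There are two harmless bookkeeping slips: $\al=2\bt$ means $\tfrac14=\tfrac1{16}$, i.e.\ $12=0$, so $p\in\{2,3\}$ rather than $7$ (it is $\al=\bt$ that gives $7$), and this is what disposes of $4\J$, $6\J$, $4\QQ$ together with the conditions $\bt=-\tfrac14,-\tfrac17$, since $\bt=-\tfrac17$ on its own would leave $p=13$; also, your enumeration should explicitly discard $4\Y(\al,\tfrac{1-\al^2}{2})$ (it forces $p=7$), the families with $\al=\tfrac12$ (impossible), and $\mathcal H$, $\hatH$, $6\Y(\tfrac12,2)$ regarded as $(2,\tfrac12)$-algebras (they force $\tfrac14=2$, i.e.\ $p=7$).
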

 \begin{proof}
Since $\V$ is not a Norton-Sakuma algebra, by~\cite[Theorem 1.5]{FMS3}, $\F$ has positive characteristic. If $\ch(\F)\in \{2,3,7,31\}$, then the fusion law $\M(\tfrac{1}{4},\tfrac{1}{32})$ is not defined, so we exclude these characteristics. 

Assume first that $\V$ is isomorphic to a quotient  of an axial algebra of  Jordan type.  By Table~\ref{table2}, $$\V\cong 3\C(-1)^\times,\quad \J(\dl),\quad  \mbox{or}\quad   \J(0)^\times. $$ 

If $\V\cong 3\C(-1)^\times$, then $\tfrac{1}{4}$ or $\tfrac{1}{32}$ is equal to $-1$. We have $\tfrac{1}{4}=-1$ if and only if $\ch(\F)=5$, while $\tfrac{1}{32}=-1$ if and only if $\ch(\F)=11$, since characteristic three is already ruled out. 

If $\V$ is isomorphic to $\J(\dl)$ or $\J(0)$, then $\tfrac{1}{4}$ or $\tfrac{1}{32}$ is equal to $\tfrac{1}{2}$. Note that $\tfrac{1}{4}=\tfrac{1}{2}$ cannot happen in any characteristic, while $\tfrac{1}{32}=\tfrac{1}{2}$ happens only when $\ch(\F)=5$. By Note~\ref{table2 3C J} in Table~\ref{table2}, $\J(-\tfrac{3}{8})\cong 3\C(\tfrac{1}{2})$.

Next assume $\V$ is not isomorphic to a quotient of an axial algebra of Jordan type. Note that, since $\tfrac{1}{4}\not \in \{ \tfrac{1}{2}, \tfrac{1}{8}, \tfrac{1}{16}\}$ in every characteristic other than $2$ and $3$, 
$$\al \not \in \{\tfrac{1}{2}, 2\bt, 4\bt\}.$$ 
By Tables~\ref{table3A}-\ref{4Bq}, $\V$ is isomorphic to a quotient of one of the following
\[
3\A(\al, \tfrac{1-3\al^2}{3\al-1})^\times, \quad 4\A(\tfrac{1}{4}, \tfrac{1}{2})^\times, \quad 4\B(-1, \tfrac{1}{2})^\times, \quad 4\Y(\al, \tfrac{1-\al^2}{2} ), \quad 6\A(\tfrac{2}{3},-\tfrac{1}{3})^\times,
\]
\[
6\A(\tfrac{1\pm \sqrt{97}}{24},\tfrac{53\pm 5\sqrt{97}}{192})^\times, \quad 
\IY_3(\al, \tfrac{1}{2}, \mu),  \quad \IY_5(\al, \tfrac{1}{2}), \quad Q_2^\prime(\tfrac{1}{3}, \tfrac{2}{3}),\]
\[3\C^\prime(\al, 1-\al), \quad \mbox{or}\quad 4\B(-1, \tfrac{1}{2}, \nu)^\times .
\]
If $\V\cong 3\A(\al,\tfrac{1-3\al^2}{3\al-1})^\times$, then 
$\tfrac{1}{32}=\bt =\tfrac{1-3\al^2}{3\al-1}= -\tfrac{13}{4}$, which  implies   $\ch(\F)=5$. Note that, in this case, $\bt=\tfrac{1}{2}$ and, by Note~\ref{tableIY3 3A} in Table~\ref{tableIY3bis}, $\V\cong \IY_3(-1,\tfrac{1}{2};-\tfrac{1}{2})^\times$. 

If $\V$ is isomorphic to a quotient of $4\A(\tfrac{1}{4},\tfrac{1}{2})^\times$, $4\B(-1,\tfrac{1}{2})^\times$, $4\B(-1,\tfrac{1}{2};\nu)^\times$, $\IY_3(\al,\tfrac{1}{2};\mu)$, $\IY_5(\al,\tfrac{1}{2})$, then $\bt=\tfrac{1}{2}$ which again implies  $\ch(\F)=5$, and then $\frac{1}{4} = \al =-1$.   By Note~\ref{tableIY3 3A} in Table~\ref{tableIY3bis}, $\IY_3(-1,\tfrac{1}{2};-\tfrac{1}{2})\cong 3\A(-1,\tfrac{1}{2})$, which is a Norton-Sakuma algebra. By Note~\ref{tableIY5 char5} of Table~\ref{tableIY5}, $\IY_5(-1,\tfrac{1}{2}) \cong 5\A(-1,\tfrac{1}{2})$ in characteristic five, which is again a Norton-Sakuma algebra.  However,  the quotient $\IY_5(-1,\tfrac{1}{2})^\times\cong 5\A(-1,\tfrac{1}{2})^\times$ is not Norton-Sakuma.

If $\V\cong 4\Y(\al,\tfrac{1-\al^2}{2})$, then $\al^2+2\bt-1=0$. Substituting $(\al,\bt)=(\tfrac{1}{4},\tfrac{1}{32})$ into the polynomial, we get $-\tfrac{7}{8}=0$, a contradiction as characteristic $7$ is ruled out above.

  If $\V\cong 6\A(\tfrac{2}{3},-\tfrac{1}{3})^\times$, then $(\tfrac{1}{4},\tfrac{1}{32})=(\tfrac{2}{3},-\tfrac{1}{3})$. This only happens in characteristic five. If $\V\cong 6\A(\tfrac{1\pm \sqrt{97}}{24},\tfrac{53\pm 5\sqrt{97}}{192})^\times$, then $12\al^2-\al-2=0$. Evaluating this expression by $\al=\tfrac{1}{4}$, we get $\tfrac{5}{2}=0$ thus $\ch(\F)=5$. However, 
$$\bt=\tfrac{53\pm 5\sqrt{97}}{192}=\tfrac{3}{2}=-1=\tfrac{1}{4}=\al$$ 
which cannot happen. Hence $\V\not \cong 6\A(\tfrac{1\pm \sqrt{97}}{24},\tfrac{53\pm 5\sqrt{97}}{192})^\times$.

Finally, if $\V$ is isomorphic to $3\QQ^\prime(\tfrac{1}{3},\tfrac{2}{3})$ or $3\C^\prime (\al,1-\al)$, then $\al+\bt-1=0$. Substituting $(\al,\bt)=(\tfrac{1}{4},\tfrac{1}{32})$ into the expression, we get $-\tfrac{23}{32}=0$ which only happens for  $\ch(\F)=23$. Since  $(\tfrac{1}{4},\tfrac{1}{32})\neq (\tfrac{1}{3},\tfrac{2}{3})$ in characteristic $23$, we get  $\V\cong 3\C^\prime (\tfrac{1}{4},\tfrac{1}{32})$.

Conversely, a direct check shows that none of the algebras listed in the statement is isomorphic to a Norton-Sakuma algebra.
\end{proof}

\begin{proof}[Proof of Corollary \ref{mcor:NS char}]
This follows by Lemma~\ref{JM}.
\end{proof}

\printindex
\printnomenclature[4cm] 

\end{document}